\theoremstyle{plain}
\newtheorem{prop}{Proposition}[section]
\newtheorem{lemma}[prop]{Lemma}
\newtheorem{thm}[prop]{Theorem}
\newtheorem{theorem}{Theorem}
\newtheorem{coroll}[theorem]{Corollary}
\newtheorem{cor}[prop]{Corollary}
\theoremstyle{definition}
\newtheorem{defi}[prop]{Definition}
\newtheorem{example}[prop]{Example}
\theoremstyle{remark}
\newtheorem{remark}[prop]{Remark}
\newcommand{\funddom}{\mathcal{F}}
\newcommand{\ini}{\mathrm{ini}}
\DeclareMathOperator{\GL}{GL}
\DeclareMathOperator{\SL}{SL}
\DeclareMathOperator{\PSL}{PSL}
\DeclareMathOperator{\PGamma}{P\Gamma}
\DeclareMathOperator{\Ima}{Im}
\DeclareMathOperator{\Rea}{Re}
\DeclareMathOperator{\sgn}{sgn}
\newcommand\N{\mathbb{N}}
\newcommand\R{\mathbb{R}}
\newcommand\Z{\mathbb{Z}}
\newcommand\C{\mathbb{C}}
\newcommand{\h}{\mathbb{H}}
\newcommand{\mc}[1]{\mathcal #1}
\newcommand{\mf}[1]{\mathfrak #1}
\newcommand{\wt}{\widetilde}
\newcommand{\wh}{\widehat}
\newcommand{\eps}{\varepsilon}
\newcommand{\mat}[4]{\begin{pmatrix} #1&#2\\#3&#4\end{pmatrix}}
\newcommand{\bmat}[4]{\begin{bmatrix} #1&#2\\#3&#4\end{bmatrix}}
\newcommand{\textmat}[4]{\left(\begin{smallmatrix} #1&#2 \\ #3&#4
\end{smallmatrix}\right)}
\newcommand{\textbmat}[4]{\left[\begin{smallmatrix} #1&#2 \\ #3&#4
\end{smallmatrix}\right]}
\newcommand{\downmapsto}{\rotatebox[origin=c]{-90}{$\scriptstyle\mapsto$}
\mkern2mu}
\newcommand*{\logeq}{\ratio\Longleftrightarrow}
\newtheorem{propsperty}{Property}
\theoremstyle{definition}
\newtheorem{algo}[prop]{Algorithm}
\newtheorem{lemdef}[prop]{Lemma and Definition}
\tikzset{
    >=stealth',
    pil/.style={
           ->,
           thick,
           shorten <=2pt,
           shorten >=2pt,}
}
\newcounter{Fig}
\DeclareRobustCommand{\SkipTocEntry}[5]{}
\providecommand*\xrightarrowtriangle[2][]{%
  \ext@arrow 0055{\arrowfill@\relbar\relbar\rightarrowtriangle}{#1}{#2}}
\global\long\def\defset#1#2{\left\{  #1\;\middle|\;#2\right\}  }
\global\long\def\dif#1{\operatorname{d}\!#1}
\global\long\def\i{\mathrm{i}}
\global\long\def\H{\mathbb{H}}
\global\long\def\R{\mathbb{R}}
\global\long\def\Z{\mathbb{Z}}
\global\long\def\C{\mathbb{C}}
\global\long\def\N{\mathbb{N}}
\global\long\def\PSLR{\mathrm{PSL}_2(\R)}
\global\long\def\x{\mathrm{x}}
\global\long\def\hyp{\mathrm{h}}
\global\long\def\id{\operatorname{id}}
\global\long\def\tr#1{\operatorname{tr}(#1)}
\global\long\def\TO#1{\mathcal{L}_{#1}}
\global\long\def\fTO#1{\widetilde{\mathcal{L}}_{#1}}
\global\long\def\Return{\mathscr{R}}
\global\long\def\act{\boldsymbol{.}}
\global\long\def\CrSc{\widehat{\operatorname{C}}}
\global\long\def\BrU{\mathrm{C}}
\global\long\def\BrS{\mathcal{C}}
\global\long\def\Cs#1{\operatorname{C}_{#1}}
\global\long\def\Css#1#2{\operatorname{C}_{#1}^{(#2)}}
\global\long\def\Csr#1{\wt{\operatorname{C}}_{#1}}
\global\long\def\Csacc#1{\operatorname{C}_{#1}^{\,\acc}}
\global\long\def\Iset#1{I_{#1,\st}}
\global\long\def\Ired#1{\mathcal{I}_{#1}}
\global\long\def\Jset#1{J_{#1,\st}}
\global\long\def\Plussp#1{\mathrm{H}_+(#1)}
\global\long\def\Minussp#1{\mathrm{H}_-(#1)}
\global\long\def\PMsp#1{\mathrm{H}_\pm(#1)}
\global\long\def\MPsp#1{\mathrm{H}_\mp(#1)}
\global\long\def\base#1{\operatorname{bp}(#1)}
\global\long\def\fund{\mathcal{F}}
\global\long\def\iso#1{\mathrm{I}(#1)}
\global\long\def\intiso#1{\mathrm{int\,}\iso{#1}}
\global\long\def\extiso#1{\mathrm{ext\,}\iso{#1}}
\global\long\def\UTB#1{\operatorname{S}\!#1}
\global\long\def\quod#1#2{#1\diagdown#2}
\global\long\def\Orbi{\mathbb{X}}
\global\long\def\GeoFlow{\widehat{\Phi}}
\global\long\def\cyc#1#2{\operatorname{cyc}_{#1}(#2)}
\global\long\def\cycstar#1{\operatorname{cyc}^*_{#1}}
\global\long\def\cycset#1#2{\operatorname{Cyc}_{#1,#2}}
\global\long\def\cycstarset#1{\operatorname{Cyc}^*_{#1}}
\global\long\def\cycnostarset#1{\operatorname{Cyc}_{#1}}
\global\long\def\cycnext#1{g_{#1}}
\global\long\def\cycstarnext#1{g^*(#1)}
\global\long\def\cycnostarnext#1{g(#1)}
\global\long\def\cyctrans#1#2{u_{#1,#2}}
\global\long\def\cycstartrans#1{u_{#1}}
\global\long\def\point#1#2{#2_{#1}}
\global\long\def\Per{\operatorname{Per}}
\global\long\def\st{\operatorname{st}}
\global\long\def\T#1{\mathscr{T}(#1)}
\global\long\def\abs#1{\left|#1\right|}
\global\long\def\geo{\mathscr{g}}
\global\long\def\Geo{\mathscr{G}}
\global\long\def\Ball#1#2{\mathrm{B}_{#1}(#2)}
\global\long\def\Vanish{\mathrm{Van}}
\global\long\def\retime#1{t^+_{#1}}
\global\long\def\pretime#1{t^-_{#1}}
\global\long\def\ittime#1{\mathrm{t}_{#1}}
\global\long\def\ittrans#1{\mathrm{g}_{#1}}
\global\long\def\itindex#1{\mathrm{k}_{#1}}
\global\long\def\Trans#1#2#3{\mathcal{G}_{#1}(#2,#3)}
\global\long\def\Transs#1#2#3{\wt{\mathcal{G}}_{#1}(#2,#3)}
\global\long\def\Transss#1#2#3#4{\mathcal{G}_{#1}^{(#2)}(#3,#4)}
\global\long\def\Transprime#1#2#3{\mathcal{G}'_{#1}(#2,#3)}
\global\long\def\Past#1#2#3{\mathcal{V}_{#1}(#2,#3)}
\global\long\def\Heir#1#2{H_{#1}(#2)}
\global\long\def\acc{\mathrm{acc}}
\global\long\def\red{\mathrm{red}}
\global\long\def\rred{\mathrm{\textcolor{red}{red}}}
\global\long\def\eX{\mathrm{X}}
\global\long\def\eY{\mathrm{Y}}
\global\long\def\eR{\mathrm{R}}
\global\long\def\eZ{\mathrm{Z}}
\global\long\def\eW{\mathrm{W}}
\global\long\def\Index{\wh{A}}
\global\long\def\gbound#1{\geo#1}
\global\long\def\ReturnGraph#1{\mathrm{RG}_{#1}}
\global\long\def\ram#1{\operatorname{ram}(#1)}
\global\long\def\Ram#1{\mathrm{Ram}_{#1}}
\global\long\def\fixp#1#2{\operatorname{f}_{#1}(#2)}
\global\long\def\Stab#1#2{\operatorname{Stab}_{#1}(#2)}
\global\long\def\standh#1{\mathrm{h}_{#1}}
\global\long\def\countit#1{\varphi(#1)}
\global\long\def\Unit{\operatorname{U}}
\global\long\def\ct{\operatorname{ct}}
\global\long\def\Att#1#2{\operatorname{Att}_{#1}(#2)}
\global\long\def\nbAtt#1{\operatorname{Att}_{#1}}
\global\long\def\IAtt#1#2{I(\Att{#1}{#2})}
\global\long\def\conv#1#2{\operatorname{conv}_{#1}(#2)}
\global\long\def\slow{\mathrm{slow}}
\global\long\def\fast{\mathrm{fast}}
\global\long\def\Fct{\operatorname{Fct}}
\global\long\def\edge#1{\xrightarrowtriangle{#1}}
\global\long\def\conv{\mathrm{conv}}
\global\long\def\prA{\pi_{\!A}}
\global\long\def\prZ{\pi_{\eZ}}
\newcommand{\algofin}{\hfill\rotatebox{45}{\textsquare}}
\begin{document}

\title[Strict transfer operator approaches]{Selberg zeta functions, cuspidal accelerations, and existence of strict transfer operator approaches}
\author[A.\@ Pohl]{Anke Pohl}
\address{Anke Pohl, University of Bremen, Department~3 - Mathematics, Institute for Dynamical Systems, Bibliothekstr.~5, 28359~Bremen, Germany}
\email{apohl@uni-bremen.de}
\author[P.\@ Wabnitz]{Paul Wabnitz}
\address{Paul Wabnitz, University of Bremen, Department~3 - Mathematics, Institute for Dynamical Systems, Bibliothekstr.~5, 28359~Bremen, Germany}
\email{pwabnitz@uni-bremen.de}
\subjclass[2020]{Primary: 11M36, 37C30; Secondary: 37D35, 37D40, 58J51}
\keywords{Selberg zeta function, strict transfer operator approach, cuspidal acceleration, transfer operator, Fredholm determinant, symbolic dynamics, cross 
section, geodesic flow}

\begin{abstract} 
For geometrically finite non-compact developable hyperbolic orbisurfaces (including those of infinite volume), we provide transfer operator families whose Fredholm determinants are identical to the Selberg zeta function. Our proof yields an algorithmic and uniform construction. This construction is initiated with an externally provided cross section for the geodesic flow on the considered orbisurface that yields a highly faithful, but non-uniformly expanding discrete dynamical system modelling the geodesic flow. Through a number of algorithmic steps of reduction, extension, translation, induction and acceleration, we turn this cross section into one that yields a still highly faithful, but now uniformly expanding discrete dynamical system. The arising transfer operator family is nuclear of order zero on suitable Banach spaces. In addition, finite-dimensional twists with non-expanding cusp monodromy can be included.
\end{abstract}

\maketitle
\tableofcontents{}
\vspace*{\fill}

\section{Introduction}\label{SEC:intro}

The geometric and spectral properties of hyperbolic orbisurfaces\footnote{We use the term ``(developable) hyperbolic orbisurfaces'' to refer to all 
two-dimensional good connected hyperbolic Riemannian orbifolds. All orbifolds considered in this article will be assumed to be good (i.e., developable), for which reason we will omit throughout ``developable'' and abbreviate to ``hyperbolic orbisurfaces.'' In other words, using the notation introduced further below, we call all spaces of the form~$\quod{\Gamma}{\H}$ ``hyperbolic orbisurfaces.'' We emphasize that this notion also includes those quotient spaces~$\quod{\Gamma}{\H}$ for which the Fuchsian group~$\Gamma$ has torsion. Thus, it includes the spaces~$\quod{\Gamma}{\H}$ that are not hyperbolic surfaces in the original sense, i.e., which are not Riemannian manifolds.}
and their interdependence are of great interest throughout mathematics and physics, and beyond. For investigations, we can therefore take advantage of insights, knowledge and methods from various areas.
Each of the existing methods has its particular strengths and weaknesses; the improvement of methods is an ongoing process. With this article we contribute to the further development of one of these methods: transfer operator techniques. Our main motivation is to provide representations of Selberg zeta functions by Fredholm determinants of families of transfer operators that are particularly well-structured and have an accompanying slow transfer operator (see further below for terminology and details). Further motivation stems from other applications that these particular transfer operators have, of which we will indicate one below. 

\addtocontents{toc}{\SkipTocEntry}
\subsection*{Motivation} 

Before presenting our main results, we provide now a more detailed account 
on our motivation. The Selberg zeta function of a geometrically finite hyperbolic orbisurface is a dynamical zeta function that yields an intimate 
relation between the geodesic length spectrum and the resonances of this 
orbisurface.
We refer to~\eqref{eq:szf_infprod_intro} for a precise formula of this zeta function.
Since its introduction by Selberg~\cite{Selberg} in the 1950s, it is a 
highly valued object when investigating resonances (or vice versa, when 
investigating periodic geodesics using knowledge on resonances). Selberg zeta 
functions are now available for all geometrically finite hyperbolic orbisurfaces, and their theory is already quite extensive. Nevertheless, new contributions are made regularly, as we will do with this article. 

Another family of objects that allows us to establish relations between 
geodesics and resonances, even resonant states, of hyperbolic orbisurfaces are 
transfer operators associated to suitable discretizations of the geodesic flow 
on hyperbolic orbisurfaces. The theory of these (discrete-time or Ruelle-type) 
transfer operators is much younger and less developed than the theory of the 
Selberg zeta function. Its use in the spectral theory of hyperbolic orbisurfaces 
goes back to Ruelle~\cite{Ruelle_zeta, Ruelle_dynzeta, Ruelle_thermo}, 
Mayer~\cite{Mayer_zeta, Mayer_thermo, Mayer_thermoPSL}, Fried~\cite{Fried_zetafunctionsI} 
and Pollicott~\cite{Pollicott} and has made a great leap forward in recent 
years due to the efforts of many different researchers. (See further below for more details and more extensive references. For completeness we remark that there is also a theory of continuous-time transfer operators. However, we will not discuss it
here.) 

While the Selberg zeta function focuses on the \emph{static geometry} of hyperbolic 
orbisurfaces (namely, the lengths of periodic geodesics), transfer operators take 
advantage of the \emph{dynamics} of the geodesic flow (namely, the paths of periodic 
geodesics). Accordingly, investigations of spectral properties of hyperbolic 
surfaces by means of transfer operators on the one hand and by means of Selberg 
zeta functions on the other hand are complementary. A number of particularly 
good results have been achieved by combining both approaches; further below we 
will list a few examples. In these cases the Selberg zeta function of the 
considered hyperbolic orbisurface is represented as a Fredholm determinant of a 
family of transfer operators that are particularly well-structured. 

With this article we will significantly extend the realm of hyperbolic orbisurfaces for which such well-structured transfer operators exist. Our results will show 
that such transfer operators exist for a huge class of non-compact geometrically finite hyperbolic orbisurfaces with at least one periodic geodesic, including a huge class of infinite-area hyperbolic orbisurfaces. We will not only 
establish existence but indeed present a construction of such transfer operators. We refer to the discussion of our main results further below for more detailed information on the realm of our results. 

To be more detailed regarding our motivation, we use the upper half plane model 
\[
 \H =\defset{z=x+\i y\in\C}{y=\Ima z > 0}\,,\qquad \dif s^2_z = \frac{\dif x^2 + 
\dif y^2}{y^2}\,,
\]
of the hyperbolic plane, fix a discrete, finitely generated group~$\Gamma$ of 
orientation-preserving Riemannian isometries of~$\H$ (thus, $\Gamma$ is a 
Fuchsian group) with at least one hyperbolic element, and consider the geometrically finite hyperbolic orbisurface
\[
\Orbi\coloneqq\quod{\Gamma}{\H}\,.
\]
The \emph{spectral entities} of~$\Orbi$ that motivate our investigations are the Laplace resonances and resonant states of~$\Orbi$. For a more detailed exposition, we let $\Delta_\Orbi$ denote the (positive) Laplacian on~$\Orbi$. In the local coordinates of~$\H$ it is given by
\[
 \Delta_\Orbi = -y^2 \big(\partial_x^2 + \partial_y^2 \big)\,.
\]
Its resolvent 
\[
 R_\Orbi(s) \coloneqq \big( \Delta_\Orbi - s(1-s) \big)^{-1}\colon L^2(\Orbi) 
\to H^2(\Orbi)
\]
is defined for all $s\in\C$ with $\Rea s > \tfrac12$ for which $s(1-s)$ is not 
an $L^2$-eigenvalue of~$\Delta_\Orbi$. Here, $L^2(\Orbi)$ denotes the space of (equivalence classes) of square-Lebesgue-integrable functions, and $H^2(\Orbi)$ the Sobolev space. The family of restricted operators
\[
 R_\Orbi(s)\colon C^\infty_c(\Orbi) \to C^\infty(\Orbi)\,,\qquad \Rea s \gg 1\,,
\]
extends in $s$ to a meromorphic family of operators
\[
 R_\Orbi(s) \colon L^2_{\textnormal{comp}}(\Orbi) \to 
H^2_{\textnormal{loc}}(\Orbi)
\]
on all of~$\C$, see~\cite{Mazzeo_Melrose, GZ_upper_bounds}, where $L^2_{\textnormal{comp}}(\Orbi)$ denotes the space of the compactly supported elements of~$L^2(\Orbi)$, and $H^2_{\textnormal{loc}}(\Orbi)$ is the space of functions that are locally in~$H^2$. The \emph{resonances} of~$\Orbi$ are the poles of the map $s\mapsto R_\Orbi(s)$, and the \emph{resonant states with spectral parameter~$s$} are the generalized eigenfunctions of~$\Delta_\Orbi$ at the resonance~$s$. We let~$\mc R_\Orbi$ 
denote the multiset of the resonances of~$\Orbi$, that is, the set of resonances with multiplicities.

The resonances of~$\Orbi$ are closely related to the geodesics of~$\Orbi$, as 
can be shown with several different methods, thereby establishing a mathematically rigorous incarnation of the classical-quantum correspondence principle from physics. The two methods that motivate this article---the Selberg zeta functions and the transfer operator techniques---show such relations by exploiting the properties of \emph{periodic geodesics}. In order to provide a more detailed account, we denote by~$L_\Orbi$ the \emph{primitive geodesic length spectrum} of~$\Orbi$, that is, the multiset of the lengths of all primitive periodic geodesics of~$\Orbi$. Further we denote the unit tangent bundle of~$\Orbi$ by~$\UTB\Orbi$, and by~$T_\Orbi$ the \emph{trapped set}, that is the set of unit tangent vectors $v\in \UTB\Orbi$ for which the geodesic determined by~$v$ stays in some compact subset of~$\UTB\Orbi$. We let 
$\nu$ denote the Hausdorff dimension of~$T_\Orbi$ and set $\delta \coloneqq 
(\nu-1)/2$. (We remark that~$\delta$ equals the Hausdorff dimension of the 
limit set of the fundamental group of~$\Orbi$. For details we refer 
to~\cite[Section~14.8]{Borthwick_book}.) The \emph{Selberg zeta function}~$Z_\Orbi$ of~$\Orbi$ is given by the infinite product (Euler product)
\begin{equation}\label{eq:szf_infprod_intro}
 Z_\Orbi(s) = \prod_{\ell\in L_\Orbi} \prod_{k=0}^\infty \left(1 - 
e^{-(s+k)\ell}\right)
\end{equation}
for $s\in\C$ with $\Rea s > \delta$, and by the meromorphic continuation of the 
infinite product in~\eqref{eq:szf_infprod_intro} to all of~$\C$ (see, e.~g., 
\cite{Selberg, Venkov_book, Borthwick_book}). The multiset of the zeros 
of~$Z_\Orbi$ is almost equal to the multiset~$\mc R_\Orbi$ of resonances 
of~$\Orbi$ as every zero of~$Z_\Orbi$ is a resonance or a so-called topological zero, and almost every resonance of~$\Orbi$ is a zero of~$Z_\Orbi$, including (almost) matching multiplicities. The set of topological zeros is well-understood and rather sparse; it is contained in $\tfrac12(1-\N_0)$. Also the resonances of~$\Orbi$ that do not arise as zeros of~$Z_\Orbi$ are well-understood; these are caused by exceptional $L^2$-eigenvalues that give rise to two resonances of~$\Orbi$. For details we refer, e.g., to the textbooks~\cite{Venkov_book, Borthwick_book}. For completeness we mention that if the Fuchsian group~$\Gamma$ does not contain any hyperbolic elements, or in other words, if $\Orbi$ does not have any periodic geodesics, then the infinite product in~\eqref{eq:szf_infprod_intro} is void and hence equals~$1$. We will here omit the discussion of these degenerate situations.

There exists a huge variety of methods, approaches and tools that allow us to 
enhance our understanding of the resonances of~$\Orbi$ by taking advantage of 
their interpretation as zeros of the Selberg zeta function~$Z_\Orbi$. Among these are one-parameter families of transfer operators~$\{\TO s^\Orbi\}_{s\in\C}$ that arise from well-chosen discretizations of the geodesic flow on~$\Orbi$ and that allow a representation of the Selberg zeta function~$Z_\Orbi$ as a Fredholm determinant:
\begin{equation}\label{eq:szf_fredholm_intro}
 Z_\Orbi(s) = \det\left( 1 - \TO s^\Orbi \right)\,.
\end{equation}
The identity~\eqref{eq:szf_fredholm_intro} converts the search for zeros of~$Z_\Orbi$ (and hence for resonances of~$\Orbi$) to a question on the existence of eigenfunctions with eigenvalue~$1$ of the transfer operator~$\TO s^\Orbi$ with parameter~$s$. \textit{A priori}, the latter seems to be the more involved problem. However, this conversion allows us to take advantage of, e.g., functional-analytic properties of~$\TO s^\Orbi$ for investigations. This is exactly the reason why a relation between~$Z_\Orbi$ and transfer operators as in~\eqref{eq:szf_fredholm_intro} is rather powerful, as the list of sample results below indicates. However, such a relation is not yet known for all hyperbolic orbisurfaces. 

In such studies of resonances, sometimes also a \emph{twisted variant} of the 
Selberg zeta function and of~\eqref{eq:szf_fredholm_intro} features. For its 
definition we let $\chi\colon\Gamma\to\Unit(V)$ be a unitary representation 
(the \emph{twist}) of the Fuchsian group~$\Gamma$ on a finite-dimensional unitary vector space~$V$. Further we denote the set of prime periodic unit speed 
geodesics on~$\Orbi$ by $P_\Orbi$, where we consider geodesics as identical if 
they differ only in their time parametrization. For $\widehat\gamma\in P_\Orbi$ we let $\ell(\widehat\gamma)$ denote the length of~$\widehat\gamma$, and we let 
$h_{\widehat\gamma}$ be the certain specific hyperbolic element of~$\Gamma$ that 
characterizes~$\widehat\gamma$ (see Section~\ref{SUBSEC:geodsurfaces}). The \emph{$\chi$-twisted Selberg zeta function}~$Z_{\Orbi,\chi}$ is then the infinite product
\begin{equation}\label{eq:szf_twist_intro}
 Z_{\Orbi,\chi}(s) \coloneqq \prod_{\widehat\gamma\in P_\Orbi} \prod_{k=0}^\infty \det\left(1 - \chi(h_{\widehat\gamma})e^{-(s+k)\ell(\widehat\gamma)}\right)\,,
\end{equation}
which converges for $\Rea s > \delta$, and its meromorphic continuation to all 
of~$\C$. For some hyperbolic orbisurfaces, a family of twisted transfer 
operators~$\{\TO {s,\chi}^\Orbi\}$ is available that allows us to find the 
twisted analogue of~\eqref{eq:szf_fredholm_intro}:
\begin{equation}\label{eq:szf_fredholm_twist_intro}
 Z_{\Orbi,\chi}(s) = \det\left(1-\TO {s,\chi}^\Orbi\right)\,.
\end{equation}
We list a few examples that make crucial use of~\eqref{eq:szf_fredholm_intro} 
or~\eqref{eq:szf_fredholm_twist_intro}.
\begin{itemize}
\item Prime geodesic theorems, also including error terms~\cite{PP_annals, 
PP_asterisque, Pollicott_Sharp_experr, Pollicott_Sharp_err, Naud_expanding, 
Oh_Winter} (some of these works use a variant of~\eqref{eq:szf_fredholm_intro} 
or~\eqref{eq:szf_fredholm_twist_intro} or are for other spaces, but are 
nevertheless good examples).
\item Relations between Patterson--Sullivan distributions and Wigner 
distributions~\cite{Anantharaman_Zelditch}.
\item Numerical investigations of resonances as in~\cite{Borthwick_numerics, 
BPSW}.
\item Distribution and counting results for resonances~\cite{Guillope_Lin_Zworski, JNS, NPS, Pohl_Soares}.
\item Meromorphic continuation of Selberg zeta functions~\cite{Fried_zetafunctionsI, Ruelle_thermo, Ruelle_dynzeta, Ruelle_zeta, Mayer_thermo, Mayer_thermoPSL, Pollicott, Morita_transfer,  Chang_Mayer_extension, Mayer_Muehlenbruch_Stroemberg, Moeller_Pohl, Pohl_hecke_infinite, Pohl_representation, FP_NECM} (some of these results use variants of~\eqref{eq:szf_fredholm_intro} to compensate for non-exact codings)
\end{itemize}

Also Selberg zeta functions twisted by \emph{non-unitary} representations are of interest in mathematics and physics. Recently, it could be shown, see~\cite{FP_NECM}, that for any choice of~$\Gamma$ and any finite-dimensional representation~$\chi\colon\Gamma\to\GL(V)$ with non-expanding cusp monodromy (i.e., for each parabolic element~$p\in\Gamma$, all eigenvalues of the endomorphism~$\chi(p)$ are of modulus~$1$), the infinite product in~\eqref{eq:szf_twist_intro} converges on some right half-plane in~$\C$. For representations~$\chi$ without non-expanding cusp monodromy, the infinite product in~\eqref{eq:szf_twist_intro} diverges. Thus, the class of representations with non-expanding cusp monodromy is the maximal realm of twists for which a theory of Selberg zeta functions, departing at an infinite product as in~\eqref{eq:szf_twist_intro}, can be expected. We emphasize that the class of representations with non-expanding cusp monodromy contains all unitary representations. 

Regarding the meromorphic continuability of these twisted infinite products, the situation is not yet fully settled, in particular not for the non-unitary representations contained in this class. In~\cite{FP_NECM} it is shown that if there exists a so-called \emph{strict transfer operator approach} for the Fuchsian group~$\Gamma$ (or the hyperbolic orbisurface~$\Orbi$), then the infinite product in~\eqref{eq:szf_twist_intro} extends meromorphically to all of~$\C$ for any twist~$\chi$ with non-expanding cusp monodromy. In a nutshell, a strict transfer operator approach refers to the presence of a particularly well-structured transfer operator family~$\{\TO s^\Orbi\}_s$ satisfying the Fredholm determinant identity~\eqref{eq:szf_fredholm_intro} for the untwisted Selberg zeta function. A bit more precisely, it refers to the existence of a particularly well-structured discrete dynamical system on a family of real intervals that yields a bijection between its orbits and a certain large set of geodesics on~$\Orbi$ (including all periodic geodesics), is uniformly expanding and piecewise given by the linear fractional transformation action of a certain generating set of elements of~$\Gamma$. We refer to Section~\ref{SUBSEC:stricttrans} for the proper definition and to Section~\ref{SUBSEC:fasttrans} for the well-structuredness. Strict transfer operator approaches are well-known to exist, e.g., for all Fuchsian Schottky groups (using the Koebe--Morse coding induced by a standard choice of a fundamental domain, see, e.g., \cite{Patterson_Perry, Borthwick_book}) and for all (cofinite as well as non-cofinite) Hecke triangle groups \cite{Moeller_Pohl, Pohl_hecke_infinite, Pohl_representation}. Also the symbolic dynamics in~\cite{Series} and the transfer operators in~\cite{Mayer_thermo, Chang_Mayer_extension} give rise or arise from strict transfer operator approaches. This is much likely also true for several cases in, e.g., \cite{Ruelle_zeta, Fried_zetafunctionsI, Bowen, Pollicott, Bowen_Series, Morita_transfer, Fried_triangle, Dalbo_Peigne}. Despite all these results, for most Fuchsian groups the situation regarding strict transfer operator approaches remains yet unclear. One of our main motivations for this article is to discuss this situation for a large class of Fuchsian groups. Indeed, we will provide strict transfer operator approaches for a huge class of non-cocompact, geometrically finite Fuchsian groups with at least one hyperbolic element or, equivalently, for all non-compact, geometrically finite hyperbolic orbisurfaces with at least one periodic geodesic. 

Considering the identities~\eqref{eq:szf_fredholm_intro} and~\eqref{eq:szf_fredholm_twist_intro}, the natural question arises on the relation between the eigenfunctions with eigenvalue~$1$ of~$\TO {s,\chi}^\Orbi$ and Laplace eigenfunctions, thereby reaching beyond the relation between the existence of transfer operator eigenfunctions and the spectral parameters of Laplace eigenfunctions (or resonances). This leads to another main motivation for this article. For a growing class of hyperbolic orbisurfaces, an interpretation of Laplace eigenfunctions as transfer operator eigenfunctions can indeed be established. More precisely, for geometrically finite, non-compact hyperbolic orbisurfaces~$\Orbi$, well-chosen discretizations of the geodesic flow on~$\Orbi$ give rise to transfer operators that are \emph{finite} sums of certain elementary operators (see Section~\ref{SUBSEC:slowtrans}) \cite{Pohl_Symdyn2d, Pohl_diss, Wabnitz}. These operators are usually called \emph{slow} transfer operators, denoted~$\TO {s,\chi}^{\Orbi, \slow}$, due to step-by-step encodings of windings of geodesics in cusp ends of~$\Orbi$. If $\Orbi$ has cusps and is of finite area, then the (untwisted) Maass cusp forms of~$\Orbi$ with spectral parameter~$s$ (rapidly decaying Laplace eigenfunctions with eigenvalue~$s(1-s)$) are bijective to eigenfunctions with eigenvalue~$1$ of~$\TO s^{\Orbi,\slow}$ \cite{Moeller_Pohl, Pohl_gamma, Pohl_mcf_general}. Recently such a bijection could also be established for a class of hyperbolic orbisurfaces of \emph{infinite} area, namely the Hecke triangle orbisurfaces of infinite area, and more general Laplace eigenfunctions~\cite{Bruggeman_Pohl}. The transfer operators~$\TO {s,\chi}^{\Orbi,\slow}$ are typically non-nuclear and hence do not satisfy the identities~\eqref{eq:szf_fredholm_intro} or~\eqref{eq:szf_fredholm_twist_intro}.

However, for the family of Hecke triangle orbisurfaces of finite as well as of infinite area, a \emph{cuspidal acceleration} procedure was developed in which the discretizations of the geodesic flow that led to~$\TO {s,\chi}^{\Orbi,\slow}$ are modified so that all successive windings in the cusp are encoded in a single step. The cuspidal acceleration gives rise to the family of \emph{fast} transfer operators~$\TO {s,\chi}^{\Orbi,\fast}$, which are indeed nuclear and satisfy~\eqref{eq:szf_fredholm_intro} and~\eqref{eq:szf_fredholm_twist_intro} \cite{Moeller_Pohl, Pohl_hecke_infinite, Pohl_representation, Bruggeman_Pohl}. Further, the eigenfunctions with eigenvalue~$1$ of~$\TO {s,\chi}^{\Orbi,\fast}$ are bijective to those of~$\TO {s,\chi}^{\Orbi,\slow}$ with suitable regularity~\cite{Adam_Pohl_iso_hecke, Bruggeman_Pohl}, which implies further observations for which we refer to~\cite{Adam_Pohl_iso_hecke}. The strongest results in the list above indeed use these fast transfer operators. The fast transfer operators are also useful for characterizing the necessary subspaces of eigenfunctions of slow transfer operators used for the transfer-operator-based interpretation of Laplace eigenfunctions. For completeness, we mention that also the transfer operators and the functional equation in~\cite{Mayer_thermo, Mayer_thermoPSL, Chang_Mayer_extension, Lewis_Zagier} fit into this slow-fast transfer operator scheme~\cite{Pohl_oddeven}. For the setup in~\cite{Bruggeman_Muehlenbruch} it is not yet clarified. 

All these applications of fast transfer operators are reasons for the interest in developing a cuspidal acceleration procedure for more general hyperbolic orbisurfaces and discretizations. While the procedure for Hecke triangle groups makes crucial use of the explicit and rather simple structure of the initial discretizations and the slow transfer operators as well as some special features of Hecke triangle orbisurfaces and could be done hands-on, we here provide a general construction of cuspidal accelerations that is applicable for all (slow) discretizations in~\cite{Pohl_Symdyn2d, Pohl_diss, Wabnitz} and beyond. 

\addtocontents{toc}{\SkipTocEntry}
\subsection*{Main results}

With this article we provide \emph{particularly well-structured strict transfer operator approaches} for a large class of non-uniform, geometrically finite Fuchsian groups. We do not only establish their existence but indeed present algorithmic constructions. 

The starting point for our constructions are cross sections for the geodesic flow on the considered hyperbolic orbisurface that need to satisfy certain structural, but not too restrictive requirements. Each such cross section will henceforth be called a \emph{set of branches}. We refer to Section~\ref{SUBSEC:cuspexp} for a precise, axiomatic definition. As soon as a set of branches can be provided, our construction will produce a strict transfer operator approach; no further restrictions on the hyperbolic orbisurfaces need to be made.  In rough terms, our principal result can be stated as follows. We refer to Theorem~\ref{mainthm} for the detailed statement.

\begin{theorem}\label{thmA}
 Let $\Gamma$ be a geometrically finite Fuchsian group with at least one hyperbolic element that admits a set of branches. Then $\Gamma$ admits a strict transfer operator approach. The constructions in Sections~\ref{SEC:branchred}--\ref{SEC:strictTOAexist} provide explicit examples of strict transfer operator approaches for~$\Gamma$. 
\end{theorem}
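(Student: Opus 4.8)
The proof of Theorem~\ref{thmA} is entirely constructive and is carried out in Sections~\ref{SEC:branchred}--\ref{SEC:strictTOAexist}; here I only sketch the strategy. The plan is to start from the given set of branches, which by the axioms of Section~\ref{SUBSEC:cuspexp} encodes a cross section for the geodesic flow on $\Orbi=\quod{\Gamma}{\H}$ whose associated first-return dynamical system is highly faithful (its orbits are in bijection with a large class of geodesics on $\Orbi$ that in particular contains all periodic ones) and is piecewise given by the linear fractional action of elements of $\Gamma$, but which is only \emph{eventually} expanding: expansion fails precisely along the branches that record a single winding of a geodesic through a cusp, since these are governed by parabolic elements, and a parabolic transformation of $\R$ has a fixed point at which its derivative equals~$1$. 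The goal is to transform this system, through finitely many explicit and algorithmic moves, into one that still enjoys faithfulness and the piecewise-$\Gamma$-structure but is now \emph{uniformly} expanding, hence (by Section~\ref{SUBSEC:fasttrans}) gives rise to a nuclear transfer operator family and a strict transfer operator approach in the sense of Section~\ref{SUBSEC:stricttrans}.

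First I would perform a sequence of preparatory normalizations of the set of branches. A \emph{reduction} step removes redundant or degenerate branches and arranges the remaining ones into a controlled combinatorial pattern (a return graph); an \emph{extension} step enlarges the branch domains so that consecutive branches fit together without gaps or overlaps, producing a genuine family of real intervals as the base of the dynamical system; and a \emph{translation} step moves each branch by a suitable element of $\Gamma$ so that the cusp at which it winds is placed at a standard position, normalizing the relevant parabolic generator to a map $z\mapsto z+\lambda$. At each move one must check that faithfulness is preserved and that the coding changes only by composition with elements of $\Gamma$, so that the piecewise-linear-fractional form is maintained and the generating set of $\Gamma$ involved stays finite.

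The heart of the argument is the \emph{induction} and \emph{cuspidal acceleration} step. After normalization, the branches responsible for the failure of uniform expansion come in infinite families indexed by a winding number $n\in\N$, each given by (a conjugate of) $z\mapsto z+n\lambda$ followed by a fixed $\Gamma$-transformation. Instead of iterating such a branch step by step, I would pass to the first-return map that skips directly from the entrance of a cusp neighbourhood to its exit, replacing each such infinite family by a single accelerated branch; concretely the associated transfer operator then contributes one summand that is an infinite series over $n$ of weighted composition operators, and one shows---using the uniform estimates on the parabolic cocycle and on the geometry of the cusp neighbourhoods developed in the earlier sections---that this series converges to an operator that is nuclear of order zero and that the resulting first-return system is uniformly expanding, each accelerated branch having derivative uniformly bounded away from~$1$ because a full passage through a cusp strictly contracts the relevant direction. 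The delicate points here are (i) showing that the inducing procedure terminates, i.e.\ that after accelerating all cuspidal families only finitely many branches remain and no new non-expanding behaviour is created, and (ii) checking that faithfulness survives induction, so that the accelerated orbits are still in bijection with the original large class of geodesics, with no periodic geodesic lost or doubled.

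Finally, with a uniformly expanding, faithful, piecewise-$\Gamma$ dynamical system in hand, one reads off the transfer operator family $\{\TO s^\Orbi\}_s$ and invokes the structural results of Section~\ref{SUBSEC:fasttrans}, together with the standard dictionary between uniformly expanding Markov systems and their transfer operators, to conclude that this family is nuclear of order zero on the appropriate Banach spaces and that the Fredholm determinant identity $Z_\Orbi(s)=\det\bigl(1-\TO s^\Orbi\bigr)$ of~\eqref{eq:szf_fredholm_intro} holds; this is exactly the assertion that $\Gamma$ admits a strict transfer operator approach, and the detailed version is Theorem~\ref{mainthm}. I expect the main obstacle to be the bookkeeping underlying points~(i) and~(ii): controlling how branches split, merge and get relabelled under reduction, extension, translation and especially induction, so that one can simultaneously guarantee termination of the algorithm, uniform expansion of the final system, and exact preservation of the geodesic-to-orbit correspondence.
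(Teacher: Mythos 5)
Your overall route---start from a set of branches, normalize it, accelerate all successive cusp windings into a single step, and then read off the transfer operator family---is indeed the route the paper takes, and your concern (ii) about preserving the geodesic-to-orbit correspondence corresponds to real verifications in Section~\ref{SEC:strictTOAexist}. However, there is a genuine gap: you attribute the failure of uniform expansion exclusively to parabolic branches at cusps, and consequently your plan contains no analogue of the identity elimination step of Section~\ref{SEC:stepredux}. In the paper the slow system can fail to be expanding for a second, independent reason: finite products of the coding transformations can equal the identity, i.e.\ some iterate of the slow map has a submap acting trivially (issue~(i) at the start of Section~\ref{SEC:branchred}). Cuspidal acceleration does not remove this defect, and without the non-collapsing property~\eqref{BP:noidentity} the coding of periodic geodesics in terms of the chosen generators is not unique, so Properties~\ref{staPROP2}--\ref{staPROP4} of a strict transfer operator approach break down: the sets $\Per_n$ need not be disjoint and the count of representatives of a hyperbolic conjugacy class is no longer controlled (the proofs of Lemma~\ref{LEM:noncollapspayoff} and Lemma~\ref{LEM:conjinPer} use non-collapsing explicitly). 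This is precisely why the paper first rearranges the branches to be weakly non-collapsing (Algorithm~\ref{ALGO:decollaps}, Proposition~\ref{PROP:noncollaps}) and then runs the identity elimination Algorithm~\ref{stepreduction}, a step it declares strictly necessary, before any acceleration.

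Two further points where your sketch does not match what is actually needed. First, your ``extension'' and ``translation'' steps are misdescribed: what must be arranged before acceleration is admissibility and, crucially, finite ramification, obtained by attaching additional branches at every cusp (Propositions~\ref{PROP:Attandram} and~\ref{PROP:allwithfiniteram}); finiteness of the transition sets is indispensable because the sets $C_{a,b}$ in the structure tuple are required to be finite. Second, after acceleration the paper does not re-prove nuclearity or the Fredholm determinant identity by estimating the parabolic series by hand; establishing the strict transfer operator approach means exhibiting an explicit structure tuple and verifying Properties~\ref{staPROP1}--\ref{staPROP5} for it (Theorem~\ref{mainthm}), after which the analytic consequences are quoted from Theorem~\ref{mainthm:F_P_NECM}. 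The bulk of the work, which your sketch compresses into ``faithfulness bookkeeping,'' is the group-theoretic verification that every hyperbolic conjugacy class is represented in exactly one $\Per_n$ and there with exactly $p(g)$ representatives, together with the construction of the complex neighborhoods demanded by Property~\ref{staPROP5}; none of this follows from uniform expansion alone.
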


Examples of sets of branches exist already for a large class of non-uniform, geometrically finite Fuchsian groups or, equivalently, a large class of non-compact, geometrically finite hyperbolic orbisurfaces (including large classes of hyperbolic orbisurfaces of infinite area). See, e.g.,~\cite{Pohl_Symdyn2d, Pohl_gamma, Pohl_mcf_general, Wabnitz}. Also some Koebe--Markov codings give rise to sets of branches as, e.g., in the case of Schottky groups. See, e.g.,~\cite{Patterson_Perry, Borthwick_book, Bowen_Series, Katok_beyond}. We expect that forthcoming articles will extend the realm of explicit sets of branches to \emph{all} non-compact, geometrically finite hyperbolic orbisurfaces. We further expect that several other examples of sets of branches can be developed. Indeed, one strength of our approach is an axiomatization of properties of cross sections that are sufficient for our constructions. We emphasize that our constructions are uniform for all sets of branches. 

In a nutshell, as already indicated above, strict transfer operator approaches yield one-parameter families of well-structured (fast) transfer operators which are nuclear of order zero on certain Banach spaces (and other suitable spaces), can accommodate any finite-dimensional twist with non-expanding cusp monodromy and---most importantly---who represent the Selberg zeta function via its Fredholm determinants:
\begin{equation}\label{eq:szf_fredholm_fast}
Z_{\Orbi,\chi}(s) = \det\left(1-\TO {s,\chi}^{\Orbi,\fast}\right)\,.
\end{equation}
See also~\eqref{eq:szf_fredholm_twist_intro}. Here, $\Gamma$ is the considered Fuchsian group, $\Orbi=\quod{\Gamma}{\H}$ the associated hyperbolic orbisurface, $\chi\colon \Gamma\to \GL(V)$ the considered finite-dimensional representation of~$\Gamma$ with non-expanding cusp monodromy, and $(\TO {s,\chi}^{\Orbi,\fast})_{s\in\C}$ the provided family of fast transfer operators. 
The identity~\eqref{eq:szf_fredholm_fast} immediately yields a proof of the meromorphic continuation of the Selberg zeta function. See~\cite{FP_NECM}. Thus, an immediate consequence of Theorem~\ref{thmA} is the following statement.

\begin{coroll}
Let $\Gamma$ be a geometrically finite Fuchsian group with at least one hyperbolic element that admits a set of branches, and let $\chi$ be any finite-dimensional twist of~$\Gamma$ with non-expanding cusp monodromy. Then the Selberg zeta function associated to~$(\Gamma,\chi)$ has a meromorphic continuation to all of~$\C$ and is represented by the Fredholm determinant of a family of transfer operators.
\end{coroll}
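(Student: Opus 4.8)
\subsection*{Proof proposal}

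The plan is to obtain the corollary directly from Theorem~\ref{thmA} together with the structural properties of strict transfer operator approaches recalled in Sections~\ref{SUBSEC:stricttrans} and~\ref{SUBSEC:fasttrans}. First, since $\Gamma$ admits a set of branches, Theorem~\ref{thmA} produces an explicit strict transfer operator approach for~$\Gamma$, that is, a uniformly expanding discrete dynamical system on a family of real intervals whose orbits are in faithful correspondence with a large set of geodesics on $\Orbi=\quod{\Gamma}{\H}$ (including all periodic ones) and which is piecewise given by the linear fractional action of a generating set of~$\Gamma$. From such a system one reads off, for the given finite-dimensional twist $\chi\colon\Gamma\to\GL(V)$ with non-expanding cusp monodromy, the associated one-parameter family of fast transfer operators $(\TO{s,\chi}^{\Orbi,\fast})_{s}$.

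The second step is to record that this family has the two properties we need. On the one hand, because the underlying dynamical system is uniformly expanding and the weights arising from the cuspidal acceleration are kept under control precisely by the non-expanding cusp monodromy of~$\chi$, the operators $\TO{s,\chi}^{\Orbi,\fast}$ act on suitable Banach spaces as nuclear operators of order zero and depend on~$s$ holomorphically away from a discrete set of poles (originating from the closed-form summation of the windings in the cusps); consequently $s\mapsto\det\!\bigl(1-\TO{s,\chi}^{\Orbi,\fast}\bigr)$ is a well-defined meromorphic function on all of~$\C$. On the other hand, the faithfulness of the coding for periodic geodesics, together with the standard Ruelle-type identification of $\operatorname{tr}\bigl((\TO{s,\chi}^{\Orbi,\fast})^{n}\bigr)$ with a sum over length-$n$ periodic orbits weighted by $\chi$ evaluated along the orbit and by the geodesic multipliers, yields the Fredholm determinant identity~\eqref{eq:szf_fredholm_fast}, i.e. $Z_{\Orbi,\chi}(s)=\det\!\bigl(1-\TO{s,\chi}^{\Orbi,\fast}\bigr)$, first on the half-plane $\Rea s>\delta$ where the Euler product~\eqref{eq:szf_twist_intro} converges. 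Combining the two properties, the right-hand side furnishes the meromorphic continuation of $Z_{\Orbi,\chi}$ to all of~$\C$ and simultaneously exhibits it as the Fredholm determinant of a transfer operator family, which is the assertion; this last implication is exactly the mechanism isolated in~\cite{FP_NECM}.

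The main obstacle is the verification of~\eqref{eq:szf_fredholm_fast} together with nuclearity --- everything else is bookkeeping. Concretely, one must (i) exhibit Banach spaces on which the fast transfer operators are nuclear of order zero, uniformly on compact $s$-regions, where the non-expanding cusp monodromy hypothesis on~$\chi$ is what prevents the cusp contributions from destroying nuclearity and convergence; (ii) check that the coding is \emph{exact} on periodic geodesics, with matching multiplicities, so that no spurious or missing orbits enter the trace; and (iii) carry out the trace-of-powers computation while tracking the cocycle $\chi(h_{\widehat\gamma})$ along each periodic orbit. In the present paper this entire chain is supplied by the construction in Sections~\ref{SEC:branchred}--\ref{SEC:strictTOAexist} and the well-structuredness established in Section~\ref{SUBSEC:fasttrans}; granting those, the corollary is immediate.
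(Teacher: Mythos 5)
Your argument is correct and is essentially the paper's own route: apply Theorem~\ref{thmA} to obtain a strict transfer operator approach and then invoke the result of~\cite{FP_NECM} (recalled as Theorem~\ref{mainthm:F_P_NECM}), which supplies nuclearity of order zero, the identity $Z_{\Orbi,\chi}(s)=\det\bigl(1-\TO{s,\chi}\bigr)$ for $\Rea s\gg1$, and the meromorphic continuation of both sides. The verification steps (i)--(iii) you list need not be redone here --- they are precisely the content of the cited theorem once a strict transfer operator approach is available --- so the corollary is immediate, as in the paper.
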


Cross sections and discretizations for the geodesic flow on~$\Orbi=\quod{\Gamma}{\H}$ for which the associated transfer operator family yields the identity in~\eqref{eq:szf_fredholm_fast} do not need to be very particular. Their most essential properties are uniform expansiveness and a bijection between periodic geodesics of~$\Orbi$ and periodic orbits in the discretization. Therefore, the transfer operator families typically do not exhibit any particular properties. 

However, the discretizations underlying the transfer operators in our constructions derive from sets of branches and hence from much more special cross sections. This also features as special properties of the operators. In particular, each \emph{fast} transfer operator~$\TO {s,\chi}^{\Orbi,\fast}$ is accompanied by a \emph{slow} transfer operator~$\TO {s,\chi}^{\Orbi,\slow}$, which is of a different structure and has different functional analytic properties. We refer to Sections~\ref{SUBSEC:slowtrans}--\ref{SUBSEC:fasttrans} for detailed information. 

For several choices of the Fuchsian group~$\Gamma$ and the representation~$\chi$, it is known that certain classes of eigenfunctions with eigenvalue~$1$ of the slow transfer operator~$\TO {s,\chi}^{\Orbi,\fast}$ are in isomorphism with eigenfunctions with eigenvalue~$s(1-s)$ of the Laplacian~$\Delta$ of~$\Orbi$, by a proof \emph{not} relying on the Selberg zeta function or Selberg trace formula. See~\cite{Lewis_Zagier_survey, Lewis_Zagier, Bruggeman97, Chang_Mayer_eigen, Bruggeman_Muehlenbruch, Deitmar_Hilgert} and \cite{Bruggeman_Lewis_Zagier, Bruggeman_Pohl, Pohl_oddeven, Pohl_gamma, Pohl_mcf_general}. A rather easily accessible survey is provided in~\cite{Pohl_Zagier}. In some of these cases, it could also be shown that eigenfunctions with eigenvalue~$1$ of the slow transfer operator~$\TO {s,\chi}^{\Orbi,\slow}$ are isomorphic to eigenfunctions with eigenvalue~$1$ of the fast transfer operator~$\TO {s,\chi}^{\Orbi,\fast}$. See~\cite{Chang_Mayer_eigen, Chang_Mayer_extension, Moeller_Pohl, Adam_Pohl_iso_hecke, Bruggeman_Pohl}. Since the eigenfunctions with eigenvalue~$1$ of the fast transfer operator determine the zeros of the Selberg zeta function by~\eqref{eq:szf_fredholm_fast}, the full relation provide a spectral interpretation of (some parts of) the zeros of the Selberg zeta function that is alternative to proofs using the Selberg trace formula.

The isomorphism between eigenfunctions of fast and slow transfer operators is currently established for Hecke triangle groups only, due to the previously limited availability of explicit fast and slow transfer operators in the literature. However, this proof takes advantage of a certain structural relation between fast and slow transfer operators for Hecke triangle groups. The pairs of fast and slow transfer operators that we provide here for a larger class of Fuchsian groups enjoy a similar structural relation. Therefore we expect that our article will help to complete the realm of hyperbolic orbisurfaces for this line of research. 

We now provide a brief survey of our construction. A set of branches for the geodesic flow on~$\Orbi = \quod{\Gamma}{\H}$ is a subset of~$S\H$ with essentially the following properties: Its base set in~$\H$ is a finite number of complete geodesics. Clearly, each such geodesic splits~$\H$ into two (open) totally geodesic half-spaces (and the geodesic itself). For each such geodesic, the set of branches contains the unit tangent vectors that are based on the geodesic and point into one of the half-spaces. The geodesics and half-spaces are chosen in such a way that the set of branches descends to a cross section for the geodesic flow on~$\Orbi$. Further, tracking the intersection sequence of a generic geodesic~$\gamma$ on~$\H$ with the set of branches and its $\Gamma$-translates encodes the geodesic by a bi-infinite sequence~$(g_n)_{n\in\Z}$ of elements in~$\Gamma$. In addition, this sequence can be fully determined from the two terminal endpoints of the geodesic~$\gamma$ \emph{combined} with the knowledge of which complete geodesic in the base set of the set of branches is intersected by~$\gamma$ (the ``intersection at time zero''). In particular, the ``positive part'' of the sequence, i.e.~$(g_n)_{n\in\N}$, can be determined from the endpoint of~$\gamma$ in positive time direction, i.e.~$\gamma(+\infty)$, and the intersection at time zero. 

In this way, any set of branches induces a discrete dynamical system which is defined as the action by linear fractional transformations of certain, finitely many elements of~$\Gamma$ on a disjointified union of certain (open connected) subsets of~$P^1(\R)$. It is this discrete dynamical system, which gives rise to well-structured slow transfer operators. (See the discussion above for the significance of slow transfer operators.) E.g., for $\Gamma=\PSL_2(\Z)$, a certain choice of set of branches yields the classical Farey algorithm, also known as the slow continued fraction algorithm. Also for any other generic Fuchsian groups, any set of branches yields a slow continued fraction-type algorithm and hence a discrete dynamical system which typically is \emph{not uniformly expanding}. 

Our task is to modify this discrete dynamical system into one which is \emph{uniformly expanding} without loosing any of its dynamical content. We will  achieve this by modifying the set of branches, and hence the choice of cross section for the geodesic flow on~$\Orbi$. These modifications will be done in a number of steps, each of which serves a different purpose. We emphasize that all steps are algorithmic and that the final object typically is not a set of branches anymore, but still recovers the dynamical essence of the geodesic flow. 

In a first step, we perform a \emph{branch reduction} algorithm. This step is not strictly necessary. However, it reduces the complexity by eliminating branches from the set of branches that do not carry unique information. Moreover, it yields additional slow discretizations and slow discrete dynamical systems associated to the geodesic flow on~$\Orbi$, which are sometimes useful for other purposes. 

For this algorithm, the notion of a successor of a branch (from the set of branches) is essential: for any given branch, say~$\Cs{i}$, any $\Gamma$-translate of a branch, say $g\act\Cs{j}$, is called a direct successor of~$\Cs{i}$ if there exists a unit tangent vector~$\nu\in\Cs{j}$ such that the geodesic~$\gamma_\nu$ determined by~$\nu$ intersects $g\act\Cs{j}$ before intersecting any other $\Gamma$-translate of a branch (in positive time direction). The branch reduction algorithm is separated into two parts. 

In the first part of this algorithm we iteratively eliminate branches that have only a single direct successor, in case that this successor is not a $\Gamma$-translate of the considered branch. In the second part, we iteratively eliminate branches that do not have any $\Gamma$-translate of itself among its direct successors. The resulting object will still be a set of branches. In terms of the initial coding sequence~$(g_n)_n$ of a geodesic, the branch reduction algorithm means that successive elements are combined to a single one if their product variant does not carry any information. 

In the second step, we perform an \emph{identity elimination} algorithm. In contrast to the first step, the second step is strictly necessary. Here, we remove any \emph{inner identities} in iterated successors, i.e., any occurrences  of unit tangent vectors $\nu$ in the set of branches such that the geodesic~$\gamma_\nu$ determined by~$\nu$ intersects again the set of branches, not only a  nontrivial $\Gamma$-translate of a branch. In other words, for the coding sequence~$(g_n)_n$ of a geodesic, initially it might happen that some products of  successive elements combine to the identity element. The identity elimination algorithm eliminates unit tangent vectors from the set of branches in such a way that for the coding sequences arising from the new cross section such phenomenon does not appear. 

The need of identity elimination is explained as follows: the transformations appearing in the coding sequences from a set of generators for~$\Gamma$ (as a group). For a strict transfer operator approach it is required that periodic geodesics have unique coding sequences in terms of the chosen set of generators.  In other words, certain hyperbolic elements of~$\Gamma$ must have unique presentations as product of the elements in this set of generators. We emphasize that the discrete dynamical systems arising from these two steps are typically non-uniformly expanding.

The transition to a uniformly expanding discrete dynamical system is achieved in the third step, the \emph{cuspidal acceleration}. In the situations considered here, non-uniformity in the expansion is always caused by the presence of cusps combined with the property of the \emph{slow} discretizations to encode windings of geodesics around cusps in a \emph{slow} way. That is, each single winding is encoded separately. In a nutshell, the cuspidal acceleration algorithm modifies this discretization and the underlying cross section in such a way that all successive windings are encoded simultaneously. The precise algorithm is rather involved, more precisely, the proof that the arising set of vectors is indeed a cross section with all claimed features is rather involved. Therefore this third step constitutes the main bulk of this article. 

We now give an overview of the structure of this article. In Section~\ref{SEC:prelimi} we survey the elements of hyperbolic geometry that we will need throughout. In Section~\ref{SEC:crosssec} we present the concept of sets of branches, which are the starting objects for our constructions. In addition we discuss their essential properties as well as their relation to cross sections for the geodesic flow, coding sequences of geodesics, associated discrete dynamical systems and slow transfer operators. Furthermore, we recapitulate the concept of strict transfer operator approaches and their relation to fast transfer operators and Selberg zeta functions. In Section~\ref{SEC:branchred} we initiate our constructions with the branch reduction algorithm. In Section~\ref{SEC:stepredux} we present the identity elimination algorithm. The main bulk of the construction, the cuspidal acceleration algorithm, is discussed in Section~\ref{SEC:cuspacc}. In Sections~\ref{SEC:structacc}--\ref{SEC:strictTOAexist} we prove that the arising discrete dynamical systems indeed provide a strict transfer operator approach. Throughout, all constructions are illustrated with small examples. For the convenience of the reader, we provide a detailed long example in the final Section~\ref{SEC:longexample}. In addition, the appendix contains an extensive index of terminology as well as a list of notations.

\addtocontents{toc}{\SkipTocEntry}
\subsection*{Acknowledgement} 
This research was funded by the Deutsche Forschungsgemeinschaft (DFG, German 
Research Foundation) -- project no.~264148330 and PO~1483/2-1. Both authors 
wish to thank the Hausdorff Institute for Mathematics in Bonn for excellent working 
conditions during the HIM trimester program ``Dynamics: Topology and Numbers,'' 
where part of this manuscript was prepared.

\section{Elements of hyperbolic geometry}\label{SEC:prelimi}

In this section we briefly present the background material on hyperbolic orbisurfaces and their geodesic flows that is necessary for our investigations. 
Comprehensive treatises, including proofs for all statements that we leave unproven, can be found in the many excellent textbooks on hyperbolic geometry.
We refer in particular to~\cite{Anderson, Beardon, Ratcliffe, Stahl, Katok_fuchsian, Lehner, Iversen}. 

We will use throughout standard notations such as $\N$, 
\index[symbols]{Na@$\N$}%
$\N_0$ 
\index[symbols]{Naa@$\N_0$}%
and~$\Z$ 
\index[symbols]{Z@$\Z$}%
for the set of positive numbers, non-negative numbers and all integers, respectively.
We use~$\R$
\index[symbols]{R@$\R$}%
and~$\C$
\index[symbols]{C@$\C$}%
for the set of real and complex numbers, respectively, both equipped with the Euclidean topology.
We write~$\SL_2(\R)$ 
\index[symbols]{SL@$\SL_2(\R)$}%
for the group of $2\times2$-matrices of determinant~$1$ with real entries, and equip~$\SL_2(\R)$ with the subspace topology of~$\R^4$.
We denote closed, semi-closed/half-open and open intervals
\index[symbols]{intera@$[\cdot,\cdot]$}%
\index[symbols]{interb@$(\cdot,\cdot]$}%
\index[symbols]{interc@$[\cdot,\cdot)$}%
\index[symbols]{interd@$(\cdot,\cdot)$}%
\index[defs]{interval}%
in~$\R$ by $[a,b]$, $(a,b]$, $[a,b)$ and~$(a,b)$ for any $a,b\in\R$, respectively,
or, if applicable also for $a,b\in\R\cup\{\pm\infty\}$.
In case that we need to stress that intervals are real, we endow them with the subscript~$\R$, e.g., $(a,b)_\R$ (cf.~\ref{SUBSEC:geodplane}).
\index[symbols]{interea@$[\cdot,\cdot]_\R$}%
\index[symbols]{intereb@$(\cdot,\cdot]_\R$}%
\index[symbols]{interec@$[\cdot,\cdot)_\R$}%
\index[symbols]{intered@$(\cdot,\cdot)_\R$}%
For any $z\in\C$ we denote its real part~$\Rea z$ and its imaginary part~$\Ima z$.
\index[symbols]{Re@$\Rea$}%
\index[symbols]{Im@$\Ima$}%

\subsection{The hyperbolic plane}\label{SUBSEC:hypgeo}

As model for the hyperbolic plane we will use the \emph{upper half-plane} 
\[
\H \coloneqq \defset{z\in\C}{\Ima z>0},
\]
\index[symbols]{H@$\H$}\index[defs]{upper half-plane}%
\index[defs]{hyperbolic plane}%
endowed with the hyperbolic metric given by the line element 
\[
\dif s^2_z \coloneqq (\Ima z)^{-2}\dif z\dif\overline{z}
\]
\index[symbols]{ds@$\dif s^2$}%
at any $z\in\H$.
The \emph{geodesic boundary}~$\partial_{\geo}\H$ of~$\H$
\index[symbols]{Hgeo@$\partial_\geo\H$}%
\index[defs]{geodesic boundary}%
can and shall be identified, in the obvious way, with the Alexandroff extension (one-point compactification)
\[
\widehat{\R}\coloneqq\R\cup\left\{\infty\right\}
\]
\index[symbols]{Ra@$\widehat\R$}%
\index[symbols]{10@$\widehat{\cdot}$}%
\index[defs]{one-point compactification}%
\index[defs]{Alexandroff compactification}%
of the real line~$\R$.
Likewise, we understand the \emph{geodesic closure}
\[
 \overline\H^{\geo}\coloneqq\H\cup\partial_{\geo}\H
\]
\index[symbols]{Hclosgeo@$\overline{\H}^\geo$}%
\index[defs]{geodesic closure}%
of~$\H$ as a subset of the Alexandroff compactification 
\[
 \widehat{\C} \coloneqq \C\cup\left\{\infty\right\}
\]
\index[symbols]{Ca@$\widehat\C$}%
\index[defs]{one-point compactification}%
\index[defs]{Alexandroff compactification}%
of~$\C$.
In the topology of~$\widehat\C$, the geodesic boundary~$\partial_\geo\H$ of~$\H$ is indeed the topological boundary of~$\h$, and the geodesic closure~$\overline\H^\geo$ is the topological closure of~$\h$. 
The topology of~$\overline\H^\geo$ can also be characterized intrinsically, 
most conveniently by taking advantage of the Riemannian isometries of~$\h$.
We will recall this characterization in the next subsection, but will not make use of 
it here.

For a subset~$K$ of~$\H$, the closure of~$K$ in the topology of~$\H$ may differ 
from its closure in the topology of~$\overline\H^\geo$.
We will write $\overline{K}$
\index[symbols]{100@$\overline{\;\cdot\;}$}%
\index[defs]{closure}%
for its closure in~$\H$, and $\overline{K}^{\geo}$
\index[symbols]{101@$\overline{\;\cdot\;}^\geo$}%
\index[defs]{geodesic closure}%
for its closure in~$\overline\H^\geo$.
Further, we will write $\partial K$
\index[symbols]{105@$\partial$}%
\index[defs]{boundary}%
for the boundary of~$K$ in~$\H$, and $\partial_{\geo}K$
\index[symbols]{106@$\partial_\geo$}%
\index[defs]{boundary}%
for its boundary in~$\overline{\H}^{\geo}$.
The \emph{geodesic boundary of~$K$}, that is the part of~$\partial_{\geo}K$ which is contained in~$\partial_{\geo}\H$, will be denoted by~$\gbound{K}$.
\index[symbols]{107@$\gbound$}%
\index[defs]{geodesic boundary}%
E.g., for 
\[
 K \coloneqq \defset{ z \in\H }{ 0<\Rea z < 1 }
\]
we have 
\begin{align*}
 \overline K & = \defset{ z\in\H }{ 0\leq \Rea z \leq 1 }\,,
 \\
 \overline K^\geo & = \defset{ z\in\C }{ 0\leq \Rea z\leq 1,\ \Ima z\geq 0 } 
\cup 
\{\infty\}
 \\
 & = \overline K \cup [0,1] \cup\{\infty\}\,,
 \intertext{and}
 \gbound K & = [0,1] \cup\{\infty\}\,.
\end{align*}
The sets of inner points of~$K$ in the two topologies coincide and are 
therefore denoted with the same symbol,~$K^{\circ}$.
\index[symbols]{107@$(\;\cdot\;)^\circ$}%
\index[defs]{inner points}%

We will require the following extension of the notion of intervals in~$\R$ to 
\emph{intervals in~$\widehat\R$}.
\index[defs]{interval}%
For any $a,b\in\R$, $a\not=b$, we let 
\[
 (a,b)_c \coloneqq 
 \begin{cases}
  (a,b) & \text{if $a<b$}
  \\
  (a,+\infty) \cup \{\infty\} \cup (-\infty, b) & \text{if $a>b$}.
 \end{cases}
\]
be the \emph{open interval} in~$\widehat\R$ from~$a$ to~$b$.
For $a=\infty \in\widehat\R$ and $b\in\R$ we set 
\[
 (a,b)_c = (\infty, b)_c = (-\infty, b)\,,
\]
and analogously we define $(b,a)_c$.
\index[symbols]{interaaw@$[\cdot,\cdot]_c$}%
\index[symbols]{interbaw@$(\cdot,\cdot]_c$}%
\index[symbols]{intercaw@$[\cdot,\cdot)_c$}%
\index[symbols]{interdaw@$(\cdot,\cdot)_c$}%
We define semi-open and closed intervals in~$\widehat\R$ in the obvious, analogous way.
The subscript~$c$ refers to the cyclic order of~$\widehat\R$ that is used implicitly in this definition.
We remark that singletons in~$\widehat\R$ and the empty set cannot be defined consistently 
within this notation.
Occasionally we will call the boundary points of intervals \emph{endpoints}.
For subsets~$M$ of~$\wh\R$ we write~$M^\circ$ for the interior and~$\overline{M}$ for the closure of~$M$ in~$\wh\R$.
\index[symbols]{107@$(\;\cdot\;)^\circ$}%
\index[defs]{inner points}%
\index[symbols]{100@$\overline{\;\cdot\;}$}%
\index[defs]{closure}%

In order to distinguish between the point~$\infty$ in~$\wh\R$ and the two infinite endpoints of~$\R$ with its standard order, we will write $\pm\infty$ whenever we refer to the latter ones and use the extended standard order of 
$\R\cup\{\pm\infty\}$ (i.e., $-\infty < r < +\infty$ for all $r\in\R$).
The unsigned symbol~$\infty$ will always refer to the point in~$\wh\R$.
As usual, we consider $\R$ to be embedded into~$\wh\R$.
In particular, we have 
\[
 \wh\R = (-\infty, +\infty) \cup \{\infty\}\,.
\]

\subsection{Riemannian isometries}\label{SUBSEC:riemisom}

The \emph{group of orientation-preserving Riemannian isometries} of~$\H$ is known to be isomorphic to
\begin{equation}\label{eq:def_PSLR}
\PSLR \coloneqq \SL_2(\R) \slash \left\{\pm\id\right\},
\end{equation}
\index[symbols]{PSL@$\PSLR$}%
considered as acting on~$\H$ from the left by linear fractional transformations (M\"obius transformations).
We note that each element~$g\in\PSLR$ has exactly two representatives in~$\SL_2(\R)$.
If $\textmat{a}{b}{c}{d}\in\SL_2(\R)$ is a representative of~$g\in\PSLR$, then the second representative of~$g$ is 
$\textmat{-a}{-b}{-c}{-d}$.
In this case, we shall write $g=\textbmat{a}{b}{c}{d}$ (with square brackets). 
\index[symbols]{801@$\textbmat{a}{b}{c}{d}$}%
\index[symbols]{800@$\textmat{a}{b}{c}{d}$}%
With respect to this notation and identification, the action of~$\PSLR$ on~$\H$ is given by 
\[
 \bmat{a}{b}{c}{d}\act z \coloneqq \frac{az+b}{cz+d}
\]
\index[defs]{action!on~$\H$}%
\index[symbols]{810@$g\act z$}%
for any $g=\textbmat{a}{b}{c}{d}\in\PSLR$ and $z\in\H$.
The action of~$\PSLR$ on~$\H$ is conformal, transitive and faithful.

The action of~$\PSLR$ on~$\H$ extends smoothly to an action of~$\PSLR$ 
on~$\overline\H^\geo$.
For any $g = \textbmat{a}{b}{c}{d}\in\PSLR$ and $z \in \overline\H^{\geo}$ we have
\begin{equation}\label{eq:def_act_PSLR}
g\act z \coloneqq 
\begin{dcases} \frac{a}{c} & \text{if } z = \infty\,, 
\\ \frac{az+b}{cz+d} & \text{otherwise}\,,
\end{dcases}
\end{equation}
\index[defs]{action!on~$\overline\H^{\geo}$}%
\index[symbols]{810@$g\act z$}%
where $g\act z \coloneqq \infty$ whenever the denominator on the right hand side of~\eqref{eq:def_act_PSLR} vanishes.
As for~$\H$, the action of~$\PSLR$ restricted to~$\partial_\geo\H$ is transitive and faithful.
Throughout we will identify the elements in~$\PSLR$ with their action on~$\overline\H^\geo$.

Each element in~$\PSLR$ has at least one fixed point in~$\overline{\H}^{\geo}$.
The identity element
\[
\id = \bmat{1}{0}{0}{1}\in\PSLR
\]
\index[symbols]{id@$\id$}%
is the unique linear fractional transformation that fixes (at least) two points 
in~$\H$.
It is also the unique element with three fixed points in~$\overline{\H}^{\geo}$.
Any other element has at most one fixed point in~$\H$ and at most two fixed points in the geodesic closure~$\overline\H^\geo$. 
Let $g\in\PSLR$, $g\not=\id$.
If $g$ fixes exactly one point in~$\overline\H^\geo$, say~$c$, then $g$ is called \emph{elliptic} 
\index[defs]{elliptic element}%
\index[defs]{element!elliptic}%
if $c\in\H$, and \emph{parabolic}
\index[defs]{parabolic element}%
\index[defs]{element!parabolic}%
if $c\in\partial_{\geo}\H$. If $g$ fixes two points in~$\overline\H^\geo$, then both fixed points are contained in~$\partial_\geo\H$ and $g$ is called \emph{hyperbolic}.
\index[defs]{hyperbolic element}%
\index[defs]{element!hyperbolic}%
The classification of~$g$ being elliptic, parabolic or hyperbolic can be read off the value of the \emph{unsigned trace of~$g$}. To that end we denote with~$\abs{\tr{g}}$ 
\index[symbols]{trace@$\tr{g}$}%
\index[defs]{trace}%
the absolute value of the trace of any matrix in~$\SL_2(\R)$ that represents~$g$. We note that while the trace may 
depend on the choice of this representative, its absolute value does not. 
One easily checks that the element~$g$ is elliptic if and only if~$\abs{\tr{g}}<2$. It is parabolic if and only if~$\abs{\tr{g}}=2$ (recall that $g\not=\id$). And it is hyperbolic if and only if~$\abs{\tr{g}}>2$.

The fixed points of any hyperbolic element~$g$ can be characterized as limit points of the iterated action of~$g$ or~$g^{-1}$ on~$\H$. With a little caution, this characterization extends to~$\overline\H^\geo$. 

\begin{lemdef}\label{LEM:hypfixedconv}
Let $g\in\PSLR$ be hyperbolic. 
\begin{enumerate}[label=$\mathrm{(\roman*)}$, ref=$\mathrm{\roman*}$]
\item For any $z\in\H$, the limit of $(g^n\act z)_{n\in\N}$ in~$\overline\H^\geo$ exists and is independent of the choice of~$z$. We call this limit point the \emph{attracting fixed point} or \emph{attractor of~$g$} and denote it by~$\fixp+g$.
\index[defs]{attracting fixed point}%
\index[defs]{attractor}%
\index[defs]{fixed point!attracting}%
\index[symbols]{fp@$\fixp+{g}$}%
\item For any $z\in\H$, the limit of $(g^{-n}\act z)_{n\in\N}$ in~$\overline\H^\geo$ exists and is independent of the choice of~$z$. We call this limit point the \emph{repelling fixed point} or the \emph{repeller of~$g$} and denote it by~$\fixp-g$.
\index[defs]{repelling fixed point}%
\index[defs]{repeller}%
\index[defs]{fixed point!repelling}%
\index[symbols]{fp@$\fixp-{g}$}%
\item For all $z\in\overline\H^\geo$ we have 
\begin{align*}
\lim_{n\to+\infty}g^n\act z & =
\begin{cases}
\fixp{-}{g} & \text{if $z=\fixp{-}{g}$}\,,
\\
\fixp{+}{g} & \text{otherwise},
\end{cases}
\intertext{and}
\lim_{n\to-\infty}g^n\act z & =
\begin{cases}
\fixp{+}{g} & \text{if $z=\fixp{+}{g}$}\,,
\\
\fixp{-}{g} & \text{otherwise}.
\end{cases}
\end{align*}
\end{enumerate}
\end{lemdef}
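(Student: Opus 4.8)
The plan is to reduce to a normal form by conjugation and then to compute directly. By the classification recalled above, a hyperbolic~$g$ has exactly two fixed points in~$\overline{\H}^{\geo}$, both lying on~$\partial_\geo\H$; denote them $\xi_1\neq\xi_2$ (alternatively, solving $g\act z=z$ for $g=\textbmat{a}{b}{c}{d}$ leads, when $c\neq 0$, to the quadratic $cz^2+(d-a)z-b=0$ with discriminant $\tr{g}^2-4>0$, hence to two distinct real roots, and to the roots~$\infty$ and a real number when $c=0$). Since $\PSLR$ acts transitively on ordered pairs of distinct points of~$\partial_\geo\H$ (equivalently, any hyperbolic element is $\PSLR$-conjugate to a dilation), we may pick $a\in\PSLR$ with $a\act 0=\xi_1$ and $a\act\infty=\xi_2$; then $h\coloneqq a^{-1}ga$ fixes~$0$ and~$\infty$, which forces $h\act z=\mu z$ for some $\mu>0$ (in matrix form $h=\textbmat{\sqrt{\mu}}{0}{0}{1/\sqrt{\mu}}$). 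As $g\neq\id$ we have $\mu\neq 1$, and after possibly interchanging $\xi_1$ and $\xi_2$ (replace $a$ by $a\cdot\textbmat{0}{-1}{1}{0}$) we may assume $\mu>1$.

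Next I would establish (i)--(iii) for the model element~$h$. For $z=x+\i y\in\H$ we have $h^n\act z=\mu^n x+\i\mu^n y$, with $\abs{h^n\act z}=\mu^n\abs{z}\to\infty$, so $(h^n\act z)_{n\in\N}$ converges in~$\overline{\H}^{\geo}$ to~$\infty$, and this limit is plainly independent of~$z$; likewise $h^{-n}\act z=\mu^{-n}z\to 0$. On $\partial_\geo\H=\wh\R$ the points~$0$ and~$\infty$ are fixed by every power of~$h$, while for $z\in\R\setminus\{0\}$ we have $h^n\act z=\mu^n z$ with $\abs{\mu^n z}\to\infty$, hence $h^n\act z\to\infty$ in~$\wh\R$ (the relevant point being that $\wh\R$ is a one-point compactification, so this holds regardless of the sign of~$z$), and symmetrically $h^{-n}\act z\to 0$ for $z\neq\infty$. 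This yields (i)--(iii) for~$h$, with $\fixp+h=\infty$ and $\fixp-h=0$.

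Finally I would transfer everything to~$g$ by conjugation-equivariance. Since the $\PSLR$-action on~$\overline{\H}^{\geo}$ extends the one on~$\H$ smoothly (as recalled above), $a$ is a homeomorphism of~$\overline{\H}^{\geo}$ and hence commutes with limits. Writing $g^n\act z=a\act\bigl(h^n\act(a^{-1}\act z)\bigr)$ for $z\in\overline{\H}^{\geo}$ and letting $w\coloneqq a^{-1}\act z$ range over~$\overline{\H}^{\geo}$, the previous step gives $g^n\act z\to a\act\infty=\xi_2$ whenever $w\neq 0$, i.e.\ $z\neq\xi_1$, and $g^n\act z\to a\act 0=\xi_1$ when $z=\xi_1$. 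For $z\in\H$ this is exactly (i) with $\fixp+g=\xi_2$, the analogous computation with $h^{-n}$ gives (ii) with $\fixp-g=\xi_1$, and the two statements together give (iii). That $\fixp\pm g$ are fixed points of~$g$ then follows from continuity of~$g$ on~$\overline{\H}^{\geo}$, e.g.\ $g\act\fixp+g=g\act\lim_n g^n\act z=\lim_n g^{n+1}\act z=\fixp+g$; and $\fixp+g\neq\fixp-g$ since~$a$ is injective, so these are precisely the two fixed points of~$g$ from the classification. I do not expect a genuine obstacle: the only things needing care are the transitivity input producing~$a$ and the bookkeeping in the one-point-compactified topology on~$\partial_\geo\H$, where ``tending to~$\infty$'' means approaching a single point irrespective of sign.
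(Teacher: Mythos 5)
Your proof is correct and follows essentially the same route as the paper: conjugate the hyperbolic element to the standard dilation form (the paper's $\standh{\ell}$), verify the convergence statements directly for that model, and transfer back via conjugation-equivariance of the action on~$\overline\H^{\geo}$. The paper merely compresses these steps into a one-line remark, so your write-up is a faithful (and more detailed) version of the intended argument.
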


This characterization of the fixed points will be of utmost importance for our investigations. 
For the convenience of the reader we indicate a brief proof of Lemma~\ref{LEM:hypfixedconv}, albeit well-known.
To that end let $g\in\PSLR$ be hyperbolic. Then 
$g$ is conjugate within~$\PSLR$ to a unique element of the form 
\begin{align}\label{EQDEF:standh}
\standh{\ell}\coloneqq\begin{bmatrix}e^{\tfrac{\ell}{2}}&0\\0&e^{-\tfrac{\ell}{2
}}\end{bmatrix}
\end{align}
\index[symbols]{hl@$\standh{\ell}$}%
for some $\ell\in\R$, $\ell>0$. The number~$\ell$ is called the \emph{displacement length} of~$g$
\index[defs]{displacement length}%
and will be denoted by~$\ell(g)$.
\index[symbols]{l@$\ell$}%
\index[symbols]{l@$\ell(g)$}%
In Section~\ref{SUBSEC:geodsurfaces} we will recall its relation to periodic geodesics. The statement of Lemma~\ref{LEM:hypfixedconv} is easily verified for transformations of the form~\eqref{EQDEF:standh}, and it remains valid under conjugation in~$\PSLR$.

Taking advantage of the action of~$\PSLR$ on~$\overline\H^\geo$, we can conveniently provide an intrinsic characterization of the topology of~$\overline\H^\geo$. On the subset~$\H$ of~$\overline\H^\geo$ the topology is 
given by the Euclidean topology of~$\C$. A neighborhood basis at~$\infty$
\index[defs]{neighborhood of~$\infty$}%
\index[defs]{neighborhood!$\infty$}%
is given by the family 
\[
 \mathcal{U}_{\infty}\coloneqq\defset{U_\varepsilon}{\varepsilon>0}
\]
\index[symbols]{Ua@$\mathcal{U}_{\infty}$}%
consisting of the open sets
\[
U_\varepsilon\coloneqq\defset{z\in\H}{\Ima(z)>\varepsilon^{-1}}\cup\{\infty\}\,,
\quad \varepsilon>0\,.
\]
\index[symbols]{Ub@$U_\varepsilon$}%
Finally, since $\PSLR$ acts transitively on $\partial_{\geo}\H$, the images~$\PSLR\act\,\mathcal{U}_{\infty}$ of this neighborhood basis at~$\infty$ yield neighborhood bases at any point of~$\partial_{\geo}\H$, each one 
consisting of open sets.
\index[defs]{neighborhood of boundary point}%
\index[defs]{neighborhood!boundary point}%

\subsection{Geodesics on the hyperbolic plane}\label{SUBSEC:geodplane}

Throughout we will consider geodesics on~$\H$ as (certain) curves~$\R\to\H$ and require that they are parametrized by arc length (thus, \emph{unit speed geodesics}). 
\index[defs]{geodesic!on~$\H$}%
\index[defs]{geodesic!unit speed}%
\index[defs]{unit speed geodesic}%
We denote by~$\UTB\H$ the unit tangent bundle of~$\H$.
\index[defs]{unit tangent bundle!of~$\H$}%
\index[symbols]{SH@$\UTB\H$}%
Each unit tangent vector~$\nu\in\UTB\H$
\index[defs]{unit tangent vector}%
uniquely determines a geodesic on~$\H$, namely the (unit speed) geodesic~$\gamma_\nu$
\index[symbols]{geoa@$\gamma_\nu$}%
with derivative $\nu$ at time~$0$, thus
\begin{align}\label{EQDEF:geodeter}
\gamma_{\nu}^{\prime}(0)=\nu\,.
\end{align}
E.g., the geodesic determined by the unit tangent vector~$\partial_y\vert_i$ is
\begin{align}\label{EQDEF:geostandard}
 \gamma_s\coloneqq \gamma_{\partial_y\vert_i}\colon\R\to\H\,,\quad t\mapsto 
ie^t\,,
\end{align}
the \emph{standard geodesic} on~$\H$.
\index[defs]{geodesic!standard}%
\index[symbols]{geob@$\gamma_s$}%
Since the action of~$\PSLR$ on~$\H$ is by Riemannian isometries, it induces an action of~$\PSLR$ on~$\UTB\H$,
\index[defs]{action!on~$\UTB\H$}%
for which we use the same notation as for the action of~$\PSLR$ on~$\H$. The action of~$\PSLR$ on~$\UTB\H$ is simply transitive, and hence $\PSLR$ acts simply transitive on the set of all geodesics on~$\H$. Therefore any geodesic on~$\H$ is a (unique) $\PSLR$-translate of the standard geodesic~$\gamma_s$ from~\eqref{EQDEF:geostandard}. The \emph{(unit speed) geodesic flow} on~$\H$ is the dynamical system
\[
\Phi\colon\R\times\UTB\H \to \UTB\H\,,\quad 
(t,\nu)\mapsto\gamma^{\prime}_{\nu}(t)\,.
\]
\index[defs]{geodesic flow!on~$\H$}%
\index[defs]{unit speed geodesic flow}%
\index[symbols]{Pa@$\Phi$}%

From the characterization above of all geodesics we immediately obtain that the arcs (images) of the geodesics are generalized semicircles that are perpendicular to~$\partial_\geo\H$, and that each such generalized semicircle 
is the arc of a geodesic. We will require the following notion of \emph{hyperbolic intervals} or \emph{geodesic segments}.
\index[defs]{hyperbolic interval}%
\index[defs]{interval!hyperbolic}%
\index[defs]{geodesic segment}%
Let $\gamma\colon\R\to\H$ be a geodesic on~$\H$. We let 
\[
\gamma(\pm\infty)\coloneqq\lim_{r\to\pm\infty}\gamma(r)
\]
\index[symbols]{geof@$\gamma(\pm\infty)$}%
denote the \emph{endpoints} of~$\gamma$ in~$\partial_\geo\H$.
\index[defs]{endpoints of geodesic}%
For any~$r_1,r_2\in\R\cup\{\pm\infty\}$, $r_1<r_2$, we call 
\[
[z_1,z_2]\coloneqq\gamma([r_1,r_2])
\]
\index[symbols]{intera@$[\cdot,\cdot]$}%
the \emph{closed geodesic segment}\footnote{In contrast to geodesics, which are curves, geodesic segments are subsets of~$\overline\H^\geo$.} or \emph{closed hyperbolic interval} from $z_1=\gamma(r_1)$ to $z_2=\gamma(r_2)$.
\index[defs]{geodesic segment!closed}%
\index[defs]{interval!closed hyperbolic}%
Analogously, we define the \emph{open} or \emph{semi-open geodesic segments/hyperbolic intervals}~$(z_1,z_2)$, $[z_1,z_2)$, and~$(z_1,z_2]$.
\index[defs]{geodesic segment!open}%
\index[defs]{geodesic segment!semi-open}%
\index[defs]{interval!open hyperbolic}%
\index[defs]{interval!semi-open hyperbolic}%
\index[symbols]{interb@$(\cdot,\cdot]$}%
\index[symbols]{interc@$[\cdot,\cdot)$}%
\index[symbols]{interd@$(\cdot,\cdot)$}%
We note that for any given~$z_1,z_2\in\overline\H^\geo$, $z_1\not=z_2$, the definition of these hyperbolic intervals does not depend on the choice of the geodesic~$\gamma$ as long as $\gamma(r_1) = z_1$, $\gamma(r_2)=z_2$ for some $r_1<r_2$. We will write~$[z_1,z_2]_{\H}$ (and so on) whenever confusion with intervals in $\widehat{\R}$ might occur.
The latter ones are then denoted~$[z_1,z_2]_\R$ (and so on).
We call a  geodesic segment~$(z_1,z_2)$ \emph{complete} or \emph{geodesic arc} if $z_1,z_2\in\partial_\geo\H$, thus if $(z_1,z_2)=\gamma(\R)$ for some geodesic~$\gamma$ on~$\H$.
\index[defs]{geodesic segment!complete}%
\index[defs]{geodesic arc}%

\subsection{Fuchsian groups and hyperbolic orbisurfaces}\label{SUBSEC:fuchsgroups}

We endow~$\PSLR$ with the quotient topology (see~\eqref{eq:def_PSLR}), and call, as usual, discrete subgroups of~$\PSLR$ \emph{Fuchsian}.
\index[defs]{Fuchsian group}%
For any subgroup~$\Gamma$ of~$\PSLR$,
\index[symbols]{gamma@$\Gamma$}%
being Fuchsian is equivalent to any of the following (equivalent) properties (see, e.g., \cite[Section~2.2]{Katok_fuchsian}): 
\begin{enumerate}[label=$\mathrm{(\alph*)}$, ref=$\mathrm{\alph*}$]
 \item $\Gamma$ acts \emph{properly discontinuously} on~$\H$, that is, any compact subset of~$\H$ contains only finitely many points of each $\Gamma$-orbit.
\index[defs]{properly discontinuous}%
\index[defs]{action!properly discontinuous}%
 \item Each $\Gamma$-orbit is a discrete subset of~$\H$.
\end{enumerate}
From these characterizations it follows immediately that for any Fuchsian group~$\Gamma$ and any point~$z\in\H$, the order of the stabilizer subgroup 
\[
\Stab{\Gamma}{z} \coloneqq \defset{ g\in\Gamma }{ g\act z = z }
\]
of~$\Gamma$ is finite.
\index[defs]{stabilizer group}%
\index[symbols]{stab@$\Stab{\Gamma}{z}$}%

Let $\Gamma$ be a Fuchsian group, let 
\[
\Orbi\coloneqq\quod{\Gamma}{\H}
\]
\index[symbols]{X@$\Orbi$}%
be the orbit space of the action of~$\Gamma$ on~$\H$ and let 
\[
\pi\colon\H\to\Orbi
\]
\index[symbols]{pi@$\pi$}%
\index[defs]{canonical quotient map}%
denote the canonical quotient map. Since $\Gamma$ acts properly discontinuously, the space~$\Orbi$ naturally carries the structure of a good hyperbolic Riemannian orbifold, with a hyperbolic Riemannian metric inherited via the projection map~$\pi$. If~$\Gamma$ has torsion, then $\Orbi$ has conical singularities, and hence it is not a manifold but a genuine orbifold. 
We call any such orbit space~$\Orbi$ a \emph{hyperbolic orbisurface}. 
\index[defs]{hyperbolic orbisurface}%
\index[defs]{hyperbolic orbifold}%
\index[defs]{orbifold}%

\subsection{Geodesics on hyperbolic orbisurfaces}\label{SUBSEC:geodsurfaces}

Let $\Gamma$ be a Fuchsian group and let $\Orbi = \quod{\Gamma}{\H}$ be the associated hyperbolic orbisurface. The (unit speed) \emph{geodesics} on~$\Orbi$
\index[defs]{geodesic!on~$\Orbi$}%
are the images of the geodesics on~$\H$ under the canonical quotient map
\begin{equation}\label{eq:def_pi1}
\pi\colon\H\to\Orbi\,.
\end{equation}
Thus, if $\gamma$ is a geodesic on~$\H$, then 
\begin{equation}\label{eq:def_geodnot}
 \widehat{\gamma}\coloneqq\pi\circ\gamma \colon \R\to\Orbi\,,\quad t\mapsto 
\pi(\gamma(t))\,,
\end{equation}
is the induced geodesic on~$\Orbi$.
\index[symbols]{geob@$\widehat{\gamma}$}%
In this case, we say that~$\gamma$ is a \emph{representative} or a \emph{lift} of~$\widehat\gamma$ to~$\H$.
\index[defs]{representative!geodesic}%
\index[defs]{lift}%
We emphasize that all geodesics on~$\Orbi$ arise in this way.

As for~$\H$, geodesics on~$\Orbi$ are determined by any of its tangent vectors. 
To simplify the further exposition, we recall from the previous section that $\PSLR$, and hence~$\Gamma$, acts on~$\UTB\H$. We identify the unit tangent bundle~$\UTB\Orbi$ of~$\Orbi$ with the $\Gamma$-orbit space of~$\UTB\H$:
\begin{equation}\label{eq:def_USX}
\UTB\Orbi=\quod{\Gamma}{\UTB\H}\,.
\end{equation}
\index[defs]{unit tangent bundle!of~$\Orbi$}%
\index[symbols]{SX@$\UTB\Orbi$}%
We let 
\begin{equation}\label{eq:def_pi2}
 \pi\colon\UTB\H\to\UTB\Orbi
\end{equation}
\index[symbols]{pi@$\pi$}%
\index[defs]{canonical quotient map}%
denote the canonical quotient map, which is indeed the tangent map of the quotient map from~\eqref{eq:def_pi1}. The context will always clarify if~$\pi$ refers to the map in~\eqref{eq:def_pi2} or in~\eqref{eq:def_pi1}. If an object 
on~$\Orbi$ (or related to~$\Orbi$) is defined as the~$\pi$-image of a corresponding object of~$\H$, then we usually denote the object on~$\Orbi$ by the object on~$\H$ decorated with~$\widehat{\ }$. One example is the notation 
in~\eqref{eq:def_geodnot} for geodesics on~$\Orbi$. Analogously, we denote an element in~$\UTB\Orbi$ by $\widehat\nu$ if it is represented by $\nu\in\UTB\H$, thus
\[
 \widehat\nu = \pi(\nu)\,.
\]
\index[symbols]{10@$\widehat{\cdot}$}%
\index[symbols]{v@$\widehat\nu$}%
(However, we note that the~$\widehat{\ }$ on the set~$\R$ denotes its extension to the projective line and not a $\Gamma$-orbit.)

Each unit tangent vector $\widehat\nu\in\UTB\Orbi$ uniquely determines a geodesic on~$\Orbi$, namely the geodesic $\widehat\gamma_{\widehat\nu}$ with derivative~$\widehat\nu$ at time~$t=0$: 
\[
 \widehat\gamma^\prime_{\widehat\nu}(0) = \widehat\nu\,.
\]
\index[symbols]{geoc@$\widehat\gamma_{\widehat\nu}$}%
\index[symbols]{geod@$\widehat\gamma_\nu$}%
We emphasize that for any representative $\nu\in\UTB\H$ of $\widehat\nu$ we have
\[
 \widehat\gamma_{\widehat\nu} = \widehat\gamma_\nu = \pi(\gamma_\nu)\,,
\]
for which reason we will omit from the notation the $\widehat{\ }\ $ in the index. Also the \emph{geodesic flow} on~$\Orbi$, denoted~$\GeoFlow$,
\index[defs]{geodesic flow!on~$\Orbi$}%
\index[symbols]{Pb@$\GeoFlow$}%
is the $\pi$-image of the geodesic flow~$\Phi$ on~$\H$. Thus, 
\begin{equation}\label{eq:GeoFlow}
\GeoFlow\coloneqq\pi\circ\Phi\circ\left(\id\times\pi_0^{-1}
\right)\colon\R\times\UTB\Orbi\to\UTB\Orbi\,,\quad 
(t,\widehat\nu)\mapsto \widehat\gamma_\nu'(t)\,,
\end{equation}
where $\pi_0^{-1}$ is an arbitrary section of $\pi$. (It is straightforward to check that $\GeoFlow$ does not depend on the choice of the section~$\pi_0^{-1}$.)

Whereas the arcs of geodesics on~$\H$ are always generalized semicircles, the arcs of geodesics on~$\Orbi$ enjoy a greater variety of forms. Of particular interest for us are the periodic geodesics, which we discuss now. We say that a geodesic~$\widehat{\gamma}$ on~$\Orbi$ is \emph{periodic}
\index[defs]{geodesic!periodic}%
\index[defs]{periodic geodesic}%
if there exists~$\delta>0$ such that 
\[
\widehat{\gamma}(t+\delta)=\widehat{\gamma}(t)
\]
for all $t\in\R$. Any such~$\delta$ is called a \emph{period} for~$\widehat\gamma$. 
\index[defs]{period}%

Periodic geodesics are closely related to hyperbolic elements in~$\Gamma$ in a way that we recall now. Let $h\in\PSLR$ be a hyperbolic element. Then there exists a geodesic~$\gamma$ on~$\H$ whose endpoints are the fixed points of~$h$. 
More precisely, $\gamma$ satisfies
\begin{equation}\label{eq:geod_h}
 \gamma(+\infty) = \fixp{+}{h}\qquad\text{and}\qquad \gamma(-\infty) = 
\fixp{-}{h}\,.
\end{equation}
We note that the choice of~$\gamma$ is not unique. However, since linear fractional transformations map geodesics to geodesics, it immediately follows that any two geodesics satisfying~\eqref{eq:geod_h} have the same arc but may be  
time-shifted from each other. For that reason, we call any two geodesics~$\gamma_1$ and~$\gamma_2$ on~$\H$ \emph{equivalent} if they differ only by a parameter change,
\index[defs]{geodesic!equivalent}%
i.e., if there exists $t_0\in\R$ such that 
\[
\gamma_1(t)=\gamma_2(t+t_0)
\]
for all $t\in\R$. Thus, the set of geodesics with~\eqref{eq:geod_h} constitutes such an equivalence class, which we call the \emph{axis}~$\alpha(h)$ of~$h$. 
\index[defs]{axis}%
\index[defs]{hyperbolic element!axis}%
\index[defs]{element!axis of hyperbolic}%
\index[symbols]{aa@$\alpha(h)$}%

We denote the equivalence class of a geodesic~$\gamma$ on~$\H$ by~$[\gamma]$. 
\index[defs]{geodesic!equivalence class}%
\index[symbols]{900@$[\gamma]$}%
The action of~$\Gamma$ on the set of geodesics descends to an action on the equivalence classes by
\[
 g\act[\gamma] \coloneqq [g\act\gamma]
\]
\index[defs]{action!on geodesics}%
\index[symbols]{gc@$g\act[\gamma]$}%
for all~$g\in\Gamma$ and all geodesics~$\gamma$ on~$\H$. Analogously, we call any two geodesics on~$\Orbi$ equivalent if they differ only by a parameter change.
\index[defs]{geodesic!equivalent}%
Obviously, the geodesics~$\widehat\gamma_1$ and~$\widehat\gamma_2$ are equivalent if and only if $\Gamma\act[\gamma_1] = \Gamma\act[\gamma_2]$ for any choice of its representatives~$\gamma_1$ and~$\gamma_2$. Periodicity and periods of geodesics on~$\Orbi$ are stable under descend to equivalence classes. Further, 
for any equivalence classes, 
\[
 [\widehat\gamma] =  [\pi(\gamma)] = \pi([\gamma])\,.
\]
We use~$\Geo(\Orbi)$
\index[symbols]{geode@$\Geo(\Orbi)$}%
to denote the set of all equivalence classes of geodesics on~$\Orbi$, and $\Geo_{\Per}(\Orbi)$ to denote the subset of the equivalence classes of periodic geodesics.
\index[symbols]{geodf@$\Geo_{\Per}(\Orbi)$}%

With the following lemma we recall a well-known first observation on the relation between periodic geodesics on~$\Orbi$ and hyperbolic elements in~$\Gamma$ as well as between axes and displacement lengths of different hyperbolic elements. A proof can be deduced, e.g., from~\cite[Observations~3.28, 3.29]{Dupuy_Buser_Semmler}.

\begin{lemma}\label{LEM:periodicaxes}
Let $h\in\Gamma$ be hyperbolic with displacement length~$\ell(h)$ and axis~$\alpha(h)$. Then we have:
\begin{enumerate}[label=$\mathrm{(\roman*)}$, ref=$\mathrm{\roman*}$]
\item\label{periodicaxes:axisisperiod}
The geodesics in the equivalence class~$\pi(\alpha(h))$ are periodic with period~$\ell(h)$.

\item\label{periodicaxes:conjugacy}
For all~$g\in\Gamma$ the element~$ghg^{-1}$ is hyperbolic with displacement length~$\ell(h)$ and axis~$g\act\alpha(h)$.

\item\label{periodicaxes:powers}
For all~$n\in\N$ we have $\alpha(h^n)=\alpha(h)$ and~$\ell(h^n)=n\ell(h)$.
\end{enumerate}
\end{lemma}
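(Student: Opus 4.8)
The plan is to reduce all three assertions to explicit computations with the normal form $\standh{\ell}$ from~\eqref{EQDEF:standh}, together with the conjugation behaviour of attracting and repelling fixed points recorded in Lemma~\ref{LEM:hypfixedconv}. Fix a hyperbolic $h\in\Gamma$ and write $h=k^{-1}\standh{\ell}k$ with $k\in\PSLR$ and $\ell=\ell(h)$; recall that $\standh{\ell}$ is the \emph{unique} element of this shape to which $h$ is conjugate. For~(i), I would first observe that $\standh{\ell}$ acts on~$\H$ by $z\mapsto e^{\ell}z$, so that $\standh{\ell}\act\gamma_s(t)=ie^{t+\ell}=\gamma_s(t+\ell)$; that is, $\standh{\ell}$ translates the standard geodesic along itself by~$\ell$, and $\gamma_s$ represents $\alpha(\standh{\ell})$. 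Conjugating, the geodesic $\gamma\coloneqq k^{-1}\act\gamma_s$ represents $\alpha(h)$ and satisfies $h\act\gamma(t)=\gamma(t+\ell)$ for all $t\in\R$. Applying~$\pi$ and using that $\pi$ identifies points lying in a common $\Gamma$-orbit gives $\widehat\gamma(t+\ell)=\widehat\gamma(t)$, so $\widehat\gamma$ is periodic with period~$\ell$; since periodicity and periods are stable under passage to equivalence classes (as recalled just before the lemma), every geodesic in $\pi(\alpha(h))$ is periodic with period $\ell=\ell(h)$.

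For~(ii), the displacement length is immediate from $ghg^{-1}=(kg^{-1})^{-1}\standh{\ell}(kg^{-1})$, which is conjugate in $\PSLR$ to $\standh{\ell}$, hence hyperbolic with displacement length $\ell=\ell(h)$ by uniqueness of the normal form. For the axis I would determine the fixed points: $ghg^{-1}$ fixes $w\in\overline\H^\geo$ exactly when $h$ fixes $g^{-1}\act w$, and, using continuity of the $\PSLR$-action on $\overline\H^\geo$ together with Lemma~\ref{LEM:hypfixedconv}, $\fixp{\pm}{ghg^{-1}}=\lim_{n\to\infty}(ghg^{-1})^{\pm n}\act z = g\act\bigl(\lim_{n\to\infty}h^{\pm n}\act(g^{-1}\act z)\bigr)=g\act\fixp{\pm}{h}$. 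Thus the geodesics with endpoints $\fixp{\pm}{ghg^{-1}}$ are precisely the $g$-translates of those with endpoints $\fixp{\pm}{h}$, which is the assertion $\alpha(ghg^{-1})=g\act\alpha(h)$. For~(iii), I would use $\standh{\ell}^{\,n}=\standh{n\ell}$, so $h^n=k^{-1}\standh{n\ell}k$ is hyperbolic with $\ell(h^n)=n\ell=n\ell(h)$; and $h^n$ shares its fixed points with $h$, since for $z\in\H$ the sequence $(h^{nm}\act z)_{m\in\N}$ is a subsequence of $(h^{k}\act z)_{k\in\N}$, which converges to $\fixp{+}{h}$ by Lemma~\ref{LEM:hypfixedconv}, whence $\fixp{+}{h^n}=\fixp{+}{h}$, and symmetrically $\fixp{-}{h^n}=\fixp{-}{h}$, giving $\alpha(h^n)=\alpha(h)$.

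I do not expect a genuine obstacle here; the points that demand care are bookkeeping ones. One must keep the distinction between a geodesic (a parametrized curve) and its equivalence class or axis (a subset of $\overline\H^\geo$), so that the identity $h\act\gamma(t)=\gamma(t+\ell)$ is asserted only for the specific representative $k^{-1}\act\gamma_s$ of $\alpha(h)$ rather than for an arbitrary one, and one then invokes the already-stated stability of periodicity under equivalence to transport the conclusion of~(i) from $\widehat\gamma$ to the whole class $\pi(\alpha(h))$. A similar care with the meaning of ``period'' (any period, not necessarily the minimal one) removes any apparent tension with~(iii).
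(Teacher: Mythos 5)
Your proof is correct. Note that the paper does not prove Lemma~\ref{LEM:periodicaxes} at all --- it only refers the reader to Observations~3.28 and~3.29 of the cited work of Dupuy--Buser--Semmler --- so there is no in-paper argument to compare against; your normal-form computation supplies a self-contained proof that the paper leaves to the literature. Your route is moreover exactly in the spirit of how the paper treats Lemma~\ref{LEM:hypfixedconv}: verify the statement for $\standh{\ell}$, where the action $z\mapsto e^{\ell}z$ makes everything explicit (translation of $\gamma_s$ by $\ell$, fixed points $0$ and $\infty$, $\standh{\ell}^{\,n}=\standh{n\ell}$), and then transport it by conjugation, using the equivariance of attracting/repelling fixed points and the stability of periodicity and periods under time-shift equivalence, both of which are recorded in the paper before the lemma. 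The bookkeeping caveats you flag (periodic with \emph{a} period, not the minimal one; asserting $h\act\gamma(t)=\gamma(t+\ell)$ only for the chosen representative $k^{-1}\act\gamma_s$) are indeed the only delicate points, and you handle them correctly.
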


We denote by~$[\Gamma]_{\mathrm{h}}$ the set of all $\Gamma$-conjugacy classes of hyperbolic elements in~$\Gamma$.
\index[symbols]{Gammah@$[\Gamma]_{\mathrm{h}}$}%
\index[defs]{hyperbolic element!conjugacy class}%
\index[defs]{element!conjugacy class}%
Lemma~\ref{LEM:periodicaxes} yields that all representatives of~$[g]\in[\Gamma]_{\mathrm{h}}$ give rise to the same 
equivalence class of geodesics in~$\Geo_{\Per}(\Orbi)$. Thus, this relation gives rise to the map
\begin{align}\label{EQDEF:mapaxistoX}
\varrho\colon[\Gamma]_{\mathrm{h}}\to\Geo_{\Per}(\Orbi)\,,\quad 
[h]\mapsto\pi(\alpha(h))\,.
\end{align}
\index[symbols]{rho@$\varrho$}%

Lemma~\ref{LEM:periodicaxes} further shows that for each hyperbolic $h\in\Gamma$, the displacement length is constant in~$[h]$ and is one of the possible periods of the geodesics in~$\varrho([h])$. However, since the 
displacement length scales with powers of~$h$ but the image of~$\varrho$ remains unchanged, $\varrho$ is not a bijection, but an infinite covering. 

For each hyperbolic element~$h\in\Gamma$ there exists a unique pair $(h_0,n)\in\Gamma\times\N$ such that $h_0$ is hyperbolic and $n$ is maximal with the property that $h=h_0^n$. The displacement length of~$h_0$ as well as 
the value of~$n$ are invariants of the equivalence class~$[h]$. We set 
\begin{align*}
 \ct([h]) &\coloneqq n \qquad \text{(``count'')}
\intertext{and}
 \ell_0([h]) &\coloneqq \ell(h_0)\,.
\end{align*}
Further, we call the element~$h_0$ the \emph{primitive} of~$h$.
\index[symbols]{ct@$\ct([h])$}%
\index[symbols]{lb@$\ell_0$}%
\index[symbols]{lc@$\ell_0([h])$}%
\index[symbols]{h@$h_0$}%
\index[defs]{hyperbolic element!primitive}%
\index[defs]{element!primitive hyperbolic}%
\index[defs]{primitive element}%

\begin{prop}[Theorem~3.30 in~\cite{Dupuy_Buser_Semmler}]
\label{PROP:hypconjisperiod}
The map 
\[
\varrho\times\ct \colon [\Gamma]_\hyp \to \Geo_{\Per}(\Orbi)\times\N\,,\quad 
[h] \mapsto \bigl(\pi(\alpha(h)), \ct([h])\bigr)\,,
\]
is a bijection.
\end{prop}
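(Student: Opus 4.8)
The plan is to prove injectivity and surjectivity of $\varrho\times\ct$ separately, both resting on a structural fact that I would isolate first: for a hyperbolic $h\in\Gamma$ with attractor $\fixp+h$ and repeller $\fixp-h$, the subgroup $A\leq\PSLR$ of all elements fixing both $\fixp+h$ and $\fixp-h$ is a one-parameter group, isomorphic to $(\R,+)$ and conjugate to $\{\standh\ell:\ell\in\R\}$, whose nontrivial elements are exactly the hyperbolic transformations with this same fixed-point pair (elliptic and parabolic elements are excluded, as each fixes at most one point of $\partial_\geo\H$). Since $\Gamma$ is discrete and contains the nontrivial element $h\in A$, the group $\Gamma\cap A$ is infinite cyclic; writing $\Gamma\cap A=\langle k\rangle$ and, after possibly replacing $k$ by $k^{-1}$, arranging $\fixp+k=\fixp+h$, the element $k$ is not a proper power of any hyperbolic element of $\Gamma$ (such a root would again lie in $\langle k\rangle$). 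Hence $k$ is its own primitive with $\ct([k])=1$, and by Lemma~\ref{LEM:periodicaxes}\ref{periodicaxes:powers} every power $k^m$ with $m\in\N$ has $\alpha(k^m)=\alpha(k)$, primitive $k$, and $\ct([k^m])=m$.

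For surjectivity I would start from an arbitrary pair $(\widehat\gamma,n)\in\Geo_\Per(\Orbi)\times\N$, choose a periodic representative of some period $\delta>0$, and lift it to $\gamma=\gamma_\nu$ on $\H$. Since $\PSLR$, and hence $\Gamma$, acts freely on $\UTB\H$, the quotient map $\pi\colon\UTB\H\to\UTB\Orbi$ identifies points precisely along $\Gamma$-orbits, so $\GeoFlow_\delta(\pi(\nu))=\pi(\nu)$ produces $g\in\Gamma$ with $\Phi_\delta(\nu)=g\act\nu$, that is, $\gamma(t+\delta)=g\act\gamma(t)$ for all $t\in\R$. Letting $t\to\pm\infty$ shows that $g$ fixes the two distinct boundary points $\gamma(+\infty)$ and $\gamma(-\infty)$; hence $g\neq\id$ (a geodesic on $\H$ is not periodic) and $g$ is hyperbolic, and iterating the relation and applying Lemma~\ref{LEM:hypfixedconv} gives $\fixp+g=\gamma(+\infty)$ and $\fixp-g=\gamma(-\infty)$, so that $\alpha(g)=[\gamma]$ and $\pi(\alpha(g))=\widehat\gamma$. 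Applying the structural fact to $g$ and taking $k$ to be the generator of $\Gamma\cap A$ with $\fixp+k=\fixp+g$, the class $[k^n]$ then has $\pi(\alpha(k^n))=\pi(\alpha(g))=\widehat\gamma$ and $\ct([k^n])=n$, which is what is needed.

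For injectivity I would take $[h],[h']\in[\Gamma]_\hyp$ with equal images under $\varrho\times\ct$, say with common count $n$. Equality of the first coordinates says that the equivalence classes $\pi(\alpha(h))$ and $\pi(\alpha(h'))$ of periodic geodesics on $\Orbi$ agree, so $\Gamma\act\alpha(h)=\Gamma\act\alpha(h')$ and there is $g\in\Gamma$ with $g\act\alpha(h)=\alpha(h')$. Replacing $h$ by the conjugate $ghg^{-1}$, which has axis $g\act\alpha(h)$ by Lemma~\ref{LEM:periodicaxes}\ref{periodicaxes:conjugacy} and represents the same class $[h]$, I may then assume $\alpha(h)=\alpha(h')$. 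Because the axis is an \emph{oriented} equivalence class, this forces $\fixp+h=\fixp+{h'}$ and $\fixp-h=\fixp-{h'}$; thus $h$ and $h'$ both lie in $\Gamma\cap A=\langle k\rangle$ (with $A$ as above) and share the attractor $\fixp+k$, so $h=k^a$ and $h'=k^b$ for some $a,b\in\N$. Since $k$ is its own primitive, the primitive decomposition of $h=k^a$ is the pair $(k,a)$, whence $a=\ct([h])=n$, and symmetrically $b=n$; therefore $h=k^n=h'$ and $[h]=[h']$.

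The load-bearing points are all contained in the first paragraph, and that is where I expect the most care to be needed: the discreteness argument that $\Gamma\cap A$ is infinite cyclic, and---equally important---the fact that $\alpha(\cdot)$ was defined as an \emph{oriented} equivalence class, so that $h$ and $h^{-1}$ have \emph{distinct} axes. The orientation is precisely what rescues the statement: if axes were unoriented, $h$ and $h^{-1}$ would share the same value of $\varrho\times\ct$ while being non-conjugate in general (for instance in Schottky groups), and the proposition would be false. Keeping track of this orientation through the conjugation step of the injectivity part is the one place I would proceed slowly; the lifting of periodic geodesics, the fixed-point identifications, and the bookkeeping with primitives and counts then follow routinely from the material recalled in Section~\ref{SEC:prelimi}.
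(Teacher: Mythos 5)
Your argument is correct. Note that the paper does not prove this proposition at all: it is imported verbatim as Theorem~3.30 of the cited work of Dupuy--Buser--Semmler, so there is no in-paper proof to compare against. Your blind proof is the standard self-contained one, and all the load-bearing steps check out: the pointwise stabilizer $A$ of the ordered fixed-point pair is a one-parameter group, so $\Gamma\cap A$ is infinite cyclic by discreteness, its generator $k$ (oriented so that $\fixp{+}{k}=\fixp{+}{h}$) is primitive because any hyperbolic root of an element of $\langle k\rangle$ has the same fixed points and hence lies in $\langle k\rangle$; surjectivity follows from lifting a period-$\delta$ geodesic and using flow-equivariance to produce $g\in\Gamma$ with $\Phi_\delta(\nu)=g\act\nu$, whose hyperbolicity and oriented fixed points $\fixp{\pm}{g}=\gamma(\pm\infty)$ you correctly extract via Lemma~\ref{LEM:hypfixedconv}; and injectivity reduces, after conjugating, to both representatives lying in $\langle k\rangle$ with the same attractor, where the count pins down the exponent. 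You are also right to flag orientation as the crucial point: with unoriented axes the statement would fail (e.g.\ $h$ versus $h^{-1}$ in a Schottky group), and your use of the paper's oriented definition of $\alpha(h)$ handles this cleanly. One cosmetic remark: freeness of the $\Gamma$-action on $\UTB\H$ is not needed to identify fibers of $\pi$ with $\Gamma$-orbits (that is the definition of the quotient); it only gives uniqueness of $g$, which you never use.
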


If $\widehat\gamma$ is a periodic geodesic on~$\Orbi$ and $\delta_0$ is its minimal period, then for any $n\in\N$, also $n\delta_0$ is a period of~$\widehat\gamma$. In view of Proposition~\ref{PROP:hypconjisperiod}, we may 
understand $([\widehat\gamma], n) \in\Geo_{\Per}(\Orbi)\times\N$ as the equivalence class~$[\widehat\gamma]$ of periodic geodesics endowed with a fixed choice of period, namely $n\delta_0$. We may further define the \emph{length} of $(\widehat{\gamma},n)\in\Geo_{\Per}(\Orbi)\times\N$
\index[defs]{length}%
as
\[
\ell(\widehat\gamma,n)\coloneqq \ell([h])
\]
\index[symbols]{la@$\ell$}%
\index[symbols]{le@$\ell(\widehat\gamma,n)$}%
for any~$h\in\Gamma$ such that 
\[
 (\varrho\times\ct)([h]) = (\widehat\gamma,n)\,.
\]
If $n=1$ then we call $([\widehat\gamma],n)$ \emph{prime} or \emph{primitive}.
\index[defs]{geodesic!prime}%
\index[defs]{geodesic!primitive}%
Likewise, we call $h$ and~$[h]$ \emph{primitive} if $\ct([h]) = 1$.
\index[defs]{hyperbolic element!primitive}%
\index[defs]{element!primitive hyperbolic}%

Until now we have considered~$\Geo(\Orbi)$ and~$\Geo_{\Per}(\Orbi)$ as sets of equivalence classes of geodesics. 
For convenience, in what follows, we will often consider them as sets of (equivalence classes of) geodesics endowed with a multiplity of the minimal period length. Equivalently, we often consider them as multisets, where every periodic geodesic is repeated countably infinitely many times, once for each multiplicity of its minimal period length. Moreover, we will refer to their elements as geodesics. Thus, by slight abuse of notion, we identify geodesics with their equivalence classes, and we will often indicate the choice of period only implicitly. 
\index[defs]{geodesic!convention}%

\begin{cor}\label{COR:primhypconjisprimperiod}
The conjugacy classes of primitive hyperbolic elements in $\Gamma$ are in bijection with the prime periodic geodesics on $\Orbi$.
\end{cor}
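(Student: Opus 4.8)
The plan is to obtain this as an immediate corollary of Proposition~\ref{PROP:hypconjisperiod}, by restricting the bijection $\varrho\times\ct$ to the locus where $\ct$ equals~$1$.

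First, I would recall that, by the definitions preceding Proposition~\ref{PROP:hypconjisperiod}, a hyperbolic element $h\in\Gamma$ is primitive exactly when $\ct([h])=1$, and that this is a property of the whole conjugacy class $[h]$: writing $h=h_0^{\,n}$ with $h_0$ hyperbolic and $n\in\N$ maximal, the integer $n=\ct([h])$ is an invariant of $[h]$ (as already noted in the text), so the $\Gamma$-conjugacy class of a primitive hyperbolic element consists entirely of primitive hyperbolic elements, and conversely. Consequently the set of $\Gamma$-conjugacy classes of primitive hyperbolic elements in $\Gamma$ is precisely the fiber $(\varrho\times\ct)^{-1}\bigl(\Geo_{\Per}(\Orbi)\times\{1\}\bigr)$ inside $[\Gamma]_\hyp$.

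Next, since $\varrho\times\ct$ is a bijection by Proposition~\ref{PROP:hypconjisperiod}, it restricts to a bijection from this fiber onto $\Geo_{\Per}(\Orbi)\times\{1\}$. On the fiber the value of $\ct$ is constantly~$1$, so the restricted bijection is carried entirely by~$\varrho$; equivalently, denoting by $h_0$ the primitive of a hyperbolic element $h$, it is the map $[h_0]\mapsto\pi(\alpha(h_0))$.

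Finally, it remains to match this with the statement by unwinding conventions. By definition an element $(\widehat\gamma,1)\in\Geo_{\Per}(\Orbi)\times\{1\}$ is a prime (primitive) periodic geodesic, namely the equivalence class $[\widehat\gamma]$ equipped with its minimal period; and, under the identification of $\Geo_{\Per}(\Orbi)$ with equivalence classes of geodesics endowed with a choice of period that is recorded just before the corollary, every prime periodic geodesic on $\Orbi$ arises in this way exactly once. Hence $\Geo_{\Per}(\Orbi)\times\{1\}$ \emph{is} the set of prime periodic geodesics on~$\Orbi$, and the previous step furnishes the desired bijection. I do not expect any genuine obstacle: all the substance already sits in Proposition~\ref{PROP:hypconjisperiod}, and the only point requiring care is the bookkeeping of conventions---minimal versus chosen period, the multiset viewpoint on $\Geo_{\Per}(\Orbi)$, and the fact that primitivity of $h$ as a group element is detected by the conjugacy-invariant $\ct$.
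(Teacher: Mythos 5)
Your proposal is correct and follows exactly the route the paper intends: the corollary is stated as an immediate consequence of Proposition~\ref{PROP:hypconjisperiod}, obtained by restricting the bijection $\varrho\times\ct$ to the locus $\ct=1$ and using that primitivity is a conjugacy-class invariant. The bookkeeping of conventions (minimal period, the identification of $\Geo_{\Per}(\Orbi)\times\{1\}$ with prime periodic geodesics) that you carry out is precisely what the paper leaves implicit.
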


For each prime periodic geodesic~$\widehat\gamma$ on~$\Orbi$, any representative of the associated conjugacy class of primitive hyperbolic elements in~$\Gamma$ serves as the element~$h_{\widehat\gamma}$ in~\eqref{eq:szf_twist_intro}.
\index[symbols]{h@$h_{\widehat\gamma}$}%

\subsection{Geometry at infinity}\label{SUBSEC:geominfinity}

Let $\Gamma$ be a Fuchsian group. We call a subset~$\fund$ of~$\H$ a \emph{fundamental domain} for~$\Gamma$ in~$\H$ if $\fund$ is an open, connected set such that 
\index[defs]{fundamental domain}%
\index[symbols]{F@$\fund$}%
\begin{enumerate}[label=$\mathrm{(\roman*)}$, ref=$\mathrm{\roman*}$]
 \item any two $\Gamma$-translates of~$\fund$ are disjoint: for all 
$g\in\Gamma$, 
 \[
  g\act\fund \cap \fund = \varnothing\,,
 \]
 \item the $\Gamma$-translates of~$\overline\fund$ cover all of~$\H$: 
 \[
  \H = \bigcup_{g\in\Gamma} g\act\overline\fund\,.
 \]
\end{enumerate}
Each Fuchsian group admits at least one fundamental domain. Standard choices are Dirichlet or Ford fundamental domains, which we will use also here and which are examples of fundamental domains in the shape of exact, convex polyhedra (in the sense of~\cite{Ratcliffe}). We call a Fuchsian group \emph{geometrically finite} 
\index[defs]{Fuchsian group!geometrically finite}%
\index[defs]{geometrically finite}%
if it admits such a polyhedral fundamental domain with finitely many sides. By Poincar\'e's theorem on fundamental polyhedra, a Fuchsian group is geometrically finite if and only if it is finitely generated~\cite{Maskit}.

\begin{center}
\framebox{
\begin{minipage}{.53\textwidth}
From now on let $\Gamma$ be a geometrically finite Fuchsian group that contains at least one hyperbolic element.
\end{minipage}
}
\end{center}

Let $\Orbi=\quod{\Gamma}{\H}$ be the associated hyperbolic orbisurface. The geometry of~$\Orbi$ allows us to find a (large) compact subset~$K$ of~$\Orbi$ such that for all compact subsets~$\widetilde K$ with $K\subseteq\widetilde K$, the spaces~$\Orbi\setminus K$ and~$\Orbi\setminus\widetilde K$ have the same number of connected components. For definiteness we may take for~$K$ the \emph{compact core} of~$\Orbi$. As we will not use any further features of the compact core other than this separation property, we refer to~\cite{Borthwick_book} for a definition. The connected components of~$\Orbi\setminus K$ are the \emph{ends} of~$\Orbi$.
\index[defs]{ends}%
\index[defs]{hyperbolic orbisurface!end}%
(We ignore here the slight inexactness in that this notion of ends depends on the choice of~$K$ if we do not pick the compact core, as we will need only the general concept.) The geometric finiteness of~$\Gamma$ yields that $\Orbi$ has only a finite number of ends.   

The hyperbolic orbisurface~$\Orbi$ has at least one periodic geodesic as $\Gamma$ contains a hyperbolic element. Therefore $\Orbi$ admits only two types of ends: 
\begin{enumerate}[label=$\mathrm{(\alph*)}$, ref=$\mathrm{\alph*}$]
\item Ends of finite area are called \emph{cusps}.
\index[defs]{cusp}%
Via the canonical quotient map~$\pi$ from~\eqref{eq:def_pi1}, each cusp of~$\Orbi$ can be identified with the $\Gamma$-orbit of the fixed point~$c$ of some parabolic transformation in~$\Gamma$. The stabilizer 
group~$\Stab{\Gamma}{c}$ of~$c$ is a cyclic subgroup of~$\Gamma$. In particular, there exists $g\in\PSL_2(\R)$ and a unique~$\lambda>0$ such that~$\Stab{\Gamma}{c}$ is generated by
\[
 g\bmat{1}{\lambda}{0}{1}g^{-1}\,.
\]
We call~$c$ a \emph{cusp representative} or a \emph{cuspidal point}
\index[defs]{cusp representative}%
\index[defs]{representative!cusp}%
\index[defs]{cuspidal point}%
and denote the corresponding cusp by~$\wh c$ (see also Section~\ref{SUBSEC:geodsurfaces}). Further, we call~$\lambda$ the \emph{cusp width} of~$\wh c$.
\index[defs]{cusp width}%
\item Ends of infinite area are called \emph{funnels}.
\index[defs]{funnel}%
Funnels can be identified with certain subsets of the geodesic boundary of a fundamental domain for~$\Gamma$. Further below, after the introduction of the limit set of~$\Gamma$, we will give a second characterization of funnels.
\end{enumerate}

The hyperbolic orbisurface~$\Orbi$ is compact if and only if it neither has cusps nor funnels. In this case, $\Gamma$ is called \emph{cocompact} or \emph{uniform}.
\index[defs]{Fuchsian group!cocompact}%
\index[defs]{Fuchsian group!uniform}%
\index[defs]{uniform}%
\index[defs]{cocompact}%
If $\Orbi$ is not compact, then $\Gamma$ is called \emph{non-cocompact} or \emph{non-uniform}.
\index[defs]{Fuchsian group!non-cocompact}%
\index[defs]{Fuchsian group!non-uniform}%
\index[defs]{non-cocompact}%
\index[defs]{non-uniform}%
If $\Orbi$ has no cusps, but probably funnels, and is a proper surface (i.e., $\Gamma$ has no elliptic elements), then $\Gamma$ is called \emph{convex cocompact}.
\index[defs]{Fuchsian group!convex cocompact}%
\index[defs]{convex cocompact}%
(The naming originates from the fact that in this case the convex core is compact. We refer to \cite{Borthwick_book} for the definition of the convex core.)

The periodic geodesics on~$\Orbi$ are dense in the set of all geodesics on $\Orbi$ in a certain sense which we describe in what follows and which will be crucial for our investigations. Since $\Gamma$ is discrete,  $\Gamma$-orbits do not accumulate in~$\H$ but they may do so in~$\overline\H^\geo$. We denote by~$\Lambda(\Gamma)$ the set of all limit points (accumulation points) of $\Gamma$-orbits in~$\overline\H^\geo$\!, the \emph{limit set} of~$\Gamma$.
\index[defs]{limit set}%
\index[symbols]{L@$\Lambda$}%
\index[symbols]{L@$\Lambda(\Gamma)$}%
The set~$\Lambda(\Gamma)$ equals the set of all limit points of the single orbit~$\Gamma\act z$ for any $z\in\H$ with trivial stabilizer subgroup in~$\Gamma$. We note that $\Lambda(\Gamma)$ is a $\Gamma$-invariant subset 
of~$\partial_\geo\H$. The Fuchsian group~$\Gamma$ is called \emph{elementary}
\index[defs]{Fuchsian group!elementary}%
\index[defs]{elementary}%
if $\Lambda(\Gamma)$ is finite. If $\Gamma$ is non-elementary, then the limit set~$\Lambda(\Gamma)$ is either all of~$\partial_\geo\H$ or a perfect, nowhere dense subset of~$\partial_\geo\H$. We let
\begin{align}\label{EQDEF:EOrbi}
E(\Orbi)\coloneqq
\defset{ (\gamma(+\infty),\gamma(-\infty)) }{ 
\text{$\gamma$ geodesic on~$\H$, $\widehat\gamma\in\Geo_{\Per}(\Orbi)$} }
\end{align}
\index[symbols]{EX@$E(\Orbi)$}%
denote the set of the endpoints of all lifts of periodic geodesics on~$\Orbi$. By Proposition~\ref{PROP:hypconjisperiod}, this set can also be characterized as  
\[
E(\Orbi)=\defset{ (\fixp{+}{h},\fixp{-}{h}) }{ 
[h]\in[\Gamma]_{\mathrm{h}} }\,.
\]
In particular,~$E(\Orbi)$ is a subset of~$\Lambda(\Gamma)\times\Lambda(\Gamma)$.

\begin{prop}[\cite{Eberlein_curvedMflds}]\label{PROP:EXliesdense}
For any geometrically finite Fuchsian group~$\Gamma$ with hyperbolic elements, the set $E(\Orbi)$ is dense in~$\Lambda(\Gamma)\times\Lambda(\Gamma)$.
\end{prop}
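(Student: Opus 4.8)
I would work directly from the description $E(\Orbi)=\defset{(\fixp+h,\fixp-h)}{[h]\in[\Gamma]_\hyp}$ recorded just before the statement. First, two elementary reductions. The set $E(\Orbi)$ is $\Gamma$-invariant: by Lemma~\ref{LEM:periodicaxes}(\ref{periodicaxes:conjugacy}) one has $g\act(\fixp+h,\fixp-h)=(\fixp+{ghg^{-1}},\fixp-{ghg^{-1}})$ for all $g\in\Gamma$. Moreover $E(\Orbi)$ is disjoint from the diagonal $\Delta\subseteq\wh\R\times\wh\R$, since a hyperbolic element has two distinct fixed points. Hence it suffices to prove that $E(\Orbi)$ is dense in $(\Lambda(\Gamma)\times\Lambda(\Gamma))\setminus\Delta$. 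If $\Gamma$ is elementary but contains a hyperbolic element, then $\Gamma$ stabilizes the two-point set $\{p,q\}=\{\fixp+{g},\fixp-{g}\}$ of any hyperbolic $g\in\Gamma$, $\Lambda(\Gamma)=\{p,q\}$, and $E(\Orbi)=\{(p,q),(q,p)\}=(\Lambda\times\Lambda)\setminus\Delta$, so the claim is immediate (and the statement holds verbatim once one reads it, as is customary in this degenerate case, modulo the diagonal). So the content lies in the non-elementary case, where $\Lambda(\Gamma)$ is perfect and therefore $(\Lambda\times\Lambda)\setminus\Delta$ is itself dense in $\Lambda\times\Lambda$; thus it will be enough to approximate, for every pair of \emph{distinct} points $\xi,\eta\in\Lambda(\Gamma)$, and for arbitrarily small disjoint open neighborhoods $U\ni\xi$, $V\ni\eta$ in $\wh\R$, a pair $(\fixp+h,\fixp-h)$ with $\fixp+h\in U$ and $\fixp-h\in V$. (Note that geometric finiteness is not actually needed here.)

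The construction rests on two structural inputs: first, the minimality of the limit set, i.e.\ that for a non-elementary Fuchsian group every $\Gamma$-orbit contained in $\Lambda(\Gamma)$ is dense in $\Lambda(\Gamma)$ (classical; see, e.g., \cite{Beardon}); and second, the North--South dynamics of hyperbolic elements, which is precisely Lemma~\ref{LEM:hypfixedconv}(iii). Fix once and for all a hyperbolic element $g_0\in\Gamma$ (which exists by the standing hypothesis), with fixed points $g_0^+\coloneqq\fixp+{g_0}$ and $g_0^-\coloneqq\fixp-{g_0}$. By minimality, $\overline{\Gamma\act g_0^+}=\overline{\Gamma\act g_0^-}=\Lambda(\Gamma)$, so I may choose $p,q\in\Gamma$ with $p\act g_0^+\in U$ and $q\act g_0^-\in V$. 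Refining these choices---using minimality once more, together with the perfectness of $\Lambda(\Gamma)$, and after possibly shrinking $U,V$ and replacing $\xi,\eta$ by nearby points of $\Lambda(\Gamma)$---I may in addition arrange that the four points $p\act g_0^{\pm}$ and $q\act g_0^{\pm}$ are pairwise distinct, with $p\act g_0^-\notin\overline V$ and $q\act g_0^+\notin\overline U$.

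Now set $A_k\coloneqq pg_0^kp^{-1}$ and $B_m\coloneqq qg_0^mq^{-1}$ for $k,m\in\N$. Each $A_k$ is hyperbolic with $\fixp+{A_k}=p\act g_0^+\in U$ and $\fixp-{A_k}=p\act g_0^-$, and each $B_m$ is hyperbolic with $\fixp+{B_m}=q\act g_0^+$ and $\fixp-{B_m}=q\act g_0^-\in V$. Put $h\coloneqq A_kB_m\in\Gamma$. I claim that for $k$ and $m$ large enough $h$ is hyperbolic with $\fixp+h\in U$ and $\fixp-h\in V$. This is a ping-pong argument driven by Lemma~\ref{LEM:hypfixedconv}(iii): for $k$ large, $A_k$ maps the complement of a small neighborhood of $p\act g_0^-$ into a neighborhood $U'\subseteq U$ of $p\act g_0^+$ that can be taken disjoint from $\overline V$ and from a fixed small neighborhood of $q\act g_0^-$; symmetrically, for $m$ large, $B_m$ maps the complement of a small neighborhood of $q\act g_0^-$ into a neighborhood $V'$ of $q\act g_0^+$ disjoint from $U'$---all these separations being available by the genericity secured above. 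Composing, $h=A_k\circ B_m$ maps a closed neighborhood of $\overline U$ into $U'$, so $h$ has an attracting fixed point in $U$; applying the same reasoning to $h^{-1}=B_m^{-1}A_k^{-1}$, whose outermost factor $B_m^{-1}$ contracts towards $q\act g_0^-\in V$, yields that $h^{-1}$ has an attracting fixed point in $V$. By Lemma~\ref{LEM:hypfixedconv}, this means exactly $\fixp+h\in U$ and $\fixp-h\in V$, as desired.

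The one place that demands care---the main obstacle---is the bookkeeping in the last paragraph together with the refinements in the preceding one: one must choose $U',V'$, the various ``exceptional'' neighborhoods, and the perturbations of $\xi,\eta$ \emph{simultaneously} so that all the disjointness conditions needed for the compositions to nest are met at once. Conceptually, however, there is nothing deep beyond combining minimality of the limit set with North--South dynamics. (One may of course also simply invoke Eberlein's density theorem \cite{Eberlein_curvedMflds} directly, as the statement indicates.)
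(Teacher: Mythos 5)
Your argument is sound, but note that the paper does not prove this statement at all: it is quoted from Eberlein \cite{Eberlein_curvedMflds}, so the comparison is with a citation rather than with an in-text proof. What you supply is the standard self-contained route: reduce to the non-elementary case, use minimality of the limit set (equivalently, density of the orbit $\Gamma\act\fixp{+}{g_0}$, which consists of attracting fixed points of conjugates of $g_0$) to place an attracting fixed point in $U$ and a repelling one in $V$, and then run a ping-pong/North--South argument (Lemma~\ref{LEM:hypfixedconv}) on $h=A_kB_m$ to produce a hyperbolic element with $(\fixp{+}{h},\fixp{-}{h})\in U\times V$. This buys a short, elementary proof using only one classical external ingredient (minimality), whereas the citation buys generality (Eberlein's theorem is stated for much wider classes of manifolds) at the cost of opacity; your observation that geometric finiteness is irrelevant is correct.

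Two caveats. First, the ``bookkeeping'' you flag as the main obstacle is lighter than you make it: for the nesting you only need $\overline U\cap\overline V=\varnothing$ together with the single condition $p\act\fixp{-}{g_0}\ne q\act\fixp{+}{g_0}$ (i.e.\ $\fixp{-}{A}\ne\fixp{+}{B}$); the conditions $p\act\fixp{-}{g_0}\notin\overline V$, $q\act\fixp{+}{g_0}\notin\overline U$ and full pairwise distinctness are not needed, which removes the simultaneity worry. Moreover this one coincidence can always be repaired without perturbing $\xi,\eta$ or shrinking $U,V$ iteratively: if $B_0\coloneqq qg_0q^{-1}$ has $\fixp{-}{B_0}\in V$ but $\fixp{+}{B_0}=\fixp{-}{A}$, pick $\gamma\in\Gamma$ with $\gamma\act\{\fixp{\pm}{B_0}\}\cap\{\fixp{\pm}{B_0}\}=\varnothing$ (possible since $\Gamma$ is non-elementary) and replace $B_0$ by $B\coloneqq B_0^{-n}\,(\gamma B_0\gamma^{-1})\,B_0^{n}$, which for large $n$ has both fixed points near $\fixp{-}{B_0}\in V$, hence $\fixp{-}{B}\in V$ and $\fixp{+}{B}\ne\fixp{-}{A}$, while still being a conjugate of $g_0$. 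As written, your refinement step is asserted rather than executed, but it does not fail. Second, your parenthetical on the elementary case is too generous: if $\Gamma$ is cyclic hyperbolic, then $\Lambda(\Gamma)\times\Lambda(\Gamma)$ is a four-point discrete set and $E(\Orbi)$ misses the diagonal, so the literal statement is false there; this is a defect of the statement's implicit convention (Eberlein's result is for non-elementary groups), not of your proof, but it should be said plainly rather than as ``holds verbatim modulo the diagonal.''
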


The limit set~$\Lambda(\Gamma)$ allows us to give another interpretation of funnels as follows. The set
\[
 \widehat\R\setminus\Lambda(\Gamma) 
\]
decomposes into countably many connected (open) components. Each such component we call a \emph{funnel interval}.
\index[defs]{funnel interval}%
\index[defs]{interval!funnel}%
Further, we call an interval in~$\widehat\R$  a \emph{funnel representative}
\index[defs]{funnel representative}%
\index[defs]{representative!funnel}%
if it is contained in a funnel interval consisting of points that are pairwise 
non-equivalent under the $\Gamma$-action and that is maximal with obeying this property.
One easily sees that each funnel interval is the union of several funnel representatives. The $\Gamma$-orbits of 
funnel intervals coincide with the $\Gamma$-orbits of funnel representatives (as sets or as equivalence classes), and each such $\Gamma$-orbit corresponds to a unique funnel of~$\Orbi$. We may identify each funnel of~$\Orbi$ with such a $\Gamma$-orbit (understood as equivalence class) or with a funnel representative.

Finally, we introduce a few more definitions that are not classical but will be used extensively throughout. For any parabolic element $p\in\Gamma$ we denote its fixed point by~$\fixp{}{p}$. 
\index[defs]{fixed point!parabolic element}%
\index[symbols]{fp@$\fixp{}{p}$}%
We set
\begin{equation}\label{eq:def_st}
\widehat{\R}_{\st}\coloneqq\Lambda(\Gamma)\setminus\bigcup\defset{\fixp{}{p}}{p\in\Gamma~\text{parabolic}}\,.
\end{equation}
\index[symbols]{Rst@$\widehat{\R}_{\st}$}%
\index[symbols]{st@${\st}$}%
\index[symbols]{910@${}_{\st}$}%
\index[defs]{st}%
The set~$\wh\R_{\st}$ is $\Gamma$-invariant and contains all hyperbolic fixed points of~$\Gamma$ (and typically further points).
For every subset~$M\subseteq\widehat{\R}$ we set
\[
M_{\st}\coloneqq M\cap\widehat{\R}_{\st}.
\]
\index[symbols]{Mst@$M_{\st}$}%
\index[symbols]{Iaa@$I_{\st}$}%
For elements of some family $\{M_j\}_{j}$ of subsets of $\widehat{\R}$ we set $M_{j,\st}\coloneqq\left(M_j\right)_{\st}$.
Finally, we call a subset~$I'\subseteq\widehat{\R}_{\st}$ an \emph{interval} in~$\widehat{\R}_{\st}$,
\index[defs]{interval!in~$\widehat{\R}_{\st}$}%
if there exists an interval~$I$ in~$\widehat{\R}$ such that~$I'=I_{\st}$. The following characterization of 
cocompact Fuchsian groups is immediate.

\begin{lemma}\label{LEM:noncocomp}
Let $\Gamma$ be a geometrically finite Fuchsian group. Then $\Gamma$ is cocompact if and only if $\widehat{\R}=\widehat{\R}_{\st}$.
\end{lemma}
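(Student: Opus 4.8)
The plan is to unravel the definition of~$\widehat{\R}_{\st}$ and translate the asserted equivalence into the language of the ends of~$\Orbi$ developed in Section~\ref{SUBSEC:geominfinity}. Write $P\coloneqq\bigcup\defset{\fixp{}{p}}{p\in\Gamma\text{ parabolic}}$ for the set of parabolic fixed points, so that $\widehat{\R}_{\st}=\Lambda(\Gamma)\setminus P$ and always $\widehat{\R}_{\st}\subseteq\Lambda(\Gamma)\subseteq\widehat{\R}$. First I would record the elementary fact that the fixed point of a parabolic element~$p$ is the limit of $(p^{n}\act z)_{n}$ for any $z\in\H$ (in analogy with Lemma and Definition~\ref{LEM:hypfixedconv}) and therefore lies in~$\Lambda(\Gamma)$, so $P\subseteq\Lambda(\Gamma)$. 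Granting this, it is a purely set-theoretic check that $\widehat{\R}=\widehat{\R}_{\st}$ holds if and only if both $\Lambda(\Gamma)=\widehat{\R}$ and $P=\varnothing$: if $\widehat{\R}_{\st}=\widehat{\R}$ then $\widehat{\R}=\Lambda(\Gamma)\setminus P\subseteq\Lambda(\Gamma)\subseteq\widehat{\R}$ forces $\Lambda(\Gamma)=\widehat{\R}$, and then $\widehat{\R}\setminus P=\widehat{\R}$ together with $P\subseteq\widehat{\R}$ forces $P=\varnothing$; the converse is immediate. In other words, $\widehat{\R}=\widehat{\R}_{\st}$ is equivalent to ``$\Gamma$ has no parabolic elements and $\Lambda(\Gamma)$ is all of~$\partial_\geo\H$''.

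It then remains to identify these two conditions with the absence of the two types of ends of~$\Orbi$. From Section~\ref{SUBSEC:geominfinity} I would recall that the funnels of~$\Orbi$ correspond bijectively to the $\Gamma$-orbits of the connected components of $\widehat{\R}\setminus\Lambda(\Gamma)$; hence $\Orbi$ has no funnel if and only if $\widehat{\R}\setminus\Lambda(\Gamma)=\varnothing$, i.e.\ $\Lambda(\Gamma)=\widehat{\R}$. Likewise, the cusps of~$\Orbi$ correspond bijectively to the $\Gamma$-orbits of parabolic fixed points, so $\Orbi$ has no cusp if and only if $P=\varnothing$. Combining the two, $\widehat{\R}=\widehat{\R}_{\st}$ is equivalent to $\Orbi$ having neither cusps nor funnels, which by the compactness criterion recalled in Section~\ref{SUBSEC:geominfinity} is exactly the statement that $\Orbi=\quod{\Gamma}{\H}$ is compact, i.e.\ that $\Gamma$ is cocompact. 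This closes the chain of equivalences.

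The hard part will be the cusp half of the second equivalence: while ``cusp $\Rightarrow$ parabolic orbit'' is built into the description of cusps, the converse ``parabolic element $\Rightarrow$ genuine cusp end'' is where geometric finiteness is actually used, through the fact that parabolic fixed points of geometrically finite Fuchsian groups are bounded parabolic points and hence truly cut out finite-area ends. I would isolate this as the one nontrivial input; everything else — the set-theoretic manipulation with $\Lambda(\Gamma)$ and $P$, the funnel correspondence, and the compactness criterion — is bookkeeping that follows directly from the material already assembled. I would also briefly note that the two phenomena allowed under the standing hypotheses, namely elliptic elements and infinite area, cause no trouble: elliptic elements affect neither $\Lambda(\Gamma)$ nor $P$, and the infinite-area case is precisely the one detected by the funnel condition, so both are already subsumed in the argument.
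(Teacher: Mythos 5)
Your proposal is correct, and it is exactly the argument the paper has in mind: the lemma is stated there without proof as ``immediate,'' and what is meant is precisely your unwinding of $\widehat{\R}_{\st}=\Lambda(\Gamma)\setminus P$ into the two conditions $\Lambda(\Gamma)=\widehat{\R}$ (no funnel intervals, hence no funnels) and $P=\varnothing$ (no parabolic fixed points, hence no cusps), combined with the criterion that $\Orbi$ is compact iff it has neither cusps nor funnels. You also correctly flag the only point where anything beyond bookkeeping enters, namely that for a geometrically finite group every parabolic fixed point genuinely produces a cusp end, which is the standard bounded-parabolic-point fact implicitly used in Section~\ref{SUBSEC:geominfinity}.
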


\section{Cross sections, sets of branches and transfer operators}\label{SEC:crosssec}

Throughout this section let $\Gamma$ be a geometrically finite, non-cocompact Fuchsian group with at least one hyperbolic element and let $\Orbi$ denote the associated hyperbolic orbisurface. In this section we present the starting point of our constructions, the so-called \emph{sets of branches}, and prove various crucial properties of these seminal objects.

We start by presenting a notion of cross sections for the geodesic flow that we borrowed from~\cite{Pohl_Symdyn2d} and will use throughout. As for the classical notion, a cross section in our sense is a subset of the unit tangent bundle of~$\Orbi$ such that all intersections between any geodesic and the cross section are discrete with respect to the time parameter of the considered geodesic. However, it differs from the classical one in regard to the set of geodesics that is requested to intersect the cross section. We refer to the exposition in Section~\ref{SUBSEC:cross} for details.

We will define any cross section by choosing a set of representatives for it, i.e., a subset of the unit tangent bundle of~$\H$ that is bijective to the cross section. More precisely, we may and will consider the set of representatives as the primary object and the cross section as a consequential object that inherits all its properties from the set of representatives. The starting point of our constructions are well-structured sets of representatives, called \emph{sets of branches}, whose definition we introduce in Section~\ref{SUBSEC:cuspexp} and 
whose first essential properties we discuss in Sections~\ref{SUBSEC:propbranches}--\ref{SUBSEC:slowtrans}.

In a nutshell, the notion of sets of branches constitutes an equilibrium between our wish to keep the framework as general as possible and the requirements of a descent algorithm of cuspidal acceleration and a nicely structured passage from slow to fast transfer operators.  
Indeed, all sets of representatives for cross sections in~\cite{Pohl_Symdyn2d, Bruggeman_Pohl, Pohl_diss, Wabnitz} are sets of branches.
Thus, a large class of geometrically finite, non-compact hyperbolic orbisurfaces admits several choices of sets of branches.
Moreover, also many more choices of sets of branches with much different properties are possible. Example~\ref{EX:branchramification} below provides a first indication. 
On the other hand, the notion of a set of branches is sufficiently rigid to give rise to well-structured families of slow transfer operators, as we show in Section~\ref{SUBSEC:slowtrans}. Further, it allows for an acceleration/induction algorithm that turns any set of branches into a strict transfer operator approach. The notions of strict transfer operator approaches and fast transfer operators as well as their importance for Selberg zeta functions are provided in Section~\ref{SUBSEC:stricttrans}--\ref{SUBSEC:fasttrans}.

\subsection{Cross sections for the geodesic flow}\label{SUBSEC:cross}

For the notion of cross sections for the geodesic flow~$\GeoFlow$ on the hyperbolic orbisurface~$\Orbi$ we use a measure~$\mu$ on the set of all geodesics on~$\Orbi$ to single out those geodesics whose behavior is essential for the applications in mind. For the general statement of the definition of a cross section, we neither require the measure~$\mu$ to be finite or even a probability measure nor ask for any specific properties of the implicitly fixed $\sigma$-algebra on the set of geodesics. We refer to the discussion below the definition and to 
Section~\ref{SUBSEC:crossbranches} for the class of measures relevant for our applications. For any subset~$M$ of~$\UTB\Orbi$, any geodesic~$\widehat\gamma$ on~$\Orbi$ and any~$t\in\R$ we say that $\widehat\gamma$ \emph{intersects~$M$ at time~$t$} if $\widehat{\gamma}^{\prime}(t)\in M$.
\index[defs]{intersect}%
\index[defs]{geodesic!intersection}%

We call a subset~$\CrSc$ of~$\UTB\Orbi$ a \emph{cross section for~$\GeoFlow$ with respect to~$\mu$} if
\index[defs]{cross section}%
\index[symbols]{C@$\CrSc$}%
\begin{enumerate}[label=$\mathrm{(CS\arabic*)}$, ref=$\mathrm{CS}$\arabic*]
\item\label{CS:infinitelyoften}
$\mu$-almost every geodesic~$\widehat{\gamma}$ on~$\Orbi$ intersects~$\CrSc$ infinitely often in past and future, i.e., there exists a two-sided sequence~$(t_n)_{n\in\Z}$ with 
\[
\lim_{n\to\pm\infty} t_n = \pm\infty
\]
such that for each~$n\in\Z$, the geodesic~$\wh\gamma$ intersects~$\CrSc$ at time~$t_n$, and 

\item\label{CS:discreteintime}
each intersection of any geodesic~$\widehat\gamma$ on~$\Orbi$ and $\CrSc$ is discrete (in time). That is, for any $t\in\R$ with $\widehat\gamma^\prime(t)\in\CrSc$, there exists $\varepsilon>0$ such that 
\[
\widehat{\gamma}^{\prime}((t-\varepsilon,t+\varepsilon))\cap\CrSc=\{\widehat{
\gamma}^{\prime}(t)\}\,.
\]
\end{enumerate}
\index[defs]{0CS01@(CS1)}%
\index[symbols]{CS1@(CS1)}%
\index[defs]{0CS02@(CS2)}%
\index[symbols]{CS2@(CS2)}%
We call a cross section~$\CrSc$ for~$\GeoFlow$ \emph{strong} if it satisfies that
\index[defs]{cross section!strong}%
\begin{enumerate}[resume, label=$\mathrm{(CS\arabic*)}$, 
ref=$\mathrm{CS}$\arabic*]
\item\label{CS:strong}
every geodesic on~$\Orbi$ that intersects~$\CrSc$ at all, intersects~$\CrSc$ infinitely often both in past and future. 
\end{enumerate}
\index[defs]{0CS02@(CS3)}%
\index[symbols]{CS3@(CS3)}%

We remark that this notion of cross section deviates from the classical notion in that it does \emph{not} require that \emph{every} geodesic intersects the cross section. For the applications that motivate our article we may restrict to certain measures whose support contains all periodic geodesics, and we may relax~\eqref{CS:infinitelyoften} to 
\begin{enumerate}[label=$\mathrm{(CS\arabic*')}$, ref=$\mathrm{CS}$\arabic*']
\item\label{CS:infoftenalt} Every periodic geodesic~$\widehat{\gamma}$ on~$\Orbi$ intersects~$\CrSc$. 
\end{enumerate}
\index[defs]{0CS01@(CS1')}%
\index[symbols]{CS1@(CS1')}%
In~\cite{Pohl_Symdyn2d, Moeller_Pohl, Pohl_gamma, Pohl_mcf_general, Bruggeman_Pohl} it was seen that cross sections of this kind capture just the right amount of geometry and simultaneously allow for sufficient freedom to construct discretizations of the geodesic flow for which the associated transfer operators mediate between the geodesic flow and the Laplace eigenfunctions of~$\Orbi$.

Substituting~\eqref{CS:infinitelyoften} by~\eqref{CS:infoftenalt} would allow us to omit the choice of a measure from the definition of cross section. However, to achieve greater flexibility in view of potential further applications, we will work with a larger class of measures. Starting with Section~\ref{SUBSEC:crossbranches} we  will consider all those measures that do not assign positive mass to the geodesics that ``vanish'' into an end 
of~$\Orbi$. We refer to Proposition~\ref{CofSoB:crosssection} for a precise statement. In what follows we will often suppress the choice of the measure~$\mu$ from the notation.

Suppose that~$\CrSc$ is a strong cross section for~$\GeoFlow$. An immediate consequence of~\eqref{CS:strong} is that for any~$\widehat\nu\in\CrSc$, the \emph{first return time} 
\index[defs]{first return time}%
\index[symbols]{t@$\retime{\CrSc}$}%
\[
\retime{\CrSc}(\nu)\coloneqq\min\defset{t>0}{\widehat{\gamma}_{\nu}^{\prime}
(t)\in\CrSc}
\]
of~$\widehat\nu$ with respect to~$\CrSc$ exists. Further, the \emph{first return map}
\index[defs]{first return map}%
\index[symbols]{R@$\wh\Return$}%
\[
\wh\Return\colon\CrSc\to \CrSc\,,\quad \widehat{\nu}\mapsto\widehat{\gamma}_{\nu}^{\prime}(\retime{\CrSc}(\widehat{
\nu}))\,,
\]
is well-defined. The dynamical system
\[
 \Z\times\CrSc \to \CrSc\,,\quad (n,\wh\nu) \mapsto \wh\Return^n(\wh\nu)\,,
\]
for short $(\CrSc,\wh\Return)$, constitutes a discretization of the geodesic flow~$\GeoFlow$ on~$\Orbi$. We will apply the notions of first return time and first return map also to cross sections~$\CrSc$ that are not necessarily strong. In this case, $\retime{\CrSc}$ and~$\wh\Return$ might be defined only on a subset of~$\CrSc$, resulting in partial maps.

Let $\CrSc$ now be a cross section that may be non-strong. Recall the quotient map~$\pi\colon \UTB\H\to\UTB\Orbi$ from~\eqref{eq:def_pi2}. We call a subset $\BrU$ of~$\UTB\H$ a \emph{set of representatives} for~$\CrSc$ 
\index[symbols]{C@$\BrU$}%
\index[defs]{set of representatives}%
\index[defs]{representative!set of}%
if $\BrU$ and $\CrSc$ are bijective via~$\pi$, i.e., $\pi(\BrU) = \CrSc$ and the restricted map
\[
 \pi\vert_{\BrU}\colon \BrU \to \CrSc
\]
is bijective. For any set of representatives~$\BrU$, the first return map~$\wh\Return$ induces a \emph{first return map~$\Return$ on~$\BrU$}
\index[defs]{first return map}%
\index[symbols]{R@$\Return$}%
by 
\begin{equation}\label{eq:firstreturnH}
 \Return \coloneqq \pi\vert_{\BrU}^{-1}\circ\wh\Return\circ\pi\,.
\end{equation}
In other words, the diagram
\[
\xymatrix{
\BrU \ar[r]^{\Return} \ar[d]_{\pi}  & \BrU \ar[d]^{\pi}
\\
\CrSc \ar[r]^{\wh\Return} & \CrSc
}
\]
commutes. If $\CrSc$ is not strong and hence $\wh\Return$ is only partially defined, then $\Return$ is also only partially defined. Sometimes it is possible to find a partition of~$\BrU$ into (finitely or infinitely) many subsets, say
\begin{equation}\label{eq:decomp_branches}
 \BrU = \bigcup_{a\in A} \Cs{a}\,, 
\end{equation}
such that for each~$a\in A$, the map 
\[
 \Cs{a} \to \wh\R\,,\quad \nu\mapsto \gamma_\nu(+\infty)
\]
is injective. In this case, we set 
\[
 D_a \coloneqq \defset{ (\gamma_\nu(+\infty), a) }{ \nu\in \Cs{a} }
\]
and 
\begin{equation}\label{eq:surveyDunion}
 D \coloneqq \bigcup_{a\in A} D_a\,.
\end{equation}
We emphasize that the union in~\eqref{eq:surveyDunion} is disjoint.
Then $\Return$ induces a (well-defined, unique) map $F\colon D\to D$ that makes the diagram
\begin{equation}\label{eq:induced_system}
\xymatrix{
\BrU \ar[r]^{\Return} 
\ar[d]_{\overset{\nu}{\underset{\gamma_\nu(+\infty)}{\downmapsto}}} & \BrU 
\ar[d]^{\overset{\nu}{\underset{\gamma_\nu(+\infty)}{\downmapsto}}}
\\
D \ar[r]^{F} & D
}
\end{equation}
commutative. In the first component, the map~$F$ is piecewise given by the action of certain elements of~$\Gamma$ by linear fractional transformations. In the second component, $F$ is a certain symbol transformation.
We call $(D,F)$ the \emph{discrete dynamical system} induced by~$\BrU$. 
\index[defs]{discrete dynamical system}%

\subsection{Sets of branches}\label{SUBSEC:cuspexp}

If $\BrU$ is a set of representatives for a cross section~$\CrSc$ for the geodesic flow on~$\Orbi$, then $\BrU$ completely determines~$\CrSc$. For that reason, we may turn around the order of definitions. That means, for defining a cross section, we may start by picking a subset~$\BrU$ of~$\UTB\H$ such that the quotient map~$\pi\colon \BrU \to \pi(\BrU)$ is bijective and the image set~$\pi(\BrU)$ is a cross section. Then all properties of~$\CrSc=\pi(\BrU)$ are controlled by the properties of~$\BrU$, and specific requirements on a set of representatives can sometimes be guaranteed by a suitable choice of~$\BrU$. 

The concept of sets of branches, which we will introduce in this section, implements this idea. A set of branches is a family of subsets of~$\UTB\H$ that serves as a set of representatives with a decomposition as in~\eqref{eq:decomp_branches}, namely the elements of this family, and which induces a nicely structured 
discrete dynamical system as in~\eqref{eq:induced_system}. See Sections~\ref{SUBSEC:propbranches}--\ref{SUBSEC:slowtrans}. This concept takes advantage of points in~$\widehat{\R}\setminus\widehat{\R}_{\st}$, whose 
existence is equivalent to the non-cocompactness of~$\Gamma$ by Lemma~\ref{LEM:noncocomp}. This also explains why we restrict our considerations to non-cocompact Fuchsian groups.

We let
\begin{align}\label{EQDEF:basepoint}
\mathrm{bp}\colon\UTB\H\to\H\,,\quad (z,\nu)\mapsto z\,,
\end{align}
\index[symbols]{bzp@$\mathrm{bp}$}%
\index[defs]{projection on base points}%
denote the projection onto base points. For any convex subset~$M$ of~$\H$ and any unit tangent vector~$\nu\in\UTB\H$ satisfying $\base{\nu}\in\partial M$ we say that $\nu$ \emph{points into}~$M$
\index[defs]{points into}%
if there exists $t^*>0$ such that for all~$t\in(0,t^*)$ we have $\gamma_{\nu}(t)\in M^\circ$, where $M^\circ$ denotes the interior of~$M$.
\index[symbols]{Ma@$M^\circ$}%
\index[defs]{interior of a set}%
We further set
\begin{align}\label{EQDEF:Gammastar}
\Gamma^*\coloneqq\Gamma\setminus\{\id\}\,.
\end{align}
\index[symbols]{Gammas@$\Gamma^*$}%
We recall the definition of intersection between subsets of~$\UTB\H$ (or~$\UTB\Orbi$) and geodesics on~$\H$ (or~$\Orbi$) from Section~\ref{SUBSEC:cross}, and we recall the quotient map~$\pi$ from~\eqref{eq:def_pi2}.

\begin{defi}\label{DEF:setofbranches}
Let~$N\in\N$ and let~$\Cs1,\ldots, \Cs N$ be subsets of~$\UTB\H$. 
Set~$A\coloneqq\{1,\dots,N\}$, 
\[
 \BrU \coloneqq \bigcup_{j\in A} \Cs j\qquad\text{and}\qquad \BrS \coloneqq 
\defset{\Cs j }{ j\in A }\,.
\]
We call $\BrS$ a \emph{set of branches for the geodesic flow on}~$\Orbi$ if it satisfies the following properties:
\index[defs]{set of branches}%
\index[defs]{branch!set of}%
\index[symbols]{C@$\BrS$}%
\index[symbols]{C@$\BrU$}%
\index[symbols]{A@$A$}%

\begin{enumerate}[label=$\mathrm{(B\arabic*)}$, ref=$\mathrm{B\arabic*}$, 
itemsep=2ex]
\item\label{BP:closedgeodesicsHtoX} 
\index[defs]{0B01@(B1)}%
\index[symbols]{B01@(B1)}%
For each~$j\in A$ there exists~$\nu\in\Cs{j}$ such that $\pi(\gamma_\nu)$ is a periodic geodesic on~$\Orbi$.
\item\label{BP:completegeodesics}
\index[defs]{0B02@(B2)}%
\index[symbols]{B02@(B2)}%
For each~$j\in A$, the set~$\overline{\base{\Cs{j}}}$ is a complete geodesic segment in~$\H$ and its endpoints are 
in~$\widehat{\R}\setminus\widehat{\R}_{\st}$. In particular, for each $j\in A$, the set~$\H\setminus\overline{\base{\Cs{j}}}$ decomposes uniquely into two (geodesically) convex open half-spaces.
\item\label{BP:pointintohalfspaces}
\index[defs]{0B03@(B3)}%
\index[symbols]{B03@(B3)}%
For each~$j\in A$, all elements of~$\Cs{j}$ point into the same open half-space relative to~$\overline{\base{\Cs{j}}}$. We denote this half-space by~$\Plussp{j}$ and set
\[
\Minussp{j}\coloneqq\H\setminus(\overline{\base{\Cs{j}}}\cup\Plussp{j})\,.
\]
\index[symbols]{H@$\Plussp{j}$}%
\index[symbols]{H@$\Minussp{j}$}%
\index[defs]{half-space}%
\index[symbols]{I@$I_j$}%
Further, we denote by~$I_j$ the largest open subset of~$\wh\R$ that is contained in~$\geo\Plussp{j}$, and by~$J_j$ the largest open subset of~$\wh\R$ contained in~$\geo\Minussp{j}$.
\index[symbols]{J@$J_j$}%
\item\label{BP:coverlimitset}
\index[defs]{0B04@(B4)}%
\index[symbols]{B04@(B4)}%
The~$\Gamma$-translates of~$\defset{I_j}{j\in A}$ cover the set~$\wh \R_{\st}$, i.e.,
\[
\wh \R_{\st}\subseteq\bigcup_{j\in A}\bigcup_{g\in\Gamma}g\act I_j\,.
\]
\item\label{BP:allvectors}
\index[defs]{0B05@(B5)}%
\index[symbols]{B05@(B5)}%
For each $j\in A$ and each pair~$(x,y)\in \Iset{j}\times\Jset{j}$ there exists a (unique) vector~$\nu\in\Cs{j}$ such that
\[
(x,y)=(\gamma_\nu(+\infty),\gamma_\nu(-\infty))\,.
\]
\item\label{BP:disjointunion}
\index[defs]{0B06@(B6)}%
\index[symbols]{B06@(B6)}%
If $\overline{\base{\Cs{j}}}\cap g\act\overline{\base{\Cs{k}}}\ne\varnothing$ for some~$j,k\in A$ and~$g\in\Gamma$, then either $j=k$ and $g=\id$, or $\PMsp{j} = g\act\MPsp{k}$.
\item\label{BP:intervaldecomp}
\index[defs]{0B07@(B7)}%
\index[symbols]{B07@(B7)}%
For each pair~$(a,b)\in A\times A$ there exists a (possibly empty) subset~$\Trans{}{a}{b}$ of~$\Gamma$ such that
\begin{enumerate}[label=$\mathrm{(\alph*)}$, ref=$\mathrm{\alph*}$]
\item\label{BP:intervaldecompGdecomp}
for all~$j\in A$ we have 
\begin{align*}
\bigcup_{k\in A}\bigcup_{g\in\Trans{}{j}{k}}g\act I_{k}&\subseteq 
I_j
\intertext{and}
\bigcup_{k\in 
A}\bigcup_{g\in\Trans{}{j}{k}}g\act\Iset{k}&=\Iset{j}\,,
\end{align*}
and these unions are disjoint,
\item\label{BP:intervaldecompGgeod}
for each pair~$(j,k)\in A\times A$, each $g\in\Trans{}{j}{k}$ and each pair of points~$(z,w)\in\base{\Cs{j}}\times g\act\base{\Cs{k}}$, the geodesic segment~$(z,w)_{\H}$ is nonempty, is contained in~$\Plussp{j}$ and does not intersect~$\Gamma\act\BrU$,
\item\label{BP:intervaldecompback}
for all~$j\in A$ we have 
\[
 \Jset{j}\subseteq\bigcup_{k\in A}\bigcup_{h\in\Trans{}{k}{j}}h^{-1}\act\Jset{k}\,.
\]
\end{enumerate}
\end{enumerate}

We call the sets~$\Cs{j}$, $j\in A$, the \emph{branches} of~$\BrS$,
\index[defs]{branch}%
\index[symbols]{C@$\Cs{j}$}%
and $\BrU$ the \emph{branch union}.
\index[defs]{branch union}%
\index[symbols]{C@$\BrU$}%
Further, we call the sets~$\Trans{}{j}{k}$, $j,k\in A$, the \emph{(forward) transition sets} of~$\BrS$,
\index[defs]{transition set}%
\index[defs]{forward transition set}%
\index[defs]{transition set!forward}%
\index[symbols]{G@$\Trans{}{j}{k}$}%
with $\Trans{}{j}{k}$ being the \emph{(forward) transition set from~$\Cs{j}$ to~$\Cs{k}$}.

A set of branches is called \emph{admissible} if it satisfies the following property:
\index[defs]{admissible}%
\index[defs]{set of branches!admissible}%

\begin{enumerate}[resume*]
\item\label{BP:leavespaceforflip}
\index[defs]{0B08@(B8)}%
\index[symbols]{B08@(B8)}%
There exist a point~$q\in\widehat{\R}$ and an open neighborhood~$\mathcal{U}$ of~$q$ in~$\widehat{\R}$ such that 
\[
\mathcal{U}\cap\bigcup_{j\in A}\Iset{j}=\varnothing\qquad\text{and}\qquad q\notin I_j\,,
\]
for every~$j\in A$.
\end{enumerate}

A set of branches~$\BrS$ is called \emph{non-collapsing} if it satisfies 
\index[defs]{non-collapsing}%
\index[defs]{set of branches!non-collapsing}%

\begin{enumerate}[resume*]
\item\label{BP:noidentity}
\index[defs]{0B09@(B9)}%
\index[symbols]{B09@(B9)}%
For every~$n\in\N$, every choice of~$j_1,\dots,j_{n+1}\in A$ and every choice of elements~$g_i\in\Trans{}{j_i}{j_{i+1}}$ for~$i\in\{1,\ldots,n\}$, we have
\[
g_1\cdots g_n\ne\id\,.
\]
\end{enumerate}
If~$\BrS$ does not satisfy~\eqref{BP:noidentity}, then it is called \emph{collapsing}.
\index[defs]{collapsing}%
\index[defs]{set of branches!collapsing}%
\end{defi}

The following remark highlights some immediate consequences of the properties of a set of branches. 

\begin{remark}\label{REM:SoBrem}
We comment on some properties of a set of branches that will be used throughout and that are immediate by its definition. We resume the notation from Definition~\ref{DEF:setofbranches}.

\begin{enumerate}[label=$\mathrm{(\alph*)}$, ref=$\mathrm{\alph*}$, itemsep=1ex]

\item\label{SoBrem:periodgeod}
A close relationship between the set of branches~$\BrS$ and periodic geodesics 
on~$\Orbi$ is guaranteed by~\eqref{BP:closedgeodesicsHtoX} and, in 
fact,~\eqref{BP:coverlimitset}.
The property \eqref{BP:closedgeodesicsHtoX} assures that every branch 
contributes in a meaningful way to the complete collection of branches by detecting at least one periodic geodesic on~$\Orbi$ or, more precisely, a lift to~$\H$ of a periodic geodesic on~$\Orbi$. 
In particular, it implies that each branch is a nonempty set.
Therefore, for orbifolds without 
periodic geodesics (e.g., a parabolic cylinder, 
see~\cite{Borthwick_book}) a set of branches in the sense of 
Definition~\ref{DEF:setofbranches} does not exist.
On the contrary, property~\eqref{BP:coverlimitset} has the 
consequence that every periodic geodesic on~$\Orbi$ is detected by~$\BrS$. See 
Proposition~\ref{PROP:oldB1} below.

\item The emphasis on periodic geodesics is due to our applications. For a 
strict transfer operator approach and hence a representation of the Selberg zeta 
function as a Fredholm determinant of a transfer operator family, we need to 
provide a certain symbolic presentation of each periodic geodesic on~$\Orbi$ by 
means of iterated intersections with a cross section. For more details we refer 
to the brief discussion in Section~\ref{SUBSEC:cross} as well as 
to~\cite{Moeller_Pohl, Pohl_representation, FP_NECM}.

\item\label{SoBrem:structure}
Properties~\eqref{BP:completegeodesics}--\eqref{BP:allvectors} determine the 
structure of branches. Each branch partitions the hyperbolic plane into two 
geodesically convex half-spaces and a complete geodesic segment. The requirement 
that, for each $j\in A$, the endpoints of~$\overline{\base{\Cs{j}}}$ are 
in~$\wh\R\setminus\wh\R_{\st}$ implies that geodesics~$\gamma$ on~$\H$ with 
$\gamma(\R)=\overline{\base{\Cs{j}}}$ do not represent any periodic 
geodesic on~$\Orbi$. This condition further implies that 
$\Iset{j}\cap\Jset{j}=\varnothing$.

\item\label{SoBrem:descrHalfspaces}
For each~$j\in A$, the requirements 
of~\eqref{BP:pointintohalfspaces}--\eqref{BP:allvectors} yield that the union 
\[
I_j\cup J_j\cup\geo\base{\Cs{j}}
\]
is disjoint and equals~$\widehat{\R}$. The set~$\overline{\base{\Cs{j}}}$ is the 
complete geodesic segment in~$\H$ that connects the two endpoints of~$I_j$ or, 
equivalently, of~$J_j$. The boundary of the half-spaces~$\Plussp{j}$ 
and~$\Minussp{j}$ in~$\H\cup\partial_{\geo}\H$ is 
\begin{eqnarray*}
\partial_{\geo}\Plussp{j}=\overline{\base{\Cs{j}}}\cup\overline{I_j}^{\geo}
&\text{and}&\partial_{\geo}\Minussp{j}=\overline{\base{\Cs{j}}}\cup\overline{
J_j}^{\geo}\,,
\end{eqnarray*}
respectively. It will be useful to fix the following notation for the endpoints 
of~$\overline{\base{\Cs{j}}}$: let $\point{j}{\eX}, \point{j}{\eY}$ be the 
elements in~$\wh\R$ such that 
\[
\{\point{j}{\eX},\point{j}{\eY}\}=\gbound{\overline{\base{\Cs{j}}}}
\]
\index[symbols]{X@$\eX_j$}%
\index[symbols]{Y@$\eY_j$}%
and that, when traveling along the geodesic segment~$\overline{\base{\Cs{j}}}$ 
from~$\point{j}{\eX}$ to~$\point{j}{\eY}$, the half-space~$\Plussp{j}$ lies to 
the right of the path of travel. See Figure~\ref{FIG:halfspaces}.

\item\label{SoBrem:vecfrompoint}
Property~\eqref{BP:allvectors} further has the following consequence for all 
$j\in A$: Let $(x,y)\in\Iset{j}\times\Jset{j}$ and let $\gamma$ be a geodesic 
on~$\H$ from~$x$ to~$y$. The unique vector~$\nu\in\Cs{j}$ with 
$(\gamma_\nu(+\infty),\gamma_\nu(-\infty))=(x,y)$ is then 
\[
\nu=\gamma^{\prime}(t)\,,
\]
where $t\in\R$ is the unique time with $\gamma(t)\in\base{\Cs{j}}$. We emphasize 
that~\eqref{BP:allvectors} does not prevent the branches from containing 
vectors~$\nu$ such that 
$(\gamma_\nu(+\infty),\gamma_\nu(-\infty))\notin\R_{\st}\times\R_{\st}$.

\item\label{SoBrem:pairwisedisjoint}
Properties~\eqref{BP:disjointunion} and~\eqref{BP:intervaldecomp} describe 
the mutual interplay of the branches. Property~\eqref{BP:disjointunion} implies 
that a set of branches~$\{\Cs{1},\dots,\Cs{n}\}$ is pairwise disjoint, 
which will be crucial for the well-definedness of the intersection sequences in 
Section~\ref{SUBSEC:reducedbranches} below.  A stronger 
statement is shown in Proposition~\ref{PROP:CofSoBnonst}\eqref{CofSoB:disjointunion} below.
Property~\eqref{BP:intervaldecomp} 
uses the close relation between each branch~$\Cs{j}$, $j\in A$,  and its 
associated sets~$I_j$ and~$J_j$ in~$\wh\R$ in order to provide the tools 
necessary to track the behavior of geodesics which intersect~$\Cs{j}$ in future 
and past time directions. As we will see in Sections~\ref{SUBSEC:reducedbranches} 
and~\ref{SUBSEC:slowtrans}, the rather precise tracking makes it possible to deduce an explicit discrete model of the geodesic flow, or in other words, of the arising symbolic dynamics or intersection sequences.

\item\label{SoBrem:coincide} In the situation of~\eqref{BP:disjointunion} we 
always have $\overline{\base{\Cs{j}}}=g\act\overline{\base{\Cs{k}}}$. However, 
it does not necessarily follow that $\base{\Cs{j}}=g\act\base{\Cs{k}}$. This 
subtle difference makes it necessary to be rather careful with choices and 
argumentation at some places.

\item\label{SoBrem:infinitesets}
We emphasize that the uniqueness or non-uniqueness of the forward transition 
sets is not part of the requirements in~\eqref{BP:intervaldecomp}. For the 
moment and in particular in isolated consideration of~\eqref{BP:intervaldecomp} 
it may well be that different choices for the families of forward transition 
sets~$(\Trans{}{j}{k})_{j,k\in A}$ can be made. However, in 
Proposition~\ref{PROP:CofSoBst} we will see that the interplay of all properties 
of a set of branches enforces uniqueness of these sets. 

We further emphasize that the transition sets~$\Trans{}{j}{k}$, $j,k\in A$, are 
allowed to be infinite. 
In Example~\ref{EX:branchramification} we show that there are sets of branches with finite as well as with infinite transition sets.
In Section~\ref{SUBSEC:branchram} we provide a characterization of sets of branches 
with infinite transition sets.

\item\label{SoBrem:leavespaceforflip}
Property~\eqref{BP:leavespaceforflip} allows us to suppose that for 
all~$j\in A$, the intervals~$I_j$ are contained in~$\R$. 
For this we possibly need to conjugate $\Gamma$ by some element~$g\in\PSL_2(\R)$, translate~$\BrS$ by~$g$ and consider a set of branches for $g\Gamma g^{-1}$.
In other words, \eqref{BP:leavespaceforflip} allows us to suppose without loss of generality that the discrete dynamical system induced by~$\BrS$ is completely defined within~$\R$ and any handling of a second manifold chart to investigate neighborhoods of~$\infty$ can be avoided.
This often simplifies the discussion, in particular in Section~\ref{SEC:strictTOAexist}.

It is immediately clear that a sufficient (but not necessary) condition for~\eqref{BP:leavespaceforflip} is that~$\wh\R\setminus\bigcup_{k=1}^NI_k$ contains an open interval.
Let~$j\in A$.
Property~\eqref{BP:closedgeodesicsHtoX} is equivalent to the existence of an equivalence class of geodesics~$[\gamma]$ on~$\H$ such that~$\pi(\gamma)\in\Geo_{\Per}(\Orbi)$ and 
\[
(\gamma(+\infty),\gamma(-\infty))\in \Iset{j}\times\Jset{j}\subseteq\ I_j\times J_j\,,
\]
for every representative~$\gamma$ of~$[\gamma]$.
The class~$[\gamma]$ is then the axis of some hyperbolic transformation~$h\in\Gamma$, which, because of Lemma~\ref{LEM:periodicaxes}, contracts the interval~$I_j$ towards~$\fixp{+}{h}=\gamma(+\infty)$.
For every~$j\in A$ a hyperbolic transformation~$h_j\in\Gamma$ can be found in this way, and the contracting behavior assures that we find~$i_1,\dots,i_N\in\N$ such that
\[
\wh\R\setminus\bigcup_{k=1}^Nh_k^{i_k}\act I_k
\]
contains an open interval.
Lemma~\ref{LEM:nouniqueSoB} below implies that
\[
\{h_k^{i_1}\act\Cs{1},\dots,h_N^{i_N}\act\Cs{N}\}
\]
is again a set of branches for the geodesic flow on $\Orbi$.
For orbisurfaces with cusps it is often possible and appropriate to work with parabolic instead of hyperbolic transformations.

We refer to Proposition~\ref{PROP:noncollaps} below for a rigorous treatment of this issue.

\item\label{SoBrem:noidentity}
Indispensable for our approach is a \emph{unique} coding of periodic geodesics in 
terms of the chosen generators of the Fuchsian group.
This requires in particular that the identity transformation will not be encountered during the tracking of geodesics.
This property is formulated as~\eqref{BP:noidentity}. 
Even though \eqref{BP:noidentity} will eventually be fundamental, this property need not be guaranteed immediately during the construction of a set of branches. Indeed, as we will see, every set that fulfills~\eqref{BP:closedgeodesicsHtoX}--\eqref{BP:intervaldecomp} can be transformed into one that fulfills~\eqref{BP:noidentity} (at the tolerable cost of weakening others, most profoundly~\eqref{BP:allvectors}).
This is done by means of a reduction procedure, which we call~\emph{identity elimination}. It is discussed in Section~\ref{SEC:stepredux} below.
\end{enumerate}
\end{remark}

\begin{figure}[h]
\begin{tikzpicture}[scale=9.5]
\foreach \x in {15,30,45,60,75,90,105,120,135,150,165}{
	\foreach \y in {0,45,-45}{
		\tikzmath{\p=.5 + .4*cos(\x); \q=.4*sin(\x);
						 \v=(.9*\p+.05-\p)*cos(\y)-(.9*\q-\q)*sin(\y)+\p; 
						 \w=(.9*\p+.05-\p)*sin(\y)+(.9*\q-\q)*cos(\y)+\q;
						}
		\draw[->,color=lightgray] (\p,\q) -- (\v,\w);
		}
	}
\draw[style=thick] (0:.9) arc (0:180:.4);
 \foreach \x/\y in {.1/$\point{j}{\eX}$,.9/$\point{j}{\eY}$}
    \draw (\x,0.00) -- (\x,-0.02) node [below] {\y};
\coordinate [label=below:$\color{gray!75}\Cs{j}$] (C) at (.5,.36);
\coordinate [label={$\overline{\base{\Cs{j}}}$}] (BC) at (.2,.37);
\draw (.21,.38) -- (.3,.346);
\draw [decorate,decoration={brace,amplitude=10pt,mirror}]
(0.1,0) -- (0.9,0) node [below,midway,yshift=-10pt]
{$I_j$};
\draw [decorate,decoration={brace,amplitude=10pt,mirror}]
(0.9,0) -- (1.15,0) node [below,midway,yshift=-10pt]
{$J_j$};
\draw [decorate,decoration={brace,amplitude=10pt,mirror}]
(-.15,0) -- (0.1,0) node [below,midway,yshift=-10pt]
{$J_j$};
\fill[color=white] (1.1,-.03)--(1.155,-.03)--(1.155,.0)--(1.1,.0)--cycle;
\fill[color=white] (-.1,-.03)--(-.155,-.03)--(-.155,.0)--(-.1,.0)--cycle;
\draw[style=thick] (-.1,0) -- (1.1,0);
\coordinate [label=$\Plussp{j}$] (H+) at (.5,.17);
\coordinate [label=$\Minussp{j}$] (H-) at (.9,.32);
\end{tikzpicture}
\caption[halfspaces]{The relation between the sets $\overline{\base{\Cs{j}}}$, 
$\Plussp{j}$, $\Minussp{j}$, $I_j$, $J_j$ and the points~$\eX_j$ and~$\eY_j$ for 
the branch~$\Cs{j}$.}\label{FIG:halfspaces}
\end{figure}

Examples for sets of branches can be found in~\cite{Pohl_Symdyn2d, 
Pohl_representation, Pohl_diss, Wabnitz}. Indeed, all cross sections 
constructed there arise from sets of branches. In what follows we provide sets of branches for Schottky groups as well as for one further class of Fuchsian groups. Throughout, we will come back to these examples to illustrate our techniques.

\begin{example}\label{EX:schottkySoB}
Let $\Gamma_{\mathrm{S}}$ be a Schottky group,
\index[defs]{Schottky group}%
\index[defs]{Fuchsian group!Schottky}%
that is, a geometrically finite, 
non-cofinite Fuchsian group consisting solely of hyperbolic elements and the 
identity. By~\cite{Button_Schottky}, we may associate to~$\Gamma_{\mathrm{S}}$ a 
choice of \emph{Schottky data},
\index[defs]{Schottky data}%
that is, a tuple
\[
\left(r,\left\{\mathcal{D}_{j},\mathcal{D}_{-j}\right\}_{j=1}^r,\left\{s_{j},s_{
-j}\right\}_{j=1}^r\right),
\]
where $r\in\N$, $\{s_1,\dots,s_r\}\subseteq\PSLR$ is a set of 
generators of~$\Gamma_{\mathrm{S}}$, $s_{-j}\coloneqq s_j^{-1}$ for 
$j\in\{1,\ldots, r\}$, and 
$\mathcal{D}_1,\dots,\mathcal{D}_r,\mathcal{D}_{-1},\dots,\mathcal{D}_{-r}$ are 
mutually disjoint open Euclidean disks in~$\C$ centered on~$\R$ such that for 
each~$j=1,\dots,r$, the element~$s_j$ maps the exterior of~$\mathcal{D}_j$ to 
the interior of~$\mathcal{D}_{-j}$, and such that 
\[
 \H \setminus \bigcup_{j=1}^r \bigl(\overline{\mc D_j} \cup \overline{\mc D_{-j}}\bigr)
\]
is a fundamental domain for~$\Gamma_{\mathrm{S}}$. For $j\in\{\pm1,\dots,\pm 
r\}$ we let $\Cs{j}$ be the set of unit tangent vectors~$\nu\in\UTB\H$ that are 
based on the boundary~$\partial\mathcal{D}_j$ of~$\mc D_j$ and that point 
into~$\mathcal{D}_j$ (thus, 
$\gamma_\nu(+\infty)\in\Rea{\left(\mathcal{D}_j\right)}$). Then 
\[
\{\Cs{1},\dots,\Cs{r},\Cs{-1},\dots,\Cs{-r}\}
\]
is a set of branches for the geodesic flow on the hyperbolic surface~$\quod{\Gamma_{\mathrm{S}}}{\H}$.
\end{example}

\begin{example}\label{EX:G3Def}
Let $\lambda>1$. Let $\Gamma_{\lambda}$ be the subgroup of $\PSLR$ generated by the elements
\begin{eqnarray*}
t_{\lambda}\coloneqq\begin{bmatrix}1&\lambda\\0&1\end{bmatrix}&\text{and}
&h\coloneqq\begin{bmatrix}2&-1\\3&-1\end{bmatrix}\,.
\end{eqnarray*}
Then $\Gamma_{\lambda}$ is discrete in $\PSLR$ and (obviously) geometrically 
finite. It has one funnel (represented by the interval $[1,\lambda]$), one cusp 
(represented by $\infty$), and one elliptic point of order $3$ (represented by 
$\tfrac{1}{2}+\i\tfrac{1}{2\sqrt{3}}$).
A fundamental domain for $\Gamma_{\lambda}$ is indicated in 
Figure~\ref{FIG:G3fund}.
A first choice of a set of branches~$\BrS$ is indicated in Figure~\ref{FIG:G3SoB}. It arises from the cusp expansion algorithm in~\cite{Pohl_diss}.

\begin{figure}[h]
\begin{tikzpicture}[scale=10]
\tikzmath{\q=1/(2*sqrt(3));
				  \r=.8*1/3;
				  \v=.1+\r;
				  \w=.1+2*\r;
				  }
\fill[color=lightgray!50] (1,0) -- (.9,0) arc (0:120:\r) -- (.5,.8*\q) arc 
(60:180:\r) -- (0,0)  -- (0,.6) -- (1,.6) -- cycle;
\draw (.9,0) arc (0:120:\r);
\draw (.5,.8*\q) arc (60:180:\r);
\draw (0,0) -- (0,.6);
\draw (1,0) -- (1,.6);
 \foreach \x/\y in 
{.1/$0$,.9/$1$,0/$\frac{1-\lambda}{2}$,1/$\frac{1+\lambda}{2}$,\v/$\frac{1}{3}$,
\w/$\frac{2}{3}$}
    \draw (\x,0.00) -- (\x,-0.02) node [below] {\y};
\draw[dashed] (\w,0) arc (0:60:\r);
\draw[dashed] (.5,.8*\q) arc (120:180:\r);
\draw[style=thick] (-.1,0) -- (1.1,0);
\coordinate [label=below:$\color{gray}\fund$] (F) at (.5,.5);
\end{tikzpicture}
\caption[halfspaces]{A Ford fundamental domain domain for $\Gamma_{\lambda}$. 
The side-pairing transforms are $t_{\lambda}^{\pm1}$ for the vertical sides and 
$h^{\pm1}$ for the non-vertical sides, respectively.}\label{FIG:G3fund}
\end{figure}
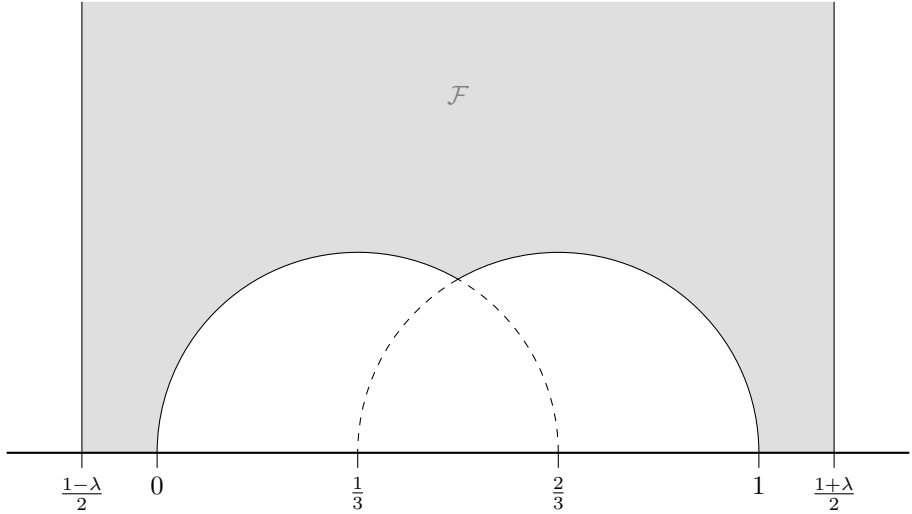

\begin{figure}[h]
\begin{tikzpicture}[scale=9.5]
\tikzmath{\q=1/(2*sqrt(3));
				  \r=.8/6;
				  \v=.1+.8*1/3;
				  \w=.1+2*.8*1/3;
				  }
\foreach \x in {0,.1,\v,\w,.9}
	\fill[color=gray!50] (\x,0) -- (\x+.02,0) -- (\x+.02,.6) -- (\x,.6) -- 
cycle;
\foreach \x in {.1,.9,1}
	\fill[color=gray!50] (\x,0) -- (\x-.02,0) -- (\x-.02,.6) -- (\x,.6) -- 
cycle;
\foreach \x in {0,\v,\w}
	\fill[color=gray!15] (\x,0) -- (\x-.02,0) -- (\x-.02,.6) -- (\x,.6) -- 
cycle;	
\fill[color=gray!15] (1,0) -- (1+.02,0) -- (1+.02,.6) -- (1,.6) -- cycle;	
\foreach \x/\y in 
{\v/$h^{-1}\act\!\Cs{5}$,\w/$h\act\!\Cs{4}$,.9/$h\act\!\Cs{8}$}{
	\fill[color=gray!15] (\x,0) arc (0:180:\r);
	\fill[color=white] (\x-.02,0) arc (0:180:\r-.02);
	\draw (\x,0) arc (0:180:\r);
	\coordinate [label=below:\y] (C\x) at (\x-\r,\r-.01);
	}	
 \foreach \x/\y/\z/\s in 
{0/$\frac{1-\lambda}{2}$/$\Cs{1}$/$t_{\lambda}^{-1}\act\!\Cs{6}$,
 									   .1/$0$/$\Cs{2}$/$\Cs{8}$,
\v/$\frac{1}{3}$/$\Cs{3}$/$h^{-1}\act\!\Cs{4}$,
\w/$\frac{2}{3}$/$\Cs{4}$/$h\act\!\Cs{3}$,
 									   .9/$1$/$\Cs{5}$/$\Cs{7}$,
1/$\frac{1+\lambda}{2}$/$t_{\lambda}\act\!\Cs{1}$/$\Cs{6}$}{
    \draw (\x,0.00) -- (\x,-0.02) node [below] {\y};
    \draw (\x,0) -- (\x,.45) node [right] {\z};
    \draw (\x,.6) -- (\x,.4) node [left] {\s};
    }
\fill[color=white] (1.1,-.03)--(1.155,-.03)--(1.155,.0)--(1.1,.0)--cycle;
\fill[color=white] (-.1,-.03)--(-.155,-.03)--(-.155,.0)--(-.1,.0)--cycle;
\draw[style=thick] (-.1,0) -- (1.1,0);
\end{tikzpicture}
\caption[halfspaces]{A set of branches~$\BrS=\defset{\Cs{j}}{j=1,\dots,8}$ for 
$\Gamma_{\lambda}$ (dark gray) together with their translates relevant for the 
determination of the sets $\Trans{}{.}{.}$ (light gray).}\label{FIG:G3SoB}
\end{figure}
\end{example}

\subsection{Elementary properties of sets of 
branches}\label{SUBSEC:propbranches}

Throughout this section let $\BrS = \{ \Cs{1},\ldots, \Cs{N}\}$ be a set of 
branches for the geodesic flow on~$\Orbi$, set $A\coloneqq \{1,\ldots, N\}$ and 
let $\BrU = \bigcup_{j\in A} \Cs{j}$ denote the branch union of~$\BrS$. In the 
course of this section and the following two sections we will show that 
\[
\CrSc\coloneqq\pi(\BrU)
\]
is a cross section with respect to any measure in a certain class and also in 
the sense of~\eqref{CS:infoftenalt} and~\eqref{CS:discreteintime}. See 
Proposition~\ref{CofSoB:crosssection}. We will further find a subset of~$\CrSc$ 
that is a strong cross section. See Corollary~\ref{CofSoB:strongcrosssection}.

For any $j\in A$, we resume the notation for the sets~$I_j$, $J_j$, $\Plussp{j}$ 
and~$\Minussp{j}$ from~\eqref{BP:pointintohalfspaces}. We fix a family of 
forward transition sets~$(\Trans{}{j}{k})_{j,k\in A}$. See~\eqref{BP:intervaldecomp}. Further, for $j,k\in A$ 
we set
\begin{equation}\label{eq:backsets}
\Past{}{k}{j}\coloneqq 
\Trans{}{k}{j}^{-1}=\defset{g^{-1}}{g\in\Trans{}{k}{j}}\,,
\end{equation}
which we call the \emph{backwards transition set from~$\Cs{k}$ to~$\Cs{j}$}.
\index[defs]{transition set!backwards}%
\index[defs]{backwards transition set}%
\index[symbols]{V@$\Past{}{k}{j}$}%
In this section 
we establish some first properties of the set of branches, the 
transition sets and their interactions, which will be used throughout this 
article.

\begin{prop}\label{PROP:BPintervaldecompV}
The backwards transition sets satisfy the properties dual 
to~\eqref{BP:intervaldecomp}. That is, \eqref{BP:intervaldecomp} holds also for  
$(\Past{}{k}{j})_{j,k\in A}$ in place of~$(\Trans{}{j}{k})_{j,k\in A}$ and the 
roles of~$(I_j)_{j\in A}$ and $(J_j)_{j\in A}$ as well as $(\Plussp{j})_{j\in 
A}$ and $(\Minussp{j})_{j\in A}$ interchanged. More precisely:
\begin{enumerate}[label=$\mathrm{(\roman*)}$, ref=$\mathrm{\roman*}$]
\item\label{BP:intervaldecompVdecomp}
For all~$j\in A$ we have 
\begin{align*}
\bigcup_{k\in A}\bigcup_{g\in\Past{}{k}{j}}g\act J_{k}&\subseteq 
J_j
\intertext{and}
\bigcup_{k\in A}\bigcup_{g\in\Past{}{k}{j}}g\act\Jset{k}&=\Jset{j}\,,
\end{align*}
and these unions are disjoint,

\item\label{BP:intervaldecompVgeod}
for each pair~$(j,k)\in A\times A$, each $g\in\Past{}{k}{j}$ and each pair of 
points~$(v,z)\in g\act\base{\Cs{k}}\times \base{\Cs{j}}$, the geodesic segment~
$(v,z)_{\H}$ is nonempty, contained in~$\Minussp{j}$ and does not intersect
$\Gamma\act\BrU$.
\end{enumerate}
\end{prop}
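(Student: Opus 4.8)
The plan is to reduce each assertion to the already‑established forward properties \eqref{BP:intervaldecompGdecomp}--\eqref{BP:intervaldecompback} by applying, to a given $g\in\Past{}{k}{j}$, the element $h\coloneqq g^{-1}\in\Trans{}{k}{j}$, using repeatedly that $\Gamma\act\BrU$ and $\wh\R_{\st}$ are $\Gamma$-invariant and that $I_i$, $J_i$ and the geodesic boundary of $\overline{\base{\Cs{i}}}$ partition $\wh\R$ (Remark~\ref{REM:SoBrem}). The key step is a \emph{nesting lemma}: for every $h\in\Trans{}{k}{j}$ the complete geodesic segments $\overline{\base{\Cs{k}}}$ and $h\act\overline{\base{\Cs{j}}}$ are disjoint in $\H$ and $h\act\overline{\base{\Cs{j}}}\subseteq\Plussp{k}$; consequently $h^{-1}\act\overline{\base{\Cs{k}}}\subseteq\Minussp{j}$ and $h^{-1}\act\Minussp{k}\subseteq\Minussp{j}$. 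To prove it, note first that $(k,h)=(j,\id)$ is impossible, since then \eqref{BP:intervaldecompGgeod} would force a nonempty sub-segment of $\overline{\base{\Cs{j}}}$ into the open half-space $\Plussp{j}$. If $\overline{\base{\Cs{k}}}$ and $h\act\overline{\base{\Cs{j}}}$ met, then \eqref{BP:disjointunion} would yield $\Plussp{k}=h\act\Minussp{j}$, hence $h\act I_j=\geo(h\act\Plussp{j})=\geo\Minussp{k}=J_k$, contradicting $h\act I_j\subseteq I_k$ (from \eqref{BP:intervaldecompGdecomp}) since $I_k\cap J_k=\varnothing$. With disjointness in hand, $h\act I_j\subseteq I_k=\geo\Plussp{k}$ places the ideal endpoints of $h\act\overline{\base{\Cs{j}}}$ in $\overline{I_k}^{\geo}$, so a convexity argument gives $h\act\overline{\base{\Cs{j}}}\subseteq\Plussp{k}$; and $\overline{\base{\Cs{k}}}$ lies on the $h\act\Minussp{j}$-side of $h\act\overline{\base{\Cs{j}}}$ (otherwise the ideal endpoints of $\overline{\base{\Cs{k}}}$ would force $h\act I_j=I_k$, hence $h\act\overline{\base{\Cs{j}}}=\overline{\base{\Cs{k}}}$), which yields the two remaining inclusions.

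\textbf{Part \eqref{BP:intervaldecompVgeod}.} Granting the nesting lemma this follows at once: for $g=h^{-1}\in\Past{}{k}{j}$ and $(v,z)\in g\act\base{\Cs{k}}\times\base{\Cs{j}}$, the translated segment $h\act(v,z)_{\H}$ has endpoints in $\base{\Cs{k}}$ and $h\act\base{\Cs{j}}$, so by \eqref{BP:intervaldecompGgeod} it is nonempty, lies in $\Plussp{k}$, and avoids $\Gamma\act\BrU$; applying $h^{-1}$ and using $\Gamma$-invariance, $(v,z)_{\H}$ is nonempty, avoids $\Gamma\act\BrU$, starts at a point of $h^{-1}\act\overline{\base{\Cs{k}}}\subseteq\Minussp{j}$ and ends at $z\in\overline{\base{\Cs{j}}}$; since a geodesic segment meets the complete geodesic segment $\overline{\base{\Cs{j}}}$ in at most one point (it is not contained in it, by the nesting lemma), connectedness forces the open segment into $\Minussp{j}$.

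\textbf{Part \eqref{BP:intervaldecompVdecomp}.} The inclusion $g\act J_k\subseteq J_j$ is obtained by taking geodesic boundaries in $h^{-1}\act\Minussp{k}\subseteq\Minussp{j}$, the two endpoints of $J_j$ being excluded because their $h$-images lie in $\overline{I_k}^{\geo}$ and hence outside $J_k$; intersecting with $\wh\R_{\st}$ gives $\bigcup_{k,g\in\Past{}{k}{j}}g\act\Jset{k}\subseteq\Jset{j}$, while the reverse inclusion is \eqref{BP:intervaldecompback}. Disjointness is the substantive part: suppose $y\in g_1\act\Jset{k_1}\cap g_2\act\Jset{k_2}$ with $g_i\in\Past{}{k_i}{j}$; fix $x\in\Iset{j}$ (nonempty by \eqref{BP:closedgeodesicsHtoX}), note $I_j\subseteq g_i\act I_{k_i}$, so $(g_i^{-1}\act x,g_i^{-1}\act y)\in\Iset{k_i}\times\Jset{k_i}$, and let $\nu_i\in\Cs{k_i}$ be the vector with these terminal points from \eqref{BP:allvectors}. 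Then $\mu_i\coloneqq g_i\act\nu_i\in g_i\act\Cs{k_i}\subseteq\Gamma\act\BrU$ lie on the geodesic $\gamma$ from $x$ to $y$, which (as $y\in\Jset{j}$) also carries some $\mu_0\in\Cs{j}$ by \eqref{BP:allvectors}; by part~\eqref{BP:intervaldecompVgeod} the open segments from $\base{\mu_i}$ to $\base{\mu_0}$ avoid $\Gamma\act\BrU$, so comparing the positions of $\base{\mu_1},\base{\mu_2}$ along $\gamma$ forces $\mu_1=\mu_2$, hence $\nu_1\in\Cs{k_1}\cap(g_1^{-1}g_2)\act\Cs{k_2}$. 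By \eqref{BP:disjointunion} either $k_1=k_2$ and $g_1^{-1}g_2=\id$, or $\Plussp{k_1}=(g_1^{-1}g_2)\act\Minussp{k_2}$; the latter makes $\nu_1$ point into both $\Plussp{k_1}$ and $\Minussp{k_1}$, which is impossible, so $k_1=k_2$ and $g_1=g_2$. Disjointness of the full intervals $g\act J_k$ then follows: if two of them overlap and their bounding geodesics cross, \eqref{BP:disjointunion} again gives $g_1\act I_{k_1}=g_2\act J_{k_2}$, contradicting the overlap; if the geodesics do not cross, either one interval contains the other---so the overlap contains the (nonempty, by \eqref{BP:closedgeodesicsHtoX}) set $g\act\Jset{k}$ of the smaller one, reducing to the case just treated---or the two intervals cover $\wh\R$, which is impossible because then $I_j\subseteq g_1\act I_{k_1}\cap g_2\act I_{k_2}=\varnothing$.

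\textbf{Main obstacle.} It is precisely the disjointness statement in~\eqref{BP:intervaldecompVdecomp}: unlike the inclusions, which are formal translates of the forward properties, it requires reconstructing the relevant unit tangent vectors via~\eqref{BP:allvectors}, arguing about the order in which a single geodesic meets $\Gamma\act\BrU$, and a small case analysis of how two interval-translates can overlap. Everything else is a matter of careful bookkeeping with half-spaces, complicated only by the fact that $\base{\Cs{j}}$ and $g\act\base{\Cs{k}}$ need not coincide even when their closures do (Remark~\ref{REM:SoBrem}).
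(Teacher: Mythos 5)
Your argument is correct and nothing essential is missing, but it reaches the heart of the statement---the disjointness in~\eqref{BP:intervaldecompVdecomp}---by a genuinely different route than the paper. The inclusions and part~\eqref{BP:intervaldecompVgeod} are, as in the paper, obtained by transporting everything with $g^{-1}\in\Trans{}{k}{j}$ and quoting~\eqref{BP:intervaldecomp}; your only extra ingredient there is the nesting lemma ($h\act\overline{\base{\Cs{j}}}\subseteq\Plussp{k}$, hence $h^{-1}\act\Minussp{k}\subseteq\Minussp{j}$ for $h\in\Trans{}{k}{j}$), which the paper does not prove at this stage and only establishes later, in sharpened (strict) form, as Lemma~\ref{LEM:halfspaceincludeGV}. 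For the disjointness the paper stays entirely at the level of intervals and half-spaces: if $g_1\act J_{k_1}$ and $g_2\act J_{k_2}$ meet but differ, the translated base geodesics are forced into a configuration excluded by~\eqref{BP:disjointunion}, so the two intervals coincide, and \eqref{BP:disjointunion} then gives $(k_1,g_1)=(k_2,g_2)$; this handles the full intervals and their $\st$-parts simultaneously in a few lines. You instead prove~\eqref{BP:intervaldecompVgeod} first, use~\eqref{BP:allvectors} to realize a common point of $g_1\act\Jset{k_1}\cap g_2\act\Jset{k_2}$ as the backward endpoint of a single geodesic carrying vectors $\mu_0\in\Cs{j}$ and $\mu_i\in g_i\act\Cs{k_i}$, use the ``no intersection with $\Gamma\act\BrU$ in between'' content of~\eqref{BP:intervaldecompVgeod} to force $\mu_1=\mu_2$, and conclude with the \eqref{BP:disjointunion}/\eqref{BP:pointintohalfspaces} argument that the paper itself uses for Proposition~\ref{PROP:CofSoBnonst}\eqref{CofSoB:disjointunion}. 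Your route is more dynamical and makes transparent that disjointness is exactly uniqueness of the previous intersection, but it is longer: it needs the nesting lemma up front, it yields only the $\st$-version directly, and upgrading to the full intervals $g\act J_k$ costs the extra arc case analysis (where, in your ``cover $\wh\R$'' case, the union may in fact miss a boundary point of one interval; this is harmless, since $I_j$ is open and nonempty, but deserves a word). Both proofs are sound; the paper's is the more economical, yours the more geometric.
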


\begin{proof}
We first establish~\eqref{BP:intervaldecompVdecomp}. Let~$j\in A$. For any $k\in 
A$ and $g\in \Past{}{k}{j}=\Trans{}{k}{j}^{-1}$ we have 
\[
 g^{-1}\act I_j \subseteq I_k
\]
by~(\ref{BP:intervaldecomp}\ref{BP:intervaldecompGdecomp}). Therefore 
$g^{-1}\act J_j\supseteq J_k$ and hence 
\[
 g\act J_k \subseteq J_j\,.
\]
It follows that 
\begin{equation}\label{eq:Vunion}
 \bigcup_{k\in A}\bigcup_{g\in\Past{}{k}{j}}g\act J_k \subseteq J_j
\end{equation}
and further 
\[
 \bigcup_{k\in A}\bigcup_{g\in\Past{}{k}{j}}g\act \Jset{k} \subseteq \Jset{j}\,.
\]
Combining the latter with~(\ref{BP:intervaldecomp}\ref{BP:intervaldecompback}) 
shows the claimed equality of sets. It remains to show that the unions 
in~\eqref{eq:Vunion} are disjoint. To that end let $k_1,k_2\in A$ and 
$g_1\in\Past{}{k_1}{j}$, $g_2\in\Past{}{k_2}{j}$ such that 
\[
 g_1\act J_{k_1} \cap g_2\act J_{k_2}\not=\varnothing. 
\]
If we assume that 
\[
 g_1\act J_{k_1} \not= g_2\act J_{k_2}\,,
\]
then 
\begin{align*}
 g_2\act I_{k_2} &\cap g_1\act J_{k_1} \not=\varnothing\qquad\text{and}\qquad 
 g_2\act I_{k_2} \cap g_1\act I_{k_1} \not=\varnothing\,,
\end{align*}
and hence 
\begin{align*}
 g_2\act \Plussp{k_2} & \cap g_1\act \Minussp{k_1}\not=\varnothing\,,
 \\
 g_2\act \Plussp{k_2} & \cap g_1\act \Plussp{k_1}\not=\varnothing\,, 
 \intertext{and}
 g_2\act \overline{\base{\Cs{k_2}}} &\cap g_1\act \overline{\base{\Cs{k_1}}} 
\not=\varnothing\,.
\end{align*}
Property~\eqref{BP:disjointunion} implies that this constellation is impossible. In 
turn, 
\[
 g_1\act J_{k_1} = g_2\act J_{k_2}\,.
\]
It follows that $g_1\act I_{k_1} = g_2\act I_{k_2}$ and further 
\begin{align*}
 g_1\act \overline{\base{\Cs{k_1}}} & = g_2\act\overline{\base{\Cs{k_2}}}
 \intertext{as well as}
 g_1\act \Plussp{k_1} & = g_2\act \Plussp{k_2}\,.
\end{align*}
Thus, $k_1=k_2$ and $g_1=g_2$ by~\eqref{BP:disjointunion}, which shows that the 
unions in~\eqref{eq:Vunion} are disjoint.

We now show~\eqref{BP:intervaldecompVgeod}. To that end let $j,k\in A$, $g\in 
\Past{}{k}{j}$ and $(v,z)\in g\act\base{\Cs{k}}\times \base{\Cs{j}}$. Then, 
since 
\[
 g^{-1}\act (v,z) = (g^{-1}\act v, g^{-1}\act z) \in \base{\Cs{k}} \times 
g^{-1}\act \base{\Cs{j}}
\]
and $g^{-1}\in\Trans{}{k}{j}$, 
(\ref{BP:intervaldecomp}\ref{BP:intervaldecompGgeod}) shows that $g^{-1}\act 
(v,z)_{\H}$ (and hence $(v,z)_\H$) is nonempty and does not 
intersect~$\Gamma\act\BrU$. By 
(\ref{BP:intervaldecomp}\ref{BP:intervaldecompGdecomp}), $g^{-1}\act I_j 
\subseteq I_k$ and therefore $\Plussp{j} \subseteq g\act \Plussp{k}$. 
Combination with (\ref{BP:intervaldecomp}\ref{BP:intervaldecompGgeod}) yields 
\[
 (v,z)_\H \subseteq g\act \Plussp{k} \setminus \Plussp{j} \subseteq 
\Minussp{j}\,.
\]
This completes the proof.
\end{proof}

We recall that a family~$\mf B$ of subsets of~$\H$ is called \emph{locally 
finite in~$\H$}
\index[defs]{locally finite}%
if for each~$z\in\H$ there exists an open neighborhood~$U$ 
of~$z$ in~$\H$ such that at most finitely many members of~$\mf B$ intersect~$U$.

\begin{prop}\label{PROP:branches_locfinite}
The family 
\[
 \mf B\coloneqq\defset{ g\act \overline{\base{\Cs{j}}} }{ g\in\Gamma,\ j\in A }
\]
of $\Gamma$-translates of the closures of the base sets of the branches in~$\BrS$ is locally finite in~$\H$. 
\end{prop}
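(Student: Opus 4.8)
The plan is to deduce this from proper discontinuity of the $\Gamma$-action on~$\H$ together with property~\eqref{BP:completegeodesics}, which keeps the endpoints of each base geodesic out of~$\widehat{\R}_{\st}$. I argue by contradiction. Let $d$ denote the hyperbolic distance on~$\H$, fix $z\in\H$, and suppose $\mf B$ is not locally finite at~$z$; then the open ball $U\coloneqq\defset{w\in\H}{d(w,z)<1}$ meets infinitely many pairwise distinct members of~$\mf B$, say $g_n\act\overline{\base{\Cs{j_n}}}$. Since $A$ is finite, after passing to a subsequence I may assume $j_n=j$ for all~$n$; abbreviate $\ell\coloneqq\overline{\base{\Cs{j}}}$, a complete geodesic arc by~\eqref{BP:completegeodesics}. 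Thus I have infinitely many pairwise distinct geodesic arcs $g_n\act\ell$, each meeting~$U$.

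Next I transfer the problem to orbit points lying in a tube around~$\ell$. The setwise stabilizer $S\coloneqq\defset{g\in\Gamma}{g\act\ell=\ell}$ is finite: a nontrivial element of~$S$ either fixes both endpoints of~$\ell$, which is impossible because those endpoints would then be the fixed points of a hyperbolic element and hence lie in~$\widehat{\R}_{\st}$, contradicting~\eqref{BP:completegeodesics}; or it exchanges the two endpoints and is therefore an elliptic involution, and any two such involutions in~$\Gamma$ coincide (their product fixes both endpoints, hence is the identity). As $\Stab{\Gamma}{z}$ is also finite, the fibres of $g\mapsto g\act\ell$ and of $g\mapsto g^{-1}\act z$ are finite, so, passing to a further subsequence, the points $w_n\coloneqq g_n^{-1}\act z$ may be taken pairwise distinct. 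Choosing $q_n\in g_n\act\ell\cap U$ and applying $g_n^{-1}$ gives $g_n^{-1}\act q_n\in\ell$ with $d(g_n^{-1}\act q_n,\,w_n)=d(q_n,z)<1$, so each $w_n$ lies in the closed tube $T\coloneqq\defset{w\in\H}{d(w,\ell)\le 1}$. Moreover all $w_n$ lie in the single orbit $\Gamma\act z$, which is discrete in~$\H$; since $\overline{\H}^{\geo}$ is compact, some subsequence of $(w_n)$ converges to a point $\xi\in\partial_{\geo}\H$, and $\xi\in\Lambda(\Gamma)$ as it is an accumulation point of a $\Gamma$-orbit.

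The key geometric observation is that $\overline{T}^{\geo}\cap\partial_{\geo}\H$ consists precisely of the two endpoints of~$\ell$: after conjugating so that $\ell$ is the vertical line through some $a\in\R$, the tube~$T$ is the Euclidean wedge $\defset{w\in\H}{|\Rea w-a|\le\sinh(1)\,\Ima w}$, whose only limit points in~$\widehat{\R}$ are $a$ and~$\infty$. Hence $\xi$ is an endpoint of~$\ell$, so by~\eqref{BP:completegeodesics} we have $\xi\in\widehat{\R}\setminus\widehat{\R}_{\st}$; that is, $\xi$ is either not in~$\Lambda(\Gamma)$, which contradicts the previous paragraph, or $\xi$ is a parabolic fixed point of~$\Gamma$. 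The latter case is where geometric finiteness is genuinely used, and I expect it to be the main obstacle. Conjugating $\Gamma$ in~$\PSLR$ (which leaves all hypotheses intact) I may assume $\xi=\infty$; since $\infty$ is a parabolic fixed point of the discrete group~$\Gamma$, its stabilizer $\Stab{\Gamma}{\infty}$ is purely parabolic, generated by a translation $\sigma=\textbmat{1}{\lambda}{0}{1}$ with $\lambda>0$, and in particular every $g\in\Stab{\Gamma}{\infty}$ preserves~$\Ima$. By Shimizu's lemma, every $\textbmat{a}{b}{c}{d}\in\Gamma$ with $c\ne 0$ satisfies $|c|\ge\lambda^{-1}$, whence $\Ima(g\act z)=\Ima z/|cz+d|^2\le\lambda^2/\Ima z$. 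Combining the two cases, $\Ima(w_n)\le\max(\Ima z,\ \lambda^2/\Ima z)$ for all~$n$, which contradicts $w_n\to\infty$.

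Since both alternatives for~$\xi$ are impossible, $\mf B$ is locally finite at~$z$; as $z\in\H$ was arbitrary, $\mf B$ is locally finite in~$\H$. In the write-up the only points needing a little care are the finiteness of~$S$ (the one place where~\eqref{BP:completegeodesics} enters) and the reduction of the parabolic case to Shimizu's lemma; the tube computation and the $\Ima$-estimate are routine once set up.
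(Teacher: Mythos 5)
Your proof is correct, but it follows a genuinely different route from the paper's. The paper fixes a Siegel-set-type (Ford) fundamental domain~$\mc F$ for the geometrically finite group and argues by covering: since each endpoint of a base geodesic lies at a cusp or inside a funnel representative, two translates of~$\mc F$ absorb both ends of the geodesic, the remaining piece is compact and meets only finitely many further translates, so $\overline{\mc F}$ contains only finitely many translates of the geodesic, which gives local finiteness. You instead argue by contradiction at a point~$z$: infinitely many distinct translates $g_n\act\overline{\base{\Cs{j}}}$ through a unit ball pull back to infinitely many distinct orbit points $w_n=g_n^{-1}\act z$ inside a fixed metric tube around~$\overline{\base{\Cs{j}}}$; proper discontinuity forces accumulation at a boundary point of the tube, which your $\sinh$-computation identifies as an endpoint of the geodesic, hence a point of $\widehat{\R}\setminus\widehat{\R}_{\st}$ by~\eqref{BP:completegeodesics}; since it is also in~$\Lambda(\Gamma)$ it must be a parabolic fixed point, and the Shimizu height bound then caps $\Ima(w_n)$, a contradiction. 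What each approach buys: the paper's proof is short once the structure theory of geometrically finite fundamental domains (Siegel sets at cusps, funnel boundaries) is available, and it treats the two types of non-$\st$ endpoints uniformly through the shape of~$\mc F$; your proof avoids fundamental domains altogether and uses only discreteness of~$\Gamma$ together with~\eqref{BP:completegeodesics} and the classical Shimizu lemma, so---contrary to your own remark that the parabolic case is ``where geometric finiteness is genuinely used''---it in fact needs no geometric finiteness at all (both the purely parabolic, cyclic structure of $\Stab{\Gamma}{\infty}$ and Shimizu's inequality follow from discreteness alone), making it slightly more general in scope at the cost of an external classical lemma and the explicit tube computation. One small point to make explicit in a write-up: the parenthetical ``their product fixes both endpoints, hence is the identity'' is not true for arbitrary elements of~$\PSLR$; it holds here because a nontrivial element fixing both endpoints would be hyperbolic and its fixed points would then lie in~$\widehat{\R}_{\st}$, contradicting~\eqref{BP:completegeodesics}---you give exactly this argument in the preceding clause, so it is a one-line repair rather than a gap.
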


\begin{proof}
Let $\sigma \subseteq\H$ be a complete geodesic segment with both its endpoints 
in~$\wh\R\setminus\wh\R_{\st}$. In what follows we show that the family of 
$\Gamma$-translates of~$\sigma$ is locally finite in~$\H$. Since the family of the closures of the base sets of the branches~$\BrS$ consists of finitely many of such geodesic segments, the statement of the proposition follows then immediately. 

We find and fix a Siegel-set-type fundamental domain~$\mc F$ for~$\Gamma$ 
in~$\H$ (see, e.g., \cite{Garland_Raghunathan} for cofinite Fuchsian groups 
and~\cite[Section~10]{Beardon} for general, geometrically finite Fuchsian 
groups). That is, each cusp~$\wh c$ of~$\Orbi$ is represented by a point, 
say~$c$, in~$\overline{\mc F}^{\geo}$, and the part of~$\mc F$ near~$c$ is of the 
form (\emph{Siegel set})
\index[defs]{Siegel set}%
\[
 g\act\defset{ z\in\H }{ |\Rea z| < w,\ \Ima z > h }
\]
for some~$g=g(c)\in\PSL_2(\R)$, $w=w(c)>0$ and $h=h(c)>0$. The neighboring 
translates of~$\mc F$ at~$c$ are given by~$p\act\mc F$ and~$p^{-1}\act\mc F$ 
where $p\in\Gamma$ is a generator of~$\Stab{\Gamma}{c}$. Each funnel 
of~$\Orbi$ is represented by a funnel representative, say~$I$, in~$\overline{\mc 
F}^{\geo}$, and the part of~$\mc F$ near~$I$ is bounded by two geodesic segments, 
each one having one of the two boundary points of~$I$ as an endpoint. The 
neighboring translates of~$\mc F$ at this part are of the form~$b\act\mc F$ 
and~$b^{-1}\act\mc F$, where $b$ is a primitive element of~$\Gamma$ whose axis 
represents a funnel bounding geodesic. A funnel bounding geodesic is a   periodic geodesic on~$\Orbi$ that is furthest into the funnel. Funnel bounding geodesics are unique up to orientation.
The whole fundamental domain~$\mc F$ is a finite union of geodesically convex subsets of~$\H$. 

Since each endpoint of~$\sigma$ is either cuspidal or contained in a funnel 
representative, the shape of~$\mc F$ implies that we find~$g_1,g_2\in\Gamma$ 
such that~$g_1\act\mc F\cup g_2\act\mc F$ covers ``most'' of~$\sigma$. In other 
words, 
\[
 \beta \coloneqq \sigma\setminus (g_1\act\mc F \cup g_2\act\mc F)
\]
is a geodesic segment of finite hyperbolic length. Again, the shape of~$\mc F$ 
implies that~$\beta$ can be covered by finitely many $\Gamma$-translates 
of~$\overline{\mc F}$. In total, the complete geodesic segment~$\sigma$ 
intersects only finitely many $\Gamma$-translates of~$\overline{\mc F}$. 
Equivalently, $\overline{\mc F}$ contains only finitely many $\Gamma$-translates 
of~$\sigma$. Since $\sigma$, and each of its $\Gamma$-translates, is a geodesic 
segment, it immediately follows that the family~$\Gamma\act\sigma$ is locally 
finite in~$\overline{\mc F}$, and hence in all of~$\H$. This completes the 
proof.
\end{proof}

We now aim to prove that the set of branches~$\BrS$ for~$\Gamma$ accounts for all periodic geodesics on~$\Orbi$, in the sense that every~$\widehat{\gamma}\in\Geo_{\Per}(\Orbi)$ has a representative~$\gamma\in\Geo(\H)$ that intersects~$\BrU$.
For this we take advantage of the following equivalent formulation 
of~\eqref{BP:coverlimitset}.

\begin{lemma}\label{LEM:coverlimitset}
Set~$\Geo_{\Per,\Gamma}(\H)\coloneqq\defset{\gamma\in\Geo(\H)}{
\pi(\gamma)\in\Geo_{\Per}(\Orbi)}$.
Then~\eqref{BP:coverlimitset} is equivalent to
\begin{equation}\tag{$\mathrm{B4'}$}\label{EQN:coverplusinfty}
\forall\gamma\in\Geo_{\Per,\Gamma}(\H)\colon\gamma(+\infty)\in\Gamma\act\bigcup_
{j\in A}I_j\,.
\end{equation}
\index[defs]{0B04@(B4')}%
\index[symbols]{B04@(B4')}%
\end{lemma}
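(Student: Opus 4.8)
The plan is to phrase both \eqref{BP:coverlimitset} and \eqref{EQN:coverplusinfty} as statements about the single set
\[
 U\coloneqq\Gamma\act\bigcup_{j\in A}I_j=\bigcup_{j\in A}\bigcup_{g\in\Gamma}g\act I_j\,,
\]
which is an \emph{open} and \emph{$\Gamma$-invariant} subset of~$\wh\R$. First I would record the following dictionary. If $\gamma\in\Geo_{\Per,\Gamma}(\H)$, then $\pi(\gamma)$ is a periodic geodesic on~$\Orbi$, so by Proposition~\ref{PROP:hypconjisperiod} together with Lemma~\ref{LEM:periodicaxes} the class~$[\gamma]$ is the axis of some hyperbolic element of~$\Gamma$; in particular $\gamma(+\infty)$ is a fixed point of a hyperbolic element of~$\Gamma$. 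Conversely, for every hyperbolic $h\in\Gamma$ a representative of~$\alpha(h)$ lies in~$\Geo_{\Per,\Gamma}(\H)$ and has~$\fixp{+}{h}$ as its forward endpoint, so $\fixp{+}{h}$, and (applying this to~$h^{-1}$) also~$\fixp{-}{h}$, occurs as some $\gamma(+\infty)$ with $\gamma\in\Geo_{\Per,\Gamma}(\H)$. Since~$\Gamma$ contains a hyperbolic element by standing assumption, $\Geo_{\Per,\Gamma}(\H)\ne\varnothing$, and all hyperbolic fixed points of~$\Gamma$ lie in~$\wh\R_{\st}$ (see~\eqref{eq:def_st} and the text following it). Hence \eqref{BP:coverlimitset} is equivalent to ``$\wh\R_{\st}\subseteq U$'' and \eqref{EQN:coverplusinfty} to ``every hyperbolic fixed point of~$\Gamma$ lies in~$U$''.

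Granting this dictionary, the implication \eqref{BP:coverlimitset}$\Rightarrow$\eqref{EQN:coverplusinfty} is immediate: for $\gamma\in\Geo_{\Per,\Gamma}(\H)$ the point $\gamma(+\infty)$ is a hyperbolic fixed point, hence lies in~$\wh\R_{\st}$, hence in~$U$.

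For the converse I would assume \eqref{EQN:coverplusinfty} and, since $\wh\R_{\st}\subseteq\Lambda(\Gamma)$, reduce to showing $\Lambda(\Gamma)\subseteq U$. By the dictionary and $\Geo_{\Per,\Gamma}(\H)\ne\varnothing$, the set~$U$ contains at least one hyperbolic fixed point of~$\Gamma$, and hyperbolic fixed points lie in~$\Lambda(\Gamma)$. If~$\Gamma$ is non-elementary, then $\Lambda(\Gamma)\setminus U$ is a closed, $\Gamma$-invariant subset of~$\Lambda(\Gamma)$ that is not all of~$\Lambda(\Gamma)$, hence empty because~$\Gamma$ acts minimally on its limit set (see, e.g., \cite{Beardon, Katok_fuchsian}); equivalently one may combine the density of the hyperbolic fixed points in~$\Lambda(\Gamma)$ --- which also follows from Proposition~\ref{PROP:EXliesdense} --- with the openness and $\Gamma$-invariance of~$U$. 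If~$\Gamma$ is elementary, then, as it contains a hyperbolic element, $\Lambda(\Gamma)$ consists of exactly the two fixed points of that element; both are hyperbolic fixed points and hence lie in~$U$ by~\eqref{EQN:coverplusinfty} and the dictionary, so again $\Lambda(\Gamma)\subseteq U$. In this degenerate case $\wh\R_{\st}$ in fact coincides with the set of hyperbolic fixed points, so the equivalence is trivial there.

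I expect the only genuinely non-formal point to be the step, in the non-elementary case, from ``$U$ meets~$\Lambda(\Gamma)$'' to ``$U\supseteq\Lambda(\Gamma)$'': this is exactly minimality of the boundary action (equivalently, density of the hyperbolic fixed points in~$\Lambda(\Gamma)$ combined with $\Gamma$-invariance of~$U$), and it is the one place where a structural input on Fuchsian groups is required. Everything else is bookkeeping with the definitions of~$U$, $\wh\R_{\st}$, $\Geo_{\Per,\Gamma}(\H)$ and the axis of a hyperbolic element, together with the already established fact that hyperbolic fixed points sit inside~$\wh\R_{\st}$.
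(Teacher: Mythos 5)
Your proposal is correct, and the forward implication coincides with the paper's (endpoints of periodic lifts are hyperbolic fixed points, hence lie in~$\wh\R_{\st}$). For the converse, however, you argue by a different mechanism than the paper. The paper's proof is a one-liner: it invokes the density of~$E(\Orbi)$ in~$\Lambda(\Gamma)\times\Lambda(\Gamma)$ (Proposition~\ref{PROP:EXliesdense}) together with the openness of the sets~$I_j$, leaving the passage from ``the covered points are dense'' to ``all of~$\wh\R_{\st}$ is covered'' implicit. You instead exploit that $U=\Gamma\act\bigcup_{j\in A}I_j$ is open \emph{and $\Gamma$-invariant}, observe that~\eqref{EQN:coverplusinfty} forces $U\cap\Lambda(\Gamma)\neq\varnothing$, and conclude $\Lambda(\Gamma)\subseteq U$ from minimality of the action of a non-elementary Fuchsian group on its limit set, treating the elementary case (where $\Lambda(\Gamma)$ is the pair of fixed points of a hyperbolic element and $\wh\R_{\st}=\Lambda(\Gamma)$) by hand. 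This buys you a fully explicit deduction --- indeed a stronger conclusion, $\Lambda(\Gamma)\subseteq U$, using only a single hyperbolic fixed point from~\eqref{EQN:coverplusinfty} in the non-elementary case --- at the price of importing minimality, a structural fact the paper never cites; the paper's route stays inside its own toolkit (Proposition~\ref{PROP:EXliesdense}) but is considerably terser about why density plus openness suffices. One small caution: your parenthetical ``equivalently, density of hyperbolic fixed points plus openness and $\Gamma$-invariance'' is not a formal implication as stated --- to avoid minimality you would additionally need the north--south dynamics of a hyperbolic element (take $h$ hyperbolic with $\fixp{-}{h}\neq x$, use $h^n\act x\to\fixp{+}{h}\in U$ from Lemma~\ref{LEM:hypfixedconv} and then invariance); since your primary argument rests on the minimality theorem with a proper citation, the proof stands as written.
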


\begin{proof}
Recall from~\eqref{EQDEF:EOrbi} the set
\[
E(\Orbi)=\defset{(\gamma(+\infty),\gamma(-\infty))}{\gamma\in\Geo_{\Per,\Gamma}
(\H)}\,.
\]
The inclusion relation~$E(\Orbi)\subseteq\wh\R_{\st}\times\wh\R_{\st}$ immediately shows that~\eqref{BP:coverlimitset} implies~\eqref{EQN:coverplusinfty}.
Since~$E(\Orbi)$ is dense in~$\Lambda(\Gamma)\times\Lambda(\Gamma)$ by 
Proposition~\ref{PROP:EXliesdense}, so 
does~$\wh\R_{\st}\times\wh\R_{\st}$.
Thus, the openness of the sets~$I_j$, $j\in A$, yields that \eqref{EQN:coverplusinfty} implies~\eqref{BP:coverlimitset}.
\end{proof}

\begin{prop}\label{PROP:oldB1}
Under the assumption of~\eqref{BP:allvectors} and~\eqref{BP:intervaldecomp}, 
property~\eqref{BP:coverlimitset} is equivalent to the following statement:
\begin{enumerate}[label=$\mathrm{(B_{\Per})}$, ref=$\mathrm{B_{\Per}}$]
\item\label{BP:closedgeodesicsXtoH}
For all~$\widehat{\gamma}\in\Geo_{\Per}(\Orbi)$ there exists~$\gamma\in\Geo(\H)$ 
such that~$\pi(\gamma)=\widehat{\gamma}$ and~$\gamma$ intersects~$\BrU$.
\end{enumerate}
\index[defs]{0B12@(B$_{\Per}$)}%
\index[symbols]{B12@(B$_{\Per}$)}%
\end{prop}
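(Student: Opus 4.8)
The plan is to deduce the equivalence from the reformulation \eqref{EQN:coverplusinfty} of \eqref{BP:coverlimitset} provided by Lemma~\ref{LEM:coverlimitset} (which needs no extra hypotheses); it then suffices to show, under the standing assumptions \eqref{BP:allvectors} and \eqref{BP:intervaldecomp}, that \eqref{EQN:coverplusinfty} and \eqref{BP:closedgeodesicsXtoH} are equivalent. Throughout I would use three elementary facts already recorded: every $\gamma\in\Geo_{\Per,\Gamma}(\H)$ has both endpoints in $\wh\R_{\st}$ (since $E(\Orbi)\subseteq\wh\R_{\st}\times\wh\R_{\st}$); the set $\wh\R_{\st}$ is $\Gamma$-invariant; and for each $j\in A$ one has the disjoint decomposition $\wh\R=I_j\cup J_j\cup\geo\base{\Cs{j}}$ with $\geo\base{\Cs{j}}\subseteq\wh\R\setminus\wh\R_{\st}$ (Remark~\ref{REM:SoBrem}\eqref{SoBrem:descrHalfspaces}), so that a point of $\wh\R_{\st}$ lying in a $\Gamma$-translate of $\overline{\base{\Cs{j}}}$ is never on its geodesic boundary and each such point lies in $\Iset{j}$ or in $\Jset{j}$.

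For the easy implication \eqref{BP:closedgeodesicsXtoH}$\,\Rightarrow\,$\eqref{EQN:coverplusinfty} I would take $\gamma\in\Geo_{\Per,\Gamma}(\H)$, apply \eqref{BP:closedgeodesicsXtoH} to $\widehat\gamma=\pi(\gamma)$ to obtain a lift $\gamma_1$ of $\widehat\gamma$ with $\gamma_1'(t)\in\Cs{j}$ for some $j\in A$, $t\in\R$, and note that by \eqref{BP:pointintohalfspaces} the forward ray $\gamma_1((t,\infty))$ enters the open half-space $\Plussp{j}$ and never leaves it again: it would otherwise have to meet the geodesic $\overline{\base{\Cs{j}}}$ a second time, which is impossible since $\gamma_1(\R)$ and $\overline{\base{\Cs{j}}}$ are distinct complete geodesic segments (their endpoints lie in $\wh\R_{\st}$ and in its complement, respectively) and so meet at most once. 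Hence $\gamma_1(+\infty)$ lies in the closure of $I_j$, and, the two endpoints of $I_j$ being non-st, in $I_j$ itself; as $\gamma_1$ and $g\act\gamma$ are equivalent for a suitable $g\in\Gamma$, this gives $\gamma(+\infty)\in g^{-1}\act I_j\subseteq\Gamma\act\bigcup_{k\in A}I_k$, which is \eqref{EQN:coverplusinfty}.

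For the converse, \eqref{EQN:coverplusinfty}$\,\Rightarrow\,$\eqref{BP:closedgeodesicsXtoH}, I would run a descent. Given $\widehat\gamma\in\Geo_{\Per}(\Orbi)$, fix a lift $\gamma$ and, using \eqref{EQN:coverplusinfty}, replace it by a $\Gamma$-translate so that $x:=\gamma(+\infty)\in\Iset{j_0}$ for some $j_0\in A$; put $y:=\gamma(-\infty)$. Maintaining $G_n:=g_1\cdots g_n\in\Gamma$ ($G_0:=\id$) and $j_n\in A$ with $G_n^{-1}\act x\in\Iset{j_n}$, I would test at each stage whether $G_n^{-1}\act y\in J_{j_n}$. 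If yes, then $(G_n^{-1}\act x,G_n^{-1}\act y)\in\Iset{j_n}\times\Jset{j_n}$, so \eqref{BP:allvectors} yields $\nu\in\Cs{j_n}$ with exactly these endpoints; then $G_n^{-1}\act\gamma$, being equivalent to $\gamma_\nu$, meets $\Cs{j_n}\subseteq\BrU$ while still projecting to $\widehat\gamma$, so \eqref{BP:closedgeodesicsXtoH} holds. If no, then $G_n^{-1}\act y\in I_{j_n}$, hence $G_n^{-1}\act y\in\Iset{j_n}$, and I would use the disjoint decomposition $\Iset{j_n}=\bigcup_{k\in A}\bigcup_{g\in\Trans{}{j_n}{k}}g\act\Iset{k}$ from \eqref{BP:intervaldecomp}\eqref{BP:intervaldecompGdecomp} to pick the unique pair $(j_{n+1},g_{n+1})$ with $g_{n+1}\in\Trans{}{j_n}{j_{n+1}}$ and $G_n^{-1}\act x\in g_{n+1}\act\Iset{j_{n+1}}$, and set $G_{n+1}:=G_ng_{n+1}$.

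The step I expect to be the main obstacle is showing that the descent cannot continue forever. If it did, then at every stage $G_n^{-1}\act y$ would have to lie in the \emph{same} piece $g_{n+1}\act\Iset{j_{n+1}}$ as $G_n^{-1}\act x$: otherwise the disjointness of the $I$-pieces in \eqref{BP:intervaldecomp}\eqref{BP:intervaldecompGdecomp} would put $G_{n+1}^{-1}\act y$ outside $I_{j_{n+1}}$, hence in $J_{j_{n+1}}$, stopping the descent at the next stage. Thus both $x$ and $y$ would lie in every translated interval $I_n^{*}:=G_n\act I_{j_n}$, and these form a \emph{strictly} nested decreasing family of open arcs in $\wh\R$ — strictly, since $g\act I_k=I_j$ for $g\in\Trans{}{j}{k}$ would force $g\act\overline{\base{\Cs{k}}}=\overline{\base{\Cs{j}}}$ and hence, picking distinct $z\in\base{\Cs{j}}$, $w\in g\act\base{\Cs{k}}$, produce a nonempty geodesic segment $(z,w)_{\H}$ lying on $\overline{\base{\Cs{j}}}$ rather than inside $\Plussp{j}$, contradicting \eqref{BP:intervaldecomp}\eqref{BP:intervaldecompGgeod}. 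Since $x\neq y$, the endpoints of $I_n^{*}$ then converge monotonically to two \emph{distinct} limit points, so the complete geodesic segments $\sigma_n:=G_n\act\overline{\base{\Cs{j_n}}}$ joining them are pairwise distinct and converge to a fixed complete geodesic segment in $\H$; consequently infinitely many $\sigma_n$ would meet a fixed ball in $\H$, contradicting the local finiteness of the family $\mf B$ from Proposition~\ref{PROP:branches_locfinite}. Hence the descent terminates after finitely many steps, establishing \eqref{BP:closedgeodesicsXtoH}.
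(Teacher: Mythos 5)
Your proof is correct and follows essentially the same route as the paper: reduce to the reformulation \eqref{EQN:coverplusinfty} via Lemma~\ref{LEM:coverlimitset}, obtain the forward implication from the fact that an intersection with a translate of a branch places the forward endpoint in the corresponding translate of~$I_j$, and prove the converse by the same descent through the decomposition in~(\ref{BP:intervaldecomp}\ref{BP:intervaldecompGdecomp}), terminating via the local finiteness of Proposition~\ref{PROP:branches_locfinite}. Your only deviations are cosmetic — you make the strictness of the nesting explicit (essentially re-deriving Lemma~\ref{LEM:halfspaceincludeGV}) where the paper instead tracks that the chosen pairs $(k_n,g_n)$ are pairwise distinct — so the argument matches the paper's in substance.
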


\begin{proof}
Let~$\gamma\in\Geo_{\Per,\Gamma}(\H)$ and 
set~$\widehat{\gamma}\coloneqq\pi(\gamma)\in\Geo_{\Per}(\Orbi)$.
By combining Proposition~\ref{PROP:hypconjisperiod} and 
Lemma~\ref{LEM:periodicaxes}\eqref{periodicaxes:conjugacy} one sees that the set 
of all representatives of~$\widehat{\gamma}$ is given by~$\Gamma\act\gamma$.
Hence, if one representative of~$\widehat{\gamma}$ intersects~$\BrU$, then each 
of its representatives intersect~$\Gamma\act\BrU$.
Hence, given~\eqref{BP:closedgeodesicsXtoH}, there exists~$(j,g)\in 
A\times\Gamma$ such that~$\gamma$ intersects~$g\act\Cs{j}$, i.e.,
\[
(\gamma(+\infty),\gamma(-\infty))\in g\act\Iset{j}\times g\act\Jset{j}\subseteq 
g\act I_j\times g\act J_j\,.
\]
Thus,~\eqref{BP:closedgeodesicsXtoH} implies~\eqref{EQN:coverplusinfty}.
For the converse implication, we suppose that~\eqref{EQN:coverplusinfty} is satisfies and let~$\widehat{\gamma}\in\Geo_{\Per}(\Orbi)$.
As discussed above, it suffices to find a representative of~$\widehat{\gamma}$ that intersects~$\Gamma\act\BrU$.
Let~$\gamma\in\Geo(\H)$ be any representative of~$\widehat{\gamma}$.
Then~$\gamma\in\Geo_{\Per,\Gamma}(\H)$. Further, \eqref{EQN:coverplusinfty} yields the existence of a pair~$(k_1,g_1)\in A\times\Gamma$ such that~$\gamma(+\infty)\in g_1\act I_{k_1}$.
By~\eqref{BP:allvectors}, for a geodesic~$\eta\in\Geo(\H)$ to 
intersect~$g\act\Cs{k}$, $(k,g)\in A\times\Gamma$, it suffices to have
\[
(\eta(+\infty),\eta(-\infty))\in g\act\Iset{k}\times g\act\Jset{k}\,.
\]
Since~$E(\Orbi)\subseteq\wh\R_{\st}\times\wh\R_{\st}$ we immediately 
have~$(\gamma(+\infty),\gamma(-\infty))\in\wh\R_{\st}\times\wh\R_{\st}$.
Therefore, if~$\gamma(-\infty)\in g_1\act J_{k_1}$, the statement 
of~\eqref{BP:closedgeodesicsXtoH} follows.
In order to seek a contradiction, we assume that this is not the case.
Since~$\eX_j,\eY_j\in\wh\R\setminus\wh\R_{\st}$, it follows 
that~$\gamma(-\infty)\in g_1\act I_{k_1}$.
By~(\ref{BP:intervaldecomp}\ref{BP:intervaldecompGdecomp}) we 
find~$(k_2,g_2)\in\left(A\times\Gamma\right)\setminus\{(k_1,g_1)\}$ such that
\[
g_1^{-1}g_2\in\Trans{}{k_1}{k_2}\qquad\text{and}\qquad\gamma(+\infty)\in g_2\act 
I_{k_2}\,.
\]
Now the same argumentation as before applies: If~$\gamma(-\infty)\in g_2\act 
J_{k_2}$, then~\eqref{BP:closedgeodesicsXtoH} follows.
If this is not the case, then necessarily~$\gamma(-\infty)\in g_2\act I_{k_2}$ 
and we apply~(\ref{BP:intervaldecomp}\ref{BP:intervaldecompGdecomp}) to 
find~$(k_3,g_3)\in\left(A\times\Gamma\right)\setminus\{(k_1,g_1),(k_2,g_2)\}$ 
such that
\[
g_1^{-1}g_2^{-1}g_3\in\Trans{}{k_2}{k_3}\qquad\text{and}\qquad\gamma(+\infty)\in 
g_3\act I_{k_3}\,.
\]
We now show that iterations of this procedure must terminate after finitely many 
steps by finding~$\gamma(-\infty)\in g_i\act J_{k_i}$ for some~$i\in\N$.
Assume for contradiction that this is not the case. Thus, the above procedure 
yields a sequence~$((k_n,g_n))_{n\in\N}$ in~$A\times\Gamma$ such that
\[
\{\gamma(+\infty),\gamma(-\infty)\}\subseteq\bigcap_{n\in\N}g_n\act I_{k_n}\,.
\]
Then Proposition~\ref{PROP:hypconjisperiod} provides a hyperbolic 
element~$h\in\Gamma$ such that
\[
\gamma(+\infty)=\fixp{+}{h}\ne\fixp{-}{h}=\gamma(-\infty)\,,
\]
meaning the geodesic arc~$\gamma(\R)$ is non-degenerate.
Further, from the construction it is clear that
\[
g_{i+1}\act I_{k_{i+1}}\subseteq g_{i}\act I_{k_{i}}
\]
for all~$i\in\N$.
Hence, the sequence~$((g_n\act\eX_{k_n},g_n\act\eY_{k_n}))_{n\in\N}$ converges 
to some pair $(x,y)\in\widehat{\R}\times\widehat{\R}$ with~$x,y\in \left(g_1\act 
I_{k_1}\right)\setminus\Rea{\left(\gamma(\R)\right)}$.
But this entails the convergence of the 
sequence~$(g_n\act\overline{\base{\Cs{k_n}}})_{n\in\N}$ to the geodesic 
arc~$(x,y)_{\H}$.
Any neighborhood of any point~$z\in(x,y)_{\H}$ therefore intersects infinitely 
many members of the family~$\defset{g_n\act\overline{\base{\Cs{k_n}}}}{n\in\N}$, 
which contradicts to the local finiteness of its 
superset~$\mathfrak{B}=\defset{g\act\overline{\base{\Cs{j}}}}{(j,g)\in 
A\times\Gamma}$ ensured by Proposition~\ref{PROP:branches_locfinite}.
In turn,  the above procedure must terminate after finitely many steps, thereby 
showing that~$\gamma$ intersects~$\Gamma\act\BrU$.
Combining this with Lemma~\ref{LEM:coverlimitset} finishes the proof.
\end{proof}

The following proposition is the first immediate step towards proving that 
$\CrSc$ is a cross section with $\BrU$ as set of representatives. We show that 
the intersections of geodesics on~$\Orbi$ with~$\CrSc$ are bijective to the 
intersections of geodesics on~$\H$ with~$\BrU$. This observation will be crucial 
for establishing discreteness of intersections. We show further that the map 
\begin{equation}\label{eq:mapsetrep}
 \pi\vert_{\BrU}\colon \BrU\to\CrSc\,,\quad \nu\mapsto \pi(\nu)\,,
\end{equation}
is a bijection. Hence, as soon as $\CrSc$ is known to be a cross section, then 
$\BrU$ is a set of representative for~$\CrSc$. To simplify the exposition, we 
will already call $\BrU$ a set of representatives, thereby refering 
to~\eqref{eq:mapsetrep}.

\begin{prop}\label{PROP:CofSoBnonst}
The set~$\BrU$, the family of the~$\Gamma$-translates of its elements, and its 
image~$\wh\BrU$ under~$\pi$ satisfy the following properties:
\begin{enumerate}[label=$\mathrm{(\roman*)}$, ref=$\mathrm{\roman*}$]
\item\label{CofSoB:disjointunion}
The members of the family $\defset{g\act\nu}{g\in\Gamma,\,j\in 
A,\,\nu\in\Cs{j}}$ are pairwise distinct. In particular, $\BrU$ is a set of 
representatives for~$\CrSc$.

\item\label{CofSoB:uniqueintersect}
Let $\widehat{\gamma}$ be a geodesic on~$\Orbi$ that
intersects~$\CrSc$ at time~$t$. Then there exists a unique geodesic~$\gamma$ 
on~$\H$ such that $\pi(\gamma)=\widehat{\gamma}$ and $\gamma$ 
intersects~$\BrU$ at time~$t$.
\end{enumerate}
\end{prop}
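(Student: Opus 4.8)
The plan is to prove the two statements by descending to and ascending from the universal cover, using the pairwise-disjointness properties of the branches together with property~\eqref{BP:disjointunion}.

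\emph{Part~\eqref{CofSoB:disjointunion}.} First I would show that for any $j,k\in A$, $g\in\Gamma$, and $\nu\in\Cs{j}$, $\mu\in\Cs{k}$ with $g\act\nu=\mu$ one must have $j=k$ and $g=\id$. Applying the base-point projection $\base{\cdot}$ gives $g\act\base{\nu}=\base{\mu}$, so $g\act\overline{\base{\Cs{j}}}\cap\overline{\base{\Cs{k}}}\ne\varnothing$. By~\eqref{BP:disjointunion}, either $j=k$ and $g=\id$ (and we are done), or $\PMsp{k}=g\act\MPsp{j}$, i.e.\ $g$ swaps the ``plus'' half-space of one branch with the ``minus'' half-space of the other. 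In the latter case, $\nu\in\Cs{j}$ points into $\Plussp{j}$ by~\eqref{BP:pointintohalfspaces}, hence $g\act\nu$ points into $g\act\Plussp{j}=\Minussp{k}$; but $\mu=g\act\nu\in\Cs{k}$ points into $\Plussp{k}$, and since $\Plussp{k}$ and $\Minussp{k}$ are disjoint open half-spaces with common boundary $\overline{\base{\Cs{k}}}$ (Remark~\ref{REM:SoBrem}\eqref{SoBrem:descrHalfspaces}), a single unit tangent vector based on that boundary cannot point into both. This is a contradiction, so the second alternative is impossible and we conclude $j=k$, $g=\id$. Consequently $\pi\vert_{\BrU}$ is injective, and since $\CrSc=\pi(\BrU)$ it is a bijection onto $\CrSc$, i.e.\ $\BrU$ is a set of representatives for~$\CrSc$.

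\emph{Part~\eqref{CofSoB:uniqueintersect}.} Suppose $\widehat\gamma$ intersects $\CrSc$ at time $t$, so $\widehat\gamma'(t)\in\CrSc=\pi(\BrU)$. Pick any lift $\gamma$ of $\widehat\gamma$ to $\H$ (a geodesic on $\H$ with $\pi\circ\gamma=\widehat\gamma$); then $\pi(\gamma'(t))=\widehat\gamma'(t)\in\pi(\BrU)$, so there is $\nu\in\BrU$ with $\pi(\nu)=\pi(\gamma'(t))$, i.e.\ $\nu=g\act\gamma'(t)$ for some $g\in\Gamma$. Replacing $\gamma$ by $g\act\gamma$ (still a lift of $\widehat\gamma$, since $\pi\circ(g\act\gamma)=\pi\circ\gamma=\widehat\gamma$), we obtain a lift, which I again call $\gamma$, with $\gamma'(t)\in\BrU$; thus $\gamma$ intersects $\BrU$ at time $t$. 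For uniqueness, suppose $\gamma_1,\gamma_2$ are both lifts of $\widehat\gamma$ intersecting $\BrU$ at time $t$. Since any two lifts of $\widehat\gamma$ differ by an element of $\Gamma$, write $\gamma_2=g\act\gamma_1$ for some $g\in\Gamma$; then $\gamma_1'(t)\in\Cs{j}$ and $\gamma_2'(t)=g\act\gamma_1'(t)\in\Cs{k}$ for some $j,k\in A$. By Part~\eqref{CofSoB:disjointunion} applied to $\nu=\gamma_1'(t)$, $\mu=\gamma_2'(t)$, we get $g=\id$, hence $\gamma_1=\gamma_2$.

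The only genuinely delicate point is the half-space argument in Part~\eqref{CofSoB:disjointunion}: one must be careful that $g\act\nu$ really does point into $g\act\Plussp{j}$ rather than merely being based on its boundary, which follows because $\PSLR$ acts by Riemannian isometries and ``pointing into a convex set'' is an isometry-invariant local condition. Everything else is a routine transport between $\H$ and $\Orbi=\quod{\Gamma}{\H}$ via the deck-transformation action of $\Gamma$ on lifts, for which Lemma~\ref{LEM:periodicaxes}\eqref{periodicaxes:conjugacy} (or rather the standard fact that lifts of a fixed curve form a single $\Gamma$-orbit) is the relevant tool.
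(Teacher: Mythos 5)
Your proof is correct and follows essentially the same route as the paper: part~(\ref{CofSoB:disjointunion}) via~\eqref{BP:disjointunion} combined with the pointing-into-half-spaces property~\eqref{BP:pointintohalfspaces}, and part~(\ref{CofSoB:uniqueintersect}) by transporting the intersection to a lift and deducing uniqueness from part~(\ref{CofSoB:disjointunion}). The only cosmetic difference is that the paper phrases the half-space contradiction as $\Plussp{j}\cap g\act\Plussp{k}\ne\varnothing$ and invokes~\eqref{BP:disjointunion} a second time, and in part~(\ref{CofSoB:uniqueintersect}) it appeals directly to the bijectivity of $\pi\vert_{\BrU}$ rather than to the $\Gamma$-orbit structure of lifts, but these are the same argument in substance.
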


\begin{proof}
In order to prove~\eqref{CofSoB:disjointunion}, let $j,k\in A$, $\nu\in\Cs{j}$,  
$\eta\in\Cs{k}$ and $g\in\Gamma$ such that $\nu=g\act\eta$. Thus 
$\base{\Cs{j}}\cap g\act\base{\Cs{k}}\ne\varnothing$. 
Then~\eqref{BP:disjointunion} implies $\base{\Cs{j}}=g\act\base{\Cs{k}}$. Since 
$\nu=g\act\eta$, we have 
$\Plussp{j}\cap g\act\Plussp{k}\ne\varnothing$ 
by~\eqref{BP:pointintohalfspaces}. Using again~\eqref{BP:disjointunion}, we 
obtain $j=k$ and $g=\id$. This shows~\eqref{CofSoB:disjointunion}.

In order to prove~\eqref{CofSoB:uniqueintersect} let~$\widehat{\gamma}$ be a 
geodesic on~$\Orbi$ that intersects~$\CrSc$ at~$t$. Without loss of generality, 
we may suppose that $t=0$ (otherwise we apply a reparametrization 
of~$\widehat\gamma$). Let $\widehat{\nu}\coloneqq \widehat{\gamma}^{\prime}(0)$. 
Since $\BrU$ is a set of representatives for~$\CrSc$ 
by~\eqref{CofSoB:disjointunion}, there exists a unique element~$\nu\in\BrU$ such 
that~$\pi(\nu)=\wh\nu$. Thus, $\gamma_\nu$ is the unique lift of~$\wh\gamma$ 
to~$\H$ that intersects~$\CrSc$ at $t=0$. This completes the proof.
\end{proof}

The following result, which is a straightforward consequence of the conformity 
of linear fractional transformations and the definitions of the sets involved, shows 
that there is no unique choice of a set of branches for the set~$\CrSc = 
\pi(\BrU)$.  

\begin{lemma}\label{LEM:nouniqueSoB}
Let $g_1,\dots,g_N\in\Gamma$. Then $\{g_1\act\Cs{1},\dots,g_N\act\Cs{N}\}$ is a 
set of branches for~$\GeoFlow$, and $\pi(\bigcup_{j=1}^N g_j\act\Cs{j})=\CrSc$.
\end{lemma}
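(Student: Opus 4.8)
The plan is to verify that the family $\BrS' \coloneqq \{g_1\act\Cs{1},\dots,g_N\act\Cs{N}\}$ satisfies the axioms \eqref{BP:closedgeodesicsHtoX}--\eqref{BP:intervaldecomp} of Definition~\ref{DEF:setofbranches}, and then to observe that the equality of cross sections is immediate. Throughout I will use the decorations $\Cs{j}' \coloneqq g_j\act\Cs{j}$, and correspondingly $\base{\Cs{j}'} = g_j\act\base{\Cs{j}}$, $\Plussp{j}' = g_j\act\Plussp{j}$, $\Minussp{j}' = g_j\act\Minussp{j}$, $I_j' = g_j\act I_j$, $J_j' = g_j\act J_j$, since linear fractional transformations are Riemannian isometries of~$\H$ that act conformally on~$\overline\H^\geo$, map geodesics to geodesics and half-spaces to half-spaces, and permute the branches' $\Gamma$-translates (because $\Gamma\act\BrU = \Gamma\act\BrU'$). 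Note first that $\wh\R\setminus\wh\R_{\st}$ is $\Gamma$-invariant, so endpoints of $\base{\Cs{j}'}$ still lie in $\wh\R\setminus\wh\R_{\st}$, giving~\eqref{BP:completegeodesics}; and $g_j$ carries ``points into $\Plussp{j}$'' to ``points into $g_j\act\Plussp{j}$'', giving~\eqref{BP:pointintohalfspaces} with the stated identifications of $I_j', J_j'$.

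First I would dispatch the easy axioms. For~\eqref{BP:closedgeodesicsHtoX}: if $\nu\in\Cs{j}$ with $\pi(\gamma_\nu)$ periodic, then $g_j\act\nu\in\Cs{j}'$ and $\pi(\gamma_{g_j\act\nu}) = \pi(g_j\act\gamma_\nu) = \pi(\gamma_\nu)$ is the same periodic geodesic on~$\Orbi$, since $g_j\in\Gamma$. For~\eqref{BP:allvectors}: fix $j$ and $(x,y)\in\Iset{j}'\times\Jset{j}' = g_j\act\Iset{j}\times g_j\act\Jset{j}$ (using $\Gamma$-invariance of $\wh\R_{\st}$); then $(g_j^{-1}\act x, g_j^{-1}\act y)\in\Iset{j}\times\Jset{j}$, so by~\eqref{BP:allvectors} for~$\BrS$ there is a unique $\nu\in\Cs{j}$ with terminal endpoints $(g_j^{-1}\act x,g_j^{-1}\act y)$, whence $g_j\act\nu\in\Cs{j}'$ has terminal endpoints $(x,y)$ and uniqueness transfers back. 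For~\eqref{BP:coverlimitset}: I would use the equivalent formulation Lemma~\ref{LEM:coverlimitset}, noting $\bigcup_j I_j' = \bigcup_j g_j\act I_j$ and $\Gamma\act\bigcup_j g_j\act I_j = \Gamma\act\bigcup_j I_j$ because each $g_j\in\Gamma$; since \eqref{EQN:coverplusinfty} refers only to the $\Gamma$-orbit of $\bigcup_j I_j$, it is unchanged.

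The two remaining axioms require slightly more care but are still routine. For~\eqref{BP:disjointunion}: suppose $\overline{\base{\Cs{j}'}}\cap g\act\overline{\base{\Cs{k}'}}\ne\varnothing$ for some $j,k\in A$, $g\in\Gamma$; rewriting, $g_j\act\overline{\base{\Cs{j}}}\cap (g g_k)\act\overline{\base{\Cs{k}}}\ne\varnothing$, equivalently $\overline{\base{\Cs{j}}}\cap (g_j^{-1}g g_k)\act\overline{\base{\Cs{k}}}\ne\varnothing$; applying~\eqref{BP:disjointunion} for~$\BrS$ with the element $g_j^{-1}g g_k\in\Gamma$ yields either ($j=k$ and $g_j^{-1}g g_k=\id$, i.e. $g = g_j g_k^{-1}$, which gives $j=k$ and $g=\id$ precisely when $j=k$; and one checks $\PMsp{j}' = g\act\MPsp{k}'$ holds vacuously/trivially in this sub-case since $g=\id$, $j=k$ forces $\Plussp{j}'=\Plussp{j}'$), or $\PMsp{j} = (g_j^{-1}g g_k)\act\MPsp{k}$, which after applying~$g_j$ reads $\PMsp{j}' = g\act\MPsp{k}'$ — exactly the alternative in~\eqref{BP:disjointunion} for~$\BrS'$. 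Finally, for~\eqref{BP:intervaldecomp} I would define the candidate forward transition sets by $\Trans{}{j}{k}' \coloneqq g_j\,\Trans{}{j}{k}\,g_k^{-1} \subseteq\Gamma$ and verify its three sub-properties by conjugating the corresponding statements for~$\BrS$: for $g'\in\Trans{}{j}{k}'$, write $g' = g_j g g_k^{-1}$ with $g\in\Trans{}{j}{k}$, so $g'\act I_k' = g_j\act(g\act I_k)\subseteq g_j\act I_j = I_j'$ and likewise $g'\act\Iset{k}' = g_j\act(g\act\Iset{k})$, so that the union over $k$ and $g'$ equals $g_j\act\Iset{j} = \Iset{j}'$ and remains disjoint since $g_j$ is injective; the geodesic-segment condition~(b) transfers because $g_j$ maps $\Plussp{j}$ to $\Plussp{j}'$ and $\Gamma\act\BrU$ to $\Gamma\act\BrU' = \Gamma\act\BrU$; and~(c) follows the same pattern, noting $\Past{}{k}{j}' = (\Trans{}{k}{j}')^{-1} = g_j\,\Past{}{k}{j}\,g_k^{-1}$. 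This establishes $\BrS'$ is a set of branches. For the last assertion, $\pi(\bigcup_j g_j\act\Cs{j}) = \bigcup_j\pi(g_j\act\Cs{j}) = \bigcup_j\pi(\Cs{j}) = \pi(\BrU) = \CrSc$, since $\pi$ is $\Gamma$-invariant and each $g_j\in\Gamma$.

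I do not anticipate a genuine obstacle here; the only point demanding attention is bookkeeping in~\eqref{BP:disjointunion} and~\eqref{BP:intervaldecomp}, namely keeping straight the conjugating elements $g_j^{-1}gg_k$ versus $g_j g g_k^{-1}$ and confirming that the conjugated transition sets $g_j\Trans{}{j}{k}g_k^{-1}$ are indeed the correct witnesses — in particular that the \emph{disjointness} of the various unions is preserved, which hinges only on injectivity of the $g_j$-action and is therefore automatic.
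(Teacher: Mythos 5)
Your verification is correct and is exactly the argument the paper has in mind: the paper gives no detailed proof, stating only that the lemma is a straightforward consequence of the equivariance of all the defining data under the $\Gamma$-action, which is precisely what you carry out axiom by axiom (including the correct conjugated transition sets $g_j\Trans{}{j}{k}g_k^{-1}$, consistent with how they are used later in Lemma~\ref{LEM:ramtrans}). Your detour through~\eqref{EQN:coverplusinfty} for~\eqref{BP:coverlimitset} is harmless but unnecessary, since~\eqref{BP:coverlimitset} itself is already stated in terms of the $\Gamma$-orbit of $\bigcup_j I_j$ and is therefore literally unchanged.
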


\subsection{Strong branches and iterated 
intersections}\label{SUBSEC:reducedbranches}

Throughout this section we continue to set $A \coloneqq \{1,\ldots, N\}$, let 
$\BrS = \defset{ \Cs{j} }{ j\in A }$ be a set of branches and let $\BrU = 
\bigcup_{j\in A} \Cs{j}$ denote the branch union of~$\BrS$. We pick again a 
family of forward transition sets~$(\Trans{}{j}{k})_{j,k\in A}$ and denote the 
family of backwards transition sets by $(\Past{}{k}{j})_{j,k\in A}$ (cf.\@ 
\eqref{eq:backsets}). For any $j\in A$, we resume the notation for the 
sets~$I_j$, $J_j$, $\Plussp{j}$ and~$\Minussp{j}$ 
from~\eqref{BP:pointintohalfspaces}. 

In this section we show that the transition sets are indeed unique, and we 
provide an alternative characterization of them. See 
Proposition~\ref{PROP:CofSoBst}. Moreover, we prepare the ground for showing 
that $\CrSc \coloneqq \pi(\BrU)$ is intersected by almost all geodesics 
infinitely often in future and past, for finding a strong cross section as a 
subset of~$\CrSc$ and for determining the induced discrete dynamical system on 
subsets of~$\wh\R$.

For each~$j\in A$ we call 
\begin{equation}\label{eq:def_redbranch}
\Cs{j,\st}\coloneqq\defset{\nu\in\Cs{j}}{\left(\gamma_\nu(+\infty),
\gamma_\nu(-\infty)\right)\in\widehat{\R}_{\st}\times\widehat{\R}_{\st}},
\end{equation}
the \emph{strong branch},
\index[defs]{strong branch}%
\index[defs]{branch!strong}%
\index[symbols]{Cjs@$\Cs{j,\st}$}%
and we let 
\begin{equation}\label{eq:def_redset}
\BrU_{\st}\coloneqq\bigcup_{j\in A}\Cs{j,\st}
\end{equation}
denote the \emph{strong set of representatives}. 
\index[defs]{strong set of representatives}%
\index[defs]{set of representatives!strong}%
\index[defs]{representative!strong set}%
\index[symbols]{Cst@$\BrU_{\st}$}%

Our first goal is to show that for each vector~$\nu\in\BrU_{\st}$, the 
geodesic~$\gamma_\nu$ on~$\H$ has a (well-defined) minimal intersection 
time~$t^+>0$ with~$\Gamma\act\BrU_{\st}$, the \emph{next intersection time},
\index[defs]{next intersection time}%
\index[defs]{intersection time!next}%
\index[symbols]{ta@$t^+$}%
as 
well as a maximal intersection time $t^-<0$ with~$\Gamma\act\BrU_{\st}$, the 
\emph{previous intersection time}.
\index[defs]{previous intersection time}%
\index[defs]{intersection time!previous}%
\index[symbols]{taa@$t^-$}%
We start with a remark and some preparatory 
lemmas.

\begin{remark}\label{REM:inst} After the restriction to strong sets and branches, we observe the following relations:
\begin{enumerate}[label=$\mathrm{(\roman*)}$, ref=$\mathrm{\roman*}$]
\item\label{insti} For each~$j\in A$, property~\eqref{BP:closedgeodesicsHtoX} 
implies that $\Iset{j}\not=\varnothing$ and $\Jset{j}\not=\varnothing$.
\item\label{instii} For each~$j\in A$ and each pair~$(x,y)\in I_{j,\st}\times 
J_{j,\st}$, 
there exists a unique vector~$\nu\in\BrU_j$ such that 
\[
 (\gamma_\nu(+\infty),\gamma_\nu(-\infty)) = (x,y)
\]
by~\eqref{BP:allvectors}. Clearly, $\nu\in\BrU_{j,\st}$.
\item\label{instiii} Let $j\in A$ and $\nu\in\Cs j$. If the 
geodesic~$\gamma_\nu$ represents a periodic geodesic on~$\Orbi$, then $\nu\in\Cs 
{j,\st}$ (see Proposition~\ref{PROP:EXliesdense} and~\eqref{eq:def_st}).
\end{enumerate}
\end{remark}

From the definition of the transition sets~$\Trans{}{j}{k}$ it is obvious 
that $g\act\Plussp{k}\subseteq\Plussp{j}$ for all~$g\in\Trans{}{j}{k}$. 
The following lemma shows that this inclusion is indeed proper, and that also 
the dual property holds with the backwards transition sets.

\begin{lemma}\label{LEM:halfspaceincludeGV}
Let $j,k\in A$.
\begin{enumerate}[label=$\mathrm{(\roman*)}$, ref=$\mathrm{\roman*}$]
\item\label{halfspaceincludeGV+} For all~$g\in\Trans{}{j}{k}$ we have 
$g\act\Plussp{k}\varsubsetneq\Plussp{j}$.
\item\label{halfspaceincludeGV-} For all~$g\in\Past{}{k}{j}$ we have 
$g\act\Minussp{k}\varsubsetneq\Minussp{j}$.
\end{enumerate}
\end{lemma}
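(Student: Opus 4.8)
The plan is to establish~\eqref{halfspaceincludeGV+} first and then deduce~\eqref{halfspaceincludeGV-} by the dual argument. As recalled just before the lemma, the inclusion $g\act\Plussp{k}\subseteq\Plussp{j}$ for $g\in\Trans{}{j}{k}$ is immediate from $g\act I_k\subseteq I_j$ in~(\ref{BP:intervaldecomp}\ref{BP:intervaldecompGdecomp}) together with the observation (used repeatedly) that an open half-space and the complete geodesic bounding it determine each other. So the only point to prove is that this inclusion is proper, and for this I would argue by contradiction.

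Suppose $g\act\Plussp{k}=\Plussp{j}$ for some $g\in\Trans{}{j}{k}$. Passing to topological boundaries in~$\H$ and invoking~\eqref{BP:completegeodesics} and~\eqref{BP:pointintohalfspaces}, this forces $g\act\overline{\base{\Cs{k}}}=\overline{\base{\Cs{j}}}$; in particular $g\act\base{\Cs{k}}$ is contained in the complete geodesic $\overline{\base{\Cs{j}}}$. Since $\Cs{j}$ and $\Cs{k}$ are nonempty by Remark~\ref{REM:SoBrem}\eqref{SoBrem:periodgeod}, fix $z\in\base{\Cs{j}}$ and $w\in g\act\base{\Cs{k}}$. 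By~(\ref{BP:intervaldecomp}\ref{BP:intervaldecompGgeod}) the geodesic segment $(z,w)_{\H}$ is nonempty and contained in $\Plussp{j}$; in particular $z\ne w$. But $z$ and $w$ both lie on the complete geodesic $\overline{\base{\Cs{j}}}$, so $(z,w)_{\H}\subseteq\overline{\base{\Cs{j}}}$, and the latter is disjoint from the open half-space $\Plussp{j}$. This contradicts $\varnothing\ne(z,w)_{\H}\subseteq\Plussp{j}$, and hence $g\act\Plussp{k}\varsubsetneq\Plussp{j}$, which proves~\eqref{halfspaceincludeGV+}.

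For~\eqref{halfspaceincludeGV-} I would run the identical argument with the forward transition sets replaced by the backwards ones and the roles of $(I_j)_{j}$, $(\Plussp{j})_{j}$ interchanged with $(J_j)_{j}$, $(\Minussp{j})_{j}$: Proposition~\ref{PROP:BPintervaldecompV}\eqref{BP:intervaldecompVdecomp} gives $g\act J_k\subseteq J_j$, hence $g\act\Minussp{k}\subseteq\Minussp{j}$, for all $g\in\Past{}{k}{j}$; and should equality hold, then choosing $v\in g\act\base{\Cs{k}}$ and $z\in\base{\Cs{j}}$, Proposition~\ref{PROP:BPintervaldecompV}\eqref{BP:intervaldecompVgeod} would force the nonempty segment $(v,z)_{\H}\subseteq\Minussp{j}$ to lie on $\overline{\base{\Cs{j}}}=g\act\overline{\base{\Cs{k}}}$, which is disjoint from $\Minussp{j}$, a contradiction. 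The only genuine (and rather minor) subtlety throughout is the passage from the equality of open half-spaces to the equality $g\act\overline{\base{\Cs{k}}}=\overline{\base{\Cs{j}}}$ of their bounding complete geodesics; beyond that I do not anticipate any substantial obstacle.
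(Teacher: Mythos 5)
Your proof is correct and follows essentially the same route as the paper's: deduce the inclusion $g\act\Plussp{k}\subseteq\Plussp{j}$ from (\ref{BP:intervaldecomp}\ref{BP:intervaldecompGdecomp}), assume equality, pass to boundaries to get $g\act\overline{\base{\Cs{k}}}=\overline{\base{\Cs{j}}}$, and contradict (\ref{BP:intervaldecomp}\ref{BP:intervaldecompGgeod}); part~\eqref{halfspaceincludeGV-} is the dual argument via Proposition~\ref{PROP:BPintervaldecompV}, which the paper leaves as ``analogous.'' Your choice of a pair $(z,w)\in\base{\Cs{j}}\times g\act\base{\Cs{k}}$ even sidesteps the base-set-versus-closure subtlety of Remark~\ref{REM:SoBrem}\eqref{SoBrem:coincide} slightly more cleanly than the paper's $(z,z)_{\H}=\varnothing$ formulation.
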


\begin{proof}
It suffices to establish~\eqref{halfspaceincludeGV+} as the proof 
of~\eqref{halfspaceincludeGV-} is analogous. Let $g\in\Trans{}{j}{k}$. 
Then~(\ref{BP:intervaldecomp}\ref{BP:intervaldecompGdecomp}) shows that
$g\act I_k\subseteq I_j$. Remark~\ref{REM:SoBrem}\eqref{SoBrem:descrHalfspaces} 
and the convexity of the half-spaces $\Plussp{.}$ imply that 
$g\act\Plussp{k}\subseteq\Plussp{j}$. We now assume that 
$g\act\Plussp{k}=\Plussp{j}$, in order to seek a contradiction. Then
\[
g\act\overline{\base{\Cs{k}}}\cup 
g\act\overline{I_k}^{\geo}=g\act\partial_{\geo}\Plussp{k}=\partial_{\geo}\Plussp
{j}=\overline{\base{\Cs{j}}}\cup\overline{I_j}^{\geo}
\]
by Remark~\ref{REM:SoBrem}\eqref{SoBrem:descrHalfspaces} and the continuity of 
the action of~$g$. Since $\H$ and~$\partial_{\geo}\H$ are both stable under the 
action of~$\PSLR$, it follows that 
$g\act\overline{\base{\Cs{k}}}=\overline{\base{\Cs{j}}}$. Thus, for 
any~$z\in\overline{\base{\Cs{j}}}$ we have $z\in g\act\overline{\base{\Cs{k}}}$ 
and $(z,z)_{\H} = \varnothing$, in contradiction 
to~(\ref{BP:intervaldecomp}\ref{BP:intervaldecompGgeod}). In turn,  
$g\act\Plussp{k}\varsubsetneq\Plussp{j}$.
\end{proof}

We emphasize that in the following two lemmas, $g$ is not required to be 
in~$\Trans{}{j}{k}$ or~$\Past{}{k}{j}$, respectively. These lemmas allow us to 
establish next and previous intersections and to narrow down their locations by coarse-locating the endpoints of geodesics.

\begin{lemma}\label{LEM:halfspaceFuture}
Let $j,k\in A$ and let $g\in\Gamma$ be such that $g\act I_k\subseteq I_j$. Then 
we have
\begin{enumerate}[label=$\mathrm{(\roman*)}$, ref=$\mathrm{\roman*}$]
\item\label{halfspaceFutureinclude}
$g\act\Plussp{k}\subseteq\Plussp{j}$.
\item\label{halfspaceFutureintersect}
For all $\nu\in\Cs{j,\st}$ with $\gamma_\nu(+\infty)\in g\act I_k$ there exists 
$t\geq0$ such that $\gamma_\nu^{\prime}(t)\in g\act\Cs{k}$.
\item\label{halfscapeFuturestrict}
If $g\act I_k\varsubsetneq I_j$, then $g\act\Plussp{k}\varsubsetneq \Plussp{j}$ and, 
in~\eqref{halfspaceFutureintersect}, $t>0$.
\end{enumerate}
\end{lemma}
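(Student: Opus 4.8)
The plan is to exploit the elementary geometry of half-spaces determined by complete geodesics, together with the structural facts about branches recorded in Remark~\ref{REM:SoBrem}, and the property~\eqref{BP:allvectors} that reconstructs a vector in a branch from the endpoints of its geodesic. For part~\eqref{halfspaceFutureinclude}, I would argue as in the proof of Lemma~\ref{LEM:halfspaceincludeGV}: by Remark~\ref{REM:SoBrem}\eqref{SoBrem:descrHalfspaces} the half-space $\Plussp{k}$ is the geodesically convex hull (inside $\H$) of the boundary interval $I_k\subseteq\wh\R$, and $g$ acts by a linear fractional transformation, so $g\act\Plussp{k}$ is the convex hull of $g\act I_k$. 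Since $g\act I_k\subseteq I_j$ and $\Plussp{j}$ is the convex hull of $I_j$, convexity gives $g\act\Plussp{k}\subseteq\Plussp{j}$ directly. (The one subtlety is that $\overline{\base{\Cs{k}}}$ has both endpoints in $\wh\R\setminus\wh\R_{\st}$, hence these endpoints lie in $\wh\R\setminus\Lambda(\Gamma)\cup\{\text{parabolic points}\}$; in any case $g\act I_k$ and $I_j$ are honest open intervals in $\wh\R$ with the expected closures, so the convex-hull description applies without boundary pathologies.)

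For part~\eqref{halfspaceFutureintersect}, fix $\nu\in\Cs{j,\st}$ with $x\coloneqq\gamma_\nu(+\infty)\in g\act I_k$. The idea is to produce the intersection by running $\gamma_\nu$ forward until it enters the region $g\act\Plussp{k}$ and reaches a point of $g\act\base{\Cs{k}}$. Concretely: the terminal point $x$ lies in $g\act I_k$ and, since $\nu\in\Cs{j,\st}$, we also know $x\in\Iset{j}$; the other endpoint $y\coloneqq\gamma_\nu(-\infty)\in\Jset{j}$. Now $g\act J_k$ is the complementary interval to $g\act I_k\cup\gbound(g\act\overline{\base{\Cs{k}}})$ in $\wh\R$; I would show $y$ must land in $g\act J_k$ (or, in the strict-inclusion case, at worst on $g\act I_k$, which the hypothesis of part~\eqref{halfscapeFuturestrict} will then rule out via a further argument — see below). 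Granting $(x,y)\in g\act\Iset{k}\times g\act\Jset{k}$, property~\eqref{BP:allvectors} applied to the branch $\Cs{k}$ (transported by $g$) produces a unique $\mu\in g\act\Cs{k,\st}$ with $(\gamma_\mu(+\infty),\gamma_\mu(-\infty))=(x,y)$; since $\gamma_\nu$ and $\gamma_\mu$ have the same endpoints they are equal as geodesic arcs, so $\mu=\gamma_\nu'(t)$ for the unique $t$ with $\gamma_\nu(t)\in g\act\base{\Cs{k}}$. It remains to see $t\geq 0$: because $\nu$ points into $\Plussp{j}$ and $g\act\overline{\base{\Cs{k}}}$ bounds the sub-half-space $g\act\Plussp{k}\subseteq\Plussp{j}$ with $x=\gamma_\nu(+\infty)$ lying in $\gbound(g\act\Plussp{k})$, the geodesic $\gamma_\nu$ crosses $g\act\overline{\base{\Cs{k}}}$ only in forward time (its tail for $r\to+\infty$ is eventually inside $g\act\Plussp{k}$, while at time $0$ it is based on $\base{\Cs{j}}$ on the far boundary of $\Plussp{j}$ from that sub-half-space, unless the two base geodesics coincide). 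The only case giving $t=0$ is when $\base{\Cs{j}}$ meets $g\act\base{\Cs{k}}$, and then~\eqref{BP:disjointunion} forces $\overline{\base{\Cs{j}}}=g\act\overline{\base{\Cs{k}}}$.

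For part~\eqref{halfscapeFuturestrict}, strictness $g\act I_k\varsubsetneq I_j$ gives $g\act\Plussp{k}\varsubsetneq\Plussp{j}$ by the same convexity argument as in Lemma~\ref{LEM:halfspaceincludeGV} (equality of half-spaces would force equality of their boundary geodesics, contradicting~(\ref{BP:intervaldecomp}\ref{BP:intervaldecompGgeod}) exactly as there). For $t>0$: if $t=0$ we would have $\nu\in g\act\Cs{k}$, so $\base{\Cs{j}}\cap g\act\base{\Cs{k}}\neq\varnothing$, whence $\overline{\base{\Cs{j}}}=g\act\overline{\base{\Cs{k}}}$ by~\eqref{BP:disjointunion}, hence $I_j=g\act I_k$, contradicting strictness. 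The main obstacle in writing this out carefully is the bookkeeping in part~\eqref{halfspaceFutureintersect} — precisely pinning down that the \emph{backward} endpoint $y$ lands in $g\act\Jset{k}$ rather than escaping into $g\act I_k$, which is where one genuinely uses that $\nu\in\Cs{j,\st}$ (so $y\in\Jset{j}$, an interval of \emph{stable} points) together with the nesting of the interval families and the local finiteness from Proposition~\ref{PROP:branches_locfinite}; all the half-space containments themselves are routine consequences of convexity and conformality.
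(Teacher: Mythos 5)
Your overall strategy is the paper's: locate the endpoint pair of $\gamma_\nu$ in $g\act\Iset{k}\times g\act\Jset{k}$, invoke~\eqref{BP:allvectors} for the branch~$\Cs{k}$ (transported by~$g$) to produce the intersection vector, and read off the sign of~$t$ from the half-space inclusion of part~\eqref{halfspaceFutureinclude}. However, two of the steps, as you justify them, do not go through. The step you yourself flag as ``the main obstacle''---that $y=\gamma_\nu(-\infty)$ lands in $g\act\Jset{k}$---is left unproved, and the tools you propose for it (local finiteness via Proposition~\ref{PROP:branches_locfinite}, and a ``further argument'' in the strict case to exclude $y\in g\act I_k$) point in the wrong direction: the settling observation is plain complementation. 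Since $\wh\R$ is the disjoint union of $I_j$, $J_j$ and the two endpoints (Remark~\ref{REM:SoBrem}\eqref{SoBrem:descrHalfspaces}), the hypothesis $g\act I_k\subseteq I_j$ forces $J_j\subseteq g\act J_k$; as $\nu\in\Cs{j,\st}$ gives $y\in\Jset{j}\subseteq J_j\subseteq g\act J_k$ with $y\in\wh\R_{\st}$, one gets $y\in g\act\Jset{k}$ at once, and $y\in g\act I_k$ is impossible in \emph{every} case (it would contradict $y\in J_j$). Likewise $x=\gamma_\nu(+\infty)\in g\act I_k\cap\wh\R_{\st}=g\act\Iset{k}$. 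Without this (or a substitute), your proof of~\eqref{halfspaceFutureintersect} is incomplete, and your parenthetical case distinction indicates the missing idea.

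Second, in part~\eqref{halfscapeFuturestrict} you rule out $g\act\Plussp{k}=\Plussp{j}$ by citing~(\ref{BP:intervaldecomp}\ref{BP:intervaldecompGgeod}) ``exactly as'' in Lemma~\ref{LEM:halfspaceincludeGV}. That property is not available here: the lemma deliberately does \emph{not} assume $g\in\Trans{}{j}{k}$ (the paper emphasizes this right before the statement), so~\eqref{BP:intervaldecomp} says nothing about this~$g$. The repair is easy and needs no branch axiom: if $g\act\Plussp{k}=\Plussp{j}$, then the parts of their geodesic boundaries lying in $\partial_\geo\H$ coincide, so $g\act\overline{I_k}=\overline{I_j}$ and hence $g\act I_k=I_j$, contradicting the assumed strict inclusion; alternatively, argue as the paper does that when $g\act I_k\ne I_j$ the segment $g\act\overline{\base{\Cs{k}}}$ passes through the interior of~$\Plussp{j}$. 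With these two repairs, the rest of your sketch (the convexity argument for~\eqref{halfspaceFutureinclude}, the use of~\eqref{BP:allvectors} together with uniqueness of geodesics through two boundary points, and the sign argument for~$t$ with $t=0$ only when the base geodesics coincide, handled via~\eqref{BP:disjointunion}) matches the paper's proof.
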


\begin{proof}
For the proof of~\eqref{halfspaceFutureinclude} and the first part 
of~\eqref{halfscapeFuturestrict} we use the characterization of the 
half-space~$\Plussp{j}$ from 
Remark~\ref{REM:SoBrem}\eqref{SoBrem:descrHalfspaces}. The continuity of~$g$ 
implies $g\act\overline{I_k}^{\geo}=\overline{g\act 
I_k}^{\geo}\subseteq\overline{I_j}^{\geo}$. We denote the two endpoints of~$I_k$ 
by~$x$ and~$y$. From  $(x,y)_{\H}=\overline{\base{\Cs{k}}}$ and $g\act x,\,g\act 
y\in\overline{I_j}^{\geo}$ we obtain 
$g\act\overline{\base{\Cs{k}}}\subseteq\overline{\Plussp{j}}^{\geo}$. Thus, 
$g\act\Plussp{k}\subseteq\Plussp{j}$. If $g\act I_k\not= I_j$, then $g\act 
(x,y)_\H$ passes through (the interior) of~$\Plussp{j}$. Further, at least one of the points~$g\act x$ and~$g\act y$ is in $I_j$. Thus, in this case, 
$g\act\Plussp{k}\varsubsetneq\Plussp{j}$.

For the proof of~\eqref{halfspaceFutureintersect} and the second part 
of~\eqref{halfscapeFuturestrict} let $\nu\in\Cs{j,\st}$ be such that 
$\gamma_\nu(+\infty)\in g\act I_k$. The hypothesis $g\act I_k\subseteq I_j$ 
implies $g\act\Iset{k}\subseteq\Iset{j}$ and $g\act J_k\supseteq J_j$ and 
further $g\act\Jset{k}\supseteq\Jset{j}$. Since 
$(\gamma_\nu(+\infty),\gamma_\nu(-\infty))\in\Iset{j}\times\Jset{j}$ as 
$\nu\in\Cs{j,st}$, it follows that $\gamma_\nu(+\infty)\in\widehat{\R}_{\st}\cap 
g\act 
I_k=g\act\Iset{k}$. Therefore, $(\gamma_\nu(+\infty),\gamma_\nu(-\infty))\in 
g\act\Iset{k}\times g\act\Jset{k}$ or, equivalently, 
\[
 g^{-1}\act(\gamma_\nu(+\infty),\gamma_\nu(-\infty))\in \Iset{k}\times 
\Jset{k}\,.
\]
By~\eqref{BP:allvectors} there exists a (unique) vector~$\eta\in\Cs{k}$ such 
that 
\[
(\gamma_\eta(+\infty),\gamma_\eta(-\infty))=g^{-1}\act(\gamma_{\nu}(+\infty),
\gamma_{\nu}(-\infty))\,.
\]
The uniqueness of geodesics connecting two points in~$\overline{\H}^{\geo}$ 
implies that there exists~$t\in\R$ such that $\gamma_\nu^{\prime}(t)=g\act\eta$. 
The combination of~\eqref{BP:pointintohalfspaces}, 
\eqref{halfspaceFutureinclude} and the first part 
of~\eqref{halfscapeFuturestrict} yield that $t\geq 0$ and, in case of $g\act 
I_k\varsubsetneq I_j$, $t>0$.
\end{proof}

The proof of the following result is analogous to that of 
Lemma~\ref{LEM:halfspaceFuture}, for which reason we omit it.

\begin{lemma}\label{LEM:halfspacePast}
Let $j,k\in A$ and let $g\in\Gamma$ be such that $g\act J_k\subseteq J_j$. Then 
we have
\begin{enumerate}[label=$\mathrm{(\roman*)}$, ref=$\mathrm{\roman*}$]
\item\label{halfspacePastinclude}
$g\act\Minussp{k}\subseteq\Minussp{j}$,
\item\label{halfspacePastintersect}
for all $\nu\in\Cs{j,\st}$ with $\gamma_\nu(-\infty)\in g\act J_k$ there exists 
$t\leq0$ such that $\gamma_\nu^{\prime}(t)\in g\act\Cs{k}$,
\item\label{halfspacePaststrict}
if $g\act J_k\varsubsetneq J_j$, then $g\act\Minussp{k}\varsubsetneq\Minussp{j}$ and, 
in~\eqref{halfspacePastintersect}, $t<0$.
\end{enumerate}
\end{lemma}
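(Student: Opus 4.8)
The plan is to prove Lemma~\ref{LEM:halfspacePast} by transporting the argument for Lemma~\ref{LEM:halfspaceFuture} through the duality of a set of branches, which exchanges $I_j\leftrightarrow J_j$, $\Plussp{j}\leftrightarrow\Minussp{j}$, forward transition sets with backward transition sets, the endpoint $\gamma(+\infty)$ with $\gamma(-\infty)$, and the sign of the intersection time. Wherever the forward proof invokes property~\eqref{BP:intervaldecomp}, the dual proof should invoke Proposition~\ref{PROP:BPintervaldecompV}; property~\eqref{BP:allvectors} and property~\eqref{BP:disjointunion} are already symmetric in $I$ and $J$ and may be used verbatim.

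For part~\eqref{halfspacePastinclude} and the first half of part~\eqref{halfspacePaststrict}, I would use the characterization $\partial_\geo\Minussp{j}=\overline{\base{\Cs{j}}}\cup\overline{J_j}^\geo$ from Remark~\ref{REM:SoBrem}\eqref{SoBrem:descrHalfspaces}. Continuity of the action of $g$ gives $g\act\overline{J_k}^\geo=\overline{g\act J_k}^\geo\subseteq\overline{J_j}^\geo$; writing $x,y$ for the two endpoints of $I_k$ (equivalently, of $J_k$), the identity $(x,y)_\H=\overline{\base{\Cs{k}}}$ together with $g\act x,g\act y\in\overline{J_j}^\geo$ forces $g\act\overline{\base{\Cs{k}}}\subseteq\overline{\Minussp{j}}^\geo$, hence $g\act\Minussp{k}\subseteq\Minussp{j}$ by convexity of the half-spaces. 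If moreover $g\act J_k\neq J_j$, then at least one of $g\act x,g\act y$ lies in the open interval $J_j$, so the translated base geodesic passes through the interior of $\Minussp{j}$ and the inclusion is strict.

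For part~\eqref{halfspacePastintersect} and the second half of part~\eqref{halfspacePaststrict}, I would take $\nu\in\Cs{j,\st}$ with $\gamma_\nu(-\infty)\in g\act J_k$. The hypothesis $g\act J_k\subseteq J_j$ implies $g\act\Jset{k}\subseteq\Jset{j}$ and $g\act I_k\supseteq I_j$ and further $g\act\Iset{k}\supseteq\Iset{j}$. Since $(\gamma_\nu(+\infty),\gamma_\nu(-\infty))\in\Iset{j}\times\Jset{j}$ because $\nu\in\Cs{j,\st}$, one gets $\gamma_\nu(-\infty)\in\wh\R_\st\cap g\act J_k=g\act\Jset{k}$, hence $g^{-1}\act(\gamma_\nu(+\infty),\gamma_\nu(-\infty))\in\Iset{k}\times\Jset{k}$. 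Property~\eqref{BP:allvectors} then provides a unique $\eta\in\Cs{k}$ with $(\gamma_\eta(+\infty),\gamma_\eta(-\infty))=g^{-1}\act(\gamma_\nu(+\infty),\gamma_\nu(-\infty))$, and uniqueness of the geodesic joining two points of $\overline\H^\geo$ yields a time $t\in\R$ with $\gamma_\nu'(t)=g\act\eta$. Finally, combining~\eqref{BP:pointintohalfspaces}, part~\eqref{halfspacePastinclude}, and the first half of part~\eqref{halfspacePaststrict} should pin down the sign: $t\leq 0$, and $t<0$ when $g\act J_k\varsubsetneq J_j$.

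The one place where the proof is not a purely mechanical dualization, and the point I expect to be the main obstacle, is the sign of $t$. In the forward lemma the branch vectors point into $\Plussp{j}$, the ``smaller'' half-space that is entered, so the crossing happens in positive time; in the dual statement $g\act\Minussp{k}$ is contained in $\Minussp{j}$ whereas $g\act\Plussp{k}$ \emph{contains} $\Plussp{j}$, so one must verify that $\gamma_\nu$ meets $g\act\overline{\base{\Cs{k}}}$ on the past side of $\overline{\base{\Cs{j}}}$. This follows because $\gamma_\nu$ runs from $\gamma_\nu(-\infty)\in\overline{J_j}^\geo$ to $\gamma_\nu(+\infty)\in\overline{I_j}^\geo$, meeting the complete geodesic $\overline{\base{\Cs{j}}}$ exactly once, at $t=0$, where it points into $\Plussp{j}$; hence $\gamma_\nu(t)\in\Minussp{j}$ for $t<0$ and $\gamma_\nu(t)\in\Plussp{j}$ for $t>0$, and since $\gamma_\nu(t)=g\act\base{\eta}\in g\act\overline{\base{\Cs{k}}}\subseteq\overline{\Minussp{j}}^\geo$, necessarily $t\leq 0$, with $t=0$ excluded once $g\act\overline{\base{\Cs{k}}}\neq\overline{\base{\Cs{j}}}$, i.e.\ once $g\act J_k\varsubsetneq J_j$. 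All remaining steps are routine translations of the corresponding lines in the proof of Lemma~\ref{LEM:halfspaceFuture}.
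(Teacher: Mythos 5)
Your proposal is correct and is precisely the dualization that the paper has in mind when it omits the proof as ``analogous to Lemma~\ref{LEM:halfspaceFuture}'': you swap $I\leftrightarrow J$, $\Plussp{}\leftrightarrow\Minussp{}$ and the time direction, and invoke Proposition~\ref{PROP:BPintervaldecompV} in place of~\eqref{BP:intervaldecomp} where needed. Your explicit treatment of the sign of~$t$ (the geodesic lies in $\Minussp{j}$ for $t<0$ and in $\Plussp{j}$ for $t>0$, and $g\act\overline{\base{\Cs{k}}}\subseteq\overline{\Minussp{j}}$ with $t=0$ ruled out in the strict case since distinct geodesics with boundary points on one side of $\overline{\base{\Cs{j}}}$ cannot meet it in~$\H$) matches, and even slightly elaborates, the terse argument used in the forward proof.
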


For each~$j\in A$ and $\nu\in\Cs{j}$ we set 
\begin{align}
\retime{\BrU}(\nu) & \coloneqq\min\defset{t>0}{\gamma_{\nu}^{\prime}
(t)\in\Gamma\act\BrU}
\intertext{and}
\pretime{\BrU}(\nu) & \coloneqq\max\defset{t<0}{\gamma_\nu^{\prime}
(t)\in\Gamma\act\BrU}
\end{align}
\index[symbols]{tb@$\retime{\BrU}(\nu)$}%
\index[symbols]{tbb@$\pretime{\BrU}(\nu)$}%
if the respective elements exist. In this case, we call~$\retime{\BrU}(\nu)$ 
the \emph{next intersection time of~$\nu$ in~$\UTB\H$ with respect to~$\BrU$},
\index[defs]{next intersection time!with respect to}%
\index[defs]{intersection time!next, with respect to}%
and $\pretime{\BrU}(\nu)$ the \emph{previous intersection time of~$\nu$ 
in~$\UTB\H$ with respect to~$\BrU$}.
\index[defs]{previous intersection time!with respect to}%
\index[defs]{intersection time!previous, with respect to}%

With these preparations we can now establish the existence of next and previous 
intersection times. That also allows us to present a characterization of the 
transition sets, which implies their uniqueness.

\begin{prop}\label{PROP:CofSoBst}
Let $j\in A$.

\begin{enumerate}[label=$\mathrm{(\roman*)}$, ref=$\mathrm{\roman*}$]
\item\label{CofSoB:dontseeVan}
We have
\begin{align*}
\Iset{j} &=\defset{\gamma_\nu(+\infty)}{\nu\in\Cs{j,\st}}
\intertext{and}
\Jset{j} & =\defset{\gamma_\nu(-\infty)}{\nu\in\Cs{j,\st}}\,.
\end{align*}

\item\label{CofSoB:firstreturn}
For each $\nu\in\Cs{j,\st}$, the next intersection time~$\retime{\BrU}(\nu)$, as 
well as the previous intersection time~$\pretime{\BrU}(\nu)$, exists.

\item\label{CofSoB:representsets} For each~$k\in A$ we have
\begin{align*}
\Trans{}{j}{k}&=\defset{g\in\Gamma}{\exists\,\nu\in\Cs{j,\st}\colon\gamma_\nu^{
\prime}(\retime{\BrU}(\nu))\in g\act\Cs{k}}
\intertext{and}
\Past{}{k}{j}&=\defset{g\in\Gamma}{\exists\,\nu\in\Cs{j,\st}\colon 
\gamma_\nu^{\prime}(\pretime{\BrU}(\nu))\in g\act\Cs{k}}\,.
\end{align*}
\end{enumerate}
\end{prop}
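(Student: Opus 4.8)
The plan is to prove the three assertions in order, exploiting the preparatory lemmas on half-spaces and the local finiteness result.

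\textbf{Part (i).} First I would establish the identities for $\Iset{j}$ and $\Jset{j}$. The inclusion $\supseteq$ is immediate: if $\nu\in\Cs{j,\st}$ then by definition $(\gamma_\nu(+\infty),\gamma_\nu(-\infty))\in\wh\R_\st\times\wh\R_\st$, and by Remark~\ref{REM:SoBrem}\eqref{SoBrem:descrHalfspaces} we have $\gamma_\nu(+\infty)\in\overline{I_j}^\geo\cap\wh\R$; since the endpoints $\eX_j,\eY_j$ lie in $\wh\R\setminus\wh\R_\st$ by~\eqref{BP:completegeodesics}, the point $\gamma_\nu(+\infty)$ must actually lie in $I_j$, hence in $\Iset{j}$. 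For $\subseteq$, let $x\in\Iset{j}$. Using property~\eqref{BP:intervaldecompback} I would find $k\in A$ and $h\in\Trans{}{k}{j}$ with $x\in h^{-1}\act\Jset{k}$—wait, rather I would use (\ref{BP:intervaldecomp}\ref{BP:intervaldecompback}) to get some $y\in\Jset{j}$; then by~\eqref{BP:allvectors} (more precisely Remark~\ref{REM:inst}\eqref{instii}) the pair $(x,y)\in\Iset{j}\times\Jset{j}$ gives a vector $\nu\in\Cs{j,\st}$ with $\gamma_\nu(+\infty)=x$. This needs $\Jset{j}\neq\varnothing$, which holds by Remark~\ref{REM:inst}\eqref{insti} (a consequence of~\eqref{BP:closedgeodesicsHtoX}). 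The argument for $\Jset{j}$ is symmetric, using~\eqref{BP:intervaldecompVdecomp} from Proposition~\ref{PROP:BPintervaldecompV} and $\Iset{j}\neq\varnothing$.

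\textbf{Part (ii).} For the existence of $\retime{\BrU}(\nu)$, let $\nu\in\Cs{j,\st}$. By (\ref{BP:intervaldecomp}\ref{BP:intervaldecompGdecomp}) applied to $j$, and using part~(i) so that $\gamma_\nu(+\infty)\in\Iset{j}$, there is a unique $k\in A$ and $g\in\Trans{}{j}{k}$ with $\gamma_\nu(+\infty)\in g\act\Iset{k}\subseteq g\act I_k$. By Lemma~\ref{LEM:halfspaceFuture} (applied with this $g$, noting $g\act I_k\subseteq I_j$, indeed $\varsubsetneq$ by Lemma~\ref{LEM:halfspaceincludeGV}) there is $t>0$ with $\gamma_\nu'(t)\in g\act\Cs{k}\subseteq\Gamma\act\BrU$, so the set $\{t>0:\gamma_\nu'(t)\in\Gamma\act\BrU\}$ is nonempty. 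To see it has a minimum, I would invoke local finiteness: by Proposition~\ref{PROP:branches_locfinite} the family $\{g\act\overline{\base{\Cs{k}}}\}_{g\in\Gamma,k\in A}$ is locally finite in $\H$, so the geodesic segment $\gamma_\nu([0,t])$ (a compact subset of $\H$) meets only finitely many members, and each intersection of $\gamma_\nu$ with a translate $g\act\Cs{k}$ forces $\gamma_\nu$ to cross $g\act\overline{\base{\Cs{k}}}$; hence only finitely many intersection times lie in $(0,t]$, and the smallest positive one exists. The argument for $\pretime{\BrU}(\nu)$ is the mirror image, using Lemma~\ref{LEM:halfspacePast} and (\ref{BP:intervaldecomp}\ref{BP:intervaldecompVdecomp}).

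\textbf{Part (iii) and the main obstacle.} For the characterization of $\Trans{}{j}{k}$: the inclusion $\subseteq$ follows from the construction in part~(ii)—given $g\in\Trans{}{j}{k}$, property~(i) together with (\ref{BP:intervaldecomp}\ref{BP:intervaldecompGdecomp}) lets me choose $\nu\in\Cs{j,\st}$ with $\gamma_\nu(+\infty)\in g\act\Iset{k}$ (using that $g\act\Iset{k}\neq\varnothing$, which follows from Remark~\ref{REM:inst}\eqref{insti}), and then Lemma~\ref{LEM:halfspaceFuture}\eqref{halfspaceFutureintersect} together with the fact that the intersection at $g\act\Cs{k}$ is actually the \emph{first} one. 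This last point is where the real work lies: I must rule out that $\gamma_\nu$ hits some other translate $g'\act\Cs{k'}$ strictly before hitting $g\act\Cs{k}$. The key is property (\ref{BP:intervaldecomp}\ref{BP:intervaldecompGgeod}), which says the geodesic segment from $\base{\Cs{j}}$ to $g\act\base{\Cs{k}}$ does not intersect $\Gamma\act\BrU$; combined with the disjointness of the union in (\ref{BP:intervaldecomp}\ref{BP:intervaldecompGdecomp}) (so $g$ is the \emph{unique} transition element compatible with $\gamma_\nu(+\infty)$) and the nesting $g\act\Plussp{k}\varsubsetneq\Plussp{j}$, one concludes the portion of $\gamma_\nu$ between $\base{\Cs j}$ and $g\act\base{\Cs k}$ is intersection-free, so $\gamma_\nu'(\retime{\BrU}(\nu))\in g\act\Cs{k}$. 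For the converse inclusion $\supseteq$, suppose $\gamma_\nu'(\retime{\BrU}(\nu))\in g\act\Cs{k}$ for some $\nu\in\Cs{j,\st}$; then $g^{-1}\act\nu$ gives a vector whose geodesic has $(+\infty)$-endpoint in $\Iset{k}$, forcing $\gamma_\nu(+\infty)\in g\act\Iset{k}\subseteq g\act I_k$; by (\ref{BP:intervaldecomp}\ref{BP:intervaldecompGdecomp}) and part~(i) there is a unique $g'\in\Trans{}{j}{k'}$ with $\gamma_\nu(+\infty)\in g'\act I_{k'}$, and the $\subseteq$-direction already proved shows $\gamma_\nu'(\retime{\BrU}(\nu))\in g'\act\Cs{k'}$; uniqueness of the vector in a branch with given endpoints (from~\eqref{BP:allvectors} and Proposition~\ref{PROP:CofSoBnonst}\eqref{CofSoB:disjointunion}) forces $g=g'$, $k=k'$, hence $g\in\Trans{}{j}{k}$. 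The statement for $\Past{}{k}{j}$ is obtained by the dual argument via Proposition~\ref{PROP:BPintervaldecompV} and Lemma~\ref{LEM:halfspacePast}. I expect the main obstacle to be the careful bookkeeping in showing that the intersection exhibited in Lemma~\ref{LEM:halfspaceFuture} is genuinely the \emph{first} return, i.e.\ that no earlier intersection with $\Gamma\act\BrU$ is possible—this is exactly what (\ref{BP:intervaldecomp}\ref{BP:intervaldecompGgeod}) is designed to preclude, but one must track how the segment of $\gamma_\nu$ before $\base{\Cs j}$ (if $\nu$ is not based on the very first crossing) interacts with the disjointness, and ensure the coarse endpoint-localization pins down a unique transition element at each step.
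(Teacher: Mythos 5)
Your proposal is correct and follows essentially the same route as the paper: part (i) via \eqref{BP:allvectors} together with Remark~\ref{REM:inst}, part (ii) via (\ref{BP:intervaldecomp}\ref{BP:intervaldecompGdecomp}) combined with Lemmas~\ref{LEM:halfspaceincludeGV} and~\ref{LEM:halfspaceFuture}, and part (iii) by the two inclusions, with uniqueness supplied by~\eqref{BP:disjointunion} (equivalently Proposition~\ref{PROP:CofSoBnonst}) and the backward statement by the duality $\Past{}{k}{j}=\Trans{}{k}{j}^{-1}$. The only (harmless) deviation is in (ii), where you get existence of the minimum from local finiteness (Proposition~\ref{PROP:branches_locfinite}), whereas the paper uses (\ref{BP:intervaldecomp}\ref{BP:intervaldecompGgeod}) to see directly that the intersection produced by Lemma~\ref{LEM:halfspaceFuture} is already the first one---a fact you in any case also invoke and justify in part (iii).
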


\begin{proof}
For the proof of~\eqref{CofSoB:dontseeVan}, we recall from Remark~\ref{REM:inst} 
that $\Jset{j}\not=\varnothing$ and fix any  $y_0\in\Jset{j}$. For each 
$x\in\Iset{j}$, the combination of~\eqref{BP:allvectors} and 
Remark~\ref{REM:inst} implies the existence of~$\nu\in\Cs{j,\st}$ such that 
\[
 (x,y_0) = (\gamma_\nu(+\infty), \gamma_\nu(-\infty))\,.
\]
Thus, 
\[
 x\in \defset{ \gamma_\nu(+\infty) }{ \nu \in \Cs{j,\st} }
\]
and hence 
\[
 \Iset{j} \subseteq \defset{ \gamma_\nu(+\infty) }{ \nu \in \Cs{j,\st} }\,.
\]
Conversely, since $\defset{ \gamma_\nu(+\infty) }{ \nu\in\Cs{j} }\subseteq I_j$ 
by~\eqref{BP:pointintohalfspaces}, 
\[
 \defset{\gamma_\nu(+\infty) }{ \nu\in\Cs{j,\st} } \subseteq I_j \cap 
\wh{\R}_{\st} = \Iset{j}\,.
\]
This shows the first part of~\eqref{CofSoB:dontseeVan}. The second part follows 
analogously. 

For the proof of~\eqref{CofSoB:firstreturn} we fix $\nu\in\Cs{j,\st}$. Then 
$\gamma_\nu(+\infty)\in\Iset{j}$ by~\eqref{CofSoB:dontseeVan}, 
and~(\ref{BP:intervaldecomp}\ref{BP:intervaldecompGdecomp}) shows the existence 
of unique elements~$k\in A$ and $g\in\Trans{}{j}{k}$ such that 
$\gamma_\nu(+\infty)\in g\act I_{k}$. 
From Lemma~\ref{LEM:halfspaceincludeGV}\eqref{halfspaceincludeGV+} we 
obtain $g\act\Plussp{k}\varsubsetneq\Plussp{j}$ and hence $g\act I_k\varsubsetneq 
I_j$. Therefore we find $t>0$ such that $\gamma_\nu^{\prime}(t)\in 
g\act\Cs{k}$, as proven in Lemma~\ref{LEM:halfspaceFuture}. Thus, 
\begin{equation}\label{eq:intersectset}
 t\in \defset{ s>0 }{ \gamma_\nu^\prime(s) \in \Gamma\act\BrU }\,.
\end{equation}
In order to show that the minimum of this set exists and is assumed by~$t$, we 
set $z\coloneqq\gamma_\nu(0)$ and $w\coloneqq\gamma_\nu(t)$ and observe that
\[
 (z,w)_{\H}\cap\Gamma\act\BrU=\varnothing
\]
by using (\ref{BP:intervaldecomp}\ref{BP:intervaldecompGgeod}) and the fact that
$g\in\Trans{}{j}{k}$. Thus, there is no ``earlier'' intersection between 
$\gamma_\nu$ and $\Gamma\act\BrU$, and hence $\retime{\BrU}(\nu)$ exists and 
equals $t$.  The existence of $\pretime{\BrU}(\nu)$ follows 
analogously by taking advantage of Proposition~\ref{PROP:BPintervaldecompV},
Lemma~\ref{LEM:halfspaceincludeGV}\eqref{halfspaceincludeGV-} and 
Lemma~\ref{LEM:halfspacePast}.

In order to establish~\eqref{CofSoB:representsets} we fix $k\in A$ and set
\[
\mathrm{G}_{j,k}\coloneqq\defset{g\in\Gamma}{\exists\,
\nu\in\Cs{j,\st}\colon\gamma_{\nu}^{\prime}(\retime{\BrU}(\nu))\in 
g\act\Cs{k}}\,.
\]
We first aim at showing that $\mathrm{G}_{j,k} = \Trans{}{j}{k}$. Let 
$g\in\mathrm{G}_{j,k}$ and $\nu\in\Cs{j,\st}$ such that 
$\gamma_{\nu}^{\prime}(\retime{\BrU}(\nu))\in g\act\Cs{k}$. As in the proof 
of~\eqref{CofSoB:firstreturn} we obtain the existence and uniqueness of~$\ell\in 
A$ and~$h\in\Trans{}{j}{\ell}$ such that 
$\gamma_\nu^\prime(\retime{\BrU}(\nu))\in h\act\Cs{\ell}$. Thus, $g\act 
\Cs{k}\cap h\act\Cs{\ell}\not=\varnothing$, which yields $g=h$ and $k=\ell$ 
by~\eqref{BP:disjointunion}. In turn, $g\in \Trans{}{j}{k}$ and hence
\[
 \mathrm{G}_{j,k} \subseteq \Trans{}{j}{k}\,.
\]
For the converse inclusion relation, we pick $g\in\Trans{}{j}{k}$. 
From~\eqref{BP:intervaldecomp}, in combination with~\eqref{BP:disjointunion}, we 
get 
\[
 g\act \Iset{k} \varsubsetneq \Iset{j}\,.
\]
We pick any $(x,y)\in g\act \Iset{k}\times  \Jset{j}$. Then $(x,y)\in 
\Iset{j}\times\Jset{j}$, and hence there exists, by~\eqref{BP:allvectors}, a unique vector~$\nu\in\Cs{j,\st}$ such that 
\[
 (\gamma_\nu(+\infty), \gamma_\nu(-\infty)) = (x,y)\,.
\]
By Lemma~\ref{LEM:halfspaceFuture}, there exists~$t>0$ such that 
$\gamma_\nu^\prime(t)\in g\act\Cs{k}$. As in the proof 
of~\eqref{CofSoB:firstreturn} we obtain $t=\retime{\BrU}(\nu)$. Therefore $g\in 
\mathrm{G}_{j,k}$ and hence 
\[
 \Trans{}{j}{k}\subseteq \mathrm{G}_{j,k}\,.
\]
This proves the first part of~\eqref{CofSoB:representsets}. For the second part 
we observe that 
\begin{eqnarray*}
\exists\,\nu\in\Cs{j}\colon\gamma_\nu^{\prime}(\pretime{\BrU}(\nu))\in 
g\act\Cs{k}&\Longleftrightarrow&\exists\,\eta\in\Cs{k}\colon\gamma_\eta^{\prime}
(\retime{\BrU}(\eta))\in g^{-1}\act\Cs{j}\,.
\end{eqnarray*}
The equality $\Past{}{k}{j}=\Trans{}{k}{j}^{-1}$ now completes the proof. 
\end{proof}

\begin{remark}
Let $j,k\in A$. We note that for the characterization of the transition 
sets~$\Trans{}{j}{k}$ and~$\Past{}{k}{j}$ in 
Proposition~\ref{PROP:CofSoBst}\eqref{CofSoB:representsets} we used the strong 
branch~$\Cs{j,\st}$ instead of the orginal branch~$\Cs{j}$. This is necessary 
due to the possibility that branches are \emph{not full}, i.e., there might 
exist $k\in A$ and a geodesic~$\gamma$ on~$\H$ with $\gamma(+\infty)\in I_k$ and 
$\gamma(-\infty)\in J_k$ that does not intersect~$\Cs{k}$. In other words, the 
geodesic~$\gamma$ is passing through the geodesic 
segment~$\overline{\base{\Cs{k}}}$ from the half-space~$\Minussp{k}$ 
into~$\Plussp{k}$ and hence has the potential to intersect~$\Cs{k}$, but the 
necessary vector at the intersection point with~$\overline{\base{\Cs{k}}}$ is 
not contained in~$\Cs{k}$.  If we now have a vector 
~$\nu\in\Cs{j}\setminus\Cs{j,\st}$ such that the next intersection 
time~$\retime{\BrU}(\nu)$ exists but the next intersection between $\gamma_\nu$ 
and $\Gamma\act\overline{\base{\BrU}}$ is at an earlier time (due to a 
``missing'' vector in~$\BrU$ as above), then 
\[
 \gamma_\nu'(\retime{\BrU}(\nu)) \in h\act\Cs{k}
\]
for some $k\in A$ and $h\in\Gamma$ with $h$ typically not in~$\Trans{}{j}{k}$.
However, if all branches are full, then
\begin{align*}
\Trans{}{j}{k}&=\defset{g\in\Gamma}{\exists\,\nu\in\Cs{j}\colon\text{$\retime{
\BrU}(\nu)$ exists and $\gamma_\nu^{\prime}(\retime{\BrU}(\nu))\in 
g\act\Cs{k}$}}
\intertext{and}
\Past{}{k}{j}&=\defset{g\in\Gamma}{\exists\,\nu\in\Cs{j}\colon 
\text{$\pretime{\BrU}(\nu)$ exists and 
$\gamma_\nu^{\prime}(\pretime{\BrU}(\nu))\in g\act\Cs{k}$}}\,.
\end{align*}
\end{remark}

The next observation follows immediately from Proposition~\ref{PROP:CofSoBst}. 

\begin{cor}\label{COR:uniquereturn}
Let $j,k\in A$, $\nu\in\Cs{j,\st}$ and $g\in\Gamma$. Then 
\begin{enumerate}[label=$\mathrm{(\roman*)}$, ref=$\mathrm{\roman*}$]
\item\label{uniquereturn+}
$\gamma_\nu^{\prime}(\retime{\BrU}(\nu))\in g\act\Cs{k}$ if and only if 
$g\in\Trans{}{j}{k}$ and $\gamma_\nu(+\infty)\in g\act I_k$,

\item\label{uniquereturn-}
$\gamma_\nu^{\prime}(\pretime{\BrU}(\nu))\in g\act\Cs{k}$ if and only if 
$g\in\Past{}{k}{j}$ and $\gamma_\nu(-\infty)\in g\act J_k$.
\end{enumerate}
\end{cor}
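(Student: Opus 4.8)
The plan is to read both equivalences off the construction already carried out in the proof of Proposition~\ref{PROP:CofSoBst}\eqref{CofSoB:firstreturn}. Recall from that proof that, since $\nu\in\Cs{j,\st}$ gives $\gamma_\nu(+\infty)\in\Iset{j}$ by Proposition~\ref{PROP:CofSoBst}\eqref{CofSoB:dontseeVan}, there is a \emph{unique} pair $(k_0,g_0)$ with $k_0\in A$ and $g_0\in\Trans{}{j}{k_0}$ such that $\gamma_\nu(+\infty)\in g_0\act I_{k_0}$, the uniqueness being exactly the disjointness statement in~(\ref{BP:intervaldecomp}\ref{BP:intervaldecompGdecomp}); and that for this pair one has $\gamma_\nu^{\prime}(\retime{\BrU}(\nu))\in g_0\act\Cs{k_0}$. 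Everything in~\eqref{uniquereturn+} is then a matching of $(k,g)$ against $(k_0,g_0)$.

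For the ``if'' direction of~\eqref{uniquereturn+}: if $g\in\Trans{}{j}{k}$ and $\gamma_\nu(+\infty)\in g\act I_k$, then $(k,g)$ satisfies the defining property of $(k_0,g_0)$, so uniqueness gives $(k,g)=(k_0,g_0)$ and hence $\gamma_\nu^{\prime}(\retime{\BrU}(\nu))\in g\act\Cs{k}$. For the ``only if'' direction: if $\gamma_\nu^{\prime}(\retime{\BrU}(\nu))\in g\act\Cs{k}$, then $g\act\Cs{k}$ and $g_0\act\Cs{k_0}$ share the element $\gamma_\nu^{\prime}(\retime{\BrU}(\nu))$, so their base geodesic segments meet and, exactly as in the proof of Proposition~\ref{PROP:CofSoBst}\eqref{CofSoB:representsets}, property~\eqref{BP:disjointunion} forces $k=k_0$ and $g=g_0$; in particular $g\in\Trans{}{j}{k}$ and $\gamma_\nu(+\infty)\in g\act I_k$. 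This proves~\eqref{uniquereturn+}.

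Part~\eqref{uniquereturn-} is obtained by the same argument with the backwards data in place of the forward data: one uses the existence of $\pretime{\BrU}(\nu)$ and the analogous uniqueness of the pair $(k_0,g_0)$ with $g_0\in\Past{}{k_0}{j}$ and $\gamma_\nu(-\infty)\in g_0\act J_{k_0}$, which comes from the dual decomposition in Proposition~\ref{PROP:BPintervaldecompV}\eqref{BP:intervaldecompVdecomp} together with $\gamma_\nu(-\infty)\in\Jset{j}$ (Proposition~\ref{PROP:CofSoBst}\eqref{CofSoB:dontseeVan}); discarding the wrong pairs again rests on~\eqref{BP:disjointunion}. There is no real obstacle: the corollary is a bookkeeping reformulation of uniqueness already established for Proposition~\ref{PROP:CofSoBst}, and the only point deserving a word of care is that the disjointness in~(\ref{BP:intervaldecomp}\ref{BP:intervaldecompGdecomp}) is being applied to $\gamma_\nu(\pm\infty)$, which lie in $\wh\R_{\st}$ precisely because $\nu$ was chosen in the strong branch $\Cs{j,\st}$.
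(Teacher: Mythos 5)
Your proof is correct and follows essentially the same route as the paper, which simply records the corollary as an immediate consequence of Proposition~\ref{PROP:CofSoBst}: the uniqueness of the pair $(k,g)$ comes from the disjointness in~(\ref{BP:intervaldecomp}\ref{BP:intervaldecompGdecompred}e.g.\ the decomposition of $\Iset{j}$) together with~\eqref{BP:disjointunion}, exactly as in the proofs of parts~\eqref{CofSoB:firstreturn} and~\eqref{CofSoB:representsets} of that proposition. Your observation that the disjointness is applied to $\gamma_\nu(\pm\infty)\in\wh\R_{\st}$ because $\nu\in\Cs{j,\st}$ is the right point of care and matches the paper's reasoning.
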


We now associate to each element~$\nu\in\BrU_{\st}$ the three sequences defined 
in what follows. The combination of Propositions~\ref{PROP:CofSoBnonst}, 
\ref{PROP:CofSoBst}, Corollary~\ref{COR:uniquereturn} and Remark~\ref{REM:inst} 
shows their existence, well-definedness and the claimed properties.

We define the \emph{sequence~$(\ittime{\BrU,n}(\nu))_{n\in\Z}$ of iterated 
intersection times of~$\nu$ with respect to~$\BrU$} by 
\index[defs]{iterated intersection times}%
\index[defs]{intersection time!iterated}%
\index[defs]{iterated sequence!times}%
\index[symbols]{tc@$\ittime{\BrU,n}(\nu)$}%
\begin{align}
\label{EQDEF:ittime0}
\ittime{\BrU,0}(\nu) &\coloneqq 0
\intertext{and}
\label{EQDEF:ittimen}
\ittime{\BrU,n}(\nu) &\coloneqq
\begin{cases}
\min\defset{t>\ittime{\BrU,n-1}(\nu)}{\gamma_\nu^{\prime}(t)\in\Gamma\act\BrU}
&\text{for $n\geq 1$}\,,
\\[1ex]
\max\defset{t<\ittime{\BrU,n+1}(\nu)}{\gamma_\nu^{\prime}
(t)\in\Gamma\act\BrU}&\text{for $n\leq-1$}\,.
\end{cases}
\end{align}
This sequence is strictly increasing. For each $n\in\Z$ we have 
\[
\sgn(\ittime{\BrU,n}(\nu))=\sgn(n)
\]
and
\begin{equation}\label{eq:iternexttime}
\ittime{\BrU,n}(\nu)=\retime{\BrU}\bigl(\gamma_\nu^{\prime}(\ittime{\BrU,n-1}
(\nu))\bigr)=\pretime{\BrU}\bigl(\gamma_\nu^{\prime}(\ittime{\BrU,n+1}
(\nu))\bigr)\,.
\end{equation}
Given the sequence~$(\ittime{\BrU,n}(\nu))_{n\in\Z}$, for each $n\in\Z$ the 
branch translate $g\act\Cs{k}$ containing the vector 
$\gamma_\nu^{\prime}(\ittime{\BrU,n}(\nu))$ is uniquely determined.  This allows 
us to define the \emph{sequence~$(\itindex{\BrU,n}(\nu))_{n\in\Z}$ of iterated 
intersection branches of~$\nu$ with respect to~$\BrU$}
\index[defs]{iterated intersection branches}%
\index[defs]{branch!iterated intersection}%
\index[defs]{iterated sequence!branches}%
\index[symbols]{k@$\itindex{\BrU,n}(\nu)$}%
as the sequence in~$A$ 
given by
\begin{align}\label{EQDEF:itindex}
\itindex{\BrU,n}(\nu)=k \qquad \logeq  \qquad \exists\,
g\in\Gamma\colon\gamma_\nu^{\prime}(\ittime{\BrU,n}(\nu))\in g\act\Cs{k}
\end{align}
for all~$n\in\Z$. The \emph{sequence~$(\ittrans{\BrU,n}(\nu))_{n\in\Z}$ of 
iterated intersection transformations of~$\nu$ with respect to~$\BrU$}
\index[defs]{iterated intersection transformations}%
\index[defs]{iterated sequence!transformations}%
\index[symbols]{g@$\ittrans{\BrU,n}(\nu)$}%
is a 
sequence in~$\Gamma$ that is given by 
\begin{align}\label{EQDEF:ittrans1}
\ittrans{\BrU,0}(\nu) & \coloneqq\id
\end{align}
and for $n\in\Z$, $n\not=0$, by 
\begin{align}\label{EQDEF:ittrans2}
& \ittrans{\BrU,n}(\nu)  = g  
\\
& \qquad \logeq  \quad 
\begin{cases}
\gamma_ 
\nu^{\prime}(\ittime{\BrU,n}(\nu))\in\ittrans{\BrU,1}(\nu)\cdots\ittrans{\BrU,
n-1}(\nu)g\act\Cs{\itindex{\BrU,n}(\nu)}&\text{for $n\geq1$}\,,
\\[2mm]
\gamma_\nu^{\prime}(\ittime{\BrU,n}(\nu))\in\ittrans{\BrU,-1}(\nu)\cdots\ittrans
{\BrU,
n+1}(\nu)g\act\Cs{\itindex{\BrU,n}(\nu)}&\text{for $n\leq-1$}\,.
\end{cases}
\nonumber
\end{align}
For each $n\in\N$ we then have
\[
\ittrans{\BrU,n}(\nu)\in\Trans{}{\itindex{\BrU,n-1}(\nu)}{\itindex{\BrU,n}(\nu)}
\]
and for each $n\in -\N$ we have
\[
\ittrans{\BrU,n}(\nu)^{-1}\in\Trans{}{\itindex{\BrU,n}(\nu)}{\itindex{\BrU,n+1}
(\nu)}\,.
\]
We call the ordered set 
\begin{equation}\label{eq:itseq}
[(\ittime{\BrU,n}(\nu))_n,(\itindex{\BrU,n}(\nu))_n,(\ittrans{\BrU,n}(\nu))_n]
\end{equation}
the \emph{system of iterated sequences of~$\nu$ with respect to~$\BrU$}. 
\index[defs]{system of iterated sequences}%
\index[defs]{iterated sequence!system}%

\begin{lemma}\label{LEM:accpointsofit}
For each $\nu\in\BrU_{\st}$ we have
\[
\lim_{n\to\pm\infty}\ittime{\BrU,n}(\nu)=\pm\infty\,.
\]
\end{lemma}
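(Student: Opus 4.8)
The plan is to argue by contradiction, feeding on the local finiteness of the family of $\Gamma$-translates of the base geodesic segments of the branches established in Proposition~\ref{PROP:branches_locfinite}. It suffices to treat the limit $n\to+\infty$; the case $n\to-\infty$ is entirely analogous (or follows by reversing the time orientation of $\gamma_\nu$, using Proposition~\ref{PROP:BPintervaldecompV} in place of~\eqref{BP:intervaldecomp}). So suppose the claim fails for $n\to+\infty$. Since the sequence $(\ittime{\BrU,n}(\nu))_{n\in\Z}$ is strictly increasing, the subsequence $(\ittime{\BrU,n}(\nu))_{n\in\N}$ is then bounded above and converges to some finite $T>\ittime{\BrU,1}(\nu)>0$. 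I would set $z_n\coloneqq\gamma_\nu(\ittime{\BrU,n}(\nu))$ for $n\in\N$; these points lie in~$\H$ and converge to $z\coloneqq\gamma_\nu(T)\in\H$, and they are pairwise distinct because the parametrization $t\mapsto\gamma_\nu(t)$ of a geodesic on~$\H$ is injective. By the definition of $\ittime{\BrU,n}(\nu)$ together with~\eqref{EQDEF:itindex}, for each $n\in\N$ there is $g_n\in\Gamma$ with $\gamma_\nu^{\prime}(\ittime{\BrU,n}(\nu))\in g_n\act\Cs{\itindex{\BrU,n}(\nu)}$; projecting onto base points gives $z_n\in g_n\act\overline{\base{\Cs{\itindex{\BrU,n}(\nu)}}}$, which is a member of the family $\mf B=\defset{g\act\overline{\base{\Cs{j}}}}{g\in\Gamma,\ j\in A}$ from Proposition~\ref{PROP:branches_locfinite}.

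Next I would invoke Proposition~\ref{PROP:branches_locfinite} to pick an open neighborhood $U$ of $z$ in~$\H$ meeting only finitely many members of~$\mf B$. Since $z_n\to z$, all but finitely many $z_n$ lie in~$U$; as these are infinitely many pairwise distinct points each lying on some member of~$\mf B$ that meets~$U$, the pigeonhole principle yields a single member $\sigma\in\mf B$ containing at least two of the $z_n$. But $\gamma_\nu(\R)$ and $\sigma$ are both complete geodesic segments (arcs of geodesics on~$\H$), and two distinct such arcs meet in at most one point, so $\gamma_\nu(\R)=\sigma$. Now $\sigma=g\act\overline{\base{\Cs{k}}}$ for some $g\in\Gamma$, $k\in A$, and by~\eqref{BP:completegeodesics} together with the $\Gamma$-invariance of $\wh\R_{\st}$ its two endpoints lie in $\wh\R\setminus\wh\R_{\st}$; hence $\gamma_\nu(+\infty)\in\wh\R\setminus\wh\R_{\st}$. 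On the other hand, $\nu\in\BrU_{\st}$ means $\nu\in\Cs{j,\st}$ for some $j\in A$, so $\gamma_\nu(+\infty)\in\Iset{j}\subseteq\wh\R_{\st}$ by Proposition~\ref{PROP:CofSoBst}\eqref{CofSoB:dontseeVan}. This contradiction shows $\ittime{\BrU,n}(\nu)\to+\infty$, and the case $n\to-\infty$ is symmetric.

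The only step requiring a little care, and the one I would write out most carefully, is the passage from ``infinitely many $z_n$ in~$U$'' to ``two distinct $z_n$ on a common member of~$\mf B$'': this needs both the genuine distinctness of the $z_n$ (injectivity of the geodesic parametrization, so that the strictly increasing sequence of times produces distinct points) and the fact that a geodesic arc is pinned down by any two of its points, which is what turns a repeated member of~$\mf B$ into the forbidden coincidence $\gamma_\nu(\R)=\sigma$. Everything else is a direct consequence of local finiteness and the endpoint constraint \eqref{BP:completegeodesics} that distinguishes strong vectors from the base geodesics of the branches.
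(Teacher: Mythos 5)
Your proof is correct and takes essentially the same route as the paper: assume the intersection times accumulate at a finite time, observe that the (pairwise distinct) base points $\gamma_\nu(\ittime{\BrU,n}(\nu))$ converge in~$\H$, and contradict the local finiteness of the family~$\mf B$ of $\Gamma$-translates of the branch base segments from Proposition~\ref{PROP:branches_locfinite}. The only difference is in how the contradiction is extracted -- the paper argues that infinitely many distinct members of~$\mf B$ must meet every neighborhood of the limit point, whereas you pigeonhole two of the points onto a single member and then play the endpoint condition~\eqref{BP:completegeodesics} against $\nu\in\BrU_{\st}$; this is a slightly more explicit way of ruling out repeated translates, but the underlying mechanism is the same.
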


\begin{proof}
We establish the claimed properties via a proof by contradiction. Let 
$\nu\in\BrU_{\st}$ and consider the system 
\[
 [(\ittime{\BrU,n}(\nu))_n,(\itindex{\BrU,n}(\nu))_n,(\ittrans{\BrU,n}(\nu))_n]
\]
of iterated sequences of~$\nu$ from~\eqref{eq:itseq}. For $n\in\N$ set 
\[
 t_n \coloneqq \ittime{\BrU,n}(\nu)
\]
and recall that $(t_n)_{n\in\N}$ is strictly increasing. We assume that 
$(t_n)_n$ converges in~$\R$, say
\[
 \lim_{n\to+\infty} t_n = \tau \quad\in\R\,.
\]
Since 
\[
 \R\to\H\,,\quad t\mapsto \gamma_\nu(t)\,,
\]
is an isometric embedding, it follows that the sequence~$(\gamma_\nu(t_n))_n$ 
converges in~$\H$, namely
\[
 \lim_{n\to+\infty}\gamma_\nu(t_n) = \gamma_\nu(\tau)\,,
\]
and the elements of the sequence $(\gamma_\nu(t_n))_n$ are pairwise distinct. 
For $n\in\N$ let 
\[
 k_n\coloneqq \itindex{\BrU,n}(\nu)
\]
and 
\[
 h_n \coloneqq \ittrans{\BrU,1}(\nu)\cdots \ittrans{\BrU,n}(\nu)\,.
\]
Then 
\[
 \gamma_\nu(t_n) \in h_n\act \overline{\base{\Cs{k_n}}}
\]
for each~$n\in\N$ (see~\eqref{EQDEF:ittrans1}--\eqref{EQDEF:ittrans2}). Further, 
the shape of geodesics in~$\H$ implies that the tuples $(k_n,h_n)$, $n\in\N$, 
are pairwise distinct. Hence, each neighborhood of~$\gamma_\nu(\tau)$ in~$\H$ 
intersects infinitely many members of the family 
\[
 \defset{h_n\act\overline{\base{\Cs{k_n}}}}{n\in\N}\,.
\]
This contradicts~Proposition~\ref{PROP:branches_locfinite}. In turn, 
\[
 \lim_{n\to+\infty} t_n = +\infty\,.
\]
The statement for $n\to-\infty$ can be shown analogously.
\end{proof}

The following proposition shows that each intersection between 
$\Gamma\act\BrU_{\st}$ and a geodesic determined by an element of~$\BrU_{\st}$ 
is indeed detected by the iterated sequences. This observation will be crucial 
for establishing that $\CrSc = \pi(\BrU)$ is a cross section.

\begin{prop}\label{PROP:allinit}
Let $\nu\in\BrU_{\st}$, $k\in A$, $t\in\R$ and $g\in\Gamma$ be such that
\[
\gamma_\nu^{\prime}(t)\in g\act\Cs{k}\,.
\]
Then there exists a unique element~$n\in\Z$ such that~$\sgn(n)=\sgn(t)$ and
\begin{eqnarray*}
k=\itindex{\BrU,n}(\nu),&t=\ittime{\BrU,n}(\nu),\text{ 
and}&g=\ittrans{\BrU,\sgn(t)}(\nu)\ittrans{\BrU,2\sgn(t)}(\nu)\cdots\ittrans{
\BrU,n}(\nu)\,.
\end{eqnarray*}
\end{prop}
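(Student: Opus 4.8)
The plan is to show that the sequence of iterated intersection times $(\ittime{\BrU,n}(\nu))_{n\in\Z}$ enumerates \emph{every} time at which $\gamma_\nu$ meets $\Gamma\act\BrU$, and then to read off $k$ and $g$ from the defining properties \eqref{EQDEF:itindex} and \eqref{EQDEF:ittrans2} of $\itindex{\BrU,n}$ and $\ittrans{\BrU,n}$, using that the branch translate through a given unit tangent vector is unique by Proposition~\ref{PROP:CofSoBnonst}\eqref{CofSoB:disjointunion}.

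First I would dispose of the case $t=0$. Here $\gamma_\nu^{\prime}(0)=\nu$, and since $\nu\in\BrU_{\st}\subseteq\BrU$ there is $j\in A$ with $\nu\in\Cs{j}$. From $\nu\in g\act\Cs{k}$ and $\nu\in\Cs{j}$, Proposition~\ref{PROP:CofSoBnonst}\eqref{CofSoB:disjointunion} forces $g=\id$ and $k=j$. Then $n=0$ satisfies all three conditions (the product in the statement being the empty product, equivalently $\ittrans{\BrU,0}(\nu)=\id$ by \eqref{EQDEF:ittrans1}, and $\itindex{\BrU,0}(\nu)=j=k$, $\ittime{\BrU,0}(\nu)=0=t$), and $n=0$ is the only integer with $\sgn(n)=\sgn(0)=0$.

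Next I would treat $t>0$; the case $t<0$ is entirely parallel, replacing the upper branch of \eqref{EQDEF:ittimen} by the lower one (with $\max$ in place of $\min$) and using Lemma~\ref{LEM:accpointsofit} in the $-\infty$ direction. To locate $t$ in the sequence: $\ittime{\BrU,0}(\nu)=0<t$ by \eqref{EQDEF:ittime0}, while $\ittime{\BrU,m}(\nu)\to+\infty$ by Lemma~\ref{LEM:accpointsofit}, so there is a smallest integer $n\ge1$ with $\ittime{\BrU,n}(\nu)\ge t$, and by minimality $\ittime{\BrU,n-1}(\nu)<t\le\ittime{\BrU,n}(\nu)$. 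Since $\gamma_\nu^{\prime}(t)\in g\act\Cs{k}\subseteq\Gamma\act\BrU$ and $t>\ittime{\BrU,n-1}(\nu)$, the minimum defining $\ittime{\BrU,n}(\nu)$ in \eqref{EQDEF:ittimen} forces $\ittime{\BrU,n}(\nu)\le t$, hence $\ittime{\BrU,n}(\nu)=t$. Strict monotonicity of $(\ittime{\BrU,m}(\nu))_{m\ge1}$ shows this $n$ is the unique positive index with $\ittime{\BrU,n}(\nu)=t$, which already yields uniqueness of $n$ among integers with $\sgn(n)=\sgn(t)=1$.

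Finally I would identify $k$ and $g$ for this $n$. As $\gamma_\nu^{\prime}(t)=\gamma_\nu^{\prime}(\ittime{\BrU,n}(\nu))$ lies in $g\act\Cs{k}$, and the branch translate containing a vector is unique by Proposition~\ref{PROP:CofSoBnonst}\eqref{CofSoB:disjointunion}, definition \eqref{EQDEF:itindex} gives $\itindex{\BrU,n}(\nu)=k$ and makes the group element in \eqref{EQDEF:ittrans2} unambiguous. Unwinding \eqref{EQDEF:ittrans2} at index $n$ shows
\[
\gamma_\nu^{\prime}(\ittime{\BrU,n}(\nu))\in\ittrans{\BrU,1}(\nu)\cdots\ittrans{\BrU,n}(\nu)\act\Cs{\itindex{\BrU,n}(\nu)}=\ittrans{\BrU,1}(\nu)\cdots\ittrans{\BrU,n}(\nu)\act\Cs{k}\,,
\]
and comparing with $\gamma_\nu^{\prime}(t)\in g\act\Cs{k}$, uniqueness of the branch translate gives $g=\ittrans{\BrU,1}(\nu)\cdots\ittrans{\BrU,n}(\nu)$, which is the asserted product since $\sgn(t)=1$. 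The one genuinely substantive point is the surjectivity/no-overshoot step in the third paragraph: that the iterated times exhaust all intersection times and cannot jump past $t$. This is exactly where Lemma~\ref{LEM:accpointsofit} (so the sequence does not stall below $t$) and the \emph{minimum} in \eqref{EQDEF:ittimen} (so no intervening intersection time is skipped) are both indispensable; the remaining steps are straightforward bookkeeping with the definitions of the iterated sequences.
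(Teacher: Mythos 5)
Your proposal is correct and follows essentially the same route as the paper: locate $t$ within the strictly monotone sequence $(\ittime{\BrU,n}(\nu))_n$ using Lemma~\ref{LEM:accpointsofit}, force $t=\ittime{\BrU,n}(\nu)$ via the minimum in \eqref{EQDEF:ittimen} (equivalently \eqref{eq:iternexttime}), and then read off $k$ and $g$ from the definitions together with the disjointness in Proposition~\ref{PROP:CofSoBnonst}\eqref{CofSoB:disjointunion}. The only difference is that you spell out the $t=0$ case and the identification of $k$ and $g$, which the paper dismisses as immediate from the definitions.
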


\begin{proof}
It suffices to show the uniqueness of~$n\in\Z$ with~$t=\ittime{\BrU,n}(\nu)$. The remaining statements are then immediate from the definitions.
By the strict monotony of the sequence $(\ittime{\BrU,n}(\nu))_{n\in\Z}$ and 
Lemma~\ref{LEM:accpointsofit} we find exactly one $n\in\Z$ such that
\[
\ittime{\BrU,n-1}(\nu)< t\leq\ittime{\BrU,n}(\nu)\,.
\]
If $t<\ittime{\BrU,n}(\nu)$, then the hypothesis
$\gamma_\nu^{\prime}(t)\in\Gamma\act\BrU$ implies 
$\ittime{\BrU,n}(\nu)\ne\retime{\BrU}(\gamma_\nu^{\prime}(\ittime{\BrU,n-1}
(\nu)))$. This contradicts~\eqref{eq:iternexttime}.  Hence, 
$t=\ittime{\BrU,n}(\nu)$.
\end{proof}

The remainder of this section is devoted to the proof that every set of branches 
can be rearranged into an admissible one (see~\eqref{BP:leavespaceforflip}).
We further introduce an additional property of sets of branches, which will be 
needed later on (see Proposition~\ref{PROP:stepreduxworx} below) as a 
prerequisite in order to assure a non-collapsing behavior in the sense 
of~\eqref{BP:noidentity}.
Again, every set of branches can be rearranged to one with that property, and so 
that, simultaneously, admissibility is assured.

\begin{defi}\label{DEF:noncollaps}
A set of branches~$\BrS=\defset{\Cs{j}}{j\in A}$ is called \emph{weakly 
non-collaps\-ing}
\index[defs]{weakly non-collapsing}%
\index[defs]{non-collapsing!weakly}%
\index[defs]{set of branches!weakly non-collapsing}%
\index[defs]{branch!set of weakly non-collapsing}%
if
\begin{enumerate}[label=$\mathrm{(B_{col})}$, ref=$\mathrm{B_{col}}$]
\item\label{BP:noncollaps}
For every pair~$(j,k)\in A\times A$, for every~$\nu\in\Cs{j}$ such 
that~$\gamma_\nu$ intersects~$\Cs{k}$ at some time~$t^*>0$, the geodesic 
segment~$\gamma_\nu((0,t^*))$ does not intersect~$g\act\BrU$ for 
any~$g\in\Gamma^*$.
\end{enumerate}
\index[defs]{0B12@(B$_{\mathrm{col}}$)}%
\index[symbols]{B12@(B$_{\mathrm{col}}$)}%
\end{defi}

\begin{remark}\label{B9impliesBcol}
Via contraposition it is easy to see that~\eqref{BP:noidentity} 
implies~\eqref{BP:noncollaps}:
Assume that a given set of branches~$\BrS=\defset{\Cs{j}}{j\in A}$ is not weakly 
non-collapsing. Then we find~$j,k,\ell\in A$,~$g\in\Gamma^*$,~$0<t_1<t_2$, 
and~$\nu\in\UTB\H$ such that
\[
\gamma_\nu'(0)\in\Cs{j}\,,\quad\gamma_\nu'(t_1)\in 
g\act\Cs{\ell}\,,\quad\text{and}\quad\gamma_\nu'(t_2)\in\Cs{k}\,.
\]
Consider the system of iterated sequences 
$[(\ittime{\BrU,n}(\nu))_n,(\itindex{\BrU,n}(\nu))_n,(\ittrans{\BrU,n}(\nu))_n]$ 
associated to~$\nu$ by~\eqref{eq:itseq}.
By Proposition~\ref{PROP:allinit} there exist~$n_1,n_2\in\N$ such that
\[
\ittime{\BrU,n_1}(\nu)=t_1\qquad\text{and}\qquad\ittime{\BrU,n_2}(\nu)=t_2\,.
\]
The above then implies
\[
\ittrans{\BrU,1}(\nu)\ittrans{\BrU,2}(\nu)\cdots\ittrans{\BrU,n_2}(\nu)=\id\,.
\]
Hence,~$\BrS$ does not fulfill~\eqref{BP:noidentity}.
However,~\eqref{BP:noidentity} and~\eqref{BP:noncollaps} are not equivalent, 
since~\eqref{BP:noncollaps} allows for elements~$\nu\in\BrU$ whose induced 
geodesics have future intersections with~$\BrU$ that are not immediate, while~\eqref{BP:noidentity} does not.
\end{remark}

Property~\eqref{BP:noncollaps} demands that the set of branches is structured in a specific way, relative to the non-trivial $\Gamma$-translates of itself.
Lemma~\ref{LEM:nouniqueSoB} allows us to exchange branches with $\Gamma$-translates.
In what follows, we describe a sorting algorithm that transforms every set of branches into a weakly non-collapsing one.

For reasons of effectivity, we introduce so-called \emph{branch trees}:
\index[defs]{branch tree}%
Let~$j\in A$.
The root node (level~$0$) of the \emph{branch tree relative to~$j$} is~$(j,\id)\in A\times\Gamma$.
The nodes at level~$1$ are all tuples of the form~$(\itindex{\BrU,1}(\nu),\ittrans{\BrU,1}(\nu))\in A\times\Gamma$, for~$\nu\in\Cs{j,\st}$.
By virtue of~\eqref{BP:intervaldecomp}, the finiteness of~$A$ and the discreteness of~$\Gamma$, there are at most countably many such tuples in level~$1$.
The nodes at level~$r$, for~$r\in\N$, are all tuples of the form~$(\itindex{\BrU,r}(\nu),\ittrans{\BrU,1}(\nu)\cdots\ittrans{\BrU,r}(\nu))\in A\times\Gamma$, for~$\nu\in\Cs{j,\st}$.
Two nodes are linked by an edge if and only if they are of the form
\[
(\itindex{\BrU,r}(\nu),\ittrans{\BrU,1}(\nu)\cdots\ittrans{\BrU,r}(\nu))\quad\text{and}\quad(\itindex{\BrU,r+1}(\nu),\ittrans{\BrU,1}(\nu)\cdots\ittrans{\BrU,r}(\nu)\ittrans{\BrU,r+1}(\nu))
\]
for \textit{the same}~$\nu\in\Cs{j,\st}$.
Hence, every path in the branch tree is of the form
\begin{equation}\label{EQ:pathform}
(j,\id)\edge{\phantom{-.}}(\itindex{\BrU,1}(\nu),\ittrans{\BrU,1}(\nu))\edge{\phantom{-.}}(\itindex{\BrU,2}(\nu),\ittrans{\BrU,1}(\nu)\ittrans{\BrU,2}(\nu))\edge{\phantom{-.}}\dots
\end{equation}
for \textit{the same}~$\nu\in\Cs{j,\st}$ in each tuple, where~$(k_1,h_1)\edge{\phantom{-.}}(k_2,h_2)$ denotes an edge in the tree, for~$k_1,k_2\in A$,~$h_1,h_2\in\Gamma$.
Hence, a path in a branch tree corresponds to the existence of a geodesic on~$\H$ intersecting~$h\act\Cs{k}$ for every node~$(k,h)$ on that path.
We denote the branch tree with the root~$(j,\id)$ by~$B_j$.
\index[symbols]{Bzj@$B_j$}%
The collection of one or more branch trees is called a \emph{branch forest}.
\index[defs]{branch forest}%
In what follows we adopt a straightforward terminology of sub- and super-trees as sub- and super-graphs of trees.
A sub- or super-tree is called \emph{complete} if it contains all child nodes of its root node.
The \emph{level} of a node in a tree is the number of edges in the path connecting the root node to it.
Furthermore, we consider trees as directed graphs, with direction towards increasing level.

\begin{figure}
\begin{tikzpicture}[scale=6.5]
\fill[color=lightgray] (.5,0) -- +(0:.5) arc (0:180:.5);
\fill[color=white] (.5,0) -- +(0:.47) arc (0:180:.47);
\fill[color=lightgray!80] (.25,0) -- +(0:.25) arc (0:180:.25);
\fill[color=white] (.25,0) -- +(0:.22) arc (0:180:.22);
\draw (.25,0) -- +(0:.25) arc (0:180:.25);
\foreach \x in {.75}{
	\fill[color=lightgray!40] (\x,0) -- +(0:.25) arc (0:180:.25);
	\fill[color=white] (\x,0) -- +(0:.22) arc (0:180:.22);
	\draw (\x,0) -- +(0:.25) arc (0:180:.25);
	}
\foreach \x in {.1666666666, .4166666666}{
	\fill[color=lightgray!40] (\x,0) -- +(0:{abs(\x - 1/3)}) arc (0:180:{abs(\x - 1/3)});
	\fill[color=white] (\x,0) -- +(0:{abs(\x - 1/3)-.025}) arc (0:180:{abs(\x - 
1/3)-.025});
	\draw (\x,0) -- +(0:{abs(\x - 1/3)}) arc (0:180:{abs(\x - 1/3)});
}
\fill[color=lightgray!80] (.7,0) -- +(0:.25) arc (0:180:.14);
\fill[color=white] (.7,0) -- +(0:.225) arc (0:180:.115);
\draw (.7,0) -- +(0:.25) arc (0:180:.14);
\draw (.5,0) -- +(0:.5) arc (0:180:.5);
\draw (.416666666,-.04) -- (.416666666,.07);
\draw[style=thick] (-.1,0) -- (1.1,0);
\coordinate [label=below:$\Cs{1}$] (C1) at (.5,.47);
\coordinate [label=below:$\Cs{2}$] (C2) at (.25,.23);
\coordinate [label=below:$g_3\act\Cs{3}$] (ST1SC3) at (.75,.22);
\coordinate [label=below:$g_1\act\Cs{1}$] (STC1) at (.16666666,.15);
\coordinate [label=below:$\Cs{4}$] (C4) at (.815,.13);
\coordinate [label=below:$g_2\act\Cs{2}$] (STSC2) at (.416666666,-.03);
\end{tikzpicture}
\begin{tikzpicture}[level/.style={sibling distance=25mm/#1},edge from parent/.style={draw,-open triangle 45}]
\node (z){$(1,\id)$}
  child {node (a) {$(2,\id)$}
    child {node (b) {$(1,g_1)$}
    }
    child {node (c) {$(2,g_2)$}}
   }
  child {node (d) {$(3,g_3)$}
  child {node (e) {$(4,\id)$}}
  };
\end{tikzpicture}
\caption[branchtree1]{A schematic example of a local relationship in a set of branches and one of the branch trees emerging from it, up to level~$2$.}\label{FIG:branchtree1}
\end{figure}

We define a left multiplication of elements~$g\in\Gamma$ on the set~$\defset{B_j}{j\in A}$ by defining~$gB_j$ to be the tree that arises from~$B_j$ by exchanging every node~$(k,h)$ in~$B_j$ for~$(k,gh)$.
We emphasize that we construct further trees in this way.
Then the complete sub-tree with the root~$(\itindex{\BrU,r}(\nu),\ittrans{\BrU,1}(\nu)\cdots\ittrans{\BrU,r}(\nu)\bigr)$ is given by~$\ittrans{\BrU,1}(\nu)\cdots\ittrans{\BrU,r}(\nu)B_{\itindex{\BrU,r}(\nu)}$, at every level~$r\in\N$.
Conversely, every complete sub-tree of~$B_j$,~$j\in A$, is of the form~$gB_k$, for some~$k\in A$ and~$g\in\Gamma$.
Every node~$(k,h)$, with~$k\in A$ and~$h\in\Gamma$, is unique within a branch tree~$B_j$ (but not necessarily within a branch forest).
A complete sub-tree of some tree~$B_j$,~$j\in A$, is called~\emph{$\Gamma$-trivial}
\index[defs]{Gtriv@$\Gamma$-trivial}%
\index[defs]{trivial, $\Gamma$-}%
if it is of the form~$\id\!B_k$ for some~$k\in A$.
This is obviously the case if and only if~$\ittrans{\BrU,1}(\nu)\cdots\ittrans{\BrU,r}(\nu)=\id$, with~$r\in\N$ and~$\nu\in\Cs{j,\st}$ such that~$k=\itindex{\BrU,r}(\nu)$.
This fact implies the following characterization.

\begin{lemma}\label{LEM:wnctree}
The set of branches~$\BrS=\defset{\Cs{j}}{j\in A}$ is weakly non-collapsing if and only if, for every~$j\in A$, every complete super-tree of any~$\Gamma$-trivial sub-tree of~$B_j$ is itself~$\Gamma$-trivial.
\end{lemma}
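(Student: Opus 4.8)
The plan is to recast each of the two properties in the statement as the \emph{absence} of one and the same combinatorial configuration, differing only in a domain of quantification, and then to compare. Let us call a triple $(\nu,t_1,t_2)$ a \emph{relay over} a set $S\subseteq\BrU$ if $\nu\in S$, $0<t_1<t_2$, $\gamma_\nu'(t_1)\in\Gamma^*\act\BrU$, and $\gamma_\nu'(t_2)\in\BrU$. Unravelling the definition, $\BrS$ \emph{fails} to be weakly non-collapsing precisely when a relay over $\BrU$ exists: a relay $(\nu,t_1,t_2)$ yields $j,k\in A$ with $\nu\in\Cs{j}$ and $\gamma_\nu'(t_2)\in\Cs{k}$, so $\gamma_\nu$ meets $\Cs{k}$ at $t_2>0$ while $\gamma_\nu((0,t_2))$ meets $g\act\BrU$ for some $g\in\Gamma^*$, violating~\eqref{BP:noncollaps}; conversely, any violation of~\eqref{BP:noncollaps}, after recording $\gamma_\nu'(0)=\nu\in\BrU$, is a relay over $\BrU$.

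The principal preparatory step is to show that the tree condition of the lemma is equivalent to the nonexistence of a relay over $\BrU_{\st}$. For this I would use three facts from Section~\ref{SUBSEC:reducedbranches}: first, the dictionary between branch trees and geodesics, namely that a path from the root $(j,\id)$ of $B_j$ through nodes $(k_1,h_1),\dots,(k_r,h_r)$ is exactly the datum of a vector $\nu\in\Cs{j,\st}$ with $\itindex{\BrU,i}(\nu)=k_i$ and $\ittrans{\BrU,1}(\nu)\cdots\ittrans{\BrU,i}(\nu)=h_i$ for $1\le i\le r$ (equivalently $\gamma_\nu'(\ittime{\BrU,i}(\nu))\in h_i\act\Cs{k_i}$); second, Proposition~\ref{PROP:allinit}, which guarantees that for $\nu\in\BrU_{\st}$ \emph{every} forward intersection of $\gamma_\nu$ with $\Gamma\act\BrU$ occurs along this path; third, Proposition~\ref{PROP:CofSoBnonst}\eqref{CofSoB:disjointunion}, by which the translate containing $\gamma_\nu'(\ittime{\BrU,n}(\nu))$, hence the partial product $\ittrans{\BrU,1}(\nu)\cdots\ittrans{\BrU,n}(\nu)$, is determined by $\nu$ and $n$. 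Since nodes are unique within a fixed $B_j$, a complete super-tree of a $\Gamma$-trivial sub-tree of $B_j$ is the complete sub-tree rooted at some ancestor on the unique path to that node; combining this with the remark made immediately before the lemma, the tree condition holds for all $j$ if and only if, for every $\nu\in\BrU_{\st}$ and every $r\in\N$, $\ittrans{\BrU,1}(\nu)\cdots\ittrans{\BrU,r}(\nu)=\id$ implies $\ittrans{\BrU,1}(\nu)\cdots\ittrans{\BrU,s}(\nu)=\id$ for all $1\le s\le r$. Translating partial products back into the translates met by $\gamma_\nu$ via the second and third facts, a failure of this condition is exactly a relay $(\nu,\ittime{\BrU,s}(\nu),\ittime{\BrU,r}(\nu))$ over $\BrU_{\st}$, while Proposition~\ref{PROP:allinit} turns any relay over $\BrU_{\st}$ into such a failure.

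Granting the two reformulations, one implication is immediate: as $\BrU_{\st}\subseteq\BrU$, a relay over $\BrU_{\st}$ is in particular a relay over $\BrU$, so weak non-collapsing implies the tree condition. For the converse I would argue by contraposition: from a relay $(\nu,t_1,t_2)$ over $\BrU$ I must produce a relay over $\BrU_{\st}$. On the compact segment $\gamma_\nu([0,t_2])$ the geodesic meets only finitely many $\Gamma$-translates of the closed base segments $\overline{\base{\Cs{m}}}$ (Proposition~\ref{PROP:branches_locfinite}), each transversally; hence there is a neighbourhood $W$ of $(\gamma_\nu(+\infty),\gamma_\nu(-\infty))$ in $\wh\R\times\wh\R$ such that every geodesic with endpoints in $W$ crosses the same translates, transversally and in the same order, and no further ones in between. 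For $(x,y)\in W$ with $x,y\in\wh\R_{\st}$, property~\eqref{BP:allvectors} applied to $\Cs{j}$, to the translate $g\act\Cs{\ell}$ met at $t_1$ (with $g\in\Gamma^*$), and to $\Cs{k}$ then produces a vector $\nu'\in\Cs{j,\st}$ whose geodesic realizes a relay over $\BrU_{\st}$. Since $\gamma_\nu(+\infty)\in\overline{I_j}^\geo$ and $\gamma_\nu(-\infty)\in\overline{J_j}^\geo$ by~\eqref{BP:pointintohalfspaces}, and $\wh\R_{\st}$ is dense in $\Lambda(\Gamma)$ (the limit set of a non-elementary Fuchsian group is perfect, and only the countably many parabolic fixed points are removed; cf.~Proposition~\ref{PROP:EXliesdense}), such $(x,y)$ exists whenever $\gamma_\nu(+\infty),\gamma_\nu(-\infty)\in\Lambda(\Gamma)$, in particular whenever $\nu\in\Cs{j,\st}$ already.

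The \textbf{main obstacle} is precisely this last reduction for those $\nu\in\Cs{j}\setminus\Cs{j,\st}$ whose geodesic has an endpoint \emph{outside} $\Lambda(\Gamma)$: near such an endpoint $\wh\R_{\st}$ is empty, so the perturbation above cannot be kept inside $\BrU_{\st}$, and one must instead exhibit a \emph{different} strong geodesic carrying a relay. I expect this to require a dedicated argument in the spirit of the discussion following Proposition~\ref{PROP:CofSoBst} on non-full and non-strong branches, using the covering property~\eqref{BP:coverlimitset} together with the transition-set structure~\eqref{BP:intervaldecomp} to replace the relay's ``exterior'' endpoint by a limit point without destroying the nontrivial intermediate intersection. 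Everything else --- the path/geodesic dictionary, the invocation of Proposition~\ref{PROP:allinit} together with Lemma~\ref{LEM:accpointsofit}, and the bookkeeping with partial products and relays --- is routine given the machinery already developed in this section.
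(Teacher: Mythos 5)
Your skeleton — recasting both sides as the non-existence of a "relay," using the dictionary between paths in~$B_j$ and the iterated sequences of strong vectors (Proposition~\ref{PROP:allinit}, node uniqueness in~$B_j$), and getting "weakly non-collapsing $\Rightarrow$ tree condition" from $\BrU_{\st}\subseteq\BrU$ — is sound and is essentially the argument the paper leaves implicit (the paper states the lemma as an immediate consequence of the preceding dictionary, without addressing the point you raise). But as written your proof is incomplete exactly where you say it is: the converse needs every relay carried by some $\nu\in\Cs{j}$ to produce one carried by a vector in~$\Cs{j,\st}$, your perturbation-of-endpoints argument only covers $\gamma_\nu(\pm\infty)\in\Lambda(\Gamma)$, and for the remaining case you only announce that "a dedicated argument" via~\eqref{BP:coverlimitset} and~\eqref{BP:intervaldecomp} should exist. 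That deferred step is the gap; moreover the route you gesture at is not the natural one.

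The missing step is in fact short and uses only~\eqref{BP:closedgeodesicsHtoX}, \eqref{BP:allvectors} and~\eqref{BP:disjointunion}; the original endpoints of~$\gamma_\nu$ are irrelevant beyond fixing the crossing order, so no approximation near them is needed. Given $\nu\in\Cs{j}$ with $\gamma_\nu'(t_1)\in g\act\Cs{\ell}$, $g\in\Gamma^*$, and $\gamma_\nu'(t_2)\in\Cs{k}$, $0<t_1<t_2$: by~\eqref{BP:disjointunion} (with Remark~\ref{REM:SoBrem}\eqref{SoBrem:coincide}) any two $\Gamma$-translates of base segments are disjoint or coincide, and since a geodesic meets a complete geodesic segment at most once, coincidence of any two of $\overline{\base{\Cs{j}}}$, $g\act\overline{\base{\Cs{\ell}}}$, $\overline{\base{\Cs{k}}}$ would force equal crossing times; hence the three segments are pairwise disjoint, which yields the nesting $\Plussp{k}\subseteq g\act\Plussp{\ell}\subseteq\Plussp{j}$ and therefore $I_k\subseteq g\act I_\ell\subseteq I_j$ and $J_j\subseteq g\act J_\ell\subseteq J_k$. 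Now pick any $(x,y)\in\Iset{k}\times\Jset{j}$, which is nonempty by~\eqref{BP:closedgeodesicsHtoX} (Remark~\ref{REM:inst}); by the nesting, $(x,y)\in\Iset{j}\times\Jset{j}$, $g^{-1}\act(x,y)\in\Iset{\ell}\times\Jset{\ell}$ and $(x,y)\in\Iset{k}\times\Jset{k}$, so three applications of~\eqref{BP:allvectors} give a vector $\nu'\in\Cs{j,\st}$ whose geodesic meets $g\act\Cs{\ell}$ and then $\Cs{k}$ at positive times in this order. This is a relay over~$\BrU_{\st}$, and Proposition~\ref{PROP:allinit} turns it into a $\Gamma$-trivial sub-tree of~$B_j$ with a non-$\Gamma$-trivial complete super-tree, as required. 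This "choose endpoints in nested $\st$-intervals and invoke~\eqref{BP:allvectors}" device is the same one the paper uses in Lemma~\ref{LEM:paths_gen}\eqref{path:allbyCS} and Proposition~\ref{PROP:returngraph}; with it, your proof closes, and your density/perturbation discussion (and the appeal to~\eqref{BP:coverlimitset}) can be dropped entirely.
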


Using this characterization we can implement an algorithm to rearrange a set of branches in order to obtain a weakly non-collapsing structure.
To that end, we first fix a convenient choice of root nodes given by initial branches:
A branch~$\Cs{j}$,~$j\in A$, is called~\emph{initial}
\index[defs]{initial branch}%
\index[defs]{branch!initial}%
if
\begin{equation}\label{EQN:initiality}
\forall\, k\in A\setminus\{j\}\colon \Minussp{k}\not\subseteq \Minussp{j}\,.
\end{equation}
In particular this implies that
\[
\ittrans{\BrU,-1}(\nu)^{-1}\cdots\ittrans{\BrU,-n}(\nu)^{-1}\ne\id
\]
for every~$\nu\in\Cs{j}$ and every~$n\in\N$.
Hence, a branch~$\Cs{j}$ is initial if and only if $B_j$ does not appear as a~$\Gamma$-trivial sub-tree in any branch tree other than~$B_j$.
We set
\begin{equation}\label{eq:D_initial}
D_\ini \coloneqq \defset{j\in A}{\text{$\Cs{j}$ is initial}}\,.
\end{equation}
\index[symbols]{Dini@$D_\ini$}%
Since~$A$ is finite, the set~$D_\ini$ is nonempty.
For every~$k\in A$ there exists~$j\in D_\ini$ such that either~$k=j$, or~$\Plussp{k}\subseteq\Plussp{j}$.
In the latter case there exists~$\nu\in\Cs{j}$ and~$n\in\N$ such that
\[
(\itindex{\BrU,n}(\nu),\ittrans{\BrU,1}(\nu)\cdots\ittrans{\BrU,n-1}(\nu))=(k,\id)\,.
\]
Hence, every branch tree~$B_k$,~$k\in A$, is contained as a~$\Gamma$-trivial sub-tree in some member of the branch forest~$F_\ini\coloneqq\defset{B_j}{j\in D_\ini}$.
\index[symbols]{Fini@$F_\ini$}%

The following algorithm defines transformations~$q_j\in\Gamma$ for every~$j\in A$.
In it we apply a notion of \emph{cutting off} nodes~$(k,h)$ from the branch forest~$F_\ini$.
By that we mean that, for the remainder of the algorithm, one is restricted from considering the complete sub-tree~$hB_k$ in \emph{any} branch tree~$B_j$,~$j\in D_\ini$.
The \emph{remaining} nodes are all nodes not contained in a cut off sub-tree.
For~$r\in\N$ and~$k\in A$ we define~$L_r(k)$ to be the set of all nodes of the form~$(k,h)$,~$h\in\Gamma$, at level~$r$, anywhere in the branch forest~$F_\ini$.
\index[symbols]{Lr@$L_r(k)$}%

\begin{algo}\label{ALGO:decollaps}
The index $r$ below starts at $1$.
\vspace*{-\baselineskip}
\vspace*{4pt}
\begin{enumerate}
\renewcommand{\labelenumi}{\emph{Step\,0.}}
\item
Set~$q_j\coloneqq\id$ for every~$j\in D_\ini$.\newline
Cut off all nodes from~$F_\ini$ for which the complete sub-tree with this node as root node does not contain a~$\Gamma$-trivial sub-tree.\newline
Carry out \emph{Step 1}.
\renewcommand{\labelenumi}{\emph{Step\,r.}}
\item
If no nodes remain at level~$r$, the algorithm terminates.\newline
Otherwise, cut off all nodes~$(j,g)$ from~$F_\ini$ in level~$r$ with $j\in A$ for which~$q_j$ has already been defined.\newline
For all~$k\in A\setminus D_\ini$ for which~$L_r(k)$ contains remaining nodes, choose such a node~$(k,h)$, set~$q_k\coloneqq h$, and cut off all remaining nodes in~$L_r(k)\setminus\{(k,h)\}$ from~$F_\ini$.\newline
Carry out \emph{Step r+1}.\algofin
\end{enumerate}
\end{algo}

\begin{lemma}\label{LEM:noncollaps}
Algorithm~\ref{ALGO:decollaps} defines for every~$j\in A$ a transformation~$q_j\in\Gamma$, before terminating after at most~$N+2$ steps.
(Recall that $\#A = N$.)
The arising set~$\BrS'\coloneqq\defset{q_j\act\Cs{j}}{j\in A}$ is a weakly non-collapsing set of branches for the geodesic flow on~$\Orbi$.
\end{lemma}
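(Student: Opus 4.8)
The plan is to verify, first, that Algorithm~\ref{ALGO:decollaps} is well-posed and halts within the asserted number of steps, and second, that the resulting family is a weakly non-collapsing set of branches; the latter is checked through the characterization in Lemma~\ref{LEM:wnctree}, which is the substantive part.

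For termination and well-definedness I would argue as follows. Since $A$ is finite, $D_\ini\neq\varnothing$, so Step~$0$ assigns $q_j=\id$ for every $j\in D_\ini$ and leaves exactly $N-\#D_\ini\le N-1$ indices unassigned. Each Step~$r$ with $r\ge 1$ that does not terminate assigns $q_k$ for at least one previously unassigned index $k\in A\setminus D_\ini$: after the prescribed removal of all level-$r$ nodes whose index already carries an assigned transformation, any node still present at level~$r$ has an as-yet-unassigned index, and the algorithm then assigns it; hence the number of unassigned indices strictly decreases. Thus at most $N-\#D_\ini$ such steps occur, and once every $q_j$ is defined every node at the next processed level has an assigned index and is therefore cut off, so after two further steps no nodes remain and the algorithm halts — altogether at most $1+(N-\#D_\ini)+2\le N+2$ steps including Step~$0$. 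That every index is eventually assigned uses the fact recorded just above the algorithm: for each $k\in A\setminus D_\ini$ the branch tree $B_k$ occurs as a $\Gamma$-trivial sub-tree $\id B_k$ inside some member of $F_\ini$. Step~$0$ prunes only nodes whose complete sub-tree contains no $\Gamma$-trivial sub-tree, so such an occurrence and all of its ancestors survive Step~$0$, and tracing the cut-off rules shows $q_k$ is assigned no later than the step at which the shallowest surviving node with index $k$ and trivial $\Gamma$-component is reached, unless it has already been assigned earlier.

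That $\BrS'=\defset{q_j\act\Cs{j}}{j\in A}$ is a set of branches with $\pi(\bigcup_{j\in A}q_j\act\Cs{j})=\CrSc$ is immediate from Lemma~\ref{LEM:nouniqueSoB}. For the weak non-collapsing property I would first record how translating the branches transforms the combinatorics: applying Proposition~\ref{PROP:CofSoBst}\eqref{CofSoB:representsets} to $\BrS$ and to $\BrS'$ (together with $\gamma_{q\act\nu}=q\act\gamma_\nu$) shows that the forward transition sets of $\BrS'$ are $q_j\Trans{}{j}{k}q_k^{-1}$. Hence along any root-path in a branch tree $B'_j$, $j\in A$, of $\BrS'$ running through indices $j=k_0,k_1,\dots,k_r$ with underlying old transition elements $h_i$, the accumulated product at level~$s$ equals $q_j h_1\cdots h_s q_{k_s}^{-1}$, so the sub-tree of $B'_j$ rooted at level~$s$ is $\Gamma$-trivial precisely when $h_1\cdots h_s=q_j^{-1}q_{k_s}$. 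By Lemma~\ref{LEM:wnctree} the claim reduces to the implication
\[
 h_1\cdots h_r=q_j^{-1}q_{k_r}\ \Longrightarrow\ h_1\cdots h_s=q_j^{-1}q_{k_s}\quad(1\le s\le r).
\]
I would establish this by isolating the invariant driving the algorithm: every node of $F_\ini$ that is still present when its level is processed (in particular every node the algorithm selects) has all of its ancestors equipped with a $q$-value equal to their own $\Gamma$-component, since the cut-off at each Step~$r$ discards exactly the level-$r$ nodes whose index already carries a different assignment. Transporting the root-path of $B'_j$ back into the appropriate tree of $F_\ini$ — prepending, when $j\notin D_\ini$, the group element of the $\Gamma$-trivial occurrence $\id B_j$ inside some $B_{j_0}$ — and feeding in this invariant forces $h_1\cdots h_s=q_j^{-1}q_{k_s}$ for every prefix once it holds for the full path, which is the required implication. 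The remaining points — that \eqref{BP:noncollaps} for a positive intersection time $t^{*}$ is exactly what $\Gamma$-triviality of these sub-trees encodes, and that the bookkeeping is consistent when passing between $B_{j_0}$ and its sub-trees $\id B_j$ — I would dispatch by unwinding the definitions in Section~\ref{SUBSEC:reducedbranches}.

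The step I expect to be the main obstacle is pinning down this invariant precisely and checking that the cut-off rules enforce it in all cases — in particular handling indices that were assigned at a strictly earlier step through a different branch, and ensuring the bookkeeping is transported correctly between the initial branch trees comprising $F_\ini$ and the possibly non-initial trees $B_j$ occurring inside them. The termination bound and the verification that $\BrS'$ is again a set of branches are comparatively routine.
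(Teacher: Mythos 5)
Your reduction of the weak non-collapsing property via Lemma~\ref{LEM:wnctree} to the prefix implication $h_1\cdots h_r=q_j^{-1}q_{k_r}\Rightarrow h_1\cdots h_s=q_j^{-1}q_{k_s}$, and your bookkeeping that the new transition sets are $q_j\Trans{}{j}{k}q_k^{-1}$, are fine, as is the step count. The genuine gap is exactly at the point you flag: your ``invariant'' only says something about nodes that are still \emph{remaining} when their level is processed, but the root-path you transport back into $F_\ini$ is in general \emph{not} a remaining path, and this is precisely the situation you must exclude. If some intermediate prefix satisfies $q_jh_1\cdots h_s\neq q_{k_s}$, then the transported node $(k_s,q_jh_1\cdots h_s)$ is cut off at its level (once $q_{k_s}$ is defined), so the terminal node $(k_r,q_{k_r})$ on that path is not remaining either, and the invariant simply does not apply to it; the algebraic hypothesis on the accumulated weight of the endpoint does not certify survival of the path. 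So ``feeding in this invariant'' cannot force the prefix identities; closing the argument needs a further ingredient relating the transported path to the path actually chosen by the algorithm for $q_{k_r}$ — e.g.\ the rigidity of weighted paths from Proposition~\ref{PROP:returngraph}\eqref{returngraph:disjointpaths} combined with the minimality of the levels at which the $q_k$ are assigned, or, as the paper does, the identification of the remaining nodes after termination with the all-$(k,q_k)$ paths and of the translated branch trees with sub-trees rooted at the chosen nodes.

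A second, smaller gap is in well-definedness of the $q_k$. You argue that the occurrence $\id B_k$ inside some member of $F_\ini$ ``and all of its ancestors survive Step~$0$'' and that $q_k$ is assigned by the time its level is reached. Surviving Step~$0$ is not enough: an ancestor $(m,g)$ of that occurrence is cut off at its level as soon as $q_m$ has been defined (even if $g=q_m$, and certainly if $q_m$ was chosen through a different node), so the specific occurrence you track may die long before level of $(k,\id)$. What rescues the argument — and is the content of the paper's induction — is a renewal phenomenon: whenever some $q_m$ is assigned via a chosen node $(m,q_m)$, the complete sub-tree $q_mB_m$ is kept remaining and contains a fresh node with index $k$, so at every non-terminating step an unassigned index still has a remaining occurrence at a later level; combined with termination this forces every $q_k$ to be defined. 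As written, your proposal asserts rather than proves both of these steps, so the proof is not complete.
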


\begin{proof}
Since~$\#A=N<+\infty$, the level at which nodes of the form~$(k,\id)$,~$k\in A$, may appear is bounded from above.
Hence, from the cutting off of sub-trees that do not contain~$\Gamma$-trivial sub-trees in \emph{Step 0} onward, the (non-complete) sub-trees of remaining nodes are all finite.
Thus, Algorithm~\ref{ALGO:decollaps} will eventually fail to encounter remaining nodes and will thus terminate after finitely many steps.
Since in every step the algorithm either terminates or defines at least one transformation~$q_j$ which has not yet been defined, the number of steps is bounded by~$N+2$.

Let~$r\in\N$ and~$k\in A$, and assume that~$q_k$ has not yet been defined at the start of \emph{Step r}.
Since the branch tree~$B_k$ is contained in some member of~$F_\ini$, we find~$j\in A$ and $g\in\Gamma$ such that~$q_j$ has been defined at \emph{Step $r-$1} and~$gB_j$ contains the node~$(k,\id)$.
But then the sub-tree~$q_jB_j$ contains the node~$(k,q_jg^{-1})$, which is remaining at the start of \emph{Step r}.
Since this argument applies for every~$r$ for which Algorithm~\ref{ALGO:decollaps} does not terminate in \emph{Step r}, and since no sub-tree containing~$B_k$ is cut off in \emph{Step 0}, the algorithm must eventually encounter a node of the form~$(k,h)$ and thus define the transformation~$q_k$.

Finally, the set~$\BrS'$ is a set of branches for the geodesic flow on~$\Orbi$ by virtue of Lemma~\ref{LEM:nouniqueSoB}.
After termination of Algorithm~\ref{ALGO:decollaps}, every path in the sub-tree of remaining nodes in any member of~$F_\ini$ is of the form
\[
(k_1,q_{k_1})\edge{\phantom{-.}}(k_2,q_{k_2})\edge{\phantom{-.}}\dots\edge{\phantom{-.}}(k_n,q_{k_n})\,,
\]
for some~$n\in\N$ and~$k_1,\dots,k_n\in A$.
Since the branch forest of all branch trees with respect to the set of branches~$\BrS'$ is given by~$\defset{q_j^{-1}B_j}{j\in A}$, this path then reads as
\[
(k_1,\id)\edge{\phantom{-.}}(k_2,\id)\edge{\phantom{-.}}\dots\edge{\phantom{-.}}(k_n,\id)\,.
\]
By Lemma~\ref{LEM:wnctree}, this implies that~$\BrS'$ is weakly non-collapsing.
\end{proof}

\begin{example}\label{EX:G3wnc}
Recall the group~$\Gamma_\lambda$ from Example~\ref{EX:G3Def} and its set of branches~$\BrS=\{\Cs{1},\dots,\Cs{8}\}$ from Figure~\ref{FIG:G3SoB}.
The set of transformations defined by Algorithm~\ref{ALGO:decollaps} for~$\BrS$ is given by
\[
q_1=q_2=\ldots=q_7=\id\qquad\text{and}\qquad q_8=h\,,
\]
meaning the only branch that gets swapped with one of its translates in order to obtain a weakly non-collapsing set of branches~$\BrS'$ is~$\Cs{8}$.
\end{example}

We now tend to show that each set of branches can be turned into one that is simultaneously weakly non-collapsing and admissible.
To that end a sorting of branches more restricted than what is provided by the branch trees is needed.
To be more precise, we suppose that~$\BrS=\defset{\Cs{j}}{j\in A}$ is weakly non-collapsing and let~$k\in A\setminus D_\ini$.
Then there might be more than one~$i\in D_\ini$ for which~$(k,\id)$ is a node in~$B_i$.
To overcome this issue, we define inductively for every node an associated initial node.
For~$i\in D_\ini$ we set~$\mathscr{j}(i)\coloneqq i$.
For~$(k,\id)$ a node at level~$1$ we pick one~$i\in D_\ini$ with~$(k,\id)\in B_i$ and set~$\mathscr{j}(k)\coloneqq i$.
Now let~$r\in\N$ and assume that for all nodes~$(k,\id)$ up to level~$r$ the index~$\mathscr{j}(k)$ has already been defined.
Then, for~$(k,\id)$ a node at level~$r+1$, we pick one~$i\in A$ with~$(i,\id)$ a node at level~$r$ and~$(k,\id)$ a node in~$B_i$, and set~$\mathscr{j}(k)\coloneqq\mathscr{j}(i)$.
\index[symbols]{jk@$\mathscr{j}(k)$}%
That way we obtain a map~$\mathscr{j}\colon A\to D_\ini$.
For~$i\in D_\ini$ we define
\begin{equation}\label{EQNDEF:Bj}
D_i\coloneqq\defset{k\in A}{\mathscr{j}(k)=i}\,.
\end{equation}
\index[symbols]{Dinj@$D_j$}%
Then
\begin{equation}\label{EQ:Dinidecomp}
A=\bigcup_{i\in D_\ini}D_i
\end{equation}
and the union on the right hand side is disjoint.

\begin{prop}\label{PROP:noncollaps}
For every set of branches~$\BrS=\{\Cs{1},\dots,\Cs{N}\}$ there exist 
transformations~$g_1,\dots,g_N\in\Gamma$ such 
that~$\wt{\BrS}\coloneqq\{g_1\act\Cs{1},\dots,g_N\act\Cs{N}\}$ is admissible and 
weakly non-collapsing.
\end{prop}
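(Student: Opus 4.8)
The two requirements will be met one after the other: first arrange admissibility by contracting the intervals of the branches towards hyperbolic fixed points, and then run the decollapsing procedure of Algorithm~\ref{ALGO:decollaps}, checking that this second step does not spoil admissibility. The order matters: one cannot decollapse first and then contract, because contracting generically destroys weak non-collapsing, whereas — as will be shown — decollapsing keeps admissibility.

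\textbf{Step 1: achieving admissibility.} I would follow the sketch in Remark~\ref{REM:SoBrem}\eqref{SoBrem:leavespaceforflip}. By~\eqref{BP:closedgeodesicsHtoX}, for each $j\in A$ there is a hyperbolic element $h_j\in\Gamma$ whose axis has endpoints in $\Iset{j}\times\Jset{j}$, so $\fixp{+}{h_j}\in\Iset{j}\subseteq I_j$ and $\fixp{-}{h_j}\in\Jset{j}\subseteq J_j$. By Lemma~\ref{LEM:periodicaxes} together with Lemma~\ref{LEM:hypfixedconv}, the iterates $h_j^i$ contract $\overline{I_j}$ towards $\fixp{+}{h_j}$; note $\fixp{-}{h_j}\notin\overline{I_j}$ since the endpoints of $I_j$ lie in $\wh\R\setminus\wh\R_{\st}$ whereas $\fixp{-}{h_j}\in\wh\R_{\st}$. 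Hence for $i_1,\dots,i_N\in\N$ sufficiently large, $\bigcup_{j\in A}h_j^{i_j}\act I_j$ is contained in a union of $N$ arbitrarily short arcs of $\wh\R$, so $\wh\R\setminus\bigcup_{j\in A}h_j^{i_j}\act I_j$ contains an open interval. By Lemma~\ref{LEM:nouniqueSoB}, $\BrS''\coloneqq\{h_1^{i_1}\act\Cs{1},\dots,h_N^{i_N}\act\Cs{N}\}$ is again a set of branches, and by the quoted remark this open-interval property is a sufficient condition for~\eqref{BP:leavespaceforflip}; thus $\BrS''$ is admissible.

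\textbf{Step 2: decollapsing while keeping admissibility.} Apply Algorithm~\ref{ALGO:decollaps} to $\BrS''$ (which is legitimate for an arbitrary set of branches). By Lemma~\ref{LEM:noncollaps} it returns transformations $q_j\in\Gamma$ such that $\wt{\BrS}\coloneqq\{(q_jh_j^{i_j})\act\Cs{j}\}_{j\in A}$ is a weakly non-collapsing set of branches. It remains to check $\wt{\BrS}$ is still admissible. Write $I_j''$ for the $\wh\R$-interval of the branch $h_j^{i_j}\act\Cs{j}$ of $\BrS''$, so the interval of $q_j\act(h_j^{i_j}\act\Cs{j})$ is $q_j\act I_j''$. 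The key point is $\bigcup_{j\in A}q_j\act I_j''\subseteq\bigcup_{j\in A}I_j''$: iterating (\ref{BP:intervaldecomp}\ref{BP:intervaldecompGdecomp}) along a path of a branch tree shows that every node $(k,h)$ of $B_i$ (for $\BrS''$) satisfies $h\act I_k''\subseteq I_i''$, and the algorithm defines each $q_k$ either as $\id$ (for $k\in D_{\ini}$) or as the second coordinate $h$ of such a node $(k,h)$ sitting in some $B_i$ with $i\in D_{\ini}$; hence $q_k\act I_k''\subseteq\bigcup_{i\in D_{\ini}}I_i''\subseteq\bigcup_{j\in A}I_j''$. Therefore $\wh\R\setminus\bigcup_{j\in A}q_j\act I_j''\supseteq\wh\R\setminus\bigcup_{j\in A}I_j''$, which contains an open interval by Step~1, so $\wt{\BrS}$ is admissible. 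Setting $g_j\coloneqq q_jh_j^{i_j}$ completes the proof.

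\textbf{Expected main obstacle.} The contraction estimate and the bookkeeping with translates are routine. The one genuinely substantive point — and the reason the statement needs its own proof rather than being a one-line corollary of Lemma~\ref{LEM:noncollaps} and Remark~\ref{REM:SoBrem}\eqref{SoBrem:leavespaceforflip} — is the observation that Algorithm~\ref{ALGO:decollaps} never enlarges $\bigcup_j I_j$: it only replaces each interval by a subset of one of the initial intervals $I_i$, $i\in D_{\ini}$. This is precisely the nesting $h\act I_k\subseteq I_i$ for branch-tree nodes, which is forced by the disjoint nesting in~\eqref{BP:intervaldecomp}. Once this is isolated, the correct order of operations (contract, then decollapse) makes everything fit together.
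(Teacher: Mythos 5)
Your proof is correct, but it runs in the opposite order from the paper's and rests on a different key observation. The paper first makes $\BrS$ weakly non-collapsing via Lemma~\ref{LEM:noncollaps}, and only then achieves admissibility by translating: for each $i\in D_\ini$ it uses the nesting $I_k\varsubsetneq I_i$, $J_i\varsubsetneq J_k$ for a maximal-level $k\in D_i$ together with Proposition~\ref{PROP:EXliesdense} and~\eqref{BP:allvectors} to find a \emph{single} hyperbolic element $h_i$ whose axis crosses every branch $\Cs{\ell}$ with $\mathscr{j}(\ell)=i$, and then translates all branches of that class by the same power $h_i^{\wt n}$; weak non-collapsing survives because any path with trivial weights in a branch tree of the translated system stays inside one $\mathscr{j}$-class and hence occurs with the same trivial weights in the branch tree of $\BrS$. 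You instead contract each branch independently to obtain admissibility first, then run Algorithm~\ref{ALGO:decollaps}, and preserve admissibility by noting that every $q_k$ the algorithm outputs is the weight of a node $(k,q_k)$ of some $B_i$ with $i\in D_\ini$, so iterating (\ref{BP:intervaldecomp}\ref{BP:intervaldecompGdecomp}) along that path gives $q_k\act I_k''\subseteq I_i''$ and the union of intervals can only shrink; this nesting argument is the same one the paper uses elsewhere (e.g.\ in Lemma~\ref{LEM:paths_gen}), so the step is sound, and the admissibility witness from your Step~1 survives. What each route buys: yours avoids the somewhat delicate construction of a common axis through all branches of a class (the map $\mathscr{j}$ plays only a background role for you), while the paper's avoids having to analyze how the decollapsing algorithm moves the intervals. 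One side remark of yours is inaccurate, though harmless for your argument: it is not true that one cannot decollapse first and then contract — the paper does exactly that; contraction only threatens weak non-collapsing when the contracting elements are chosen independently per branch, and the paper circumvents this by using one element per $\mathscr{j}$-class.
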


\begin{proof}
Because of Lemma~\ref{LEM:noncollaps} we may assume that~$\BrS$ is weakly non-collapsing.
Let~$i\in D_\ini$ and let~$k\in D_i$ be of maximal level in~$B_i$, that is,~$\id\notin\bigcup_{\ell\in A}\Trans{}{k}{\ell}$.
Then, whenever~$k\ne i$,
\[
I_k\varsubsetneq I_i\qquad\text{and}\qquad J_i\varsubsetneq J_k\,.
\]
Since~$I_k$ and~$J_i$ are both open and contain elements of~$\Lambda(\Gamma)$ by virtue of~\eqref{BP:closedgeodesicsHtoX}, Proposition~\ref{PROP:EXliesdense} yields a hyperbolic transformation~$h_i\in\Gamma$ such that
\[
(\fixp{+}{h_i},\fixp{-}{h_i})\in I_k\times J_i\,.
\]
Then, because of~\eqref{BP:allvectors}, the axis of~$h_i$ intersects each branch~$\Cs{\ell}$ with~$\ell\in D_i$.
Thus, for every~$\ell\in D_i$ we have
\[
(\fixp{+}{h_{\mathscr{j}(\ell)}},\fixp{-}{h_{\mathscr{j}(\ell)}})\in I_\ell\times J_\ell\subseteq \wh\R\setminus\{\eX_\ell,\eY_\ell\}\times\wh\R\setminus\{\eX_\ell,\eY_\ell\}\,.
\]
By Lemma~\ref{LEM:hypfixedconv} we therefore find
\[
\lim_{n\to+\infty}h_{\mathscr{j}(\ell)}^n\act \eX_\ell=\lim_{n\to+\infty}h_{\mathscr{j}(\ell)}^n\act \eY_\ell=\fixp{+}{h_{\mathscr{j}(\ell)}}\,,
\]
meaning~$h_{\mathscr{j}(\ell)}$ contracts the interval~$I_\ell$ towards~$\fixp{+}{h_{\mathscr{j}(\ell)}}$.
Hence, for~$\wt n\in\N$ sufficiently large,
\[
\wh\R\setminus\bigcup_{\ell\in A}h_{\mathscr{j}(\ell)}^{\wt n}\act I_\ell
\]
contains an open interval.
In turn,~$\wt\BrS\coloneqq\defset{h_{\mathscr{j}(\ell)}^{\wt n}\act\Cs{\ell}}{\ell\in A}$ is an admissible set of branches (see also Lemma~\ref{LEM:nouniqueSoB}).

Now let~$k\in A$ and denote the branch tree of~$k$ w.r.t.~$\wt\BrS$ by~$\wt B_k$.
Let~$\mathscr{j}(k)=i$.
Then~$\wt B_i$ contains the path
\begin{equation}\label{EQ:pathintree}
(i,\id)\edge{\phantom{-.}}(k_1,h_1)\edge{\phantom{-.}}\dots\edge{\phantom{-.}}(k_n,h_n)\edge{\phantom{-.}}(k,\id)\,,
\end{equation}
with~$k_1,\dots,k_n\in A$ and~$h_1,\dots,h_n\in\Gamma$,~$n\in\N$.
Since~$\mathscr{j}(k)=i$, by construction we have~$\mathscr{j}(k_\iota)=i$ for all~$\iota=1,\dots,n$. Therefore, the branch tree~$B_i$ in the branch forest of~$\BrS$ contains the path~\eqref{EQ:pathintree} as well.
Since~$\BrS$ is weakly non-collapsing, we have~$h_1=\dots=h_n=\id$.
This shows that~$\wt\BrS$ is also weakly non-collapsing. This finishes the proof.
\end{proof}

\subsection{Cross sections induced by sets of 
branches}\label{SUBSEC:crossbranches}

Throughout this section, let $\BrS = \{\Cs{1},\ldots,\Cs{N}\}$ be a set of 
branches for the geodesic flow~$\GeoFlow$ on~$\Orbi$, let $\BrU \coloneqq 
\bigcup_{j=1}^N \Cs{j}$ denote the branch union and set 
\[
 \CrSc \coloneqq \pi\left( \BrU \right)\,. 
\]
In what follows we show that $\CrSc$ is indeed a cross section for~$\GeoFlow$ 
with respect to certain measures and that the strong branch union~$\BrU_{\st}$ 
induces a strong cross section.  
To that end let $\Vanish(\Orbi)$
\index[symbols]{van@$\Vanish(\Orbi)$}%
denote the subset of~$\Geo(\Orbi)$ of all 
geodesics for which there exists a lift on~$\H$ having at least one endpoint 
in~$\widehat{\R}\setminus\widehat{\R}_{\st}$ (and thus all of its lifts on~$\H$ 
have this property). We note that 
$\Geo_{\Per}(\Orbi)\subseteq\Geo(\Orbi)\setminus\Vanish(\Orbi)$. We denote 
by~$\mathcal{M}_{\Vanish(\Orbi)}$
\index[symbols]{Mvan@$\mathcal{M}_{\Vanish(\Orbi)}$}%
the set of measures~$\mu$ on (a $\sigma$-algebra on)~$\Geo(\Orbi)$ 
with the property that $\mu(\Vanish(\Orbi))=0$. In particular, the counting measure of periodic geodesics belongs to~$\mathcal{M}_{\Vanish(\Orbi)}$. Throughout we use the standard notation from the previous sections.
In particular, we set $A = \{1,\ldots, N\}$ and define $I_j$, $J_j$, etc. as 
in~\eqref{BP:pointintohalfspaces}.

\begin{prop}\label{CofSoB:crosssection}
For each~$\mu\in\mathcal{M}_{\Vanish(\Orbi)}$, the set~$\CrSc$ is a cross 
section for the geodesic flow~$\GeoFlow$ on~$\Orbi$ with respect to~$\mu$. In 
particular, each geodesic in~$\Geo(\Orbi)\setminus\Vanish(\Orbi)$ 
intersects~$\CrSc$ infinitely often in past and future.
\end{prop}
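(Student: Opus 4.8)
The plan is to verify the two defining properties~\eqref{CS:infinitelyoften} and~\eqref{CS:discreteintime} of a cross section for $\CrSc=\pi(\BrU)$ and to read off the final assertion along the way. Since every $\mu\in\mathcal M_{\Vanish(\Orbi)}$ satisfies $\mu(\Vanish(\Orbi))=0$, for~\eqref{CS:infinitelyoften} it suffices to prove that \emph{every} geodesic in $\Geo(\Orbi)\setminus\Vanish(\Orbi)$ meets $\CrSc$ infinitely often in past and in future; this is exactly the ``in particular'' claim, so both halves of the proposition reduce to this recurrence together with discreteness.

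For~\eqref{CS:discreteintime}, let $\widehat\gamma$ be a geodesic on $\Orbi$ with $\widehat\gamma'(t)\in\CrSc$ for some $t$. By Proposition~\ref{PROP:CofSoBnonst}\eqref{CofSoB:uniqueintersect} there is a lift $\gamma$ on $\H$ with $\pi(\gamma)=\widehat\gamma$ for which, for every $s\in\R$, one has $\widehat\gamma'(s)\in\CrSc$ if and only if $\gamma'(s)\in\Gamma\act\BrU$. Hence it is enough to show that $T\coloneqq\defset{s\in\R}{\gamma'(s)\in\Gamma\act\BrU}$ has no accumulation point in $\R$. If $(s_n)_n$ were pairwise distinct points of $T$ with $s_n\to s_0$, then, $s\mapsto\gamma(s)$ being an isometric embedding, the base points $\gamma(s_n)$ would be pairwise distinct and converge to $\gamma(s_0)$. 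Choosing $(j_n,g_n)\in A\times\Gamma$ with $\gamma'(s_n)\in g_n\act\Cs{j_n}$, the point $\gamma(s_n)$ lies on the complete geodesic segment $\sigma_n\coloneqq g_n\act\overline{\base{\Cs{j_n}}}$, a member of the family $\mathfrak B$ of Proposition~\ref{PROP:branches_locfinite}. By~\eqref{BP:pointintohalfspaces} the vector $g_n^{-1}\act\gamma'(s_n)$ points off $\overline{\base{\Cs{j_n}}}$, so $\gamma(\R)\neq\sigma_n$; since two distinct geodesic arcs meet in at most one point of $\H$, $\gamma(\R)\cap\sigma_n=\{\gamma(s_n)\}$, whence the $\sigma_n$ are pairwise distinct. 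Thus every neighbourhood of $\gamma(s_0)$ meets infinitely many members of $\mathfrak B$, contradicting its local finiteness. This proves~\eqref{CS:discreteintime}.

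For the recurrence, let $\widehat\gamma\in\Geo(\Orbi)\setminus\Vanish(\Orbi)$ and fix a lift $\gamma$ on $\H$; then $\gamma(+\infty),\gamma(-\infty)\in\wh\R_{\st}$ with $\gamma(+\infty)\neq\gamma(-\infty)$. First I would show that $\gamma$ intersects $\Gamma\act\BrU$ at least once, by repeating the iterative argument in the proof of Proposition~\ref{PROP:oldB1} for the implication from~\eqref{EQN:coverplusinfty} to~\eqref{BP:closedgeodesicsXtoH}: using~\eqref{BP:coverlimitset} to place $\gamma(+\infty)$ in some $g_1\act I_{k_1}$ and then iterating~\eqref{BP:intervaldecomp}, one either reaches a stage with $\gamma(-\infty)\in g_i\act J_{k_i}$---so that $\gamma$ meets $g_i\act\Cs{k_i}$ by~\eqref{BP:allvectors}---or produces a strictly nested sequence $(g_i\act I_{k_i})_i$ containing $\{\gamma(+\infty),\gamma(-\infty)\}$, whose endpoint segments $g_i\act\overline{\base{\Cs{k_i}}}$ then converge to a nondegenerate geodesic arc, again contradicting Proposition~\ref{PROP:branches_locfinite}. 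The only appeal to periodicity in that proof (via Proposition~\ref{PROP:hypconjisperiod}) served merely to guarantee nondegeneracy of the limiting arc, i.e.\ $\gamma(+\infty)\neq\gamma(-\infty)$, which is automatic here; all remaining ingredients---$(\gamma(+\infty),\gamma(-\infty))\in\wh\R_{\st}\times\wh\R_{\st}$, \eqref{BP:allvectors}, \eqref{BP:intervaldecomp}, local finiteness---carry over verbatim.

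Finally I would upgrade one intersection to infinitely many in both directions. Say $\gamma'(t_0)\in g_0\act\Cs{j_0}$ and put $\nu_0\coloneqq g_0^{-1}\act\gamma'(t_0)\in\Cs{j_0}$, so that $\gamma(\cdot+t_0)=g_0\act\gamma_{\nu_0}$. The endpoints of $\gamma_{\nu_0}$ are the $g_0^{-1}$-images of $\gamma(\pm\infty)$, hence lie in the $\Gamma$-invariant set $\wh\R_{\st}$, so $\nu_0\in\Cs{j_0,\st}\subseteq\BrU_{\st}$ by~\eqref{eq:def_redbranch}. Then Proposition~\ref{PROP:CofSoBst}\eqref{CofSoB:firstreturn}, the system of iterated sequences of $\nu_0$, and Lemma~\ref{LEM:accpointsofit} give intersection times $\ittime{\BrU,n}(\nu_0)$, $n\in\Z$, of $\gamma_{\nu_0}$ with $\Gamma\act\BrU$ satisfying $\ittime{\BrU,n}(\nu_0)\to\pm\infty$ as $n\to\pm\infty$; translating by $g_0$ and undoing the shift, $\gamma$ intersects $\Gamma\act\BrU$ at the times $t_0+\ittime{\BrU,n}(\nu_0)\to\pm\infty$, and applying $\pi$ shows that $\widehat\gamma$ intersects $\CrSc$ at these times. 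This establishes~\eqref{CS:infinitelyoften} for every $\widehat\gamma\in\Geo(\Orbi)\setminus\Vanish(\Orbi)$, and hence, together with~\eqref{CS:discreteintime}, the proposition. I expect the ``first intersection'' step to be the main obstacle; but since it transcribes an argument already present in the excerpt, the remainder is bookkeeping once one checks that periodicity plays no essential role there.
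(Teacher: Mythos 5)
Your argument is correct, and for the central step it takes a genuinely different route than the paper. The discreteness property~\eqref{CS:discreteintime} and the upgrade from one intersection to infinitely many (normalizing via Proposition~\ref{PROP:CofSoBnonst}, observing that the relevant vector lies in~$\BrU_{\st}$ because $\wh\gamma\notin\Vanish(\Orbi)$, and then invoking Proposition~\ref{PROP:CofSoBst}\eqref{CofSoB:firstreturn} and Lemma~\ref{LEM:accpointsofit}) coincide with the paper's proof, with your treatment of~\eqref{CS:discreteintime} even supplying a detail the paper leaves implicit, namely why infinitely many \emph{pairwise distinct} members of~$\mathfrak B$ would be hit (via~\eqref{BP:pointintohalfspaces} and the fact that two distinct complete geodesics meet at most once). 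The difference lies in showing that every $\wh\gamma\in\Geo(\Orbi)\setminus\Vanish(\Orbi)$ meets~$\CrSc$ at all: the paper assumes, for contradiction, that both endpoints of a lift always lie on the same side of every translate $g\act\overline{\base{\Cs{j}}}$, then uses the density of~$E(\Orbi)$ (Proposition~\ref{PROP:EXliesdense}) to approximate the lift by representatives of periodic geodesics, applies~\eqref{BP:closedgeodesicsXtoH} to each approximant, and builds a nested sequence of branch translates trapping a nondegenerate arc, contradicting Proposition~\ref{PROP:branches_locfinite}. You instead rerun the iteration from the proof of Proposition~\ref{PROP:oldB1} directly on the given geodesic, and your key observation is accurate: periodicity enters that iteration only through Proposition~\ref{PROP:hypconjisperiod}, solely to guarantee $\gamma(+\infty)\ne\gamma(-\infty)$, which holds for every geodesic on~$\H$; the inclusion of both endpoints in~$\wh\R_{\st}$ (from $\wh\gamma\notin\Vanish(\Orbi)$) replaces $E(\Orbi)\subseteq\wh\R_{\st}\times\wh\R_{\st}$, and \eqref{BP:coverlimitset}, \eqref{BP:allvectors}, \eqref{BP:intervaldecomp} (with the strict nesting supplied by Lemma~\ref{LEM:halfspaceincludeGV}) and local finiteness carry over unchanged. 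Your route is more direct and removes any appeal to Proposition~\ref{PROP:EXliesdense} from this proof, at the cost of re-examining the interior of the proof of Proposition~\ref{PROP:oldB1}; the paper's route keeps that proposition as a black box for periodic geodesics and pays instead with the density/approximation step.
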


\begin{proof}
We start by establishing~\eqref{CS:infinitelyoften}. To that end let 
$\widehat{\gamma}\in\Geo(\Orbi)\setminus\Vanish(\Orbi)$ and $\gamma$ be any lift 
of~$\wh\gamma$ to~$\H$. We first need to show that $\wh\gamma$ 
intersects~$\CrSc$ or, equivalently, that $\gamma$ intersects~$\Gamma\act\BrU$. 
Without loss of generality we may suppose that $\gamma(\pm\infty)\not=\infty$ 
(otherwise, we pick another representing geodesic for~$\wh\gamma$). Thus, the 
two endpoints~$\gamma(\pm\infty)$ of~$\gamma$ are in~$\R_{\st}$. In what follows 
we show (via proof by contradiction) that there exist~$g\in\Gamma$ and $j\in A$ 
such that $\gamma(+\infty)\in g\act I_j$ and $\gamma(-\infty)\in g\act J_j$. 
Then $\gamma$ intersects~$g\act\Cs{j}$ by~\eqref{BP:allvectors}, and 
hence~$\wh\gamma$ intersects~$\CrSc$. 

In order to seek a contradiction to the existence of such elements in~$\Gamma$ 
and~$A$, we assume that for all~$g\in\Gamma$ and all~$j\in A$ the two 
endpoints~$\gamma(\pm\infty)$ of~$\gamma$ are either both in~$g\act I_j$ or both 
in~$g\act J_j$. Since $\gamma(\pm\infty)\in\R_{\st}$ and $\R_{\st}$ is contained 
in the limit set~$\Lambda(\Gamma)$ of~$\Gamma$, Proposition~\ref{PROP:EXliesdense} implies 
that for each~$\varepsilon>0$ we find a geodesic~$\eta_\eps$ on~$\H$ such that 
$\pi(\eta_\eps)\in\Geo_{\Per}(\Orbi)$ and the endpoints~$(x(\eps),y(\eps)) 
\coloneqq (\eta_\eps(+\infty), \eta_\eps(-\infty))$ of~$\eta_\eps$ are 
$\eps$-near to~$\gamma(\pm\infty)$, respectively, i.e.,
\begin{eqnarray}\label{EQN:epsapart}
\abs{x(\eps)-\gamma(+\infty)}<\eps&\text{and}&\abs{y(\eps)-\gamma(-\infty)}
<\eps\,.
\end{eqnarray}
Again using Proposition~\ref{PROP:EXliesdense}, we may and shall suppose that 
$x(\eps)$ and~$y(\eps)$ are exterior to the interval in~$\R$ that is spanned 
by~$\gamma(+\infty)$ and~$\gamma(-\infty)$. By~\eqref{BP:closedgeodesicsXtoH} we 
find~$h_\eps\in\Gamma$ and~$j_\eps\in A$ such that $h_\eps\act\eta_\eps$ 
intersects~$\Cs{j_\eps}$. Thus, 
\[
(x(\eps),y(\eps)) \in h_\eps^{-1}\act I_{j_\eps}\times h_\eps^{-1}\act 
J_{j_\eps}\,.
\]
By the assumption, either
\begin{eqnarray}\label{EQN:IxorJ}
\gamma(\pm\infty)\in h_\eps^{-1}\act I_{j_\eps}&\text{or}&\gamma(\pm\infty)\in 
h_\eps^{-1}\act J_{j_\eps}\,.
\end{eqnarray}
We consider $h_\eps$ and $j_\eps$ to be fixed once and for all for each~$\eps>0$ 
separately.

We now construct inductively a sequence of ``nested'' translates of a complete 
geodesic segment as follows. Without loss of generality we suppose that 
$\gamma(+\infty)<\gamma(-\infty)$. We pick a (small) $\eps_1>0$ and fix a 
geodesic~$\eta_{\eps_1}$ on~$\H$ with the properties as above with $\eps_1$ in 
place of~$\eps$. We let $a_1,b_1\in\wh\R$ be the endpoints 
of~$h_{\eps_1}^{-1}\act\overline{\base{\Cs{j_{\eps_1}}}}$, ordered such that 
\[
 a_1 < \gamma(+\infty) < \gamma(-\infty) < b_1\,.
\]
If $a_1=\infty \in\wh\R$, then the left hand part of this inequality is 
understood as $-\infty < \gamma(+\infty)$ in~$\R$; analogously for the right 
hand part if $b_1=\infty$.
This configuration is the only one feasible under the condition 
that~\eqref{EQN:epsapart} and~\eqref{EQN:IxorJ} remain both valid for a 
sufficiently small~$\eps_1$.

We set 
\[
 \eps_2 \coloneqq \min\left\{ \frac{|a_1-\gamma(+\infty)|}{2}  , \frac{|b_1 - 
\gamma(-\infty)|}{2} \right\}\,.
\]
Then $\eps_1>\eps_2$, and we repeat with $\eps_2$ in place of~$\eps_1$. We 
emphasize that the chosen geodesic~$\eta_{\eps_2}$ does not intersect 
$h_{\eps_1}^{-1}\act \Cs{j_{\eps_1}}$ as it is contained in either 
$h_{\eps_1}^{-1}\act\Plussp{j_{\eps_1}}$ or 
$h_{\eps_1}^{-1}\act\Minussp{j_{\eps_1}}$. Further, the endpoints $(a_2,b_2)$ do 
not coincide with $(a_1,b_1)$, and since $h_{\eps_2}^{-1}\act 
\overline{\base{\Cs{j_{\eps_2}}}}$ may not intersect $h_{\eps_1}^{-1}\act 
\overline{\base{\Cs{j_{\eps_1}}}}$ by~\eqref{BP:disjointunion}, we have $a_1\leq 
a_2$ and $b_2\leq b_1$ with at least one of the inequalities being strict.

We repeat inductively and obtain a sequence
\[
 h_{\eps_1}^{-1}\act \Cs{j_{\eps_1}}\,,\ h_{\eps_2}^{-1}\act \Cs{j_{\eps_2}}\,,\ 
h_{\eps_3}^{-1}\act \Cs{j_{\eps_3}}\,,\ \ldots
\]
of certain $\Gamma$-translates of elements of the set of branches, and 
sequences 
\[
 (a_n)_{n\in\N}\qquad\text{and}\qquad (b_n)_{n\in\N}
\]
of the endpoints of the elements of the sequence~$(h_{\eps_n}^{-1}\act 
\overline{\base{\Cs{j_{\eps_n}}}})_n$. Then the sequence~$(a_n)_n$ is monotone 
increasing and bounded above by~$\gamma(+\infty)$, and $(b_n)_n$ is monotone 
decreasing and bounded below by~$\gamma(-\infty)$. Let 
\[
 a \coloneqq \lim_{n\to\infty} a_n\qquad\text{and}\qquad b\coloneqq 
\lim_{n\to\infty} b_n\,.
\]
We fix a point on the geodesic segment~$(a,b)_\H$, say~$z$. Then each 
neighborhood of~$z$ intersects infinitely many members of the family
\[
 \defset{ h_{\eps_n}^{-1}\act\overline{\base{\Cs{j_{\eps_n}}}} }{ n\in\N }\,,
\]
which are pairwise disjoint. This 
contradicts~Proposition~\ref{PROP:branches_locfinite}. In turn, 
$\wh\gamma$ intersects~$\CrSc$. 

Without loss of generality we may suppose that $\widehat{\gamma}$ 
intersects~$\CrSc$ at time~$t=0$. By 
Proposition~\ref{PROP:CofSoBnonst}\eqref{CofSoB:uniqueintersect} there exists a 
unique lift~$\eta$ of~$\widehat{\gamma}$ that intersects~$\BrU$ at $t=0$.  Let 
$\nu\coloneqq\eta^{\prime}(0)$. Since $\widehat{\gamma}\notin\Vanish(\Orbi)$, we 
have $\{\eta(\pm\infty)\}\subseteq\widehat{\R}_{\st}$, and hence 
$\nu\in\BrU_{\st}$. Using Lemma~\ref{LEM:accpointsofit} we now obtain 
\eqref{CS:infinitelyoften} with 
$(t_n)_{n\in\Z}=(\ittime{\BrU,n}(\nu))_{n\in\Z}$.

In order to establish~\eqref{CS:discreteintime} we let $\wh\gamma$ be any 
geodesic on~$\Orbi$ and pick any representing geodesic~$\gamma$ on~$\H$. We note 
that the intersection times of~$\wh\gamma$ with~$\CrSc$ and of~$\gamma$ 
with~$\Gamma\act\BrU$ coincide (we picked geodesics with coinciding time 
parametrizations). The local finiteness of~$\Gamma\act\BrU$, as guaranteed 
by~Proposition~\ref{PROP:branches_locfinite}, immediately implies that the 
intersection times form a discrete subset of~$\R$.
\end{proof}

\begin{cor}\label{CofSoB:strongcrosssection}
For each~$\mu\in\mathcal{M}_{\Vanish(\Orbi)}$ the 
set~$\CrSc_{\st}\coloneqq\pi(\BrU_{\st})$ is a strong cross 
section for the geodesic flow~$\GeoFlow$ on~$\Orbi$ with respect to~$\mu$. Each 
geodesic in~$\Geo(\Orbi)\setminus\Vanish(\Orbi)$ intersects~$\CrSc_{\st}$ 
infinitely often in past and future.
\end{cor}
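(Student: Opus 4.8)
The plan is to derive the corollary from the machinery already built in this section, using only two elementary observations: $\BrU_{\st}\subseteq\BrU$ (whence $\CrSc_{\st}=\pi(\BrU_{\st})\subseteq\pi(\BrU)=\CrSc$), and the $\Gamma$-invariance of $\widehat{\R}_{\st}$. First I would prove that for every geodesic $\widehat\gamma\in\Geo(\Orbi)\setminus\Vanish(\Orbi)$ the set of intersection times of $\widehat\gamma$ with $\CrSc$ coincides with the set of intersection times with $\CrSc_{\st}$. Indeed, if $\widehat\gamma$ intersects $\CrSc$ at time $t$, then by Proposition~\ref{PROP:CofSoBnonst}\eqref{CofSoB:uniqueintersect} there is a unique lift $\gamma$ of $\widehat\gamma$ to $\H$ with $\gamma'(t)\in\BrU$, say $\gamma'(t)\in\Cs{j}$; the geodesic determined by $\gamma'(t)$ is a time shift of $\gamma$ and thus has the same endpoints $\gamma(\pm\infty)$, which lie in $\widehat{\R}_{\st}$ because $\widehat\gamma\notin\Vanish(\Orbi)$, so the definition~\eqref{eq:def_redbranch} of $\Cs{j,\st}$ forces $\gamma'(t)\in\Cs{j,\st}$ and hence $\widehat\gamma'(t)\in\CrSc_{\st}$. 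The reverse inclusion of intersection times is trivial since $\CrSc_{\st}\subseteq\CrSc$. As Proposition~\ref{CofSoB:crosssection} tells us that every $\widehat\gamma\in\Geo(\Orbi)\setminus\Vanish(\Orbi)$ meets $\CrSc$ infinitely often in past and future, it therefore also meets $\CrSc_{\st}$ infinitely often in past and future; this is the ``in particular'' assertion, and since $\mu(\Vanish(\Orbi))=0$ for every $\mu\in\mathcal{M}_{\Vanish(\Orbi)}$ it also yields~\eqref{CS:infinitelyoften} for $\CrSc_{\st}$. Property~\eqref{CS:discreteintime} for $\CrSc_{\st}$ is inherited from $\CrSc$ (or re-derived from the local finiteness of $\Gamma\act\BrU$ in Proposition~\ref{PROP:branches_locfinite}), so $\CrSc_{\st}$ is a cross section for $\GeoFlow$ with respect to $\mu$.

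It remains to establish the strong property~\eqref{CS:strong}. Let $\widehat\gamma$ be any geodesic on $\Orbi$ that intersects $\CrSc_{\st}$; after reparametrizing we may assume the intersection occurs at time $0$, and by Proposition~\ref{PROP:CofSoBnonst} there is a unique lift $\gamma$ with $\nu\coloneqq\gamma'(0)\in\BrU_{\st}$. Consider the iterated intersection times $(\ittime{\BrU,n}(\nu))_{n\in\Z}$ from~\eqref{EQDEF:ittime0}--\eqref{EQDEF:ittimen}. For each $n$ we have $\gamma_\nu'(\ittime{\BrU,n}(\nu))\in g\act\Cs{k}$ for some $g\in\Gamma$, $k\in A$; the geodesic determined by $g^{-1}\act\gamma_\nu'(\ittime{\BrU,n}(\nu))$ is a time shift of $g^{-1}\act\gamma_\nu$, with endpoints $g^{-1}\act\gamma_\nu(\pm\infty)\in\widehat{\R}_{\st}$ (by $\Gamma$-invariance of $\widehat{\R}_{\st}$ and $\nu\in\BrU_{\st}$), so that vector lies in $\Cs{k,\st}$ and hence $\gamma_\nu'(\ittime{\BrU,n}(\nu))\in g\act\Cs{k,\st}\subseteq\Gamma\act\BrU_{\st}$. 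Consequently $\widehat\gamma'(\ittime{\BrU,n}(\nu))=\pi\bigl(\gamma_\nu'(\ittime{\BrU,n}(\nu))\bigr)\in\pi(\BrU_{\st})=\CrSc_{\st}$ for all $n\in\Z$, and by Lemma~\ref{LEM:accpointsofit} these times tend to $\pm\infty$. Thus $\widehat\gamma$ meets $\CrSc_{\st}$ infinitely often in past and future, which is exactly~\eqref{CS:strong}. (As a byproduct, the intersection times of $\gamma_\nu$ with $\Gamma\act\BrU_{\st}$ are precisely $(\ittime{\BrU,n}(\nu))_{n\in\Z}$, since by Proposition~\ref{PROP:allinit} any intersection with $\Gamma\act\BrU_{\st}\subseteq\Gamma\act\BrU$ lies in that sequence.)

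I do not anticipate a genuine obstacle; the argument is essentially a repackaging of Proposition~\ref{CofSoB:crosssection}, Lemma~\ref{LEM:accpointsofit} and Proposition~\ref{PROP:allinit}. The only points demanding care are the two invocations of the $\Gamma$-invariance of $\widehat{\R}_{\st}$ together with the observation that the geodesic attached to $g\act\gamma'(t)$ is merely a time shift of $g\act\gamma$ — this is what makes ``being strong'' stable both under passing to lifts in $\H$ and under the $\Gamma$-action — and the bookkeeping that, for $\nu\in\BrU_{\st}$, intersecting $\Gamma\act\BrU$ and intersecting $\Gamma\act\BrU_{\st}$ are the same thing along $\gamma_\nu$, so that the iterated-sequence apparatus of Section~\ref{SUBSEC:reducedbranches} applies verbatim without re-proving existence of next and previous intersection times relative to $\BrU_{\st}$.
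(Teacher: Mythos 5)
Your proof is correct and follows essentially the same route as the paper: reduce \eqref{CS:infinitelyoften} and \eqref{CS:discreteintime} to Proposition~\ref{CofSoB:crosssection}, observe via the $\Gamma$-invariance of~$\wh\R_{\st}$ and~\eqref{eq:def_redbranch} that for geodesics outside~$\Vanish(\Orbi)$ every intersection with~$\CrSc$ in fact lies on~$\CrSc_{\st}$, and obtain \eqref{CS:strong} from the iterated intersection times of the lifted vector~$\nu\in\BrU_{\st}$ together with Lemma~\ref{LEM:accpointsofit}. You merely spell out the details that the paper's proof delegates back to the proof of Proposition~\ref{CofSoB:crosssection}.
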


\begin{proof}
The proof of Proposition~\ref{CofSoB:crosssection} already 
establishes~\eqref{CS:infinitelyoften} for~$\CrSc_{\st}$ since it shows that any 
$\wh\gamma\in\Geo(\Orbi)\setminus\Vanish(\Orbi)$ intersects~$\CrSc_{\st}$, not 
only~$\CrSc$, at least once and then infinitely often in past and future. Also 
the proof of~\eqref{CS:discreteintime} for~$\CrSc_{\st}$ can be taken directly 
from the proof of Proposition~\ref{CofSoB:crosssection}. For~\eqref{CS:strong} 
we let~$\wh\gamma$ be any geodesic on~$\Orbi$ that intersects~$\CrSc_{\st}$ at 
least once. Then $\wh\gamma\in\Geo(\Orbi)\setminus\Vanish(\Orbi)$ and it can be 
seen as in the proof of Proposition~\ref{CofSoB:crosssection} that $\wh\gamma$ 
intersects~$\CrSc_{\st}$ infinitely often in past and future.
\end{proof}

\subsection{Slow transfer operators}\label{SUBSEC:slowtrans}

Let $\BrS \coloneqq \defset{ \Cs{j} }{ j\in A }$ with $A\coloneqq \{1,\ldots, 
N\}$ be a set of branches for the geodesic flow on~$\Orbi$. In this section we 
present the discrete dynamical system induced by~$\BrS$ and the associated 
family of transfer operators. These transfer operators are the so-called 
\emph{slow} transfer operators. The notion of \emph{fast} transfer operators 
will be discussed in Section~\ref{SUBSEC:fasttrans}. 

As before we let $\BrU \coloneqq \bigcup_{j\in A} \Cs{j}$ denote the branch 
union and resume the notation from~\eqref{BP:pointintohalfspaces}, 
\eqref{BP:intervaldecomp}, and \eqref{eq:def_redbranch}--\eqref{eq:def_redset}. 
For $\nu\in\BrU_{\st}$ we recall from 
\eqref{EQDEF:ittime0}--\eqref{EQDEF:ittrans2} its system of iterated sequences
\[
[(\ittime{\BrU,n}(\nu))_n,(\itindex{\BrU,n}(\nu))_n,(\ittrans{\BrU,n}(\nu))_n]\,
.
\]
The first return map $\Return\colon \BrU_{\st} \to \BrU_{\st}$ (cf.\@ 
\eqref{eq:firstreturnH}) is given by 
\[
\Return\vert_{\Cs{j,\st}}\colon\Cs{j,\st} \to \Cs{
\itindex{\BrU,1}(\nu),\st}\,,\quad \nu\mapsto 
\ittrans{\BrU,1}(\nu)^{-1}\act\gamma_{\nu}^{\prime}(\ittime{\BrU,1}(\nu))\,,
\]
for any $j\in A$. In order to present the discrete dynamical system induced 
by~$\BrS$, as defined at the end of Section~\ref{SUBSEC:cross}, we set for 
any~$j,k\in A$ and $g\in\Trans{}{j}{k}$, 
\[
D_{j,k,g}\coloneqq g\act\Iset{k}\times\{j\}\,.
\]
By~\eqref{BP:intervaldecomp},
\[
 \bigcup_{j,k\in A}\bigcup_{g\in\Trans{}{j}{k}}D_{j,k,g}=\bigcup_{j\in 
A}\Iset{j}\times\{j\}\,.
\]
Then
\[
D\coloneqq\bigcup_{j\in A}\Iset{j}\times\{j\}
\]
is the domain of the induced discrete dynamical system~$F\colon D\to D$, and the 
map~$F$ decomposes into the \emph{submaps} (local bijections)
\[
F\vert_{D_{j,k,g}}\colon g\act\Iset{k}\times\{j\} \to \Iset{k}\times\{k\}\,,\quad  (x,j)\mapsto(g^{-1}\act x,k)
\]
for $j,k\in A$, $g\in\Trans{}{j}{k}$. One easily checks that the diagram
\begin{center}
\begin{tikzcd}
\BrU_{\st}\arrow[r, "\Return"]\arrow[d, "\iota"]& \BrU_{\st}\arrow[d, "\iota"] 
\\
D\arrow[r, "F"]& D
\end{tikzcd}
\end{center}
indeed commutes, where the surjection $\iota\colon\BrU_{\st}\mapsto D$ is  
piecewise defined by 
\[
\iota\vert_{\Cs{j,\st}}\colon\nu\mapsto(\gamma_{\nu}(+\infty),j)
\]
with $j\in A$. 

Let $V$ be a finite-dimensional complex vector space and let 
\[
 \Fct(D;V) \coloneqq \defset{ f\colon D\to V }{ \text{$f$ function} }
\]
denote the space of $V$-valued functions on~$D$. Further let 
$\chi\colon\Gamma\to\GL(V)$ be a representation of~$\Gamma$ on~$V$. We define an 
associated \emph{weight function}~$\omega$ on~$D$ by
\[
\omega\vert_{D_{j,k,g}}\colon(x,j)\mapsto\chi(g^{-1})\,,
\]
for any $j,k\in A$ and $g\in\Trans{}{j}{k}$.
\index[defs]{transfer operator!slow}%
\index[symbols]{Ls@$\TO{s}$}%
The \emph{(slow) transfer 
operator}~$\TO s$ with parameter $s\in\C$ and weight~$\omega$ associated to the 
map~$F$ is (initially only formally) defined as an operator on~$\Fct(D;V)$ by 
\[
\TO{s}f((x,k))\coloneqq\sum_{(y,j)\in 
F^{-1}((x,k))}\omega((y,j))\abs{F^{\prime}(y,j)}^{-s} f((y,j))
\]
for any $(x,k)\in D$ and $f\in \Fct(D;V)$. The space of functions which is used 
as domain of the transfer operator and on which it then defines an actual 
operator depends a lot on the intended application. It typically is a subset 
of~$\Fct(D;V)$ or a closely related space of functions with complex domains. For 
the transfer-operator based interpretations of Laplace eigenfunctions, which 
motivate this article and that we briefly surveyed in the introduction, the 
function spaces of choice are subspaces of~$\Fct(V;D)$ consisting of highly 
regular functions. We omit any further discussion and refer 
to~\cite{Moeller_Pohl, Pohl_gamma, Pohl_mcf_general, Pohl_oddeven, 
Bruggeman_Pohl, Pohl_Zagier} for details. However, we note that if all 
transition sets~$\Trans{}{j}{k}$ ($j,k\in A$) are finite, then $\TO s$ is 
already well-defined as an operator on~$\Fct(D;V)$. For infinite transition 
sets, questions of convergence need to be considered. 

We end this section with the following example, which illustrates the structure 
of the slow transfer operators in the case of Schottky surfaces. For other 
Fuchsian groups, the structure is similar. For Schottky surfaces, transfer 
operators are classically defined using a Koebe--Morse coding for the geodesic 
flow (see, e.g., \cite{Borthwick_book}). This example also shows that the 
approach via sets of branches reproduces the classical transfer operators and 
generalizes the classical construction.

\begin{example}\label{EX:schottkyTO}
\sloppy Let $\Gamma_{\mathrm{S}}$ be a Schottky group with Schottky data 
$(r,\left\{\mathcal{D}_{\pm j}\right\}_{j=1}^r,\left\{s_{\pm 
j}\right\}_{j=1}^r)$, and recall the set of branches $\{\Cs{\pm1},\dots,\Cs{\pm 
r}\}$ from Example~\ref{EX:schottkySoB}.
Let 
\[
\BrU_{\mathrm{S}}\coloneqq\bigcup_{j=1}^r\Cs{j}\cup\bigcup_{k=1}^r\Cs{-k}\,.
\]
For $j\in\{\pm1,\dots,\pm r\}$ consider the subspace $A^2(\mathcal{D}_j)$ of 
$L^2(\mathcal{D}_j)$ of holomorphic functions (Bergman space with $p=2$).
Then the direct sum
\[
\mathcal{H}\coloneqq\bigoplus_{j=1}^r A^2(\mathcal{D}_j)\oplus\bigoplus_{k=1}^r 
A^2(\mathcal{D}_{-k})
\]
is a Hilbert space.
If we identify functions $f\in\mathcal{H}$ with the function vectors
\begin{eqnarray*}
\bigoplus_{j=1}^rf_j\oplus\bigoplus_{k=1}^rf_{-k}\,,&\text{with}& f_l\in 
A^2(\mathcal{D}_l)\,,\quad l\in\{\pm 1,\dots,\pm r\}\,,
\end{eqnarray*}
then the slow transfer operator for $\BrU_{\mathrm{S}}$ with parameter~$s\in\C$ 
and constant weight~$\omega\equiv 1$ (trivial representation) takes the form
\[
\TO{s}=\begin{pmatrix}
\tau_s(s_1)&\tau_s(s_2)&\dots&\tau_s(s_r)&0&\tau_s(s_{-2})&\dots&\tau_s(s_{-r}
)\\
\tau_s(s_1)&\tau_s(s_2)&\dots&\tau_s(s_r)&\tau_s(s_{-1})&0&\dots&\tau_s(s_{-r}
)\\
\vdots&\vdots&\ddots&\vdots&\vdots&\vdots&\ddots&\vdots\\
\tau_s(s_1)&\tau_s(s_2)&\dots&\tau_s(s_r)&\tau_s(s_{-1})&\tau_s(s_{-2}
)&\dots&0\\
0&\tau_s(s_2)&\dots&\tau_s(s_r)&\tau_s(s_{-1})&\tau_s(s_{-2})&\dots&\tau_s(s_{-r
})\\
\tau_s(s_1)&0&\dots&\tau_s(s_r)&\tau_s(s_{-1})&\tau_s(s_{-2})&\dots&\tau_s(s_{-r
})\\
\vdots&\vdots&\ddots&\vdots&\vdots&\vdots&\ddots&\vdots\\
\tau_s(s_1)&\tau_s(s_2)&\dots&0&\tau_s(s_{-1})&\tau_s(s_{-2})&\dots&\tau_s(s_{-r
})
\end{pmatrix},
\]
where
\begin{eqnarray*}
\tau_s(g^{-1})f(x)\coloneqq\left(g^{\prime}(x)\right)^sf(g\act x)
\end{eqnarray*}
for $f\colon U\to\C$, $x\in U$, $g\in\Gamma_{\mathrm{S}}$.
\end{example}

\subsection{Strict transfer operator approaches}\label{SUBSEC:stricttrans}

This section serves to recall the concept of strict transfer operator approaches 
from~\cite{FP_NECM}. The relation between these approaches and the Selberg zeta 
function will be discussed in Section~\ref{SUBSEC:fasttrans}. As already stated 
in the introduction, the aim of this article is to construct strict transfer 
operator approaches for a large class of Fuchsian groups, starting at sets of 
branches. 

We say that $\Gamma$ admits a \emph{strict 
transfer operator approach}
\index[defs]{strict transfer operator approach}%
if there exists a \emph{structure tuple}
\[
\mathcal{S} \coloneqq \bigl( \Index, (\wh{I}_{a})_{a \in \Index}, (P_{a,b})_{a,b \in \Index}, (C_{a,b})_{a,b \in \Index},((g_p)_{p\in P_{a,b}})_{a,b\in \Index} \bigr)
\]
\index[defs]{structure tuple}%
\index[symbols]{S@$\mathcal S$}%
consisting of
\index[symbols]{Ab@$\Index$}%
\index[symbols]{Ia@$\wh{I}_a$}%
\index[symbols]{Pab@$P_{a,b}$}%
\index[symbols]{Cab@$C_{a,b}$}%
\index[symbols]{gp@$g_p$}%
\begin{itemize}
\item a finite set $\Index$,
\item a family $(\wh{I}_{a})_{a \in \Index}$ of (not necessarily disjoint) intervals in 
$\widehat{\R}$,
\item a family $(P_{a,b})_{a,b \in \Index}$ of finite (possibly empty) sets of 
parabolic elements in $\Gamma$,
\item a family $(C_{a,b})_{a,b \in \Index}$ of finite (possibly empty) subsets of 
$\Gamma$, and
\item a family $((g_p)_{p\in P_{a,b}})_{a,b\in \Index}$ of elements of $\Gamma$ 
(which may be the identity)
\end{itemize}
which satisfies the following five properties.
\begin{propsperty}\label{staPROP1}
\index[defs]{property!1}%
For all $a,b \in \Index$
\begin{enumerate}[label=$\mathrm{(\Roman*)}$,ref=$\mathrm{\Roman*}$]
\item\label{staPROP1:Pinclude} we have $p^{-n}g_p^{-1}\act \wh{I}_{a,\st} 
\subseteq \wh{I}_{b,\st}$ for all $p \in P_{a,b}$ and $n \in \N$, and $p^{n} 
\notin P_{a,b}$ for $n\geq2$,
\item\label{staPROP1:Cinclude} we have $g^{-1}\act \wh{I}_{a,\st} \subseteq 
\wh{I}_{b,\st}$ for all $g \in C_{a,b}$,
\item\label{staPROP1:decompofIsets} the sets in the family
\[
\defset{g^{-1}\act \wh{I}_{j,\st}}{j \in \Index, g \in C_{j,b}} \cup 
\defset{p^{-n}g_p^{-1}\act \wh{I}_{j,\st}}{j \in \Index,\, p \in P_{j,b},\, n \in 
\N}
\]
are pairwise disjoint and
\[
\wh{I}_{b,\st} = \bigcup_{j \in \Index} \Biggl( \bigcup_{g \in C_{j,b}}g^{-1}\act 
\wh{I}_{j,\st} \cup \bigcup_{p \in P_{j,b}} \bigcup_{n = 
1}^{\infty}p^{-n}g_p^{-1}\act \wh{I}_{j,\st}\Biggr)\,.
\]
\end{enumerate}
\end{propsperty}
Property 1 induces a discrete dynamical system $(D,F)$,
\index[defs]{discrete dynamical system}%
\index[symbols]{DF@$(D,F)$}%
where
\[
D \coloneqq \bigcup_{a\in \Index}\wh{I}_{a,\st}\times \{a\}\,,
\]
\index[symbols]{D@$D$}%
and $F$ splits into the submaps (bijections, that are local parts of the map~$F$)
\index[defs]{submap}%
\begin{align*}
g^{-1}\act \wh{I}_{a,\st}\times \{b\} \ni (x,b) &\mapsto (g\act x,a) \in 
\wh{I}_{a,\st} \times \{a\}\,, 
\intertext{and}
p^{-n}g_p^{-1}\act \wh{I}_{a,\st}\times \{b\} \ni (x,b) &\mapsto 
(g_pp^{n}\act x,a) \in \wh{I}_{a,\st} \times \{a\}\,,
\end{align*}
for all $a,b \in \Index$, $g \in C_{a,b}$, $p \in P_{a,b}$ and $n \in \N$, which 
completely determine~$F$.

\begin{propsperty}\label{staPROP2}
\index[defs]{property!2}%
For $n \in \N$ denote by $\Per_{n}$
\index[symbols]{Pern@$\Per_n$}%
the subset of~$\Gamma$ of all~$g$ for which 
there exists $a \in \Index$ such that
\[
g^{-1}\act \wh{I}_{a,\st}\times \{a\} \ni (x,a) \mapsto (g\act x,a) \in 
\wh{I}_{a,\st}\times \{a\}
\]
is a submap of $F^{n}$. Then the union
\[
\Per \coloneqq \bigcup_{n = 1}^{\infty}\Per_{n}
\]
is disjoint.
\index[symbols]{Per@$\Per$}%
\end{propsperty}
As before, we denote by~$[\Gamma]_{\mathrm{h}}$ the set of all 
$\Gamma$-conjugacy classes of hyperbolic elements in~$\Gamma$.
\begin{propsperty}\label{staPROP3}
\index[defs]{property!3}%
Let $\Per$ be as in Property 2. Then
\begin{enumerate}[label=$\mathrm{(\Roman*)}$, ref=$\mathrm{\Roman*}$]
\item\label{staPROP3:hyperbolic}
all elements of $\Per$ are hyperbolic,

\item\label{staPROP3:primitive}
for each $h \in \Per$ also its primitive $h_{0}$ is contained in $\Per$,

\item\label{staPROP3:representatives}
for each $[g] \in [\Gamma]_{\mathrm{h}}$ there exists a unique element $n \in 
\N$ such that $\Per_{n}$ contains an element that represents $[g]$.
\end{enumerate}
\end{propsperty}

Suppose that $[g]\in [\Gamma]_{\mathrm{h}}$ is represented by $h\in\Per_{n}$, 
$n\in\N$.
Because of Property~\ref{staPROP2} we shall define the \emph{word length of~$h$} 
as
\index[defs]{word length}%
\index[symbols]{om@$\omega(h)$}%
\begin{align}\label{EQDEF:wordlengthofh}
\omega(h)\coloneqq n\,.
\end{align}
We denote by $m=m(h)\in\N$
\index[symbols]{m@$m(h)$}%
the unique number such that $h = h_{0}^{m}$ for a 
primitive hyperbolic element $h_{0}\in\Gamma$, and we set
\begin{align}\label{EQDEF:ratioofh}
p(h)\coloneqq\tfrac{\omega(h)}{m(h)}\,.
\end{align}
\index[symbols]{p@$p(h)$}%
Further we set $\omega(g)\coloneqq\omega(h)$ as well as $m(g)\coloneqq m(h)$ and 
$p(g)\coloneqq p(h)$. By Property~\ref{staPROP3} these values are well-defined.

\begin{propsperty}\label{staPROP4}
\index[defs]{property!4}%
For each element $[g] \in [\Gamma]_{\mathrm{h}}$ there are exactly $p(g)$ 
distinct elements $h \in \Per_{\omega(g)}$ such that $h \in [g]$.
\end{propsperty}

\begin{propsperty}\label{staPROP5}
\index[defs]{property!5}%
There exists a family $(\mathcal{E}_ {a})_{a \in \Index}$ of open, bounded, 
connected and simply connected sets in $\widehat{\C}$ such that
\index[symbols]{Ea@$\mathcal{E}_a$}%
\begin{enumerate}[label=$\mathrm{(\Roman*)}$, ref=$\mathrm{\Roman*}$]
\item\label{staPROP5:cover}
for all $a \in \Index$ we have
\[
\overline{\wh{I}_{a,\st}} \subseteq \mathcal{E}_{a}\,,
\]
\item\label{staPROP5:fliptheworld}
there exists $\xi \in \PSLR$ such that for all $a \in \Index$ we have 
$\xi\act\overline{\mathcal{E}_{a}} \subseteq \C$, and for all $b \in \Index$ and all 
$g \in C_{a,b}$ we have
\[
g\xi^{-1}\act\infty \notin \overline{\mathcal{E}_{a}}\,,
\]
\item\label{staPROP5:Cinclude}
for all $a,b \in \Index$ and all $g \in C_{a,b}$ we have
\[
g^{-1}\act\overline{\mathcal{E}_{a}} \subseteq \mathcal{E}_{b}\,,
\]
\item\label{staPROP5:Pinclude}
for all $a,b \in \Index$ and all $p \in P_{a,b}$ there exists a compact subset 
$K_{a,b,p}$ of $\widehat{\C}$ such that for all $n \in \N$ we have
\[
p^{-n}g_p^{-1}\act\overline{\mathcal{E}_{a}} \subseteq K_{a,b,p} \subseteq 
\mathcal{E}_{b}\,,
\]
\item\label{staPROP5:nofixed}
for all $a,b \in \Index$ and all $p \in P_{a,b}$ the set 
$g_p^{-1}\act\overline{\mathcal{E}_{a}}$ does not contain the fixed point of 
$p$.
\end{enumerate}
\end{propsperty}

\subsection{Fast transfer operators and Selberg zeta 
functions}\label{SUBSEC:fasttrans}

In this section we briefly explain the use of strict transfer operator 
approaches for Selberg zeta functions and introduce for this the notion of 
fast transfer operators. 
To that end we suppose that $\Gamma$ admits a strict transfer operator approach 
with structure tuple
\[
\mathcal{S} \coloneqq \bigl( \Index, (I_{a})_{a \in \Index}, (P_{a,b})_{a,b \in \Index}, (C_{a,b})_{a,b \in \Index},((g_p)_{p\in P_{a,b}})_{a,b\in \Index} \bigr)\,.
\]
We let $V$ be a finite-dimensional vector space and let
$\chi\colon\Gamma\to\GL(V)$ be a representation of~$\Gamma$ on~$V$ with \emph{non-expanding cusp monodromy},
\index[defs]{non-expanding cusp monodromy}%
i.e., for each parabolic 
element~$p\in\Gamma$ the endomorphism~$\chi(p)$ has only eigenvalues with 
absolute value~$1$. (We refer to ~\cite{EKMZ_Lyapunovexp} and~\cite{FP_NECM} for 
an extended discussion of this property.) 

For $s\in\C$, $U\subseteq\widehat{\C}$, any $f\in\Fct(U;V)$, $g\in\Gamma$ and 
$z\in U$ we set
\[
\alpha_s(g^{-1})f(z)\coloneqq(g^{\prime}(z))^s\chi(g)f(g\act z)\,,
\]
\index[symbols]{as@$\alpha_s$}%
whenever it is well-defined. (We note that $\alpha_s$ is typically not a 
representation of~$\Gamma$ on~$\Fct(U;V)$,
\index[symbols]{Fct@$\Fct(U;V)$}%
but it satisfies some restricted 
homomorphism properties, which motivated the notation. We refer to the 
discussion in~\cite[Section~6.3]{Bruggeman_Pohl} for details.) For any open set 
$U\subseteq\C$ we set
\[
\mathcal{B}(U;V)\coloneqq\defset{f\in C(\overline{U};V)}{\text{$f\vert_U$ 
holomorphic}}\,.
\]
\index[symbols]{BUV@$\mathcal{B}(U;V)$}%
Then $\mathcal{B}(U;V)$, endowed with the supremum norm, is a Banach space. 
We write
\[
\mathcal{B}(\mathcal{E}_{\Index};V)\coloneqq\bigoplus_{a\in 
\Index}\mathcal{B}(\mathcal{E}_a;V)
\]
for the product space, where $\mathcal{E}_{\Index}=(\mathcal{E}_a)_{a\in \Index}$ is a family of open sets as provided by Property~\ref{staPROP5}.

For $s\in\C$ with $\Rea s$ sufficiently large, the transfer operator~$\TO{s,\chi}$ 
with parameter~$s$ associated to~$\mc S$ and~$\chi$ is an operator on the Banach 
space~$\mc  B(\mc E_{\Index};V)$. If we identify the elements 
$f\in\mathcal{B}(\mathcal{E}_{\Index};V)$ with the function vectors $f=(f_a)_{a\in \Index}$, 
where 
\[
f_a\colon I_{a,\st}\to V
\]
for $a\in \Index$, then $\TO{s,\chi}$ reads 
\[
\TO{s,\chi}=\biggl(\sum_{g\in C_{a,b}}\alpha_s(g)+\sum_{p\in 
P_{a,b}}\sum_{n\in\N}\alpha_s(g_pp^n)\biggr)_{a,b\in \Index}\,.
\]
We call $\{\TO{s,\chi}\}_{s}$ the \emph{fast transfer operator family} for~$\Gamma$ 
associated to~$\mathcal{S}$. 
\index[defs]{transfer operator!fast}%
\index[symbols]{Ls@$\TO{s}$}%

One of the main motivations for this article is the following result 
of~\cite{FP_NECM}, which shows that (fast) transfer operator families arising 
from strict transfer operator approaches provide Fredholm determinant 
representations of Selberg zeta functions.

\begin{thm}[\cite{FP_NECM}]\label{mainthm:F_P_NECM}
Let $\Gamma$ be a geometrically finite Fuchsian group which admits a strict 
transfer operator approach, and let $\chi\colon\Gamma\to\GL(V)$ be a 
finite-dimensional representation of~$\Gamma$ on the finite-dimensional vector 
space~$V$ having non-expanding cusp monodromy. Let $\mathcal{S}$ be a structure 
tuple for~$\Gamma$ with associated fast transfer operator 
family~$\{\TO{s,\chi}\}_{s}$.
Let~$\Orbi=\quod{\Gamma}{\H}$ and resume the notation of Section~\ref{SUBSEC:stricttrans}.
Then we have:
\begin{enumerate}[label=$\mathrm{(\roman*)}$, ref=$\mathrm{\roman*}$]
\item There exists $\delta>0$ only depending on $\Gamma$ and $(V,\chi)$ such 
that for $s\in\C$ with $\Rea s>\delta$ the operator $\TO{s,\chi}$ on 
$\mathcal{B}(\mathcal{E}_{\Index};V)$ is bounded and nuclear of order 0, independently of the choice of the family $\mathcal{E}_{\Index}$.
\item The map $s\mapsto\TO{s,\chi}$ extends meromorphically to all of $\C$ with 
values in nuclear operators of order 0 on $\mathcal{B}(\mathcal{E}_{\Index};V)$. All poles are simple. There exists $d\in\N$ such that each pole is contained in 
$\frac{1}{2}(d-\N_0)$. 
\item\label{eq:szf_id} For $\Rea s\gg1$, we have
\[
Z_{\Orbi,\chi}(s)=\det(1-\TO{s,\chi})\,.
\]
\item The Selberg zeta function $Z_{\Orbi,\chi}$ extends to a 
meromorphic function on $\C$ with poles contained in $\frac{1}{2}(d-\N_0)$ and 
the identity in~\eqref{eq:szf_id} extends to all of~$\C$.
\end{enumerate}
\end{thm}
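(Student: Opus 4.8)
The plan is to follow the classical Ruelle--Mayer--Fried strategy for relating dynamical zeta functions to Fredholm determinants of transfer operators, here in the twisted and cuspidal variant. I would begin by splitting the fast transfer operator into a \emph{finite part} and a \emph{cuspidal part},
\[
\TO{s,\chi} = \Bigl(\sum_{g\in C_{a,b}}\alpha_s(g)\Bigr)_{a,b\in\Index} + \Bigl(\sum_{p\in P_{a,b}}\sum_{n\in\N}\alpha_s(g_pp^n)\Bigr)_{a,b\in\Index}\,,
\]
so that each summand is a single weighted composition operator $f\mapsto((g^{-1})'(\cdot))^{s}\chi(g^{-1})\,f(g^{-1}\act\cdot)$. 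For part~(i) the decisive input is Property~\ref{staPROP5}: the inducing maps send $\overline{\mathcal E_a}$ compactly into $\mathcal E_b$ (for the parabolic terms even uniformly, into the common compact set $K_{a,b,p}$), and by Property~\ref{staPROP5}\,(\ref{staPROP5:fliptheworld}),\,(\ref{staPROP5:nofixed}) the weights $((g^{-1})'(\cdot))^{s}$ are holomorphic, non-vanishing and single-valued on $\overline{\mathcal E_a}$. Transferring each $\mathcal E_a$ to a disk by a Riemann map and using Cauchy estimates on an intermediate annular neighbourhood, such an operator has geometrically decaying singular values, hence is nuclear of order $0$, with nuclear norm controlled by $\sup_{\overline{\mathcal E_a}}|(g^{-1})'|^{\Rea s}\cdot\|\chi(g^{-1})\|$ up to a constant depending only on the compact inclusion. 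For the cuspidal terms, after the standardizing conjugation of Property~\ref{staPROP5} the modulus of the derivative of $(g_pp^n)^{-1}$ on $\overline{\mathcal E_a}$ is comparable to $n^{-2}$, while $\|\chi(g_pp^n)\|$ grows at most polynomially in $n$ because $\chi$ has non-expanding cusp monodromy; hence $\sum_n\|\alpha_s(g_pp^n)\|_{\mathrm{nuc}}<\infty$ for $\Rea s>\delta$, with $\delta$ depending only on the cusp widths and the Jordan data of $\chi$ at the parabolic elements. Independence of the family $(\mathcal E_a)_a$ I would obtain by interpolating two admissible families by a third one contained in both and noting that the corresponding restriction operators are nuclear, so the Fredholm determinants agree.

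For part~(ii) the finite part is manifestly entire in $s$, so only the cuspidal part needs continuation. Passing to a Taylor-coefficient model of $\mathcal B(\mathcal E_a;V)$, the operator $\sum_n\alpha_s(g_pp^n)$ becomes, entry by entry, a finite linear combination of Hurwitz--Lerch-type zeta functions $\sum_{n\ge1}n^{j}(\lambda n+c)^{-2s-k}$, the polynomial factors in $n$ coming from the derivative of $(g_pp^n)^{-1}$ and from $\chi(g_pp^n)$. Each of these is meromorphic on $\C$ with only simple poles, located in $\tfrac12(d-\N_0)$ for a suitable $d\in\N$; assembling the finitely many cusps and finitely many coefficients then gives the claimed meromorphic continuation of $s\mapsto\TO{s,\chi}$ with values in nuclear operators of order $0$, with only simple poles, all contained in $\tfrac12(d-\N_0)$.

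Part~(iii) is the heart of the matter. For $\Rea s\gg1$ one has $\|\TO{s,\chi}\|<1$, so the trace formula for nuclear operators gives $\det(1-\TO{s,\chi})=\exp\bigl(-\sum_{m\ge1}\tfrac1m\operatorname{Tr}\TO{s,\chi}^m\bigr)$, and it remains to identify $\sum_{m\ge1}\tfrac1m\operatorname{Tr}\TO{s,\chi}^m$ with $-\log Z_{\Orbi,\chi}(s)$. Expanding $\TO{s,\chi}^m$ as a matrix of sums of single weighted composition operators by means of the (branchwise, via Property~\ref{staPROP5}\,(\ref{staPROP5:fliptheworld})) cocycle identity $\alpha_s(g_1)\cdots\alpha_s(g_m)=\alpha_s(g_1\cdots g_m)$ and taking the trace component-wise, only the terms corresponding to closed length-$m$ orbits of the dynamical system $(D,F)$ survive, that is the terms $\alpha_s(g)$ with $g\in\Per_m$, each with multiplicity one (Properties~\ref{staPROP1},~\ref{staPROP2}). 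For such a $g$, Property~\ref{staPROP3} makes $g$ hyperbolic; then $g^{-1}$ is a holomorphic self-map of $\mathcal E_a$ carrying $\overline{\mathcal E_a}$ into $\mathcal E_a$, with unique attracting fixed point $\fixp{-}{g}\in\mathcal E_a$ and multiplier $(g^{-1})'(\fixp{-}{g})=e^{-\ell(g)}$ by Lemma~\ref{LEM:hypfixedconv} and~\eqref{EQDEF:standh}, whence the holomorphic Lefschetz (Atiyah--Bott) fixed point formula --- equivalently, triangularity of $\alpha_s(g)$ in the monomial basis at $\fixp{-}{g}$ --- gives $\operatorname{Tr}\alpha_s(g)=e^{-s\ell(g)}\,\tr{\chi(g^{-1})}\,(1-e^{-\ell(g)})^{-1}$. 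Therefore $\sum_{m\ge1}\tfrac1m\operatorname{Tr}\TO{s,\chi}^m=\sum_{g\in\Per}\omega(g)^{-1}\,e^{-s\ell(g)}\,\tr{\chi(g^{-1})}\,(1-e^{-\ell(g)})^{-1}$, using $\Per=\bigsqcup_m\Per_m$ and $g\in\Per_m\Leftrightarrow\omega(g)=m$. Grouping by conjugacy class --- by Property~\ref{staPROP3}\,(\ref{staPROP3:representatives}) all representatives of $[h]\in[\Gamma]_{\mathrm{h}}$ lying in $\Per$ belong to $\Per_{\omega(h)}$, and by Property~\ref{staPROP4} there are exactly $p(h)=\omega(h)/m(h)$ of them --- and relabelling $[h]$ by $[h^{-1}]$ collapses this to $\sum_{[h]\in[\Gamma]_{\mathrm{h}}}m(h)^{-1}\,e^{-s\ell(h)}\,\tr{\chi(h)}\,(1-e^{-\ell(h)})^{-1}$. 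Finally, writing $(1-e^{-\ell(h)})^{-1}=\sum_{k\ge0}e^{-k\ell(h)}$, reindexing $[h]$ by (prime geodesic, multiplicity) through Proposition~\ref{PROP:hypconjisperiod} and Corollary~\ref{COR:primhypconjisprimperiod} (so $h$ is conjugate to $h_{\widehat\gamma}^{m(h)}$, $\ell(h)=m(h)\ell(\widehat\gamma)$, $\tr{\chi(h)}=\tr{\chi(h_{\widehat\gamma})^{m(h)}}$), and expanding each factor $\log\det\bigl(1-\chi(h_{\widehat\gamma})e^{-(s+k)\ell(\widehat\gamma)}\bigr)$ in its power series, one recognizes exactly $-\log$ of the Euler product~\eqref{eq:szf_twist_intro}; hence $Z_{\Orbi,\chi}(s)=\det(1-\TO{s,\chi})$ for $\Rea s\gg1$.

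Part~(iv) is then immediate: by part~(ii) the right-hand side of the identity extends meromorphically to $\C$, since the Fredholm determinant of a meromorphic family of nuclear operators of order $0$ with at most simple poles of finite rank is meromorphic, with poles confined to the pole set of the operator family, hence to $\tfrac12(d-\N_0)$; as $Z_{\Orbi,\chi}$ is holomorphic on $\{\Rea s>\delta\}$ and agrees there with this function, it extends meromorphically with poles in $\tfrac12(d-\N_0)$ and the identity propagates to all of $\C$ by uniqueness of meromorphic continuation. I expect the main obstacle to lie in part~(iii): the rigorous trace computation --- proving that non-periodic words contribute nothing to $\operatorname{Tr}\TO{s,\chi}^m$, and evaluating the trace of a single weighted composition operator on the curved domains $\mathcal E_a$, which requires the conformal transfer to disks, careful tracking of the branch of $(\cdot)^s$ (where Property~\ref{staPROP5}\,(\ref{staPROP5:fliptheworld}) is essential) and of orientations, and non-degeneracy of the fixed point --- together with the combinatorial bookkeeping of Properties~\ref{staPROP2}--\ref{staPROP4} that makes the $\Per$-sum reproduce precisely the Euler product. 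A secondary obstacle is the meromorphic continuation in part~(ii): pinning down the exact location and the simplicity of the poles of the cuspidal part, which forces the explicit reduction to Hurwitz--Lerch zeta functions and the control of the non-semisimple contributions of $\chi$ at the parabolics.
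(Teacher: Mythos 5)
You should first be aware that this paper does not prove the statement at all: Theorem~\ref{mainthm:F_P_NECM} is imported verbatim from~\cite{FP_NECM} and merely restated in Section~\ref{SUBSEC:fasttrans}, so there is no internal proof to compare against. Judged against the argument actually carried out in that reference, your outline reconstructs essentially the same strategy: split $\TO{s,\chi}$ into the finite part (the $C_{a,b}$-sums) and the cuspidal part (the $P_{a,b}$-sums), get nuclearity of order zero from the strict inclusions $g^{-1}\act\overline{\mathcal{E}_a}\subseteq\mathcal{E}_b$ and $p^{-n}g_p^{-1}\act\overline{\mathcal{E}_a}\subseteq K_{a,b,p}$ of Property~\ref{staPROP5} together with the non-expanding cusp monodromy bound $\|\chi(g_pp^n)\|=O(n^{j})$, continue the cuspidal part meromorphically through Hurwitz--Lerch-type series (the Jordan blocks of $\chi(p)$ producing the polynomial factors and the value of $d$), evaluate $\operatorname{Tr}\alpha_s(g)$ for $g\in\Per$ by the attracting-fixed-point (Lefschetz) formula $e^{-s\ell(g)}\tr{\chi(g^{-1})}(1-e^{-\ell(g)})^{-1}$, and let Properties~\ref{staPROP1}--\ref{staPROP4} turn $\sum_m\tfrac1m\operatorname{Tr}\TO{s,\chi}^m$ into the logarithm of the Euler product. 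So as a blueprint the proposal is sound and faithful to the source.

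Two points deserve more care than your sketch gives them. First, in the trace computation you assert that each $g\in\Per_m$ contributes ``with multiplicity one''; as $\Per_m$ is defined by an existential quantifier over $a\in\Index$, a single $g$ could a priori occur as a submap of $F^m$ for several indices $a$, and the correct bookkeeping is the one you defer to Properties~\ref{staPROP2}--\ref{staPROP4}: one counts fixed submaps (equivalently, conjugates within $\Per_{\omega(g)}$), and Property~\ref{staPROP4} guarantees exactly $p(g)$ of them per class, which is precisely what makes the grouped coefficient $p(g)/\omega(g)=1/m(g)$ come out right; stating the intermediate identity as a sum over fixed submaps rather than over elements of $\Per_m$ avoids the imprecision. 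Second, the passage from $\tr{\chi(g^{-1})}$ to $\tr{\chi(h_{\widehat\gamma})^{m}}$ needs the observation that $[h]\mapsto[h^{-1}]$ is a bijection of $[\Gamma]_{\mathrm{h}}$ preserving displacement length, word length and primitivity (so that Properties~\ref{staPROP3}--\ref{staPROP4} apply uniformly after relabelling); you invoke this relabelling, and it is exactly how the reference resolves the inversion, but it should be justified rather than treated as cosmetic. With these caveats the proposal matches the proof in~\cite{FP_NECM} in all essentials.
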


We reduced the statement of Theorem~\ref{mainthm:F_P_NECM} to match our needs here. 
The result in~\cite{FP_NECM} (see Theorem~4.2 ibid.) contains further 
information about the rank of the operators~$\TO{s,\chi}$ as well as the order of the poles of $Z_{\Orbi,\chi}$.
In particular, explicit values for~$\delta$ and $d$ are given. 
However, all of these additional informations are not needed for our purposes here. 

Combining Theorem~\ref{mainthm:F_P_NECM} with the main result of this paper, 
i.e., with Theorem~\ref{mainthm},  yields a representation of the Selberg zeta 
function in terms of Fredholm determinants of transfer operators for each 
Fuchsian group that admits the construction of a set of branches.

\subsection{Ramification of branches}\label{SUBSEC:branchram}

Let $\BrS\coloneqq\defset{\Cs{j}}{j\in A}$ be a set of branches for the geodesic 
flow on~$\Orbi$ with $A=\{1,\dots,N\}$ and $N\in\N$ and let $\Trans{}{.}{.}$ 
denote the forward transition sets. As Example~\ref{EX:branchramification} 
shows, it is possible that transition sets are infinite. However, for our 
constructions of strict transfer operator approaches further below we will 
suppose that all transition sets are finite. 

The purpose of this section is twofold. We first present a simple-to-check  
criterium that allows us to distinguish sets of branches with infinite 
transition sets from those for which all transition sets are finite. Then we provide an 
algorithm that turns each set of branches with infinite transition sets into one 
with only finite transition sets by adding a limited number of specific 
branches.

\begin{defi}\label{DEF:branchramification}
For $j\in A$ we define its \emph{ramification number}
\index[defs]{ramification!number}%
\index[symbols]{ram@$\ram{j}$}%
by
\[
\ram{j}\coloneqq\sum_{k\in A}\#\Trans{}{j}{k}
\]
and the \emph{ramification in} $\BrS=\defset{\Cs{j}}{j\in A}$
\index[defs]{ramification}%
\index[symbols]{ram@$\Ram{\BrS}$}%
by
\[
\Ram{\BrS}\coloneqq\sup_{j\in A}\ram{j}\,.
\]
A set of branches $\BrS$ is called \emph{infinitely 
ramified} if $\Ram{\BrS}=+\infty$, and \emph{finitely ramified} 
otherwise.
\index[defs]{ramified!finitely}%
\index[defs]{ramified!infinitely}%
If we need to emphasize the choice of the Fuchsian group~$\Gamma$ for 
the ramification (as, e.g., in Example~\ref{EX:branchramification}), then we 
write~$\Ram{\BrS,\Gamma}$ for~$\Ram{\BrS}$.
\end{defi}

Starting on the branch $\Cs j$ the number $\ram{j}$ encodes the number of 
distinct directions in which one can travel with regard to the next intersection 
branches.

With the following example we illustrate that the ramification heavily depends 
on the combination of Fuchsian group \emph{and} set of branches. We provide two 
Fuchsian groups that admit the same set of branches, but the ramification is 
finite only for one of them. We also show that a different choice of set of 
branches may yield a finite ramification.

\begin{example}\label{EX:branchramification}
We consider the modular group 
\[
 \Gamma_1 \coloneqq \PSL_2(\Z)
\]
and the projective Hecke congruence group of level~$2$
\begin{align*}
 \Gamma_2 &\coloneqq \PGamma_0(2) = \defset{ \bmat{a}{b}{c}{d} \in \PSL_2(\Z) }{ 
c \equiv 0 \mod 2}\,.
\end{align*}
These groups are well-known to be discrete and geometrically finite. Fundamental 
domains for them are indicated in Figure~\ref{FIG:branchram:fund}. We fix the 
elements
\begin{eqnarray*}
s_1\coloneqq\begin{bmatrix}0&1\\-1&0\end{bmatrix},&s_2\coloneqq\begin{bmatrix}
1&-1\\2&-1\end{bmatrix}\quad\text{ 
and}\quad &t\coloneqq\begin{bmatrix}1&1\\0&1\end{bmatrix}
\end{eqnarray*}
of~$\PSLR$. Then 
\begin{align*}
\Gamma_1 = \langle s_1,t\rangle \qquad\text{and}\qquad 
\Gamma_2 = \langle s_2,t\rangle\,.
\end{align*}

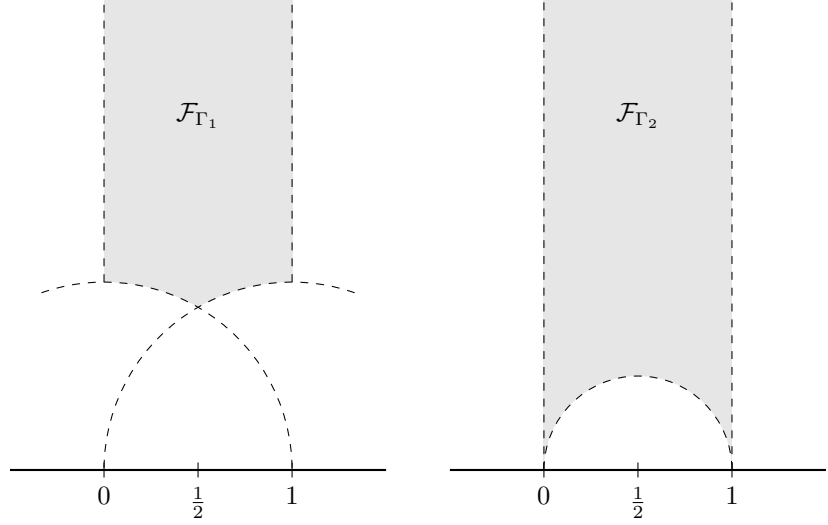
\begin{figure}
\begin{tikzpicture}[scale=2.5]
\draw[color=white] (-.5,0) -- (-.5,2.5);
\fill[color=lightgray!40] (0,0) -- (1,0) -- (1,2.5) -- (0,2.5) -- cycle;
\draw[dashed] (0,1) -- (0,2.5);
\draw[dashed] (1,1) -- (1,2.5);
\fill[color=white]  (0,0) -- +(90:1) arc (90:0:1) -- cycle;
\fill[color=white]  (1,0) -- +(180:1) arc (180:90:1) -- cycle;
\draw[style=thick,color=white] (0,0) -- (0,1);
\draw[style=thick,color=white] (1,0) -- (1,1);
\draw[dashed] (0,0) -- +(0:1) arc (0:110:1);
\draw[dashed] (1,0) -- +(180:1) arc (180:70:1);
 \foreach \x/\y in {0/$0$,.5/$\tfrac{1}{2}$,1/$1$}
    \draw (\x,0.04) -- (\x,-0.04) node [below] {\y};
\draw[style=thick] (-.5,0) -- (1.5,0);
\coordinate [label=below:$\mathcal{F}_{\Gamma_1}$] (F) at (.5,2);
\end{tikzpicture}
\qquad
\begin{tikzpicture}[scale=2.5]
\draw[color=white] (-.5,0) -- (-.5,2.5);
\fill[color=lightgray!40] (0,0) -- (1,0) -- (1,2.5) -- (0,2.5) -- cycle;
\draw[dashed] (0,0) -- (0,2.5);
\draw[dashed] (1,0) -- (1,2.5);
\fill[color=white]  (.5,0) -- +(0:.5) arc (0:180:.5) -- cycle;
\draw[dashed] (.5,0) -- +(0:.5) arc (0:180:.5);
 \foreach \x/\y in {0/$0$,.5/$\tfrac{1}{2}$,1/$1$}
    \draw (\x,0.04) -- (\x,-0.04) node [below] {\y};
\draw[style=thick] (-.5,0) -- (1.5,0);
\coordinate [label=below:$\mathcal{F}_{\Gamma_2}$] (F) at (.5,2);
\end{tikzpicture}
\caption[branchramification]{Examples of fundamental domains for $\Gamma_1$ and 
$\Gamma_2$, respectively.}\label{FIG:branchram:fund}
\end{figure}

\begin{enumerate}[label=$\mathrm{(\roman*)}$, ref=$\mathrm{\roman*}$]
\item We let $\gamma\coloneqq(0,1)_{\H}$ be the geodesic segment from~$0$ to~$1$ 
and define $\Cs{1}\subseteq\UTB\H$ to be the set of all vectors based 
on~$\gamma$ and pointing into the half-space to the right of it. Thus, 
\[
I_1=\defset{\gamma_\nu(+\infty)}{\nu\in\Cs{1}}=(0,1)
\]
and 
\[
J_1=\defset{\gamma_\nu(-\infty)}{\nu\in\Cs{1}}=(-\infty,0)\cup(1,+\infty)
\]
(see also Figures~\ref{FIG:branchram:Gamma1} and~\ref{FIG:branchram:Gamma2}).
\begin{figure}
\begin{tikzpicture}[scale=8]
\fill[color=lightgray] (.5,0) -- +(0:.5) arc (0:180:.5);
\fill[color=white] (.5,0) -- +(0:.47) arc (0:180:.47);
\draw (.5,0) -- +(0:.5) arc (0:180:.5);
\foreach \x in {.25,.75}{
	\fill[color=lightgray!40] (\x,0) -- +(0:.25) arc (0:180:.25);
	\fill[color=white] (\x,0) -- +(0:.22) arc (0:180:.22);
	\draw (\x,0) -- +(0:.25) arc (0:180:.25);
	}
 \foreach \x/\y in {0/$0$,.5/$\tfrac{1}{2}$,1/$1$}
    \draw (\x,0.02) -- (\x,-0.02) node [below] {\y};
\draw[style=thick] (-.1,0) -- (1.1,0);
\coordinate [label=below:$\Cs{1}$] (C) at (.5,.47);
\coordinate [label=below:$s_1t^{-1}s_1\act\Cs{1}$] (ST1SC) at (.25,.22);
\coordinate [label=below:$s_1t^{-2}\act\Cs{1}$] (ST2C) at (.75,.22);
\end{tikzpicture}
\caption[branchramGamma1]{The set of branches $\{\Cs{1}\}$ for $\Gamma_1$ and 
its 
successors.}\label{FIG:branchram:Gamma1}
\end{figure}
\begin{figure}
\begin{tikzpicture}[scale=8]
\fill[color=lightgray] (.5,0) -- +(0:.5) arc (0:180:.5);
\fill[color=white] (.5,0) -- +(0:.47) arc (0:180:.47);
\draw (.5,0) -- +(0:.5) arc (0:180:.5);
\foreach \x in {1,2,3,4,5,6,7,8}{
	\pgfmathsetmacro{\Rad}{1/(8*\x*\x - 2)}%
	\pgfmathsetmacro{\Mit}{(4*\x*\x - 2*\x - 1)/(8*\x*\x - 2)}%
	\pgfmathsetmacro{\Mint}{(4*\x*\x + 2*\x - 1)/(8*\x*\x - 2)}%
	\fill[color=lightgray!40] (\Mit,0) -- +(0:\Rad) arc (0:180:\Rad);
	\fill[color=white] (\Mit,0) -- +(0:\Rad*.8) arc (0:180:\Rad*.8);
	\draw (\Mit,0) -- +(0:\Rad) arc (0:180:\Rad);
	\fill[color=lightgray!40] (\Mint,0) -- +(0:\Rad) arc (0:180:\Rad);
	\fill[color=white] (\Mint,0) -- +(0:\Rad*.8) arc (0:180:\Rad*.8);
	\draw (\Mint,0) -- +(0:\Rad) arc (0:180:\Rad);
	}
 \foreach \x/\y in {0/$0$,.5/$\tfrac{1}{2}$,1/$1$}
    \draw (\x,0.02) -- (\x,-0.02) node [below] {\y};
\draw[style=thick] (-.1,0) -- (1.1,0);
\coordinate [label=below:$\Cs{1}$] (C) at (.5,.47);
\draw (.16666666,-0.02) -- (.16666666,.14666666);
\coordinate [label=below:$s_2t\act\Cs{1}$] (STC) at (.16666666,-0.02);
\draw (.36666666,-0.02) -- (.36666666,.03);
\coordinate [label=below:$s_2t^2\act\Cs{1}$] (ST2C) at (.36666666,-0.02);
\node at (.45,-0.055)[circle,fill,inner sep=.5pt]{};
\node at (.46,-0.055)[circle,fill,inner sep=.5pt]{};
\node at (.47,-0.055)[circle,fill,inner sep=.5pt]{};
\draw (1-.16666666,-0.02) -- (1-.16666666,.14666666);
\coordinate [label=below:$s_2t^{-1}\act\Cs{1}$] (STinvC) at (1-.16666666,-0.02);
\draw (1-.36666666,-0.02) -- (1-.36666666,.03);
\coordinate [label=below:$s_2t^{-2}\act\Cs{1}$] (STinv2C) at (1-.355,-0.02);
\node at (1-.45,-0.055)[circle,fill,inner sep=.5pt]{};
\node at (1-.46,-0.055)[circle,fill,inner sep=.5pt]{};
\node at (1-.47,-0.055)[circle,fill,inner sep=.5pt]{};
\end{tikzpicture}
\caption[branchramGamma2]{The set of branches $\{\Cs{1}\}$ for $\Gamma_2$ and 
its 
successors.}\label{FIG:branchram:Gamma2}
\end{figure}
Then $\{\Cs{1}\}$ is a set of branches for both groups, $\Gamma_1$ and 
$\Gamma_2$. However, 
\begin{eqnarray*}
\Ram{\{\Cs{1}\},\Gamma_1}=2,&\text{while}&\Ram{\{\Cs{1}\},\Gamma_2}=+\infty\,,
\end{eqnarray*}
as indicated in Figures~\ref{FIG:branchram:Gamma1} 
and~\ref{FIG:branchram:Gamma2}.
\item We now let $\eta \coloneqq (0,1/2)_\H$ be the geodesic segment from~$0$ 
to~$1/2$, and let $\Cs{2}$ and~$\Cs{3}$ be the set of unit tangent vectors based 
on~$\eta$ that point into the half-space to the right or left of~$\eta$, 
respectively. Then $\BrS = \{ \Cs{1}, \Cs{2},\Cs{3}\}$ is a set of branches 
for~$\Gamma_2$, and $\Ram{\BrS,\Gamma_2} = 2$ (see 
Figure~\ref{FIG:branchram2}). 

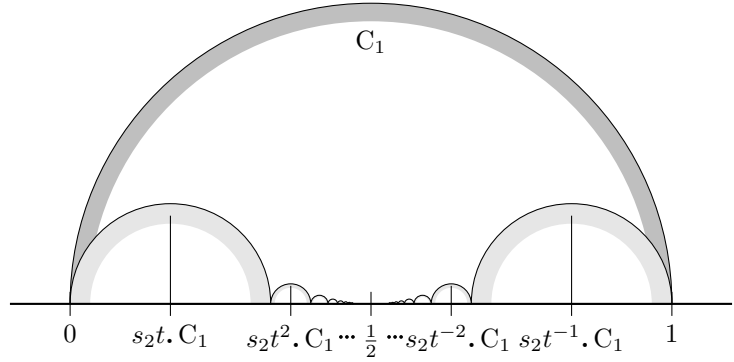
\begin{figure}
\begin{tikzpicture}[scale=8]
\fill[color=lightgray] (.5,0) -- +(0:.5) arc (0:180:.5);
\fill[color=white] (.5,0) -- +(0:.47) arc (0:180:.47);
\fill[color=lightgray!80] (.25,0) -- +(0:.28) arc (0:180:.28);
\fill[color=lightgray!80] (.25,0) -- +(0:.25) arc (0:180:.25);
\fill[color=white] (.25,0) -- +(0:.22) arc (0:180:.22);
\draw (.25,0) -- +(0:.25) arc (0:180:.25);
\foreach \x in {.75}{
	\fill[color=lightgray!40] (\x,0) -- +(0:.25) arc (0:180:.25);
	\fill[color=white] (\x,0) -- +(0:.22) arc (0:180:.22);
	\draw (\x,0) -- +(0:.25) arc (0:180:.25);
	}
\foreach \x in {.1666666666, .4166666666}{
	\fill[color=lightgray!40] (\x,0) -- +(0:{abs(\x - 1/3)}) arc (0:180:{abs(\x 
- 1/3)});
	\fill[color=white] (\x,0) -- +(0:{abs(\x - 1/3)-.025}) arc (0:180:{abs(\x - 
1/3)-.025});
	\draw (\x,0) -- +(0:{abs(\x - 1/3)}) arc (0:180:{abs(\x - 1/3)});
	\draw (\x,-.07) -- (\x,{abs(\x - 1/3)-.015});
}
\fill[color=lightgray!40] (1,0) -- (0:1.03) arc (0:180:.53) -- (0,0) arc 
(180:0:.5);
\draw (.5,0) -- +(0:.5) arc (0:180:.5);
 \foreach \x/\y in {0/$0$,.3333333333/$\tfrac{1}{3}$,.5/$\tfrac{1}{2}$,1/$1$}
    \draw (\x,0) -- (\x,-0.02) node [below] {\y};
\draw[style=thick] (-.1,0) -- (1.1,0);
\coordinate [label=below:$\Cs{1}$] (C1) at (.5,.47);
\coordinate [label=above:$s_2\act\Cs{1}$] (SC1) at (.5,.53);
\coordinate [label=below:$\Cs{2}$] (C2) at (.25,.23);
\coordinate [label=above:$\Cs{3}$] (C3) at (.25,.27);
\coordinate [label=below:$s_2t^{-1}s_2\act\Cs{3}$] (ST1SC3) at (.75,.22);
\coordinate [label=below:$s_2t\act\Cs{1}$] (STC1) at (.16666666,-.07);
\coordinate [label=below:$s_2ts_2\act\Cs{2}$] (STSC2) at (.416666666,-.07);
\end{tikzpicture}
\caption[branchram2]{The set of branches $\BrS=\{\Cs{1}, \Cs{2}, \Cs{3}\}$ for 
$\Gamma_2$ and its 
successors.}\label{FIG:branchram2}
\end{figure}

\end{enumerate}
\end{example}

The following result shows that ramification is invariant under the action 
of~$\Gamma$.

\begin{lemma}\label{LEM:ramtrans}
Let $\defset{p_j}{j\in A}\subseteq\Gamma$. Then the ramification of the set of 
branches $\BrS'\coloneqq\defset{p_j\act\Cs{j}}{j\in A}$ equals the one 
of~$\BrS$, thus
\begin{align*}
\Ram{\BrS'}=\Ram{\BrS}\,.
\end{align*}
\end{lemma}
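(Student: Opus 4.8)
The plan is to reduce the claim to the assertion that for every pair $(j,k)\in A\times A$ the forward transition set of~$\BrS$ from the $j$-th to the $k$-th branch and the corresponding transition set of~$\BrS'$ have the same cardinality; summing over~$k$ and taking the supremum over~$j$ then yields $\Ram{\BrS'}=\Ram{\BrS}$ directly from Definition~\ref{DEF:branchramification}, in both the finite and the infinite case. First I would invoke Lemma~\ref{LEM:nouniqueSoB} to know that $\BrS'=\defset{p_j\act\Cs{j}}{j\in A}$ is again a set of branches for the geodesic flow on~$\Orbi$, so that by Proposition~\ref{PROP:CofSoBst} it carries a \emph{unique} family of forward transition sets, which I denote $(\Trans{}{j}{k}')_{j,k\in A}$. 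I write $\Cs{j}'\coloneqq p_j\act\Cs{j}$, let $\BrU'\coloneqq\bigcup_{j\in A}\Cs{j}'$ be its branch union, and record that $\Cs{j,\st}'=p_j\act\Cs{j,\st}$, since $\wh\R_{\st}$ is $\Gamma$-invariant and $\gamma_{p_j\act\nu}(\pm\infty)=p_j\act\gamma_\nu(\pm\infty)$.

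The key observation is that $\Gamma\act\BrU'=\Gamma\act\BrU$, because each $p_j$ lies in~$\Gamma$ and hence $\Gamma p_j=\Gamma$. Consequently, for $\nu\in\Cs{j,\st}$ and its translate $\nu'\coloneqq p_j\act\nu\in\Cs{j,\st}'$ we have $\gamma_{\nu'}=p_j\act\gamma_\nu$, so $\retime{\BrU'}(\nu')=\retime{\BrU}(\nu)$ and
\[
\gamma_{\nu'}^{\prime}\bigl(\retime{\BrU'}(\nu')\bigr)=p_j\act\gamma_\nu^{\prime}\bigl(\retime{\BrU}(\nu)\bigr)\,.
\]
By the characterization in Proposition~\ref{PROP:CofSoBst}\eqref{CofSoB:representsets} applied to~$\BrS$, an element $g\in\Gamma$ lies in $\Trans{}{j}{k}$ if and only if there is $\nu\in\Cs{j,\st}$ with $\gamma_\nu^{\prime}(\retime{\BrU}(\nu))\in g\act\Cs{k}$; by the displayed identity this is equivalent to $\gamma_{\nu'}^{\prime}(\retime{\BrU'}(\nu'))\in p_jg\act\Cs{k}=(p_jgp_k^{-1})\act\Cs{k}'$, which in turn, by Proposition~\ref{PROP:CofSoBst}\eqref{CofSoB:representsets} applied now to~$\BrS'$, is equivalent to $p_jgp_k^{-1}\in\Trans{}{j}{k}'$. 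Running this chain of equivalences in both directions shows that $g\mapsto p_jgp_k^{-1}$ is a well-defined bijection $\Trans{}{j}{k}\to\Trans{}{j}{k}'$, with inverse $g'\mapsto p_j^{-1}g'p_k$; hence $\#\Trans{}{j}{k}'=\#\Trans{}{j}{k}$ for all $j,k\in A$.

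Finally I would assemble the pieces: the ramification number of~$j$ computed for~$\BrS'$ equals $\sum_{k\in A}\#\Trans{}{j}{k}'=\sum_{k\in A}\#\Trans{}{j}{k}=\ram{j}$, and taking the supremum over $j\in A$ gives $\Ram{\BrS'}=\Ram{\BrS}$. I do not expect a serious obstacle here; the only points requiring a little care are to use that the next intersection time $\retime{\BrU}$ is taken with respect to the full $\Gamma$-orbit $\Gamma\act\BrU$ (which is exactly what makes it invariant under the translations by the $p_j$), and to exploit the \emph{uniqueness} of the transition sets from Proposition~\ref{PROP:CofSoBst}, so that the bijection above genuinely computes $\Trans{}{j}{k}'$ rather than just some admissible choice of transition sets for~$\BrS'$.
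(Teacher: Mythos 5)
Your proof is correct and takes essentially the same route as the paper: both arguments reduce the claim to the identity $\Trans{}{j}{k}'=p_j\Trans{}{j}{k}p_k^{-1}$ for all $j,k\in A$ and then read off equality of the ramification numbers. The only difference is cosmetic — the paper obtains this identity by applying $p_j$ to the decomposition in~(\ref{BP:intervaldecomp}\ref{BP:intervaldecompGdecomp}) and using uniqueness of the transition sets, while you transport the first-return characterization of Proposition~\ref{PROP:CofSoBst}\eqref{CofSoB:representsets}, both resting on the same $\Gamma$-equivariance.
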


\begin{proof}
We recall from Lemma~\ref{LEM:nouniqueSoB} that $\BrS'$ is indeed a set of 
branches and denote all objects related to~$\BrS'$ by the same letter as 
for~$\BrS$ but with an additional prime (${ }'$). Thus, $\Iset{j}' = p_j\act 
\Iset{j}$ for all $j\in A$, etc. 
Then~(\ref{BP:intervaldecomp}\ref{BP:intervaldecompGdecomp}) yields
\[
p_j\act\Iset{j}=p_j\act\Bigl(\bigcup_{k\in 
A}\bigcup_{g\in\Trans{}{j}{k}}g\act\Iset{k}\Bigr)=\bigcup_{k\in 
A}\bigcup_{g\in\Trans{}{j}{k}}p_jgp_k^{-1}\act\Iset{k}'
\]
for all~$j,k\in A$, and hence $\Trans{}{j}{k}'=p_j\Trans{}{j}{k}p_k^{-1}$. 
Therefore the ramification number of~$j$ with respect to~$\BrS$ is the same as 
with respect to~$\BrS'$.
\end{proof}

In what follows, we determine the geometric structure of finitely and 
infinitely ramified sets of branches and find that the cusps of~$\Orbi$ play a 
central role. It is therefore convenient to first study the case that~$\Orbi$ 
has no cusps.

\begin{lemma}\label{LEM:nocuspnoram}
Let $\Gamma$ be a geometrically finite Fuchsian group without parabolic 
elements. Then every set of branches for the geodesic flow on $\Orbi$ is 
finitely ramified. 
\end{lemma}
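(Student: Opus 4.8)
The plan is to argue by contradiction. Suppose $\BrS$ is infinitely ramified. Since $A$ is finite there are $j,k\in A$ with $\#\Trans{}{j}{k}=+\infty$, and I fix pairwise distinct $g_1,g_2,\dots\in\Trans{}{j}{k}$. By (\ref{BP:intervaldecomp}\ref{BP:intervaldecompGdecomp}) and Lemma~\ref{LEM:halfspaceincludeGV}\eqref{halfspaceincludeGV+}, the sets $g_n\act I_k$ are pairwise disjoint proper open subintervals of $I_j$, so their lengths (for any fixed finite metric on $\wh\R$) are summable and hence tend to $0$; after passing to a subsequence I may assume $\overline{g_n\act I_k}$ shrinks to a single point $\xi\in\wh\R$, so $g_n\act\eX_k\to\xi$ and $g_n\act\eY_k\to\xi$. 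Picking $x_n\in g_n\act\Iset{k}\subseteq\Iset{j}$ (possible since $\Iset{k}\ne\varnothing$ by Remark~\ref{REM:inst}\eqref{insti}) gives $x_n\to\xi$ with $x_n\in\wh\R_{\st}\subseteq\Lambda(\Gamma)$; as $\Lambda(\Gamma)$ is closed and, crucially, $\wh\R_{\st}=\Lambda(\Gamma)$ because $\Gamma$ has no parabolic elements (see \eqref{eq:def_st}), we obtain $\xi\in\wh\R_{\st}$. In particular $\xi\notin\{\eX_j,\eY_j\}$ by \eqref{BP:completegeodesics}, and since $\xi\in\overline{I_j}$ this forces $\xi\in I_j$, i.e.\ $\xi\in\Iset{j}$. (If $\Gamma$ had parabolic elements, $\xi$ could be a parabolic fixed point and the argument would — fittingly — break down, since infinite ramification is the footprint of geodesics winding into a cusp.)

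Next I fix $y_0\in\Jset{j}$, nonempty by Remark~\ref{REM:inst}\eqref{insti}. For each $n$, \eqref{BP:allvectors} yields a unique $\nu_n\in\Cs{j,\st}$ with $(\gamma_{\nu_n}(+\infty),\gamma_{\nu_n}(-\infty))=(x_n,y_0)$, and by Corollary~\ref{COR:uniquereturn}\eqref{uniquereturn+} we have $\gamma_{\nu_n}'(\retime{\BrU}(\nu_n))\in g_n\act\Cs{k}$, so $\gamma_{\nu_n}(\retime{\BrU}(\nu_n))$ lies on $g_n\act\overline{\base{\Cs{k}}}$. The latter is a complete geodesic segment both of whose feet tend to $\xi\in\partial_\geo\H$, hence it leaves every compact subset of $\H$ as $n\to\infty$. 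On the other hand, parametrising so that $\gamma_{\nu_n}(0)\in\overline{\base{\Cs{j}}}$, the vectors $\nu_n$ converge to the vector $\nu_\infty\in\Cs{j,\st}$ attached by \eqref{BP:allvectors} to $(\xi,y_0)\in\Iset{j}\times\Jset{j}$; thus $\gamma_{\nu_n}\to\gamma_\infty:=\gamma_{\nu_\infty}$ uniformly with derivatives on compacta, and $\gamma_{\nu_n}(0)\to\gamma_\infty(0)\in\H$. Consequently, if the first return times $\retime{\BrU}(\nu_n)$ stayed bounded, the point $\gamma_{\nu_n}(\retime{\BrU}(\nu_n))$ would remain in a fixed compact subset of $\H$, contradicting that it escapes to $\partial_\geo\H$. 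So it only remains to bound these return times.

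The main obstacle is exactly this boundedness. To establish it, note that $\tau_\infty:=\retime{\BrU}(\nu_\infty)\in(0,\infty)$ exists by Proposition~\ref{PROP:CofSoBst}\eqref{CofSoB:firstreturn}, with $\gamma_\infty'(\tau_\infty)\in h\act\Cs{\ell}$ for some $\ell\in A$ and $h\in\Trans{}{j}{\ell}$; since $\xi,y_0\in\wh\R_{\st}$ and $\wh\R_{\st}$ is $\Gamma$-invariant, in fact $\gamma_\infty'(\tau_\infty)\in h\act\Cs{\ell,\st}$, so $h^{-1}\act\xi\in\Iset{\ell}$ and $h^{-1}\act y_0\in\Jset{\ell}$. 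As $\gamma_\infty$ crosses $h\act\overline{\base{\Cs{\ell}}}$ transversally at time $\tau_\infty$, for all large $n$ the nearby geodesic $\gamma_{\nu_n}$ crosses it too, at some time $\tau_n\to\tau_\infty$ and pointing into $h\act\Plussp{\ell}$. Using that $I_\ell$ and $J_\ell$ are open, that $h^{-1}\act x_n\to h^{-1}\act\xi\in\Iset{\ell}$, and that $\wh\R_{\st}$ is $\Gamma$-invariant, the endpoints of $h^{-1}\act\gamma_{\nu_n}$ lie in $\Iset{\ell}\times\Jset{\ell}$ for large $n$, so the uniqueness clause of \eqref{BP:allvectors} forces $\gamma_{\nu_n}'(\tau_n)\in h\act\Cs{\ell,\st}\subseteq\Gamma\act\BrU$. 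Hence $\retime{\BrU}(\nu_n)\le\tau_n\le\tau_\infty+1$ for all large $n$, completing the contradiction and proving $\Ram{\BrS}<+\infty$. The only genuinely delicate bookkeeping throughout is verifying that the relevant crossing vectors lie in \emph{strong} branches, and this is precisely where the equality $\wh\R_{\st}=\Lambda(\Gamma)$ — hence the absence of parabolic elements — is indispensable.
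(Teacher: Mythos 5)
Your proposal is correct, and its first half coincides with the paper's argument: assume $\#\Trans{}{j}{k}=+\infty$, use the disjointness in~(\ref{BP:intervaldecomp}\ref{BP:intervaldecompGdecomp}) to extract translates $g_n\act I_k$ whose endpoints accumulate at a single point $\xi$, and use the absence of parabolic elements (so that $\Lambda(\Gamma)=\wh\R_{\st}$ is closed) together with~\eqref{BP:completegeodesics} to conclude $\xi\in \Iset{j}$. Where you diverge is the final contradiction. The paper stays entirely on the boundary: since $\xi\in\Iset{j}$, property~(\ref{BP:intervaldecomp}\ref{BP:intervaldecompGdecomp}) places $\xi$ in some \emph{open} interval $h\act I_\ell$ of the decomposition, which must then swallow both endpoints of some $g_n\act I_k$, contradicting disjointness in two lines. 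You instead run a dynamical argument: via~\eqref{BP:allvectors} and Corollary~\ref{COR:uniquereturn} you put the first-return points of the geodesics $\gamma_{\nu_n}$ on the segments $g_n\act\overline{\base{\Cs{k}}}$, which escape every compact set because both their endpoints tend to $\xi$ (this escape claim does hold also when $\xi=\infty$, with endpoints of either equal or opposite signs, which is worth a sentence), and you contradict this with a uniform bound on $\retime{\BrU}(\nu_n)$ obtained from convergence $\nu_n\to\nu_\infty$ and a transversal-crossing/continuity argument. All the continuity facts you invoke (continuous dependence of geodesics and crossing times on endpoints, $C^1$-convergence on compacta) are routine and your bookkeeping of which vectors lie in \emph{strong} branches is correct, so the argument is sound; it is, however, noticeably longer and leans on the first-return machinery (Proposition~\ref{PROP:CofSoBst}, Corollary~\ref{COR:uniquereturn}) where the paper only needs the interval decomposition itself. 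In exchange, your route makes the geometric mechanism visible — infinitely many successors force return points to drift off to the boundary while return times stay bounded — which is a useful heuristic, and it is the same compactness-versus-escape tension that elsewhere in the paper is packaged as local finiteness (Proposition~\ref{PROP:branches_locfinite}).
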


\begin{proof}
Let $\BrS = \{\Cs{1},\dots,\Cs{N}\}$ be a set of branches for the geodesic flow 
on~$\Orbi$ and adopt the standard notation from the beginning of this section. 
In order to seek a contradiction, we assume that $\Ram{\BrS}=+\infty$. Then we 
find and fix $j,k\in A$ such that $\#\Trans{}{j}{k} = +\infty$. 

In what follows, we will take advantage of the euclidean structure and the 
standard ordering of~$\R$ to simplify the argumentation. To that end, we may 
suppose without loss of generality that the interval~$I_j$ is contained in~$\R$ 
and bounded (if necessary, we conjugate~$\Gamma$ and the set of branches by a 
suitable element~$g\in\PSL(2,\R)$). 

From~\eqref{BP:intervaldecomp} we obtain that the open, nonempty intervals 
\begin{equation}\label{eq:intervalsdisjoint}
 g\act I_k\quad (g\in\Trans{}{j}{k})
\end{equation}
are pairwise disjoint and all contained in~$I_j$. In particular, for all 
$g\in\Trans{}{j}{k}$ the two boundary points $g\act\eX_k$, $g\act\eY_k$ of~$g\act I_k$ 
are contained in~$\overline{I_j}$. These properties, together with the 
boundedness of~$I_j$, allow us to find a strictly increasing, convergent 
sequence in $\defset{ g\act\eX_k }{ g\in\Trans{}{j}{k} }$, say 
$(g_n\act\eX_k)_{n\in\N}$ with 
\[
 \lim_{n\to+\infty} g_n\act\eX_k \eqqcolon a\,.
\]
We note that $a\in\overline{I_j}$. Further, the disjointness and convexity of 
the intervals in~\eqref{eq:intervalsdisjoint} and the strict monotony of the 
sequence~$(g_n\act\eX_k)_{n\in\N}$ implies that, for each index~$n$, the 
point~$g_n\act\eY_k$ is contained in the interval~$(g_n\act\eX_k, 
g_{n+1}\act\eX_k)$. Therefore 
\[
 \lim_{n\to+\infty} g_n\act\eY_k = a\,.
\]
We fix a point on the geodesic segment~$\overline{\base{\Cs{k}}}$, say 
$z_0\in\H$. Then $(g_n\act z_0)_{n\in\N}$ is a sequence in~$\H$ with 
\[
 \lim_{n\to+\infty} g_n\act z_0 = a\,.
\]
Thus, $a\in\Lambda(\Gamma)$. Since $\Gamma$ contains no parabolic elements, we 
have $\Lambda(\Gamma) = \wh\R_{\st}$ (see \eqref{eq:def_st}) and hence 
\[
 a\in \wh\R_{\st} \cap \overline{I_j} = \Iset{j}\,.
\]
By~\eqref{BP:intervaldecomp} we find a (unique) pair~$(\ell,h)\in 
A\times\Trans{}{j}{\ell}$ such that 
\[
 a\in h\act \Iset{\ell} \subseteq h\act I_\ell\,.
\]
Since $h\act I_\ell$ is open and $a$ is the limit of the sequences from above, 
we find $n\in\N$ such that $g_n\act\eX_k, g_n\act\eY_k\in h\act I_\ell$. Thus,
\[
 g_n\act I_k \cap h\act I_\ell\not=\varnothing\,.
\]
This contradicts the disjointness of the unions in~\eqref{BP:intervaldecomp}. In 
turn, $\BrS$ is finitely ramified.
\end{proof}

We now consider the case where~$\Orbi$ is allowed to have cusps.
In Proposition~\ref{PROP:Attandram} we will see that the ramification with 
respect to a given set of branches~$\BrS$ depends on how thoroughly~$\BrS$ 
accounts for the cusps of~$\Orbi$.
To make this statement rigorous we require the following notion regarding the 
local structure of sets of branches in the vicinity of cusps.

\begin{defi}\label{DEF:attachedcusp}
Let $\wh c$ be a cusp of $\Orbi$, let $c\in\wh\R$ be a representative of~$\wh c$, and let
\[
\Att{\BrS}{c}\coloneqq\defset{(j,h)\in 
A\times\Gamma}{c \in h\act\geo\overline{\base{\Cs{j}}}}\,.
\]
\index[symbols]{Att@$\Att{\BrS}{c}$}%
\index[defs]{set of branches!attached to}%
\index[defs]{attached to}%
We say that $\wh c$ is \emph{attached to the set of branches 
}~$\BrS$ if the interval
\[
\IAtt{\BrS}{c}\coloneqq\overline{\bigcup_{(j,h)\in\Att{\BrS}{c}} 
h\act I_j}
\]
is a neighborhood of~$c$ in~$\wh\R$.
\end{defi}

Obviously, the definition of~$\nbAtt{\BrS}$ is independent of the choice of the 
representative~$c$ of~$\wh c$ and the notion of attachedness is well-defined. If 
$\wh c$ is attached to~$\BrS$, then $\#\Att{\BrS}{c}\geq 2$ because for 
each~$j\in A$, the set~$\geo\overline{\base{\Cs{j}}}$ consists of exactly the 
two boundary points of the interval~$I_j$.

The following result is a technical observation that comes in handy for the 
remaining proofs of this section.

\begin{lemma}\label{LEM:maxheight}
Suppose that the hyperbolic orbisurface~$\Orbi$ has cusps and that one of them, 
say~$\widehat{c}$, is represented by~$\infty$ and let $\lambda$ be the cusp 
width of~$\widehat{c}$. Suppose further that the set of 
branches~$\BrS=\defset{\Cs{j}}{j\in A}$ 
satisfies~$\Att{\BrS}{\infty}=\varnothing$. Then, for each~$j\in A$, the maximum
\[
\mathscr{h}_{\widehat{c}}(j)\coloneqq\max\defset{\Ima{z}}{
z\in\Gamma\act\overline{\base{\Cs{j}}}}
\]
exists and is bounded from above by $\lambda/2$.
\index[symbols]{hc@$\mathscr{h}_{\widehat{c}}(j)$}%
\end{lemma}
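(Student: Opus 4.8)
The plan is to exploit the fact that, since $\widehat c$ is a cusp represented by $\infty$ with cusp width $\lambda$, the stabilizer $\Stab{\Gamma}{\infty}$ is generated by $t_\lambda = \textbmat{1}{\lambda}{0}{1}$, and that a standard choice of fundamental domain (or more concretely a Ford-type fundamental domain) near this cusp is bounded below by a horocycle, equivalently: no isometric circle of an element $g = \textbmat{a}{b}{c}{d} \in \Gamma$ with $c \neq 0$ has Euclidean radius exceeding... well, the relevant bound is on the imaginary part of points covered by $\Gamma$-translates of geodesic segments that do \emph{not} have $\infty$ as an endpoint. Concretely, for $g \in \Gamma \setminus \Stab{\Gamma}{\infty}$, writing $g = \textbmat{a}{b}{c}{d}$ with $c \neq 0$, one has $\Ima(g\act z) = \Ima(z)/|cz+d|^2$, and a classical fact for Fuchsian groups with a cusp at $\infty$ of width $\lambda$ is that $|c| \geq \sqrt{2/\lambda}$ for every such $g$ — more precisely, that $1/|c| \leq \lambda/2$ is exactly the statement that the isometric circles have radius $\leq \lambda/2$ (the Shimizu–Leutbecher lemma). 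I would invoke this: \emph{the Euclidean radius of any $g\act\overline{\base{\Cs{j}}}$ with $\infty \notin g\act\geo\overline{\base{\Cs{j}}}$ is at most $\lambda/2$.}

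First I would observe that, because $\Att{\BrS}{\infty} = \varnothing$, for every $j \in A$ and every $g \in \Gamma$ the point $\infty$ is not an endpoint of the complete geodesic segment $g\act\overline{\base{\Cs{j}}}$; hence $g\act\overline{\base{\Cs{j}}}$ is a Euclidean half-circle with both endpoints in $\R$, and its highest point has imaginary part equal to half the distance between its endpoints, i.e.\ its Euclidean radius. Second, I would reduce to controlling this radius: it suffices to show $\sup\{\text{radius of } g\act\overline{\base{\Cs{j}}} : g \in \Gamma\} \leq \lambda/2$ and that the supremum is attained. For the bound, fix a representative geodesic segment $\sigma_j = \overline{\base{\Cs{j}}}$; writing its endpoints as $x_j, y_j \in \R$ (finite, by \ref{BP:completegeodesics} together with $\Att{\BrS}{\infty}=\varnothing$), the translate $g\act\sigma_j$ has endpoints $g\act x_j, g\act y_j$. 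Decompose $g$ modulo $\Stab{\Gamma}{\infty}$: if $g \in \Stab{\Gamma}{\infty}$ then $g\act\sigma_j$ is a horizontal translate of $\sigma_j$, so its radius equals that of $\sigma_j$, which is $\leq \lambda/2$ provided $|x_j - y_j| \leq \lambda$ — and this last inequality I'd derive from the fact that $\sigma_j$ lies in (a $\Gamma$-translate of) a fundamental domain of width $\lambda$ for the cusp, or more robustly from the Shimizu–Leutbecher estimate applied after noting $\sigma_j$ itself is a translate of some isometric-circle-bounded piece. For $g \notin \Stab{\Gamma}{\infty}$, the Shimizu–Leutbecher lemma gives $1/|c(g)| \leq \lambda/2$, and a direct computation shows the radius of $g\act\sigma_j$ is $|x_j - y_j| / |c g x_j + d|\,|c g y_j + d|$ — bounded using $|cz+d| \geq$ (distance-type estimates) — the cleaner route is to use that $g\act\sigma_j$ is a geodesic disjoint from a fixed horoball $U_{\lambda/2}^{-1}$ about $\infty$ (Shimizu–Leutbecher again, in its horoball form), forcing its radius $\leq \lambda/2$.

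For existence of the maximum, I would invoke Proposition~\ref{PROP:branches_locfinite}: the family $\mf B = \{g\act\overline{\base{\Cs{j}}} : g \in \Gamma,\ j \in A\}$ is locally finite in $\H$, so on any compact set only finitely many members intersect it; combining local finiteness with the uniform radius bound $\leq \lambda/2$ (which confines the relevant translates to the strip $0 < \Ima z \leq \lambda/2$ up to horizontal translation, and horizontal translates all have the same height) shows that $\{\Ima z : z \in \Gamma\act\overline{\base{\Cs{j}}}\}$ has only finitely many distinct ``maximal heights'' occurring, whence the supremum is attained — so $\mathscr{h}_{\widehat c}(j)$ exists and $\mathscr{h}_{\widehat c}(j) \leq \lambda/2$. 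The main obstacle I anticipate is pinning down the precise form of the Shimizu–Leutbecher-type estimate that applies here: the hypothesis is stated only as $\Att{\BrS}{\infty} = \varnothing$, which is weaker than assuming the geodesic segments lie in a Siegel set, so I need the bound to come purely from the group-theoretic fact that $\Gamma$ is discrete with a parabolic fixing $\infty$ of translation length $\lambda$, together with the fact that each $g\act\overline{\base{\Cs{j}}}$ is a complete geodesic both of whose endpoints lie in $\R$ (hence it is disjoint from some horoball about $\infty$, and discreteness forces that horoball to be at least as large as $U_{\lambda/2}$). Making this last implication airtight — that a $\Gamma$-translate of a fixed finite-endpoint geodesic cannot penetrate the horoball $\defset{z}{\Ima z > \lambda/2}$ — is the crux, and it is exactly the content of the Shimizu–Leutbecher lemma for the cyclic parabolic subgroup $\langle t_\lambda \rangle$.
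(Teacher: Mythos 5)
There is a genuine gap in your argument for the bound $\lambda/2$, and it is precisely at the point you yourself flag as the crux. The Shimizu--Leutbecher lemma cannot deliver this bound: for a cusp of width~$\lambda$ at~$\infty$ it gives $|c|\geq 1/\lambda$ for every $\textbmat{a}{b}{c}{d}\in\Gamma\setminus\Stab{\Gamma}{\infty}$, i.e.\ isometric circles of radius at most~$\lambda$, not~$\lambda/2$ (your ``$|c|\geq\sqrt{2/\lambda}$, more precisely $1/|c|\leq\lambda/2$'' is both internally inconsistent and false in general; in $\PSL_2(\Z)$, with $\lambda=1$, the element $\textbmat{0}{-1}{1}{0}$ has isometric circle of radius $1$). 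Moreover, even the correct Shimizu bound says nothing about the branch geodesic $\sigma_j=\overline{\base{\Cs{j}}}$ itself: $\sigma_j$ is not an isometric circle, and nothing in the hypotheses places it inside a fundamental domain --- it is a \emph{complete} geodesic and in general crosses many fundamental domains --- nor does it a priori avoid any fixed horoball about~$\infty$. So your reduction ``radius of $\sigma_j\leq\lambda/2$ because $\sigma_j$ lies in a translate of a width-$\lambda$ fundamental domain / is a translate of an isometric-circle-bounded piece'' does not go through, and since isometric circles genuinely can reach height~$\lambda$, no Shimizu-type input alone can produce the constant~$\lambda/2$ claimed in the lemma.

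The missing idea is to use the disjointness axiom~\eqref{BP:disjointunion} of the set of branches together with the generator $t_\lambda=\textbmat{1}{\lambda}{0}{1}$ of $\Stab{\Gamma}{\infty}$: since $\Att{\BrS}{\infty}=\varnothing$, every translate $g\act\overline{\base{\Cs{j}}}$ is a Euclidean semicircle with endpoints $x_g<y_g$ in~$\R$, and if some translate had $y_h-x_h>\lambda$, then
\[
x_h<x_h+\lambda=x_{t_\lambda h}<y_h<y_{t_\lambda h}\,,
\]
so $h\act\overline{\base{\Cs{j}}}$ and $t_\lambda h\act\overline{\base{\Cs{j}}}$ would intersect transversally without coinciding, contradicting~\eqref{BP:disjointunion}. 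Hence every translate has diameter at most~$\lambda$ and therefore height (= radius) at most~$\lambda/2$; this is where the constant comes from. Your second part --- existence of the maximum via Proposition~\ref{PROP:branches_locfinite} and $t_\lambda$-invariance of heights --- is in the right spirit and matches the paper's strategy, although as stated the step ``only finitely many distinct maximal heights occur'' needs the reduction to a vertical strip of bounded width (where local finiteness plus the nested-convex-hull/disjointness argument applies) before invoking $t_\lambda$-periodicity, since the region $\{0<\Ima z\leq\lambda/2\}$ is not compact.
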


\begin{proof}
Let $j\in A$. For each~$g\in\Gamma$, the 
hypothesis~$\Att{\BrS}{\infty}=\varnothing$ implies that the 
set~$g\act\overline{\base{\Cs{j}}}$ is a euclidean semicircle. Hence, the 
maximum of the set
\[
\defset{\Ima{z}}{z\in g\act\overline{\base{\Cs{j}}}}
\]
exists and equals the radius of~$g\act\overline{\base{\Cs{j}}}$. Let 
$x_{g},y_{g}\in\R$, $x_{g}<y_{g}$, denote the two endpoints 
of~$g\act\overline{\base{\Cs{j}}}$, i.e., 
\[
(x_{g},y_{g})_{\H}=g\act\overline{\base{\Cs{j}}}\,.
\]
Let~$t_{\lambda}=\begin{bsmallmatrix}1&\lambda\\0&1\end{bsmallmatrix}$ be the 
generator of the cyclic group~$\Stab{\Gamma}{\infty}$ satisfying $\lambda>0$. 
Then the set~$\defset{\Ima z}{z\in\Gamma\act\overline{\base{\Cs{j}}}}$ is 
bounded from above by~$\lambda/2$ because otherwise we would find 
$h\in\Gamma$ such that $y_{h}-x_{h}>\lambda$ and then
\[
x_{h}<x_{h}+\lambda=t_{\lambda}\act x_{h}=x_{t_{\lambda}h}<y_{h}<t_{\lambda}\act 
y_{h}=y_{t_{\lambda}h}\,.
\]
Hence, the geodesic segments~$g\act\overline{\base{\Cs{j}}}$ 
and~$t_{\lambda}g\act\overline{\base{\Cs{j}}}$ would intersects without 
coinciding, contradicting~\eqref{BP:disjointunion}.

We consider now the strip~$\mc S\coloneqq(-\lambda,\lambda)+\i\R_{>0}$.
Let $\Lambda\subseteq\Gamma$ be the set of elements~$g\in\Gamma$ such that 
\[
 g\act \overline{\base{\Cs{j}}}\subseteq \mc S\,.
\]
For each~$g\in\Lambda$ set 
\[
\mathscr{h}_{g}\coloneqq\max\defset{\Ima{z}}{z\in 
g\act\overline{\base{\Cs{j}}}}\,.
\]
In order to seek a contradiction, we suppose that there exists a 
sequence~$(g_m)_{m\in\N}$ in~$\Lambda$ such that the sequence of 
maxima~$(\mathscr{h}_{g_m})_{m\in\N}$ is strictly increasing. By the previous 
considerations, $(\mathscr{h}_{g_m})_{m\in\N}$ is bounded from above 
by~$\lambda/2$ and hence convergent. Further, $\mathscr{h}_{g_m}>0$ for 
all~$m\in\N$. Since the 
union~$\bigcup_{g\in\Lambda}g\act\overline{\base{\Cs{j}}}$ is disjoint 
by~\eqref{BP:disjointunion} and contained in the strip~$\mc S$ of finite width, 
we find a subsequence~$(g_{m_k})_{k\in\N}$ of~$(g_m)_{m\in\N}$ such that for 
all~$k\in\N$,
\[
g_{m_k}\act\overline{\base{\Cs{j}}}\subseteq\conv_{\mathrm{E}}\left(g_{m_{k+1}}
\act\overline{\base{\Cs{j}}}\right)\,,
\]
where~$\conv_{\mathrm{E}}(M)$ denotes the convex hull of the set~$M$ in~$\C$ 
with respect to the euclidean metric. Since~$(\mathscr{h}_{g_{m_k}})_{k\in\N}$ 
converges, the family~$(g_{m_k}\act\overline{\base{\Cs{j}}})_{k\in\N}$ is not locally 
finite in~$\H$, which contradicts Proposition~\ref{PROP:branches_locfinite}. 
Thus, such a sequence~$(g_m)_{m\in\N}$ cannot exist. In turn, there exists an 
element~$g^*\in\Lambda$ such that
\[
\mathscr{h}_{g^*}=\max_{g\in\Lambda}\mathscr{h}_{g}\,.
\]
The same argument applies to all strips of the form 
\[
 \bigl( (n-1)\lambda, (n+1)\lambda\bigr) + i\R_{>0} = t_\lambda^n\act\mc S
\]
with~$n\in\Z$ and $\Lambda$ replaced by~$t_\lambda^n\act\Lambda$. With the 
$t_\lambda$-invariance of~$\mathscr{h}$, i.e., $\mathscr{h}_g = 
\mathscr{h}_{t_\lambda g}$ for all~$g\in\Gamma$, it follows that 
\[
\mathscr{h}_{g^*}=\max_{g\in t_\lambda^n\act\Lambda}\mathscr{h}_{g}
\]
for all~$n\in\Z$. Since $\Gamma = \bigcup_{n\in\Z} t_\lambda^n\act\Lambda$, the 
existence of~$\mathscr{h}_{\widehat{c}}(j)$ is shown.
\end{proof}

\begin{prop}\label{PROP:Attandram}
The set of branches~$\BrS$ is finitely ramified if and only if all cusps 
of~$\Orbi$ are attached to~$\BrS$.
\end{prop}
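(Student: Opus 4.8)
Both directions will be proved by contraposition. If $\Gamma$ has no parabolic elements, then $\Orbi$ has no cusps, so ``all cusps of $\Orbi$ are attached to $\BrS$'' holds vacuously while $\BrS$ is finitely ramified by Lemma~\ref{LEM:nocuspnoram}; the assertion is trivially true in that case, so from now on I assume $\Orbi$ has at least one cusp. When analysing a single cusp we are free, without changing either the ramification of $\BrS$ (Lemma~\ref{LEM:ramtrans}) or the attachedness of any cusp, to conjugate $\Gamma$ by a suitable element of $\PSLR$ and translate $\BrS$ accordingly (cf.\ the discussion around Remark~\ref{REM:SoBrem}); thus for any prescribed cusp $\widehat c$ we may and shall assume it is represented by $\infty$, with cusp width $\lambda$ and $\Stab{\Gamma}{\infty}=\langle t_\lambda\rangle$, $t_\lambda=\textbmat{1}{\lambda}{0}{1}$; and for the first implication below we may instead assume that the interval $I_j$ attached to a prescribed branch $\Cs{j}$ is a bounded subset of $\R$.

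\emph{$\BrS$ infinitely ramified $\Rightarrow$ some cusp of $\Orbi$ is not attached to $\BrS$.} Choose $j,k\in A$ with $\#\Trans{}{j}{k}=+\infty$ and normalize $I_j$ to be a bounded subset of $\R$. Exactly as in the proof of Lemma~\ref{LEM:nocuspnoram}, the intervals $\defset{g\act I_k}{g\in\Trans{}{j}{k}}$ are pairwise disjoint and contained in $I_j$, so after passing to a subsequence the left endpoints form a strictly monotone convergent sequence $g_n\act\eX_k\to a$, the nesting forces $g_n\act\eY_k\to a$ as well, and $a\in\overline{I_j}\cap\Lambda(\Gamma)$. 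If $a\in\wh\R_{\st}$, then since $\eX_j,\eY_j\in\wh\R\setminus\wh\R_{\st}$ we get $a\in\Iset{j}$, and \eqref{BP:intervaldecomp} produces $(\ell,h)\in A\times\Trans{}{j}{\ell}$ with $a\in h\act\Iset{\ell}\subseteq h\act I_\ell$; as $h\act I_\ell$ is open and both endpoints of $g_n\act I_k$ converge to $a$, the open intervals $g_n\act I_k$ and $h\act I_\ell$ overlap for large $n$, contradicting the disjointness in~(\ref{BP:intervaldecomp}\ref{BP:intervaldecompGdecomp}). Hence $a\notin\wh\R_{\st}$, so $a$ is the fixed point of a parabolic element of $\Gamma$; let $\widehat c$ be the cusp it represents and let $p\in\Gamma$ generate $\Stab{\Gamma}{a}$. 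Suppose, for contradiction, that $\widehat c$ were attached to $\BrS$. Then $\IAtt{\BrS}{a}$ is a neighbourhood of $a$, so there is $(\ell_0,h_0)\in\Att{\BrS}{a}$ with $h_0\act\overline{\base{\Cs{\ell_0}}}$ a complete geodesic segment having $a$ as one endpoint and with $h_0\act I_{\ell_0}$ lying on the same side of $a$ from which the $g_n\act I_k$ approach. The translates $p^mh_0\act\overline{\base{\Cs{\ell_0}}}$, $m\in\Z$, are then pairwise distinct complete geodesic segments all sharing the endpoint $a$, whose other endpoints march monotonically towards $a$; this is a second ``infinitely nested shrinking'' chain of branch translates accumulating at $a$ from the same side as the semicircles $g_n\act\overline{\base{\Cs k}}$. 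Comparing the two chains via the cyclic order of $\wh\R$, either some $g_n\act\overline{\base{\Cs k}}$ and some $p^mh_0\act\overline{\base{\Cs{\ell_0}}}$ have interleaved endpoints, hence intersect without coinciding, contradicting~\eqref{BP:disjointunion}; or every $g_n\act I_k$ is eventually trapped strictly between two consecutive endpoints of the $p$-chain, in which case applying the appropriate power $p^{-m(n)}$ confines the segments $p^{-m(n)}g_n\act\overline{\base{\Cs k}}$ to a fixed bounded region of $\H$ with boundary in $\wh\R\setminus\wh\R_{\st}$, so that they accumulate at a point of $\wh\R\setminus\wh\R_{\st}$ which, by the same dichotomy, is again a cuspidal point; since $\Orbi$ has only finitely many cusps, iterating this step eventually returns to a previously visited cusp and produces a configuration incompatible with~\eqref{BP:disjointunion} and Proposition~\ref{PROP:branches_locfinite}. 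Either way we obtain a contradiction, so $\widehat c$ is not attached.

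\emph{Some cusp $\widehat c$ of $\Orbi$ is not attached to $\BrS$ $\Rightarrow$ $\BrS$ infinitely ramified.} Normalize $\widehat c$ to $\infty$, with $\Stab{\Gamma}{\infty}=\langle t_\lambda\rangle$. Since $\Orbi$ has a periodic geodesic, $\Gamma$ is non-elementary, and by Proposition~\ref{PROP:EXliesdense} the set $E(\Orbi)$ is dense in $\Lambda(\Gamma)\times\Lambda(\Gamma)$; in particular there are periodic geodesics on $\H$ whose two endpoints lie arbitrarily close to $\infty$ in $\wh\R$ from opposite directions, hence periodic geodesics on $\Orbi$ that wind about $\widehat c$ an arbitrarily large number $m$ of times. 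When $\Att{\BrS}{\infty}=\varnothing$, Lemma~\ref{LEM:maxheight} shows all $\Gamma$-translates $g\act\overline{\base{\Cs j}}$ with $\infty\notin g\act I_j$ are Euclidean semicircles of radius at most $\lambda/2$; the general unattached case is reduced to this one by restricting attention to the punctured neighbourhood of $\infty$ in $\wh\R$ which the finitely many (up to $\Stab{\Gamma}{\infty}$) branch translates touching $\infty$ fail to cover. Either way, no single branch translate can ``shortcut'' a winding about $\widehat c$, so a geodesic that winds $m$ times must cross $\Gamma\act\BrU$ at least $c\hspace{1pt}m$ times inside the cusp region for some $c>0$; reading off the coding sequences of the winding periodic geodesics from \eqref{BP:intervaldecomp} and \eqref{BP:closedgeodesicsXtoH} and pigeonholing over the finite index set $A$, one finds a pair $(j_0,k_0)$ such that, as $m\to\infty$, infinitely many pairwise distinct transition elements (mutually related by powers of $t_\lambda$) occur in $\Trans{}{j_0}{k_0}$. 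Hence $\#\Trans{}{j_0}{k_0}=+\infty$ and $\BrS$ is infinitely ramified.

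The technical heart, and the step I expect to be the main obstacle, is the second half of the first implication: ruling out that the accumulation point of an infinite transition set represents an \emph{attached} cusp. The difficulty is that the shrinking branch translates $g_n\act\overline{\base{\Cs k}}$ accumulate at the geodesic boundary $\partial_\geo\H$, where Proposition~\ref{PROP:branches_locfinite} gives no leverage at all; the contradiction must be forced out of~\eqref{BP:disjointunion} by confronting the $g_n$-chain with the $\Stab{\Gamma}{a}$-orbit of a branch translate witnessing attachedness, and the case in which one shrinking chain slips entirely inside the gaps of the other has to be closed off by a descent through the finitely many cusps of $\Orbi$. For the converse implication the quantitative statement ``attachedness $\iff$ boundedly many $\Gamma\act\BrU$-crossings per cusp winding'' is the crux, with Lemma~\ref{LEM:maxheight} supplying the required height bound for the non-attaching branch translates.
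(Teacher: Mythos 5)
Your opening reduction (following Lemma~\ref{LEM:nocuspnoram} to produce the accumulation point~$a\in\Lambda(\Gamma)\setminus\wh\R_{\st}$, hence a cuspidal point) is sound and coincides with the paper. But the contradiction with attachedness is not closed. In the second horn of your dichotomy, after normalizing the trapped segments by powers of~$p$ you only know that infinitely many pairwise disjoint translates $p^{-m(n)}g_n\act\overline{\base{\Cs{k}}}$ have endpoints in one fixed compact interval; their endpoint pairs may well converge to a \emph{single} point, and that point can lie in~$\wh\R_{\st}$, in which case neither Proposition~\ref{PROP:branches_locfinite} (the segments degenerate to a boundary point, no accumulation in~$\H$) nor the disjointness in~(\ref{BP:intervaldecomp}\ref{BP:intervaldecompGdecomp}) applies, because the translated intervals $p^{-m(n)}g_n\act I_k$ no longer belong to the decomposition of one fixed set~$\Iset{j'}$. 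If the limit is again a parabolic point, ``iterate until a cusp is revisited'' is not an argument: there is no decreasing quantity, and nothing contradictory about revisiting a cusp. The paper closes exactly this step with a different mechanism: attachedness together with the parabolic generator of~$\Stab{\Gamma}{a}$ yields a branch translate $h_1\act\Cs{\ell_1}$ whose interval has~$a$ as an endpoint and either lies inside~$I_j$ --- then, choosing by Proposition~\ref{PROP:EXliesdense} and~\eqref{BP:allvectors} a geodesic from~$\Jset{j}$ to~$g_n\act\Iset{k}$ with $g_n\act I_k\varsubsetneq h_1\act I_{\ell_1}$, this geodesic crosses $h_1\act\base{\Cs{\ell_1}}$ strictly between $\base{\Cs{j}}$ and $g_n\act\base{\Cs{k}}$, contradicting~(\ref{BP:intervaldecomp}\ref{BP:intervaldecompGgeod}) --- or sticks out of~$I_j$, forcing a transversal intersection of base segments and contradicting~\eqref{BP:disjointunion}. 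Your proposal never invokes the ``no intermediate intersection'' clause of~\eqref{BP:intervaldecomp}, which is the decisive tool here, and without it the case you yourself flag as the main obstacle remains open.

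The other implication has a comparable hole. The inference from ``a geodesic winding $m$ times crosses $\Gamma\act\BrU$ at least $cm$ times inside the cusp region'' to ``some single set $\Trans{}{j_0}{k_0}$ is infinite'' is not valid: long coding sequences are entirely compatible with all transition sets being finite --- that is precisely the situation at an \emph{attached} cusp, where each winding is coded by boundedly many steps drawn from finite transition sets, repeated many times. Pigeonholing over the finite index set~$A$ gives a pair $(j_0,k_0)$ that occurs infinitely often along the codes, not infinitely many distinct weights in one transition set; the parenthetical ``mutually related by powers of~$t_\lambda$'' is asserted, not derived, and the reduction of the general unattached case to $\Att{\BrS}{\infty}=\varnothing$, as well as the existence of the constant $c>0$, are only sketched. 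The paper instead argues directly by contradiction with finite ramification: it produces $j$ with $\infty\in I_j$ (contracting along a hyperbolic axis), uses the \emph{finiteness} of the decomposition in~(\ref{BP:intervaldecomp}\ref{BP:intervaldecompGdecomp}) to place~$\infty$ in some $h\act\overline{I_k}$; if $\infty$ is a boundary point, two-sided approximation within~$\wh\R_{\st}$ manufactures a second adjacent interval and hence attachedness (contradiction), and if it is interior, iteration gives a strictly nested chain of intervals containing~$\infty$ which, by Proposition~\ref{PROP:branches_locfinite}, zero in on~$\infty$ and force unbounded heights of branch translates, contradicting Lemma~\ref{LEM:maxheight}. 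So both implications, as written, have essential gaps; the solid portions are exactly those already contained in Lemma~\ref{LEM:nocuspnoram}.
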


\begin{proof}
We suppose first that $\BrS$ is finitely ramified. In order to seek a 
contradiction we assume that there exists a cusp of~$\Orbi$ that is not attached 
to~$\BrS$. Without loss of generality we may suppose that this cusp is 
represented by~$\infty$. Then $\infty\in\Lambda(\Gamma)$. Moreover, $\infty$ is 
approximated from both sides by suitable sequences in~$\wh\R_{\st}$, as can be 
seen by taking any element~$w\in\wh\R_{\st}$ (which is necessarily 
not~$\infty$), any element~$t$ in~$\Stab{\Gamma}{\infty}$ and considering the 
two sequences~$(t^n\act w)_{n\in\N}$ and~$(t^{-n}\act w)_{n\in\N}$, which both 
converge to~$\infty$ but from different sides.

We now claim that there exists a pair~$(j,g)\in A\times\Gamma$ such that 
\begin{equation}\label{eq:inftyforward}
\infty\in g\act I_j\,. 
\end{equation}
In order to see this, we pick a periodic geodesic~$\wh\gamma$ on~$\Orbi$. 
By~\eqref{BP:closedgeodesicsXtoH} we find a geodesic~$\gamma$ on~$\H$ 
representing~$\wh\gamma$ and a pair~$(j,p)\in A\times\Gamma$ such that $\gamma$ 
intersects~$p\act\Cs{j,\st}$. Then $\gamma(+\infty)\in p\act\Iset{j}$ and 
$\gamma(-\infty)\in p\act\Jset{j}$. Further we find a hyperbolic 
element~$h\in\Gamma$ such that $\fixp{\pm}{h} = \gamma(\pm\infty)$. In other 
words, the geodesic~$\gamma$ represents the axis~$\alpha(h)$ of~$h$. Under the 
iterated action of~$h^{-1}$, the two endpoints~$p\act\eX_j$ and~$p\act\eY_j$ 
of~$p\act J_j$ tend to~$\fixp{-}{h} = \gamma(-\infty)$. More precisely, 
\[
 h^{-(n+1)}p\act J_j \varsubsetneq h^{-n}p\act J_j\qquad\text{for all~$n\in\N$}\,,
\]
and 
\[
 \bigcap_{n\in\N} h^{-n}p\act J_j = \{ \gamma(-\infty) \}
\]
(see Lemma~\ref{LEM:hypfixedconv}). Since $\wh\gamma$ is periodic, 
$\gamma(-\infty)$ is a hyperbolic fixed point and hence cannot coincide with the 
cuspidal point~$\infty$. Therefore, for some sufficiently large~$N\in\N$, the 
interval~$h^{-N}p\act J_j = (h^{-N}p\act\eY_j, h^{-N}p\act\eX_j)_c$ is the real 
interval $(h^{-N}p\act\eY_j, h^{-N}p\act\eX_j)$. Thus, $h^{-N}p\act I_j = 
(h^{-N}p\act\eX_j, h^{-N}p\act\eY_j)_c$ contains~$\infty$. This establishes the 
existence of a pair~$(j,g)\in A\times\Gamma$ satisfying~\eqref{eq:inftyforward}. 

We now fix such a pair~$(j,g)$. Without loss of generality (using 
Lemma~\ref{LEM:ramtrans}), we may suppose that $g=\id$. From the density 
of~$\wh\R_{\st}$ at~$\infty$ we obtain that $\infty\in \overline{\Iset{j}}$. 
Since $\BrS$ is finitely ramified, 
(\ref{BP:intervaldecomp}\ref{BP:intervaldecompGdecomp}) implies that 
\[
 \overline{\Iset{j}} = \bigcup_{k\in A} \bigcup_{h\in \Trans{}{j}{k}} h\act 
\overline{\Iset{k}}\,.
\]
Thus, we find $k\in A$ and $h\in\Trans{}{j}{k}$ such that $\infty\in 
h\act\overline{I_k}$. 

We suppose first that~$\infty$ is an endpoint, hence a boundary point, of the 
interval~$h\act I_k$. Without loss of generality, we may suppose that 
$\infty=h\act\eY_k$. Then 
\[
 h\act \overline{I_k} \cap I_j \subseteq (\eX_j, \infty]_c\,.
\]
Since $\infty$ is contained in the open interval~$I_j$ and is approximated 
within~$\wh\R_{\st}$ from both sides, we see that 
\[
 (\infty, \eY_j)_c \cap \Iset{j} \not=\varnothing
\]
(the part of~$\Iset{j}$ at the other side of~$\infty$) and we find $\ell\in A$ 
and $p\in\Trans{}{j}{\ell}$ with $(\ell,p)\not=(k,h)$ such that $\infty\in 
p\act\overline{I_\ell}$. Since $p\act I_\ell$ and $h\act I_k$ are disjoint 
by~(\ref{BP:intervaldecomp}\ref{BP:intervaldecompGdecomp}), we obtain that 
$\infty$ is a boundary point of~$p\act I_\ell$ and further that 
$(k,h),(\ell,p)\in \Att{\BrS}{\infty}$ and $p\act\overline{I_\ell}\cup 
h\act\overline{I_k}$ is a neighborhood of~$\infty$ in~$\wh\R$. This contradicts 
our hypothesis that $\wh\infty$ is not attached to~$\BrS$. 

In turn, since $h\act I_k$ is open, $\infty\in h\act I_k$ 
(and hence $(k,h)$ is uniquely determined). The combination 
of~(\ref{BP:intervaldecomp}\ref{BP:intervaldecompGgeod}) 
and~\eqref{BP:disjointunion} implies that 
\[
 h\act I_k \varsubsetneq I_j\,.
\]
Inductively we obtain a sequence $((k_n,g_n))_{n\in\N}$ in $A\times\Gamma$ such 
that for each $n\in\N$, 
\[
 \infty \in g_n\act I_{k_n}
\]
and 
\[
 g_{n+1}\act I_{k_{n+1}} \varsubsetneq g_n\act I_{k_n}\,.
\]
Since the family of the geodesic segments $g_n\act\overline{\base{\Cs{k_n}}}$, 
$n\in\N$, is locally finite by Proposition~\ref{PROP:branches_locfinite}, the 
intervals $g_n\act I_{k_n}$, $n\in\N$, zero in on~$\infty$. But this implies 
that the family of maxima
\[
\mathscr{h}_{k_n, g_n} \coloneqq \max\defset{\Ima z}{ z \in 
g_n\act\overline{\base{\Cs{k_n}}}}\qquad (n\in\N)
\]
is unbounded, which contradicts Lemma~\ref{LEM:maxheight}. Thus, the assumption 
that $\wh\infty$ is not attached to~$\BrS$ fails. This completes the proof that 
$\BrS$ being finitely ramified implies that all cusps of~$\Orbi$ are attached 
to~$\BrS$. 

In the case that~$\Orbi$ does not have cusps, the converse implication (i.e., if 
all cusps are attached to~$\BrS$, then $\BrS$ is finitely ramified) is already 
established in Lemma~\ref{LEM:nocuspnoram}. For its proof in the general case we 
suppose that $\Orbi$ has cusps and that every cusp of~$\Orbi$ is attached to 
$\BrS=\{\Cs{1},\dots,\Cs{n}\}$. We aim to show that~$\BrS$ is finitely ramified. 
However, in order to seek a contradiction we assume that $\BrS$ is infinitely 
ramified. As in the proof of Lemma~\ref{LEM:nocuspnoram} we find and fix $j,k\in 
A$ such that $\#\Trans{}{j}{k} = +\infty$, we may suppose that the 
interval~$I_j$ is contained in~$\R$ and bounded, and we find a 
sequence~$(g_n)_{n\in\N}$ in~$\Trans{}{j}{k}$ such that the endpoint sequences 
$(g_n\act\eX_k)_{n\in\N}$ and $(g_n\act\eY_k)_{n\in\N}$ are contained in~$I_j$ and converge to 
an element 
\[
 a \in \Lambda(\Gamma) \cap \overline{I_j}\,.
\]
From the proof of Lemma~\ref{LEM:nocuspnoram} we obtain further that 
$a\notin\wh\R_{\st}$. Thus, it remains to consider the case that $a$ is a 
parabolic fixed point (cf.~\eqref{eq:def_st}). By hypothesis, the cusp~$\wh a$ 
of~$\Orbi$ is attached to~$\BrS$.
Preparatory for the following constructions, we now pick 
$(i,h)\in\Att{\BrS}{a}$ such that $I_j\subseteq h\act I_i$ (if such a pair 
exists, otherwise we omit this step) and show that we also find 
$(\ell,g)\in\Att{\BrS}{a}$ such that $g\act I_{\ell}\subseteq I_j$. To that end 
we note that since $a\in \overline{I_j}$ and $a$ is an endpoint of the 
interval~$h\act I_i$, the point~$a$ is also an endpoint of~$I_j$. Let 
$p\in\Gamma$ be a parabolic element that fixes~$a$. Then the pairs~$(j,p)$ 
and~$(j,p^{-1})$ belong to~$\Att{\BrS}{a}$ and either 
\[
 p\act I_j \subseteq I_j \qquad\text{or}\qquad p^{-1}\act I_j\subseteq I_j\,,
\]
which shows the existence of such a pair~$(\ell,g)$.

From the attachment property of the cusp~$\wh a$ it follows that we find two 
distinct pairs~$(\ell_1,h_1), (\ell_2,h_2)\in A\times\Gamma$ such that $a$ is a 
joint endpoint of the intervals~$h_1\act I_{\ell_1}$ and~$h_2\act I_{\ell_2}$, 
and 
\[
 h_1\act \overline{I_{\ell_1}} \cup h_2\act\overline{I_{\ell_2}}
\]
is a neighborhood of~$a$ in~$\R$. By the previous argument we may further 
suppose that at least one of these intervals intersects~$I_j$ but does not 
cover~$I_j$. Without loss of generality, we suppose it to be $h_1\act 
I_{\ell_1}$. 

We suppose first that $h_1\act I_{\ell_1}\subseteq I_j$ and fix $n\in\N$ such 
that $g_n\act I_k\varsubsetneq h_1\act I_{\ell_1}$ (the existence of~$n$ follows 
directly from the properties of the two sequences~$(g_n\act\eX_k)_n$ 
and~$(g_n\act\eY_k)_n$). By the density of~$E(\Orbi)$ (see 
Proposition~\ref{PROP:EXliesdense}) we find 
\[
 (x,y) \in E(\Orbi) \cap \bigl( g_n\act I_k \times J_j\bigr)\,.
\]
Then~\eqref{BP:allvectors} implies that the geodesic segment~$(x,y)_\H$ 
intersects $\base{\Cs{j}}$ in some point, say~$z$, and intersects 
$g_n\act\base{\Cs{k}}$ in some point, say~$w$, and 
intersects~$h_1\act\base{\Cs{\ell_1}}$ in some point, say~$u$, with $u\in 
(z,w)_\H$. This contradicts~\eqref{BP:intervaldecomp}. 

In turn, $h_1\act I_{\ell_1}\varsubsetneq I_j$. Then one endpoint of~$h_1\act 
I_{\ell_1}$, namely~$a$, is contained in~$I_j$, while the other endpoint is not. 
Convexity implies that $\overline{\base{\Cs{j}}} \cap 
h_1\act\overline{\base{\Cs{\ell_1}}} \not=\varnothing$ but 
$\overline{\base{\Cs{j}}} \ne h_1\act \overline{\base{\Cs{\ell_1}}}$. But this 
contradicts~\eqref{BP:disjointunion}. It follows that~$\BrS$ is finitely 
ramified.
\end{proof}

Proposition~\ref{PROP:Attandram} already indicated how we could turn an 
infinitely ramified set of branches into a finitely ramified one: if we find a 
way to augment the initial (infinitely ramified) set of branches with further 
branches such that all cusps of~$\Orbi$ are attached to the enlarged family of 
branches, then the ramification becomes finite. By comparing 
Figure~\ref{FIG:branchram:Gamma2} to Figure~\ref{FIG:branchram2} in 
Example~\ref{EX:branchramification} above, one sees that this approach has been 
carried out successfully for the group~$\Gamma_2$. 
Proposition~\ref{PROP:allwithfiniteram} below states that this can always be 
done. We emphasize that its proof is constructive and provides an algorithm for 
the enlargement procedure.

For the proof of Proposition~\ref{PROP:allwithfiniteram} we will take advantage 
of Ford fundamental domains and some of their specific properties, which we 
briefly survey now. For more details we refer to, e.g., \cite{Pohl_diss, 
Pohl_isofunddom}. We start with the concept of isometric spheres. 

Let~$g=\begin{bsmallmatrix}a&b\\c&d\end{bsmallmatrix}\in\Gamma\setminus\Stab{
\Gamma}{\infty}$. The \emph{isometric sphere} of~$g$ is the set
\index[defs]{isometric sphere}%
\index[symbols]{ig@$\iso{g}$}%
\[
\iso{g}\coloneqq\defset{z\in\H}{\abs{g'(z)}=1}=\defset{z\in\H}{\abs{cz+d}=1}\,.
\]
Thus, the isometric sphere~$\iso{g}$ is the complete geodesic segment connecting 
the two real points~$(-d+1)/c$ and $(-d-1)/c$. The \emph{radius} of~$\iso{g}$ is 
$1/|c|$.
\index[defs]{radius}%
\index[defs]{isometric sphere!radius}%
The \emph{interior} of~$\iso{g}$ is the set
\index[defs]{interior}%
\index[defs]{isometric sphere!interior}%
\index[symbols]{int@$\intiso{g}$}%
\begin{align*}
\intiso{g}&\coloneqq\defset{z\in\H}{\abs{g'(z)}>1}=\defset{z\in\H}{\abs{cz+d}<1}
\,,
\intertext{and its \emph{exterior} is}
\extiso{g}&\coloneqq\defset{z\in\H}{\abs{g'(z)}<1}=\defset{z\in\H}{\abs{cz+d}>1}
\,.
\end{align*}
\index[defs]{exterior}%
\index[defs]{isometric sphere!exterior}%
\index[symbols]{ext@$\extiso{g}$}%
We set
\begin{align}\label{EQNDEF:setK}
\mathcal{K}\coloneqq\bigcap_{g\in\Gamma\setminus\Stab{\Gamma}{\infty}}\extiso{g}
\,.
\end{align}
\index[symbols]{K@$\mathcal K$}%
This is a geodesically convex set whose boundary decomposes into geodesic 
segments contained in isometric spheres. Suppose that $\wh\infty$ is a cusp 
of~$\Orbi$ and let $\lambda$ be the cusp width of~$\wh\infty$. Then 
$\Stab{\Gamma}{\infty}$ is generated 
by~$t_{\lambda}=\begin{bsmallmatrix}1&\lambda\\0&1\end{bsmallmatrix}$. The 
set~$\mc K$ is invariant under~$t_{\lambda}$, or in other words, all geometric 
structures of~$\mc K$ are $\lambda$-periodic. Further, for sufficiently 
large~$y_0>0$, the set 
\begin{equation}\label{eq:horoballinfty}
 \defset{ z\in\H }{ \Ima z > y_0 }
\end{equation}
is contained in~$\mc K$. (In other words, all horoballs centered at~$\infty$ 
with sufficiently small radii are contained in~$\mc K$.) These two properties 
are the most crucial features for deducing fundamental domains from~$\mc K$ in 
the following way: For every~$x\in\R$ the strip
\begin{equation}\label{EQNDEF:fundxinfty}
\fund_{x,\infty}\coloneqq(x,x+\lambda)+\i\R_{>0}
\end{equation}
\index[symbols]{Fx@$\fund_{x,\infty}$}%
is a fundamental domain for the action of~$\Stab{\Gamma}{\infty}$ on~$\H$.
Then
\begin{align}\label{EQNDEF:fundx}
\fund_x\coloneqq\fund_{x,\infty}\cap\mathcal{K}
\end{align}
\index[symbols]{Fx@$\fund_x$}%
is a geometrically finite convex fundamental polyhedron for~$\Gamma$, a 
\emph{Ford fundamental domain}.
\index[defs]{Ford fundamental domain}%
\index[defs]{fundamental domain!Ford}%
We emphasize that for each vertical geodesic 
segment~$(b,\infty)_{\H}$ (including complete segments), there exists~$x\in\R$ 
such that~$(b,\infty)_{\H}$ is contained in~$\fund_{x,\infty}$. Further, the 
part of~$(b,\infty)_\H$ in $\fund_x$ is again a vertical geodesic segment.

\begin{lemma}\label{LEM:maxintersect}
Suppose that $\wh\infty$ is a cusp of~$\Orbi$ and that the set of 
branches~$\BrS$ satisfies~$\Att{\BrS}{\infty}=\varnothing$. Let $(j,g)\in 
A\times\Gamma$ be such that 
\[
 \max\defset{ \Ima z }{ z \in \Gamma\act\overline{\base{\BrU}} } = \max\defset{ 
\Ima z }{ z \in g\act\overline{\base{\Cs{j}}} }
\]
(the existence of the pair~$(j,g)$ is guaranteed by Lemma~\ref{LEM:maxheight}). 
Pick $x\in\R$ such that $(g\act\eX_j,\infty) \subseteq \fund_{x,\infty}$. Then 
it follows that 
\begin{enumerate}[label=$\mathrm{(\roman*)}$, ref=$\mathrm{\roman*}$]
\item\label{MI:z0contained} $g\act\overline{\base{\Cs{j}}} \cap \overline{\mc K} 
\not=\varnothing$. More precisely, the point of maximal height 
of~$g\act\overline{\base{\Cs{j}}}$ is contained in~$\overline{\mc K}$. 
\item\label{MI:minigap} there exists~$\eps>0$ such that 
\[
  (g\act\eX_j -\eps, g\act\eY_j -\eps) \subseteq \Rea(\fund_{x,\infty})
\]
or
\[
  (g\act\eY_j +\eps, g\act\eX_j +\eps) \subseteq \Rea(\fund_{x,\infty}) 
\]
\item\label{MI:nicefunddom} there exists~$x\in\R$ such that 
$(g\act\eX_j,\infty)\cap\fund_x\not=\varnothing$ and the point of maximal height 
of~$g\act\overline{\base{\Cs{j}}}$ is contained in~$\overline{\fund_x}$.
\end{enumerate}
\end{lemma}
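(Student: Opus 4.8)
\textbf{Proof plan for Lemma~\ref{LEM:maxintersect}.}

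The plan is to exploit the defining features of the Ford fundamental domain surveyed just before the lemma, together with the extremality of the pair $(j,g)$. First I would treat~\eqref{MI:z0contained}. Let $z_0$ be the point of maximal height on $g\act\overline{\base{\Cs{j}}}$; by Lemma~\ref{LEM:maxheight} (using $\Att{\BrS}{\infty}=\varnothing$) this is the apex of a euclidean semicircle, and $\Ima z_0 = \radius{g\act\overline{\base{\Cs{j}}}}$ is the global maximum of $\Ima$ over $\Gamma\act\overline{\base{\BrU}}$. Suppose for contradiction that $z_0\notin\overline{\mc K}$. By definition~\eqref{EQNDEF:setK} of $\mc K$ there is some $h\in\Gamma\setminus\Stab{\Gamma}{\infty}$ with $z_0\in\intiso{h}$, i.e.\ $\abs{h'(z_0)}>1$. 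Applying $h$ to the geodesic segment $g\act\overline{\base{\Cs{j}}}$ gives another $\Gamma$-translate $hg\act\overline{\base{\Cs{j}}}$ of the same branch; since $z_0\notin\geo\overline{\base{\BrU}}$-translates are hit at interior points and $h$ is a Riemannian isometry whose derivative has modulus $>1$ at $z_0$, the image point $h\act z_0$ satisfies $\Ima(h\act z_0) = \abs{h'(z_0)}\,\Ima z_0 > \Ima z_0$. (Here one uses the elementary identity $\Ima(h\act w) = \abs{h'(w)}\,\Ima w$ for $h\in\PSLR$, $w\in\H$.) But $h\act z_0\in hg\act\overline{\base{\Cs{j}}}\subseteq\Gamma\act\overline{\base{\BrU}}$, contradicting the maximality of $\Ima z_0$. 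Hence $z_0\in\overline{\mc K}$.

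Next, part~\eqref{MI:minigap}. Since $\Att{\BrS}{\infty}=\varnothing$, the point $\infty$ is not an endpoint of any $\Gamma$-translate of any $I_k$; in particular $g\act\eX_j$ and $g\act\eY_j$ are real, and the geodesic $g\act\overline{\base{\Cs{j}}}$ is the semicircle over the real interval with these endpoints. The translate $x\in\R$ was chosen so that the vertical complete segment $(g\act\eX_j,\infty)$ lies in $\fund_{x,\infty} = (x,x+\lambda)+\i\R_{>0}$ (see~\eqref{EQNDEF:fundxinfty}), hence $g\act\eX_j\in(x,x+\lambda)$. Because $z_0\in\overline{\mc K}$ and, by the horoball property~\eqref{eq:horoballinfty}, every sufficiently high horoball at $\infty$ lies in $\mc K$, the apex height $\Ima z_0$ is bounded by $\lambda/2$ (Lemma~\ref{LEM:maxheight} again), so the euclidean diameter $\abs{g\act\eY_j - g\act\eX_j} = 2\,\Ima z_0 \le \lambda$; and equality would force $t_\lambda g\act\overline{\base{\Cs{j}}}$ to meet $g\act\overline{\base{\Cs{j}}}$ non-trivially, contradicting~\eqref{BP:disjointunion} exactly as in the proof of Lemma~\ref{LEM:maxheight}. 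Thus the diameter is strictly less than $\lambda$, so there is $\eps>0$ with either the left $\eps$-shift or the right $\eps$-shift of the interval $(g\act\eX_j,g\act\eY_j)_c$ still contained in $(x,x+\lambda) = \Rea(\fund_{x,\infty})$; the direction depends on whether $g\act\eX_j$ is closer to $x$ or to $x+\lambda$, which gives the stated dichotomy.

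Finally, part~\eqref{MI:nicefunddom} is a recombination of the first two parts. Using the $\eps$ from~\eqref{MI:minigap}, replace $x$ by $x\mp\eps$ (matching the case that occurred) so that the full real interval $(g\act\eX_j,g\act\eY_j)_c$ — equivalently the euclidean footprint of the semicircle $g\act\overline{\base{\Cs{j}}}$ — lies in $\Rea(\fund_{x,\infty})$. Then the whole semicircle $g\act\overline{\base{\Cs{j}}}$ lies inside the closed strip $\overline{\fund_{x,\infty}}$, and in particular its apex $z_0$ does. Combining with~\eqref{MI:z0contained}, $z_0\in\overline{\mc K}\cap\overline{\fund_{x,\infty}} = \overline{\fund_x}$ by~\eqref{EQNDEF:fundx}; and since $z_0$ lies on the vertical segment $(g\act\eX_j,\infty)$ restricted to $\fund_x$ (which is again a vertical segment by the final remark preceding the lemma), we get $(g\act\eX_j,\infty)\cap\fund_x\neq\varnothing$ as claimed. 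The main obstacle I anticipate is bookkeeping in~\eqref{MI:minigap}: one must be careful that shifting $x$ to capture the semicircle's footprint does not destroy the property $(g\act\eX_j,\infty)\subseteq\fund_{x,\infty}$ needed to invoke the vertical-segment structure of Ford domains — but since both $g\act\eX_j$ and $g\act\eY_j$ stay within a strip of width $<\lambda$ this is automatic once the shift direction is chosen correctly, and the strict inequality on the diameter is exactly what makes the argument go through.
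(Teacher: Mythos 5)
Your part~\eqref{MI:z0contained} is essentially the paper's argument: assume the apex~$z_0$ of $g\act\overline{\base{\Cs{j}}}$ misses $\overline{\mc K}$, place it in the interior of an isometric sphere of some $h\in\Gamma\setminus\Stab{\Gamma}{\infty}$, and use $\Ima(h\act z_0)=|h'(z_0)|\,\Ima z_0>\Ima z_0$ to contradict the global maximality built into the choice of $(j,g)$; that part is fine. In~\eqref{MI:minigap}, however, your claim that equality $|g\act\eX_j-g\act\eY_j|=\lambda$ would force $t_\lambda g\act\overline{\base{\Cs{j}}}$ to meet $g\act\overline{\base{\Cs{j}}}$ and so contradict~\eqref{BP:disjointunion} is wrong: in the equality case the two semicircles share only the real point $g\act\eX_j+\lambda=g\act\eY_j\in\partial_\geo\H$, so the complete geodesic segments are disjoint in~$\H$ and \eqref{BP:disjointunion} is untouched. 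So you have not proved strictness of the diameter bound -- and you should not need it, since the intervals in the statement are open: with $d\le\lambda$ and $g\act\eX_j\in(x,x+\lambda)$ one may take any $\eps>0$ with $g\act\eY_j-x-\lambda\le\eps\le g\act\eX_j-x$ (e.g.\ $\eps=g\act\eX_j-x$ in the borderline case). Note also that the dichotomy in~\eqref{MI:minigap} is governed by whether $g\act\eX_j<g\act\eY_j$ or $g\act\eY_j<g\act\eX_j$ (only one of the two displayed intervals is well formed), not by which end of $(x,x+\lambda)$ the point $g\act\eX_j$ is closer to.

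The genuine gap is in~\eqref{MI:nicefunddom}. You justify $(g\act\eX_j,\infty)\cap\fund_x\neq\varnothing$ by asserting that $z_0$ lies on the vertical segment $(g\act\eX_j,\infty)$; it does not -- $\Rea z_0$ is the midpoint of the footprint of the semicircle, not its endpoint $g\act\eX_j$. Moreover, with your shift the point $g\act\eX_j$ can land exactly on the boundary of the new strip (when $d=\lambda$ and $\eps=g\act\eX_j-x$), in which case the vertical line misses the open strip entirely, and the identity $\overline{\mc K}\cap\overline{\fund_{x,\infty}}=\overline{\fund_x}$ you invoke is in general only the inclusion ``$\supseteq$'' and needs an argument. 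The repair is short: since $|\Rea z_0-g\act\eX_j|=d/2\le\lambda/2$, one can choose $x$ so that \emph{both} $g\act\eX_j$ and $\Rea z_0$ lie strictly inside $(x,x+\lambda)$. Then sufficiently high points of $(g\act\eX_j,\infty)$ lie in $\mc K$ by the horoball property~\eqref{eq:horoballinfty}, hence in $\fund_x$, which gives the nonempty intersection; and since $z_0\in\overline{\mc K}$ by~\eqref{MI:z0contained} and $\fund_{x,\infty}$ is an open neighborhood of~$z_0$, points of $\mc K$ arbitrarily close to $z_0$ lie in $\fund_x$, so $z_0\in\overline{\fund_x}$. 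With this replacement your argument matches the paper's (very terse) derivation of~\eqref{MI:nicefunddom} from~\eqref{MI:z0contained} and~\eqref{MI:minigap}; as written, it does not close.
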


\begin{proof}
Let $z_0$ denote the (unique) point of maximal height 
of~$g\act\overline{\base{\Cs{j}}}$. By the choice of~$(j,g)$, we have
\begin{equation}\label{eq:z0globalmax}
 \Ima z_0 = \max\defset{ \Ima z }{ z\in \Gamma\act\overline{\base{\BrU}} }\,.
\end{equation}
In order to show~\eqref{MI:z0contained}, we assume, with the goal to find a 
contradiction, that $g\act\overline{\base{\Cs{j}}}\cap\overline{\mc K} = 
\varnothing$. Then 
\[
 g\act\overline{\base{\Cs{j}}} \subseteq 
\bigcup_{h\in\Gamma\setminus\Stab{\Gamma}{\infty}} \intiso{h}\,.
\]
We fix $p = \textbmat{a}{b}{c}{d} \in \Gamma\setminus\Stab{\Gamma}{\infty}$ such 
that $z_0\in\intiso{p}$. Thus, $|cz_0+d|<1$, and it follows that 
\[
 \Ima (p\act z_0) = \frac{\Ima z_0}{|cz_0+d|^2} > \Ima z_0\,,
\]
which contradicts the choice of~$(j,g)$ and~$z_0$ 
(see~\eqref{eq:z0globalmax}). In turn, 
\[
 g\act\overline{\base{\Cs{j}}}\cap\overline{\mc K} \not=\varnothing\,.
\]
For~\eqref{MI:minigap}, let $\lambda>0$ be the cusp width of~$\wh\infty$. 
Lemma~\ref{LEM:maxheight} shows that the height 
of~$g\act\overline{\base{\Cs{j}}}$ is bounded above by~$\lambda/2$. Thus, 
\[
 |g\act\eX_j - g\act\eY_j|\leq \lambda\,.
\]
Since~$\fund_{x,\infty} = (x,x+\lambda) + i\R_{>0}$ and 
$(g\act\eX_j,\infty)\in\fund_{x,\infty}$, the statement of~\eqref{MI:minigap} 
follows immediately.
Statement~\eqref{MI:nicefunddom} is an immediate consequence 
of~\eqref{MI:z0contained} and~\eqref{MI:minigap}.
\end{proof}

With these preparations we can now provide and prove the enlargement procedure, 
in the proof of the following proposition.

\begin{prop}\label{PROP:allwithfiniteram}
Suppose that $\BrS=\{\Cs 1, \ldots, \Cs N\}$ is infinitely ramified and let~$m$ 
be the number of cusps of~$\Orbi$ not attached to~$\BrS$. Then there exists a 
finitely ramified set of branches for~$\GeoFlow$ of the form
\[
\BrS'\coloneqq \{\Cs{1},\dots,\Cs{N},\Cs{N+1},\dots,\Cs{N+k}\}
\]
for some $k\in\N$, $k\leq 2m$.
\end{prop}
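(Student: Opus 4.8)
The plan is to make good on the program sketched just before the statement: we must add branches to $\BrS$ in order to attach every cusp that is currently unattached, and Proposition~\ref{PROP:Attandram} will then force finite ramification. The first step is to reduce to a single cusp. Since $\Orbi$ is geometrically finite, it has finitely many cusps, and by hypothesis exactly $m$ of them, say represented by $c_1,\dots,c_m\in\wh\R$, are not attached to~$\BrS$. The attachment of a cusp is a local phenomenon near its representative, and the finitely many cusps have disjoint $\Gamma$-orbits, so it suffices to show: for a single unattached cusp~$\wh c$, we can add at most two new branches $\Cs{N+1},\Cs{N+2}$ so that, in the enlarged family, $\wh c$ becomes attached while no previously attached cusp is lost. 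Iterating over $c_1,\dots,c_m$ then yields the bound $k\le 2m$.

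So fix an unattached cusp; conjugating $\Gamma$ and translating $\BrS$ (which changes neither discreteness nor the ramification, by Lemma~\ref{LEM:ramtrans}, and does not affect attachedness of the other cusps if we are careful to keep them inside their Ford neighborhoods) we may assume it is represented by~$\infty$, with cusp width~$\lambda$ and $\Stab{\Gamma}{\infty}=\langle t_\lambda\rangle$, $t_\lambda=\textbmat{1}{\lambda}{0}{1}$. Because $\wh\infty$ is unattached we have $\Att{\BrS}{\infty}=\varnothing$, so Lemma~\ref{LEM:maxheight} applies: every $\Gamma$-translate of every $\overline{\base{\Cs{j}}}$ is a Euclidean semicircle of radius at most $\lambda/2$, and the maximal heights $\mathscr{h}_{\wh\infty}(j)$ exist. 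Pick a pair $(j,g)\in A\times\Gamma$ realizing the global maximal height $\max\{\Ima z : z\in\Gamma\act\overline{\base{\BrU}}\}$; by Lemma~\ref{LEM:maxintersect} the top point $z_0$ of $g\act\overline{\base{\Cs{j}}}$ lies in $\overline{\fund_x}$ for a suitable Ford fundamental domain $\fund_x=\fund_{x,\infty}\cap\mc K$, and moreover there is $\eps>0$ with (say) $(g\act\eX_j-\eps,\ g\act\eY_j-\eps)\subseteq\Rea(\fund_{x,\infty})$, i.e.\ there is genuine horizontal room on one side of $g\act\overline{\base{\Cs{j}}}$ inside the width-$\lambda$ strip. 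The plan is to place a new branch on a \emph{vertical} complete geodesic segment $(b,\infty)_\H$ with $b$ chosen in that gap, oriented so that its half-space $\Plussp{N+1}$ is the side containing $g\act\overline{\base{\Cs{j}}}$; then $\infty$ is an endpoint of $I_{N+1}$, and the $t_\lambda$-translate $t_\lambda^{\pm1}\act\Cs{N+1}$ (equivalently, a second new branch $\Cs{N+2}$ on the vertical segment $(b+\lambda,\infty)_\H$ with the opposite orientation) supplies an interval abutting $\infty$ from the other side. Together these make $\IAtt{\BrS'}{\infty}$ a neighborhood of~$\infty$ in $\wh\R$, so $\wh\infty$ becomes attached. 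One must also verify that $\BrS'=\BrS\cup\{\Cs{N+1},\Cs{N+2}\}$ (or just $\{\Cs{N+1}\}$ if one vertical segment already straddles~$\infty$) is genuinely a set of branches — checking \eqref{BP:closedgeodesicsHtoX}--\eqref{BP:intervaldecomp} for the enlarged family. Here \eqref{BP:closedgeodesicsHtoX} follows by the argument in Remark~\ref{REM:SoBrem}\eqref{SoBrem:leavespaceforflip} (the vertical segment is a lift of a cusp-bounding geodesic only if $b$ is parabolic, which we avoid by taking $b\in\wh\R\setminus\wh\R_{\st}$; it detects a periodic geodesic because Proposition~\ref{PROP:EXliesdense} gives a hyperbolic fixed-point pair in $I_{N+1}\times J_{N+1}$); the half-space and disjointness properties \eqref{BP:completegeodesics}--\eqref{BP:disjointunion} hold because the new vertical segments sit inside $\overline{\fund_x}$ up to $t_\lambda$-translation and hence meet no $\Gamma$-translate of an old branch transversally; and \eqref{BP:intervaldecomp} is arranged by defining the new transition sets from the (finitely many, by \eqref{BP:disjointunion} and local finiteness) branch translates visible from the new branches.

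**Main obstacle.** The delicate point is not the attachment itself but showing that the newly inserted vertical branch can be chosen so that the enlarged family still satisfies \eqref{BP:intervaldecomp} — in particular (\ref{BP:intervaldecomp}\ref{BP:intervaldecompGgeod}), the requirement that the geodesic segments between a new branch and its successors do not hit $\Gamma\act\BrU'$, and the disjointness of the decomposition of $I_{N+1,\st}$. This is where the full strength of Lemma~\ref{LEM:maxheight} and Lemma~\ref{LEM:maxintersect} is needed: the Euclidean-radius bound $\lambda/2$ and the fact that $z_0$ (hence a whole neighbourhood of the top of $g\act\overline{\base{\Cs{j}}}$) sits in $\overline{\fund_x}$ guarantee that a sufficiently thin strip just to one side of $g\act\overline{\base{\Cs{j}}}$, truncated at a large height, is disjoint from $\Gamma\act\overline{\base{\BrU}}$, which is precisely what makes the geodesic-segment condition verifiable. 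The bookkeeping — in the worst case needing two vertical segments per unattached cusp because the horizontal gap may lie on only one side and one needs intervals on both sides of~$\infty$ — is what produces the bound $k\le 2m$ rather than $k\le m$; one new branch suffices exactly when some existing branch translate already has $\infty$ strictly inside its $I$-interval, or when a single vertical segment of width $<\lambda$ can be positioned to straddle~$\infty$, and this case analysis is the bulk of the remaining (routine) work.
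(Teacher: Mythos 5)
There is a genuine gap, and it sits exactly at the point your construction deviates from what the cited lemmas can support: the choice of the foot point of the new vertical branch. You place the complete geodesic segment $(b,\infty)_\H$ at a point~$b$ chosen in the horizontal ``gap'' of Lemma~\ref{LEM:maxintersect}\eqref{MI:minigap} and then justify property~\eqref{BP:disjointunion} for the enlarged family by saying the new vertical segments ``sit inside $\overline{\fund_x}$ up to $t_\lambda$-translation and hence meet no $\Gamma$-translate of an old branch transversally.'' That claim is false: a complete vertical geodesic runs all the way down to the real axis, so it leaves~$\mc K$ (and every single Ford domain) and in general crosses many $\Gamma$-translates of $\overline{\base{\Cs{k}}}$, namely all those whose two endpoints straddle~$b$ --- and for a generic~$b$ in the gap (even one in $\wh\R\setminus\wh\R_{\st}$) nothing in Lemmas~\ref{LEM:maxheight} or~\ref{LEM:maxintersect} excludes such translates; the gap statement only concerns the width of the strip inside $\fund_{x,\infty}$, not the absence of branch translates below it. The way to kill the straddling translates is to anchor the vertical segment at $b=g\act\eX_j$, the common endpoint with the translate $g\act\overline{\base{\Cs{j}}}$ of \emph{maximal} radius: any translate straddling that point must cross the maximal semicircle, which \eqref{BP:disjointunion} for the old family forbids. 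Even then one is not done, because one must also rule out transversal intersections of $(g\act\eX_j,\infty)_\H$ with its own $\Gamma$-translates $h\act(g\act\eX_j,\infty)_\H$, and this requires a separate, nontrivial convexity argument with Ford domains (the strips $L$, $R$, $S$ in the paper's proof); your proposal contains no substitute for it.

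A second, smaller but real gap: the verification of~\eqref{BP:intervaldecomp} for the enlarged family is not reduced to ``finitely many visible translates.'' For vectors $\nu$ based on the new branches one must first prove that $\retime{\BrU'}(\nu)$ and $\pretime{\BrU'}(\nu)$ exist at all (this uses the density of $E(\Orbi)$ from Proposition~\ref{PROP:EXliesdense} together with local finiteness, Proposition~\ref{PROP:branches_locfinite}), and only then can the transition sets be defined and the disjoint decomposition of $\Iset{N+1}$ established; note also that these new transition sets may well be infinite, so finiteness of ``visible'' translates is neither available nor needed. Finally, your criterion for when one added branch suffices is off: the economical case is $\Att{\BrS}{\infty}\neq\varnothing$, i.e.\ $\infty$ is an \emph{endpoint} of some translate $g\act\overline{\base{\Cs{j}}}$, in which case one adds the single reversed branch on that same segment; having $\infty$ strictly inside some $g\act I_j$ is a different condition and does not by itself produce the required neighborhood $\IAtt{\BrS'}{\infty}$ with only one new branch.
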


\begin{proof}
By Proposition~\ref{PROP:Attandram}, the hyperbolic orbisurface~$\Orbi$ has at least 
one cusp that is not attached to~$\BrS$, say~$\wh c$. We will enlarge~$\BrS$ to 
a set of branches~$\BrS'$ to which $\wh c$ is attached and which contains at 
most two branches more than~$\BrS$. Since $\Orbi$ has only finitely many cusps 
as a geometrically finite orbifold, a finite induction then yields the 
statement, including the counting bound.

Without loss of generality, we may suppose that $\wh c$ is represented 
by~$\infty$. (If not, then we pick any representative of~$\wh c$, say~$c$, and 
any $q\in\PSL_2(\R)$ such that $q\act c = \infty$, consider $\defset{ q\act 
\Cs{j}}{j\in A}$ instead of~$\BrS$, $q\Gamma q^{-1}$ instead of~$\Gamma$, 
perform the enlargement as described in what follows and finally undo the 
transformation by applying~$q^{-1}$.) We distinguish the following two cases: 
\begin{align}
 \Att{\BrS}{\infty}&\not=\varnothing  \label{ATT:caseA} \tag{A}
 \intertext{and}
 \Att{\BrS}{\infty}& = \varnothing\,. \label{ATT:caseB} \tag{B}
\end{align}

In Case~\eqref{ATT:caseA} we pick $(j,g)\in\Att{\BrS}{\infty}$ and let 
\[
 \Cs{n+1} \coloneqq\defset{ v\in \UTB\H }{ \base{v} \in 
g\act\overline{\base{\Cs{j}}},\ \gamma_v(+\infty) \in g\act J_j}
\]
be the set of unit tangent vectors that are based at 
$g\act\overline{\base{\Cs{j}}}$ but point into the opposite direction as 
$g\act\Cs{j}$. We emphasize that we allow the whole 
set~$g\act\overline{\base{\Cs{j}}}$ as base points and do not restrict to those 
that lie on a geodesic connecting points in~$\wh\R_{\st}$. We set 
\[
 \BrS'\coloneqq \BrS \cup \{ \Cs{n+1} \}\qquad\text{and}\qquad A'\coloneqq A 
\cup \{n+1\}\,.
\]
In Case~\eqref{ATT:caseB}, Lemma~\ref{LEM:maxheight} shows the existence 
of~$(j,g)\in A\times\Gamma$ such that 
\[
 \max\defset{ \Ima z }{ z\in g\act\overline{\base{\Cs{j}}} } = \max_{k\in A} 
\mathscr{h}_{\wh c}(k)\,.
\]
We let 
\begin{align*}
 \Cs{n+1} & \coloneqq \defset{ v\in \UTB\H }{ \base{v}\in (g\act\eX_j, 
\infty)_\H,\ \gamma_v(+\infty)\in (g\act\eX_j,+\infty) }
 \intertext{and}
 \Cs{n+2} & \coloneqq \defset{ v\in \UTB\H }{ \base{v}\in (g\act\eX_j, 
\infty)_\H,\ \gamma_v(+\infty)\in (-\infty, g\act\eX_j) }
\end{align*}
be the sets of unit tangent vectors based on the geodesic 
segment~$(g\act\eX_j,\infty)_\H$ and pointing into the one or the other of the 
associated half-spaces. We set
\begin{align*}
 \BrS' & \coloneqq \BrS \cup \{ \Cs{n+1}, \Cs{n+2} \}
 \intertext{and}
 A' & \coloneqq A \cup \{n+1, n+2\}\,.
\end{align*}
In both cases we set
\[
 \BrU'\coloneqq \bigcup_{j\in A'} \Cs{j}\,.
\]
To show \eqref{BP:closedgeodesicsXtoH} for~$\BrS'$, we note that $\BrS'$ is a 
superset of~$\BrS$. Since $\BrS$ satisfies~\eqref{BP:closedgeodesicsXtoH}, 
$\BrS'$ does so as well.
This yields~$\eqref{BP:coverlimitset}$ by virtue of Proposition~\ref{PROP:oldB1}.

The validity of~\eqref{BP:completegeodesics},~\eqref{BP:pointintohalfspaces}, and~\eqref{BP:allvectors} for~$\BrS'$ 
is obvious from the construction of the additional branches. For each $j\in A'$ 
we define the sets $I_{j}$, $J_{j}$, $\Plussp{j}$ and~$\Minussp{j}$ as 
in~\eqref{BP:pointintohalfspaces}. For $j\in A$ these sets obviously coincide 
with those related to~$\BrS$. In Case~\eqref{ATT:caseA} we have 
\[
 I_{n+1} = g\act J_j\,,\quad J_{n+1} = g\act I_j\quad\text{and}\quad \PMsp{n+1} 
= g\act \MPsp{j}\,.
\]
In Case~\eqref{ATT:caseB} we have 
\[
 I_{n+1} = J_{n+2} = (g\act\eX_j,+\infty)\,,\quad J_{n+1} = I_{n+2} = 
(-\infty,g\act\eX_j)
\]
and 
\begin{align*}
 \Plussp{n+1} & = \Minussp{n+2} = \defset{ z\in\H }{ \Rea z > g\act\eX_j }
 \\
 \Minussp{n+1} & = \Plussp{n+2} = \defset{ z\in\H }{ \Rea z < g\act\eX_j }\,.
\end{align*}

We now prove \eqref{BP:closedgeodesicsHtoX} for~$\BrS'$. For $j\in A'\setminus 
A$ we pick $(x,y) \in \Iset{j}\times\Jset{j}$ and fix $\eps>0$ such that 
$B_\eps(x)\subseteq I_j$ and $B_\eps(y)\subseteq J_j$. (Note that 
$\infty\notin\wh\R_{\st}$.) Then $(x,y)\in 
\Lambda(\Gamma)\times\Lambda(\Gamma)$. By Proposition~\ref{PROP:EXliesdense} we 
find a geodesic~$\gamma$ on~$\H$ that represents a periodic geodesic on~$\Orbi$ 
and satisfies
\[
 \gamma(+\infty) \in B_\eps(x) \qquad\text{and}\qquad \gamma(-\infty) \in 
B_\eps(y)\,.
\]
The geodesic intersects~$\Cs{j}$ as can be seen directly from the definition of 
this set. This shows~\eqref{BP:closedgeodesicsHtoX}.

Property~\eqref{BP:disjointunion} for~$\BrS'$ in Case~\eqref{ATT:caseA} follows 
immediately from~$\overline{\base{\Cs{n+1}}} = g\act\overline{\base{\Cs{j}}}$ 
and $\PMsp{n+1} = g\act\MPsp{j}$. To establish \eqref{BP:disjointunion} 
for~$\BrS'$ in Case~\eqref{ATT:caseB} we let $a,b\in A'$, $h\in\Gamma$ be such 
that 
\[
 \overline{\base{\Cs{a}}} \cap h\act\overline{\base{\Cs{b}}} \not= 
\varnothing\,.
\]
We consider first the case that $a=n+1$, $b=n+2$ and $\overline{\base{\Cs{a}}} = 
h\act\overline{\base{\Cs{b}}}$. Recall that 
\[
 \overline{\base{\Cs{n+1}}} = \overline{\base{\Cs{n+2}}} = 
(g\act\eX_j,\infty)_\H\,.
\]
From $\Att{\BrS}{\infty} = \varnothing$ it follows that $\wh{g\act\eX_j} \not= 
\wh\infty$ (otherwise $(j,g)\in\Att{\BrS}{\infty}$). Thus, $h$ fixes both 
endpoints of~$\overline{\base{\Cs{n+1}}}$. Since $\infty$ is cuspidal, $h=\id$. 
Further, by construction, $\PMsp{n+1} = \MPsp{n+2}$. Hence, 
\eqref{BP:disjointunion} is satisfied in this case.

The previous case in combination with the fact that \eqref{BP:disjointunion} is 
satisfied for~$\BrS$ allows us to restrict all further considerations to the 
case that $a=n+1$ and $b\in A\cup\{n+1\}$. We show first that necessarily 
$b=n+1$. To that end, in order to seek a contradiction, we assume that $b\in A$. 
Then $\overline{\base{\Cs{a}}} \not=h\act\overline{\base{\Cs{b}}}$ as 
$\Att{\BrS}{\infty} = \varnothing$, and hence the geodesic 
segments~$\overline{\base{\Cs{a}}}$ and $h\act\overline{\base{\Cs{b}}}$ 
intersect transversally. We recall the tuple $(j,g)\in A\times\Gamma$ from the 
construction of~$\BrS'$. Since the geodesic 
segment~$g\act\overline{\base{\Cs{j}}}$ has maximal radius among all 
semi-circles in $\Gamma\act\overline{\base{\Cs{k}}}$ with $k\in A$, and 
$g\act\eX_j$ is a joint endpoint of~$g\act\overline{\base{\Cs{j}}}$ 
and~$\overline{\base{\Cs{n+1}}}$, the geodesic 
segment~$h\act\overline{\base{\Cs{b}}}$ 
intersects~$g\act\overline{\base{\Cs{j}}}$. Since $\BrS$ 
satisfies~\eqref{BP:disjointunion}, it follows that 
$h\act\overline{\base{\Cs{b}}} = g\act\overline{\base{\Cs{j}}}$. But then 
$h\act\overline{\base{\Cs{b}}}$ does not intersect~$\overline{\base{\Cs{n+1}}}$. 
In turn, this case is impossible. 

It remains to consider the case that $a=b=n+1$ and 
\[
 \overline{\base{\Cs{n+1}}} \cap h\act\overline{\base{\Cs{n+1}}} 
\not=\varnothing
\]
but 
\[
 \overline{\base{\Cs{n+1}}} \not= h\act\overline{\base{\Cs{n+1}}}\,.
\]
We let $B\coloneqq \overline{\base{\Cs{n+1}}}$ and suppose without loss of 
generality that the endpoints of~$g\act\overline{\base{\Cs{j}}}$ satisfy 
\[
 g\act\eX_j < g\act\eY_j\,.
\]
(If $g\act\eX_j>g\act\eY_j$, the argumentation in what follows applies with some 
changes of orderings.) Since $h\act B$ is a non-vertical complete geodesic 
segment such that the real interval enclosed between its two endpoints contains 
$g\act\eX_j$, which is the common endpoint of~$B$ 
and~$g\act\overline{\base{\Cs{j}}}$, the argumentation in the previous paragraph 
yields that 
\begin{equation}\label{eq:inconvhull}
 g\act\overline{\base{\Cs{j}}} \subseteq \conv_{\mathrm{E}}(h\act B) \setminus 
\partial\conv_{\mathrm{E}}(h\act B) \,, 
\end{equation}
where, as in the proof of Lemma~\ref{LEM:maxheight}, $\conv_{\mathrm{E}}(M)$ 
denotes the convex hull
\index[defs]{convex hull}%
\index[symbols]{conv@$\conv_{\mathrm{E}}$}%
of the set~$M$ in~$\C$ with respect to the euclidean 
metric. 

We recall the definition of the sets~$\mc K, \fund_{x,\infty}, \fund_x$ 
from~\eqref{EQNDEF:setK}, \eqref{EQNDEF:fundxinfty} and~\eqref{EQNDEF:fundx} and 
fix $x\in\R$ such that $B\subseteq \fund_{x,\infty}$ and such that the point~$z_0$ of 
maximal height of~$g\act\overline{\base{\Cs{j}}}$  is contained 
in~$\overline{\fund_x}$ and hence in~$\overline{\mc K}$. The choice of~$x$ is 
possible by Lemma~\ref{LEM:maxintersect}. With~\eqref{eq:inconvhull}, we obtain 
that 
\begin{equation}\label{eq:z0inboth}
z_0\in \conv_{\mathrm{E}}(h\act B) \cap \fund_x\,.
\end{equation}
We consider the strip-shaped set
\[
 S \coloneqq \defset{ w + \i t }{ w \in g\act\overline{\base{\Cs{j}}} \cap 
\overline{\mc K}\,,\ \Rea w \in \Rea(\fund_{x,\infty})\,,\ t>0 }\,.
\]
The set~$S$ is convex due to the convexity of~$\mc K$ and~$\fund_{x,\infty}$ and 
the boundary structure of~$\mc K$. Further, 
\begin{equation}\label{eq:SinF}
S\subseteq\fund_x 
\end{equation}
and $z_0+\i \R_{>0}\subseteq S$. The latter implies that $h\act B\cap S 
\not=\varnothing$. We now define the two domains
\begin{align*}
L&\coloneqq\left(\H\setminus\conv_{\mathrm 
E}(g\act\overline{\base{\Cs{j}}})\right)\cap\defset{z\in\H}{\Rea{z}<\inf_{w\in 
S}\Rea{w}}
\intertext{and}
R&\coloneqq\left(\H\setminus\conv_{\mathrm 
E}(g\act\overline{\base{\Cs{j}}})\right)\cap\defset{z\in\H}{\Rea{z}>\sup_{w\in 
S}\Rea{w}}\,.
\end{align*}
Because the sets~$L$,~$R$ and~$\overline{L}\cup\overline{R}\cup\overline{S}$ are 
convex, and $h\act B$ intersects~$S$, for the pair~$(h\act\infty,hg\act\eX_j)$ 
of the two endpoints of~$h\act B$ we obtain
\begin{align}\label{EQN:endpointsinLR}
\left(h\act\infty,hg\act\eX_j\right)\in\left(\geo L\times\geo 
R\right)\cup\left(\geo R\times \geo L\right)\,.
\end{align}
Since $B\cap\fund_x$ is of the form~$(b,\infty)_{\H}$ for some point~$b\in\H$, 
and $\fund_x$ contains all subsets of the form $\defset{ z\in\H }{ \Rea z\in 
(b-\eps, b+\eps),\ \Ima z > y_0}$ for sufficiently small~$\eps>0$ and 
sufficiently large~$y_0>0$, \eqref{EQN:endpointsinLR} and the convexity of the 
sets~$L,R$ imply that we have
\begin{align}
h\act\fund_x\cap L\ne\varnothing \quad & \text{if $h\act\infty\in \geo L$}
\label{eq:nonemptL}
\intertext{and}
h\act\fund_x\cap R\ne\varnothing \quad & \text{if $h\act\infty\in \geo R$}\,.
\label{eq:nonemptR}
\end{align}
We now aim to show that $h\act\fund_x$ indeed intersects $L$ and~$R$. To that 
end we recall that we suppose that $g\act\eX_j<g\act\eY_j$. 
If~$h\act\infty\in\geo L$, then~$hg\act\eX_j\in\geo R$ and
\[
\Rea{\left(hg\act\overline{\base{\Cs{j}}}\right)}\subseteq\left[hg\act\eX_j,
+\infty\right)\,.
\]
It follows that $h\act z_0\in h\act\fund_x \cap R$. If~$h\act\infty\in\geo R$, 
then~$hg\act\eX_j\in\geo L$ and, taking advantage of~\eqref{BP:disjointunion} 
for~$\BrS$, we find
\[
\Rea{\left(hg\act\overline{\base{\Cs{j}}}\right)}\subseteq\left[hg\act\eX_j,
g\act\eX_j\right]
\]
and hence $h\act z_0\in h\act\fund_x \cap L$. Combining with~\eqref{eq:nonemptL} 
and~\eqref{eq:nonemptR}, respectively, we find 
\[
 h\act\fund_x\cap L\not=\varnothing\qquad\text{and}\qquad h\act\fund_x\cap 
R\not=\varnothing\,.
\]
From the convexity of~$h\act\fund_x$ and the definitions of the sets~$L$,~$R$, 
and~$S$ it now follows that
\[
h\act\fund_x\cap S\ne\varnothing\,.
\]
In combination with~\eqref{eq:SinF} this yields a contradiction. In turn, 
\eqref{BP:disjointunion} is valid for~$\BrS'$.

In order to establish~\eqref{BP:intervaldecomp} for~$\BrS'$, we first show that 
the next and previous intersection times exist for all elements 
in~$\BrU'_{\st}$. To that end let $j\in A'$ and $\nu\in\Cs{j,\st}$. Using that 
$\Gamma\act\,\BrS'$ is locally finite by 
Proposition~\ref{PROP:branches_locfinite}, we see that if there exists any 
intersection between the geodesic~$\gamma_\nu$ and~$\Gamma\act\BrU'$ at some 
time~$t>0$, then there exists a time-minimal one and hence~$\retime{\BrU'}(\nu)$ 
exists. 

We suppose first that $j\in A$. Then $\retime{\BrU}(\nu)$ exists by 
Proposition~\ref{PROP:CofSoBst}\eqref{CofSoB:firstreturn}. Thus, 
$\retime{\BrU'}(\nu)$ exists as well. We suppose now that $j\in A'\setminus A$. 
Then 
\[
 \bigl( \gamma_\nu(+\infty), \gamma_\nu(-\infty) \bigr) \in 
\wh\R_{\st}\times\wh\R_{\st}\,.
\]
As $\wh\R_{\st} \subseteq \Lambda(\Gamma)$, Proposition~\eqref{PROP:EXliesdense} 
shows that for each~$\eps>0$ we find a geodesic~$\eta_\eps$ on~$\H$ that 
represents a periodic geodesic on~$\Orbi$ and whose endpoints satisfy
\[
 \eta_\eps(\pm\infty) \in B_\eps\bigl( \gamma_\nu(\pm\infty) \bigr)\,.
\]
Since \eqref{BP:closedgeodesicsXtoH} is valid for~$\BrS$, we find $(k_\eps, 
g_\eps)\in A\times\Gamma$ such that $\eta_\eps$ 
intersects~$g_\eps\act\Cs{k_\eps}$. Thus,
\[
 \bigl( \eta_\eps(+\infty), \eta_\eps(-\infty) \bigr) \in g_\eps\act 
\Iset{k_\eps} \times g_\eps\act \Jset{k_\eps}\,.
\]
Since $g_\eps\act I_{k_\eps}$ and $g_\eps\act J_{k_\eps}$ are open and 
$\eX_{k_\eps}, \eY_{k_\eps}\notin \wh\R_{\st}$, we can choose $\eps$ so small 
that 
\[
 \bigl( \gamma_\nu(+\infty), \gamma_\nu(-\infty) \bigr) \in g_\eps\act 
I_{k_\eps} \times g_\eps\act J_{k_\eps}\,.
\]
We fix such an~$\eps$ and set $g\coloneqq g_\eps$, $k\coloneqq k_\eps$. Now 
\[
 \bigl( \gamma_\nu(+\infty), \gamma_\nu(-\infty) \bigr) \in g\act\Iset{k} \times 
g\act\Jset{k}
\]
and \eqref{BP:allvectors} for~$\BrS$ show that $g^{-1}\act\gamma_\nu$ 
intersects~$\Cs{k,\st}$, say in $\eta\in\Cs{k,\st}$ at time~$t_0$. We consider 
the system of iterated sequences of~$\eta$ with respect to~$\BrS$, as defined 
in~\eqref{EQDEF:ittime0}--\eqref{eq:itseq}. Lemma~\ref{LEM:accpointsofit} shows 
the existence of~$n\in\Z$ such that 
\[
 \ittime{\BrU,n}(\nu) > t_0\,,
\]
which means that $g^{-1}\act\gamma_\nu$ intersects~$\ittrans{\BrU,n}\act 
\Cs{\itindex{\BrU,n}(\nu)}$ at a time larger than~$t_0$. Thus, there exists an 
intersection between~$\gamma_\nu$ and~$\BrU'$ at a positive time, and hence 
$\retime{\BrU'}(\nu)$ exists. Analogously, we can show the existence 
of~$\pretime{\BrU'}(\nu)$ in both cases. 

For $j,k\in A'$ we set, motivated by 
Proposition~\ref{PROP:CofSoBst}\eqref{CofSoB:representsets},
\[
 \Transprime{}{j}{k} \coloneqq \defset{ g\in\Gamma }{ 
\exists\,\nu\in\Cs{j,\st}\colon \gamma_\nu'(\retime{\BrU'}(\nu)) \in g\act\Cs{k} 
}\,.
\]
\index[symbols]{Ga@$\Transprime{}{j}{k}$}%
We now show that $\BrS'$ 
satisfies~(\ref{BP:intervaldecomp}\ref{BP:intervaldecompGdecomp}) with 
$(\Transprime{}{j}{k})_{j,k\in A'}$ in place of~$(\Trans{}{a}{b})_{a,b\in A'}$. 
Let $j,k\in A'$ and $g\in\Transprime{}{j}{k}$. Thus, we find~$\nu\in\Cs{j,\st}$ 
such that $\gamma_\nu(\retime{\BrU'}(\nu))\in g\act\Cs{k}$ and hence 
\[
 \gamma_\nu(+\infty) \in \Iset{j}\cap g\act I_k\,.
\]
\eqref{BP:disjointunion} implies that $g\act I_k\subseteq I_j$. It follows that 
\begin{align}
 \bigcup_{k\in A'} \bigcup_{g\in\Transprime{}{j}{k}} g\act I_k \subseteq I_j
 \intertext{and}
 \bigcup_{k\in A'} \bigcup_{g\in\Transprime{}{j}{k}} g\act \Iset{k} \subseteq 
\Iset{j}\,. 
 \label{eq:prime_st_union}
\end{align}
The disjointness of these unions follows immediately 
from~\eqref{BP:disjointunion}. Further, for any~$x\in\Iset{j}$ we choose $y\in 
\Jset{j}$. By~\eqref{BP:allvectors}, the geodesic~$\gamma$ from~$y$ to~$x$ 
intersects~$\Cs{j,\st}$, say in~$\nu$. Now~$\retime{\BrU'}(\nu)$ exists as we 
have seen above. Thus, 
\[
\gamma_\nu( \retime{\BrU'}(\nu) ) \in 
\ittrans{\BrU',1}(\nu)\act\Cs{\itindex{\BrU',1}(\nu),\st}
\]
and hence $\ittrans{\BrU',1}(\nu) \in \Transprime{}{j}{\itindex{\BrU',1}(\nu)}$. 
Therefore
\[
 x = \gamma_\nu(+\infty) \in \ittrans{\BrU',1}(\nu)\act 
\Iset{\itindex{\BrU',1}(\nu)}\,.
\]
It follows that the inclusion in~\eqref{eq:prime_st_union} is indeed an 
equality. This completes the proof 
of~(\ref{BP:intervaldecomp}\ref{BP:intervaldecompGdecomp}). The proof 
of~(\ref{BP:intervaldecomp}\ref{BP:intervaldecompback}) is analogous, using the 
existence of~$\pretime{\BrU'}(\nu)$ for all~$\nu\in\BrU'_{\st}$. 
(\ref{BP:intervaldecomp}\ref{BP:intervaldecompGgeod}) follows immediately 
from~\eqref{BP:disjointunion} and the definition of the 
sets~$\Transprime{}{j}{k}$ for $j,k\in A'$.
\end{proof}

\begin{example}\label{EX:G3infram}
Recall the group~$\Gamma_{\lambda}$ from Example~\ref{EX:G3Def}.
A possible alternative choice of a set of branches is given by~$\BrS'=\{\Cs{1}\}$ 
with
\[
\Cs{1}\coloneqq\defset{\nu\in\UTB\H}{\base{\nu}\in(-\varepsilon,1+\varepsilon)_{
\H},\,\gamma_{\nu}(+\infty)\in(-\varepsilon,1+\varepsilon)_{\R}}\,,
\]
for some~$\varepsilon\in[0,\tfrac{\lambda - 1}{2}]$.
Figure~\ref{FIG:G3infram} visualizes the location of~$\Cs{1}$ in relation to the 
fundamental domain~$\fund$ from Figure~\ref{FIG:G3fund} as well as the 
translates of~$\Cs{1}$ determining the set~$\Trans{}{1}{1}$.
We see that~$\BrS'$ is infinitely ramified (we 
have~$\Att{\BrS}{\infty}=\varnothing$).
The (slow) transfer operator with parameter~$s\in\C$,~$\Rea{s}\gg1$, induced 
by~$\BrS'$ is given by
\[
\TO{s}=\sum_{\substack{n=-\infty\\n\ne0}}^{\infty}\left(\alpha_s(t_{\lambda}
^nh)+\alpha_s(t_{\lambda}^nh^{-1})\right)\,.
\]
Note that, since~$\lambda>1$, the 
transformations~$t_{\lambda}^nh,\,t_{\lambda}^nh^{-1}$ are hyperbolic for all 
exponents~$n\in\Z\setminus\{0\}$.

\begin{figure}[h]
\begin{tikzpicture}[scale=10]
\tikzmath{\q=1/(2*sqrt(3));
				  \r=.8*1/3;
				  \v=.1+\r;
				  \w=.1+2*\r;
				  }
\fill[pattern=north west lines, pattern color=lightgray!70] (1,0) -- (.9,0) arc 
(0:120:\r) -- (.5,.8*\q) arc (60:180:\r) -- (0,0)  -- (0,.6) -- (1,.6) -- cycle;
\draw[dashed] (.9,0) arc (0:120:\r);
\draw[dashed] (.5,.8*\q) arc (60:180:\r);
\draw[dashed] (0,0) -- (0,.6);
\draw[dashed] (1,0) -- (1,.6);
\fill[color=gray!50] (.95,0) arc (0:180:3*\r/2+.05) -- (.07,0) arc 
(180:0:3*\r/2+.03) -- cycle;
\draw (.95,0) arc (0:180:3*\r/2+.05);
\foreach \x/\y in {.05/$-\varepsilon$,.95/$1+\varepsilon$}
    \draw (\x,0.00) -- (\x,-0.02) node [below] {\y};
\foreach \s in {1,-1}{
	\foreach \ss in {1,-1}{
		\foreach \n in {1,2,3,4}{
			\tikzmath{
				\htl = ((3/2 + \s/2)*(-.04) + (3/2 + \s/2)*\ss*\n*1.2 - 
1)/(3*(-.04) + 3*\ss*\n*1.2 - (3/2 - \s/2));
				\htr = ((3/2 + \s/2)*(1.04) + (3/2 + \s/2)*\ss*\n*1.2 - 
1)/(3*(1.04) + 3*\ss*\n*1.2 - (3/2 - \s/2));
				\rad = .5*(\htr - \htl);
				\kleinrad = \rad/(5 - \n);
			}
			\fill[color=gray!25] (\htl,0) arc (180:0:\rad) -- (\htr - 
\kleinrad,0) arc (0:180:\rad - \kleinrad) -- cycle;
			\draw (\htl,0) arc (180:0:\rad);
		}
	}
	\tikzmath{
		\htl = ((3/2 + \s/2)*(-.04) + (3/2 + \s/2)*1.2 - 1)/(3*(-.04) + 3*1.2 - 
(3/2 - \s/2));
		\htr = ((3/2 + \s/2)*(1.04) + (3/2 + \s/2)*(-1.2) - 1)/(3*(1.04) + 
3*(-1.2) - (3/2 - \s/2));
		\cen = (3/2 + \s/2)/3;
	}
	\draw (\htl,0) -- (\htl,-.01);
	\draw (\htr,0) -- (\htr,-.01);
	\draw (\cen,0) -- (\cen,-.01);
	\draw [decorate,decoration={brace,amplitude=10pt,mirror}]
(\htl,-.01) -- (\cen,-.01) node [below,midway,yshift=-10pt]
{\ifthenelse{\s=1}{$\footnotesize ht_{\lambda}^n\act\Cs{1}$}{$\footnotesize 
h^{\s}t_{\lambda}^n\act\Cs{1}$}};	
	\draw [decorate,decoration={brace,amplitude=10pt,mirror}]
(\cen,-.01) -- (\htr,-.01) node [below,midway,yshift=-10pt]
{\ifthenelse{\s=1}{$\footnotesize ht_{\lambda}^{-n}\act\Cs{1}$}{$\footnotesize 
h^{\s}t_{\lambda}^{-n}\act\Cs{1}$}};	
}
\draw[style=thick] (-.1,0) -- (1.1,0);
\coordinate [label=below:$\color{gray}\fund$] (F) at (.5,.55);
\coordinate [label=below:$\Cs{1}$] (C1) at (.5,3*\r/2+.03);
\end{tikzpicture}
\caption[halfspaces]{The set of branches~$\BrS'=\{\Cs{1}\}$ 
for~$\Gamma_{\lambda}$ and its successors. For~$n\to\infty$ the successors 
converge towards the boundary points~$\tfrac{1}{3}$ 
resp.~$\tfrac{2}{3}$.}\label{FIG:G3infram}
\end{figure}

Since~$\base{\Cs{1}}\subseteq\fund$ and the radii of all isometric spheres 
of~$\Gamma_{\lambda}$ are bounded by~$1/3$, the branch~$\Cs{1}$ is 
clearly of maximal height in the sense that
\[
\max\defset{\Ima{z}}{z\in\base{\Cs{1}}}=\mathscr{h}_{\wh \infty}(1)\,.
\]
Hence, by following the instructions given in the proof of 
Proposition~\ref{PROP:allwithfiniteram}, we obtain a finitely ramified set of 
branches from~$\BrS'$ by adding the two branches
\begin{align*}
\Cs{2}&\coloneqq\defset{\nu\in\UTB\H}{\base{\nu}\in(-\varepsilon,\infty)_{\H},\,
\gamma_{\nu}(+\infty)\in(-\infty,\varepsilon)_{\R}}
\intertext{and}
\Cs{3}&\coloneqq\defset{\nu\in\UTB\H}{\base{\nu}\in(-\varepsilon,\infty)_{\H},\,
\gamma_{\nu}(+\infty)\in(-\varepsilon,+\infty)_{\R}}\,.
\end{align*}
In Figure~\ref{FIG:G3finram} the emerging set of 
branches~$\BrS''=\{\Cs{1},\Cs{2},\Cs{3}\}$ together with its relevant successors 
in the strip~$\left(\tfrac{1-\lambda}{2},\tfrac{1+\lambda}{2}\right)+\i\R_{>0}$ 
is portrayed.
Note that the additional successors~$t_{\lambda}\act\Cs{3}$ 
resp.~$t_{\lambda}^{-1}\act\Cs{1}$ and~$t_{\lambda}^{-1}\act\Cs{2}$ appear to 
the right resp. the left of it.
Now the slow transfer operator induced by~$\BrS''$, in matrix form, reads as
\[
\TO{s}=\begin{pmatrix}
0&\alpha_s(t_{\lambda})&1\\
\alpha_s(t_{\lambda}h)+\alpha_s(t_{\lambda}h^{-1})&\alpha_s(t_{\lambda})&0\\
\alpha_s(t_{\lambda}^{-1}h)+\alpha_s(t_{\lambda}^{-1}h^{-1})&0&\alpha_s(t_{
\lambda}^{-1})
\end{pmatrix}\,.
\]

\begin{figure}[h]
\begin{tikzpicture}[scale=10]
\tikzmath{\q=1/(2*sqrt(3));
				  \r=.8*1/3;
				  \v=.1+\r;
				  \w=.1+2*\r;
				  }
\fill[pattern=north west lines, pattern color=lightgray!70] (1,0) -- (.9,0) arc 
(0:120:\r) -- (.5,.8*\q) arc (60:180:\r) -- (0,0)  -- (0,.6) -- (1,.6) -- cycle;
\draw[dashed] (.9,0) arc (0:120:\r);
\draw[dashed] (.5,.8*\q) arc (60:180:\r);
\draw[dashed] (0,0) -- (0,.44);
\draw[dashed] (0,.47) -- (0,.6);
\draw[dashed] (1,0) -- (1,.6);
\fill[color=gray!50] (.07,0) -- (.07,.6) -- (.03,.6) -- (.03,0) -- cycle;
\draw (.05,0) -- (.05,.6);
\fill[color=gray!50] (.95,0) arc (0:180:3*\r/2+.05) -- (.07,0) arc 
(180:0:3*\r/2+.03) -- cycle;
\draw (.95,0) arc (0:180:3*\r/2+.05);
\foreach \x/\y in {.05/$-\varepsilon$,.95/$1+\varepsilon$}
    \draw (\x,0.00) -- (\x,-0.02) node [below] {\y};
\def \lam {1.2}
\def \epp {.04}
\tikzmath{
	\poia = (2*\lam - 1 - 2*\epp)/(3*\lam - 1 - 3*\epp);
	\poib = (2*\lam - 1 - 2*\epp)/(3*\lam - 2 - 3*\epp);
	\poic = (\lam - 1 - \epp)/(3*\lam - 2 - 3*\epp);
	\poid = (\lam - \epp)/(3*\lam - 1 - 3*\epp);
}
\foreach \x/\y/\z in 
{\poia/2/$ht_{\lambda}\act\Cs{3}$,\poib/2/$ht_{\lambda}^{-1}\act\Cs{2}$,
\poic/1/$h^{-1}t_{\lambda}\act\Cs{3}$,\poid/1/$h^{-1}t_{\lambda}^{-1}\act\Cs{2}$
}{
	\tikzmath{
		\poi = min(\x,\y/3);
		\rad = .5*abs(\x - \y/3);
		\kleinrad = .02;
	}
	\fill[color=gray!25] (\poi,0) arc (180:0:\rad) -- (\poi + 2*\rad - 
\kleinrad,0) arc (0:180:\rad - \kleinrad) -- cycle;
	\draw (\poi,0) arc (180:0:\rad);
	\draw (\poi + \rad,\rad - .01) -- (\poi + \rad,-.02) node [below] {\z};
}
\draw[style=thick] (-.1,0) -- (1.1,0);
\coordinate [label=below:$\color{gray}\fund$] (F) at (.5,.55);
\coordinate [label=below:$\Cs{1}$] (C1) at (.5,3*\r/2+.03);
\coordinate [label=left:$\Cs{2}$] (C2) at (.042,.45);
\coordinate [label=right:$\Cs{3}$] (C3) at (.058,.45);
\end{tikzpicture}
\caption[halfspaces]{The set of branches~$\BrS''=\{\Cs{1},\Cs{2},\Cs{3}\}$ 
for~$\Gamma_{\lambda}$ and its successors.}\label{FIG:G3finram}
\end{figure}
\end{example}

\section{Branch reduction}\label{SEC:branchred}

Throughout this section let $\Gamma$ be a geometrically finite Fuchsian group 
with at least one hyperbolic element and let $\Orbi\coloneqq\quod{\Gamma}{\H}$ be the 
associated hyperbolic orbisurface. Let $N\in\N$, $A\coloneqq\{1,\ldots,N\}$ and let 
\[
 \BrS \coloneqq \defset{ \Cs j }{ j\in A }
\]
be a set of branches for the geodesic flow on~$\Orbi$. Further let 
\[
 \BrU \coloneqq \bigcup_{j\in A} \Cs j
\]
denote the branch union, and set 
\[
 \CrSc \coloneqq \pi(\BrU)\,,
\]
where $\pi\colon\UTB\H\to\UTB\Orbi$ is the canonical quotient map 
from~\eqref{eq:def_pi2}. In general, the cross section~$\CrSc$ and the set of branches~$\BrS$ do not yet give rise to a strict transfer operator approach. More precisely, using the notation from Section~\ref{SUBSEC:stricttrans}, if we attempt to use the family of intervals~$(I_a)_{a\in A}$ as part of a structure tuple and form, for $a,b\in A$, the sets~$P_{a,b}, C_{a,b}$ and $(g_p)_{p\in P_{a,b}}$ of elements of~$\Gamma$ such that the associated discrete dynamical system~$(D,F)$ (see explanation right after Property~\ref{staPROP1}) coincides with the discrete dynamical system associated to~$\BrS$ (see Section~\ref{SUBSEC:slowtrans}), then Properties~\ref{staPROP1}--\ref{staPROP5} from Section~\ref{SUBSEC:stricttrans} are typically not satisfied. For the associated transfer operators it means that we typically cannot find a Banach space on which they act as nuclear operators and have a well-defined Fredholm determinant (even ignoring the requirement that it should be related to the Selberg zeta function). This issue, if present, originates from $(D,F)$ not being uniformly expanding. The non-uniform expansiveness of~$(D,F)$ can have the following two reasons: 
\begin{enumerate}[label=$\mathrm{(\roman*)}$, ref=$\mathrm{\roman*}$]
 \item\label{issue1} The identity element of~$\Gamma$ is among the action elements of~$F^n$ for some $n\in\N$. That is, some iterate of the map~$F$ has a submap of the form 
 \[
  \wt{\Iset{a}}\times \{b\} \to \wt{\Iset{a}}\times \{a\}\,,\quad (x,b)\mapsto (x,a)
 \]
 for some interval~$\wt{\Iset{a}}$ (being a naturally formed subinterval of~$\Iset{a}$).
 \item\label{issue2} Some iterate of the map~$F$ has a submap consisting of the action of a parabolic element of~$\Gamma$, and the fixed point of this parabolic element is an inexhaustible source for iterations. That is, there exists~$n\in\N$ such that $F^n$ has a submap conjugate to 
 \[
  (1,\infty)_{\st} \times \{a\} \to (0,\infty)_{\st} \times \{a\}\,,\quad (x,a)\mapsto (x-1,a)\,.
 \]
 Then any iterate of~$F^n$ has a submap of this form, and hence a ``big part'' in which no expansion takes place. 
\end{enumerate}

For the set of branches~$\BrS$, issue~\eqref{issue1} means that $\BrS$ contains a branch which contains an element, say~$\nu$, such that the associated geodesic~$\gamma_\nu$ intersects another branch in~$\BrS$. Issue~\eqref{issue2} is present if $\Orbi$ has cusps. The cross section~$\CrSc$ detects every winding of a geodesic around a cusp as a separate event, and hence the set of branches~$\BrS$ and the associated discrete dynamical system~$(D,F)$ encode each single of them separately. 

To overcome these issues we require an appropriate acceleration of the dynamics, which 
translates to a reduction procedure of the branches. This will be done in three 
separate steps, which we call \emph{branch reduction}, \emph{identity elimination} and \emph{cuspidal acceleration}. 

The branch reduction, which we present in this section (Algorithms~\ref{nodereductionI} and~\ref{nodereductionII}), aims at simplifying the constructions by reducing the number of branches to a ``minimum.'' Albeit not being absolutely necessary, in many cases the branch reduction considerably reduces the complexity of the situation. In addition, it provides several intermediate ``reduced slow transfer operator families,'' which are typically useful for other applications as well. The identity elimination is discussed in Section~\ref{SEC:stepredux}, the cuspidal acceleration in Section~\ref{SEC:cuspacc}.

\subsection{Return graphs}\label{SUBSEC:returngraphs}

We associate to the set of branches~$\BrS$ a directed graph~$\ReturnGraph{}$ 
with weighted edges, called \emph{return graph}, which encodes the next intersection properties among the branches in~$\BrS$ in just the right way for an efficient presentation and discussion of the branch reduction algorithm. The return graph~$\ReturnGraph{}$ associated to~$\BrS$ is defined as follows:
\index[defs]{return graph}%
\index[symbols]{RG@$\ReturnGraph{}$}%
\index[defs]{vertex}%
\index[defs]{edge}%
\begin{itemize}
 \item The set of nodes or vertices of~$\ReturnGraph{}$ is~$A$.
 \item The set of edges of~$\ReturnGraph{}$ from the node~$j$ to the node~$k$ is 
bijective to the forward transition set~$\Trans{}{j}{k}$. For each $g\in 
\Trans{}{j}{k}$, the graph~$\ReturnGraph{}$ contains an edge from~$j$ to~$k$ 
with weight~$g$.
\end{itemize}

Let $j,k$ be nodes of~$\ReturnGraph{}$. We call $k$ a \emph{successor}
\index[defs]{successor}%
of~$j$ if 
$\ReturnGraph{}$ has an edge from~$j$ to~$k$, and a \emph{predecessor}
\index[defs]{predecessor}%
of~$j$  
if $\ReturnGraph{}$ has an edge from~$k$ to~$j$. For an edge from~$j$ to~$k$ 
weighted by~$g$ we will often write $j\edge{g}k$. A \emph{path}
\index[defs]{path}%
\index[symbols]{j@$j\edge{g}k$}%
in~$\ReturnGraph{}$ of length~$m\in\N$ is a sequence of consecutive edges of the 
form
\begin{equation}\label{EQ:path}
j_0\edge{g_1}j_1\edge{g_2}\dots
\edge{g_m}j_{m}\,.
\end{equation}
If $j_m = j_0$, then we call this path also a \emph{cycle}.
\index[defs]{cycle}%
See 
Example~\ref{EX:G3Graph} and Figure~\ref{FIG:G3returngraph} below for an example 
of a return graph.

We recall from Proposition~\ref{PROP:CofSoBst}\eqref{CofSoB:representsets} that 
for any $j,k\in A$, the forward transition set is 
\[
\Trans{}{j}{k}=\defset{g\in\Gamma}{\exists 
\nu\in\Cs{j}\colon\gamma_{\nu}^{\prime}(\retime{\BrU}(\nu))\in g\act\Cs{k}}\,.
\]
Its elements determine exactly the translates of the branch~$\Cs{k}$ on which 
the next intersections of geodesics starting on~$\Cs{j}$ are located (see 
Section~\ref{SUBSEC:reducedbranches}). We consider weights of paths as 
multiplicative. In other words, the path 
\begin{equation}\label{eq:minipath}
j\edge{g}k\edge{h}\ell
\end{equation}
gives rise to the total weight~$gh$. Any path in~$\ReturnGraph{}$ of the form as 
in~\eqref{eq:minipath} indicates that there is (at least) one geodesic starting 
on the branch~$\Cs{j}$, traversing~$g\act\Cs{k}$ and then 
intersecting~$gh\act\Cs{\ell}$, and not intersecting any other translates of 
branches inbetween. The following lemma shows that this interpretation of paths 
is indeed correct, and that all paths  in~$\ReturnGraph{}$ arise in this way. We emphasize that the vector~$\nu$ in the second part of the following lemma is not 
necessarily unique.

\begin{lemma}\label{LEM:paths_gen}
The paths in~$\ReturnGraph{}$ are fully characterized by the systems of iterated 
sequences from Section~\ref{SUBSEC:reducedbranches} of the elements in~$\BrU$. 
More precisely: 
\begin{enumerate}[label=$\mathrm{(\roman*)}$, ref=$\mathrm{\roman*}$]
\item\label{path:allinittopath}
For all $\nu\in\BrU_{\st}$ and all $n\in\N$, the return 
graph~$\ReturnGraph{}$ contains the path
\[
\itindex{\BrU,0}(\nu)
\edge{\ittrans{\BrU,1}(\nu)}\itindex{\BrU,1}(\nu)\edge{\ittrans{\BrU,2}
(\nu)}\dots
\edge{\ittrans{\BrU,n}(\nu)}\itindex{\BrU,n}(\nu)\,.
\] 

\item\label{path:allbyCS} Let $m\in\N$ and suppose that 
\[
 k_0\edge{g_1}k_1\edge{g_2}
\dots\edge{g_{m-1}}k_{m-1}
\edge{g_m}
k_m
\]
is a path in the return graph~$\ReturnGraph{}$. Then there exists $\nu\in 
\BrU_{k_0,\st}$ such that 
\begin{align*}
k_j & = \itindex{\BrU,j}(\nu) \qquad \text{for $j\in\{0,\ldots, m\}$}
\intertext{and}
g_j & = \ittrans{\BrU,j}(\nu) \qquad \text{for $j\in\{1,\ldots, m\}$\,.}
\end{align*}
\end{enumerate}
\end{lemma}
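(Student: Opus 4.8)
\textbf{Proof plan for Lemma~\ref{LEM:paths_gen}.}

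The plan is to prove the two statements by unwinding the definitions of the return graph and of the systems of iterated sequences, relying essentially on Proposition~\ref{PROP:CofSoBst} (existence of next/previous intersection times and the characterization of transition sets) and on Proposition~\ref{PROP:allinit}.

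For part~\eqref{path:allinittopath}, I would fix $\nu \in \BrU_{\st}$ and $n \in \N$ and recall from Section~\ref{SUBSEC:reducedbranches} that the system of iterated sequences $[(\ittime{\BrU,m}(\nu))_m, (\itindex{\BrU,m}(\nu))_m, (\ittrans{\BrU,m}(\nu))_m]$ exists; in particular, for each $m \in \{1,\dots,n\}$ one has $\ittrans{\BrU,m}(\nu) \in \Trans{}{\itindex{\BrU,m-1}(\nu)}{\itindex{\BrU,m}(\nu)}$ (this is exactly the property stated right after~\eqref{EQDEF:ittrans2}). By the definition of the edge set of~$\ReturnGraph{}$, membership of $\ittrans{\BrU,m}(\nu)$ in the forward transition set $\Trans{}{\itindex{\BrU,m-1}(\nu)}{\itindex{\BrU,m}(\nu)}$ means precisely that $\ReturnGraph{}$ contains an edge $\itindex{\BrU,m-1}(\nu) \edge{\ittrans{\BrU,m}(\nu)} \itindex{\BrU,m}(\nu)$. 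Chaining these $n$ edges yields the asserted path, using $\itindex{\BrU,0}(\nu) = j$ (the branch index of~$\nu$). This direction is essentially bookkeeping.

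For part~\eqref{path:allbyCS}, the argument is an induction on the path length~$m$, and this is the part I expect to require the most care. The base case $m=1$: given an edge $k_0 \edge{g_1} k_1$ in $\ReturnGraph{}$, by definition $g_1 \in \Trans{}{k_0}{k_1}$, and by Proposition~\ref{PROP:CofSoBst}\eqref{CofSoB:representsets} there exists $\nu \in \Cs{k_0,\st}$ with $\gamma_\nu^\prime(\retime{\BrU}(\nu)) \in g_1 \act \Cs{k_1}$; by Proposition~\ref{PROP:allinit} (with $t = \retime{\BrU}(\nu) = \ittime{\BrU,1}(\nu)$) this forces $\itindex{\BrU,1}(\nu) = k_1$ and $\ittrans{\BrU,1}(\nu) = g_1$, while $\itindex{\BrU,0}(\nu) = k_0$ and $\ittrans{\BrU,0}(\nu) = \id$ hold by definition. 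For the inductive step, the subtlety is that the vector~$\nu$ produced for the length-$(m-1)$ tail of the path starts on~$\Cs{k_1}$, whereas we need a vector starting on~$\Cs{k_0}$; so one cannot simply concatenate vectors. Instead I would argue as follows. Apply the induction hypothesis to the path $k_1 \edge{g_2} \cdots \edge{g_m} k_m$ to obtain $\eta \in \Cs{k_1,\st}$ realizing it, so in particular $\gamma_\eta(+\infty) \in \Jset{}$-compatible position — more precisely $(\gamma_\eta(+\infty), \gamma_\eta(-\infty)) \in \Iset{k_1} \times \Jset{k_1}$ and the forward iterated data of~$\eta$ matches $(g_2,\dots,g_m)$, $(k_1,\dots,k_m)$. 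Now consider the point $g_1 \act \gamma_\eta(+\infty) \in g_1 \act \Iset{k_1} \subseteq \Iset{k_0}$ (the inclusion by~(\ref{BP:intervaldecomp}\ref{BP:intervaldecompGdecomp}) since $g_1 \in \Trans{}{k_0}{k_1}$). Pick any $y_0 \in \Jset{k_0}$; by~\eqref{BP:allvectors} and Remark~\ref{REM:inst} there is a unique $\nu \in \Cs{k_0,\st}$ with $(\gamma_\nu(+\infty), \gamma_\nu(-\infty)) = (g_1 \act \gamma_\eta(+\infty), y_0)$. By Corollary~\ref{COR:uniquereturn}\eqref{uniquereturn+}, since $g_1 \in \Trans{}{k_0}{k_1}$ and $\gamma_\nu(+\infty) \in g_1 \act I_{k_1}$, we get $\gamma_\nu^\prime(\retime{\BrU}(\nu)) \in g_1 \act \Cs{k_1}$, hence $\itindex{\BrU,1}(\nu) = k_1$ and $\ittrans{\BrU,1}(\nu) = g_1$. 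Then $\xi \coloneqq g_1^{-1} \act \gamma_\nu^\prime(\retime{\BrU}(\nu)) \in \Cs{k_1,\st}$ with $\gamma_\xi(+\infty) = \gamma_\eta(+\infty)$; using~\eqref{EQDEF:ittrans2} and the fact that the forward iterated sequences of~$\nu$ from step~$1$ onward coincide with those of $\xi$, which in turn coincide with those of~$\eta$ (both $\xi$ and $\eta$ lie in $\Cs{k_1,\st}$ and share the same forward endpoint — I would note here that the forward iterated data depends only on the forward endpoint and the branch, which follows from Corollary~\ref{COR:uniquereturn}\eqref{uniquereturn+} and the inductive construction of the sequences), one concludes $\itindex{\BrU,j}(\nu) = k_j$ and $\ittrans{\BrU,j}(\nu) = g_j$ for all $j \in \{1,\dots,m\}$. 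Combined with $\itindex{\BrU,0}(\nu) = k_0$ this completes the induction. The main obstacle, as indicated, is the careful verification that the forward iterated sequences are determined by the pair (branch, forward endpoint) so that the tail produced by the induction hypothesis can be transported by~$g_1$; once that observation is isolated, the rest is routine.
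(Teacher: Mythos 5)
Your proof is correct and takes essentially the same route as the paper: part~(i) is immediate from the stated membership $\ittrans{\BrU,n}(\nu)\in\Trans{}{\itindex{\BrU,n-1}(\nu)}{\itindex{\BrU,n}(\nu)}$, and part~(ii) rests on exactly the same ingredients the paper uses, namely the nested inclusions $g_j\act \Iset{k_j}\subseteq \Iset{k_{j-1}}$ from~\eqref{BP:intervaldecomp}, existence of a suitable vector via~\eqref{BP:allvectors} and Remark~\ref{REM:inst}, and step-by-step identification via Corollary~\ref{COR:uniquereturn}. The only difference is organizational: you induct on the path length and transport the tail vector's forward endpoint by~$g_1$, whereas the paper chooses one vector with forward endpoint in $g_1\cdots g_m\act\Iset{k_m}$ and verifies the coding forward along the path — the determinism-and-shift observation you isolate (forward iterated data depends only on the branch and the forward endpoint) is precisely what the paper's inductive verification with the shifted vectors~$\wt\nu$ carries out.
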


\begin{proof}
To prove~\eqref{path:allinittopath} let $\nu\in\Cs{\st}$ and set 
$\nu_{m}\coloneqq\gamma_{\nu}^{\prime}(\ittime{\BrU,m}(\nu))$ for 
all~$m\in\N_0$. (We recall from Section~\ref{SUBSEC:reducedbranches} that 
$\ittime{\BrU,m}(\nu)$ is well-defined for all~$m\in\N_0$.) For each~$m\in\N_0$ 
we have 
$\ittrans{\BrU,1}(\nu_m)\in\Trans{}{\itindex{\BrU,0}(\nu_m)}{
\itindex{\BrU,1}(\nu_m)}$. Hence, the return graph~$\ReturnGraph{}$ contains 
the edge
\begin{equation}\label{eq:m_edge}
\itindex{\BrU,0}(\nu_m)
\edge{\ittrans{\BrU,1}(\nu_m)}\itindex{\BrU,1}(\nu_m)\,.
\end{equation}
Since 
\[
 \ittrans{\BrU,1}(\nu_m)=\ittrans{\BrU,m+1}(\nu)\,,\quad 
\itindex{\BrU,0}(\nu_m)=\itindex{\BrU,m}(\nu)\,,\quad\text{and}\quad 
\itindex{\BrU,1}(\nu_m)=\itindex{\BrU,m+1}(\nu)\,,
\]
the edge in~\eqref{eq:m_edge} equals
\[
 \itindex{\BrU,m}(\nu)
\edge{\ittrans{\BrU,m+1}(\nu)}\itindex{\BrU,m+1}(\nu)\,.
\]
Letting $m$ run through~$\{0,\ldots, n-1\}$,  we now obtain that the path 
\[
\itindex{\BrU,0}(\nu)
\edge{\ittrans{\BrU,1}(\nu)}\itindex{\BrU,1}(\nu)\edge{\ittrans{\BrU,2}
(\nu)}\dots
\edge{\ittrans{\BrU,n}(\nu)}\itindex{\BrU,n}(\nu)
\]
is contained in~$\ReturnGraph{}$. This proves~\eqref{path:allinittopath}.

We now show~\eqref{path:allbyCS}. Let $j\in \{1,\ldots, m\}$. Since 
\[
k_{j-1}\edge{g_j} k_j 
\]
is an edge in~$\ReturnGraph{}$ by hypothesis, we have 
$g_j\in\Trans{}{k_{j-1}}{k_j}$. Lemma~\ref{LEM:halfspaceincludeGV} shows 
\[
 g_j\act \Plussp{k_j} \varsubsetneq \Plussp{k_{j-1}}
\]
and hence 
\begin{equation}\label{EQ:nestedinclusion}
 g_j\act I_{k_j,\st} \subseteq I_{k_{j-1},\st}\,.
\end{equation}
Set $g\coloneqq g_1\cdots g_m$. Applying repeatedly these inclusions 
considerations we obtain
\[
 g\act I_{k_m,\st} \subseteq I_{k_0,\st}\,.
\]
We pick 
\[
(x,y)\in g\act I_{k_m,\st}\times J_{k_0,\st}\,.
\]
By~\eqref{BP:allvectors} and Remark~\ref{REM:inst} we find (a unique) 
$\nu\in\BrU_{k_0,\st}$ such that 
\[
 (x,y) = (\gamma_\nu(+\infty),\gamma_\nu(-\infty))\,.
\]
In order to show that $\nu$ satisfies the claimed properties we proceed 
inductively. Since $g\act I_{k_m,\st}\subseteq g_1\act I_{k_1,\st}$ 
(which can be seen via iteration of~\eqref{EQ:nestedinclusion}), $\gamma_\nu(+\infty)\in 
g_1\act 
I_{k_1,\st}$. From this property and $g_1\in\Trans{}{k_0}{k_1}$ it follows that
\[
 \gamma'_\nu(\retime{\BrU,1}(\nu)) \in g_1\act \BrU_{k_1,\st}\,,
\]
by Corollary~\ref{COR:uniquereturn} and Remark~\ref{REM:inst}. Then, by definition,
\[
 k_0 = \itindex{\BrU,0}(\nu)\,,\quad k_1 = 
\itindex{\BrU,1}(\nu)\quad\text{and}\quad g_1 = \ittrans{\BrU,1}(\nu)\,.
\]
Suppose now that for some $j_0\in\{2,\ldots, m-1\}$ we have 
already established that 
\[
 \gamma'_\nu(\retime{\BrU,j}(\nu)) \in g_j\cdots g_1\act \BrU_{k_j,\st} 
\qquad\text{for $j\in\{1,\ldots, j_0-1\}$}
\]
as well as 
\begin{align*}
 k_j & = \itindex{\BrU,j}(\nu)\qquad \text{for $j\in\{0,\ldots, j_0-1\}$}
 \intertext{and}
 g_j & = \ittrans{\BrU,j}(\nu)\qquad \text{for $j\in\{1,\ldots, j_0-1\}$.}
\end{align*}
Then 
\[
 \wt \nu \coloneqq g_1^{-1}\cdots g_{j_0-1}^{-1}\act 
\gamma'_\nu(\retime{\BrU,j_0-1}(\nu)) \in \BrU_{k_{j_0-1},\st}
\]
and the associated geodesic~$\gamma_{\wt \nu}$ is given by
\[
 \gamma_{\wt\nu}(t) = g_1^{-1}\cdots g_{j_0-1}^{-1}\act \gamma_\nu(t + 
\retime{\BrU,j_0-1}(\nu))\qquad\text{for all $t\in\R$.}
\]
Since
\[
 \gamma_\nu(+\infty) \in g_1\cdots g_{j_0}\act I_{k_{j_0},\st}\,,
\]
we have 
\[
 \gamma_{\wt\nu}(+\infty)= g_1^{-1}\cdots 
g_{j_0-1}^{-1}\act\gamma_\nu(+\infty) \in g_{j_0}\act I_{k_{j_0},\st}\,.
\]
Together with $g_{j_0}\in\Trans{}{k_{j_0-1}}{k_{j_0}}$ this yields that 
\[
 \gamma'_{\wt\nu}(\retime{\BrU,1}(\wt\nu)) \in 
g_{j_0}^{-1}\act\BrU_{k_{j_0},\st}\,,
\]
using Corollary~\ref{COR:uniquereturn} and Remark~\ref{REM:inst}. Therefore,
\begin{align*}
 k_{j_0-1} & = \itindex{\BrU,0}(\wt\nu) = \itindex{\BrU,j_0-1}(\nu)\,,
 \\
 k_{j_0} & = \itindex{\BrU,1}(\wt\nu) = \itindex{\BrU,j_0}(\nu)\,,
 \\
 g_{j_0} & = \ittrans{\BrU,1}(\wt\nu) = \ittrans{\BrU,j_0}(\nu)
\end{align*}
as well as
\[
 \gamma'_{\wt\nu}(\retime{\BrU,1}(\wt\nu)) = 
\gamma'_\nu(\retime{\BrU,j_0}(\nu))\,.
\]
This completes the proof of~\eqref{path:allbyCS}.
\end{proof}

The return graph~$\ReturnGraph{}$ is highly connected and weighted paths are essentially unique as the following proposition proves. These properties are crucial for the proof of the correctness of the branch reduction algorithm presented below (Algorithms~\ref{nodereductionI} and~\ref{nodereductionII}). See Proposition~\ref{PROP:algorithmsbehave}.

\begin{prop}\label{PROP:returngraph}
The paths in the return graph~$\ReturnGraph{}$ obey the following structures:
\begin{enumerate}[label=$\mathrm{(\roman*)}$, ref=$\mathrm{\roman*}$]
\item\label{returngraph:cyclelvlr}
Every node in~$\ReturnGraph{}$ is contained in a cycle.
 
\item\label{returngraph:disjointpaths}
Let $j,k\in A$ and suppose that 
\begin{align*}
j&\edge{g_1}p_1\edge{g_2}
\dots\edge{g_{m-1}}p_{m-1}
\edge{g_m}
k
\intertext{and}
j&\edge{h_1}q_1\edge{h_2}\dots\edge{h_{n-1}}q_{n-1}
\edge{h_n}k
\end{align*}
are paths in~$\ReturnGraph{}$ such that $g_1g_2\cdots g_m=h_1h_2\cdots h_n$. 
Then $m=n$ and, for all $i\in\{1,\ldots, m-1\}$ and $\ell\in\{1,\ldots, m\}$, we 
have $p_i=q_i$ and $g_\ell=h_\ell$.

\item\label{returngraph:awayback}
Let $j,k\in A$. If $\ReturnGraph{}$ contains a path from~$j$ to~$k$, then
it also contains a path from~$k$ to~$j$.
\end{enumerate}
\end{prop}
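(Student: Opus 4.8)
All three statements will be derived from the characterization of paths in~$\ReturnGraph{}$ by systems of iterated sequences (Lemma~\ref{LEM:paths_gen}) together with the basic facts about sets of branches collected in Sections~\ref{SUBSEC:propbranches}--\ref{SUBSEC:reducedbranches}. The underlying idea throughout is that a path in~$\ReturnGraph{}$ records the finitely many branch translates hit by a geodesic between two chosen intersection times, and that a geodesic representing a periodic geodesic on~$\Orbi$ keeps intersecting~$\Gamma\act\BrU$ forever in both time directions (Lemma~\ref{LEM:accpointsofit}, Corollary~\ref{CofSoB:strongcrosssection}).

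\textit{For~\eqref{returngraph:cyclelvlr}:} Fix a node~$j\in A$. By~\eqref{BP:closedgeodesicsHtoX} there is a vector~$\nu\in\Cs{j}$ whose geodesic~$\gamma_\nu$ projects to a periodic geodesic on~$\Orbi$; by Remark~\ref{REM:inst}\eqref{instiii} we have $\nu\in\Cs{j,\st}$. Let $h\in\Gamma$ be a hyperbolic element with $\alpha(h) = [\gamma_\nu]$, so that $\gamma_\nu(\pm\infty) = \fixp{\pm}{h}$ and $h\act\gamma_\nu$ is equivalent to~$\gamma_\nu$. Considering the system of iterated sequences of~$\nu$ with respect to~$\BrU$ and using that $\gamma_\nu$ is periodic (hence its intersection pattern with~$\Gamma\act\BrU$ is $h$-periodic), I would argue that there is some $n\in\N$ with $\itindex{\BrU,n}(\nu) = j$ and $\ittrans{\BrU,1}(\nu)\cdots\ittrans{\BrU,n}(\nu) = h^{p}$ for a suitable $p\ge 1$; the key point is just that the index~$j$ recurs, which follows since the first-return map on the (finite) index set of branch translates hit by the $h$-invariant geodesic must return to~$j$. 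Then Lemma~\ref{LEM:paths_gen}\eqref{path:allinittopath} provides a path from~$j$ to~$j$ in~$\ReturnGraph{}$, i.e.\ a cycle through~$j$.

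\textit{For~\eqref{returngraph:disjointpaths}:} Given two paths from~$j$ to~$k$ with equal total weight $g \coloneqq g_1\cdots g_m = h_1\cdots h_n$, I would use Lemma~\ref{LEM:paths_gen}\eqref{path:allbyCS} to realize the first path by a vector~$\nu\in\Cs{j,\st}$, so that $\gamma_\nu^{\prime}(\ittime{\BrU,m}(\nu)) \in g\act\Cs{k}$ with the prescribed intermediate data. By Proposition~\ref{PROP:allinit} the index~$m$ is the \emph{unique} integer with $\ittime{\BrU,m}(\nu)$ equal to that intersection time, and simultaneously $k = \itindex{\BrU,m}(\nu)$ is forced. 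But the same vector~$\nu$ realizes the second path: indeed, iterating the inclusions~\eqref{EQ:nestedinclusion} along the second path shows $h_1\cdots h_n\act I_{k,\st}\subseteq I_{j,\st}$, and since $\gamma_\nu(+\infty)\in g\act I_{k,\st} = h_1\cdots h_n\act I_{k,\st}$ one checks, by the same inductive argument as in the proof of Lemma~\ref{LEM:paths_gen}\eqref{path:allbyCS} (using Corollary~\ref{COR:uniquereturn}), that the second path's data $(q_i, h_i)$ must coincide with the iterated sequences of this very~$\nu$. Hence $n = m$, $p_i = q_i = \itindex{\BrU,i}(\nu)$ and $g_\ell = h_\ell = \ittrans{\BrU,\ell}(\nu)$.

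\textit{For~\eqref{returngraph:awayback}:} Suppose $\ReturnGraph{}$ has a path from~$j$ to~$k$, realized (Lemma~\ref{LEM:paths_gen}\eqref{path:allbyCS}) by some $\nu\in\Cs{j,\st}$ with $\gamma_\nu^{\prime}(\ittime{\BrU,m}(\nu))\in g\act\Cs{k}$. Set $\mu \coloneqq g^{-1}\act\gamma_\nu^{\prime}(\ittime{\BrU,m}(\nu))\in\Cs{k,\st}$. Its geodesic~$\gamma_\mu$ is a translate/retime of~$\gamma_\nu$, and by the cocycle relation~\eqref{EQDEF:ittrans2} together with~\eqref{eq:iternexttime} the \emph{previous} iterated intersections of~$\mu$ recover, in reverse, the branch translates that~$\gamma_\nu$ visited on the way from~$j$ to~$k$: concretely, $\itindex{\BrU,-i}(\mu) = \itindex{\BrU,m-i}(\nu)$ for $i = 0,\ldots,m$, and in particular $\itindex{\BrU,-m}(\mu) = j$. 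Applying the analogue of Lemma~\ref{LEM:paths_gen}\eqref{path:allinittopath} for the backward sequences (which holds by the same argument, since backwards transition sets are forward transition sets of inverses, cf.\ the second part of the proof of Proposition~\ref{PROP:CofSoBst}\eqref{CofSoB:representsets} and~\eqref{eq:backsets}) yields a path in~$\ReturnGraph{}$ from~$k$ back to~$j$.

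\textbf{Expected main obstacle.} The routine bookkeeping is the cocycle relabeling in parts~\eqref{returngraph:disjointpaths} and~\eqref{returngraph:awayback}, where one must carefully track how $\ittrans{\BrU,\cdot}$ composes along a path and under the shift $\nu\mapsto g^{-1}\act\gamma_\nu^{\prime}(\ittime{\BrU,m}(\nu))$; getting the indices and the left-to-right multiplication order exactly right (rather than transposed or inverted) is where errors creep in. The one genuinely non-formal point is in~\eqref{returngraph:cyclelvlr}: one needs that the sequence $(\itindex{\BrU,n}(\nu))_{n\in\Z}$ attached to a periodic geodesic actually returns to the starting index~$j$. This is where $h$-periodicity of the intersection pattern must be invoked — the set of branch translates $\{h_n\act\Cs{k_n}\}$ met by the $h$-invariant geodesic~$\gamma_\nu$ is permuted by~$h$, and since only finitely many $\Gamma$-orbits of branches exist (Proposition~\ref{PROP:branches_locfinite} guarantees local finiteness, and the quotient pattern is finite because~$\gamma_\nu$ descends to a compact periodic geodesic), the index~$j$ must recur. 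I would state this as the crux and spend the bulk of the write-up making it precise.
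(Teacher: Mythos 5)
Your treatments of parts~\eqref{returngraph:cyclelvlr} and~\eqref{returngraph:disjointpaths} are in substance the paper's own arguments. For~\eqref{returngraph:cyclelvlr} you make the recurrence of the index~$j$ into the ``crux,'' but no pigeonhole or permutation argument is needed (and pigeonhole alone would only show that \emph{some} index recurs, not~$j$): periodicity of~$\wh\gamma_\nu$ gives $\wh\gamma_\nu'(t_0)=\wh\gamma_\nu'(0)\in\pi(\Cs{j})$ for some $t_0>0$, hence $\gamma_\nu'(t_0)\in g\act\Cs{j}$ for some $g\in\Gamma$, and Proposition~\ref{PROP:allinit} together with Lemma~\ref{LEM:paths_gen}\eqref{path:allinittopath} finishes. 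For~\eqref{returngraph:disjointpaths} your single-vector argument via Corollary~\ref{COR:uniquereturn} is equivalent to the paper's comparison of nested interval chains; like the paper, you are terse about why $m=n$ (if, say, $n<m$ one is left with a trailing loop of total weight~$\id$ at~$k$, which must still be excluded, e.g.\ by observing that $\gamma_\nu$ can meet the fixed translate $g\act\Cs{k}$ at most once since its endpoints lie in~$\wh\R_{\st}$ while the endpoints of $g\act\overline{\base{\Cs{k}}}$ do not). That step should be spelled out, but it is not a different route.

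Part~\eqref{returngraph:awayback}, however, has a genuine gap: your construction does not produce a path from~$k$ to~$j$. Setting $\mu=g^{-1}\act\gamma_\nu'(\ittime{\BrU,m}(\nu))\in\Cs{k,\st}$ and reading off the \emph{backward} iterated sequence of~$\mu$ only reproduces the original path. Indeed, by Corollary~\ref{COR:uniquereturn}\eqref{uniquereturn-}, a previous intersection of~$\mu$ on $h\act\Cs{\ell}$ means $h\in\Past{}{\ell}{k}=\Trans{}{\ell}{k}^{-1}$, i.e.\ it certifies an edge $\ell\edge{h^{-1}}k$ pointing \emph{into}~$k$; reversing the time direction along the geodesic does not reverse edges of the directed graph~$\ReturnGraph{}$, so the ``backward analogue of Lemma~\ref{LEM:paths_gen}\eqref{path:allinittopath}'' yields a path ending at~$k$ (namely the given one), never one starting at~$k$. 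Statement~\eqref{returngraph:awayback} is exactly the non-formal assertion that reachability in~$\ReturnGraph{}$ is symmetric, and it requires new input. The missing idea is the paper's: since $E(\Orbi)$ is dense in $\Lambda(\Gamma)\times\Lambda(\Gamma)$ (Proposition~\ref{PROP:EXliesdense}), one may choose the realizing vector $\nu\in\Cs{j,\st}$ with $(\gamma_\nu(+\infty),\gamma_\nu(-\infty))\in g\act\Iset{k}\times\Jset{j}$ so that $\gamma_\nu$ is a lift of a \emph{periodic} geodesic; iterated application of Corollary~\ref{COR:uniquereturn} shows this~$\nu$ still produces the given path in its first~$m$ steps, and periodicity of its system of iterated sequences forces the index~$j$ to recur after step~$m$, so that Lemma~\ref{LEM:paths_gen}\eqref{path:allinittopath} delivers a forward subpath from~$k$ to~$j$. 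Your write-up never invokes periodicity or density in part~\eqref{returngraph:awayback}, and without it the argument fails.
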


\begin{proof}
In order to prove~\eqref{returngraph:cyclelvlr} we fix~$j\in A$. 
By~\eqref{BP:closedgeodesicsHtoX} and Remark~\ref{REM:inst} we 
find~$\nu\in\Cs{j,\st}$ such that $\widehat{\gamma}_\nu\coloneqq\pi(\gamma_\nu)$ 
is a 
periodic geodesic on~$\Orbi$. Thus, we find $t_0\in (0,\infty)$ such that 
$\widehat{\gamma}'_\nu(t_0)=\widehat{\gamma}'_\nu(0)$. Consequently, 
$\eta\coloneqq\gamma_{\nu}'(t_0)\in g\act\Cs{j}$ for some~$g\in\Gamma$, 
by Proposition~\ref{PROP:CofSoBnonst}. Proposition~\ref{PROP:allinit} 
now shows that there exists a unique~$n\in\N$ (we note that $t_0>0$) such that 
$t_0=\ittime{\BrU,n}(\nu)$, $j=\itindex{\BrU,n}(\nu)$ and 
$g=\ittrans{\BrU,1}(\nu)\cdots\ittrans{\BrU,n}(\nu)$. By 
Lemma~\ref{LEM:paths_gen}\eqref{path:allinittopath}, $\ReturnGraph{}$ contains 
the path 
\[
j=\itindex{\BrU,0}(\nu)
\edge{\ittrans{\BrU,1}(\nu)}\itindex{\BrU,1}(\nu)\edge{\ittrans{\BrU,2}
(\nu)}\dots
\edge{\ittrans{\BrU,n}(\nu)}\itindex{\BrU,n}(\nu)=j\,.
\] 
Hence, $j$ is contained in a cycle of~$\ReturnGraph{}$, which 
establishes~\eqref{returngraph:cyclelvlr}.

For the proof of~\eqref{returngraph:disjointpaths} we pick, as seen to be 
possible by Lemma~\ref{LEM:paths_gen}\eqref{path:allbyCS}, elements 
$\nu_1,\nu_2\in\Cs{j,\st}$ such that the path generated by~$\nu_1$ is 
\[
 j\edge{g_1}p_1\edge{g_2} \dots\edge{g_{m-1}}p_{m-1} \edge{g_m} k
\]
and the path generated by~$\nu_2$ is 
\[
 j\edge{h_1}q_1\edge{h_2}\dots\edge{h_{n-1}}q_{n-1}\edge{h_n}k\,.
\]
This means that 
\[
 j = \itindex{\BrU,0}(\nu_1) = \itindex{\BrU,0}(\nu_2)
\]
and 
\begin{align*}
 p_\ell = \itindex{\BrU,\ell}(\nu_1) & \quad\text{for $\ell\in\{1,\ldots, 
m-1\}$}
 \\
 g_\ell = \ittrans{\BrU,\ell}(\nu_1) & \quad\text{for $\ell\in\{1,\ldots, m\}$}
 \\
 q_\ell = \itindex{\BrU,\ell}(\nu_2) & \quad\text{for $\ell\in\{1,\ldots, 
n-1\}$}
 \\
 h_\ell = \ittrans{\BrU,\ell}(\nu_2) & \quad\text{for $\ell\in\{1,\ldots, 
n\}$}\,.
\end{align*}
Let 
\[
 g\coloneqq g_1\cdots g_m = h_1\cdots h_n\,.
\]
By combining Lemma~\ref{LEM:paths_gen}, Corollary~\ref{COR:uniquereturn} (recall 
\eqref{EQDEF:itindex}) and \eqref{BP:intervaldecomp} we find, 
considering~$\nu_1$, 
\[
g\act I_k = g_1\cdots g_m\act I_k \subseteq g_1\ldots g_{m-1}\act I_{p_{m-1}} 
\subseteq \ldots \subseteq g_1g_2\act I_{p_2} \subseteq g_1\act I_{p_1} 
\subseteq I_j\,. 
\]
Considering~$\nu_2$, we obtain 
\[
 g\act I_k = h_1\cdots h_n\act I_k \subseteq h_1\cdots h_{n-1}\act I_{q_{n-1}} 
\subseteq \ldots \subseteq h_1h_2\act I_{q_2} \subseteq h_1\act I_{q_1} 
\subseteq I_j\,.
\]
The disjointness of the unions in~\eqref{BP:intervaldecomp} yields that $g_1\act 
I_{p_1}\cap h_1\act I_{q_1}=\varnothing$ whenever $(g_1,p_1)\not=(h_1,q_1)$. 
However, since $g_1\act I_{p_1}$ and $h_1\act I_{q_1}$ both contain $g\act I_k$ 
(which is nonempty), we obtain $(g_1,p_1) = (h_1,q_1)$. Applying this argument 
iteratively, we find $n=m$, and $g_\ell=h_\ell$, $p_i=q_i$ for all 
$\ell\in\{1,\ldots, n\}$ and all $i\in\{1,\ldots, n-1\}$.

In order to prove~\eqref{returngraph:awayback} let 
\begin{equation}\label{eq:gen_path}
j\edge{g_1}p_1\edge{g_2}\dots\edge{g_{m-1}}p_{m-1}\edge{g_m}k
\end{equation}
be a path in the return graph~$\ReturnGraph{}$ of length~$m\in\N$. 
Lemma~\ref{LEM:paths_gen}\eqref{path:allbyCS} shows that there 
exists~$\nu\in\Cs{j,\st}$ such that the first $m$ elements of the system of 
iterated sequences of~$\nu$ produce the path~\eqref{eq:gen_path}. The choice 
of~$\nu$ is not unique. In what follows, we show that $\nu$ can be chosen such 
that $\wh\gamma_\nu$ is a periodic geodesic on~$\Orbi$. 

As in the proof of~\eqref{returngraph:disjointpaths} we see that
\begin{equation}\label{eq:nestedforward}
 g_1\cdots g_m\act I_k \subseteq g_1\cdots g_{m-1}\act I_{p_{m-1}} \subseteq 
\ldots \subseteq g_1g_2\act I_{p_2} \subseteq g_1\act I_{p_1} \subseteq I_j\,.
\end{equation}
Using Remark~\ref{REM:SoBrem}\eqref{SoBrem:descrHalfspaces}, we then obtain 
\begin{equation}\label{eq:nestedbackwards}
 g_1\cdots g_m\act J_k \supseteq g_1\cdots g_{m-1}\act J_{p_{m-1}} \supseteq 
\ldots \supseteq g_1g_2\act J_{p_2} \supseteq g_1\act J_{p_1} \supseteq J_j\,.
\end{equation}
Let $g\coloneqq g_1\cdots g_m$ and recall from~\eqref{EQDEF:EOrbi} the 
set~$E(\Orbi)$ of endpoint pairs of representatives of periodic geodesics 
on~$\Orbi$. Since $E(\Orbi)$ is dense in~$\Lambda(\Gamma)\times\Lambda(\Gamma)$ 
(see Proposition~\eqref{PROP:EXliesdense}) and 
$\wh\R_{\st}\subseteq\Lambda(\Gamma)$, we find 
\[
 (x,y) \in g\act \Iset{k}\times \Jset{j}\,.
\]
Let $\gamma$ be a geodesic on~$\H$ with $(\gamma(+\infty),\gamma(-\infty)) = 
(x,y)$. By~\eqref{BP:allvectors} and Remark~\ref{REM:inst}, $\gamma$ 
intersects~$\Cs{j,\st}$, say in~$\nu$. Iterated application of 
Corollary~\ref{COR:uniquereturn} shows that $\nu$ produces the 
path~\eqref{eq:gen_path}, i.e., 
\begin{align*}
 j=\itindex{\BrU,0}(\nu) &\edge{g_1=\ittrans{\BrU,1}(\nu)} p_1 = 
\ittrans{\BrU,1}(\nu) \edge{g_2=\ittrans{\BrU,2}(\nu)} \ldots 
 \\
 & \edge{g_{m-1} = \ittrans{\BrU,m-1}(\nu)} p_{m-1} = \itindex{\BrU,m-1}(\nu) 
\edge{g_m=\ittrans{\BrU,m}(\nu)} k = \itindex{\BrU,m}(\nu)\,.
\end{align*}
Since $\gamma_\nu$ represents a periodic geodesic on~$\Orbi$ as being a 
reparametrization of~$\gamma$, the system of iterated sequences of~$\nu$ is 
periodic and hence the (infinite) path in~$\ReturnGraph{}$ determined by~$\nu$ 
(see Lemma~\ref{LEM:paths_gen}) contains a subpath from~$k$ to~$j$. This 
completes the proof.
\end{proof}

\begin{example}\label{EX:G3Graph}
Recall the group~$\Gamma_{\lambda}$ from Example~\ref{EX:G3Def} and its set of 
branches~$\BrS$ as indicated in Figure~\ref{FIG:G3SoB}.
The return graph $\ReturnGraph{0}$ of $\Gamma_{\lambda}$ can easily be read off from 
Figure~\ref{FIG:G3SoB} and is given in Figure~\ref{FIG:G3returngraph}.

\begin{figure}[h]
\begin{tikzpicture}[->,shorten >=1pt,auto, semithick]
\def \fak {7 em}
\begin{scope}[on grid]
\node[state] (C6) {$6$};
\node[state] (C8) [above left= .951*\fak and (1-.309)*\fak of C6] {$8$};
\node[state] (C4) [above left= .588*\fak and 1.809*\fak of C6] {$4$};
\node[state] (C3) [below left= .588*\fak and 1.809*\fak of C6] {$3$};
\node[state] (C7) [below left= .951*\fak and (1-.309)*\fak of C6] {$7$};
\node[state] (C2) [below left= .951*\fak and (1.809+.809+.309)*\fak of C6] 
{$2$};
\node[state] (C5) [above left= .951*\fak and (1.809+.809+.309)*\fak of C6] 
{$5$};
\node[state] (C1) [left=(1.809+.809+1)*\fak of C6] {$1$};
\def \los {.85}
\path (C1) edge [bend right,looseness=\los] node {$\id$} (C2)
		  (C2) edge [bend right,looseness=\los] node {$\id$} (C3)
		  		 edge [bend right,looseness=\los] node[left=1pt] {$h^{-1}$} (C5)
		  (C3) edge [bend left,looseness=\los] node {$h$} (C4)
		  		 edge [bend right,looseness=\los] node[right=2pt] {$\id$} (C4)
		  (C4) edge [bend right,looseness=\los] node[below=5pt] {$\id$} (C5)
		  		 edge [bend left,looseness=\los] node [below=5pt] {$h$} (C8)
		  (C5) edge [bend right,looseness=\los] node [pos=.3,below=5pt] 
{$t_{\lambda}$} (C1)
		  (C6) edge [bend left,looseness=\los] node[pos=.7,above=5pt] {$\id$} 
(C7)
		  (C7) edge [bend left,looseness=\los] node[right=2pt] {$h$} (C8)
		  		 edge [bend left,looseness=\los] node[pos=.5, above=5pt] {$h$} 
(C3)
		  (C8) edge [bend left,looseness=\los] node[pos=.3, below=5pt] 
{$t_{\lambda}^{-1}$} (C6);
\end{scope}
\end{tikzpicture}
\caption[halfspaces]{The return graph $\ReturnGraph{0}$ for the set of 
branches~$\BrS$ for~$\Gamma_{\lambda}$.}\label{FIG:G3returngraph}
\end{figure}

The associated (slow) transfer operator in matrix form reads as
\[
\TO{s}=\begin{pmatrix}
0&				0&				0&			0&\alpha_s(t_{\lambda}^{-1})&0&			
0&				0\\
1&				0&				0&				0&				0&				
0&				0&				0\\
0&				1&				0&				0&				0&				
0&	\alpha_s(h^{-1})&	0\\
0&				0&  1+\alpha_s(h^{-1})&  0&				0&				0&			
	0&				0\\
0&		\alpha_s(h)&		0&				1&				0&				0&		
		0&				0\\
0&				0&				0&				0&				0&		 0&	   
0&\alpha_s(t_{\lambda})\\
0&				0&				0&				0&				0&				
1&				0&				0\\
0&				0&				0&	\alpha_s(h^{-1})&	0&				0&	
\alpha_s(h^{-1})&	0
\end{pmatrix}.
\]
Similarly, for the weakly non-collapsing set of branches for~$\Gamma_\lambda$ given in Example~\ref{EX:G3wnc} we obtain the return graph~$\ReturnGraph{0}$ as depicted in Figure~\ref{FIG:G3returngraphwnc}.

\begin{figure}[h]
\begin{tikzpicture}[->,shorten >=1pt,auto, semithick]
\def \fak {7 em}
\begin{scope}[on grid]
\node[state] (C6) {$6$};
\node[state] (C8) [above left= .951*\fak and (1-.309)*\fak of C6] {$8$};
\node[state] (C4) [above left= .588*\fak and 1.809*\fak of C6] {$4$};
\node[state] (C3) [below left= .588*\fak and 1.809*\fak of C6] {$3$};
\node[state] (C7) [below left= .951*\fak and (1-.309)*\fak of C6] {$7$};
\node[state] (C2) [below left= .951*\fak and (1.809+.809+.309)*\fak of C6] 
{$2$};
\node[state] (C5) [above left= .951*\fak and (1.809+.809+.309)*\fak of C6] 
{$5$};
\node[state] (C1) [left=(1.809+.809+1)*\fak of C6] {$1$};
\def \los {.85}
\path (C1) edge [bend right,looseness=\los] node {$\id$} (C2)
		  (C2) edge [bend right,looseness=\los] node {$\id$} (C3)
		  		 edge [bend right,looseness=\los] node[left=1pt] {$h^{-1}$} (C5)
		  (C3) edge [bend left,looseness=\los] node {$h$} (C4)
		  		 edge [bend right,looseness=\los] node[right=2pt] {$\id$} (C4)
		  (C4) edge [bend right,looseness=\los] node[below=5pt] {$\id$} (C5)
		  		 edge [bend left,looseness=\los] node [below=5pt] {$\id$} (C8)
		  (C5) edge [bend right,looseness=\los] node [pos=.3,below=5pt] 
{$t_{\lambda}$} (C1)
		  (C6) edge [bend left,looseness=\los] node[pos=.7,above=5pt] {$\id$} 
(C7)
		  (C7) edge [bend left,looseness=\los] node[right=2pt] {$\id$} (C8)
		  		 edge [bend left,looseness=\los] node[pos=.5, above=5pt] {$h$} 
(C3)
		  (C8) edge [bend left,looseness=\los] node[pos=.2, below=10pt] 
{$ht_{\lambda}^{-1}$} (C6);
\end{scope}
\end{tikzpicture}
\caption[halfspaces]{The return graph $\ReturnGraph{0}$ for the weakly non-collapsing set of 
branches~$\BrS'$ from Example~\ref{EX:G3wnc} for~$\Gamma_{\lambda}$.}\label{FIG:G3returngraphwnc}
\end{figure}

The transfer operator associated to this set of branches than reads as
\[
\TO{s}'=\begin{pmatrix}
0&				0&				0&			0&\alpha_s(t_{\lambda}^{-1})&0&			
0&				0\\
1&				0&				0&				0&				0&				
0&				0&				0\\
0&				1&				0&				0&				0&				
0&	\alpha_s(h^{-1})&	0\\
0&				0&  1+\alpha_s(h^{-1})&  0&				0&				0&			
	0&				0\\
0&		\alpha_s(h)&		0&				1&				0&				0&		
		0&				0\\
0&				0&				0&				0&				0&		 0&	   
0&\alpha_s(t_{\lambda}h^{-1})\\
0&				0&				0&				0&				0&				
1&				0&				0\\
0&				0&				0&				1&				0&				0&	
1&	0
\end{pmatrix}.
\]
\end{example}

\subsection{Algorithms for branch reduction}\label{SUBSEC:branchred}

We recall that $\BrS = \{\Cs{1},\dots,\Cs{N}\}$ is a (fixed, given) set of 
branches for the geodesic flow~$\GeoFlow$ on~$\Orbi$, and that $\BrU=\bigcup_{j=1}^N\Cs{j}$ denotes its branch union and $\CrSc=\pi(\BrU)$ the arising cross section of~$\GeoFlow$, where $\pi\colon\UTB\H\to\UTB\Orbi$ denotes the canonical projection.
We set 
\[
A_0\coloneqq A = \{1,\dots,N\}\,,\quad \Trans{0}{j}{k} \coloneqq \Trans{}{j}{k}
\]
for all~$j,k\in A_0$, and 
\[
\Heir{0}{\ell}\coloneqq\defset{k\in A_0}{\Trans{0}{\ell}{k}\ne\varnothing}
\]
for all~$\ell\in A_0$. In what follows, we present the \emph{branch reduction algorithm}, split into the two Algorithms~\ref{nodereductionI} and~\ref{nodereductionII}, that reduces the return graph by constructing a (finite) cascade of subsets~$A_r$ of $A_0$ and 
related sets~$\Heir{r}{j}$ and~$\Trans{r}{j}{k}$, for 
$r=1,2,\dots$, until we have achieved that $j\in\Heir{r}{j}$ for all~$j\in 
A_r$. The algorithm includes choices and, depending on the group~$\Gamma$, the 
cardinality of the set of remaining nodes might vary for 
different choices. 
(We refrain to fix these choices in any artificial way and hence slightly abuse the notion of algorithm here.)
This phenomenon potentially leads to different families of 
transfer operators, reflecting the inherent ambiguity of discretization of 
flows 
on quotient spaces. Fast transfer operators with the same spectral parameter arising from different such choices need not be mutually conjugate. 
However, their Fredholm determinants will coincide, as is guaranteed by the 
combination of 
Theorems~\ref{mainthm:F_P_NECM} and~\ref{mainthm} below, and hence the 
$1$-eigenfunctions of the two families of fast transfer operators are closely 
linked. Therefore, we observe independence of these choices in this aspect.

As mentioned, the branch reduction algorithm naturally splits into two stand-alone parts, each of which we present below as separate procedures. The first part, presented as 
Algorithm~\ref{nodereductionI}, removes those nodes of the return graph that 
only have a single outgoing edge and the edge does not loop back to the same 
node. The second part, presented as Algorithm~\ref{nodereductionII},  
successively deletes nodes which are not among their own successors.

For convenience we set $R_0\coloneqq A_0$ and, for any $j\in A_0$, 
\[
 \Trans{0}{j}{.} \coloneqq \bigcup_{k\in A_0} \Trans{0}{j}{k}\,.
\]
Likewise, as soon as, for some $r\in\N$, the sets~$A_r$ and $\Trans{r}{j}{k}$ for $j,k\in A_r$ are defined during the run of Algorithm~\ref{nodereductionI}, we set 
\[
 \Trans{r}{j}{.} \coloneqq \bigcup_{k\in A_r} \Trans{r}{j}{k}\,.
\]

\index[defs]{node reduction}%
\begin{algo}[Branch reduction, part I]\label{nodereductionI}
The index $r$ starts at $1$.
\vspace*{-\baselineskip}
\vspace*{4pt}
\begin{enumerate}
\renewcommand{\labelenumi}{\emph{Step\,r.}}
\item Set 
\[
R_r\coloneqq\defset{j\in 
A_{r-1}}{\#\Heir{r-1}{j}=1\land\Heir{r-1}{j}\ne\{j\}\land\#\Trans{r-1}{j}{.}=1}
\,.
\]
If $R_r=\varnothing$, the algorithm terminates. Otherwise choose $j\in R_r$, 
set
\[
A_r\coloneqq A_{r-1}\setminus\{j\}\,,
\]
and define for all $i,k\in A_r$
\[
\Trans{r}{i}{k}\coloneqq\Trans{r-1}{i}{k}\cup\bigcup_{g_1\in\Trans{r-1}{i}{j}}
\bigcup_{g_2\in\Trans{r-1}{j}{k}}\{g_1g_2\}
\]
and
\[
\Heir{r}{i}\coloneqq\defset{\ell\in A_r}{\Trans{r}{i}{\ell}\ne\varnothing}\,.
\]
Carry out \emph{Step r+1}.\algofin
\end{enumerate}
\end{algo}
\index[symbols]{R@$R_r$}%
\index[symbols]{H@$\Heir{r}{j}$}%
\index[symbols]{Gr@$\Trans{r}{i}{k}$}%

In each step of Algorithm~\ref{nodereductionI} a node, say~$j$, is chosen and 
deleted (in the sense that $j\in A_{r-1}$ but $j\notin A_r$). Subsequently, 
each pair of an incoming edge and an outgoing edge of~$j$ is combined to a new 
edge, thereby ``bridging'' above~$j$. More precisely, suppose that there is an 
edge from~$i$ to~$j$ weighted by~$g_1$ and an edge from~$j$ to~$k$ weighted 
by~$g_2$ for some $i,k\in A_{r-1}\setminus\{j\}$ and $g_1,g_2\in\Gamma$, then we 
combine these to an edge from~$i$ to~$k$ weighted by~$g_1g_2$. We note that if, 
for $j,k\in A_{r-1}$, we have $k\notin\Heir{r-1}{j}$, then 
$\Trans{r-1}{j}{k}=\varnothing$ and hence $\Trans{r}{i}{k}=\Trans{r-1}{i}{k}$ 
for all $i\in A_{r}$. 

Let $\kappa_1\in\N_0$
\index[symbols]{ka@$\kappa_1$}%
be the unique 
number for which $R_{\kappa_1}\not=\varnothing$ but 
$R_{\kappa_1+1}=\varnothing$. In other words, $\kappa_1+1$ is the step in which 
Algorithm~\ref{nodereductionI} terminates. (See 
Proposition~\ref{PROP:algorithmsbehave} for its existence.) Then 
Algorithm~\ref{nodereductionI} constructed the sets $A_r$, $\Heir{r}{j}$, and 
$\Trans{r}{j}{k}$ for all $r\in\{1,\dots,\kappa_1\}$ and $j,k\in A_r$.
(We emphasize that in the case~$\kappa_1=0$, Algorithm~\ref{nodereductionI} is void and does not construct any new sets.)

The second part of the branch reduction algorithm, Algorithm~\ref{nodereductionII} below, 
is almost identical to Algorithm~\ref{nodereductionI}. The only, but very 
important difference is the base set from which the nodes are chosen in each 
step.

\index[defs]{node reduction}%
\begin{algo}[Branch reduction, part II]\label{nodereductionII}
The index $r$ starts at $\kappa_1+1$.
\vspace*{-\baselineskip}
\vspace*{4pt}
\begin{enumerate}
\renewcommand{\labelenumi}{\emph{Step\,r.}}
\item Define $P_r\coloneqq\defset{j\in A_{r-1}}{j\notin\Heir{r-1}{j}}$. If 
$P_r=\varnothing$, the algorithm terminates. Otherwise choose $j\in P_r$, set
\[
A_r\coloneqq A_{r-1}\setminus\{j\},
\]
and define for all $i,k\in A_r$
\[
\Trans{r}{i}{k}\coloneqq\Trans{r-1}{i}{k}\cup\bigcup_{g_1\in\Trans{r-1}{i}{j}}
\bigcup_{g_2\in\Trans{r-1}{j}{k}}\{g_1g_2\}
\]
and
\[
\Heir{r}{i}\coloneqq\defset{\ell\in A_{r}}{\Trans{r}{i}{\ell}\ne\varnothing}\,.
\]
Carry out \emph{Step r+1}.\algofin
\end{enumerate}
\end{algo}
\index[symbols]{P@$P_r$}%

Let $\kappa_2\in\N_0$
\index[symbols]{kappa@$\kappa_2$}%
be defined analogously to~$\kappa_1$ but 
with respect to Algorithm~\ref{nodereductionII}. That is, $\kappa_2+1$ shall be 
the step in which Algorithm~\ref{nodereductionII} terminates. In other words, if 
we set $P_0\coloneqq A_0$, then $\kappa_2$ is the unique number such that 
$P_{\kappa_2}\not=\varnothing$ but $P_{\kappa_2+1} = \varnothing$.  In 
Proposition~\ref{PROP:algorithmsbehave} we will show that $\kappa_2$ is indeed 
well-defined. 

For each $r\in \{1,\ldots, \kappa_2\}$ we define the \emph{return graph of level 
r}, $\ReturnGraph{r}$,
\index[defs]{return graph!level}%
\index[symbols]{RGa@$\ReturnGraph{r}$}%
analogously 
to~$\ReturnGraph{0} = \ReturnGraph{}$, with $A_r$
\index[symbols]{Aa@$A_r$}%
being the set of nodes, and edges and edge 
weights resulting from~$\Trans{r}{.}{.}$. We emphasize that 
it may happen that Algorithm~\ref{nodereductionI} or~\ref{nodereductionII} is 
void, or even both, and consequently $\kappa_1=0$ or $\kappa_2=\kappa_1$, or 
both. See Example~\ref{EX:modularsurface}.

\begin{example}\label{EX:modularsurface}
Consider the modular group~$\Gamma=\mathrm{PSL}_2(\Z)$.
The two elements
\begin{eqnarray*}
s\coloneqq\begin{bmatrix}0&1\\-1&0\end{bmatrix}&\text{and}&t\coloneqq 
\begin{bmatrix}1&1\\0&1\end{bmatrix}
\end{eqnarray*}
form a complete set of generators for~$\Gamma$.
A well-known cross section for the geodesic flow on the modular 
surface~$\quod{\Gamma}{\H}$ is given by the set
\[
\Cs{1}\coloneqq\defset{\nu\in\UTB\H}{\base{\nu}\in(0,\infty)_{\H},\,\gamma_{\nu}
(+\infty)\in(0,+\infty)_{\R}}
\]
(see Figure~\ref{FIG:modsurf}). The set~$\{\Cs{1}\}$ has the structure of a set 
of branches.
From Figure~\ref{FIG:modsurf} we read off that the return 
graph~$\ReturnGraph{0}$ of~$\Gamma$ w.r.t.~$\{\Cs{1}\}$ consists solely of the 
two loops
\begin{eqnarray*}
1\edge{t}1&\text{and}&1\edge{st^{-1}s}1\,.
\end{eqnarray*}
Therefore,~$\Heir{0}{1}=\{1\}$ and the sets~$R_1$ and~$P_1$ from the 
Algorithms~\ref{nodereductionI}~and~\ref{nodereductionII} are empty. 
Consequently, we find~$0=\kappa_1=\kappa_2$.

\begin{figure}[h]
\begin{tikzpicture}[scale=5]
\def \height {1.5}
\tikzmath{\p=1/(2*sqrt(3));}
\foreach \x/\y/\z in {0/$0$/$\Cs{1}$, 1/$1$/$t\act\Cs{1}$}{
		\tikzmath{\q= 50 - 25*\x;}
		\fill[color=gray!\q] (\x,\height) -- (\x,0) -- (\x+.05,0) -- 
(\x+.05,\height) -- cycle;
		\draw (\x,\height) -- (\x,-.03) node [below] {\y};
		\coordinate [label=right:\z] (C\x) at (\x+.05,\height*.6);
		}
\fill[color=gray!25] (1,0) arc (0:180:.5) -- (0.05,0) arc (180:0:0.5 - .05) -- 
cycle;
\draw (0,0) arc (180:0:.5);
\fill[pattern=north west lines, pattern color=lightgray!70] (1,1) arc (90:120:1) 
arc (60:90:1) -- (0,\height) -- (1,\height) -- cycle;
\coordinate [label=below:$\color{gray}\fund$] (F) at (.5,\height-.3);
\coordinate [label=below:$st^{-1}s\act\Cs{1}$] (C2) at (.5,.45);
\draw[style=thick] (-.2,0) -- (1.2,0);
\end{tikzpicture}
\caption[halfspaces]{A fundamental domain for the modular group alongside the 
cross section~$\Cs{1}$ for the geodesic flow on the modular 
surface~$\quod{\PSL_2(\Z)}{\H}$.}\label{FIG:modsurf}
\end{figure}
\end{example}

With these preparations we can now show that Algorithms~\ref{nodereductionI} and~\ref{nodereductionII} are indeed correct and provide sets of branches. For any $r\in\{0,\ldots, \kappa_2\}$ we set
\[
 \BrS_r\coloneqq \defset{\Cs{j}}{j\in A_r}
\]
and
\[
\BrU^{(r)} \coloneqq \bigcup_{j\in A_r}\Cs{j}\,.
\]

\begin{prop}\label{PROP:algorithmsbehave}
Algorithms~\ref{nodereductionI} and~\ref{nodereductionII} terminate after 
finitely many steps without reducing the set of nodes to the empty set.  
Further, for each $r\in\{0,\dots,\kappa_2\}$ the family~$\BrS_r$ is a set of 
branches for the geodesic flow on~$\Orbi$. The 
family~$(\Trans{r}{j}{k})_{j,k\in A_r}$ is the family of forward transition sets 
in~\eqref{BP:intervaldecomp}. If $\BrS_0$ is admissible, then $\BrS_r$ is 
admissible as well.
\end{prop}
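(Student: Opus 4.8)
The plan is to establish the four assertions by running, in parallel, a short termination/non\nobreakspace degeneracy argument and two inductions on the step index $r$; throughout, the decisive elementary observation is that the node $j_{0}$ deleted in step $r$ satisfies $j_{0}\notin\Heir{r-1}{j_{0}}$ in both algorithms (in Algorithm~\ref{nodereductionI} because $\Heir{r-1}{j_{0}}$ is a singleton different from $\{j_{0}\}$, in Algorithm~\ref{nodereductionII} by the very definition of $P_{r}$), so that $\Trans{r-1}{j_{0}}{j_{0}}=\varnothing$, i.e.\ there is no loop at $j_{0}$ in $\ReturnGraph{r-1}$. Termination is then immediate: each step of either algorithm removes exactly one element from the finite set $A_{0}$, so after at most $N$ steps altogether one reaches a situation with $R_{r}=\varnothing$, resp.\ $P_{r}=\varnothing$; in particular $\kappa_{1}$ and $\kappa_{2}$ are well defined. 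To see that the node set is never exhausted I would prove by induction the \emph{cycle invariant}: every node of $\ReturnGraph{r}$ lies on a cycle, the base case $r=0$ being Proposition~\ref{PROP:returngraph}\eqref{returngraph:cyclelvlr}. In the inductive step a cycle of $\ReturnGraph{r-1}$ avoiding $j_{0}$ survives verbatim in $\ReturnGraph{r}$ because $\Trans{r}{a}{b}\supseteq\Trans{r-1}{a}{b}$ for $a,b\in A_{r}$, while a cycle through $j_{0}$ decomposes, around each occurrence of $j_{0}$, into pieces $a\edge{g_{1}}j_{0}\edge{g_{2}}b$ with $a,b\ne j_{0}$ (no loop at $j_{0}$), which the bridging rule replaces by edges $a\edge{g_{1}g_{2}}b$ of $\ReturnGraph{r}$; the result is a cycle of $\ReturnGraph{r}$ through any surviving node. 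Granting the invariant, a one-element node set $A_{r-1}=\{j_{0}\}$ forces the only cycle through $j_{0}$ to be a loop, so $\Heir{r-1}{j_{0}}=\{j_{0}\}$ and hence $j_{0}\notin R_{r}$ and $j_{0}\notin P_{r}$, i.e.\ the algorithm has already stopped. Thus $\#A_{r}\ge 1$ throughout.

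The main part is the induction showing that, for each $r\in\{0,\dots,\kappa_{2}\}$, the family $\BrS_{r}$ is a set of branches whose forward transition sets are $(\Trans{r}{j}{k})_{j,k\in A_{r}}$; the case $r=0$ is the hypothesis on $\BrS$. For the step, write $j_{0}$ for the deleted node and recall $\Trans{r-1}{j_{0}}{j_{0}}=\varnothing$. Since $A_{r}\subseteq A_{0}$ and the branches themselves are untouched, properties \eqref{BP:closedgeodesicsHtoX}, \eqref{BP:completegeodesics}, \eqref{BP:pointintohalfspaces}, \eqref{BP:allvectors}, \eqref{BP:disjointunion} and \eqref{BP:leavespaceforflip} are inherited directly from $\BrS_{r-1}$ (for \eqref{BP:leavespaceforflip} one keeps the same $q$ and $\mathcal U$, using $\bigcup_{j\in A_{r}}I_{j}\subseteq\bigcup_{j\in A_{r-1}}I_{j}$; this already gives the admissibility claim). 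For \eqref{BP:coverlimitset} I would use (\ref{BP:intervaldecomp}\ref{BP:intervaldecompGdecomp}) for $\BrS_{r-1}$ together with $\Trans{r-1}{j_{0}}{j_{0}}=\varnothing$ to obtain
\[
\Iset{j_{0}}=\bigcup_{k\in A_{r}}\ \bigcup_{g\in\Trans{r-1}{j_{0}}{k}}g\act\Iset{k}\,,
\]
so that every $\Gamma$-translate of $\Iset{j_{0}}$ is already covered by $\Gamma$-translates of $\{\Iset{k}:k\in A_{r}\}$; combined with \eqref{BP:coverlimitset} for $\BrS_{r-1}$ this gives \eqref{BP:coverlimitset} for $\BrS_{r}$. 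For (\ref{BP:intervaldecomp}\ref{BP:intervaldecompGdecomp}) I would substitute this identity into the decomposition of $\Iset{j}$ for $\BrS_{r-1}$: writing $\Iset{j}=\bigl(\bigcup_{k\in A_{r}}\bigcup_{g\in\Trans{r-1}{j}{k}}g\act\Iset{k}\bigr)\sqcup\bigl(\bigcup_{g\in\Trans{r-1}{j}{j_{0}}}g\act\Iset{j_{0}}\bigr)$ and expanding the last summand yields exactly $\Iset{j}=\bigcup_{k\in A_{r}}\bigcup_{g\in\Trans{r}{j}{k}}g\act\Iset{k}$, the disjointness following from the disjointness in (\ref{BP:intervaldecomp}\ref{BP:intervaldecompGdecomp}) for $\BrS_{r-1}$ applied first at level $j\!\to\!j_{0}$ and then at level $j_{0}\!\to\!k$, together with the fact that the ``direct'' pieces lie outside $\bigcup_{g\in\Trans{r-1}{j}{j_{0}}}g\act\Iset{j_{0}}$ while the ``composed'' pieces lie inside. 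The dual computation with backwards transition sets (Proposition~\ref{PROP:BPintervaldecompV}) gives (\ref{BP:intervaldecomp}\ref{BP:intervaldecompback}).

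It remains to verify (\ref{BP:intervaldecomp}\ref{BP:intervaldecompGgeod}) for the composed weights $g=g_{1}g_{2}$, $g_{1}\in\Trans{r-1}{j}{j_{0}}$, $g_{2}\in\Trans{r-1}{j_{0}}{k}$, and points $(z,w)\in\base{\Cs{j}}\times g\act\base{\Cs{k}}$: one shows that the geodesic segment $(z,w)_{\H}$ crosses $g_{1}\act\overline{\base{\Cs{j_{0}}}}$ in a single point $z_{1}$ (because $g\act I_{k}\subseteq g_{1}\act I_{j_{0}}\subseteq I_{j}$ places $w$ inside $g_{1}\act\Plussp{j_{0}}\varsubsetneq\Plussp{j}$ by Lemma~\ref{LEM:halfspaceincludeGV}, while $z$ lies on the bounding geodesic of $\Plussp{j}$, so $g_{1}\act\overline{\base{\Cs{j_{0}}}}$ separates $z$ from $w$), splits $(z,w)_{\H}$ at $z_{1}$, and applies (\ref{BP:intervaldecomp}\ref{BP:intervaldecompGgeod}) for $\BrS_{r-1}$ (once for $g_{1}$, once for the $g_{1}$-translate of the $\Trans{r-1}{j_{0}}{k}$-datum) to the two halves; since $g_{1}\act\Cs{j_{0}}$ is disjoint from $\Gamma\act\BrU^{(r)}$ by \eqref{BP:disjointunion} and Proposition~\ref{PROP:CofSoBnonst}\eqref{CofSoB:disjointunion}, the full segment avoids $\Gamma\act\BrU^{(r)}$ and lies in $\Plussp{j}$; for direct weights this is inherited verbatim. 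This completes the verification of \eqref{BP:closedgeodesicsHtoX}--\eqref{BP:intervaldecomp} for $\BrS_{r}$, so $\BrS_{r}$ is a set of branches, and by the uniqueness of forward transition sets (Proposition~\ref{PROP:CofSoBst}) the algorithm-defined family $(\Trans{r}{j}{k})_{j,k\in A_{r}}$ must be that family. The step I expect to be most delicate is exactly the interplay between (\ref{BP:intervaldecomp}\ref{BP:intervaldecompGdecomp}) and (\ref{BP:intervaldecomp}\ref{BP:intervaldecompGgeod}) in the composed case: one must be sure that the set-union defining $\Trans{r}{j}{k}$ never collapses a ``direct'' weight and a ``composed'' weight $g_{1}g_{2}$ (they index disjoint subintervals, which is what the interval-disjointness for $\BrS_{r-1}$ provides), and one must handle carefully the crossing point $z_{1}$ in the possible edge case where $g_{1}\act\Cs{j_{0}}$ and some surviving branch translate share an underlying geodesic with opposite orientations (there the relevant vector at $z_{1}$ points out of the plus half-space, hence is not in that branch). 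Everything else is routine inheritance or bookkeeping.
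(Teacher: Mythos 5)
Your proposal is correct and, for most of the statement, runs parallel to the paper's proof: termination by counting, the chaining computation for~(\ref{BP:intervaldecomp}\ref{BP:intervaldecompGdecomp}), verbatim inheritance of~\eqref{BP:closedgeodesicsHtoX}, \eqref{BP:completegeodesics}, \eqref{BP:pointintohalfspaces}, \eqref{BP:allvectors}, \eqref{BP:disjointunion}, \eqref{BP:leavespaceforflip}, and the same admissibility remark. Two points differ. First, non-emptiness: you maintain a cycle invariant (``every node of~$\ReturnGraph{r}$ lies on a cycle'', preserved under bridging because the deleted node carries no loop), which treats both algorithms uniformly; the paper instead disposes of Algorithm~\ref{nodereductionI} by noting that a lone surviving node cannot satisfy the deletion criterion and only invokes cycle preservation (via Proposition~\ref{PROP:returngraph}) for Algorithm~\ref{nodereductionII} --- same idea, different packaging, and your version also rules out the ``no outgoing edge'' case for a lone node in Algorithm~\ref{nodereductionII} exactly as the paper's does. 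Second, and more substantively, the covering property: the paper does not prove~\eqref{BP:coverlimitset} directly but switches to~\eqref{BP:closedgeodesicsXtoH} via Proposition~\ref{PROP:oldB1} and argues by contradiction with Proposition~\ref{PROP:CofSoBnonst} (a periodic geodesic meeting only $\Gamma$-translates of the deleted branch would force $j_0\in\Heir{r}{j_0}$); you instead verify~\eqref{BP:coverlimitset} directly by expanding each translate of~$\Iset{j_0}$ through the second display of~(\ref{BP:intervaldecomp}\ref{BP:intervaldecompGdecomp}) at level~$r-1$ together with $\Trans{r-1}{j_0}{j_0}=\varnothing$. Both routes work; yours is more elementary and bypasses the periodic-geodesic detour, while the paper's makes explicit that no periodic geodesic is lost under reduction. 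Finally, for~(\ref{BP:intervaldecomp}\ref{BP:intervaldecompGgeod}) the paper simply declares the property immediate from the construction, whereas you give the separation-and-splitting argument (using Lemma~\ref{LEM:halfspaceincludeGV} and~\eqref{BP:disjointunion}) including the orientation-reversed edge case; the one place where your sketch needs a supplementary word is that the crossing point~$z_1$ lies on $g_1\act\overline{\base{\Cs{j_0}}}$ but need not lie in $g_1\act\base{\Cs{j_0}}$ itself (when funnels are present the base set need not be dense in the complete geodesic), so quoting the level-$(r-1)$ property verbatim for the two halves requires either a density/limiting remark or a direct argument that no surviving branch translate can enter the open region $\Plussp{j}\cap g_1\act\Minussp{j_0}$ without violating~\eqref{BP:disjointunion}; this is a presentational gap at exactly the level of detail the paper itself omits, not a flaw in the approach.
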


\begin{proof}
In each step of Algorithms~\ref{nodereductionI} and~\ref{nodereductionII}, 
one element of the set~$A_0$ gets eliminated, resulting in the decreasing 
cascade of subsets
\[
 \ldots \varsubsetneq A_3\varsubsetneq A_2 \varsubsetneq A_1 \varsubsetneq A_0\,.
\]
Thus, the number of steps in both algorithms is bounded from above by $\#A_0 < 
+\infty$. In turn, both algorithms terminate (after finitely many 
steps) and hence $\kappa_1$ and $\kappa_2$ are well-defined. We first show that 
$A_{\kappa_1}\not=\varnothing$. If $\kappa_1=0$, then $A_{\kappa_1} = A_0 
\not=\varnothing$. Thus, suppose now that $\kappa_1\geq 1$. To seek a 
contradiction, we assume that $A_{\kappa_1} = \varnothing$. Thus 
$A_{\kappa_1-1}$ contains exactly one elements, say $j_0$. Then either 
$\#\Heir{\kappa_1-1}{j_0}=1$ and hence $\Heir{\kappa_1-1}{j_0} = \{j_0\}$, or 
$\#\Heir{\kappa_1-1}{j_0}=0$ and hence $\#\Trans{\kappa_1-1}{j_0}{j_0}=0$. In 
either case, $R_{\kappa_1} = \varnothing$. This contradicts the definition 
of~$\kappa_1$, and hence $A_{\kappa_1}\not=\varnothing$. We now show that 
$A_{\kappa_2}\not=\varnothing$. If $\kappa_2=\kappa_1$, then $A_{\kappa_2} = 
A_{\kappa_1} \not=\varnothing$. Thus, we suppose now that $\kappa_2>\kappa_1$. 
As before, to seek a contradiction, we assume that $A_{\kappa_2} = 
\varnothing$. 
Again, $A_{\kappa_2-1}$ contains exactly one element, say~$k_0$. By 
Proposition~\ref{PROP:returngraph} we find a cycle of~$\ReturnGraph{0}$ 
that contains~$k_0$. In each step of the node-elimination-processes of 
Algorithms~\ref{nodereductionI} and~\ref{nodereductionII}, at most one node 
(other than~$k_0$) of this cycle gets eliminated. If an elimination of a node in 
the cycle happens, the two adjacent nodes of the eliminated node in the cycle 
get connected by a new edge that combines then the two old ones. Thus, the cycle is 
``preserved'' but shortened and has changed weights. Thus, after the 
step~$\kappa_2-1$, the node $k_0$ is contained in a cycle, which 
is just a loop at~$k_0$. In turn, $P_{\kappa_2}=\varnothing$. This contradicts 
the definition of~$\kappa_2$. Hence, $A_{\kappa_2}\not=\varnothing$.

We now establish the statement on the sets of branches. For any 
$r\in\{0,\ldots,\kappa_2\}$, the family~$\BrS_r = \defset{\Cs{j}}{j\in 
A_r}$ is a subset of the original set of branches~$\BrS=\BrS_0 = 
\{\Cs{1},\dots,\Cs{N}\}$. 
Hence most of the properties that we impose for a set of branches are immediate 
from those of~$\BrS$. Indeed, the only properties which remain to be proven 
for~$\BrS_r$ are~\eqref{BP:coverlimitset} and~\eqref{BP:intervaldecomp}, where 
for the former we take advantage of Proposition~\ref{PROP:oldB1} and 
prove~\eqref{BP:closedgeodesicsXtoH} instead.
For both properties we proceed by an inductive argument and note 
that they are already known to be valid for $\BrS_0 = \BrS$. 

Let $r\in\{0,\ldots, \kappa_2-1\}$ be such that~\eqref{BP:intervaldecomp} 
is already established for~$\BrS_r$ by using, for all $j,k\in A_r$, the set~ 
$\Trans{r}{j}{k}$ for the forward transition set in~\eqref{BP:intervaldecomp}. 
Let $j\in A_{r+1}$ and suppose that $A_r\setminus A_{r+1} = \{p\}$. If 
$p\notin\Heir{r}{j}$, then 
\[
 \Trans{r+1}{j}{k} = \Trans{r}{j}{k}
\]
for all~$k\in\Heir{r+1}{j}$. In this case, 
(\ref{BP:intervaldecomp}\ref{BP:intervaldecompGdecomp}) for $r$ and $r+1$ are 
identical statements for the considered index~$j$ and hence, (\ref{BP:intervaldecomp}\ref{BP:intervaldecompGdecomp}) is valid for $r+1$. If 
$p\in\Heir{r}{j}$, then 
\[
 \Heir{r+1}{j} = \left(\Heir{r}{j}\setminus\{p\}\right)\cup\Heir{r}{p}\,,
\]
where necessarily $p\notin\Heir{r}{p}$. Taking advantage of the inductive 
hypothesis for the first equality below, we find
\begin{align*}
\Iset{j}&=\bigcup_{k\in\Heir{r}{j}}\bigcup_{g\in\Trans{r}{j}{k}}g\act\Iset{k}
\\
&=\bigcup_{k\in\Heir{r}{j}\setminus\{p\}}\bigcup_{g\in\Trans{r}{j}{k}}g\act\Iset
{k}\cup\bigcup_{h\in\Trans{r}{j}{p}}h\act\Iset{p}
\\
&=\bigcup_{k\in\Heir{r}{j}\setminus\{p\}}\bigcup_{g\in\Trans{r}{j}{k}}g\act\Iset
{k}\cup\bigcup_{h\in\Trans{r}{j}{p}}\bigcup_{q\in\Heir{r}{p}}\bigcup_{w\in\Trans
{r}{p}{q}}hw\act\Iset{q}
\\
&=\bigcup_{k\in\Heir{r}{j}\setminus\{p\}}\bigcup_{g\in\Trans{r}{j}{k}}g\act\Iset
{k}\cup\bigcup_{q\in\Heir{r}{p}}\bigcup_{h\in\Trans{r}{j}{p}}\bigcup_{w\in\Trans
{r}{p}{q}}hw\act\Iset{q}
\\
&=\bigcup_{k\in\Heir{r+1}{j}}\bigcup_{g\in\Trans{r+1}{j}{k}}g\act\Iset{k}\,.
\end{align*}
Since $\BrS_r$ is known to satisfy~\eqref{BP:intervaldecomp}, the unions in 
all steps are disjoint. This establishes the second part of 
(\ref{BP:intervaldecomp}\ref{BP:intervaldecompGdecomp}) for~$\BrS_{r+1}$. The 
first part as well as  (\ref{BP:intervaldecomp}\ref{BP:intervaldecompback}) 
follow analogously. Further, 
(\ref{BP:intervaldecomp}\ref{BP:intervaldecompGgeod}) is immediate by the 
construction of~$\BrS_{r+1}$.

Now let $r\in\{0,\ldots, \kappa_2-1\}$ be such 
that~\eqref{BP:closedgeodesicsXtoH} is established for~$\BrS_r$. Together with 
the previous discussion this then already shows that~$\BrS_r$ is a set of 
branches. Suppose that $\widehat\gamma$ is a periodic geodesic on~$\Orbi$. By 
hypothesis, $\wh\gamma$ has a lift to~$\H$ which intersects~$\BrU^{(r)}$. In order 
to show~\eqref{BP:closedgeodesicsXtoH} for~$\BrS_{r+1}$, it remains 
to show that there is also such a lift that intersects~$\BrU^{(r+1)}$. To seek a 
contradiction, we assume that all lifts of~$\widehat\gamma$ 
intersect~$\Gamma\act\BrU^{(r)}$ only on~$\Gamma\act\Cs{p}$. Then 
Proposition~\ref{PROP:CofSoBnonst} implies that $p\in\Heir{r}{p}$, 
which contradicts $p\in A_r\setminus A_{r+1}$ (note that $\BrS_r$ is already 
known to be a set of branches). In turn, $\BrS_{r+1}$ 
satisfies~\eqref{BP:closedgeodesicsXtoH}. 

Finally we suppose that $\BrS$ is admissible. Hence we find $q\in\widehat{\R}$ 
and an open neighborhood~$\mathcal{U}$ of~$q$ in~$\widehat\R$ such 
that $\mathcal{U}\cap\bigcup_{j\in A_0}\Iset{j}=\varnothing$ and~$q\notin I_j$ for any~$j\in A_0$. For any 
$r\in\{0,\ldots,\kappa_2\}$, the inclusion $A_r\subseteq A_0$ implies 
$\bigcup_{j\in A_r}\Iset{j}\subseteq\bigcup_{j\in A_0}\Iset{j}$. Therefore,  
the set of branches~$\BrS_r$ is admissible.
\end{proof}

\begin{example}\label{EX:G3Graphred}
Recall the group~$\Gamma_{\lambda}$ as well as its sets of branches~$\BrS$ and~$\BrS'$ and 
their respective return graphs from Example~\ref{EX:G3Graph}.
A complete 
reduction procedure for $\Gamma_{\lambda}$ takes $6$ steps in total and leads to 
the return graph $\ReturnGraph{6}$ indicated in Figure~\ref{FIG:G3redreturn}.
In this example, it so happens that every possible sequence of choices for the 
Algorithms~\ref{nodereductionI}~and~\ref{nodereductionII} leads to the same 
return graph~$\ReturnGraph{6}$, regardless of whether starts out with~$\BrS$ or with~$\BrS'$.
The arising set of branches~$\{\Cs{2},\Cs{7}\}$ is easily seen to be non-collapsing, for~$\{2,7\}=D_\ini$.

\begin{figure}[h]
\begin{tikzpicture}[->,>=stealth',shorten >=1pt,auto,node distance=6cm, 
semithick]
\node[state] (C2) {$2$};
\node[state] (C7) [right of=C2] {$7$};

\path (C2)	edge [loop above,out=110,in=70,looseness=10] node {$t_{\lambda}$} 
(C2)
					edge [loop left,out=200,in=160,looseness=10] node 
{$ht_{\lambda}$} (C2)
					edge [loop below,out=290,in=250,looseness=10] node 
{$h^{-1}t_{\lambda}$} (C2)
					edge [bend left, out=40, in=140] node {$ht_{\lambda}^{-1}$} 
(C7)
					edge [bend left, out=20, in=160] node [below] 
{$h^{-1}t_{\lambda}^{-1}$} (C7)
		  (C7)	edge [loop above,out=110,in=70,looseness=10] node 
{$ht_{\lambda}^{-1}$} (C7)
		  			edge [loop right,out=20,in=340,looseness=10] node 
{$h^{-1}t_{\lambda}^{-1}$} (C7)
		  			edge [loop below,out=290,in=250,looseness=10] node 
{$t_{\lambda}^{-1}$} (C7)
		  			edge [bend left,out=20, in=160] node [above] 
{$ht_{\lambda}$} (C2)
		  			edge [bend left,out=40, in=140] node {$h^{-1}t_{\lambda}$} 
(C2);
\end{tikzpicture}
\caption[halfspaces]{The maximally reduced return graph $\ReturnGraph{6}$ 
for~$\BrS$.}\label{FIG:G3redreturn}
\end{figure}
\end{example}

As the following proposition establishes, finiteness of ramification is preserved by the branch reduction algorithm.

\begin{prop}\label{PROP:reddontram}
If the set of branches~$\BrS$ is finitely ramified, then $\BrS_r$ is finitely 
ramified for each~$r\in\{0,\ldots,\kappa_2\}$.
\end{prop}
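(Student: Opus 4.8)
The plan is to proceed by induction on the step count $r \in \{0,\ldots,\kappa_2\}$, using the already-established fact (Proposition~\ref{PROP:algorithmsbehave}) that each $\BrS_r$ is a set of branches whose forward transition sets are precisely the $(\Trans{r}{j}{k})_{j,k\in A_r}$ produced by Algorithms~\ref{nodereductionI} and~\ref{nodereductionII}. The base case $r=0$ is the hypothesis. For the inductive step, I would analyse how a single node removal affects the ramification numbers.

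First I would fix $r$ with $0 \le r < \kappa_2$, assume $\BrS_r$ is finitely ramified, and let $p$ be the node removed in step $r+1$ (so $A_r \setminus A_{r+1} = \{p\}$). The defining recursion is
\[
\Trans{r+1}{i}{k} = \Trans{r}{i}{k} \cup \bigcup_{g_1 \in \Trans{r}{i}{p}} \bigcup_{g_2 \in \Trans{r}{p}{k}} \{g_1 g_2\}
\]
for $i,k \in A_{r+1}$. Hence for each fixed $i \in A_{r+1}$,
\[
\ram{i}_{(r+1)} = \sum_{k \in A_{r+1}} \#\Trans{r+1}{i}{k} \le \sum_{k\in A_{r+1}} \#\Trans{r}{i}{k} + \sum_{k\in A_{r+1}} \#\Trans{r}{i}{p}\cdot\#\Trans{r}{p}{k}\,,
\]
where I use $\ram{i}_{(r)}$ to denote the ramification number of $i$ with respect to $\BrS_r$. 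The first sum is bounded by $\ram{i}_{(r)} \le \Ram{\BrS_r} < \infty$. For the second sum, $\#\Trans{r}{i}{p}$ is a single fixed finite number $\le \ram{i}_{(r)}$, and $\sum_{k\in A_{r+1}} \#\Trans{r}{p}{k} \le \sum_{k\in A_r}\#\Trans{r}{p}{k} = \ram{p}_{(r)} \le \Ram{\BrS_r} < \infty$. Therefore $\ram{i}_{(r+1)} \le \Ram{\BrS_r} + \Ram{\BrS_r}^2 < \infty$ for every $i \in A_{r+1}$, and taking the supremum over the finite set $A_{r+1}$ gives $\Ram{\BrS_{r+1}} \le \Ram{\BrS_r} + \Ram{\BrS_r}^2 < \infty$. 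This closes the induction. (One could equally invoke Proposition~\ref{PROP:Attandram}: since each $\BrS_r$ is a set of branches covering all periodic geodesics and removing nodes that are either bridged-over or non-returning does not ``detach'' any cusp, attachedness of all cusps is preserved; but the direct transition-set estimate is cleaner and avoids re-examining the cusp geometry.)

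The main obstacle, such as it is, is purely bookkeeping: one must be careful that the union in the recursion is over $k \in A_{r+1}$, not $A_r$, so that the term $\#\Trans{r}{p}{k}$ with $k=p$ never appears on the right-hand side (consistent with $p \notin \Heir{r}{p}$ when $p$ is removed in part~II, and harmless in part~I). There is no subtlety about $\Trans{r}{i}{k}$ being replaced rather than enlarged when $p \notin \Heir{r}{i}$ — in that case the second union is empty and $\Trans{r+1}{i}{k} = \Trans{r}{i}{k}$, so the bound trivially holds. Thus the only real content is the observation that the branch reduction never creates infinitely many new edges out of any single node, because each new edge is a concatenation of one old outgoing edge of $i$ (finitely many) with one old outgoing edge of $p$ (finitely many, by the inductive hypothesis applied to $p$).
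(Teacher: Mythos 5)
Your proof is correct, but it takes a genuinely different route from the paper. You argue purely combinatorially from the recursion of Algorithms~\ref{nodereductionI} and~\ref{nodereductionII}: since $\Trans{r+1}{i}{k}\subseteq\Trans{r}{i}{k}\cup\defset{g_1g_2}{g_1\in\Trans{r}{i}{p},\,g_2\in\Trans{r}{p}{k}}$, each new transition set is a union of a finite set with a set of at most $\#\Trans{r}{i}{p}\cdot\#\Trans{r}{p}{k}$ elements, giving the bound $\Ram{\BrS_{r+1}}\le\Ram{\BrS_r}+\Ram{\BrS_r}^2$ and closing the induction; the only external input you need is Proposition~\ref{PROP:algorithmsbehave} (the algorithm-defined sets \emph{are} the forward transition sets of the set of branches~$\BrS_r$, which are unique by Proposition~\ref{PROP:CofSoBst}), and you cite it. The paper instead goes through the geometric characterization of Proposition~\ref{PROP:Attandram}: it first dispatches the cusp-free case via Lemma~\ref{LEM:nocuspnoram}, then shows that every cusp attached to~$\BrS_0$ stays attached to~$\BrS_1$ by replacing each eliminated pair $(j_0,h)\in\Att{\BrS_0}{c}$ with the pair $(k,hg)$ coming from the unique successor of~$j_0$ sharing the cuspidal endpoint, and concludes by finite induction. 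Your estimate is shorter and in fact mirrors the paper's own later argument for Proposition~\ref{PROP:finramandcollaps} (finite ramification survives identity elimination); what the paper's route buys is the explicit bookkeeping of how the attachment data $\Att{\BrS_r}{c}$ transforms under node elimination, which ties the statement back to the cusp geometry that motivates the notion of ramification. One small caveat: your parenthetical remark that one could ``equally invoke Proposition~\ref{PROP:Attandram}'' because node removal ``does not detach any cusp'' understates the work involved --- that preservation of attachedness is exactly the nontrivial content of the paper's proof --- but since your main argument does not rely on it, this does not affect correctness.
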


\begin{proof}
If $\Orbi$ has a no cusps, then the statement holds by 
Lemma~\ref{LEM:nocuspnoram}. We now suppose that~$\Orbi$ has cusps, that 
$\BrS=\BrS_0$ is finitely ramified and that $\kappa_2\geq 1$. It suffices to 
show that $\BrS_1$ is finitely ramified as the remaining claims then follow 
immediately by finite induction. By Proposition~\ref{PROP:Attandram} it further 
suffices to show that each cusp of~$\Orbi$ is attached to~$\BrS_1$. 

Let $\wh c$ be a cusp of~$\Orbi$, represented by~$c\in\wh\R$. By hypothesis and 
Proposition~\ref{PROP:Attandram}, $\wh c$ is attached to~$\BrS_0$. Thus, 
\[
\IAtt{\BrS_0}{c}\coloneqq\overline{\bigcup_{(j,h)\in\Att{\BrS_0}{c}} 
h\act I_j}\,,
\]
where $\Att{\BrS_0}{c}\coloneqq\{(j,h)\in A_0\times\Gamma \mid c \in 
h\act\geo\overline{\base{\Cs{j}}}\}$, is a neighborhood of~$c$ in~$\wh\R$. If 
$\Att{\BrS_1}{c} = \Att{\BrS_0}{c}$, then $\wh c$ is also attached to~$\BrS_1$. 
It remains to consider the case that $\Att{\BrS_1}{c} \not=\Att{\BrS_0}{c}$. 
Then there exists $(j_0,h)\in\Att{\BrS_0}{c}$ that is not contained 
in~$\Att{\BrS_1}{c}$. The index~$j_0$ is the (unique) node of~$\ReturnGraph{0}$ 
that gets eliminated in Algorithm~\ref{nodereductionI} or~\ref{nodereductionII}. 
Let $x_0$ be the unique endpoint of~$\overline{\base{\Cs{j_0}}}$, and hence 
of~$I_{j_0}$, such that $h\act x_0 = c$. Since each cusp representative is an 
accumulation point of~$\wh\R_{\st}$, $x_0$ is also an endpoint of~$\Iset{j_0}$. 
By~(\ref{BP:intervaldecomp}\ref{BP:intervaldecompGdecomp}) we find a (unique) 
pair~$(k,g)\in A_0\times \Trans{0}{j_0}{k}$ such that $hg\act \Iset{k}\subseteq 
h\act \Iset{j_0}$, and $c$ is an endpoint of both~$hg\act\Iset{k}$ and~$hg\act\overline{\base{\Cs{k}}}$. It follows that 
\[
 \bigl(\IAtt{\BrS_0}{c} \setminus \overline{h\act I_{j_0}}\bigr) \cup 
\overline{hg\act I_k}
\]
is also a neighborhood of~$c$ in~$\wh\R$. Further, $k\not=j_0$ because 
Algorithms~\ref{nodereductionI} and~\ref{nodereductionII} require $j_0\notin 
H_0(j_0)$ for elimination of~$j_0$. Therefore, $(k,hg)\in \Att{\BrS_1}{c}$. 
Substituting each appearance of~$j_0$ in~$\Att{\BrS_0}{c}$ in this way we 
obtain~$\Att{\BrS_1}{c}$ and find that $\IAtt{\BrS_1}{c}$ is a neighborhood 
of~$c$ in~$\wh\R$. Thus, $\wh c$ is attached to~$\BrS_1$. This completes the 
proof.
\end{proof}

\section{Identity elimination and reduced set of 
branches}\label{SEC:stepredux}

In order to fulfill all requirements of a strict transfer operator approach, it is 
essential to ensure a unique coding of periodic geodesics in terms of the chosen 
set of generators for the underlying Fuchsian group.
This property, which we call the \emph{non-collapsing property}, is codified 
in~\eqref{BP:noidentity}.
In terms of the return graph of the considered set of branches, it states that the weights along edge sequences never combine to the identity.
Sets of branches that initially do not satisfy~\eqref{BP:noidentity} can 
be remodeled via a reduction procedure that removes such identity 
transformations from the system.
In this section we present, discuss and prove such a procedure, which we call \emph{identity elimination}.

Let~$\BrS=\{\Cs{1},\dots,\Cs{N}\}$ be a set of branches for the geodesic flow 
on~$\Orbi$.
Proposition~\ref{PROP:noncollaps} allows us to suppose~$\BrS$ to be weakly non-collapsing 
(see~\eqref{BP:noncollaps}) without shrinking the realm of applicability of the identity elimination algorithm. This additional hypothesis eliminates some technical subleties. It assures that all identity transformations present in the system are 
\emph{visible} to the algorithm as no identities are concealed 
by~$\Gamma$-copies of~$\BrS$.

Let~$\kappa_2$,~$A_r$,~$\Heir{r}{j}$, and~$\Trans{r}{j}{k}$ for~$j,k\in A_r$ 
and~$r\in\{1,\dots,\kappa_2\}$ be as in Section~\ref{SUBSEC:branchred}.
The procedure discussed in this section is uniform in the ``level of 
reduction'', meaning uniform with respect to~$r\in\{0,\dots,\kappa_2\}$. For that reason we omit the subscript~$r$ throughout.
However, we remark that a sufficient level of reduction might already render the 
emerging set of branches non-collapsing, as can be observed, e.g., in the situation of the initial and the fully reduced return graph for the group~$\Gamma_\lambda$ from 
Example~\ref{EX:G3Def}, indicated in Figure~\ref{FIG:G3returngraph} and 
Figure~\ref{FIG:G3redreturn}, respectively.
However, this is not always the case, not even in the weakly non-collapsing 
case.

Recall the subset~$D_\ini$ of~$A$ of indices of initial branches from~\eqref{eq:D_initial}.
Recall further the branch trees~$B_j$,~$j\in A$, and the branch forest~$F_\ini$ for~$\BrS$ from Section~\ref{SUBSEC:reducedbranches}.
For every~$i\in D_\ini$ consider all nodes in~$B_i$ of the form~$(*,\id)$.
Since~$\BrS$ is weakly non-collapsing, these nodes (and their connecting edges) form a sub-tree~$B'_i$ of~$B_i$ of finite depth.
\index[symbols]{Bzjj@$B'_j$}%
Furthermore, for the same reason, for every~$k\in A$ there exists~$j\in D_\ini$ such that~$B'_j$ contains the node~$(k,\id)$.
Denote by~$F'_\ini$ the forest of sub-trees~$B'_i$,~$i\in D_\ini$.
\index[symbols]{Finii@$F'_\ini$}%
The forest~$F'_\ini$ can be seen as a disconnected, directed graph of which each connected component is a tree of nodes of the form~$(*,\id)$. Therefore, each (directed) path in~$F'_\ini$ can be indexed by a tuple consisting of the index (the element in~$A$) of the root node of its super-tree and the index of its end node. (It is necessary to consider the root node in this indexing as well, because end nodes for different paths in~$F'_\ini$ may coincide.)
We denote by~$\Delta_\ini\subseteq D_\ini\times A$ the set of these indices.
\index[symbols]{Delini@$\Delta_\ini$}%
By construction,~$\#\Delta_\ini<+\infty$, and for every~$i\in D_\ini$ the set~$\Delta_\ini$ contains an element of the form~$(i,*)$.
To each path~$(i,k)\in\Delta_\ini$ we assign its length~$\ell_{(i,k)}\in\N_0$, that is the level of~$(k,\id)$ in~$B'_i$.
Then~$\ell_{(i,k)}$ is the unique integer for which there exists~$\nu\in\Cs{i}$ such that
\[
\bigl(\itindex{\BrU,\ell_{(i,k)}}(\nu),\ittrans{\BrU,\ell_{(i,k)}}(\nu)\bigr)=(k,\id)\,,
\]
with~$\itindex{\BrU,*}(\nu)$ as in~\eqref{EQDEF:itindex} and~$\ittrans{\BrU,*}(\nu)$ as in~\eqref{EQDEF:ittrans1} and~\eqref{EQDEF:ittrans2}.
We further assign a sequence~$(a^{\delta}_n)_{n=0}^{\ell_{\delta}}$
\index[symbols]{adn@$a_n^{\delta}$}%
in~$A$ to each~$\delta=(i,k)\in\Delta_\ini$ by imposing that~$(i,k)$ indexes the path
\[
(i,\id)=(a^{(i,k)}_{\ell_{(i,k)}},\id)\edge{\phantom{-.}}(a^{(i,k)}_{\ell_{(i,k)}-1},\id)\edge{\phantom{-.}}\dots\edge{\phantom{-.}}(a^{(i,k)}_{1},\id)\edge{\phantom{-.}}(a^{(i,k)}_{0},\id)=(k,\id)\,.
\]
The following algorithm (Algorithm~\eqref{stepreduction} below) will traverse paths in backwards direction.
That justifies the counter-intuitive numbering of the members of the (finite) sequence~$(a^{\delta}_n)$.
The possibility of multiple occurrences of a single node~$(k,\id)$ in~$F'_\ini$ necessitates an iterative approach, where certain branches and transition sets might be redefined several times.
We therefore initialize the procedure by setting
\[
\ell_{\delta_0}\coloneqq 0\,,\qquad \Transss{\ell_{\delta_0}}{0}{j}{k}\coloneqq\Trans{}{j}{k}\qquad\text{and}\qquad\Css{j}{0,0}\coloneqq\Cs{j}
\]
\index[symbols]{G1@$\Transss{n}{r}{j}{k}$}%
\index[symbols]{Cjr@$\Css{j}{r,s_r}$}%
for all~$j,k\in A$. 
Further we fix an (arbitrary) enumeration of~$\Delta_\ini$. Thus, 
\begin{equation}\label{EQ:numberDelta}
\Delta_\ini=\{\delta_1,\dots,\delta_\eta\}\,,
\end{equation}
with~$\eta\coloneqq\#\Delta_\ini$.

\begin{algo}[Identity elimination]\label{stepreduction}
The index~$r$ runs from~$1$ to~$\eta$.
\vspace*{-\baselineskip}
\vspace*{4pt}
\begin{enumerate}
\renewcommand{\labelenumi}{\emph{Step\,$r$.}}
\item
Set
\[
\Transss{0}{r}{\lambda}{\kappa}\coloneqq\Transss{\ell_{\delta_{r-1}}}{r-1}{\lambda}{\kappa}\qquad\text{and}\qquad\Css{\lambda}{r,0}\coloneqq\Css{\lambda}{r-1,\ell_{\delta_{r-1}}}
\]
for all~$\lambda,\kappa\in A$.
The index~$s_r$ runs from~$1$ to~$\ell_{\delta_r}$.
\vspace*{3pt}
\begin{enumerate}
\renewcommand{\labelenumii}{\emph{Substep\,$s_r$.}}
\item
For all~$j\in A$ define
\[
\Transss{s_r}{r}{j}{a_{s_r-\ell}^{\delta_r}}\coloneqq\Transss{s_r-1}{r}{j}{a_{s_r-\ell}^{\delta_r}}\setminus\{\id\}\cup\Transss{s_r-1}{r}{j}{a_{s_r}^{\delta_r}}\,,
\]
for~$\ell=1,\dots,s_r$, as well as
\[
\Transss{s_r}{r}{j}{\kappa}\coloneqq\Transss{s_r-1}{r}{j}{\kappa}\,,
\]
for all~$\kappa\in A\setminus\{a_0^{\delta_r},\dots,a_{s_r-1}^{\delta_r}\}$.
Further set
\[
V_{s_r}^{(r)}\coloneqq\defset{\nu\in\Css{a_{s_r}^{\delta_r}}{r,s_r-1}}{\bigl(\itindex{\BrU,1}(\nu),\ittrans{\BrU,1}(\nu)\bigr)=\bigl(a_{s_r-1}^{\delta_r},\id\bigr)}
\]
\index[symbols]{Vsr@$V_{s_r}^{(r)}$}%
and define
\[
\Css{j}{r,s_r}\coloneqq\Css{j}{r,s_r-1}\setminus V_{s_r}^{(r)}\,.
\]\algofin
\end{enumerate}
\end{enumerate}
\end{algo}

Let~$r,r'\in\{1,\dots,\eta\}$ and~$s_r\in\{1,\dots,\ell_{\delta_r}\}$,~$s_{r'}\in\{1,\dots,\ell_{\delta_{r'}}\}$.
We define a relation~``$\leq$'' on the set of pairs~$(r,s_r)$ by setting
\begin{equation}\label{EQDEF:orderrsr}
(r,s_r)\leq(r',s_{r'})\quad\logeq\quad r<r'~\lor~\left(r=r'\,\land\,s_r\leq s_{r'}\right)\,.
\end{equation}
Then~``$\leq$'' is a total order.

\begin{lemma}\label{LEM:VsrCsr}
For $j\in A$, $r,r'\in\{1,\dots,\eta\}$ and~$s_r\in\{1,\dots,\ell_{\delta_r}\}$,~$s_{r'}\in\{1,\dots,\ell_{\delta_{r'}}\}$ we have~
\begin{enumerate}[label=$\mathrm{(\roman*)}$, ref=$\mathrm{\roman*}$]
\item\label{VsrCsr:Vsrdisjoint}
$V_{s_r}^{(r)}\cap V_{s_{r'}}^{(r')}\ne\varnothing$ if and only if~$V_{s_r}^{(r)}=V_{s_{r'}}^{(r')}$,
\item\label{VsrCsr:fullunion}
$ \Css{j}{r,s_r}=\Cs{j}\setminus\biggl(\,\bigcup\limits_{p=1}^{r-1}\bigcup\limits_{\ell=1}^{\ell_{\delta_p}}V_{\ell}^{(p)}\cup\bigcup\limits_{i=1}^{s_r}V_{i}^{(r)}\biggr)$,
\item\label{VsrCsr:CsrinC}
$\Css{j}{r',s_{r'}}\subseteq\Css{j}{r,s_r}\subseteq\Cs{j}$ if and only if~$(r,s_r)\leq(r',s_{r'})$.
\end{enumerate}
\end{lemma}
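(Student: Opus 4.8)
The statement is an essentially bookkeeping lemma about Algorithm~\ref{stepreduction}, so the plan is to unwind the definitions of the sets $V_{s_r}^{(r)}$ and $\Css{j}{r,s_r}$ directly and to track how the algorithm modifies branches at each (sub)step. First I would fix notation for the order in which the pairs $(r,s_r)$ occur, using~\eqref{EQDEF:orderrsr}; the key observation is that the (sub)steps of the algorithm are executed precisely in the order induced by ``$\leq$'', so passing from $(r,s_r)$ to its immediate successor removes exactly one more set $V$ from the current version of each branch.

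For part~\eqref{VsrCsr:fullunion}, I would argue by induction along the total order ``$\leq$''. The base case is the initialization $\Css{j}{1,0}=\Css{j}{0,0}=\Cs{j}$, which is the empty-union case. For the inductive step, distinguish whether we pass to the next substep within the same step $r$ (where $\Css{j}{r,s_r}=\Css{j}{r,s_r-1}\setminus V_{s_r}^{(r)}$ by the definition in \emph{Substep $s_r$}) or whether we start a new step $r$ (where $\Css{j}{r,0}\coloneqq\Css{j}{r-1,\ell_{\delta_{r-1}}}$ by the definition in \emph{Step $r$}); in either case the claimed formula is reproduced by adding (resp.\ not adding) the term $V_{s_r}^{(r)}$ to the accumulated union. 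This is routine once the indexing is set up correctly, so the only care needed is to be precise about the boundary conventions (the $s_r=0$ reset versus $s_r\geq 1$ substeps).

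Part~\eqref{VsrCsr:CsrinC} then follows from~\eqref{VsrCsr:fullunion}: if $(r,s_r)\leq(r',s_{r'})$, the index set of the union defining $\Css{j}{r,s_r}$ is contained in that defining $\Css{j}{r',s_{r'}}$, hence the reverse inclusion of the branches; conversely, if $(r,s_r)\not\leq(r',s_{r'})$ then $(r',s_{r'})<(r,s_r)$ strictly, and one must exhibit a vector $\nu\in\Css{j}{r',s_{r'}}\setminus\Css{j}{r,s_r}$, i.e.\ show that at least one of the sets $V_{i}^{(p)}$ removed between the two stages actually meets $\Cs{j}$. Here I would use property~\eqref{BP:allvectors} together with Remark~\ref{REM:inst}: the set $V_{s_r}^{(r)}$ consists of those $\nu$ whose first iterated transition transformation is the identity and whose first iterated intersection branch is $a_{s_r-1}^{\delta_r}$, and since $\BrS$ is weakly non-collapsing (hence the relevant node $(a_{s_r-1}^{\delta_r},\id)$ genuinely appears in the branch forest $F'_\ini$), such $\nu$ exist and lie in the appropriate $\Cs{j}$; thus the inclusion is strict and the ``only if'' direction holds.

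Part~\eqref{VsrCsr:Vsrdisjoint} is the most delicate point, and I expect it to be the main obstacle. The sets $V_{s_r}^{(r)}$ are defined as subsets of the \emph{current} branch $\Css{a_{s_r}^{\delta_r}}{r,s_r-1}$, which itself shrinks as the algorithm proceeds; so a priori two such sets could overlap without being equal. The plan is to show that $V_{s_r}^{(r)}$ depends only on its ``type data''---namely the pair $(a_{s_r}^{\delta_r},a_{s_r-1}^{\delta_r})$ together with the constraint $\ittrans{\BrU,1}(\nu)=\id$---via the characterization: $\nu\in V_{s_r}^{(r)}$ iff $\nu\in\Cs{a_{s_r}^{\delta_r}}$, $(\itindex{\BrU,1}(\nu),\ittrans{\BrU,1}(\nu))=(a_{s_r-1}^{\delta_r},\id)$, and $\nu$ has survived all earlier removals. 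Using~\eqref{VsrCsr:fullunion} one sees that the surviving condition is automatically implied: if $\nu\in\Cs{a_{s_r}^{\delta_r}}$ has $\ittrans{\BrU,1}(\nu)=\id$ landing in $a_{s_r-1}^{\delta_r}$, then $\nu$ was not removed by any earlier $V_{i}^{(p)}$ unless that earlier set already had exactly the same type data, in which case $V_{s_r}^{(r)}$ and $V_{i}^{(p)}$ coincide set-theoretically. Concretely, I would prove: if $V_{s_r}^{(r)}\cap V_{s_{r'}}^{(r')}\neq\varnothing$, pick $\nu$ in the intersection; then $\nu\in\Cs{a_{s_r}^{\delta_r}}\cap\Cs{a_{s_{r'}}^{\delta_{r'}}}$ forces $a_{s_r}^{\delta_r}=a_{s_{r'}}^{\delta_{r'}}$ (the branches of $\BrS$ are pairwise disjoint by Remark~\ref{REM:SoBrem}\eqref{SoBrem:pairwisedisjoint}), and $(\itindex{\BrU,1}(\nu),\ittrans{\BrU,1}(\nu))=(a_{s_r-1}^{\delta_r},\id)=(a_{s_{r'}-1}^{\delta_{r'}},\id)$ by uniqueness of the iterated sequences (Proposition~\ref{PROP:allinit}); hence the two sets have identical defining conditions and, modulo the ``survived earlier removals'' clause, are equal. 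The remaining work is to check that this survival clause does not break equality, which follows because the clause is symmetric: a vector with the given type data is removed at the first (sub)step with that type data and not before, so both $V$'s describe exactly the vectors in $\Cs{a_{s_r}^{\delta_r}}$ with that type data, independently of \emph{which} (sub)step we are looking at.
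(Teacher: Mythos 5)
Your treatment of part~\eqref{VsrCsr:fullunion} and of the forward implication in part~\eqref{VsrCsr:CsrinC} is correct and coincides with the paper's (very brief) argument: recursive unwinding of the two assignments in Algorithm~\ref{stepreduction} along the order~\eqref{EQDEF:orderrsr}, then monotonicity of the accumulated union. The remaining points contain genuine gaps. For part~\eqref{VsrCsr:Vsrdisjoint}, the step on which your equality rests --- ``both $V$'s describe exactly the vectors in $\Cs{a_{s_r}^{\delta_r}}$ with that type data, independently of which (sub)step we are looking at'' --- is false, and it contradicts the sentence you state immediately before it. Distinct paths in~$\Delta_\ini$ with a common root share edges (in Example~\ref{EX:G3idelim} the paths from $(1,\id)$ to $(5,\id)$ and to $(8,\id)$ share the sub-path through $(2,\id)$, $(3,\id)$, $(4,\id)$), so the same pair $(a_{s_r}^{\delta_r},a_{s_r-1}^{\delta_r})$ can occur at two different substeps; at the first occurrence $V$ is indeed the full set of vectors with that type data, but at any later occurrence it is empty, precisely because those vectors were ``removed at the first (sub)step with that type data and not before.'' Hence matching type data does not by itself give equality, and your proof of~\eqref{VsrCsr:Vsrdisjoint} breaks at this point. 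The statement survives and the repair is short: if the index pairs are distinct, say $(r,s_r)<(r',s_{r'})$, and the branch indices agree (set $a\coloneqq a_{s_r}^{\delta_r}=a_{s_{r'}}^{\delta_{r'}}$; for distinct branch indices disjointness of the branches settles everything), then by your formula~\eqref{VsrCsr:fullunion} the set $V_{s_r}^{(r)}$ is among the sets subtracted from~$\Cs{a}$ to form~$\Css{a}{r',s_{r'}-1}$, so $V_{s_{r'}}^{(r')}\subseteq\Css{a}{r',s_{r'}-1}$ is disjoint from $V_{s_r}^{(r)}$; a nonempty intersection therefore forces $(r,s_r)=(r',s_{r'})$, hence equality. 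This disjointness step is the actual content compressed into the paper's phrase ``immediate from the definition and the uniqueness of the system of iterated sequences.''

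Your converse for part~\eqref{VsrCsr:CsrinC} also does not go through as framed. If $(r',s_{r'})<(r,s_r)$, the sets eliminated strictly between the two stages are the $V_i^{(p)}$ attached to the branches $\Cs{a_i^{\delta_p}}$ occurring on the paths processed in between; for a fixed $j$ these branches need not be $\Cs{j}$, and by the phenomenon above the eliminated sets may even be empty, so there is in general no vector of $\Cs{j}$ to exhibit in $\Css{j}{r',s_{r'}}\setminus\Css{j}{r,s_r}$ --- indeed, if no intervening elimination touches $\Cs{j}$, the two sets coincide and the inclusion holds although $(r,s_r)\not\leq(r',s_{r'})$. Property~\eqref{BP:allvectors}, Remark~\ref{REM:inst} and weak non-collapsing only guarantee vectors realizing a given edge of $F'_\ini$ inside the corresponding branch, not inside an arbitrary $\Cs{j}$, so they do not close this gap. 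Note that the paper deduces~\eqref{VsrCsr:CsrinC} solely from the explicit description in~\eqref{VsrCsr:fullunion}, and only the implication ``$(r,s_r)\leq(r',s_{r'})$ yields the inclusions'' is used later (e.g.\ in Lemma~\ref{LEM:FSOTS} and Corollary~\ref{COR:Csreasyformula}); the strictness you attempt to establish is neither obtainable by your route nor needed.
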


\begin{proof}
Statement~\eqref{VsrCsr:Vsrdisjoint} is immediate from the definition of the sets~$V_{s_r}^{(r)}$ in Algorithm~\ref{stepreduction} and the uniqueness of the system of iterated sequences from~\eqref{eq:itseq} for any given~$\nu\in\BrU$.
Statement~\eqref{VsrCsr:fullunion} follows by straightforward, recursive application of the definition of~$\Css{j}{r,s_r}$ in Algorithm~\ref{stepreduction}. From this presentation of~$\Css{j}{r,s_r}$ we obtain that, if~$r<r'$,
\begin{align*}
\Cs{j}\setminus\Css{j}{r',s_{r'}}&=\bigcup_{p=1}^{r'-1}\bigcup_{\ell=1}^{\ell_{\delta_p}}V_{\ell}^{(p)}\cup\bigcup_{i=1}^{s_{r'}}V_{i}^{(r')}\\
&=\bigcup_{p=1}^{r-1}\bigcup_{\ell=1}^{\ell_{\delta_p}}V_{\ell}^{(p)}\cup\bigcup_{i=1}^{s_{r}}V_{i}^{(r)}\cup\bigcup_{i=s_r+1}^{\ell_{\delta_r}}V_i^{(r)}\cup \bigcup_{p=r+1}^{r'-1}\bigcup_{\ell=1}^{\ell_{\delta_p}}V_{\ell}^{(p)}\cup\bigcup_{i=1}^{s_{r'}}V_{i}^{(r')}\\
&\supseteq\Cs{j}\setminus\Css{j}{r,s_r}\,,
\end{align*}
and, if~$r=r'$ and~$s_r\leq s_{r'}$,
\[
\Cs{j}\setminus\Css{j}{r',s_{r'}}=\bigcup_{p=1}^{r'-1}\bigcup_{\ell=1}^{\ell_{\delta_p}}V_{\ell}^{(p)}\cup\bigcup_{i=1}^{s_{r}}V_{i}^{(r)}\cup\bigcup_{i=s_r+1}^{s_{r'}}V_i^{(r')}\supseteq\Cs{j}\setminus\Css{j}{r,s_r}\,.
\]
This immediately yields~\eqref{VsrCsr:CsrinC}.
\end{proof}

From Lemma~\ref{LEM:VsrCsr}\eqref{VsrCsr:fullunion} we obtain that, for every~$j\in A$, Algorithm~\ref{stepreduction} ultimately defines the set of unit tangent vectors 
\begin{equation}\label{EQDEF:redCS}
\Csr{j}\coloneqq\Css{j}{\eta,\ell_{\delta_\eta}}=\Cs{j}\setminus\bigcup_{r=1}^{\eta}\bigcup_{s=1}^{\ell_{\delta_\eta}}V_{s}^{(r)}\,.
\end{equation}
\index[symbols]{Cjrr@$\Csr{j}$}%
Accordingly, we set
\begin{equation}\label{EQDEF:redInterval}
\Ired{j}\coloneqq I_j\setminus\bigcup_{r=1}^{\eta}\bigcup_{s=1}^{\ell_{\delta_\eta}}\defset{\gamma_\nu(+\infty)}{\nu\in V_s^{(r)}}\,.
\end{equation}
\index[symbols]{Iab@$\Ired{j}$}%
Depending on the initial set of branches~$\BrS$ and the level of reduction, 
Algorithm~\ref{stepreduction} might render single branches essentially empty, in 
the sense that~$\Csr{j,\st}=\varnothing$.
We account for this possibility by updating the index set to be
\begin{equation}\label{EQNDEF:Atilde}
\wt A\coloneqq\defset{k\in A}{\Csr{k,\st}\ne\varnothing}\,.
\end{equation}
\index[symbols]{Abb@$\wt A$}%
From~\eqref{EQDEF:redCS} and the definition of the sets~$V_{*}^{(*)}$ we read off that the branch~$\Cs{j}$ can only be rendered essentially empty by Algorithm~\ref{stepreduction} if~$\bigcup_{k\in A}\Trans{}{j}{k}=\{\id\}$.
This implies in particular that
\[
\defset{j\in A}{(*,j)\in\Delta_\ini}\subseteq\wt A\,.
\]
Hence,~$\wt A\ne\varnothing$.
We further define
\begin{equation}\label{EQDEF:redTranssets}
\Transs{}{j}{k}\coloneqq\Transss{\ell_{\delta_\eta}}{\eta}{j}{k}\,,
\end{equation}
\index[symbols]{G@$\Transs{}{j}{k}$}%
for~$j,k\in\wt A$, as well as
\begin{equation}\label{EQDEF:redBrSredBrU}
\wt\BrS\coloneqq\defset{\Csr{j}}{j\in\wt A}\qquad\text{and}\qquad\wt\BrU\coloneqq\bigcup\wt\BrS = \bigcup_{j\in \wt A} \Csr{j}\,.
\end{equation}
\index[symbols]{C@$\wt\BrS$}%
\index[symbols]{C@$\wt\BrU$}%
From~\eqref{EQDEF:redInterval} it is apparent that the sets~$\Csr{j}$, $j\in A$, are not necessarily branches in the sense of Definition~\ref{DEF:setofbranches} anymore, due to possible violation of~\eqref{BP:allvectors}.
See also Definition~\ref{DEF:redsetofbranches} below.
Nevertheless, we continue to call them \emph{branches} and the sets~$\Transs{}{j}{k}$ \emph{(forward) transition sets} for~$j,k\in A$.

We now analyze the structure and interrelation of the sets of unit tangent vectors and transformations successively defined by Algorithm~\ref{stepreduction}.
To that end, we let $r\in\{0,\dots,\eta\}$, $s_r\in\{0,\dots,\ell_{\delta_r}\}$, and set
\[
\BrS_{s_r}^{(r)}\coloneqq\defset{\Css{j}{r,s_r}}{j\in \wt A}\quad\text{and}\quad\BrU_{s_r}^{(r)}\coloneqq\bigcup\BrS_{s_r}^{(r)} = \bigcup_{M \in \BrS_{s_r}^{(r)}} M\,.
\]
\index[symbols]{Crr@$\BrS_{s_r}^{(r)}$}%
\index[symbols]{Crr@$\BrU_{s_r}^{(r)}$}%
Then, obviously,
\begin{equation}\label{EQ:BrUsr}
\BrU_{s_r}^{(r)}=\BrU_{s_r-1}^{(r)}\setminus V_{s_r}^{(r)}\,.
\end{equation}
For~$\nu\in\BrU_{s_r}^{(r)}$ we define a system of sequences
\begin{equation}\label{EQDEF:algoitseq}
\bigl[\bigl(\ittime{\BrU_{s_r}^{(r)},n}(\nu)\bigr)_n,\bigl(\itindex{\BrU_{s_r}^{(r)},n}(\nu)\bigr)_n,\bigl(\ittrans{\BrU_{s_r}^{(r)},n}(\nu)\bigr)_n\bigr]
\end{equation}
as in~\eqref{eq:itseq}, with~$\BrU_{s_r}^{(r)}$ in place of~$\BrU$.

\begin{lemma}\label{LEM:FSOTS}
Let~$r\in\{0,\dots,\eta\}$ and~$s_r\in\{0,\dots,\ell_{\delta_r}\}$.
For~$\nu\in\BrU_{s_r}^{(r)}$ the system of sequences in~\eqref{EQDEF:algoitseq} is well-defined.
Furthermore, the set~$\defset{\Transss{s_r}{r}{j}{k}}{j,k\in \wt A}$
is a full set of transition sets for~$\BrS_{s_r}^{(r)}$ up to identities, in the sense that
\begin{enumerate}[label=$\mathrm{(\roman*)}$, ref=$\mathrm{\roman*}$]
\item\label{FSOTS:alltrans}
$\forall\, j,k\in A\ \forall\,\nu\in\Css{j}{r,s_r},\itindex{\BrU_{s_r}^{(r)},1}(\nu)=k\colon\ittrans{\BrU_{s_r}^{(r)},1}(\nu)\in\Transss{s_r}{r}{j}{k}\cup\{\id\}\,,$
\item\label{FSOTS:vector}
$\forall\, j,k\in A\ \forall\, g\in\Transss{s_r}{r}{j}{k}\ \exists\,\nu\in\Css{j}{r,s_r}\colon(\itindex{\BrU_{s_r}^{(r)},1}(\nu),\ittrans{\BrU_{s_r}^{(r)},1}(\nu))=(k,g)\,.$
\end{enumerate}
\end{lemma}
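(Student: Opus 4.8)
The plan is to prove Lemma~\ref{LEM:FSOTS} by double induction, following the two nested indices of Algorithm~\ref{stepreduction}: the outer index~$r$ (running over the enumeration of~$\Delta_\ini$) and the inner index~$s_r$ (running along a path in~$F'_\ini$). The base case is $(r,s_r)=(0,0)$, for which $\BrU_0^{(0)}=\BrU$, the transition sets $\Transss{0}{0}{j}{k}=\Trans{}{j}{k}$, and the three assertions reduce, respectively, to: well-definedness of the system of iterated sequences (which is Propositions~\ref{PROP:CofSoBnonst}, \ref{PROP:CofSoBst}, Corollary~\ref{COR:uniquereturn} and Remark~\ref{REM:inst}), the inclusion~$\ittrans{\BrU,1}(\nu)\in\Trans{}{j}{k}$ from the defining property of forward transition sets (Proposition~\ref{PROP:CofSoBst}\eqref{CofSoB:representsets}), and property~(\ref{BP:intervaldecomp}\ref{BP:intervaldecompGdecomp}) together with~\eqref{BP:allvectors}. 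The role of the ``$\cup\{\id\}$'' slack in~\eqref{FSOTS:alltrans} at this stage is vacuous; it becomes essential only after the first identity-removal substep.

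The inductive step passes from $(r,s_r-1)$ to $(r,s_r)$ (and from $(r,\ell_{\delta_r})$ to $(r+1,0)$, where nothing changes by the initialization clause of \emph{Step $r$}). First I would record from~\eqref{EQ:BrUsr} that $\BrU_{s_r}^{(r)}$ arises from $\BrU_{s_r-1}^{(r)}$ by removing exactly the set~$V_{s_r}^{(r)}$, which consists of those $\nu\in\Css{a_{s_r}^{\delta_r}}{r,s_r-1}$ whose first return step is $(a_{s_r-1}^{\delta_r},\id)$ --- i.e., a concealed identity transition. The key geometric observation is: for $\nu\in\BrU_{s_r}^{(r)}$ the next intersection of~$\gamma_\nu$ with a $\Gamma$-translate of~$\BrU_{s_r}^{(r)}$ is obtained from its next intersection with a translate of~$\BrU_{s_r-1}^{(r)}$ by ``skipping over'' an intermediate intersection whenever that intermediate intersection lies on an $\id$-translate of~$\Css{a_{s_r}^{\delta_r}}{r,s_r-1}$ and the subsequent step is the removed $\id$-edge to~$a_{s_r-1}^{\delta_r}$. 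Since $\BrS$ is weakly non-collapsing (Proposition~\ref{PROP:noncollaps}), all such concealed identities are visible, so no unexpected extra cancellations can occur; this is exactly what makes the bookkeeping in Algorithm~\ref{stepreduction} --- the recursive definition $\Transss{s_r}{r}{j}{a_{s_r-\ell}^{\delta_r}}=\Transss{s_r-1}{r}{j}{a_{s_r-\ell}^{\delta_r}}\setminus\{\id\}\cup\Transss{s_r-1}{r}{j}{a_{s_r}^{\delta_r}}$ for $\ell=1,\dots,s_r$ --- match the geometry of skipped intersections. From this I would deduce that $\ittime{\BrU_{s_r}^{(r)},n}(\nu)$ is well-defined for all $n$ (the intersection times still accumulate only at $\pm\infty$, by Proposition~\ref{PROP:branches_locfinite} and Lemma~\ref{LEM:accpointsofit} applied to the subfamily), establishing the well-definedness clause.

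For~\eqref{FSOTS:alltrans} in the inductive step: given $\nu\in\Css{j}{r,s_r}$ with $\itindex{\BrU_{s_r}^{(r)},1}(\nu)=k$, the first intersection of~$\gamma_\nu$ with~$\Gamma\act\BrU_{s_r-1}^{(r)}$ is either the same as with~$\Gamma\act\BrU_{s_r}^{(r)}$, in which case $\ittrans{\BrU_{s_r}^{(r)},1}(\nu)=\ittrans{\BrU_{s_r-1}^{(r)},1}(\nu)\in\Transss{s_r-1}{r}{j}{k}\cup\{\id\}$ and $k\notin\{a_0^{\delta_r},\dots,a_{s_r-1}^{\delta_r}\}$ forces it to lie in $\Transss{s_r}{r}{j}{k}\cup\{\id\}$ by the unchanged-sets clause; or else it is a skipped intersection on an $\id$-translate of $\Css{a_{s_r}^{\delta_r}}{r,s_r-1}$, in which case $\ittrans{\BrU_{s_r}^{(r)},1}(\nu)$ equals $\id\cdot\ittrans{\BrU_{s_r-1}^{(r)},2}(\nu)$ where the second-step transformation lies in $\Transss{s_r-1}{r}{a_{s_r}^{\delta_r}}{k}$ by the inductive hypothesis, hence in $\Transss{s_r}{r}{j}{k}$ by the recursive definition (and here $j=a_{s_r-\ell}^{\delta_r}$ for the appropriate $\ell$; the case of a longer skip over several consecutive removed $\id$-edges is handled by the range $\ell=1,\dots,s_r$). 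Property~\eqref{FSOTS:vector} is the converse: given $g\in\Transss{s_r}{r}{j}{k}$, trace back through the recursive definition to locate $g$ either already in $\Transss{s_r-1}{r}{j}{k}$ --- then use the inductive hypothesis directly, noting that the vector it produces survives in $\Css{j}{r,s_r}$ because its first return is not an $\id$-step of the removed kind --- or as an element of $\Transss{s_r-1}{r}{a_{s_r}^{\delta_r}}{k}$ with $j=a_{s_r-\ell}^{\delta_r}$, in which case one concatenates the $\id$-edge $(a_{s_r-\ell}^{\delta_r},\id)\to\dots\to(a_{s_r}^{\delta_r},\id)$ (which exists in $B'_{\delta_r}$ by definition of the sequence $(a_n^{\delta_r})$) with a vector realizing $g$ from $a_{s_r}^{\delta_r}$, using~\eqref{BP:allvectors}, Remark~\ref{REM:inst} and the nestedness of the intervals $g'\act I_{k'}$ (as in the proof of Lemma~\ref{LEM:paths_gen}\eqref{path:allbyCS}) to produce a single $\nu\in\Css{j}{r,s_r}$ whose first genuine (non-identity) return step is $(k,g)$.

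The main obstacle I anticipate is the case of \emph{multiple consecutive removed identity edges} --- that is, a path in $F'_\ini$ through several nodes $a_{s_r}^{\delta_r},a_{s_r-1}^{\delta_r},\dots$ all of whose connecting edges carry the weight $\id$ and have been (or are being) removed. The recursive definition in \emph{Substep $s_r$} handles this by simultaneously updating $\Transss{s_r}{r}{j}{a_{s_r-\ell}^{\delta_r}}$ for \emph{all} $\ell=1,\dots,s_r$ at once, so I must check carefully that the order of processing substeps (ascending $s_r$) is compatible with this, i.e.\ that when \emph{Substep $s_r$} runs, the sets $\Transss{s_r-1}{r}{j}{a_{s_r}^{\delta_r}}$ already reflect all identity removals ``downstream'' of $a_{s_r}^{\delta_r}$ along the path. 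This is where Lemma~\ref{LEM:VsrCsr}\eqref{VsrCsr:CsrinC} and the total order~\eqref{EQDEF:orderrsr} enter: they guarantee the branches (and hence, once the transition-set claim is proved for the predecessor index, the transition sets) form a monotone chain, so the ``skip'' analysis composes correctly across substeps. A secondary subtlety is that end nodes $(k,\id)$ may occur in several trees $B'_i$ of $F'_\ini$, so a branch $\Css{k}{\cdot}{\cdot}$ may be shrunk in several outer steps $r$; I would address this by noting that each such shrinking only removes vectors whose first return is a (now-visible) identity, so the inductive hypotheses accumulate consistently and the final sets $\Csr{k}$, $\Transs{}{j}{k}$ of~\eqref{EQDEF:redCS}--\eqref{EQDEF:redTranssets} inherit the stated ``full up to identities'' property.
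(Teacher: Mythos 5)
Your overall architecture---induction along the total order~\eqref{EQDEF:orderrsr} on the pairs~$(r,s_r)$, comparison of the iterated sequences for~$\BrU_{s_r}^{(r)}$ and~$\BrU_{s_r-1}^{(r)}$ by a ``skip'' analysis, and matching against the recursion of Algorithm~\ref{stepreduction}---is the same as the paper's. But in the decisive step you have the direction of the identity elimination reversed, and with that reversal the appeal to the recursive definition does not go through. The set~$V_{s_r}^{(r)}$ consists of vectors of~$\Css{a_{s_r}^{\delta_r}}{r,s_r-1}$ whose \emph{outgoing} step is the identity edge to~$a_{s_r-1}^{\delta_r}$ (you state this correctly at the outset), so in the skip case of~\eqref{FSOTS:alltrans} one has $\ittrans{\BrU_{s_r-1}^{(r)},2}(\nu)=\id$ and hence $\ittrans{\BrU_{s_r}^{(r)},1}(\nu)=\ittrans{\BrU_{s_r-1}^{(r)},1}(\nu)\cdot\id$, the surviving factor lying in~$\Transss{s_r-1}{r}{j}{a_{s_r}^{\delta_r}}$, i.e.\ a transition \emph{into} the pruned branch; this is exactly what the update $\Transss{s_r}{r}{j}{a_{s_r-\ell}^{\delta_r}}\supseteq\Transss{s_r-1}{r}{j}{a_{s_r}^{\delta_r}}$ re-targets downstream. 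You instead write the new transformation as $\id\cdot\ittrans{\BrU_{s_r-1}^{(r)},2}(\nu)$ with the nontrivial factor in~$\Transss{s_r-1}{r}{a_{s_r}^{\delta_r}}{k}$, a transition \emph{out of} the pruned branch; the recursion never adds sets of that form to~$\Transss{s_r}{r}{j}{k}$, so the claimed membership does not follow. (You also miss the small but useful observation that, since no node repeats along~$\delta_r$, consecutive intersections with respect to~$\BrU_{s_r-1}^{(r)}$ cannot both lie on~$\Gamma\act V_{s_r}^{(r)}$; hence at most one skip occurs per step in this comparison, which is what keeps~\eqref{FSOTS:alltrans} to a single clean case distinction---your local-finiteness argument for well-definedness is fine.)

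The same reversal breaks your construction for~\eqref{FSOTS:vector}. The branch-tree edges, and hence forward geodesic time, run from~$a_{s_r}^{\delta_r}$ down to~$a_0^{\delta_r}$, so there is no identity chain from~$(a_{s_r-\ell}^{\delta_r},\id)$ up to~$(a_{s_r}^{\delta_r},\id)$ to concatenate, and the new elements to be realized are those of~$\Transss{s_r-1}{r}{j}{a_{s_r}^{\delta_r}}$ for arbitrary~$j$, not elements of~$\Transss{s_r-1}{r}{a_{s_r}^{\delta_r}}{k}$. The correct construction (the paper's) takes $g\in\Transss{s_r-1}{r}{j}{a_{s_r}^{\delta_r}}$ and, for $k=a_s^{\delta_r}$ with $s<s_r$, produces $\nu\in\Css{j}{r,s_r}$ with $(\gamma_\nu(+\infty),\gamma_\nu(-\infty))\in g\act\Ired{k,\st}\times\Jset{j}$, using the nesting of half-spaces along the path; the geodesic then meets $g\act\Cs{a_{s_r-\ell}^{\delta_r}}$ for $\ell=0,\dots,s_r-s$ only in vectors already removed at Substep~$s_r$ (all~$V_i^{(r)}$ with $i\le s_r$ are gone), so its first return with respect to~$\BrU_{s_r}^{(r)}$ is $(k,\,g\cdot\id\cdots\id)=(k,g)$. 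Until the direction of the elimination is fixed consistently in both halves, the lemma is not proved.
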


\begin{proof}
We argue by induction over the totally ordered set of pairs~$(r,s_r)$,~$r\in\{1,\dots,\eta\}$,~$s_r\in\{1,\dots,\ell_{\delta_r}\}$.
For~$r=s_r=0$ there is nothing to show.
We suppose that all claims have already been proven for~$(r,s_r-1)$ for some $r\geq0$.
Let~$\nu\in\BrU_{s_r}^{(r)}$.
From~\eqref{EQ:BrUsr} we read off that
\[
\ittime{\BrU_{s_r}^{(r)},n}(\nu)=\ittime{\BrU_{s_r-1}^{(r)},m_n}(\nu)\,,\quad\itindex{\BrU_{s_r}^{(r)},n}(\nu)=\itindex{\BrU_{s_r-1}^{(r)},m_n}(\nu)\,,
\]
and
\begin{equation}\label{EQ:ittranssr}
\ittrans{\BrU_{s_r}^{(r)},n}(\nu)=\ittrans{\BrU_{s_r-1}^{(r)},n}(\nu)\cdots\ittrans{\BrU_{s_r-1}^{(r)},m_n}(\nu)\,,
\end{equation}
where, \textit{a priori},~$m_0\coloneqq 0$ and
\[
m_n\coloneqq\left\{
\begin{array}{rl}
\min\defset{m\geq n}{\gamma_\nu'\bigl(\ittime{\BrU_{s_r-1}^{(r)},m}(\nu)\bigr)\notin\Gamma\act V_{s_r}^{(r)}}&\text{for }n>0\,,\\
\rule{0pt}{4ex}\max\defset{m\leq n}{\gamma_\nu'\bigl(\ittime{\BrU_{s_r-1}^{(r)},m}(\nu)\bigr)\notin\Gamma\act V_{s_r}^{(r)}}&\text{for }n<0\,.
\end{array}
\right.
\]
But since each node~$(j,\id)$ appears at most once in the path~$\delta_r$, we have~$a_{s_r}^{\delta_r}\ne a_{s_r-1}^{\delta_r}$ and hence
\begin{equation}\label{EQ:simplemn}
m_n=\left\{
\begin{array}{cl}
n&\text{if }\gamma_\nu'\bigl(\ittime{\BrU_{s_r-1}^{(r)},m}(\nu)\bigr)\notin\Gamma\act V_{s_r}^{(r)}\,,\\
n+1&\text{if }\gamma_\nu'\bigl(\ittime{\BrU_{s_r-1}^{(r)},m}(\nu)\bigr)\in\Gamma\act V_{s_r}^{(r)}\text{ and }n>0\,,\\
n-1&\text{if }\gamma_\nu'\bigl(\ittime{\BrU_{s_r-1}^{(r)},m}(\nu)\bigr)\in\Gamma\act V_{s_r}^{(r)}\text{ and }n<0\,,
\end{array}
\right.
\end{equation}
Since all objects exist by hypothesis, the system of sequences from~\eqref{EQDEF:algoitseq} is well-defined.
We set
\[
j\coloneqq\itindex{\BrU_{s_r}^{(r)},0}(\nu)\,,\quad k\coloneqq\itindex{\BrU_{s_r}^{(r)},1}(\nu)\quad\text{and}\quad g\coloneqq\ittrans{\BrU_{s_r}^{(r)},1}(\nu)\,.
\]
Concerning statement~\eqref{FSOTS:alltrans} we have to show that
\begin{equation}\label{eq:setcorrect}
g\in\Transss{s_r}{r}{j}{k}\cup\{\id\}\,.
\end{equation}
To that end note that
\[
\Transss{s_r-1}{r}{j}{k}\subseteq\Transss{s_r}{r}{j}{k}\cup\{\id\}\,.
\]
If~$k\ne a_{s_r}^{\delta_r}$, then 
\[
\gamma_\nu'\bigl(\ittime{\BrU_{s_r-1}^{(r)},1}(\nu)\bigr)\in\BrU_{s_r-1}^{(r)}\setminus\Css{a_{s_r}^{\delta_r}}{r,s_r-1}\subseteq\BrU_{s_r-1}^{(r)}\setminus V_{s_r}^{(r)}\,.
\]
Hence, by the discussion above and the hypothesis,
\[
g=\ittrans{\BrU_{s_r}^{(r)},1}(\nu)=\ittrans{\BrU_{s_r-1}^{(r)},1}(\nu)\in\Transss{s_r-1}{r}{j}{k}\subseteq\Transss{s_r}{r}{j}{k}\cup\{\id\}\,.
\]
Now let~$k=a_{s_r}^{\delta_r}$.
If~$\gamma_\nu'(\ittime{\BrU_{s_r-1}^{(r)},1}(\nu))\notin \Gamma\act V_{s_r}^{(r)}$, then we may argue as before.
It thus remains to consider the case that 
\[
\gamma_\nu'(\ittime{\BrU_{s_r-1}^{(r)},1}(\nu))\in \Gamma\act V_{s_r}^{(r)}\,.
\]
Then, from the definition of~$V_{s_r}^{(r)}$ we obtain
\[
\ittrans{\BrU_{s_r-1}^{(r)},2}(\nu)=\id\,.
\]
From that,~\eqref{EQ:ittranssr},~\eqref{EQ:simplemn}, and the hypothesis we obtain that
\begin{align*}
g=\ittrans{\BrU_{s_r}^{(r)},1}(\nu)&=\ittrans{\BrU_{s_r-1}^{(r)},1}(\nu)\cdot\ittrans{\BrU_{s_r-1}^{(r)},2}(\nu)\\
&=\ittrans{\BrU_{s_r-1}^{(r)},1}(\nu)\in\Transss{s_r-1}{r}{j}{k}\subseteq\Transss{s_r}{r}{j}{k}\,.
\end{align*}
This yields~\eqref{FSOTS:alltrans}.

Concerning~\eqref{FSOTS:vector} let~$j\in A$ and~$k\in\{a_0^{\delta_r},\dots,a_{s_r-1}^{\delta_r}\}$, for in all other cases the claim is immediate from the hypothesis.
Let~$s\in\{0,\dots,s_r-1\}$ be such that~$k=a_s^{\delta_r}$.
If~$\Transss{s_r}{r}{j}{k}=\varnothing$, then there is nothing to show.
Thus, we consider the case that 
\[
\Transss{s_r}{r}{j}{k}\not=\varnothing\,.
\]
For~$g\in\Transss{s_r-1}{r}{j}{k}$ the claim is immediate from the hypothesis. If~$\Transss{s_r-1}{r}{j}{a_{s_r}^{\delta_r}}=\varnothing$, then we are finished.
Thus, we suppose that 
\[
\Transss{s_r-1}{r}{j}{a_{s_r}^{\delta_r}}\not=\varnothing
\]
and pick $g\in\Transss{s_r-1}{r}{j}{a_{s_r}^{\delta_r}}$.
We show that there exists~$\nu\in\Css{j}{r,s_r}$ such that
\begin{equation}\label{EQ:itintersectinV}
\gamma_\nu'(\ittime{\BrU,s_r-s+1}(\nu))\in\Css{k}{r,s}\quad\text{and}\quad\gamma_\nu'(\ittime{\BrU,\ell}(\nu))\in V_{s_r-\ell+1}^{(r)}
\end{equation}
for all~$\ell\in\{1,\dots,s_r-s\}$.
First note that, by definition of the path~$\delta_r$, we have~$V_{i}^{(r)}\ne\varnothing$ for all~$i\in\{1,\dots,\ell_{\delta_r}\}$, and because of~$k\in\wt A$ and Lemma~\ref{LEM:VsrCsr}\eqref{VsrCsr:CsrinC} we have~$\Css{k,\st}{r,s}\ne\varnothing$.
Because of~$g\in\Transss{s_r-1}{r}{j}{a_{s_r}^{\delta_r}}$, the hypothesis, and the structure of the path~$\delta_r$, we have
\[
g\act\Plussp{a_\ell^{\delta_r}}\varsubsetneq g\act\Plussp{a_{s_r}^{\delta_r}}\varsubsetneq\Plussp{j}\,,
\]
for all~$\ell\in\{0,\dots,s_r-1\}$.
Hence, in particular there exists~$\nu\in\Css{j}{r,s_r-1}$ such that
\[
(\gamma_\nu(+\infty),\gamma_\nu(-\infty))\in g\act\Ired{k,\st}\times\Jset{j}\,.
\]
Then also~$\nu\in\Css{j}{r,s_r}$, for otherwise~$j=a_{s_r}^{\delta_r}$ and~$\nu\in V_{s_r}^{(r)}$.
But then~$g=\id$ and~$a_{s_r-1}^{\delta_r}=a_{s_r}^{\delta_r}$ by the definition of~$V_{s_r}^{(r)}$, which contradicts the structure of the path~$\delta_r$.
Lemma~\ref{LEM:VsrCsr}\eqref{VsrCsr:CsrinC} now implies that
\[
\gamma_\nu(+\infty)\in g\act\Ired{k,\st}\subseteq g\act\defset{\gamma_\mu(+\infty)}{\mu\in\Css{k,\st}{r,s}}\,.
\]
Since~$\Jset{j}\subseteq\Jset{k}$, this together with~\eqref{BP:allvectors} and again Lemma~\ref{LEM:VsrCsr}\eqref{VsrCsr:CsrinC} imply that
\begin{equation}\label{EQ:nuintersectCsskrs}
\gamma_\nu'\bigl(0,+\infty\bigr)\cap\Css{k}{r,s}\ne\varnothing\,,
\end{equation}
and by counting intersections with the initial set of branches we see that~$\nu$ fulfills~\eqref{EQ:itintersectinV}.
From Algorithm~\ref{stepreduction} we now see that, at \emph{Substep~$s_r$}, all sets~$V_{\ell}^{(r)}$ for~$\ell\in\{1,\dots,s_r\}$ have already been removed from their respective branch.
Hence, while~$\gamma_\nu$ does intersect each of the branches~$g\act\Cs{a_{s_r-\ell}^{\delta_r}}$, it does not intersect~$g\act \Css{a_{s_r-\ell}^{\delta_r}}{r,s_r}$, for~$\ell\in\{0,\dots,s_r-s\}$.
From this and~\eqref{EQ:nuintersectCsskrs} we conclude
\[
\itindex{\BrU_{s_r}^{(r)},1}(\nu)=k
\]
and
\[
\ittrans{\BrU_{s_r}^{(r)},1}(\nu)=\ittrans{\BrU,1}(\nu)\cdots\ittrans{\BrU,s_r-s+1}(\nu)=g\cdot\id\cdots\id=g\,.
\]
This shows~\eqref{FSOTS:vector} and thereby finishes the proof.
\end{proof}

\begin{remark}
In part~\eqref{FSOTS:alltrans} of Lemma~\ref{LEM:FSOTS} it is necessary to include the identity transformation, for, depending on the enumeration of~$\Delta_\ini$ and whether or not~$\Transss{s_r-1}{r}{j}{a_{s_r}^{\delta_r}}$ contains the identity,~$\Transss{s_r}{r}{j}{k}$ might end up differing from the actual transition set for~$j,k\in A$ with respect to~$\BrU_{s_r}^{(r)}$ by lacking exactly the identity transformation.
This is due to a slight imprecision in Algorithm~\ref{stepreduction} in the handling of such situations, which we accepted in favor of clarity.
Simply put, Algorithm~\ref{stepreduction} might remove certain identities ``too soon.''
But since \emph{all} identity transformations are removed in the end (see Proposition~\ref{PROP:stepreduxworx} below), this deviation does not matter for the final transition sets.
\end{remark}

\begin{prop}\label{PROP:idelimsucceeds}
The sets~$\Csr{j}$ and~$\Transs{}{j}{k}$,~$j,k\in \wt A$, are independent of the enumeration chosen in~\eqref{EQ:numberDelta}, and we have
\[
\bigcup_{j,k\in \wt A}\Transs{}{j}{k}\subseteq\Gamma^*\,.
\]
\end{prop}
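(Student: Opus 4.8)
\textbf{Proof plan for Proposition~\ref{PROP:idelimsucceeds}.}
The plan is to prove the two assertions separately, but both rely on the same structural description of the output sets already obtained in Lemma~\ref{LEM:VsrCsr}. First I would establish enumeration-independence. By Lemma~\ref{LEM:VsrCsr}\eqref{VsrCsr:fullunion}, for each $j\in\wt A$ we have
\[
\Csr{j}=\Css{j}{\eta,\ell_{\delta_\eta}}=\Cs{j}\setminus\bigcup_{r=1}^{\eta}\bigcup_{s=1}^{\ell_{\delta_r}}V_s^{(r)}\,,
\]
so it suffices to show that the collection of \emph{sets} $\{V_s^{(r)}\mid r\in\{1,\dots,\eta\},\ s\in\{1,\dots,\ell_{\delta_r}\}\}$ does not depend on the chosen enumeration of~$\Delta_\ini$. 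The key point is that $V_s^{(r)}$ was defined intrinsically via the system of iterated sequences: it is the set of those $\nu$ (in a branch $\Css{a_s^{\delta_r}}{r,s-1}$) whose first iterated intersection data equals $(a_{s-1}^{\delta_r},\id)$. I would argue that, running over all $\delta\in\Delta_\ini$ and all steps $s$, these sets are precisely the sets
\[
W_{(k,\ell)}\coloneqq\defset{\nu\in\Cs{k}}{\exists\,\mu\in\Cs{i},\ (k,\id)\text{ at level $\ell$ in }B'_i,\ \text{with }\nu\text{ on the $\id$-labelled outgoing edge}}
\]
indexed by the nodes $(k,\id)$ of the forest $F'_\ini$ together with their level — a purely combinatorial datum of $\BrS$ (using weak non-collapsing so that the $\id$-sub-tree $B'_i$ is well-defined). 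Here I would use Lemma~\ref{LEM:VsrCsr}\eqref{VsrCsr:Vsrdisjoint} and \eqref{VsrCsr:CsrinC} to check that the order in which overlapping $W$'s are subtracted is irrelevant, because removing one $V$ never changes which vectors lie in a later $V$ (the membership condition refers to intersections with the \emph{original} $\BrU$, not the reduced one). Consequently $\bigcup_{r,s}V_s^{(r)}$, and hence every $\Csr{j}$, $\Ired{j}$, $\wt A$ and $\Transs{}{j}{k}$, depends only on $\BrS$.

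Second, for the inclusion $\bigcup_{j,k\in\wt A}\Transs{}{j}{k}\subseteq\Gamma^*$, I would combine Lemma~\ref{LEM:FSOTS}\eqref{FSOTS:alltrans}--\eqref{FSOTS:vector} applied at the terminal stage $(r,s_r)=(\eta,\ell_{\delta_\eta})$ with Proposition~\ref{PROP:stepreduxworx} (the non-collapsing property of $\wt\BrS$). By construction $\id\notin\Transss{\ell_{\delta_\eta}}{\eta}{j}{k}=\Transs{}{j}{k}$, since in the final substep every identity that could have appeared in a transition set into one of the nodes $a_*^{\delta_*}$ was explicitly removed; more carefully, I would show by induction over the totally ordered set of pairs $(r,s_r)$ that $\id\in\Transss{s_r}{r}{j}{k}$ forces $(k,\id)$ to still lie on an $\id$-path not yet processed, and after the last step no such path remains — this is exactly the content that makes $\wt\BrS$ non-collapsing, cf.~the cited Proposition~\ref{PROP:stepreduxworx}. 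The only subtlety, flagged in the Remark after Lemma~\ref{LEM:FSOTS}, is that intermediate sets $\Transss{s_r}{r}{j}{k}$ may transiently gain or lose exactly the identity; but this is harmless for the \emph{final} sets because every node of $F'_\ini$ is processed, so every identity is eventually deleted and none is reinserted thereafter (insertion of identities only happens via $\Transss{s_r-1}{r}{j}{a_{s_r}^{\delta_r}}$, and by the terminal step every such source set has itself been cleared of the identity).

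I expect the main obstacle to be the bookkeeping in the enumeration-independence argument: one must verify that the operation ``subtract $V_s^{(r)}$ from branch $a_s^{\delta_r}$ and simultaneously redefine transition sets by bridging'' genuinely commutes across different $\delta$'s. The danger is that a vector $\nu\in\Cs{k}$ which qualifies for $V_s^{(r)}$ under one enumeration might, under another enumeration in which a different $V$ was subtracted from $\Cs{k}$ first, already have been removed — but then $\nu$ would equally have been removed under the first enumeration at that earlier stage, by Lemma~\ref{LEM:VsrCsr}\eqref{VsrCsr:Vsrdisjoint}, so the two $V$-sets coincide on $\nu$. Making this precise amounts to showing the family $\{W_{(k,\ell)}\}$ is, set-theoretically, the unique minimal family of subsets of the $\Cs{j}$ whose removal eliminates all $\id$-edges, which follows from the uniqueness of the system of iterated sequences~\eqref{eq:itseq} for each $\nu\in\BrU$. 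Once this combinatorial core is in place, both claims of the proposition fall out.
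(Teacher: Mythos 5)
There is a genuine gap, and it sits exactly at the step that carries the mathematical weight of this proposition. In your identity-elimination argument you assert that identities can only be (re)inserted through the source set $\Transss{s_r-1}{r}{j}{a_{s_r}^{\delta_r}}$ and that ``by the terminal step every such source set has itself been cleared of the identity.'' At the terminal substep $s_r=\ell_{\delta_r}$ the source is the transition set whose target is the \emph{root} node $a\coloneqq a_{\ell_{\delta_r}}^{\delta_r}\in D_\ini$, and Algorithm~\ref{stepreduction} never strips identities from sets targeting a root: since roots are initial branches, $(a,\id)$ never occurs as a non-root node of any tree in $F'_\ini$, so these sets are never among the targets $a_{s-\ell}^{\delta}$ of any substep of any path. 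Hence they are not ``cleared''; one must instead prove that they never contain $\id$ at all, and that is precisely where the paper's proof does real work: assuming $\id\in\Transss{\ell_{\delta_r}-1}{r}{j}{a}$, Lemma~\ref{LEM:FSOTS}\eqref{FSOTS:vector} yields a vector in $\Css{j}{r,\ell_{\delta_r}-1}$ (hence in $\Cs{j}$, by Lemma~\ref{LEM:VsrCsr}\eqref{VsrCsr:CsrinC}) giving a non-degenerate path in $\ReturnGraph{0}$ from~$j$ to~$a$ with accumulated weight~$\id$; since $(j,\id)$ is a node of $B'_a$, either $j=a$ or there is also an identity-weight path from~$a$ to~$j$, producing an identity-weight loop in $\ReturnGraph{0}$, which is impossible (the half-spaces strictly nest along transitions, Lemma~\ref{LEM:halfspaceincludeGV}). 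Your proposal contains no substitute for this loop argument, and your appeal to Proposition~\ref{PROP:stepreduxworx} is circular: that proposition comes later and its non-collapsing assertion rests on the very absence of identities you are trying to establish; only the \emph{weak} non-collapsing of the original $\BrS$ is available here.

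A secondary, related problem is the deduction ``consequently \dots\ $\Transs{}{j}{k}$ depends only on $\BrS$'' from enumeration-independence of the $V$-sets. The transition sets are produced by their own $\setminus\{\id\}\cup$ recursion, and, as the remark after Lemma~\ref{LEM:FSOTS} points out, intermediate sets can differ from the true transition sets by exactly the identity, so a priori the processing order could affect whether $\id$ survives into the final set. The paper's (and the correct) route is: $\Csr{j}$ is enumeration-independent by the closed formula~\eqref{EQDEF:redCS}; Lemma~\ref{LEM:FSOTS} shows the final sets are full transition sets \emph{up to identities}, so $\Transs{}{j}{k}\cup\{\id\}$ is intrinsic; and only together with the no-identity statement does one conclude that $\Transs{}{j}{k}$ itself is enumeration-independent. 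You have these ingredients in your write-up, but in the wrong logical order, and the no-identity statement they hinge on is exactly the step whose justification is missing.
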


\begin{proof}
Let~$j\in \wt A$.
If~$\id\notin\bigcup_{k\in A}\Trans{}{j}{k}$, then there is nothing to show, since every set~$\Transss{s_r}{r}{j}{*}$ emerges as the union of two sets of the form~$\Transss{s_r-1}{r}{j}{*}$, for all~$r$ and all~$s_r$, and hence cannot introduce identity transformations.
Thus, suppose that $\id\in\bigcup_{k\in A}\Trans{}{j}{k}$.
For every~$k\in \wt A$ for which~$\id\in\Trans{}{j}{k}$ there exists $r\in\{1,\dots,\eta\}$ such that~$(j,\id)\edge{\phantom{-.}}(k,\id)$ is a sub-path of~$\delta_r$.
This means there exists $s\in\{1,\dots,\ell_{\delta_r}\}$ such that~$j=a_s^{\delta_r}$ and~$k=a_{s-1}^{\delta_r}$.
We set~$a\coloneqq a_{\ell_{\delta_r}}^{\delta_r}$.
Recursive application of the definition of the sets~$\Transss{*}{r}{j}{*}$ yields
\[
\Transss{\ell_{\delta_r}}{r}{j}{k}=\Transss{s-1}{r}{j}{k}\setminus\{\id\}\cup\bigcup_{\ell=s-1}^{\ell_{\delta_r}-1}\Transss{\ell}{r}{j}{a_{\ell+1}^{\delta_r}}\setminus\{\id\}\cup\Transss{\ell_{\delta_r}-1}{r}{j}{a}\,.
\]
Hence, in order to conclude that~$\id\notin\Transss{\ell_{\delta_r}}{r}{j}{k}$ it suffices to show that~$\id\notin\Transss{\ell_{\delta_r}-1}{r}{j}{a}$.
Assume for contradiction that 
\[
\id\in\Transss{\ell_{\delta_r}-1}{r}{j}{a}\,.
\]
By Lemma~\ref{LEM:FSOTS}\eqref{FSOTS:vector} we find~$\nu\in\Css{j}{r,\ell_{\delta_r}-1}$ such that
\[
\bigl(\itindex{\BrU_{\ell_{\delta_r}-1}^{(r)},1}(\nu),\ittrans{\BrU_{\ell_{\delta_r}-1}^{(r)},1}(\nu)\bigr)=(a,\id)\,.
\]
By Lemma~\ref{LEM:VsrCsr}\eqref{VsrCsr:CsrinC} also~$\nu\in\Cs{j}$ and thus there exists~$n\in\N$ such that
\[
\bigl(\itindex{\BrU,n}(\nu),\ittrans{\BrU,n}(\nu)\bigr)=(a,\id)\,.
\]
This means that in the return graph~$\ReturnGraph{0}$ contains a non-degenerate path from~$j$ to~$a$ with accumulated weight~$\id$.
By choice of~$j$, the tuple~$(j,\id)$ is a node in the tree~$B'_{a}$, which means either~$j=a$, or $\ReturnGraph{0}$ contains a non-degenerate path from~$a$ to~$j$ with accumulated weight~$\id$.
In either case we obtain a proper loop in~$\ReturnGraph{0}$ with weight~$\id$, which is contradictory.
Hence,~$\id\notin\Transss{\ell_{\delta_r}-1}{r}{j}{a}$, and therefore~$\id\notin\Transss{\ell_{\delta_r}}{r}{j}{k}$.
Since this argument applies for all~$r\in\{1,\dots,\eta\}$ for which~$(j,\id)$ is contained in the path~$\delta_r$, we conclude
\[
\id\notin\bigcup_{k\in \wt A}\Transs{}{j}{k}\,,
\]
which yields the second claim.

From~\eqref{EQDEF:redCS} it is immediately clear that~$\Csr{j}$ does not depend on the enumeration of~$\Delta_\ini$.
(We emphasize that the definition of the sets~$V_{s_r}^{(r)}$ for $r\in\{1,\ldots, \eta\}$ does not depend on the specific enumeration.)
Lemma~\ref{LEM:FSOTS} implies that $\Transs{}{j}{k}=\Transss{\ell_{\delta_\eta}}{\eta}{j}{k}$ is a full transition set for~$j,k\in\wt A$ up to identities, which necessitates that~$\Transs{}{j}{k}\cup\{\id\}$ also does not depend on the enumeration of~$\Delta_\ini$.
Since~$\id\notin\Transs{}{j}{k}$ by the first part of the proof, this implies that~$\Transs{}{j}{k}$ itself is independent of the enumeration of~$\Delta_\ini$.
Hence, the first claim follows and the proof is finished.
\end{proof}

\begin{cor}\label{COR:Csreasyformula}
For all~$j\in\wt A$ we have
\[
\Csr{j}=\Cs{j}\setminus\bigcup_{k\in A}\defset{\nu\in\Cs{j}}{\gamma_\nu'(\ittime{\BrU,1}(\nu))\in\Cs{k}}\,.
\]
\end{cor}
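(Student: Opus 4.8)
Throughout we use that $\BrS$ is weakly non-collapsing, which is the standing assumption of this section (Proposition~\ref{PROP:noncollaps}), and we fix $j\in\wt A$. First I would unwind the definition of $\Csr{j}$: by \eqref{EQDEF:redCS} and Lemma~\ref{LEM:VsrCsr}\eqref{VsrCsr:fullunion},
\[
\Csr{j}=\Css{j}{\eta,\ell_{\delta_\eta}}=\Cs{j}\setminus\bigcup_{r=1}^{\eta}\bigcup_{s=1}^{\ell_{\delta_r}}V_s^{(r)}\,.
\]
Since each $V_s^{(r)}$ is contained in $\Css{a_s^{\delta_r}}{r,s-1}\subseteq\Cs{a_s^{\delta_r}}$ and distinct branches are pairwise disjoint by \eqref{BP:disjointunion}, the set of vectors deleted from $\Cs{j}$ is $\bigcup\defset{V_s^{(r)}}{r\in\{1,\dots,\eta\},\ a_s^{\delta_r}=j}$ (for a given $r$ at most one value of $s$ can satisfy $a_s^{\delta_r}=j$, since each node occurs at most once on a path $\delta_r$). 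On the other hand, \eqref{EQDEF:itindex}, \eqref{EQDEF:ittrans1}--\eqref{EQDEF:ittrans2} together with the distinctness of branch translates (Proposition~\ref{PROP:CofSoBnonst}\eqref{CofSoB:disjointunion}) show that, for $\nu\in\Cs{j}$ for which $\retime{\BrU}(\nu)$ exists (so that $\ittime{\BrU,1}(\nu)=\retime{\BrU}(\nu)$), one has $\gamma_\nu'(\ittime{\BrU,1}(\nu))\in\Cs{k}$ if and only if $\itindex{\BrU,1}(\nu)=k$ and $\ittrans{\BrU,1}(\nu)=\id$, whereas vectors whose geodesic never meets $\Gamma\act\BrU$ in forward time lie on neither side of the asserted equality. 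Thus it remains to prove that $\bigcup\defset{V_s^{(r)}}{a_s^{\delta_r}=j}$ coincides with $\bigcup_{k\in A}\defset{\nu\in\Cs{j}}{\itindex{\BrU,1}(\nu)=k,\ \ittrans{\BrU,1}(\nu)=\id}$.

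For the inclusion ``$\subseteq$'' I would take $\nu\in V_s^{(r)}$ with $a_s^{\delta_r}=j$. By the definition of $V_s^{(r)}$ in Algorithm~\ref{stepreduction} we have $\nu\in\Css{j}{r,s-1}\subseteq\Cs{j}$ and $\bigl(\itindex{\BrU,1}(\nu),\ittrans{\BrU,1}(\nu)\bigr)=\bigl(a_{s-1}^{\delta_r},\id\bigr)$; with $k\coloneqq a_{s-1}^{\delta_r}\in A$ this places $\nu$ in the set on the right.

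For ``$\supseteq$'' fix $\nu\in\Cs{j}$ with $\itindex{\BrU,1}(\nu)=k$ and $\ittrans{\BrU,1}(\nu)=\id$ for some $k\in A$; equivalently, $\id\in\Trans{}{j}{k}$, i.e.\ the return graph $\ReturnGraph{0}$ carries an edge $j\edge{\id}k$. The crucial point is that this $\id$-labelled edge occurs in the forest $F'_\ini$. Here I would use the weak non-collapsing property as follows: by the construction of $F'_\ini$ (which rests on that property and on Lemma~\ref{LEM:wnctree}) there is $i\in D_\ini$ with $(j,\id)$ a node of $B'_i$; the complete sub-tree of $B_i$ below $(j,\id)$ equals $\id B_j=B_j$, and by Proposition~\ref{PROP:CofSoBst}\eqref{CofSoB:representsets} the relation $\id\in\Trans{}{j}{k}$ is witnessed by a \emph{strong} vector, so the edge $(j,\id)\edge{\phantom{-.}}(k,\id)$ genuinely appears inside $B_j$, hence inside $B_i$. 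Both of its endpoints have weight $\id$, so it lies in $B'_i$, and since nodes are unique within a branch tree, $(j,\id)$ is the parent of $(k,\id)$ in $B'_i$. Therefore $(i,k)\in\Delta_\ini$, and for $\delta_r=(i,k)$ the associated sequence satisfies $a_0^{\delta_r}=k$ and $a_1^{\delta_r}=j$. Now take $s=1$. If $\nu$ has already been deleted at some step before Step $r$, then $\nu\in\bigcup_{p<r}\bigcup_{\ell}V_\ell^{(p)}$ and we are done; otherwise $\nu\in\Css{j}{r,0}$ by Lemma~\ref{LEM:VsrCsr}\eqref{VsrCsr:fullunion}, and since $\bigl(\itindex{\BrU,1}(\nu),\ittrans{\BrU,1}(\nu)\bigr)=(k,\id)=(a_0^{\delta_r},\id)$ the definition of $V_1^{(r)}$ yields $\nu\in V_1^{(r)}$. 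In either case $\nu$ belongs to the set of vectors deleted from $\Cs{j}$, which, combined with the previous paragraph, establishes the claimed identity.

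The step I expect to require the most care is the claim that an $\id$-labelled edge of $\ReturnGraph{0}$ must appear in $F'_\ini$: this is exactly where the standing hypothesis that $\BrS$ is weakly non-collapsing is genuinely used (via Lemma~\ref{LEM:wnctree} and the fact that each branch tree $B_k$ sits as a $\Gamma$-trivial sub-tree of some member of $F_\ini$), and one must be careful that the edge inside the branch tree is realized by a strong vector even though $\nu$ itself need not be strong. Everything else is bookkeeping with Lemma~\ref{LEM:VsrCsr} and the definitions in Algorithm~\ref{stepreduction}.
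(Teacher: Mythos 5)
The paper gives no proof of this corollary---it is meant to follow at once from~\eqref{EQDEF:redCS} and the definition of the sets~$V_{s_r}^{(r)}$ in Algorithm~\ref{stepreduction}---and your unwinding is exactly that intended argument; in particular you handle the one genuinely delicate point correctly, namely that every $\id$-labelled edge $j\edge{\id}k$ of~$\ReturnGraph{0}$ occurs as an edge $(j,\id)\edge{\phantom{-.}}(k,\id)$ inside~$F'_\ini$ and is therefore processed at the substep of some $\delta_r\in\Delta_\ini$ with $a_s^{\delta_r}=j$, $a_{s-1}^{\delta_r}=k$: weak non-collapsing makes each $B'_i$ a connected sub-tree, every $B_j$ sits as a $\Gamma$-trivial sub-tree of some $B_i$ with $i\in D_\ini$, Proposition~\ref{PROP:CofSoBst}\eqref{CofSoB:representsets} provides a strong vector realizing the edge inside $B_j$, and uniqueness of nodes in a branch tree identifies $(j,\id)$ as the parent of $(k,\id)$. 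The only caveat concerns your step from ``some $\nu\in\Cs{j}$ has $\gamma_\nu'(\ittime{\BrU,1}(\nu))\in\Cs{k}$'' to ``$\id\in\Trans{}{j}{k}$'': Proposition~\ref{PROP:CofSoBst} and Corollary~\ref{COR:uniquereturn} justify this only for $\nu\in\Cs{j,\st}$, and the remark following Corollary~\ref{COR:uniquereturn} warns that for non-strong vectors the first intersection need not be governed by the transition sets. Since the sequences $\ittime{\BrU,n}$, $\itindex{\BrU,n}$, $\ittrans{\BrU,n}$ are only defined on~$\BrU_{\st}$ in Section~\ref{SUBSEC:reducedbranches}, this looseness is already present in the statement of the corollary and in the definition of~$V_{s_r}^{(r)}$, so it is not a gap you introduced; but you should state explicitly that the asserted equality (and your equivalence) is read over vectors for which the first-return data in the sense of Section~\ref{SUBSEC:reducedbranches} exist, i.e.\ essentially over strong vectors.
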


\begin{defi}\label{DEF:redsetofbranches}
A set~$\wt\BrS=\defset{\Csr{j}}{j\in \wt A}$ of subsets of~$\UTB\H$ is 
called a~\emph{reduced set of branches for the geodesic flow on}~$\Orbi$
\index[defs]{set of branches!reduced}%
\index[defs]{reduced set of branches}%
if it 
satisfies the 
properties~\eqref{BP:closedgeodesicsHtoX},~\eqref{BP:completegeodesics}, 
\eqref{BP:pointintohalfspaces}, and \eqref{BP:disjointunion} from Definition~\ref{DEF:setofbranches}, the 
property~\eqref{BP:closedgeodesicsXtoH} from Proposition~\ref{PROP:oldB1}, as 
well as the following three properties:
\begin{enumerate}[label=$\mathrm{(B5_{\red}\Roman*)}$, 
ref=$\mathrm{B5_{\red}\Roman*}$, leftmargin=9.5ex]
\item\label{BP:allvectorsredI}
For each~$j\in \wt A$ and each pair~$(x,y)\in\Ired{j,\st}\times\Jset{j}$ there 
exists a (unique) vector~$\nu\in\Csr{j}$ such that
\[
(x,y)=(\gamma_\nu(+\infty),\gamma_\nu(-\infty))\,.
\]
\index[defs]{0BPred01@(B5$_{\red}$I)}%
\index[symbols]{BPred01@(B5$_{\red}$I)}%
\item\label{BP:allvectorsredII}
For each~$j\in \wt A$ and each pair~$(x,y)\in\Iset{j}\times\Jset{j}$ there 
exist~$k\in\wt A$ and a (unique) vector~$\nu\in\Csr{k}$ such that
\[
(x,y)=(\gamma_\nu(+\infty),\gamma_\nu(-\infty))\,.
\]
\index[defs]{0BPred02@(B5$_{\red}$II)}%
\index[symbols]{BPred02@(B5$_{\red}$II)}%
\end{enumerate}
\begin{enumerate}
[resume*,label=$\mathrm{(B7_{\red})}$, ref=$\mathrm{B7_{\red}}$]
\item\label{BP:intervaldecompred}
For each pair~$(a,b)\in \wt A\times\wt A$ there exists a (possibly empty) subset~$\Transs{}{a}{b}$ of~$\Gamma$ such that
\begin{enumerate}[label=$\mathrm{(\alph*)}$, ref=$\mathrm{\alph*}$]
\item\label{BP:intervaldecompGdecompred}
for all~$j\in\wt A$ we have 
\begin{align*}
\bigcup_{k\in\wt A}\bigcup_{g\in\Transs{}{j}{k}}g\act\Ired{k}&\subseteq 
\Ired{j}
\intertext{and}
\bigcup_{k\in 
\wt A}\bigcup_{g\in\Transs{}{j}{k}}g\act\Ired{k,\st}&=\Ired{j,\st}\,,
\end{align*}
and these unions are disjoint,
\item\label{BP:intervaldecompGgeodred}
for each pair~$(j,k)\in\wt A\times\wt A$, each $g\in\Transs{}{j}{k}$, and each pair of points~$(z,w)\in\base{\Csr{j}}\times g\act\base{\Csr{k}}$, the geodesic segment~$(z,w)_{\H}$ is nonempty, is contained in~$\Plussp{j}$ and does not intersect~$\Gamma\act\wt\BrU$, where
\[
\wt\BrU\coloneqq\bigcup_{j\in\wt A}\Csr{j}\,,
\]
\index[symbols]{Cab@$\wt\BrU$}%
\item\label{BP:intervaldecompbackred}
for all~$j\in\wt A$ we have 
\[
 \Jset{j}\subseteq\bigcup_{k\in\wt A}\bigcup_{h\in\Transs{}{k}{j}}h^{-1}\act\Jset{k}\,.
\]
\end{enumerate}
\end{enumerate}
A reduced set of branches is called~\emph{admissible}
\index[defs]{reduced set of branches!admissible}%
\index[defs]{set of branches! reduced, admissible}%
\index[defs]{admissible}%
if it satisfies 
property~\eqref{BP:leavespaceforflip}, and it is called~\emph{non-collapsing}
\index[defs]{reduced set of branches!non-collapsing}%
\index[defs]{set of branches!reduced, non-collapsing}%
\index[defs]{non-collapsing}%
if 
it satisfies property~\eqref{BP:noidentity} from 
Definition~\ref{DEF:setofbranches}.
\end{defi}

\begin{remark}\label{REM:redSoBrem}
Depending on the underlying Fuchsian group, a non-collapsing behavior and \eqref{BP:allvectors} are often incompatible to each other for the 
explicit algorithmic construction procedures of sets of branches we use.
But non-collapsing reduced sets of branches will suffice for the purpose of all 
the following discussions and constructions.
The approach via sets of branches that get adequately reduced to ensure 
non-collapsing behavior at the cost of the strong 
property~\eqref{BP:allvectors}, instead of an axiomatic approach, was chosen to 
mimic the algorithmic process of constructing these sets.
Consistently, Proposition~\ref{PROP:stepreduxworx} below shows that for every 
Fuchsian group for which a set of branches exists we obtain a non-collapsing 
reduced set of branches via the above procedure.
\end{remark}

\begin{prop}\label{PROP:stepreduxworx}
The set~$\wt\BrS$ is a non-collapsing reduced set of branches for the geodesic 
flow on~$\Orbi$ with associated forward transition sets given 
by~$\Transs{}{j}{k}$ for any choice of~$j,k\in \wt A$.
If~$\BrS$ is admissible, then so is~$\wt\BrS$.
\end{prop}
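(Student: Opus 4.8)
The plan is to verify in turn every property that Definition~\ref{DEF:redsetofbranches} demands of a reduced set of branches, for the family $\wt\BrS$ with the sets $\Transs{}{j}{k}$ ($j,k\in\wt A$) serving as forward transition sets. By Proposition~\ref{PROP:noncollaps} I may and will assume at the outset that $\BrS$ is weakly non-collapsing---and admissible, if it was so originally---so that Algorithm~\ref{stepreduction} applies and Lemmas~\ref{LEM:VsrCsr},~\ref{LEM:FSOTS}, Proposition~\ref{PROP:idelimsucceeds} and Corollary~\ref{COR:Csreasyformula} are at our disposal. All arguments below rest on the containments $\Csr{j}\subseteq\Cs{j}$ and $\wt\BrU\subseteq\BrU$, together with the description of the deleted vectors furnished by Corollary~\ref{COR:Csreasyformula}.

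First I would show that the reduction does not shrink the base sets: for $j\in\wt A$ one has $\base{\Csr{j}}=\base{\Cs{j}}$, so that $\overline{\base{\Csr{j}}}$, the half-spaces $\Plussp{j},\Minussp{j}$, and the intervals $I_j,J_j$ are literally the ones attached to $\BrS$. Indeed, a point $z\in\base{\Cs{j}}$ is an interior point of the complete geodesic segment $\overline{\base{\Cs{j}}}$, and as a unit tangent vector based at $z$ rotates through $\Plussp{j}$ its forward endpoint sweeps all of $I_j$ (Remark~\ref{REM:SoBrem}\eqref{SoBrem:descrHalfspaces}); since by Corollaries~\ref{COR:Csreasyformula} and~\ref{COR:uniquereturn} a vector of $\Cs{j}$ is deleted exactly when its forward endpoint lies in $\bigcup\{I_k\mid\id\in\Trans{}{j}{k}\}$, and $\Ired{j}=I_j\setminus\bigcup\{I_k\mid\id\in\Trans{}{j}{k}\}$ is nonempty for $j\in\wt A$, some vector based at $z$ survives. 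Hence \eqref{BP:completegeodesics}, \eqref{BP:pointintohalfspaces}, \eqref{BP:disjointunion} and the admissibility assertion carry over verbatim from $\BrS$. Property \eqref{BP:allvectorsredI} then follows at once: for $(x,y)\in\Ired{j,\st}\times\Jset{j}$ the vector of $\Cs{j}$ with these endpoints provided by \eqref{BP:allvectors} has forward endpoint $x\notin\bigcup\{I_k\mid\id\in\Trans{}{j}{k}\}$, hence is not deleted and lies in $\Csr{j}$. For \eqref{BP:allvectorsredII} I would take the vector $\nu\in\Cs{j}$ with endpoints $(x,y)\in\Iset{j}\times\Jset{j}$ from \eqref{BP:allvectors} and follow its chain of immediate identity successors $\nu\rightsquigarrow\nu_1\rightsquigarrow\nu_2\rightsquigarrow\cdots$ (at each step $\gamma_{\nu_i}$ is a reparametrisation of $\gamma_\nu$, so the endpoints never change); this is a directed path in the identity-weighted forest $F'_\ini$, hence of finite length by weak non-collapsing and finiteness of $A$, and terminates at some $\nu_m\in\Csr{k}$ by Corollary~\ref{COR:Csreasyformula}, with $k\in\wt A$ because $\nu_m\in\Csr{k,\st}$. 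The same chasing argument, started from a lift of a periodic geodesic meeting $\BrU$ (which exists by \eqref{BP:closedgeodesicsXtoH} for $\BrS$), produces \eqref{BP:closedgeodesicsXtoH} for $\wt\BrS$, and hence \eqref{BP:coverlimitset} via Proposition~\ref{PROP:oldB1}; finally \eqref{BP:closedgeodesicsHtoX} holds because $j\in\wt A$ means $\Ired{j,\st}\ne\varnothing$, so by \eqref{BP:allvectorsredI} and the density of hyperbolic fixed-point pairs (Proposition~\ref{PROP:EXliesdense}) the branch $\Csr{j}$ still detects a periodic geodesic.

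The bulk of the work is property \eqref{BP:intervaldecompred}. The inclusions $g\act\Ired{k}\subseteq\Ired{j}$ for $g\in\Transs{}{j}{k}$ and the disjointness of the unions in \eqref{BP:intervaldecompGdecompred} are inherited from \eqref{BP:intervaldecomp} for $\BrS$ together with $\Ired{k}\subseteq I_k$ and the absence of the identity from the $\Transs{}{j}{k}$ (Proposition~\ref{PROP:idelimsucceeds}); the geodesic-separation clause \eqref{BP:intervaldecompGgeodred} follows by tracking, through Lemma~\ref{LEM:FSOTS}, that every old-branch intersection which Algorithm~\ref{stepreduction} bridges over occurs at a vector that has already been deleted, so that the relevant geodesic segment carries no vector of $\Gamma\act\wt\BrU$; and \eqref{BP:intervaldecompbackred} follows by the dual argument (cf.\ Proposition~\ref{PROP:BPintervaldecompV}). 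The substantive point is the equality $\bigcup_{k\in\wt A}\bigcup_{g\in\Transs{}{j}{k}}g\act\Ired{k,\st}=\Ired{j,\st}$. Here I would pass to the end of the run of Algorithm~\ref{stepreduction} in Lemma~\ref{LEM:FSOTS}, which already exhibits the intermediate sets $\Transss{s_r}{r}{j}{k}$ as full transition sets up to identities for the systems $\BrU_{s_r}^{(r)}$; combined with Proposition~\ref{PROP:idelimsucceeds} and Corollary~\ref{COR:Csreasyformula} this shows that for every $\nu\in\Csr{j,\st}$ the first return of $\gamma_\nu$ to $\Gamma\act\wt\BrU$ exists, lies on $\ittrans{\wt\BrU,1}(\nu)\act\Csr{\itindex{\wt\BrU,1}(\nu)}$, and has $\ittrans{\wt\BrU,1}(\nu)\in\Transs{}{j}{\itindex{\wt\BrU,1}(\nu)}$; letting $\gamma_\nu(+\infty)$ range over $\Ired{j,\st}$ and invoking \eqref{BP:allvectorsredI} exhausts the right-hand side.

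It remains to verify the non-collapsing property \eqref{BP:noidentity}; this is where the decisive small observation enters. Suppose, for contradiction, that some $\nu\in\wt\BrU$, say $\nu\in\Csr{j}$, had $\gamma_\nu$ intersecting $\wt\BrU\subseteq\BrU$ at a time $t\ne0$; without loss of generality $t>0$, so $\gamma_\nu$ meets $\Cs{j}$ at time $0$ and some $\Cs{k}$ at time $t$, both untranslated. Applying weak non-collapsing of $\BrS$ with this $t$ in the role of $t^*$ forces $\gamma_\nu\big((0,t)\big)$ to avoid every nontrivial $\Gamma$-translate of $\BrU$, so the first return time $\retime{\BrU}(\nu)$ lies in $(0,t]$ and $\gamma_\nu'(\retime{\BrU}(\nu))$ is an untranslated vector, i.e.\ $\ittrans{\BrU,1}(\nu)=\id$; but this contradicts $\nu\in\Csr{j}$ by Corollary~\ref{COR:Csreasyformula}. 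Realising an arbitrary path $j_1\xrightarrow{g_1}\cdots\xrightarrow{g_n}j_{n+1}$ of the reduced return graph by a vector $\nu\in\Csr{j_1}$---via \eqref{BP:allvectorsredI} and repeated use of Corollary~\ref{COR:uniquereturn}, exactly as in Lemma~\ref{LEM:paths_gen}---one sees that $g_1\cdots g_n=\id$ would yield precisely such a forbidden second intersection, so $g_1\cdots g_n\ne\id$. Putting everything together establishes the proposition, including the admissibility clause. The step I expect to demand the most care in execution is \eqref{BP:intervaldecompGdecompred}: matching the purely combinatorial transition sets emitted by Algorithm~\ref{stepreduction} with the genuine first-return dynamics of $\wt\BrU$ requires following, substep by substep through Lemma~\ref{LEM:FSOTS}, how identity edges are folded out of the return graph without disturbing the interval bookkeeping.
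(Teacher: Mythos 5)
Your proposal is correct and its skeleton coincides with the paper's proof: \eqref{BP:allvectorsredI} via the observation that the \eqref{BP:allvectors}-vector survives exactly when its forward endpoint lies in $\Ired{j}$, \eqref{BP:allvectorsredII} and \eqref{BP:closedgeodesicsXtoH} by descending the (finite, thanks to weak non-collapsing) identity chains, \eqref{BP:closedgeodesicsHtoX} by the density argument with Proposition~\ref{PROP:EXliesdense}, and \eqref{BP:intervaldecompred} inherited from \eqref{BP:intervaldecomp} together with Lemma~\ref{LEM:FSOTS} and Proposition~\ref{PROP:idelimsucceeds}. One genuine difference is welcome: you verify the non-collapsing property \eqref{BP:noidentity} explicitly, by realizing a hypothetical identity-weighted path through a vector $\nu\in\Csr{j_1}$ (as in Lemma~\ref{LEM:paths_gen}) and then using weak non-collapsing of~$\BrS$ to force $\ittrans{\BrU,1}(\nu)=\id$, contradicting Corollary~\ref{COR:Csreasyformula}; the paper's proof only records the absence of identities in the single transition sets (Proposition~\ref{PROP:idelimsucceeds}) and leaves the case of longer products implicit, so your argument actually discharges that part of the statement more completely. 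One caution: your claim $\base{\Csr{j}}=\base{\Cs{j}}$ is not justified as stated, since it tacitly assumes the branches are \emph{full} (that every vector based on $\overline{\base{\Cs{j}}}$ pointing into $\Plussp{j}$ belongs to $\Cs{j}$), which is not part of the axioms; this claim is, however, not needed, because \eqref{BP:pointintohalfspaces}, \eqref{BP:disjointunion} and \eqref{BP:leavespaceforflip} already follow from the inclusions $\Csr{j}\subseteq\Cs{j}$ and $\Ired{j}\subseteq I_j$, which is exactly how the paper argues.
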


\begin{proof}
In order to distinguish the application of the defining 
properties~\eqref{BP:closedgeodesicsHtoX}--\eqref{BP:leavespaceforflip} 
for~$\BrS$ from those for~$\wt\BrS$ we aim to prove, we denote the latter ones 
by~(\ref{BP:closedgeodesicsHtoX}$_{\rred}$)--(\ref{BP:pointintohalfspaces}$_{\rred}$) 
and (\ref{BP:disjointunion}$_{\rred}$)--(\ref{BP:leavespaceforflip}$_{\rred}$). We emphasize that $\wt\BrS$ is not required to satisfy~\eqref{BP:allvectors}. 

Property~(\ref{BP:closedgeodesicsHtoX}$_{\rred}$) is immediate from~\eqref{BP:closedgeodesicsHtoX} and the definition of~$\wt A$.
Further, Property~(\ref{BP:completegeodesics}$_{\rred}$) is immediate 
from~\eqref{BP:completegeodesics} and the fact that 
Algorithm~\ref{stepreduction} does not interfere with the sets~$J_j$ 
and~$\Jset{j}$ for~$j\in\wt A$.
Since~$\wt A\subseteq A$ and~$\Csr{j}\subseteq\Cs{j}$ for every~$j\in\wt A$, the 
properties~(\ref{BP:pointintohalfspaces}$_{\rred}$) 
and~(\ref{BP:disjointunion}$_{\rred}$) are direct consequences 
of~\eqref{BP:pointintohalfspaces} and~\eqref{BP:disjointunion}, respectively.
And since~$\Ired{j}\subseteq I_j$ for all~$j\in\wt A$ by virtue of~\eqref{EQDEF:redInterval}, property~(\ref{BP:leavespaceforflip}$_{\rred}$) is immediate from~\eqref{BP:leavespaceforflip}.

Let~$j\in\wt A$ and $\nu\in\Cs{j,\st}$.
Set~$x\coloneqq\gamma_\nu(+\infty)$ and~$y\coloneqq\gamma_\nu(-\infty)$. 
Then we have~$(x,y)\in \Iset{j}\times \Jset{j}$, and~$\nu\in\Csr{j}$ if and only if~$x\in\Ired{j}$.
This together with~\eqref{BP:allvectors} already yields~\eqref{BP:allvectorsredI}.
Property~\eqref{BP:allvectorsredII} follows immediately from~\eqref{BP:allvectorsredI} in this case.
Assume now that~$\nu\notin\Csr{j}$.
Then, by~\eqref{EQDEF:redInterval}, there exists~$k_1\in\Heir{}{j}$ such that~$x\in I_{k_1}$.
If~$x\notin\Ired{k_1}$, then, again by~\eqref{EQDEF:redInterval}, there exists~$k_2\in\Heir{}{k_1}$ such that~$x\in I_{k_2}$.
Iterating this argument is equivalent to traveling down a path in~$\Delta_\ini$.
Or in other words, there exist~$\delta_r\in\Delta_\ini$ and~$p\in\{1,\dots,\ell_{\delta_r}\}$ such that
\[
j=a_p^{\delta_r}\quad\text{and}\quad k_\iota=a_{p-\iota}^{\delta_r}\quad\text{for }\iota=1,2,\dots\,.
\]
Hence, we obtain~$k_p=a_0^{\delta_r}$ by virtue of Algorithm~\ref{stepreduction}, and thus~$\Csr{k_p}=\Cs{k_p}$ and~$x\in I_{k_p}=\Ired{k_p}$.
Since~$\base{\Cs{k_p}}\subseteq\Plussp{\Cs{j}}$, we find~$y\in J_j\subseteq J_{k_p}$.
Since~$(x,y)\in\wh\R_{\st}\times\wh\R_{\st}$, by~\eqref{BP:allvectors} we find a unique~$\nu'\in\Cs{k_p}$ such that~$(\gamma_{\nu'}(+\infty),\gamma_{\nu'}(-\infty))=(x,y)$.
This yields~\eqref{BP:allvectorsredII}.

Let~$\wh\gamma\in\Geo_{\Per}(\Orbi)$.
By Proposition~\ref{PROP:oldB1} there exists~$\gamma\in\Geo(\H)$,~$\pi(\gamma)=\wh\gamma$, such that~$\gamma$ intersects~$\BrU$.
Let~$j\in A$ be such that~$\gamma'(t)\in\Cs{j}$ for some~$t\in\R$.
Because of Corollary~\ref{COR:Csreasyformula} we may assume~$j\in\wt A$.
Then~$(\gamma(+\infty),\gamma(-\infty))\in\Iset{j}\times\Jset{j}$.
If~$\gamma(+\infty)\in\Ired{j,\st}$, then~$\gamma'(t)\in\Csr{j}$ by~\eqref{BP:allvectorsredI}.
Otherwise, by~\eqref{BP:allvectorsredII} we find~$t'\in\R$ and~$k\in\wt A$ such that~$\gamma'(t')\in\Csr{k}$.
Hence, in either case~$\gamma$ intersects~$\wt\BrU$, which implies that~$\wt\BrS$ fulfills~\eqref{BP:closedgeodesicsXtoH}.

Let again~$j\in\wt A$.
Since~$\Csr{j}\ne\varnothing$, we have~$\Ired{j,\st}\ne\varnothing$.
Hence, there exists~$\nu\in\Csr{j}$ such that
\[
(\gamma_\nu(+\infty),\gamma_\nu(-\infty))\in\Ired{j,\st}\times\Jset{j}
\subseteq\wh\R_{\st}\times\wh\R_{\st}
\subseteq\Lambda(\Gamma)\times\Lambda(\Gamma)\,.
\]
In particular,~$(\gamma_\nu(+\infty),\gamma_\nu(-\infty))\in\Ired{j}\times J_j$, 
which is an open set in~$\wh\R\times\wh\R$.
Therefore, there exists~$\eps>0$ such that
\[
\Ball{\wh\R,\eps}{\gamma_\nu(+\infty)}\times\Ball{\wh\R,\eps}{\gamma_\nu(-\infty)}\subseteq\Ired{j}\times J_j\,,
\]
where
\[
\Ball{\wh\R,\eps}{x}\coloneqq\left\{
\begin{array}{cl}
(x-\eps,x+\eps)&\text{if }x\in\R\,,\\
(\frac{1}{\eps},-\frac{1}{\eps})_c&\text{if }x=\infty\,.
\end{array}
\right.
\]
\index[symbols]{BallRR@$\Ball{\wh\R,\eps}{x}$}%
By Proposition~\ref{PROP:EXliesdense} we find a representative~$\gamma$ 
of some periodic geodesic on~$\Orbi$ such that
\[
(\gamma(+\infty),\gamma(-\infty))\in\Ball{\wh\R,\eps}{\gamma_\nu(+\infty)}\times\Ball{\wh\R,\eps}{\gamma_\nu(-\infty)}\,.
\]
By construction,
\[
(\gamma(+\infty),\gamma(-\infty))\in\Ired{j,\st}\times\Jset{j}\,.
\]
Combining this with~\eqref{BP:allvectorsredI} yields~(\ref{BP:closedgeodesicsHtoX}$_{\rred}$).
Finally, all statements of~\eqref{BP:intervaldecompred} follow from the combination of~$\eqref{BP:intervaldecomp}$ with~\eqref{EQDEF:redInterval},~\eqref{BP:allvectorsredI}, and~\eqref{BP:allvectorsredII}.
\end{proof}

Let~$\nu\in\wt\BrU$ and define the \emph{system of 
iterated sequences of~$\nu$ with respect to~$\wt\BrU$} as
\index[defs]{iterated sequences}%
\[
[(\ittime{\wt\BrU,n}(\nu))_n,(\itindex{\wt\BrU,n}(\nu))_n,(\ittrans{\wt\BrU,n}
(\nu))_n]\,,
\]
where
\[
\ittime{\wt\BrU,n}(\nu)\coloneqq\ittime{\BrU_{\ell_{\delta_\eta}}^{(\eta)},n}(\nu)\,,\quad
\itindex{\wt\BrU,n}(\nu)\coloneqq\itindex{\BrU_{\ell_{\delta_\eta}}^{(\eta)},n}(\nu)\,,
\]
and
\[
\ittrans{\wt\BrU,n}(\nu)\coloneqq\ittrans{\BrU_{\ell_{\delta_\eta}}^{(\eta)},n}(\nu)\,,
\]
for all~$n\in\Z$, with~$\bigl[\bigl(\ittime{\BrU_{s_r}^{(r)},n}(\nu)\bigr)_n,\bigl(\itindex{\BrU_{s_r}^{(r)},n}(\nu)\bigr)_n,\bigl(\ittrans{\BrU_{s_r}^{(r)},n}(\nu)\bigr)_n\bigr]$ as in~\eqref{EQDEF:algoitseq} for $r\in\{1,\dots,\eta\}$ and $s_r\in\{1,\dots,\ell_{\delta_r}\}$.
Because of Proposition~\ref{PROP:stepreduxworx} this system of sequences is independent of the enumeration of~$\Delta_\ini$.
We further obtain the following analogue of Proposition~\ref{PROP:allinit}.

\begin{cor}\label{COR:allinitred}
Let $\nu\in\wt\BrU_{\st}$, $k\in\wt A$, $t\in\R$ and $g\in\Gamma$ be such that
\[
\gamma_\nu^{\prime}(t)\in g\act\Csr{k}\,.
\]
Then there exists a unique element~$n\in\Z$ such that~$\sgn(n)=\sgn(t)$ and
\begin{eqnarray*}
k=\itindex{\wt\BrU,n}(\nu),&t=\ittime{\wt\BrU,n}(\nu),\text{ 
and}&g=\ittrans{\wt\BrU,\sgn(t)}(\nu)\ittrans{\wt\BrU,2\sgn(t)}
(\nu)\cdots\ittrans{
\wt\BrU,n}(\nu)\,.
\end{eqnarray*}
\end{cor}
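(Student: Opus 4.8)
The plan is to mirror the proof of Proposition~\ref{PROP:allinit} step by step, transferring each ingredient from the setting of an ordinary set of branches~$\BrS$ to the reduced set of branches~$\wt\BrS$. The key point is that all structural facts needed for the original argument have already been re-established for~$\wt\BrS$: by Proposition~\ref{PROP:stepreduxworx}, $\wt\BrS$ is a (non-collapsing, possibly admissible) reduced set of branches satisfying~(\ref{BP:closedgeodesicsHtoX}$_{\rred}$), (\ref{BP:disjointunion}$_{\rred}$), \eqref{BP:allvectorsredI}, \eqref{BP:allvectorsredII}, and \eqref{BP:intervaldecompred} with transition sets~$\Transs{}{j}{k}$. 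Moreover the system of iterated sequences~$[(\ittime{\wt\BrU,n}(\nu))_n,(\itindex{\wt\BrU,n}(\nu))_n,(\ittrans{\wt\BrU,n}(\nu))_n]$ is well-defined for~$\nu\in\wt\BrU$ (this is a consequence of Lemma~\ref{LEM:FSOTS} applied at the final stage~$(r,s_r)=(\eta,\ell_{\delta_\eta})$) and is independent of the enumeration of~$\Delta_\ini$.

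First I would record the analogue of Lemma~\ref{LEM:accpointsofit}, namely that $\lim_{n\to\pm\infty}\ittime{\wt\BrU,n}(\nu)=\pm\infty$ for every $\nu\in\wt\BrU_{\st}$. Since $\wt\BrU\subseteq\BrU$, every intersection of $\gamma_\nu$ with $\Gamma\act\wt\BrU$ is also an intersection with $\Gamma\act\BrU$, and the relevant base points lie on the family $\mf B=\defset{g\act\overline{\base{\Cs j}}}{g\in\Gamma,\,j\in A}$ (not on any larger family), which is locally finite by Proposition~\ref{PROP:branches_locfinite}. Hence the verbatim argument of Lemma~\ref{LEM:accpointsofit} — if the iterated intersection times accumulated at a finite~$\tau$, then infinitely many pairwise distinct translates of base-set closures would meet a neighborhood of~$\gamma_\nu(\tau)$, contradicting local finiteness — goes through unchanged. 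Alternatively one can note this is already implicit in the well-definedness of the full bi-infinite system of sequences.

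Next I would establish the analogue of Corollary~\ref{COR:uniquereturn} for~$\wt\BrS$: for $\nu\in\Csr{j,\st}$ and $g\in\Gamma$, one has $\gamma_\nu'(\retime{\wt\BrU}(\nu))\in g\act\Csr k$ if and only if $g\in\Transs{}{j}{k}$ and $\gamma_\nu(+\infty)\in g\act\Ired k$ (and dually for the previous intersection). This follows from Lemma~\ref{LEM:FSOTS}\eqref{FSOTS:alltrans}--\eqref{FSOTS:vector} (which says $\Transs{}{j}{k}$ is a full set of transition sets up to identities), combined with Proposition~\ref{PROP:idelimsucceeds} (which says $\id\notin\Transs{}{j}{k}$, so the ``up to identities'' caveat disappears), (\ref{BP:disjointunion}$_{\rred}$), and the nested-interval inclusions from (\ref{BP:intervaldecompGdecompred}). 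With these two facts in hand, the proof of Corollary~\ref{COR:allinitred} is a line-by-line translation of that of Proposition~\ref{PROP:allinit}: it suffices to prove uniqueness of $n\in\Z$ with $t=\ittime{\wt\BrU,n}(\nu)$, since the identifications of $k$ and $g$ are then forced by the definitions~\eqref{EQDEF:itindex}, \eqref{EQDEF:ittrans1}, \eqref{EQDEF:ittrans2}. By strict monotonicity of $(\ittime{\wt\BrU,n}(\nu))_{n\in\Z}$ and the analogue of Lemma~\ref{LEM:accpointsofit}, there is exactly one $n$ with $\ittime{\wt\BrU,n-1}(\nu)<t\leq\ittime{\wt\BrU,n}(\nu)$; if $t<\ittime{\wt\BrU,n}(\nu)$ then $\gamma_\nu'(t)\in\Gamma\act\wt\BrU$ would contradict the recursion $\ittime{\wt\BrU,n}(\nu)=\retime{\wt\BrU}(\gamma_\nu'(\ittime{\wt\BrU,n-1}(\nu)))$ (the analogue of~\eqref{eq:iternexttime}), so $t=\ittime{\wt\BrU,n}(\nu)$.

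The main obstacle — really the only one requiring care — is making sure that the ``$\Gamma$-translates'' bookkeeping is clean: one must check that $\gamma_\nu'(t)\in g\act\Csr k$ with $\nu\in\wt\BrU_{\st}$ cannot occur at a time strictly between two consecutive iterated intersection times, i.e. that $\wt\BrS$ still has the property that the sequences detect \emph{all} intersections with $\Gamma\act\wt\BrU$ (not merely with $\Gamma^*\act\wt\BrU$). For an ordinary set of branches this was guaranteed by~\eqref{BP:disjointunion}; for~$\wt\BrS$ the pairwise distinctness of $\defset{g\act\nu}{g\in\Gamma,\,\nu\in\wt\BrU}$ again follows from (\ref{BP:disjointunion}$_{\rred}$) exactly as in Proposition~\ref{PROP:CofSoBnonst}\eqref{CofSoB:disjointunion}, and the non-collapsing property~\eqref{BP:noidentity} (Proposition~\ref{PROP:stepreduxworx}) rules out the degenerate possibility that a later intersection lands back on $\id\act\Csr k$ with trivial accumulated weight. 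Everything else is purely formal, so the proof can be presented compactly by invoking these three previously-proven inputs and then pointing to the proof of Proposition~\ref{PROP:allinit}.
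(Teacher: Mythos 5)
Your proposal is correct and follows essentially the route the paper intends: the corollary is stated as an immediate analogue of Proposition~\ref{PROP:allinit}, obtained by transferring its proof verbatim to~$\wt\BrU$ using the inputs you cite (well-definedness of the iterated sequences via Lemma~\ref{LEM:FSOTS}, the divergence of intersection times via local finiteness as in Lemma~\ref{LEM:accpointsofit}, the transition-set characterization from Lemma~\ref{LEM:FSOTS} together with Proposition~\ref{PROP:idelimsucceeds}, and disjointness of translates from~(\ref{BP:disjointunion}$_{\rred}$)). The only cosmetic point is that your appeal to the non-collapsing property~\eqref{BP:noidentity} is not needed here --- uniqueness of the pair~$(k,g)$ is already forced by the pairwise distinctness of the $\Gamma$-translates, exactly as in Proposition~\ref{PROP:CofSoBnonst}\eqref{CofSoB:disjointunion}.
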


A reduced set of branches is called \emph{finitely ramified} 
\index[defs]{reduced set of branches!finitely ramified}%
\index[defs]{set of branches!reduced, finitely ramified}%
\index[defs]{finitely ramified}%
if~$\#\Transs{}{j}{k}<+\infty$ for all~$j,k\in\wt A$.
The following result shows that Algorithm~\ref{stepreduction} does not negate 
the efforts of Section~\ref{SUBSEC:branchram}.

\begin{prop}\label{PROP:finramandcollaps}
If~$\BrS$ is finitely ramified, then so is~$\wt\BrS$.
\end{prop}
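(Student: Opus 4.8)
The plan is to exploit a structural feature of Algorithm~\ref{stepreduction} that is genuinely different from the branch reduction Algorithms~\ref{nodereductionI} and~\ref{nodereductionII}: identity elimination never \emph{multiplies} group elements when it redefines a transition set. Each set $\Transss{s_r}{r}{j}{k}$ is obtained from previously available sets of the form $\Transss{s_r-1}{r}{j}{\cdot}$ by a purely set-theoretic union, possibly followed by deletion of $\id$. Hence the cardinalities of the transition sets can only be merged or shrink, never grow via products, and this will immediately yield the desired bound. (This is exactly why, in contrast, the product-forming branch reduction required the separate Proposition~\ref{PROP:reddontram}.)

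Concretely, I would first prove by a straightforward induction along the finite sequence of steps $r\in\{1,\dots,\eta\}$ and substeps $s_r\in\{1,\dots,\ell_{\delta_r}\}$ of Algorithm~\ref{stepreduction} — which is finite since $\eta=\#\Delta_\ini<+\infty$ and $\ell_{\delta_r}<+\infty$ for every $r$ — that
\[
\Transss{s_r}{r}{j}{k}\subseteq\bigcup_{\ell\in A}\Trans{}{j}{\ell}\qquad\text{for all }j,k\in A\,.
\]
The base case is the initialization $\Transss{0}{0}{j}{k}=\Trans{}{j}{k}$, which lies trivially in the right-hand side; the passage from one step to the next, $\Transss{0}{r+1}{j}{k}=\Transss{\ell_{\delta_r}}{r}{j}{k}$, changes nothing; and in a substep the only sets altered are, for $k\in\{a_0^{\delta_r},\dots,a_{s_r-1}^{\delta_r}\}$,
\[
\Transss{s_r}{r}{j}{k}=\bigl(\Transss{s_r-1}{r}{j}{k}\setminus\{\id\}\bigr)\cup\Transss{s_r-1}{r}{j}{a_{s_r}^{\delta_r}}\,,
\]
a union of two sets to which the inductive hypothesis applies (removing $\id$ can only shrink them), so the inclusion persists; all remaining transition sets are simply inherited from the previous substep.

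Then I would specialize to the last substep and invoke~\eqref{EQDEF:redTranssets}: for all $j,k\in\wt A$ one gets $\Transs{}{j}{k}=\Transss{\ell_{\delta_\eta}}{\eta}{j}{k}\subseteq\bigcup_{\ell\in A}\Trans{}{j}{\ell}$, whence
\[
\#\Transs{}{j}{k}\le\sum_{\ell\in A}\#\Trans{}{j}{\ell}=\ram{j}\le\Ram{\BrS}<+\infty
\]
by the hypothesis that $\BrS$ is finitely ramified. Since $j,k\in\wt A$ were arbitrary, $\wt\BrS$ is finitely ramified, which is the assertion. I do not expect a real obstacle: the argument is pure bookkeeping over a finite index set. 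The only point that needs genuine care — and it is precisely the point that makes this proposition easy — is confirming that the redefinition rule in Algorithm~\ref{stepreduction} is combinatorial and introduces no group elements beyond those already present in $\bigcup_{\ell\in A}\Trans{}{j}{\ell}$, so that the union bound above is legitimate.
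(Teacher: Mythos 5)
Your proof is correct and follows essentially the same route as the paper's: both are a finite induction along the run of Algorithm~\ref{stepreduction}, exploiting that every redefined transition set is just a union of at most two previously defined sets with the same first index (possibly with $\id$ deleted), so no new group elements are ever produced and finiteness propagates through the finitely many (sub)steps. The only difference is cosmetic: you carry the slightly stronger invariant $\Transss{s_r}{r}{j}{k}\subseteq\bigcup_{\ell\in A}\Trans{}{j}{\ell}$, which additionally yields the explicit bound $\#\Transs{}{j}{k}\le\ram{j}\le\Ram{\BrS}$, whereas the paper merely tracks finiteness of each intermediate set.
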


\begin{proof}
Let~$j,k\in\wt A$.
By hypothesis,~$\#\Transss{\ell_{\delta_0}}{0}{j}{k}=\#\Trans{}{j}{k}<+\infty$.
In every step of Algorithm~\ref{stepreduction}, a new transition set~$\Transss{0}{r}{j}{k}$ emerges as the union of at most two sets of the type~$\Transss{*}{r-1}{j}{k}$, which are seen to be of finite cardinality by recursive application of this argument.
Since Algorithm~\ref{stepreduction} terminates after finitely many steps, this yields the set~$\Transs{}{j}{k}$ as a finite union of finite subsets of~$\Gamma$.
\end{proof}

\begin{example}\label{EX:G3idelim}
Recall the group~$\Gamma_\lambda$ as well as its weakly non-collapsing set of branches~$\BrS'$ from Example~\ref{EX:G3Graph}.
As has been seen in Example~\ref{EX:G3Graphred}, the completely reduced set of branches~$\{\Cs{2},\Cs{7}\}$ is already non-collapsing, so there is no need to apply Algorithm~\ref{stepreduction} in this case.
But since branch reduction is optional, we might investigate the outcome of the identity elimination if applied to~$\BrS'$.
We have~$D_\ini=\{1,6\}$ and from the forest~$F'_\ini$, which is depicted in Figure~\ref{FIG:G3forest}, we see that~$\eta=3$.
\begin{figure}[h]
\begin{tikzpicture}[level/.style={sibling distance=55mm/#1},edge from parent/.style={draw,-open triangle 45}]
\node (z){$(1,\id)$}
  child {node (a) {$(2,\id)$}
  	child {node (b) {$(3,\id)$}
  		child {node (c) {$(4,\id)$}
  			child {node (d) {$(5,\id)$}} child {node (e) {$(8,\id)$}}
  		}
  	}
   };
\node [above=1ex of z] {$B'_1:$};   
\node (zz) [right=8em of z] {$(6,\id)$}
	child {node (f) {$(7,\id)$}
		child {node (g) {$(8,\id)$}}
	};
\node [above=1ex of zz] {$B'_6:$};
\end{tikzpicture}
\caption[subtreesidentity]{The subtrees~$B_1'$ and $B_6'$ of $F'_\ini$ for the set of branches~$\BrS'$ for~$\Gamma_\lambda$.}\label{FIG:G3forest}
\end{figure}
Furthermore, from Figure~\ref{FIG:G3returngraphwnc} it can be seen that~$\bigcup_{k\in A}\Trans{}{j}{k}=\{\id\}$ for~$j=1,4,6$, wherefore we obtain~$\wt A=\{2,3,5,7,8\}$.
In order to display the emerging reduced set of branches~$\wt\BrS=\defset{\Csr{j}}{j\in\wt A}$ we provide a ``return graph'' (Figure~\ref{FIG:G3altredreturn}), a depiction of the reduced branches themselves (Figure~\ref{FIG:G3altredbranches}), as well as a list of the sets~$\Ired{j}$,~$j\in\wt A$: We have
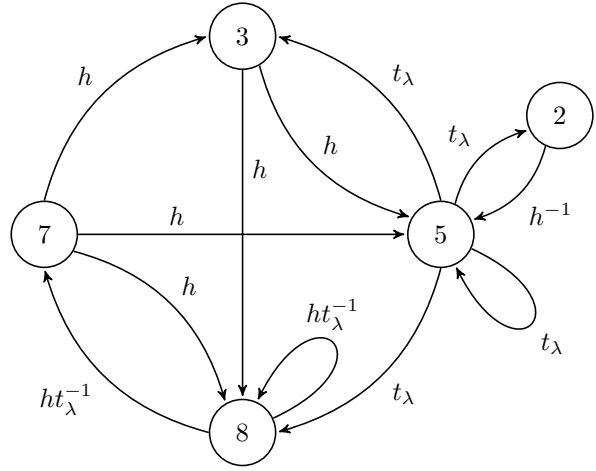
\begin{figure}[h]
\begin{tikzpicture}[->,shorten >=1pt,auto, semithick]
\def \fak {7.5 em}
\begin{scope}[on grid]
\node[state] (C3) {$3$};
\node[state] (C5) [below right= \fak and \fak of C3] {$5$};
\node[state] (C7) [below left= \fak and \fak of C3] {$7$};
\node[state] (C8) [below= 2*\fak of C3] {$8$};
\node[state] (C2) [below right= .4*\fak and 1.6*\fak of C3] {$2$};
\def \los {1}
\path (C5.north) edge [bend right,looseness=\los] node[pos=.5,above right] {$t_\lambda$} (C3.east)
		  (C5.south) edge [bend left,looseness=\los] node {$t_\lambda$} (C8.east)
		  (C3.300) edge [bend right,looseness=\los] node {$h$} (C5.150)
		  (C5.65) edge [bend left,looseness=\los] node {$t_\lambda$} (C2.205)
		  (C2.245) edge [bend left,looseness=\los] node {$h^{-1}$} (C5.25)
		  (C8.west) edge [bend left,looseness=\los] node {$ht_\lambda^{-1}$} (C7.south)
		  (C7.north) edge [bend left,looseness=\los] node {$h$} (C3.west)
		  (C5) edge [loop right,out=-25,in=-65,looseness=15] node[pos=.5,below right] {$t_\lambda$} (C5)
		  (C8) edge [loop right,out=25,in=65,looseness=15] node[pos=.5,above=1pt] {$ht_\lambda^{-1}$} (C8)
		  (C3.south) edge node[pos=.3,right] {$h$} (C8.north)
		  (C7.330) edge [bend left,looseness=\los] node {$h$} (C8.120)
		  (C7.east) edge node[pos=.3,above] {$h$} (C5.west);
\end{scope}
\end{tikzpicture}
\caption{A return-style graph for the reduced set of branches~$\wt\BrS$ for~$\Gamma_\lambda$.}\label{FIG:G3altredreturn}
\end{figure}
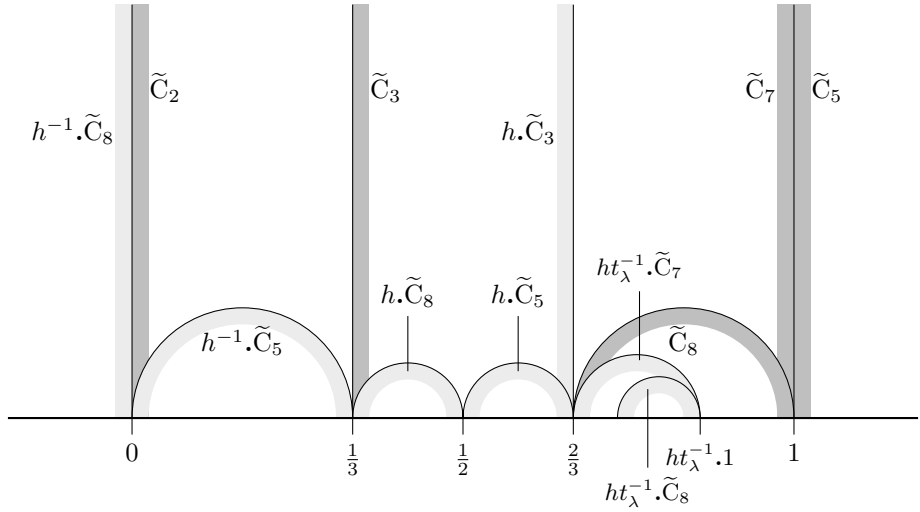
\begin{figure}[h]
\begin{tikzpicture}[scale=11]
\tikzmath{\q=1/(2*sqrt(3));
				  \r=.8/6;
				  \v=.1+.8*1/3;
				  \w=.1+2*.8*1/3;
				  \p=\w+\r+.02;
				  }
\fill[color=gray!50] (.1,0) -- (.1+.02,0) -- (.1+.02,.5) -- (.1,.5) -- cycle;
\fill[color=gray!50] (\v,0) -- (\v+.02,0) -- (\v+.02,.5) -- (\v,.5) -- cycle;
\fill[color=gray!15] (\w-.02,0) -- (\w,0) -- (\w,.5) -- (\w-.02,.5) -- cycle;
\fill[color=gray!15] (.1-.02,0) -- (.1,0) -- (.1,.5) -- (.1-.02,.5) -- cycle;
\fill[color=gray!50] (.9-.02,0) -- (.9+.02,0) -- (.9+.02,.5) -- (.9-.02,.5) -- cycle;
\fill[color=gray!15] (\v,0) arc (0:180:\r) -- (.1+.02,0) arc (180:0:\r-.02) -- cycle;
\fill[color=gray!15] (\w,0) arc (0:180:.5*\r) -- (\w-\r+.02,0) arc (180:0:.5*\r-.02) -- cycle;
\fill[color=gray!15] (\v,0) arc (180:0:.5*\r) -- (\w-\r-.02,0) arc (0:180:.5*\r-.02) -- cycle;
\fill[color=gray!50] (.9,0) arc (0:180:\r) -- (\w+.02,0) arc (180:0:\r-.02) -- cycle;
\fill[color=gray!15] (\p,0) arc (0:180:.5*\r+.01) -- (\w+.02,0) arc (180:0:.5*\r-.01) -- cycle;
\fill[color=gray!15] (\p,0) arc (0:180:.3*\r+.01) -- (\p-.6*\r,0) arc (180:0:.3*\r-.01) -- cycle;
\draw (\v,0) arc (0:180:\r);
\draw (.9,0) arc (0:180:\r);
\draw (\p,0) arc (0:180:.5*\r+.01);
\draw (\v,0) arc (180:0:.5*\r) arc (180:0:.5*\r);
\draw (\p,0) arc (0:180:.3*\r+.01);
\foreach \x/\y in {.1/$0$,\v/$\frac{1}{3}$,\w/$\frac{2}{3}$,.9/$1$}{
	\draw (\x,.5) -- (\x,-0.02) node [below] {\y};
}
\draw (.5,0) -- (.5,-.02) node [below] {$\frac{1}{2}$};
\draw (\p,0) -- (\p,-.02) node [below] {$\small ht_\lambda^{-1}\act 1$};
\draw[style=thick] (-.05,0) -- (1.05,0);
\coordinate[label=right:$\Csr{2}$] (C2) at (.11,.4);
\coordinate[label=right:$\Csr{3}$] (C3) at (\v+.01,.4);
\coordinate[label=right:$\Csr{5}$] (C5) at (.91,.4);
\coordinate[label=left:$\Csr{7}$] (C7) at (.89,.4);
\coordinate[label=below:$\Csr{8}$] (C8) at (\w+\r,\r-.01);
\coordinate[label=below:$h^{-1}\act\Csr{5}$] (h1C5) at (.1+\r,\r-.01);
\coordinate[label=left:$h^{-1}\act\Csr{8}$] (h1C8) at (.09,.35);
\coordinate[label=left:$h\act\Csr{3}$] (hC3) at (\w-.01,.35);
\draw (\v+.5*\r,.5*\r-.01) -- (\v+.5*\r,\r-.01) node[above] {$h\act\Csr{8}$};
\draw (\w-.5*\r,.5*\r-.01) -- (\w-.5*\r,\r-.01) node[above] {$h\act\Csr{5}$};
\draw (\w+.6*\r,.6*\r-.01) -- (\w+.6*\r,\r+.02) node[above] {$\small ht_\lambda^{-1}\act\Csr{7}$};
\draw (\w+.6*\r+.01,.3*\r-.005) -- (\w+.6*\r+.01,-.6*\r+.02) node[below] {$\small ht_\lambda^{-1}\act\Csr{8}$};
\end{tikzpicture}
\caption{The reduced set of branches~$\wt\BrS$ for~$\Gamma_\lambda$ emerging from~$\BrS'$ via Algorithm~\ref{stepreduction}.}\label{FIG:G3altredbranches}
\end{figure}
\begin{align*}
\Ired{2}&=\bigl(0,\tfrac{1}{3}\bigr)\,,\quad\Ired{3}=\bigl(\tfrac{1}{3},\tfrac{2}{3}\bigr)\,,\quad\Ired{5}=\bigl(1,+\infty\bigr)\,,
\\
\Ired{7}&=\bigl(-\infty,\tfrac{2}{3}\bigr)\,,\quad\text{and}\quad\Ired{8}=\bigl(\tfrac{2}{3},1\bigr)\,.
\end{align*}
\end{example}

\section{Cuspidal acceleration}\label{SEC:cuspacc}

In this section, we present the \emph{cuspidal acceleration algorithm}, which is the main step in our constructions of strict transfer operator approaches. This algorithm ultimately yields the passage from a non-uniformly expanding discrete dynamical system to a uniformly expanding one, which then guarantees nuclearity of the arising fast transfer operators. As the naming suggests, this algorithm only affects hyperbolic orbisurfaces with cusps. 

As the algorithms of branch reduction and identity elimination, also the cuspidal acceleration algorithm works by modifying a given set of representatives for a cross section. Here, we start with a cross section and a set of representatives for which the induced discrete dynamical system is typically not uniformly expanding. (If the induced system is already uniformly expanding, then the cuspidal acceleration algorithm is void and does not modify the cross section.) The non-uniformity in the expansion rate originates from the property of the initial cross section to encode each single winding of a geodesic around a cusp as a separate intersection event. To achieve uniformity, successive windings around a cusp should be merged into one (somewhat collective) event. The cuspidal acceleration algorithm achieves exactly this by a careful elimination of certain unit tangent vectors in the set of representatives for the initial cross section. We refer to Remark~\ref{REM:howtoacc} below for a more detailed explanation. The algorithm itself consists indeed of a single (simultaneous) elimination step, for which reason it is stated as a definition, namely Definition~\ref{DEF:cuspacc}, in which the \emph{accelerated} set of representatives of the \emph{accelerated} cross section is defined. The remaining section is then dedicated to the proof that this set is indeed a cross section. The following sections mostly discuss how this cross section and the set of representatives give rise (in a natural way) to a strict transfer operator approach.

Throughout this section let 
\[
\BrS^{(i)}\coloneqq\defset{\Cs{j}^{(i)}}{j\in A^{(i)}}
\]
be a set of branches with $A^{(i)}\coloneqq \{1,\ldots, N\}$ (with $(i)$ indicating \emph{initial}). We emphasize that the considerations in what follows do \emph{not} require that the set of branches~$\BrS^{(i)}$ is branch reduced. I.e., it is not required that the branch reduction algorithm from Section~\ref{SEC:branchred} has been applied to~$\BrS^{(i)}$. We further let~$\wt A$,~$\wt\BrS$,~$\wt\BrU$,~$\Ired{j}$ and~$\Transs{}{j}{k}$ for~$j,k\in\wt A$, and
\[
[(\ittime{\wt\BrU,n}(\nu))_n,(\itindex{\wt\BrU,n}(\nu))_n,(\ittrans{\wt\BrU,n}
(\nu))_n]
\]
for~$\nu\in\Csr{j}$,~$j\in\wt A$, be as in Section~\ref{SEC:stepredux}.
That is, $\wt\BrS$ is a reduced set of branches (see Definition~\ref{DEF:redsetofbranches}).
For~$j\in\wt A$ we further define
\[
\wt H(j)\coloneqq\defset{k\in\wt A}{\Transs{}{j}{k}\ne\varnothing}\,.
\]
\index[symbols]{H@$\wt{H}_{j}$}%

\begin{center}
\framebox{
\begin{minipage}{.83\textwidth}
From now on we omit all tildes (\,$\wt{\ }$\,) from the notation. Thus, throughout this section, $\BrS$ denotes a \emph{reduced} set of branches in the sense of Definition~\ref{DEF:redsetofbranches}. We caution that this notation is not fully consistent with the one of the previous sections but here preferably in favor of avoiding overloaded notation. We further assume that~$\CrSc$ is a \emph{strong} cross section for~$\GeoFlow$ (see Section~\ref{SUBSEC:cross}).
\end{minipage}
}
\end{center}

\medskip

We will take advantage of a certain cyclic behavior of $\Gamma$-translates of~$\BrS$ at cusps.
By this we refer to the following property: Let $j\in A$ and recall the endpoints~$\point{j}{\eX}$ and $\point{j}{\eY}$ of~$\overline{\base{\Cs{j}}}$ from 
Remark~\ref{REM:SoBrem}\eqref{SoBrem:descrHalfspaces}.
Recall further that~$\Ired{j}\subseteq I_j=(\eX_j,\eY_j)_c$.
There are two possibilities for~$\eZ_j\in\{\eX_j,\eY_j\}$ in regard to~$\Ired{j}$:
\begin{enumerate}[label=$\mathrm{(\alph*)}$, ref=$\mathrm{\alph*}$]
\item either~$\eZ_j$ is a boundary point of~$\Ired{j}$ in the~$\wh\R$-topology, or
\item there exists~$\eps>0$ such that
\begin{align*}
(\eZ_j-\eps,\eZ_j+\eps)\cap\Ired{j}=\varnothing & \qquad\text{if~$\eZ_j\ne\infty$},
\intertext{and}
(-\frac{1}{\eps},\frac{1}{\eps})\cap\Ired{j}=\varnothing & \qquad\text{if~$\eZ_j=\infty$}.
\end{align*}
\end{enumerate}
We suppose that, say, $\point{j}{\eX}$ is a boundary point of~$\Ired{j}$ and represents a cusp of $\Orbi$, say $\wt c$ (see~\eqref{BP:completegeodesics}).
Those two assumptions are not mutually exclusive, because, given the latter, if the former were not the case, by Corollary~\ref{COR:Csreasyformula} we would find~$j'\in A$ such that~$\Ired{j'}\subseteq I_{j'}\subseteq I_j$ and~$\eX_j$ is a boundary point of~$\Ired{j'}$.
Then~$\eX_{j'}=\eX_j$, and we thus may proceed with~$j'$ instead of~$j$.
Now, due to~\eqref{BP:intervaldecompred} we find~$k\in\Heir{}{j}$ and a
transformation $g\in\Trans{}{j}{k}$ such that $\point{j}{\eX}=g\act\point{k}{\eX}$ and~$\eX_k$ is an endpoint of~$\Ired{k}$.
The tuple~$(k,g)$ is uniquely determined.
Clearly,~$\eX_k$ is again a representative of~$\wt c$.
Iterating this argument we are led back, after finitely many steps, to some~$\Gamma$-translate of~$\eX_j$, from where on the cycle repeats (see Lemma~\ref{LEM:periodiccycles} below).
This yields the notion of~\emph{$\eX$-cycles}, which is made more rigorous by the following definition.
We may argue analogously if~$\eY_j$ is a boundary point of~$\Ired{j}$ and represents a cusp of~$\Orbi$.

\begin{defi}\label{DEF:xytuple}
Let $\eZ\in\{\eX,\eY\}$ and let 
\[
 A_\eZ \coloneqq \defset{ j\in A }{ \text{$\point{j}{\eZ}$ cuspidal boundary point of~$\Ired{j}$} }
\]
\index[symbols]{Az@$A_\eZ$}%
\index[symbols]{Aza@$A_\eX$}%
\index[symbols]{Azb@$A_\eY$}%
be the subset of elements $j\in A$ for which the endpoint~$\point{j}{\eZ}$ of 
the geodesic segment~$\overline{\base{\BrU_j}}$ represents a cusp of $\Orbi$ and coincides with a boundary point of~$\Ired{j}$ in the~$\wh\R$-topology.
By the discussion right before this definition, for each $j\in A_{\eZ}$ there 
exists a (unique) pair~$(k_j,g_j) \in \Heir{}{j} \times \Trans{}{j}{k_j}$ that 
satisfies $\point{j}{\eZ}=g_j\act\point{k_j}{\eZ}$ and $k_j\in A_\eZ$. We call 
the pair~$(k_j,g_j)$ the \emph{$\eZ$-tuple of~$j$}.
\index[defs]{Z-tuple}%
\index[defs]{X-tuple}%
\index[defs]{Y-tuple}%
Further, we define the maps
\begin{eqnarray*}
\psi_{\eZ}\colon A_\eZ\to A_\eZ\,, \qquad \psi_{\eZ}(j)\coloneqq k_j
\end{eqnarray*}
\index[symbols]{pZ@$\psi_\eZ$}%
and 
\[
\cycnext{\eZ}\colon A_\eZ\to \Gamma\,,\qquad  \cycnext{\eZ}(j) \coloneqq g_j\,.
\]
\index[symbols]{gZ@$\cycnext{\eZ}$}%
For each $j\in A_\eZ$, iterated application of~$\psi_\eZ$ leads to the sequence 
\[
\cyc{\eZ}{j}\coloneqq(\psi_{\eZ}^{r}(j))_{r\in\N_0}\,,
\] 
\index[symbols]{cyc@$\cyc{\eZ}{j}$}%
which we call the \emph{$\eZ$-cycle of~$j$}.
\index[defs]{Z-cycle}%
\index[defs]{X-cycle}%
\index[defs]{Y-cycle}%
\end{defi}

\begin{lemma}\label{LEM:periodiccycles}
Let $\eZ\in\{\eX,\eY\}$ and $j\in A_{\eZ}$. Then the sequence~$\cyc{\eZ}{j}$ is 
periodic with (minimal) period length 
\begin{align*}
\min\defset{n\in\N}{\exists\,\nu\in\Cs{j}\colon\itindex{\BrU,n}
(\nu)=j\land\ittrans{\BrU,1}(\nu)\cdots\ittrans{\BrU,n}(\nu)\act\point{j}{\eZ}
=\point{j}{\eZ}}\,.
\end{align*}
\end{lemma}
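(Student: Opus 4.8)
The plan is to first show that $\psi_{\eZ}$ is a bijection of the finite set $A_{\eZ}$ --- which makes $\cyc{\eZ}{j}$ purely periodic, with some minimal period $P$ --- and then to identify $P$ with the set on the right-hand side by two inequalities: that $P$ lies in that set (via a geodesic that literally travels the $\eZ$-cycle, realized through Lemma~\ref{LEM:paths_gen}) and that every element of that set is a period of $\cyc{\eZ}{j}$ (via tracking the cuspidal endpoint $\point{j}{\eZ}$ along the system of iterated sequences of an arbitrary witness vector).

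First I would record two elementary features of the $\eZ$-tuple iteration. Put $h_0\coloneqq\id$ and $h_r\coloneqq\cycnext{\eZ}(j)\cycnext{\eZ}(\psi_{\eZ}(j))\cdots\cycnext{\eZ}(\psi_{\eZ}^{r-1}(j))$ for $r\ge1$. Unwinding the defining relation $\cycnext{\eZ}(i)\act\point{\psi_{\eZ}(i)}{\eZ}=\point{i}{\eZ}$ gives, by induction on $r$, the telescoping identity $h_r\act\point{\psi_{\eZ}^{r}(j)}{\eZ}=\point{j}{\eZ}$, while $\cycnext{\eZ}(\psi_{\eZ}^{r}(j))\in\Trans{}{\psi_{\eZ}^{r}(j)}{\psi_{\eZ}^{r+1}(j)}$ together with Lemma~\ref{LEM:halfspaceincludeGV} gives the strict nesting $h_{r+1}\act I_{\psi_{\eZ}^{r+1}(j)}\varsubsetneq h_r\act I_{\psi_{\eZ}^{r}(j)}$, all these intervals sharing the common endpoint $\point{j}{\eZ}$. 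Since $A_{\eZ}$ is finite and $\psi_{\eZ}$ maps it to itself, $\cyc{\eZ}{j}$ is automatically eventually periodic; to upgrade this to pure periodicity it suffices to prove $\psi_{\eZ}$ injective. If $\psi_{\eZ}(j_1)=\psi_{\eZ}(j_2)=k$ with $\eZ$-tuples $(k,g_1),(k,g_2)$, then $g_1^{-1}\act\Ired{j_1,\st}$ and $g_2^{-1}\act\Ired{j_2,\st}$ both contain $\Ired{k,\st}$ and both have the cuspidal point $\point{k}{\eZ}=g_\ell^{-1}\act\point{j_\ell}{\eZ}$ as a boundary point approached from the side of $\Ired{k,\st}$; hence their base geodesics $g_1^{-1}\act\overline{\base{\Cs{j_1}}}$ and $g_2^{-1}\act\overline{\base{\Cs{j_2}}}$ do not cross, so by~\eqref{BP:disjointunion} they are either equal, in which case $j_1=j_2$ and $g_1=g_2$, or properly nested, in which case the uniqueness of the $\eZ$-tuple (using the cyclic parabolic structure of $\Stab{\Gamma}{\point{j}{\eZ}}$ to settle a possible self-transition $\psi_{\eZ}(j_\ell)=j_\ell$) again forces $j_1=j_2$. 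Thus $\psi_{\eZ}$ is a bijection of $A_{\eZ}$, so $j$ lies on a $\psi_{\eZ}$-cycle and $\cyc{\eZ}{j}$ is purely periodic; let $P=\min\{n\in\N:\psi_{\eZ}^{n}(j)=j\}$ be its minimal period.

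Now I would compare $P$ with the right-hand side, which I call $\min S$. For $\min S\le P$: since $h_{P}\act\Ired{j,\st}\subseteq\Ired{j,\st}$ is nonempty (iterate~\eqref{BP:intervaldecompred}), pick $(x,y)\in h_{P}\act\Ired{j,\st}\times\Jset{j}$ and let $\nu\in\Cs{j}$ be the vector with $(\gamma_\nu(+\infty),\gamma_\nu(-\infty))=(x,y)$ provided by~\eqref{BP:allvectorsredI}. By the nesting, $\gamma_\nu(+\infty)\in h_{r}\act\Ired{\psi_{\eZ}^{r}(j),\st}$ for $r=1,\dots,P$, and iterated application of the uniqueness of the next intersection (the reduced analogue of Corollary~\ref{COR:allinitred}) shows $\itindex{\BrU,r}(\nu)=\psi_{\eZ}^{r}(j)$ and $\ittrans{\BrU,r}(\nu)=\cycnext{\eZ}(\psi_{\eZ}^{r-1}(j))$ for $r=1,\dots,P$; hence $\itindex{\BrU,P}(\nu)=j$ and $\ittrans{\BrU,1}(\nu)\cdots\ittrans{\BrU,P}(\nu)=h_{P}$ fixes $\point{j}{\eZ}$ by the telescoping identity, so $P\in S$. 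For $\min S\ge P$: take $n\in S$ witnessed by $\nu\in\Cs{j}$, write $j_r\coloneqq\itindex{\BrU,r}(\nu)$ and $h_r\coloneqq\ittrans{\BrU,1}(\nu)\cdots\ittrans{\BrU,r}(\nu)$, so $j_n=j$ and $g\coloneqq h_n$ fixes $\point{j}{\eZ}$. As $\point{j}{\eZ}$ is cuspidal, $\Stab{\Gamma}{\point{j}{\eZ}}$ is cyclic parabolic, so $g$ is a power of the primitive parabolic at that cusp; moreover $\gamma_\nu(+\infty)\in h_n\act\Ired{j,\st}$, and the intervals $h_r\act\Ired{j_r,\st}$, $r=0,\dots,n$, are strictly nested between $\Ired{j,\st}$ and $g\act\Ired{j,\st}$, both of which have $\point{j}{\eZ}$ as a boundary point; therefore every intermediate interval has $\point{j}{\eZ}$ as a boundary point, which forces $j_r\in A_{\eZ}$ and $h_r\act\point{j_r}{\eZ}=\point{j}{\eZ}$ for all $r$. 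At each step the uniqueness of the $\eZ$-tuple, applied to $j_r$ after translating by $h_r^{-1}$, then identifies $(j_{r+1},\ittrans{\BrU,r+1}(\nu))$ with the $\eZ$-tuple of $j_r$, so inductively $j_r=\psi_{\eZ}^{r}(j)$; in particular $\psi_{\eZ}^{n}(j)=j_n=j$, whence $n$ is a period of $\cyc{\eZ}{j}$ and $n\ge P$. Combining, $\min S=P$, the minimal period of $\cyc{\eZ}{j}$.

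I expect the main obstacle to be the first step, the bijectivity of $\psi_{\eZ}$: near a cusp the sets $\Ired{j}$ are no longer genuine intervals and~\eqref{BP:intervaldecompred} is only an equality on the $\st$-parts, so one must argue with some care that among the pieces of such a decomposition exactly one has a prescribed cuspidal boundary point as an honest endpoint, and one must separately handle the degenerate situation in which the primitive parabolic at the cusp already occurs as a self-transition $\psi_{\eZ}(j)=j$ (period $1$). Everything after that is bookkeeping with the systems of iterated sequences, relying on~\eqref{BP:disjointunion}, \eqref{BP:allvectorsredI}, Lemma~\ref{LEM:halfspaceincludeGV}, Corollary~\ref{COR:allinitred}, and Remark~\ref{REM:SoBrem}\eqref{SoBrem:descrHalfspaces}.
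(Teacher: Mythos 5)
Your overall architecture (two inequalities comparing the abstract minimal period with the set on the right-hand side) is reasonable and genuinely different from the paper, but it has a decisive gap exactly where you suspected: step (1), the bijectivity of $\psi_{\eZ}$, is not proven, and your whole scheme hinges on it, because without pure periodicity you can neither define $P$ nor produce the witness $h_P$ that shows the right-hand set is nonempty in step (2). Your injectivity argument breaks down in the "properly nested" case: if $g_1^{-1}\act\overline{\base{\Cs{j_1}}}$ and $g_2^{-1}\act\overline{\base{\Cs{j_2}}}$ share only the cuspidal endpoint $\point{k}{\eZ}$ at infinity, they are disjoint in $\H$, so \eqref{BP:disjointunion} gives no information, and "uniqueness of the $\eZ$-tuple" is uniqueness of the \emph{successor} of a given index --- it says nothing about two distinct indices $j_1\neq j_2$ having the same successor, which is exactly the configuration you must exclude. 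Ruling it out combinatorially would require something like a violation of (\ref{BP:intervaldecompred}\ref{BP:intervaldecompGgeodred}) produced by the intermediate geodesic, but after identity elimination the base sets $\base{\Csr{j}}$ and the sets $\Ired{j}$ have gaps, so the crossing vector need not lie in $\Gamma\act\wt\BrU$; no such argument is supplied. The paper avoids this entirely: it invokes a Ford fundamental domain and Poincar\'e's theorem on fundamental polyhedra to obtain the primitive parabolic vertex-cycle transformation $p$ generating $\Stab{\Gamma}{\point{j}{\eZ}}$, uses the density of $\Ired{j,\st}$ at the cusp point (cusp representatives are not endpoints of funnel intervals) to choose a geodesic with forward endpoint in $p\act\Ired{j,\st}$, and reads off both the closing of the cycle and the minimality from primitivity of $p$ in one stroke. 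If you import that input, your steps (1) and (2) essentially collapse into the paper's proof; without it, your proof does not get off the ground.

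There is also a secondary jump in step (3): from "$\point{j}{\eZ}$ is a boundary point of every intermediate set $h_r\act\Ired{j_r,\st}$" you conclude "$j_r\in A_{\eZ}$ and $h_r\act\point{j_r}{\eZ}=\point{j}{\eZ}$", but after the identity elimination $\Ired{j_r}$ need not be an interval, so a boundary point of $h_r\act\Ired{j_r}$ could a priori be an interior gap endpoint of $I_{j_r}$ translated, or the endpoint with the opposite label. This can be repaired (inductively one shows the squeezed point is an endpoint of the full interval $h_r\act I_{j_r}$, and an orientation argument shows the label $\eZ$ is preserved, which is also needed to match the pair $(j_{r+1},\ittrans{\BrU,r+1}(\nu))$ with the $\eZ$-tuple of $j_r$), but as written it is asserted rather than argued. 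Fixing it would make your step (3) a valid and somewhat more explicit complement to the paper's closing argument; the fatal issue remains step (1).
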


\begin{proof}
Let $\fund$ be a Ford fundamental domain for the action of~$\Gamma$ on~$\H$. 
Then each cusp~$\wh c$ of~$\Orbi$ has at least one 
representative~$c\in\widehat{\R}$ such that $c$ is an infinite vertex of~$\fund$ 
and each sufficiently small geodesic segment on~$\Orbi$ with endpoint~$\wh c$ 
(i.e., contained in a sufficiently small horoball centered at~$\wh c\,$) has a 
representing geodesic segment on~$\H$ with endpoint~$c$ that is contained 
in~$\overline\fund$ (see~\cite[IV.7F]{Lehner}). Consequently, there 
exists~$h\in\Gamma$ such that $\point{j}{\eZ}$ is an infinite vertex 
of~$h\act\fund$ and the geodesic segment~$\overline{\base{\Cs{j}}}$ 
intersects~$h\act\overline\fund$ in any small horoball centered 
at~$\point{j}{\eZ}$. By the Poincar\'e theorem on fundamental polyhedra, the 
(conjugate) primitive vertex cycle transformation of~$\point{j}{\eZ}$, say $p$, 
is parabolic, fixes $\point{j}{\eZ}$ and is a generator of the stabilizer group 
of~$\point{j}{\eZ}$ in~$\Gamma$. Thus, either $p\act\Ired{j,\st}\subseteq\Ired{j,\st}$ 
or $p^{-1}\act\Ired{j,\st}\subseteq\Ired{j,\st}$, where we may suppose the former 
without loss of generality. Since representatives of cusps cannot coincide with 
endpoints of representative intervals of funnels (see~\cite[IV.7E and 
IV.7G]{Lehner}),
\begin{align}\label{EQN:IsetjdenseinIj}
\forall\varepsilon>0\colon\Ired{j,\st}\cap\Ball{\wh\R,\varepsilon}{\point{j}{\eZ}}
\ne\varnothing\,.
\end{align}
Let $\nu\in\Cs{j}$ be such that 
$(\gamma_{\nu}(+\infty),\gamma_{\nu}(-\infty))\in p\act\Ired{j,\st}\times\Jset{j}$. 
The combination of 
Lemma~\ref{LEM:halfspaceFuture}\eqref{halfspaceFutureintersect} and 
Proposition~\ref{PROP:allinit} yields a unique element~$n\in\N$ only depending 
on~$j$ such that for all~$0\leq m\leq n$ we have 
\[
\gamma_{\nu}^{\prime}(\ittime{\BrU,m}(\nu))\in\ittrans{\BrU,1}
(\nu)\cdots\ittrans{\BrU,m}(\nu)\act\Cs{\itindex{\BrU,m}(\nu)}
\]
with $\ittrans{\BrU,1}(\nu)\cdots\ittrans{\BrU,n}(\nu)=p$ and 
$\itindex{\BrU,n}(\nu)=j$. Since $p$ fixes $\point{j}{\eZ}$ and 
\[
\ittrans{\BrU,m+1}(\nu)\act\Ired{\itindex{\BrU,m+1}(\nu),\st}\subseteq 
\Ired{\itindex{\BrU,m}(\nu),\st}
\]
for all $0\leq m < n$, \eqref{EQN:IsetjdenseinIj} implies 
\[
\ittrans{\BrU,1}(\nu)\cdots\ittrans{\BrU,m}(\nu)\act\eZ_{\itindex{\BrU,m}
(\nu)}=\point{j}{\eZ}
\]
for all such~$m$. Moreover, since $\overline{\base{\Cs{j}}}$ 
intersects~$h\act\overline\fund$ in any small horoball centered 
at~$\point{j}{\eZ}$, the part of~$\overline{\base{\Cs{j}}}$ sufficiently 
near~$\point{j}{\eZ}$ is contained in the fundamental domain~$h\act\fund$ or in 
its boundary. The fundamental domains neighboring~$h\act\fund$ 
at~$\point{j}{\eZ}$ are $ph\act\fund$ and $p^{-1}h\act\fund$. In turn, the 
indices $\itindex{\BrU,m}$ of the iterated intersection branches of~$\nu$ are 
not equal to~$j$ for $m\in\{1,\ldots, n-1\}$. Further, since 
$\ittrans{\BrU,m}(\nu)\in\Trans{}{\itindex{\BrU,m}(\nu)}{\itindex{\BrU,m+1}
(\nu)}$ for all $m=0,\dots,n$, we obtain
\[
\cyc{\eZ}{j}_{m}=\psi_{\eZ}^{m}(j)=\itindex{\BrU,m}(\nu)\,.
\]
Set $\nu_r\coloneqq p^{r-1}\act\nu$ for $r\geq 1$. Then we find 
$\itindex{\BrU,m}(\nu_r)=\itindex{\BrU,m}(\nu)$ for all~$m\in\N_0$, and the 
translates $\ittrans{\BrU,m}(\nu_r)\act\Ired{\itindex{\BrU,m}(\nu_r),\st}$ fulfill 
the same conditions as before. This yields the periodicity of $\cyc{\eZ}{j}$ 
with period length~$n$, which is indeed the minimal period length as seen from 
the generator properties of~$p$. This completes the proof.
\end{proof}

\begin{figure}[h]
\begin{tikzpicture}[scale=10]
\foreach \y in {.09,.06,.035}{
	\tikzmath{\z = 2.2*\y;
	}
	\fill[color=lightgray!50] (.9,0) -- +(\z,0) arc (0:180:1.2*\y);
	\fill[color=white] (.9,0) --+(2*\y,0) arc (0:180:\y);
}
\foreach \y in {.09,.06,.035}{
	\tikzmath{\m = .1-\y;
	}
	\fill[color=lightgray!50] (\m,0) -- +(1.2*\y,0) arc (0:180:1.2*\y);
	\fill[color=white] (\m,0) --+(\y,0) arc (0:180:\y);
}
\fill[color=lightgray] (.5,0) -- +(0:.4) arc (0:180:.4);
\fill[color=white] (.5,0)-- +(0:.38) arc (0:180:.38);
\draw[style=thick] (0:.9) arc (0:180:.4);
 \foreach \x/\y in {.1/$\point{j}{\eX}$,.9/$\point{j}{\eY}$}
    \draw (\x,0.00) -- (\x,-0.02) node [below] {\y};
\coordinate [label=below:$\color{gray}\Cs{j}$] (C) at (.5,.38);
\foreach \y in {.09,.06,.035}{
	\tikzmath{\z = 2*\y;
	}
	\fill[color=lightgray!50] (.1,0) -- +(\z,0) arc (0:180:\y);
	\fill[color=white] (.1,0) --+(1.8*\y,0) arc (0:180:.8*\y);
}
\foreach \y in {.09,.06,.035}{
	\tikzmath{\m = .9-\y;
	}
	\fill[color=lightgray!50] (\m,0) -- +(\y,0) arc (0:180:\y);
	\fill[color=white] (\m,0) --+(.8*\y,0) arc (0:180:.8*\y);
}
\foreach \x in {.1,.9}{
	\foreach \y in {.09,.06,.035}{
		\tikzmath{\z = \x + 2*\y;
		}
		\draw (\z,0) arc (0:180:\y);
		\draw (\x,0) arc (0:180:\y);
	}
}
\draw[style=thick] (-.1,0) -- (1.1,0);
\coordinate [label=$\Plussp{j}$] (H+) at (.5,.17);
\coordinate [label=$\Minussp{j}$] (H-) at (.9,.32);
\draw[->,dashed] (.1,0) arc (270:-90:.1);
\draw[->,dashed] (.9,0) arc (-90:270:.1);
\end{tikzpicture}
\caption[disc model]{Branch cycles for a single branch $\Cs{j}$. The 
$\eX$-cycles circle clockwise, $\eY$-cycles circle 
counterclockwise.}\label{FIG:diskmodel}
\end{figure}
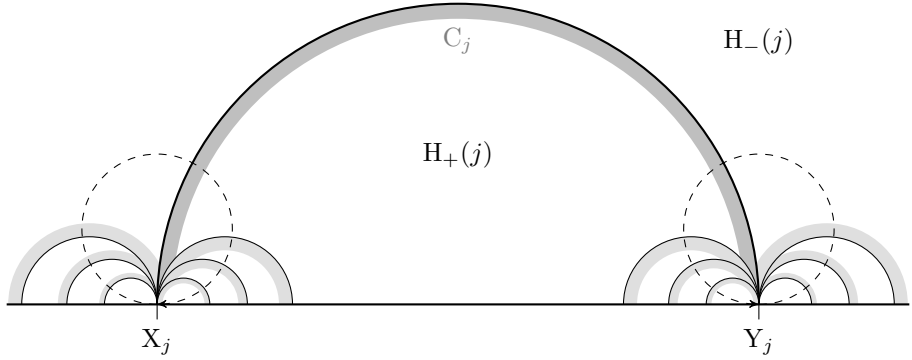

For any $\eZ\in\{\eX,\eY\}$, the set~$A_\eZ$ from Definition~\ref{DEF:xytuple} 
decomposes into finite cycles under the map~$\psi_\eZ$, as shown in 
Lemma~\ref{LEM:periodiccycles}. For~$j\in A_\eZ$, we denote the (minimal) period 
length of the $\psi_\eZ$-cycle of~$j$ by~$\sigma_{\eZ}(j)$, thus 
\begin{align*}
\sigma_{\eZ}(j) & = \min\defset{r\in\N}{\psi_{\eZ}^r(j)=j}
\\
& = \min\defset{n\in\N}{\exists\nu\in\Cs{j}\colon\itindex{\BrU,n}
(\nu)=j\land\ittrans{\BrU,1}(\nu)\cdots\ittrans{\BrU,n}(\nu)\act\point{j}{\eZ}
=\point{j}{\eZ}}\,,
\end{align*}
\index[symbols]{sZ@$\sigma_\eZ$}%
and we set 
\begin{equation}\label{eq:def_uZ}
\cyctrans{j}{\eZ}\coloneqq\cycnext{\eZ}(j)\cycnext{\eZ}(\psi_{\eZ}(j))
\cdots\cycnext{\eZ}(\psi_{\eZ}^{\sigma_{\eZ}(j)-1}(j))\,, 
\end{equation}
\index[symbols]{uZ@$\cyctrans{j}{\eZ}$}%
with $\cycnext{\eZ}$ being the map from Definition~\ref{DEF:xytuple}. As seen in 
the proof of Lemma~\ref{LEM:periodiccycles}, the element~$\cyctrans{j}{\eZ}$ is 
a generator of the stabilizer group of~$\point{j}{\eZ}$ in~$\Gamma$ and hence 
parabolic. In particular, $\cyctrans{j}{\eZ}\act\point{j}{\eZ} = 
\point{j}{\eZ}$. The latter can also be deduced immediately from the property 
that $\cycnext{\eZ}(k)^{-1}\act\point{k}{\eZ}=\point{\psi_{\eZ}(k)}{\eZ}$ for 
all $k\in A_{\eZ}$ by observing that
\begin{align*}
\cyctrans{j}{\eZ}^{-1}\act\point{j}{\eZ}&=\cycnext{\eZ}(\psi_{\eZ}^{\sigma_{\eZ}
(j)-1}(j))^{-1}\cdots\cycnext{\eZ}(j)^{-1}\act\point{j}{\eZ}=\cycnext{\eZ}(\psi_
{\eZ}
^{\sigma_{\eZ}(j)-1}(j))^{-1}\act\point{\psi_{\eZ}^{\sigma_{\eZ}(j)-1}(j)}{
\eZ}\\
&=\point{\psi_{\eZ}\left(\psi_{\eZ}^{\sigma_{\eZ}(j)-1}(j)\right)}{\eZ}=\point{
\psi_{\eZ}^{\sigma_{\eZ}(j)}(j)}{\eZ}=\point{j}{\eZ}\,.
\end{align*}
For any~$j\in A$ we set
\begin{align*}
\cycset{j}{\eZ}&\coloneqq\left\{
\begin{array}{cl}
\defset{\psi_{\eZ}^r(j)}{ r\in\{0,\ldots,\sigma_{\eZ}(j)-1\}} &\qquad \text{if 
$j\in 
A_{\eZ}$},
\\
\varnothing&\qquad \text{otherwise}.
\end{array}
\right.
\end{align*}
\index[symbols]{cyca@$\cycset{j}{\eZ}$}%
Thus, for $j\in A_\eZ$, the set~$\cycset{j}{\eZ}$ contains exactly the elements 
of the $\eZ$-cycle of~$j$. Clearly, for any $k\in\cycset{j}{\eZ}$, the 
sequence~$\cyc{\eZ}{k}$ is a shift of the sequence~$\cyc{\eZ}{j}$, the 
transformation~$\cyctrans{k}{\eZ}$ is conjugate to~$\cyctrans{j}{\eZ}$, and 
$\cycset{k}{\eZ}=\cycset{j}{\eZ}$. From another point of view, the sets 
$\cycset{j}{\eZ}$, $j\in A_\eZ$, are the equivalence classes for the equivalence 
relation
\[
j\sim k\quad\logeq\quad\exists r\in\N\colon\psi_{\eZ}^r(j)=k
\]
\index[symbols]{1005@$\sim$}%
on $A_{\eZ}$. The number of equivalence classes depends on the number of cusps 
of~$\Orbi$. 

\begin{example}\label{EX:G3acc}
Recall the group~$\Gamma_{\lambda}$ and its return graph~$\ReturnGraph{0}$ for its weakly non-collapsing set of branches~$\BrS'$ (Figure~\ref{FIG:G3returngraphwnc}) from 
Example~\ref{EX:G3Graph} as well as its reduced return graph~$\ReturnGraph{6}$ for the set of branches~$\{\Cs{2},\Cs{7}\}$
from Example~\ref{EX:G3Graphred}.
As we have seen, both sets are non-collapsing.
There is one $\eX$-cycle and one $\eY$-cycle in~$\ReturnGraph{6}$ (Figure~\ref{FIG:G3redreturn}) given by
\[
7\edge{t_\lambda^{-1}}7\qquad\text{and}\qquad 2\edge{t_\lambda}2
\]
respectively.
For the reduced set of branches~$\wt\BrS$ (Figure~\ref{FIG:G3altredreturn}) we retrieve these cycles as
\[
7\edge{h}3\edge{h}8\edge{ht_\lambda^{-1}}7\qquad\text{and}\qquad 5\edge{t_\lambda}5
\]
respectively.
Hence, in the former setting we obtain the sets
\[
A_{\eX}=\{7\}\qquad\text{and}\qquad A_{\eY}=\{2\}
\]
as well as the transformations
\[
\cyctrans{7}{\eX}=\cycnext{\eX}(7)=t_\lambda^{-1}\qquad\text{and}\qquad\cyctrans{2}{\eY}=\cycnext{\eY}(2)=t_\lambda\,.
\]
On the other hand, in the latter setting we obtain
\[
A_{\eX}=\{3,7,8\}\qquad\text{and}\qquad A_{\eY}=\{5\}
\]
and the transformations
\[
\cycnext{\eX}(3)=\cycnext{\eX}(7)=h\,,\quad\cycnext{\eX}(8)=ht_{\lambda}^{-1}\,,\qquad\text{and}\qquad\cyctrans{5}{\eY}=\cycnext{\eY}(5)=t_\lambda\,.
\]
But since~$h^3=\id$ we again find
\[
\cyctrans{7}{\eX}=\cycnext{\eX}(7)\cdot\cycnext{\eX}(3)\cdot\cycnext{\eX}(8)=h\cdot h\cdot ht_\lambda^{-1}=t_\lambda^{-1}\,,
\]
but
\[
\cyctrans{3}{\eX}=h^{-1}t_\lambda^{-1}h\qquad\text{and}\qquad\cyctrans{8}{\eX}=ht_\lambda^{-1}h^{-1}\,.
\]
\end{example}

We now introduce the acceleration procedure mentioned above.
Again, the process is presented in geometric terms, by a deletion of certain subsets of unit tangent vectors from the reduced branches.
The emerging system gives rise to a ``faster'' symbolic 
dynamics arising from a new cross section for the geodesic flow (see 
Proposition~\ref{PROP:accsystemcrosssection} below).
The emphasis lies on branches that, even after the identity elimination, are still attached to cuspidal points in the sense of Definition~\ref{DEF:xytuple}.
We therefore call the procedure \emph{cuspidal acceleration} or \emph{cuspidal acceleration algorithm}.
\index[defs]{cuspidal acceleration}%

\begin{defi}[Cuspidal acceleration]\label{DEF:cuspacc}
For $\eZ\in\{\eX,\eY\}$ define the sets
\index[symbols]{Kz@$K_{\eZ}(j)$}%
\index[symbols]{Mz@$M_{\eZ}(j)$}%
\begin{align*}
K_{\eZ}(j)&\coloneqq\left\{
\begin{array}{cl}
\defset{\nu\in\Cs{j,\st}}{\gamma_\nu(+\infty)\in\cycnext{\eZ}(j)\act\,\Ired{\psi_{
\eZ}(j),\st}}&\text{if $j\in A_{\eZ}$},\\
\varnothing&\text{otherwise,}
\end{array}\right.
\intertext{and}
M_{\eZ}(j)&\coloneqq\left\{
\begin{array}{cl}
\defset{\nu\in\Cs{j,\st}}{\gamma_\nu(-\infty)\in\cycnext{\eZ}(\psi_{\eZ}^{-1}
(j))^{
-1}\act\Jset{\psi_{\eZ}^{-1}(j)}}&\text{if }j\in A_{\eZ},\\
\varnothing&\text{otherwise.}
\end{array}\right.
\end{align*}
We call~$K_{\eZ}(j)$ the \emph{forward} and~$M_{\eZ}(j)$ the 
\emph{backward}~$\eZ$-\emph{elimination set} of~$j$.
\index[defs]{forward elimination set}%
\index[defs]{backward elimination set}%
\index[defs]{elimination set!forward}%
\index[defs]{elimination set!backward}%
For $j\in A$ we call
\[
\Cs{j,\acc}\coloneqq\bigcap_{\eZ\in\{\eX,\eY\}}\Cs{j,\st}\!\setminus\bigl(K_{\eZ
}
(j)\cap M_{\eZ}(j)\bigr)
\]
the \emph{acceleration} of $\Cs{j}$
\index[defs]{acceleration}%
\index[symbols]{Caccj@$\Cs{j,\acc}$}%
and
\[
\BrU_{\acc}\coloneqq\bigcup_{j\in A}\Cs{j,\acc}
\]
the \emph{acceleration} of $\BrU$.
\index[defs]{acceleration}%
\index[symbols]{Cacc@$\BrU_{\acc}$}%
\end{defi}

\begin{remark}\label{REM:howtoacc}
We comment on the motivation for Definition~\ref{DEF:cuspacc}. Let $\wh\gamma$ 
be a geodesic on~$\Orbi$ and $\wh c$ a cusp of~$\Orbi$. We say that the set of 
branches~$\BrS$ \emph{detects that $\wh\gamma$ travels towards~$\wh c$} if 
there 
exists a representing geodesic~$\gamma$ of~$\wh\gamma$ on~$\H$ and 
$\eZ\in\{\eX,\eY\}$ and $j\in A_\eZ$ such that $\eZ_j$ represents~$\wh c$, the 
geodesic~$\gamma$ intersects~$\BrU_j$ at some time, say~$t_0$, and 
$\gamma(+\infty) \in \cycnext{\eZ}(j)\act\Ired{\psi_{\eZ}(j)}$. In such a case, 
the next intersection (after time~$t_0$) between~$\gamma$ and~$\Gamma\act\BrU$ 
is on~$\cycnext{\eZ}(j)\act\BrU_{\psi_\eZ(j)}$ at, say, time~$t_1$. Further 
next and previous intersections of~$\gamma$ and~$\Gamma\act\BrU$ might be 
``near''~$\eZ_j$, thus given by the $\eZ$-cycle of~$j$. More precisely, it might 
happen that $\gamma$ intersects 
$\cycnext{\eZ}(\psi_{\eZ}^{-1}(j))^{-1}\act\BrU_{\psi_{\eZ}^{-1}(j)}$, in which 
case the previous intersection of~$\gamma$ and~$\Gamma\act\BrU$ is indeed 
on~$\cycnext{\eZ}(\psi_{\eZ}^{-1}(j))^{-1}\act\BrU_{\psi_{\eZ}^{-1}(j)}$, as can 
easily be seen from the definition of the  $\eZ$-cycle of~$j$. Likewise, the 
next intersection after time~$t_1$ might be 
on~$\cycnext{\eZ}(j)\cycnext{\eZ}(\psi_\eZ(j))\act\BrU_{\psi_\eZ^2(j)}$. Let us 
suppose that $\gamma$ intersects 
\begin{align}\label{eq:intersect_sequence}
 & 
\cycnext{\eZ}(\psi_\eZ^{-1}(j))^{-1}\cycnext{\eZ}(\psi_\eZ^{-2}(j))^{-1}\cdots 
\cycnext{\eZ}(\psi_\eZ^{-k_1}(j))^{-1}\act\BrU_{\psi_\eZ^{-k_1}(j)}\,,\  
\ldots\,, 
 \\ \nonumber
 &  \cycnext{\eZ}(\psi_\eZ^{-1}(j))^{-1}\act \BrU_{\psi_\eZ^{-1}(j)}\,,  
\BrU_j\,, \cycnext{\eZ}(j)\act \BrU_{\psi_\eZ(j)}\,,\  \ldots\,, 
 \\ \nonumber 
 & \cycnext{\eZ}(j) \cycnext{\eZ}(\psi_\eZ(j))\cdots 
\cycnext{\eZ}(\psi_\eZ^{k_2-1}(j))\act\BrU_{\psi_\eZ^{k_2}(j)}
\end{align}
with $k_1,k_2\in\N_0$ maximal. Large values for~$k_1,k_2$ indicate 
that~$\wh\gamma$ stays ``near'' the cusp~$\wh c$ for a rather long time, and 
larger values for~$k_1,k_2$ translate to deeper cusp excursions. We call the 
part of~$\wh\gamma$ corresponding to~\eqref{eq:intersect_sequence} a 
\emph{maximal cusp excursion} into the cusp region of~$\wh c$ or a 
\emph{sojourn} of~$\wh\gamma$ near~$\wh c$. We emphasize that $\wh\gamma$ can 
experience several disjoint sojourns at the same cusp, each one separated from 
the others by some time spend ``far away'' from the cusp. 

Each sojourn of~$\wh\gamma$ near~$\wh c$ typically contains several windings 
around the cusp (region of)~$\wh c$, expressed by a high power of the parabolic 
element~$\cyctrans{j}{\eZ}$ from~\eqref{eq:def_uZ}, as explained further below. 
Using the (reduced) set of branches~$\BrS$ for the coding of the geodesic flow on~$\Orbi$, 
as done for the development of slow transfer operators, leads to separate coding 
of each single cusp winding (and also of all the intermediate intersections). It 
is exactly this detailed (``slow'') coding of cusp windings that cause slow 
transfer operators typically to be non-nuclear. To enforce nuclearity, the idea 
is to encode each sojourn by a single step in the coding (``fast'' coding) or, 
in other words, to induce on the cusp excursions, at the expense of constructing 
an infinitely branched discrete dynamical system (and a symbolic dynamics with 
an infinite alphabet). Technically, this acceleration will be achieved by 
omitting those tangent vectors from the branches in~$\BrS$ that are internal to 
a $\eZ$-cycle. 

To be more precise, we now express the intersection properties 
in~\eqref{eq:intersect_sequence} in terms of the endpoints of~$\gamma$. To 
facilitate notation, we set, for any $k\in\cycset{j}{\eZ}$, 
\begin{equation}\label{eq:def_intergroup}
\cycnext{\eZ}(j,k)\coloneqq\cycnext{\eZ}(j)\cycnext{\eZ}(\psi_{\eZ}(j))
\cdots\cycnext{\eZ}(\psi_{\eZ}^{r-1}(j))
\end{equation}
\index[symbols]{gz@$\cycnext{\eZ}(j,k)$}%
with $r\coloneqq\min\defset{\ell\in\N}{\psi_{\eZ}^\ell(j)=k}$. Then 
\begin{eqnarray*}
\cycnext{\eZ}(j,\psi_{\eZ}(j))=\cycnext{\eZ}(j)&\text{and}&\cycnext{\eZ}({j,j})
=\cyctrans{j}{\eZ}.
\end{eqnarray*}
If $\eZ=\eX$, then the interval~$\cycnext{\eX}({j})\act 
I_{\psi_{\eX}(j)}$ decomposes as
\[
\bigcup_{n\in\N_0}\bigcup_{k\in\cycset{j}{\eX}}\left(\cyctrans{j}{
\eX}^n\cycnext{\eX}({j,k})\cycnext{\eX}(k)\act\point{\psi_{\eX}(k)}{\eY},\ 
\cyctrans
{j}{\eX}^n\cycnext{\eX}({j,k})\act\point{k}{\eY}\right)
\]
and thus the set~$\cycnext{\eX}({j})\act\,
\Ired{\psi_{\eX}(j)}$ decomposes as
\[
\bigcup_{n\in\N_0}\bigcup_{k\in\cycset{j}{\eX}}\left(\cyctrans{j}{
\eX}^n\cycnext{\eX}({j,k})\cycnext{\eX}(k)\act\point{\psi_{\eX}(k)}{\eY},\ 
\cyctrans
{j}{\eX}^n\cycnext{\eX}({j,k})\act\point{k}{\eY}\right)\cap\cyctrans
{j}{\eX}^n\cycnext{\eX}({j,k})\act\,\Ired{k}\,.
\]
On the other hand, the interval~$\cycnext{\eX}(\psi_{\eX}^{-1}(j))^{-1}\act 
J_{\psi_\eX^{-1}(j)}$ decomposes as 
\[
\bigcup_{n\in\N_0}\bigcup_{k\in\cycset{j}{\eX}}\left( 
\cyctrans{j}{\eX}^{-n}\cycnext{\eX}(k,j)^{-1}\act\point{k}{\eY}, 
\cyctrans{j}{\eX}^{-n}\cycnext{\eX}({k,j})^{-1}\cycnext{\eX}(\psi_\eX^{-1}(k))^{
-1}\act\point{\psi_{\eX}^{-1}(k)}{\eY} \right)\,.
\]
If $\eZ=\eY$, then the decomposition is analogous, with the roles of $\eX$ and 
$\eY$ interchanged and the order of the interval boundaries switched. See 
Figure~\ref{FIG:XYcyc}. For any $n\in\N_0$, $k\in \cycset{j}{\eZ}$, 
$\eZ\in\{\eX,\eY\}$ we set 
\index[symbols]{DnX@$D_{n,\eX}^+(j,k)$}%
\index[symbols]{DnY@$D_{n,\eY}^+(j,k)$}%
\index[symbols]{DnX@$D_{n,\eX}^-(j,k)$}%
\index[symbols]{DnY@$D_{n,\eY}^-(j,k)$}%
\begin{align*}
D_{n,\eX}^+(j,k) & \coloneqq \left(\cyctrans{j}{
\eX}^n\cycnext{\eX}({j,k})\cycnext{\eX}(k)\act\point{\psi_{\eX}(k)}{\eY},\ 
\cyctrans
{j}{\eX}^n\cycnext{\eX}({j,k})\act\point{k}{\eY}\right)
\intertext{and}
D_{n,\eY}^+(j,k) & \coloneqq 
\left(\cyctrans{j}{\eY}^n\cycnext{\eY}({j,k})\act\point{k}{\eX},\ \cyctrans{j}{
\eY}^n\cycnext{\eY}({j,k})\cycnext{\eY}(k)\act\point{\psi_{\eY}(k)}{\eX}\right)
\intertext{as well as}
D_{n,\eX}^-(j,k) & \coloneqq \left( 
\cyctrans{j}{\eX}^{-n}\cycnext{\eX}(k,j)^{-1}\act\point{k}{\eY}, 
\cyctrans{j}{\eX}^{-n}\cycnext{\eX}({k,j})^{-1}\cycnext{\eX}(\psi_\eX^{-1}(k))^{
-1}\act\point{\psi_{\eX}^{-1}(k)}{\eY} \right)
\intertext{and}
D_{n,\eY}^-(j,k) & \coloneqq \left( 
\cyctrans{j}{\eY}^{-n}\cycnext{\eY}({k,j})^{-1}\cycnext{\eY}(\psi_\eY^{-1}(k))^{
-1}\act\point{\psi_{\eY}^{-1}(k)}{\eX}, 
\cyctrans{j}{\eY}^{-n}\cycnext{\eY}(k,j)^{-1}\act\point{k}{\eX} \right)\,.
\end{align*}
Based on these intervals we further set
\begin{align*}
\mc D_{n,\eX}^+(j,k) & \coloneqq D_{n,\eX}^+(j,k) \cap\cyctrans
{j}{\eX}^n\cycnext{\eX}({j,k})\act\,\Ired{k}
\intertext{and}
\mc D_{n,\eY}^+(j,k) & \coloneqq D_{n,\eY}^+(j,k)\cap\cyctrans{j}{\eY}^n\cycnext{\eY}({j,k})\act\,\Ired{k}\,.
\end{align*}
\index[symbols]{DnX@$\mc{D}_{n,\eX}^+(j,k)$}%
\index[symbols]{DnY@$\mc{D}_{n,\eY}^+(j,k)$}%
Then the geodesic~$\gamma$ (with the properties as 
in~\eqref{eq:intersect_sequence}) satisfies $\gamma(+\infty)\in 
\mc D_{n,\eZ}^+(j,k)$ if and only if $k_2 = n + r$ with $r$ as 
in~\eqref{eq:def_intergroup}. It satisfies $\gamma(-\infty) \in 
D_{m,\eZ}^-(j,k)$ if and only if $k_1 = m + s$ with $s = 
\min\defset{\ell\in\N}{\psi_{\eZ}^\ell(k) = j}$. The sum of the values of~$n$ 
for~$k_1$ and~$k_2$ is the number of full windings around the cusp (region 
of)~$\wh c$ of this sojourn of~$\wh\gamma$ near~$\wh c$.

For the acceleration we now want to eliminate from the branches all those 
tangent vectors that cause intersections within a sojourn, or, in other words, 
within a $\eZ$-cycle. This elimination process is of a local nature; we only 
need to ask for the nature of the next and the previous intersection, and not of 
any further intersections. For the branch~$\BrU_j$ it means that we need to 
eliminate all those vectors~$\nu\in\BrU_j$ for which $\gamma_\nu(+\infty) \in 
\cycnext{\eZ}(j)\act\,\Ired{\psi_\eZ(j),\st}$ and $\gamma_\nu(-\infty)\in 
\cycnext{\eZ}(\psi_\eZ^{-1}(j))^{-1}\act\Jset{\psi_\eZ^{-1}(j)}$.
Thus, we need to eliminate from~$\BrU_j$ exactly the set $K_\eZ(j)\cap M_\eZ(j)$ 
for the acceleration. 

In Proposition~\ref{PROP:accsystem}\eqref{accsystem:disjointremoves} below we 
show that the sets $K_{\eX}(j)$ and $K_{\eY}(j)$ as well as $M_{\eX}(j)$ and 
$M_{\eY}(j)$ do not intersect, thus, there is no interference 
between different cycles during the elimination or acceleration procedure. In 
the remainder of this section we show that this heuristics on the necessary 
modifications of the set of branches indeed leads to the desired results.
\end{remark}

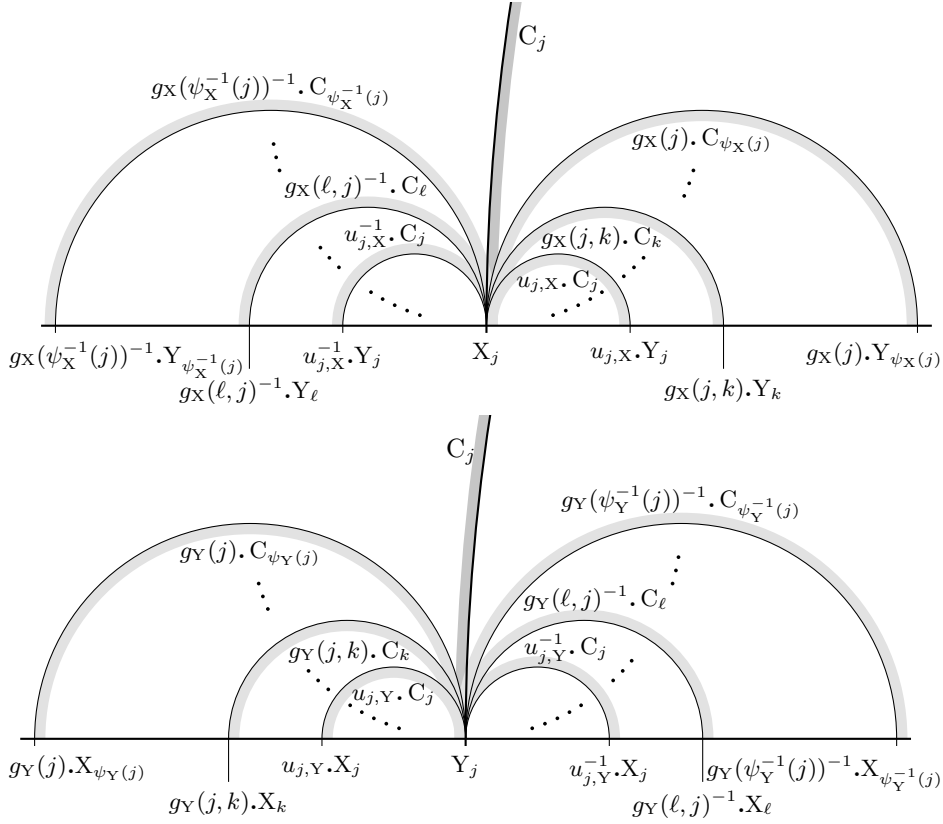
\begin{figure}[h]
\begin{tikzpicture}[scale=.955]
\begin{scope}
\clip (-6.2,-.06) rectangle (6.2,4.5);
\fill[color=lightgray!90] (0,0) arc (180:0:30) -- (59.85,0) arc (0:180:29.85) -- 
cycle;  
\foreach \x in {6,3.3,2}{
	\fill[color=lightgray!45] (0,0) arc (180:0:\x/2) -- (\x - .15,0) arc 
(0:180:\x/2 - .15) -- cycle;
	\fill[color=lightgray!45] (0,0) arc (0:180:\x/2) -- (-\x - .15,0) arc 
(180:0:\x/2 + .15) -- cycle;
}
\foreach \x in {6,3.3,2}{
\draw (0,0) arc (180:0:\x/2);
\draw (0,0) arc (0:180:\x/2);
}
\draw[style=thick] (0,0) arc (180:160:30);
\foreach \x in {1.87,1.89,1.91,1.76,1.74,1.72,1.68,1.66,1.64,1.62,1.6} {
	\pgfmathsetmacro{\Cos}{cos(\x*pi r)}%
	\pgfmathsetmacro{\Sin}{sin(\x*pi r)}%
	\node at (3*\Cos,3*\Sin+3)[circle,fill,inner sep=.5pt]{};
	}
\foreach \x in {1.95,1.93,1.91,1.78,1.76,1.74,1.68,1.66,1.64,1.62,1.6} {	
	\pgfmathsetmacro{\Cos}{cos(\x*pi r)}%
	\pgfmathsetmacro{\Sin}{sin(\x*pi r)}%
	\node at (-3*\Cos,3*\Sin+3)[circle,fill,inner sep=.5pt]{};
	}
\end{scope}
\foreach \x/\y/\c in
{3/2.9/$\cycnext{\eX}(j)\act\Cs{\psi_{\eX}(j)}$,
1.61/1.51/$\cycnext{\eX}({j,k})\act\Cs{k}$,
1/.9/$\cyctrans{j}{\eX}\act\Cs{j}$,
-1.4/1.6/$\cyctrans{j}{\eX}^{-1}\act\Cs{j}$,
-1.81/2.26/$\cycnext{\eX}({\ell,j})^{-1}\act\Cs{\ell}$,
-3/3.65/$\cycnext{\eX}(\psi_{\eX}^{-1}(j))^{-1}\act\Cs{\psi_{\eX}^{-1}(j)}$}{
	\coordinate [label=below:\small{\c}] (C2\x) at (\x,\y);
}
\draw[style=thick] (0,0) -- (0,-.1) node [below] {\small{$\eX_j$}};
\foreach \x/\z/\y in
{-3.3/.6/$\cycnext{\eX}({\ell,j})^{-1}\act\eY_{\ell}$,
-2/.1/$\cyctrans{j}{\eX}^{-1}\act\eY_{j}$,
2/.1/$\cyctrans{j}{\eX}\act\eY_{j}$,
3.3/.6/$\cycnext{\eX}({j,k})\act\eY_{k}$}{
	\draw (\x,0) -- (\x,-\z) node [below] {\small{\y}};
}
\foreach \x/\y in
{-5/$\cycnext{\eX}(\psi_{\eX}^{-1}(j))^{-1}\act\eY_{\psi_{\eX}^{-1}(j)}$,
5.4/$\cycnext{\eX}(j)\act\eY_{\psi_{\eX}(j)}$}{
	\draw[color=white] (\x,0) -- (\x,-.1) node [below] 
{\color{black}\small{\y}};
}
\draw (-6,0) -- (-6,-.1);
\draw (6,0) -- (6,-.1);
\draw[style=thick] (-6.2,0) -- (6.2,0);
\coordinate [label=right:$\Cs{j}$] (CS) at (.31,4);
\end{tikzpicture}

\begin{tikzpicture}[scale=.955]
\begin{scope}
\clip (-6.2,-.06) rectangle (6.2,4.5);
\fill[color=lightgray!90] (0,0) arc (180:0:30) -- (60.15,0) arc (0:180:30.15) -- 
cycle;  
\foreach \x in {6,3.3,2}{
	\fill[color=lightgray!45] (0,0) arc (180:0:\x/2) -- (\x + .15,0) arc 
(0:180:\x/2 + .15) -- cycle;
	\fill[color=lightgray!45] (0,0) arc (0:180:\x/2) -- (-\x + .15,0) arc 
(180:0:\x/2 - .15) -- cycle;
}
\foreach \x in {6,3.3,2}{
	\draw (0,0) arc (180:0:\x/2);
	\draw (0,0) arc (0:180:\x/2);
}
\draw[style=thick] (0,0) arc (180:160:30);
\foreach \x in {1.95,1.93,1.91,1.78,1.76,1.74,1.68,1.66,1.64,1.62,1.6} {
	\pgfmathsetmacro{\Cos}{cos(\x*pi r)}%
	\pgfmathsetmacro{\Sin}{sin(\x*pi r)}%
	\node at (3*\Cos,3*\Sin+3)[circle,fill,inner sep=.5pt]{};
	}
\foreach \x in {1.87,1.89,1.91,1.76,1.74,1.72,1.68,1.66,1.64,1.62,1.6} {	
	\pgfmathsetmacro{\Cos}{cos(\x*pi r)}%
	\pgfmathsetmacro{\Sin}{sin(\x*pi r)}%
	\node at (-3*\Cos,3*\Sin+3)[circle,fill,inner sep=.5pt]{};
	}
\end{scope}
\foreach \x/\y/\c in
{-3/2.9/$\cycnext{\eY}(j)\act\Cs{\psi_{\eY}(j)}$,
-1.61/1.51/$\cycnext{\eY}({j,k})\act\Cs{k}$,
-1/.9/$\cyctrans{j}{\eY}\act\Cs{j}$,
1.4/1.6/$\cyctrans{j}{\eY}^{-1}\act\Cs{j}$,
1.81/2.26/$\cycnext{\eY}({\ell,j})^{-1}\act\Cs{\ell}$,
3/3.65/$\cycnext{\eY}(\psi_{\eY}^{-1}(j))^{-1}\act\Cs{\psi_{\eY}^{-1}(j)}$}{
	\coordinate [label=below:\small{\c}] (C2\x) at (\x,\y);
}
\draw[style=thick] (0,0) -- (0,-.1) node [below] {\small{$\eY_j$}};
\foreach \x/\z/\y in
{3.3/.6/$\cycnext{\eY}({\ell,j})^{-1}\act\eX_{\ell}$,
2/.1/$\cyctrans{j}{\eY}^{-1}\act\eX_{j}$,
-2/.1/$\cyctrans{j}{\eY}\act\eX_{j}$,
-3.3/.6/$\cycnext{\eY}({j,k})\act\eX_{k}$}{
	\draw (\x,0) -- (\x,-\z) node [below] {\small{\y}};
}
\foreach \x/\y in
{5/$\cycnext{\eY}(\psi_{\eY}^{-1}(j))^{-1}\act\eX_{\psi_{\eY}^{-1}(j)}$,
-5.4/$\cycnext{\eY}(j)\act\eX_{\psi_{\eY}(j)}$}{
	\draw[color=white] (\x,0) -- (\x,-.1) node [below] 
{\color{black}\small{\y}};
}
\draw (-6,0) -- (-6,-.1);
\draw (6,0) -- (6,-.1);
\draw[style=thick] (-6.2,0) -- (6.2,0);
\coordinate [label=right:$\Cs{j}$] (CS) at (-.4,4);
\end{tikzpicture}
\caption[branch cycle]{The situation for an~$\eX$-cycle (above) and 
a~$\eY$-cycle (below).}\label{FIG:XYcyc}
\end{figure}

\begin{figure}[h]
\begin{tikzpicture}
\def \shi {-5.8}
\begin{scope}
\clip (-6.2,-.06) rectangle (6.2,5.8);
\fill[color=lightgray!90] (0+\shi,0) arc (180:0:30) -- (59.85+\shi,0) arc 
(0:180:29.85) -- cycle;  
\foreach \x in {11.5,9.5,7.5,3,1.5}{
	\fill[color=lightgray!45] (0+\shi,0) arc (180:0:\x/2) -- (\x - .15+\shi,0) 
arc (0:180:\x/2 - .15) -- cycle;
	\fill[color=lightgray!45] (0+\shi,0) arc (0:180:\x/2) -- (-\x - .15+\shi,0) 
arc (180:0:\x/2 + .15) -- cycle;
}
\foreach \x in {11.5,9.5,7.5,3,1.5}{
\draw (0+\shi,0) arc (180:0:\x/2);
\draw (0+\shi,0) arc (0:180:\x/2);
}
\draw[style=thick] (0+\shi,0) arc (180:160:30);
\foreach \x in {2.1,2.085,2.07,2.03,2.015,2,1.69,1.67,1.65,1.6,1.58,1.56,1.54} {
	\pgfmathsetmacro{\Cos}{cos(\x*pi r)}%
	\pgfmathsetmacro{\Sin}{sin(\x*pi r)}%
	\node at (4*\Cos+\shi,4*\Sin+4)[circle,fill,inner sep=.5pt]{};
	}
\draw[color=gray,->] (-4.7+\shi,0) arc (180:0:4.7);	
\end{scope}
\foreach \x/\y/\w/\c in
{5.75/5.7/5.7/$\cycnext{\eX}(j)\act\Cs{\psi_{\eX}(j)}$,
4.75/4.7/4.7/$\cyctrans{j}{\eX}^n\act\Cs{j}$,
5.3/3.3/2.7/$\cyctrans{j}{\eX}^n\cycnext{\eX}({j,k})\act\Cs{k}$}{
	\draw (\x+\shi,\y) -- (\x+\shi,\w) node [below] {\small{\c}};
}
\foreach \x/\y/\w/\c in
{2.5/1/1.5/$\cyctrans{j}{\eX}^n\cycnext{\eX}({j,k})\cycnext{\eX}(k)\act\Cs{\psi_
{\eX}(k)}$,
1.3/.35/.65/$\cyctrans{j}{\eX}^{n+1}\act\Cs{j}$}{
	\draw (\x+\shi,\y) -- (\x+\shi,\w) node [above] {\small{\c}};
}
\draw[dashed] (3+\shi,0) -- (3+\shi,-.55);
\draw[dashed] (7.5+\shi,0) -- (7.5+\shi,-.55);
\draw [decorate,decoration={brace,amplitude=10pt,mirror}]
(3+\shi,-.55) -- (7.5+\shi,-.55) node [below,midway,yshift=-10pt]
{$D_{n,\eX}^+(j,k)$};
\draw[style=thick] (0+\shi,0) -- (0+\shi,-.1) node [below] {\small{$\eX_j$}};
\draw[color=gray] (4.7+\shi,0) -- (4.7+\shi,-.1) node [below] 
{\small{$\gamma(+\infty)$}};
\draw[style=thick] (-6.2,0) -- (6.2,0);
\coordinate [label=right:$\Cs{j}$] (CS) at (.5+\shi,5);
\coordinate [label=right:$\gamma$] (gam) at (1.2+\shi,4.7);
\end{tikzpicture}

\begin{tikzpicture}
\def \shi {5.8}
\begin{scope}
\clip (-6.2,-.06) rectangle (6.2,6);
\fill[color=lightgray!90] (0+\shi,0) arc (180:0:30) -- (59.85+\shi,0) arc 
(0:180:29.85) -- cycle;  
\foreach \x in {11.5,9.5,7.5,3,1.5}{
	\fill[color=lightgray!45] (0+\shi,0) arc (180:0:\x/2) -- (\x - .15+\shi,0) 
arc (0:180:\x/2 - .15) -- cycle;
	\fill[color=lightgray!45] (0+\shi,0) arc (0:180:\x/2) -- (-\x - .15+\shi,0) 
arc (180:0:\x/2 + .15) -- cycle;
}
\foreach \x in {11.5,9.5,7.5,3,1.5}{
\draw (0+\shi,0) arc (180:0:\x/2);
\draw (0+\shi,0) arc (0:180:\x/2);
}
\draw[style=thick] (0+\shi,0) arc (180:160:30);
\foreach \x in {2.1,2.085,2.07,2.03,2.015,2,1.69,1.67,1.65,1.6,1.58,1.56,1.54} 
{	
	\pgfmathsetmacro{\Cos}{cos(\x*pi r)}%
	\pgfmathsetmacro{\Sin}{sin(\x*pi r)}%
	\node at (-4*\Cos+\shi,4*\Sin+4)[circle,fill,inner sep=.5pt]{};
	}
\draw[color=gray] (-4.7+\shi,0) arc (180:0:4.7);
\end{scope}
\foreach \x/\y/\w/\c in
{}{
	\draw (-\x+\shi,\y) -- (-\x+\shi,\w) node [below] {\small{\c}};
}
\foreach \x/\y/\w/\c in
{5.75/5.75/5.75/$\cycnext{\eX}(\psi_{\eX}^{-1}(j))^{-1}\act\Cs{\psi_{\eX}^{-1}
(j)}$,
4.75/4.8/4.8/$\cyctrans{j}{\eX}^{-n}\act\Cs{j}$,
1.3/.7/.7/$\cyctrans{j}{\eX}^{-n-1}\act\Cs{j}$,
5.5/3.6/3.6/$\cyctrans{j}{\eX}^{-n}\cycnext{\eX}({\ell,j})^{-1}\act\Cs{\ell}$}{
	\draw (-\x+\shi,\y) -- (-\x+\shi,\w) node [above] {\small{\c}};
}
\draw (-2.7+\shi,1) -- (-3.8+\shi,1.6);
\fill[color=white] (-3.8+\shi,1.6) rectangle (-5+\shi,2.2);
\coordinate 
[label=above:\small{$\cyctrans{j}{\eX}^{-n}\cycnext{\eX}({\ell,j})^{-1}\cycnext{
\eX}(\psi_{\eX}^{-1}(\ell))^{-1}\act\Cs{\psi_{\eX}^{-1}(\ell)}$}] (LongCS) at 
(-4.2+\shi,1.5);
\draw[dashed] (-3+\shi,0) -- (-3+\shi,-.55);
\draw[dashed] (-7.5+\shi,0) -- (-7.5+\shi,-.55);
\draw [decorate,decoration={brace,amplitude=10pt}]
(-3+\shi,-.55) -- (-7.5+\shi,-.55) node [below,midway,yshift=-10pt]
{$D_{n,\eX}^-(j,\ell)$};
\draw[color=gray] (-4.7+\shi,0) -- (-4.7+\shi,-.1) node [below] 
{\small{$\gamma(-\infty)$}};
\coordinate [label=right:$\gamma$] (gam) at (-1.2+\shi,4.8);
\draw (.31+\shi,3.5) -- (-.1+\shi,3.7) node [left] {$\Cs{j}$};
\draw[style=thick] (0+\shi,0) -- (0+\shi,-.1) node [below] {\small{$\eX_j$}};
\draw[style=thick] (-6.2,0) -- (6.2,0);
\end{tikzpicture}
\caption[branch cycle]{A representative~$\gamma$ of a geodesic with sojourn near 
a cusp detected by an~$\eX$-cycle in forward (above) and backward (below) 
time.}\label{FIG:sojourn}
\end{figure}
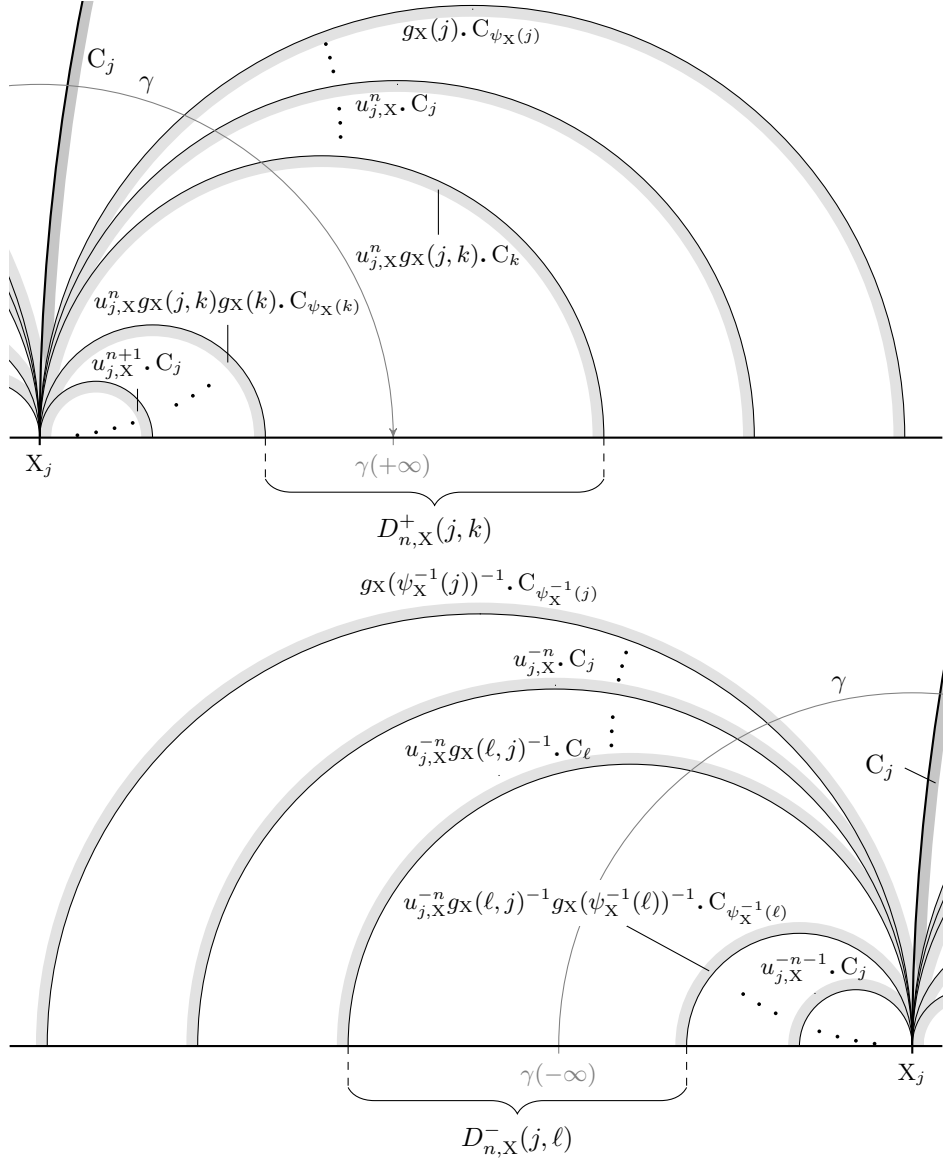
We recall that $\defset{\Cs{j}}{j\in A}$ is a reduced set of branches for the geodesic 
flow on~$\Orbi$.
In the case that $\Orbi$ does not have cusps, it is consistent with 
Definition~\ref{DEF:cuspacc} to set $\Cs{j,\acc}\coloneqq\Cs{j,\st}$ for all 
$j\in A$. For $M\subseteq\UTB\H$ we define
\begin{equation}\label{EQDEF:IJof}
I(M)\coloneqq\defset{\gamma_\nu(+\infty)}{\nu\in 
M}\quad\text{and}\quad J(M)\coloneqq\defset{\gamma_\nu(-\infty)}{\nu\in M}.
\end{equation}
We set
\begin{align}\label{EQDEF:Astar}
A^*\coloneqq\defset{j\in A}{\Cs{j,\acc}\ne\varnothing}.
\end{align}
\index[symbols]{As@$A^*$}%
We further set $\CrSc_{\acc}\coloneqq\pi(\BrU_{\acc})$.
\index[symbols]{Caccs@$\CrSc_{\acc}$}%

\begin{prop}\label{PROP:accsystem}
For any $j\in A$ and $\eZ\in\{\eX,\eY\}$, the elimination sets~$K_\eZ(j)$ 
and $M_\eZ(j)$ and the acceleration~$\Cs{j,\acc}$ of~$\BrU_j$ satisfy the 
following properties:
\begin{enumerate}[label=$\mathrm{(\roman*)}$, ref=$\mathrm{\roman*}$]
\item\label{accsystem:disjointremoves}
We have $K_{\eX}(j)\cap K_{\eY}(j)=\varnothing$ and 
$M_{\eX}(j)\cap M_{\eY}(j)=\varnothing$. 
\item\label{accsystem:empty}
The set~$\Cs{j,\acc}$ is empty if and only if there exists~$\eZ\in\{\eX,\eY\}$ 
such that $j\in A_{\eZ}$ and 
\[
\Ired{j,\st}=\cycnext{\eZ}(j)\act\,\Ired{\psi_{\eZ}(j),\st}\quad\text{and}\quad\Jset{j}=\cycnext{
\eZ}({\psi_{\eZ}^{-1}(j)})^{-1}\act\Jset{\psi_{\eZ}^{-1}(j)}\,.
\]
\item\label{accsystem:nonempty} If $j\in A_{\eX}\cap A_{\eY}$, then 
$\Cs{j,\acc}\ne\varnothing$.
\end{enumerate}
\end{prop}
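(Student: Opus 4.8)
The plan is to prove \eqref{accsystem:disjointremoves} first, then reformulate the acceleration of $\Cs{j}$ as the complement of two rectangles inside a product set, and finally read off \eqref{accsystem:empty} and \eqref{accsystem:nonempty} from that reformulation.

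For \eqref{accsystem:disjointremoves} I may assume $j\in A_\eX\cap A_\eY$, since otherwise one of the four sets involved is empty. The crux is that the pair $\bigl(\psi_\eX(j),\cycnext{\eX}(j)\bigr)$ differs from $\bigl(\psi_\eY(j),\cycnext{\eY}(j)\bigr)$: if they coincided, a single element $g\in\Gamma$ would send both endpoints of $\overline{\base{\Cs{\psi_\eX(j)}}}$ to the corresponding endpoints of $\overline{\base{\Cs{j}}}$ (this is precisely the content of the $\eX$-tuple and $\eY$-tuple relations in Definition~\ref{DEF:xytuple}), hence $g\act\overline{\base{\Cs{\psi_\eX(j)}}}=\overline{\base{\Cs{j}}}$; feeding this into \eqref{BP:disjointunion} — which holds for reduced sets of branches by Definition~\ref{DEF:redsetofbranches} — leaves only the options $g=\id$ (excluded, since $\cycnext{\eX}(j)\in\Transs{}{j}{\psi_\eX(j)}\subseteq\Gamma^*$ by Proposition~\ref{PROP:idelimsucceeds}) or $\PMsp{j}=g\act\MPsp{\psi_\eX(j)}$, and the latter, after passing to geodesic boundaries, makes $g\act\Ired{\psi_\eX(j),\st}$ disjoint from $I_j$, contradicting $(\ref{BP:intervaldecompred}\ref{BP:intervaldecompGdecompred})$ together with $\Ired{\psi_\eX(j),\st}\ne\varnothing$. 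Since $\cycnext{\eX}(j)\in\Transs{}{j}{\psi_\eX(j)}$ and $\cycnext{\eY}(j)\in\Transs{}{j}{\psi_\eY(j)}$, the disjointness of the decomposition in $(\ref{BP:intervaldecompred}\ref{BP:intervaldecompGdecompred})$ then forces $\cycnext{\eX}(j)\act\Ired{\psi_\eX(j),\st}$ and $\cycnext{\eY}(j)\act\Ired{\psi_\eY(j),\st}$ to be disjoint, whence $K_\eX(j)\cap K_\eY(j)=\varnothing$. Running the identical argument with the backwards transition sets, the predecessor tuples $\bigl(\psi_\eX^{-1}(j),\cycnext{\eX}(\psi_\eX^{-1}(j))^{-1}\bigr)$, $\bigl(\psi_\eY^{-1}(j),\cycnext{\eY}(\psi_\eY^{-1}(j))^{-1}\bigr)$, and the (reduced) analogue of Proposition~\ref{PROP:BPintervaldecompV} for the disjoint decomposition of $\Jset{j}$ yields $M_\eX(j)\cap M_\eY(j)=\varnothing$.

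Next I reformulate. Using \eqref{BP:allvectorsredI} together with Corollary~\ref{COR:Csreasyformula} (so that membership in $\Cs{j}$ of an original branch vector is detected by its forward endpoint), the assignment $\nu\mapsto\bigl(\gamma_\nu(+\infty),\gamma_\nu(-\infty)\bigr)$ identifies $\Cs{j,\st}$ with $\Ired{j,\st}\times\Jset{j}$, where $\Ired{j,\st}\ne\varnothing$ (as $j\in A$) and $\Jset{j}\ne\varnothing$ (by \eqref{BP:closedgeodesicsHtoX}). Writing $P_\eZ\coloneqq\cycnext{\eZ}(j)\act\Ired{\psi_\eZ(j),\st}$ and $Q_\eZ\coloneqq\cycnext{\eZ}(\psi_\eZ^{-1}(j))^{-1}\act\Jset{\psi_\eZ^{-1}(j)}$ for $j\in A_\eZ$ (and $P_\eZ=Q_\eZ=\varnothing$ otherwise), the set $K_\eZ(j)\cap M_\eZ(j)$ corresponds to the rectangle $P_\eZ\times Q_\eZ\subseteq\Ired{j,\st}\times\Jset{j}$, so $\Cs{j,\acc}$ corresponds to $(\Ired{j,\st}\times\Jset{j})\setminus\bigl((P_\eX\times Q_\eX)\cup(P_\eY\times Q_\eY)\bigr)$, and part (i) says $P_\eX\cap P_\eY=\varnothing=Q_\eX\cap Q_\eY$. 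Now \eqref{accsystem:empty} reduces to the elementary observation that a nonempty product $I\times J$ covered by two rectangles $P_\eX\times Q_\eX$ and $P_\eY\times Q_\eY$ whose first factors are disjoint and whose second factors are disjoint must already equal one of them: if $P_\eX\subsetneq I$, then $\{x\}\times J\subseteq P_\eY\times Q_\eY$ for any $x\in I\setminus P_\eX$ forces $Q_\eY=J$, and symmetrically $P_\eY\subsetneq I$ forces $Q_\eX=J$, violating $Q_\eX\cap Q_\eY=\varnothing$; hence $P_\eX=I$ or $P_\eY=I$, and disjointness of the first factors then forces the other to be empty, so the corresponding rectangle is all of $I\times J$. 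Translating back, $\Cs{j,\acc}=\varnothing$ iff $P_\eZ=\Ired{j,\st}$ and $Q_\eZ=\Jset{j}$ for some $\eZ$, and $P_\eZ\ne\varnothing$ forces $j\in A_\eZ$; this is precisely \eqref{accsystem:empty}. Finally \eqref{accsystem:nonempty} is immediate: if $j\in A_\eX\cap A_\eY$ and $\Cs{j,\acc}=\varnothing$, then by \eqref{accsystem:empty} some $P_\eZ=\Ired{j,\st}$, while part (i) gives $P_{\eZ'}\subseteq\Ired{j,\st}=P_\eZ$ and $P_{\eZ'}\cap P_\eZ=\varnothing$, hence $P_{\eZ'}=\varnothing$, contradicting $j\in A_{\eZ'}$ (which yields $P_{\eZ'}=\cycnext{\eZ'}(j)\act\Ired{\psi_{\eZ'}(j),\st}\ne\varnothing$).

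I expect the main obstacle to be the ``tuples differ'' step in part (i): carefully unwinding the $\eZ$-tuple relations and verifying that every alternative permitted by \eqref{BP:disjointunion} collapses. Everything else — the product identification and the rectangle-covering lemma — is routine once part (i) is available, but it is essential that part (i) delivers disjointness in \emph{both} coordinates, as the covering lemma fails if disjointness is known in only one.
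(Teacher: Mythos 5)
Your treatment of the $K$-sets in part~(i), and of parts~(ii) and~(iii), is correct and takes a genuinely different route from the paper. For the $K$-sets the paper argues geometrically: a common forward endpoint forces the two translated base segments to intersect, and \eqref{BP:disjointunion} is applied to those translates; you instead prove that the tuples $(\psi_\eX(j),\cycnext{\eX}(j))$ and $(\psi_\eY(j),\cycnext{\eY}(j))$ differ and then harvest the disjointness already recorded in~(\ref{BP:intervaldecompred}\ref{BP:intervaldecompGdecompred}). This works, and your ``tuples differ'' step (the identity excluded via Proposition~\ref{PROP:idelimsucceeds}, the flip option excluded because it would force $\cycnext{\eX}(j)\act I_{\psi_\eX(j)}=J_j$, against (\ref{BP:intervaldecompred}\ref{BP:intervaldecompGdecompred}) and $\Ired{\psi_\eX(j),\st}\ne\varnothing$) is precisely the reasoning the paper leaves implicit behind its unproved assertion $\cycnext{\eZ}(j)\act\overline{\base{\Cs{\psi_\eZ(j)}}}\ne\overline{\base{\Cs{j}}}$. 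Likewise the identification of $\Cs{j,\st}$ with $\Ired{j,\st}\times\Jset{j}$ via \eqref{BP:allvectorsredI} and Corollary~\ref{COR:Csreasyformula}, and the rectangle-covering lemma, give (ii) and (iii) cleanly; the paper's own proof of these parts is an equivalent, more pedestrian set-theoretic computation, also resting on both halves of (i).

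The gap is the $M$-half of part~(i). You pass it off as ``the identical argument'' using ``the (reduced) analogue of Proposition~\ref{PROP:BPintervaldecompV}'', but no such analogue is available: Definition~\ref{DEF:redsetofbranches} demands only the inclusion~(\ref{BP:intervaldecompred}\ref{BP:intervaldecompbackred}) in the backwards direction, and a disjoint backwards decomposition of $\Jset{j}$ is nowhere established for a \emph{reduced} set of branches. Nor is its proof a mere relabeling of Proposition~\ref{PROP:BPintervaldecompV}: that proof needs $g^{-1}\act J_{k}\subseteq J_j$, equivalently $g\act I_j\subseteq I_k$, for $g\in\Transs{}{k}{j}$ --- an inclusion of the \emph{full} intervals --- whereas (\ref{BP:intervaldecompred}\ref{BP:intervaldecompGdecompred}) controls only the reduced intervals $\Ired{k,\st}$; one must first extract the full-interval statement from the composition structure produced by the identity elimination (each element of $\Transs{}{k}{j}$ lies in an original transition set into a branch upstream of~$j$ in an identity chain), and only then can \eqref{BP:disjointunion} be run. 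Since, as you note yourself, the covering lemma in (ii) needs disjointness in \emph{both} coordinates, this is a genuine hole. You can close it either by proving that backwards disjointness (true, but with the extra work just described), or more cheaply by treating $M$ as the paper does: a common backward endpoint forces the translated base segments $\cycnext{\eX}(\psi_\eX^{-1}(j))^{-1}\act\overline{\base{\Cs{\psi_\eX^{-1}(j)}}}$ and $\cycnext{\eY}(\psi_\eY^{-1}(j))^{-1}\act\overline{\base{\Cs{\psi_\eY^{-1}(j)}}}$, anchored at $\eX_j$ and $\eY_j$ respectively, to meet, after which \eqref{BP:disjointunion} together with your ``tuples differ'' step applied at the common predecessor yields the contradiction. (A smaller point of the same kind: upgrading $\Jset{j}\subseteq\cycnext{\eZ}(\psi_\eZ^{-1}(j))^{-1}\act\Jset{\psi_\eZ^{-1}(j)}$ to the equality asserted in (ii) again needs the full-interval backward inclusion; the paper is equally silent here, so I do not count that against you.)
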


\begin{proof}
For the proof of~\eqref{accsystem:disjointremoves} we suppose that $j\in 
A_{\eX}\cap A_{\eY}$ (because otherwise there is nothing to show) and 
assume, in order to seek a contradiction, that the sets~$K_{\eX}(j)$ 
and~$K_{\eY}(j)$ are \emph{not} disjoint. Then there exists $\nu\in\Cs{j,\st}$ 
such that 
\begin{equation}\label{eq:nu_intersect}
\gamma_\nu(+\infty)\in\cycnext{\eX}({j})\act\,\Ired{\psi_{\eX}(j),\st}\cap\cycnext{\eY
}({j})\act\,\Ired{\psi_{\eY}(j),\st}\subseteq\cycnext{\eX}({j})\act\Iset{\psi_{\eX}(j)}\cap\cycnext{\eY
}({j})\act\Iset{\psi_{\eY}(j)}\,.
\end{equation}
For any $\eZ \in\{\eX,\eY\}$ we have 
\begin{equation}\label{eq:next_general}
\cycnext{\eZ}(j)\act\point{\psi_{\eZ}(j)}{\eZ}=\point{j}{\eZ}\qquad\text{and}
\qquad\cycnext{\eZ}(j)\act\overline{\base{\Cs{\psi_{\eZ}(j)}}}\ne\overline{\base
{\Cs{j}}}\,. 
\end{equation}
Thus, \eqref{eq:nu_intersect} implies 
\[
\cycnext{\eX}({j})\act\overline{\base{\Cs{\psi_{\eX}(j)}}}\cap\cycnext{\eY}(j)
\act\overline{\base{\Cs{\psi_{\eY}(j)}}}\ne\varnothing\,.
\]
From~\eqref{BP:disjointunion} it now follows that $\cycnext{\eX}({j}) = 
\cycnext{\eY}(j) \eqqcolon g$ and $\psi_{\eX}(j) = \psi_{\eY}(j) \eqqcolon k$.  
With~\eqref{eq:next_general} we obtain $g\act\eZ_k = \eZ_j$, and hence 
$g\act\overline{\base{\Cs{k}}} = \overline{\base{\Cs{j}}}$, which 
contradicts~\eqref{eq:next_general}. In turn, $K_{\eX}(j)\cap 
K_{\eY}(j)=\varnothing$. The proof of $M_{\eX}(j)\cap M_{\eY}(j)=\varnothing$ is 
analogous.

For the proof of~\eqref{accsystem:empty} we let $\eZ\in\{\eX,\eY\}$ and note 
that the equalities $\Ired{j,\st}=\cycnext{\eZ}(j)\act\,\Ired{\psi_{\eZ}(j),\st}$ and 
$\Jset{j}=\cycnext{\eZ}(\psi_{\eZ}^{-1}(j))^{-1}\act\Jset{\psi_{\eZ}^{-1}(j)}$ 
are equivalent to $K_{\eZ}(j)=\Cs{j,\st}$ and $M_{\eZ}(j)=\Cs{j,\st}$,  
respectively. Hence, if these equalities are satisfied for the considered 
index~$j\in A_\eZ$, then 
\[
\Cs{j,\acc}\subseteq\Cs{j,\st}\!\setminus\bigl(K_{\eZ}(j)\cap 
M_{\eZ}(j)\bigr)=\Cs{j,\st}\!\setminus\Cs{j,\st}=\varnothing\,.
\]
In order to prove the converse implication, we suppose that 
$\Cs{j,\acc}=\varnothing$. Then we find $\eZ\in\{\eX,\eY\}$ such that $j\in 
A_{\eZ}$ because otherwise $\Cs{j,\acc}=\Cs{j,\st}\ne\varnothing$. Thus, we know 
already (see the discussion before this proposition) that 
$K_\eZ(j)\not=\varnothing\not= M_\eZ(j)$ and will now show that $K_\eZ(j) = 
\Cs{j,\st} = M_\eZ(j)$. From the definition of~$\Cs{j,\acc}$ it follows 
immediately that 
\[
\Cs{j,\st}=\bigl(K_{\eX}(j)\cap M_{\eX}(j)\bigr)\cup\bigl(K_{\eY}(j)\cap 
M_{\eY}(j)\bigr)\,.
\]
Let $\eZ'\in\{\eX,\eY\}$ be such that $\{\eZ,\eZ'\} = \{\eX,\eY\}$. 
From~\eqref{accsystem:disjointremoves} we obtain the inclusions
$K_{\eZ'}(j)\subseteq\Cs{j,\st}\!\setminus K_{\eZ}(j)$ and 
$M_{\eZ'}(j)\subseteq\Cs{j,\st}\!\setminus M_{\eZ}(j)$. Thus, $K_{\eZ'}(j)\cap 
M_{\eZ'}(j)\subseteq\Cs{j,\st}\!\setminus\left(K_{\eZ}(j)\cup 
M_{\eZ}(j)\right)$ and hence
\[
\Cs{j,\st}=\bigl(K_{\eZ}(j)\cap 
M_{\eZ}(j)\bigr)\cup\Cs{j,\st}\!\setminus\bigl(K_{\eZ}(j)\cup 
M_{\eZ}(j)\bigr)\,.
\]
It follows that
\begin{align*}
\varnothing&=\Cs{j,\st}\!\setminus\Bigl(\bigl(K_{\eZ}(j)\cap 
M_{\eZ}(j)\bigr)\cup\Cs{j,\st}\!\setminus\bigl(K_{\eZ}(j)\cup 
M_{\eZ}(j)\bigr)\Bigr)\\
&=\bigl((\Cs{j,\st}\!\setminus K_{\eZ}(j))\cup (\Cs{j,\st}\!\setminus 
M_{\eZ}(j))\bigr)\cap\bigl(K_{\eZ}(j)\cup M_{\eZ}(j)\bigr)\\
&=\bigl((\Cs{j,\st}\!\setminus K_{\eZ}(j))\cap 
M_{\eZ}(j)\bigr)\cup\bigl((\Cs{j,\st}\!\setminus M_{\eZ}(j))\cap 
K_{\eZ}(j)\bigr)\,.
\end{align*}
Therefore
\begin{eqnarray}\label{eq:Csjst_K}
(\Cs{j,\st}\!\setminus K_{\eZ}(j))\cap 
M_{\eZ}(j)=\varnothing&\text{and}&(\Cs{j,\st}\!\setminus M_{\eZ}(j))\cap 
K_{\eZ}(j)=\varnothing\,.
\end{eqnarray}
In order to seek a contradiction we assume that $\Cs{j,\st}\!\setminus 
K_{\eZ}(j)\ne\varnothing$. Then we find 
$x\in\Ired{j,\st}\setminus\cycnext{\eZ}(j)\act\,\Ired{\psi_{\eZ}(j),\st}$ and $y\in 
\cycnext{\eZ}({\psi_{\eZ}^{-1}(j)})^{-1}\act\Jset{\psi_{\eZ}^{-1}(j)}$. 
By~\eqref{BP:allvectorsredI} there exists~$\nu\in\Cs{j,\st}$ with 
$(\gamma_\nu(+\infty),\gamma_\nu(-\infty))=(x,y)$. Thus, 
$\nu\in  M_{\eZ}(j) \cap (\Cs{j,\st}\!\setminus K_{\eZ}(j))$, which yields a 
contradiction to~\eqref{eq:Csjst_K}. In turn, $K_{\eZ}(j)=\Cs{j,\st}$ and, by an 
analogous argument, also $M_{\eZ}(j)=\Cs{j,\st}$.

For the proof of~\eqref{accsystem:nonempty} we suppose that $j\in A_{\eX}\cap 
A_{\eY}$. Then we find $\nu\in K_{\eX}(j)\cap M_{\eY}(j)$, due to~\eqref{BP:allvectorsredI} and the nonemptiness of~$K_{\eX}(j)$ and 
$M_{\eY}(j)$. Using that $K_{\eX}(j)\subseteq\Cs{j,\st}\!\setminus K_{\eY}(j)$ 
and $M_{\eY}(j)\subseteq\Cs{j,\st}\!\setminus M_{\eX}(j)$ 
by~\eqref{accsystem:disjointremoves}, we obtain 
\begin{align*}
\varnothing&\ne K_{\eX}(j)\cap M_{\eY}(j)\\
&\subseteq\Cs{j,\st}\!\setminus\bigl(K_{\eY}(j)\cup M_{\eX}(j)\bigr)\\
&\subseteq\Cs{j,\st}\!\setminus\bigl(K_{\eY}(j)\cup 
M_{\eX}(j)\bigr)\cup\Cs{j,\st}\!\setminus\bigl(K_{\eY}(j)\cup K_{\eX}(j)\bigr)\\
&\phantom{\subseteq}\,\cup\Cs{j,\st}\!\setminus\bigl(K_{\eX}(j)\cup 
M_{\eY}(j)\bigr)\cup\Cs{j,\st}\!\setminus\bigl(M_{\eX}(j)\cup M_{\eY}(j)\bigr)\\
&=\Cs{j,\st}\!\setminus\left(\bigl(K_{\eY}(j)\cup 
M_{\eX}(j)\bigr)\cap\bigl(K_{\eY}(j)\cup K_{\eX}(j)\bigr)\right.\\
&\phantom{\Cs{j,\st}\!\setminus(((}\left.\cap\,\bigl(K_{\eX}(j)\cup 
M_{\eY}(j)\bigr)\cap\bigl(M_{\eX}(j)\cup M_{\eY}(j)\bigr)\right)\\
&=\Cs{j,\st}\!\setminus\bigcup_{\eZ\in\{\eX,\eY\}}\bigl(K_{\eZ}(j)\cap 
M_{\eZ}(j)\bigr)\\
&=\Cs{j,\acc}.
\end{align*}
This completes the proof.
\end{proof}

\begin{remark}\label{REM:Itoldyoutoreduce}
Let $\eZ,\eZ'\in\{\eX,\eY\}$, $\eZ\ne\eZ'$, and $j\in A_{\eZ}$. The conditions
\[
\Ired{j,\st}=\cycnext{\eZ}(j)\act\,\Ired{\psi_{\eZ}(j),\st}\quad\text{and}\quad\Jset{j}=
\cycnext{\eZ}(\psi_{\eZ}^{-1}(j))^{-1}\act\Jset{\psi_{\eZ}^{-1}(j)}
\]
in Proposition~\ref{PROP:accsystem}\eqref{accsystem:empty} imply that $\eZ'_j$ 
is an inner point of a representative interval of 
some funnel of~$\Orbi$. In particular, because 
of~(\ref{BP:intervaldecompred}\ref{BP:intervaldecompGdecompred}), the structure 
of~$\Ired{j,\st}$ implies
\[
\Heir{}{j}=\{\psi_{\eZ}(j)\}\quad\text{and}\quad\Trans{}{j}{\psi_{\eZ}(j)}=\{
\cycnext{\eZ}(j)\}\,.
\]
In this case $j\ne\psi_{\eZ}(j)$, because otherwise $\Ired{j,\st}$ would be empty. 
Algorithm~\ref{nodereductionI} removes all branches of that type from the set 
of branches. Hence, if the level of reduction before applying the cuspidal 
acceleration is at least $\kappa_1$, with $\kappa_1$ as in 
Section~\ref{SUBSEC:branchred}, then we have $A^*=A$.
\end{remark}

\begin{prop}\label{PROP:accsystem:geodintersect}
Let $\gamma$ be a geodesic on~$\H$ that intersects~$\Gamma\act\BrU_{\st}$. Then 
$\gamma$ intersects~$\Gamma\act\BrU_{\acc}$. More precisely, if $\gamma$ 
intersects~$\Gamma\act\BrU_{\st}$ at time~$t^*$, then there exist $t^*_+, 
t^*_-\in\R$ with $t^*_+\geq t^* \geq t^*_-$ such that $\gamma$ 
intersects~$\Gamma\act\BrU_{\acc}$ at time~$t^*_+$ and at time~$t^*_-$.
\end{prop}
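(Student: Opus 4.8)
\emph{Plan.} The first thing to note is that, by Definition~\ref{DEF:cuspacc}, $\Cs{j,\acc}=\Cs{j,\st}\setminus\bigl((K_{\eX}(j)\cap M_{\eX}(j))\cup(K_{\eY}(j)\cap M_{\eY}(j))\bigr)$, so a vector of a strong branch that fails to belong to the accelerated branch must lie in $K_{\eZ}(j)\cap M_{\eZ}(j)$ for some $\eZ\in\{\eX,\eY\}$, which already forces $j\in A_{\eZ}$. I would therefore argue as follows. Write $\gamma'(t^*)\in g_0\act\Cs{j_0,\st}$ with $g_0\in\Gamma$, $j_0\in A$, and set $\nu_0\coloneqq g_0^{-1}\act\gamma'(t^*)\in\Cs{j_0,\st}$, noting $\gamma_{\nu_0}(\pm\infty)=g_0^{-1}\act\gamma(\pm\infty)\in\wh\R_{\st}$ (Remark~\ref{REM:inst}, Proposition~\ref{PROP:CofSoBst}). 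If $\nu_0\in\Cs{j_0,\acc}$ we are done with $t^*_+=t^*_-=t^*$. Otherwise fix $\eZ$ with $\nu_0\in K_{\eZ}(j_0)\cap M_{\eZ}(j_0)$ and consider the iterated sequences $(\ittime{\BrU,n}(\nu_0))_n$, $(\itindex{\BrU,n}(\nu_0))_n$, $(\ittrans{\BrU,n}(\nu_0))_n$ of $\nu_0$ with respect to the reduced branch union $\BrU$ (Corollary~\ref{COR:allinitred} and the discussion before it). Put $j_n\coloneqq\itindex{\BrU,n}(\nu_0)$ and let $\nu_n\in\Cs{j_n,\st}$ be the associated based vectors, so that $\gamma_{\nu_n}(\pm\infty)=\ittrans{\BrU,n}(\nu_0)^{-1}\act\gamma_{\nu_{n-1}}(\pm\infty)$.

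The core step is a cycle-propagation claim, which I would prove by induction on $n\geq0$: if $\nu_n\in K_{\eZ}(j_n)\cap M_{\eZ}(j_n)$, then $j_{n+1}=\psi_{\eZ}(j_n)$, $\ittrans{\BrU,n+1}(\nu_0)=\cycnext{\eZ}(j_n)$ and $\nu_{n+1}\in M_{\eZ}(j_{n+1})$, and dually $j_{n-1}=\psi_{\eZ}^{-1}(j_n)$, $\ittrans{\BrU,n}(\nu_0)=\cycnext{\eZ}(j_{n-1})$ and $\nu_{n-1}\in K_{\eZ}(j_{n-1})$. For the first three: $\nu_n\in K_{\eZ}(j_n)$ says $\gamma_{\nu_n}(+\infty)\in\cycnext{\eZ}(j_n)\act\Ired{\psi_{\eZ}(j_n),\st}$, which is exactly the piece of the disjoint decomposition of $\Ired{j_n,\st}$ from~(\ref{BP:intervaldecompred}\ref{BP:intervaldecompGdecompred}) attached to the transition $\cycnext{\eZ}(j_n)\in\Trans{}{j_n}{\psi_{\eZ}(j_n)}$; the uniqueness in that decomposition (the reduced analogue of Corollary~\ref{COR:uniquereturn}) therefore identifies $j_{n+1}$ and $\ittrans{\BrU,n+1}(\nu_0)$. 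Then $\gamma_{\nu_{n+1}}(-\infty)=\cycnext{\eZ}(j_n)^{-1}\act\gamma_{\nu_n}(-\infty)$, and since $\nu_n\in M_{\eZ}(j_n)$ gives $\gamma_{\nu_n}(-\infty)\in\cycnext{\eZ}(j_{n-1})^{-1}\act\Jset{j_{n-1}}$ (using $\psi_{\eZ}^{-1}(j_n)=j_{n-1}$), together with the inclusion $\cycnext{\eZ}(j_{n-1})^{-1}\act\Jset{j_{n-1}}\subseteq\Jset{j_n}$ — dual to $\cycnext{\eZ}(j_{n-1})\act\Ired{j_n}\subseteq\Ired{j_{n-1}}$ — one gets $\gamma_{\nu_{n+1}}(-\infty)\in\cycnext{\eZ}(j_n)^{-1}\act\Jset{j_n}$, i.e.\ $\nu_{n+1}\in M_{\eZ}(j_{n+1})$. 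The backward assertions are the mirror image, with $K_{\eZ}$, $M_{\eZ}$ and $\pm\infty$ interchanged.

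Next I would show that $\nu_n\in K_{\eZ}(j_n)\cap M_{\eZ}(j_n)$ cannot hold for all $n\geq0$. Iterating the claim gives $j_n=\psi_{\eZ}^n(j_0)$ and $\ittrans{\BrU,i+1}(\nu_0)=\cycnext{\eZ}(\psi_{\eZ}^i(j_0))$, so after $r$ full revolutions of the cycle ($m=r\,\sigma_{\eZ}(j_0)$ steps) the accumulated transformation equals $\cyctrans{j_0}{\eZ}^{\,r}$ by~\eqref{eq:def_uZ} and the periodicity of $\cyc{\eZ}{j_0}$; hence $g_0^{-1}\act\gamma(+\infty)=\gamma_{\nu_0}(+\infty)=\cyctrans{j_0}{\eZ}^{\,r}\act\gamma_{\nu_m}(+\infty)\in\cyctrans{j_0}{\eZ}^{\,r}\act I_{j_0}$ for every such $r$. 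Because $\cyctrans{j_0}{\eZ}$ is parabolic and generates the stabilizer of the cuspidal point $\point{j_0}{\eZ}$ (see the proof of Lemma~\ref{LEM:periodiccycles}), the sets $\cyctrans{j_0}{\eZ}^{\,r}\act\overline{I_{j_0}}$ contract to $\{\point{j_0}{\eZ}\}$, whereas $g_0^{-1}\act\gamma(+\infty)\in\Iset{j_0}\subseteq\wh\R_{\st}$ is not a parabolic fixed point; contradiction. Thus $n_+\coloneqq\min\{n\geq0\colon\nu_n\notin K_{\eZ}(j_n)\}$ is a positive integer, $\nu_m\in K_{\eZ}(j_m)\cap M_{\eZ}(j_m)$ for $0\leq m<n_+$, and $\nu_{n_+}\in M_{\eZ}(j_{n_+})\setminus K_{\eZ}(j_{n_+})$; in particular $\nu_{n_+}\notin K_{\eZ}(j_{n_+})\cap M_{\eZ}(j_{n_+})$, and since $M_{\eX}(j_{n_+})\cap M_{\eY}(j_{n_+})=\varnothing$ by Proposition~\ref{PROP:accsystem}\eqref{accsystem:disjointremoves}, also $\nu_{n_+}\notin K_{\eZ'}(j_{n_+})\cap M_{\eZ'}(j_{n_+})$ for the other $\eZ'$. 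Hence $\nu_{n_+}\in\Cs{j_{n_+},\acc}$ and $\gamma$ meets $\Gamma\act\BrU_{\acc}$ at $t^*_+\coloneqq t^*+\ittime{\BrU,n_+}(\nu_0)\geq t^*$. The mirror-image argument in negative time (with $\cyctrans{j_0}{\eZ}^{-r}$ and $K_{\eX}\cap K_{\eY}=\varnothing$) produces $t^*_-\coloneqq t^*+\ittime{\BrU,-(n_-+1)}(\nu_0)\leq t^*$ with $\gamma'(t^*_-)\in\Gamma\act\BrU_{\acc}$.

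The main obstacle I anticipate is the cyclic bookkeeping in the third paragraph: tracking how the accumulated transition transformations interact with the cyclic indexing of $\cycnext{\eZ}(\cdot,\cdot)$ around a cusp and collapse, over one full $\psi_{\eZ}$-period, to the parabolic generator $\cyctrans{j_0}{\eZ}$ of~\eqref{eq:def_uZ}. A secondary point that needs care is the uniqueness used in the propagation claim — that $\gamma_{\nu_n}(+\infty)\in\cycnext{\eZ}(j_n)\act\Ired{\psi_{\eZ}(j_n),\st}$ determines both $\itindex{\BrU,n+1}(\nu_0)$ and $\ittrans{\BrU,n+1}(\nu_0)$ — for which one invokes the disjointness clause of~(\ref{BP:intervaldecompred}\ref{BP:intervaldecompGdecompred}) in the reduced setting.
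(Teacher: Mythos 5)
Your proof is correct, but it runs along a different track than the paper's. The paper argues in essentially one step: since the offending vector lies in $K_\eZ(j)\cap M_\eZ(j)$, its forward endpoint lies in exactly one piece $D_{n,\eZ}^+(j,k)_{\st}$ of the interval decomposition recorded in Remark~\ref{REM:howtoacc}; Lemma~\ref{LEM:halfspaceFuture} then produces an intersection of $\gamma$ with $\cyctrans{j}{\eZ}^n\cycnext{\eZ}(j,k)\act\Cs{k}$ at some time $t^*_+>t^*$, the pulled-back intersection vector is seen to lie in $\Cs{k,\st}\setminus K_\eZ(k)$ while belonging to $M_\eZ(k)$, and Proposition~\ref{PROP:accsystem}\eqref{accsystem:disjointremoves} then places it in $\Cs{k,\acc}$; the past direction is handled symmetrically with $D_{n,\eZ}^-(j,k)$ and Lemma~\ref{LEM:halfspacePast}. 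You instead iterate the slow return map along the $\eZ$-cycle, propagate the $K_\eZ$/$M_\eZ$ memberships step by step, and force termination of the excursion by noting that otherwise $\gamma_{\nu_0}(+\infty)$ would lie in $\bigcap_{r}\cyctrans{j_0}{\eZ}^{\,r}\act\overline{I_{j_0}}=\{\point{j_0}{\eZ}\}$, a parabolic point excluded by membership in $\wh\R_{\st}$. In effect you re-derive by induction the cyclic bookkeeping that the paper outsources to Remark~\ref{REM:howtoacc}; this makes your argument more self-contained (the Remark only sketches that decomposition) at the cost of length, and as a bonus your $t^*_+$ is the \emph{first} accelerated intersection after $t^*$, which the paper does not claim. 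Two small points: the uniqueness you flag is indeed just the reduced analogue of Corollary~\ref{COR:uniquereturn}, obtained from~\eqref{BP:allvectorsredI}, \eqref{BP:allvectorsredII} and the disjointness clause of~(\ref{BP:intervaldecompred}\ref{BP:intervaldecompGdecompred}) exactly as in the unreduced case; and in the backward direction the exit index should be $-n_-$ with $n_-=\min\defset{n\geq1}{\nu_{-n}\notin M_\eZ(j_{-n})}$ rather than $-(n_-+1)$ --- a harmless bookkeeping slip that does not affect the argument.
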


\begin{proof}
Without loss of generality we may suppose that the intersection between the 
geodesic~$\gamma$ and $\Gamma\act\BrU_{\st}$ at time~$t^*$ is on~$\BrU_{\st}$, 
say at $\nu =\gamma^{\prime}(t^*) \in \Cs{j,\st}$ with $j\in A$. We may further 
suppose that 
$\nu$ is an element of
\begin{align*}
\Cs{j,\st}\!\setminus\Cs{j,\acc}=\bigcup_{\eZ\in\{\eX,\eY\}}\bigl(K_{\eZ}
(j)\cap 
M_{\eZ}(j)\bigr)\,,
\end{align*}
as otherwise there is nothing to prove. Thus, there is $\eZ\in\{\eX,\eY\}$ such 
that $\nu\in K_{\eZ}(j) \cap M_\eZ(j)$ and $j\in A_\eZ$, and $\eZ$ is unique by 
Proposition~\ref{PROP:accsystem}\eqref{accsystem:disjointremoves}. By the 
discussion in Remark~\ref{REM:howtoacc}, we find $n\in\N_0$ and 
$k\in\cycset{j}{\eZ}$ such that $\gamma(+\infty)\in D_{n,\eZ}^+(j,k)_{\st}$, 
with the set~$D_{n,\eZ}^+(j,k)$ as defined in Remark~\ref{REM:howtoacc}. Since
\begin{align}\label{eq:shifted_Dnjk}
\cycnext{\eZ}(j,k)^{-1}\cyctrans{j}{\eZ}^{-n}\act 
D_{n,\eZ}^+(j,k)_{\st}&=\Iset{k}\setminus\cycnext{\eZ}({k})\act\Iset{\psi_{\eZ}
(k)}\,,
\end{align}
Lemma~\ref{LEM:halfspaceFuture} shows that $\gamma$ intersects 
$\cyctrans{j}{\eZ}^n\cycnext{\eZ}(j,k)\act\Cs{k}$ at some time~$t^*_+>t^*$. More 
precisely, using Remark~\ref{REM:inst} and the full extent of the 
equality~\eqref{eq:shifted_Dnjk}, we obtain that
\[
\cycnext{\eZ}(j,k)^{-1}\cyctrans{j}{\eZ}^{-n}\act\gamma^{\prime}(t^*_+)\in\Cs{k,
\st}\setminus K_{\eZ}(k)\,.
\]
By construction, 
$\cycnext{\eZ}(j,k)^{-1}\cyctrans{j}{\eZ}^{-n}\act\gamma^{\prime}(t^*_+)\in 
M_\eZ(k)$. Since $M_\eX(k)\cap M_\eY(k) = \varnothing$ by 
Proposition~\ref{PROP:accsystem}\eqref{accsystem:disjointremoves}, it follows 
that 
\[
\cycnext{\eZ}(j,k)^{-1}\cyctrans{j}{\eZ}^{-n}\act\gamma^{\prime}(t^*_+)\in 
\Cs{k,\st}\setminus \bigcup_{\eW\in\{\eX,\eY\}} \left( K_\eW(k) \cap 
M_\eW(k)\right) = \Cs{k,\acc}\,.
\]
Thus, $\gamma$ intersects~$\Gamma\act\BrU_{\acc}$ at time~$t^*_+$. The proof of 
the existence of an intersection time $t^*_- \leq  t^*$ is analogous, using the 
set~$D_{n,\eZ}^-(j,k)$ (from Remark~\ref{REM:howtoacc}) for suitable $n\in\N_0$, 
$k\in\cycset{j}{\eZ}$ instead of~$D_{n,\eZ}^-(j,k)$ and 
Lemma~\ref{LEM:halfspacePast} instead of Lemma~\ref{LEM:halfspaceFuture}.
\end{proof}

\begin{prop}\label{PROP:accsystemcrosssection}
For each $\mu\in\mathcal{M}_{\Vanish(\Orbi)}$ the set~$\CrSc_{\acc}$ is a 
strong cross section for the geodesic flow on $\Orbi$ with respect to~$\mu$. 
Each geodesic in~$\Geo(\Orbi)\setminus\Vanish(\Orbi)$ intersects~$\CrSc_{\acc}$ 
infinitely often in past and future.
\end{prop}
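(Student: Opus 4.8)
The plan is to reduce the statement for $\CrSc_{\acc}$ to the already-established statement for $\CrSc_{\st}$ (Corollary~\ref{CofSoB:strongcrosssection}), using Proposition~\ref{PROP:accsystem:geodintersect} as the bridge. Recall that $\CrSc_{\st} = \pi(\BrU_{\st})$ is known to be a strong cross section for $\GeoFlow$ with respect to any $\mu\in\mathcal{M}_{\Vanish(\Orbi)}$, and that every geodesic in $\Geo(\Orbi)\setminus\Vanish(\Orbi)$ intersects $\CrSc_{\st}$ infinitely often in past and future. Since $\BrU_{\acc}\subseteq\BrU_{\st}$ (each $\Cs{j,\acc}$ is obtained from $\Cs{j,\st}$ by removing vectors), we have $\CrSc_{\acc}\subseteq\CrSc_{\st}$, so property~\eqref{CS:discreteintime} for $\CrSc_{\acc}$ is immediate from \eqref{CS:discreteintime} for $\CrSc_{\st}$: any intersection of a geodesic with $\CrSc_{\acc}$ is in particular an intersection with $\CrSc_{\st}$, hence isolated in time.

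First I would prove \eqref{CS:infinitelyoften} (and the ``in particular'' clause) for $\CrSc_{\acc}$. Let $\wh\gamma\in\Geo(\Orbi)\setminus\Vanish(\Orbi)$ and let $\gamma$ be a lift to $\H$ with both endpoints in $\wh\R_{\st}$. By Corollary~\ref{CofSoB:strongcrosssection}, $\wh\gamma$ intersects $\CrSc_{\st}$, equivalently $\gamma$ intersects $\Gamma\act\BrU_{\st}$, at some time; pick such a time $t^*$. By Proposition~\ref{PROP:accsystem:geodintersect}, there exist $t^*_+\ge t^*\ge t^*_-$ such that $\gamma$ intersects $\Gamma\act\BrU_{\acc}$ at $t^*_+$ and at $t^*_-$; in particular $\wh\gamma$ intersects $\CrSc_{\acc}$ at least once. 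Now pick $\nu\in\BrU_{\acc,\st}$ with $\gamma_\nu = \gamma$ up to the appropriate time shift (concretely, the lift realizing the intersection at $t^*_+$, which lies in $\BrU_{\acc}$ and has both endpoints in $\wh\R_{\st}$ since $\wh\gamma\notin\Vanish(\Orbi)$). Since $\BrU_{\acc,\st}\subseteq\BrU_{\st}$, Lemma~\ref{LEM:accpointsofit} applies to $\nu$ and yields $\lim_{n\to\pm\infty}\ittime{\BrU,n}(\nu) = \pm\infty$; applying Proposition~\ref{PROP:accsystem:geodintersect} at each of these times produces a bi-infinite sequence of intersection times of $\gamma$ with $\Gamma\act\BrU_{\acc}$ that still tends to $\pm\infty$ (the perturbation $t_n\mapsto t^{*,n}_\pm$ only moves each time within the gap between consecutive $\BrU_{\st}$-intersections, which is bounded). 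This gives \eqref{CS:infinitelyoften} for $\CrSc_{\acc}$, hence also that $\wh\gamma$ intersects $\CrSc_{\acc}$ infinitely often in past and future.

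Finally, for the strong property \eqref{CS:strong}: let $\wh\gamma$ be any geodesic on $\Orbi$ that intersects $\CrSc_{\acc}$ at all. Then it intersects $\CrSc_{\st}\supseteq\CrSc_{\acc}$, so by the strong-cross-section property of $\CrSc_{\st}$ we have $\wh\gamma\in\Geo(\Orbi)\setminus\Vanish(\Orbi)$, and then the previous paragraph shows $\wh\gamma$ intersects $\CrSc_{\acc}$ infinitely often in past and future. This establishes \eqref{CS:strong} and completes the proof. The main obstacle I anticipate is the bookkeeping in the second paragraph: making precise that the $\BrU_{\acc}$-intersection times obtained by applying Proposition~\ref{PROP:accsystem:geodintersect} at each $\BrU_{\st}$-intersection time genuinely form a sequence diverging to $\pm\infty$ and do not, e.g., accumulate or repeat. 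This is handled by observing that between any two consecutive intersections of $\gamma$ with $\Gamma\act\BrU_{\st}$ there is at most a bounded excursion (local finiteness, Proposition~\ref{PROP:branches_locfinite}), so the shifts $t^* \mapsto t^*_\pm$ are controlled and the divergence of $(\ittime{\BrU,n}(\nu))_n$ is inherited; alternatively, one can argue directly with the system of iterated sequences of $\nu$ with respect to $\BrU_{\acc}$ once $\CrSc_{\acc}$ is known to be nonempty along $\gamma$, in analogy with the proof of Proposition~\ref{CofSoB:crosssection}.
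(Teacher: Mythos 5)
Your proposal is correct and follows essentially the same route as the paper: reduce everything to the known properties of $\CrSc_{\st}$ (Corollary~\ref{CofSoB:strongcrosssection}), inherit~\eqref{CS:discreteintime} from the inclusion $\CrSc_{\acc}\subseteq\CrSc_{\st}$, and use Proposition~\ref{PROP:accsystem:geodintersect} to upgrade each $\BrU_{\st}$-intersection to $\BrU_{\acc}$-intersections on both sides in time. One small caution: your parenthetical justification that the shifts $t^*\mapsto t^*_{\pm}$ stay ``within the gap between consecutive $\BrU_{\st}$-intersections, which is bounded'' is neither needed nor literally true --- during a deep cusp excursion the next $\BrU_{\acc}$-intersection after $t^*$ can lie many $\BrU_{\st}$-intersections, and an unbounded amount of time, later --- but the conclusion is unaffected, since the $\BrU_{\acc}$-intersection times form a subset of the discrete, divergent sequence of $\BrU_{\st}$-intersection times which, by the one-sided inequalities $t^*_-\le t^*\le t^*_+$ applied at times $t^*\to\pm\infty$, is unbounded in both directions; this is exactly the paper's ``bi-infinite subsequence'' argument, which you also indicate as your alternative at the end.
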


\begin{proof}
We recall from Corollary~\ref{CofSoB:strongcrosssection} that $\CrSc_{\st}$ is a 
strong cross section with respect to~$\mu$ and that each geodesic 
in~$\Geo(\Orbi)\setminus\Vanish(\Orbi)$ intersects~$\CrSc_{\st}$ infinitely 
often in past and future. Therefore, as $\CrSc_{\acc} \subseteq \CrSc_{\st}$, 
the validity of~\eqref{CS:discreteintime} for~$\CrSc_{\acc}$ is immediate from 
its validity for~$\CrSc_{\st}$. In order to establish~\eqref{CS:infinitelyoften} 
and~\eqref{CS:strong} for~$\CrSc_{\acc}$ we let $\wh\gamma$ be a geodesic 
on~$\Orbi$ that intersects~$\CrSc_{\st}$ (which is true for all geodesics 
in~$\Geo(\Orbi)\setminus\Vanish(\Orbi)$ and hence for $\mu$-almost geodesics 
on~$\Orbi$) and let~$(t_n)_{n\in\Z}$ be the bi-infinite sequence of intersection 
times. From Proposition~\ref{PROP:accsystem:geodintersect} it follows that the 
sequence of intersection times of~$\wh\gamma$ with~$\CrSc_{\acc}$ is a 
\emph{bi-infinite} subsequence of~$(t_n)_n$, showing that $\wh\gamma$ 
intersects~$\CrSc_{\acc}$ infinitely often in past and future. This completes 
the proof. 
\end{proof}

\section{Structure of accelerated system}\label{SEC:structacc}

In this section we discuss the structure of the acceleration of a reduced set 
of branches. In particular, we provide a partition of the set of representatives that is better suited for a coding of geodesics (or, equivalently, the passage to a discrete dynamical system) than the family immediate from Definition~\ref{DEF:cuspacc}. These results will be crucial for the discussion in the next section, where we establish the strict transfer operator approach. We resume the notation from Section~\ref{SEC:cuspacc}. Thus, $\BrS$ denotes a \emph{reduced} set of branches for the geodesic flow on~$\Orbi$. 

The cuspidal acceleration procedure effectively dissects each branch into up to three 
mutually disjoint pieces, as the following lemma shows.

\begin{lemma}\label{LEM:accbranches}
For every~$j\in A$ we have
\[
\Cs{j,\acc}=\bigl(\Cs{j,\st}\setminus\left(K_\eX(j)\cup 
K_\eY(j)\right)\bigr)\cup\bigl(K_\eX(j)\setminus 
M_\eX(j)\bigr)\cup\bigl(K_\eY(j)\setminus M_\eY(j)\bigr)
\]
and the union on the right hand side is disjoint.
\end{lemma}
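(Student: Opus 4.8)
The plan is to unwind the definition of $\Cs{j,\acc}$ from Definition~\ref{DEF:cuspacc} and apply elementary set algebra, using the disjointness results already established in Proposition~\ref{PROP:accsystem}\eqref{accsystem:disjointremoves}. Recall that
\[
\Cs{j,\acc}=\bigcap_{\eZ\in\{\eX,\eY\}}\Cs{j,\st}\setminus\bigl(K_{\eZ}(j)\cap M_{\eZ}(j)\bigr)=\Cs{j,\st}\setminus\Bigl(\bigl(K_{\eX}(j)\cap M_{\eX}(j)\bigr)\cup\bigl(K_{\eY}(j)\cap M_{\eY}(j)\bigr)\Bigr)\,,
\]
so the task is to show that the complement (within $\Cs{j,\st}$) of $\bigl(K_{\eX}(j)\cap M_{\eX}(j)\bigr)\cup\bigl(K_{\eY}(j)\cap M_{\eY}(j)\bigr)$ equals the three-piece union on the right hand side of the asserted identity. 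First I would treat the degenerate cases: if $j\notin A_{\eX}$ then $K_{\eX}(j)=M_{\eX}(j)=\varnothing$ (and symmetrically for $\eY$), in which case both sides collapse to the appropriate subexpression, so without loss of generality we may assume $j\in A_{\eX}\cap A_{\eY}$ (the remaining sub-cases being analogous and simpler).

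For the main case, the key geometric input is the disjointness $K_{\eX}(j)\cap K_{\eY}(j)=\varnothing$ and $M_{\eX}(j)\cap M_{\eY}(j)=\varnothing$ from Proposition~\ref{PROP:accsystem}\eqref{accsystem:disjointremoves}, together with the trivial inclusions $K_{\eZ}(j)\subseteq\Cs{j,\st}$ and $M_{\eZ}(j)\subseteq\Cs{j,\st}$ (immediate from the definitions of the elimination sets). Abbreviating $S\coloneqq\Cs{j,\st}$, $K_1\coloneqq K_{\eX}(j)$, $K_2\coloneqq K_{\eY}(j)$, $M_1\coloneqq M_{\eX}(j)$, $M_2\coloneqq M_{\eY}(j)$, the claim becomes
\[
S\setminus\bigl((K_1\cap M_1)\cup(K_2\cap M_2)\bigr)=\bigl(S\setminus(K_1\cup K_2)\bigr)\cup(K_1\setminus M_1)\cup(K_2\setminus M_2)
\]
under the hypotheses $K_1\cap K_2=\varnothing$ and $M_1\cap M_2=\varnothing$. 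I would verify this by a direct element-chase: take $\nu\in S$ and split according to which of $K_1,K_2$ (if any) contains $\nu$; since $K_1\cap K_2=\varnothing$ these three cases are exhaustive and mutually exclusive. In the case $\nu\notin K_1\cup K_2$, $\nu$ lies in $(K_1\cap M_1)\cup(K_2\cap M_2)$ iff never (as it is in neither $K_1$ nor $K_2$), so $\nu$ belongs to both sides; in the case $\nu\in K_1$ (so $\nu\notin K_2$), $\nu$ lies in $(K_1\cap M_1)\cup(K_2\cap M_2)$ iff $\nu\in M_1$, hence $\nu$ is in the left hand side iff $\nu\in K_1\setminus M_1$, which is exactly its membership in the right hand side; the case $\nu\in K_2$ is symmetric. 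This establishes the set equality.

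Finally, for the disjointness of the three-term union on the right: the first term $S\setminus(K_1\cup K_2)$ is disjoint from $K_1\setminus M_1$ (which is contained in $K_1$) and from $K_2\setminus M_2$ (contained in $K_2$), and $K_1\setminus M_1$ is disjoint from $K_2\setminus M_2$ because $K_1\cap K_2=\varnothing$. I do not anticipate any real obstacle here; the only point requiring care is the bookkeeping of the degenerate cases $j\notin A_{\eX}$ or $j\notin A_{\eY}$, where the elimination sets vanish and one must check that the formula still reads correctly (it does, since an empty $K_{\eZ}(j)$ makes both $K_{\eZ}(j)\cap M_{\eZ}(j)$ and $K_{\eZ}(j)\setminus M_{\eZ}(j)$ empty, collapsing both sides consistently). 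The whole argument is a short, routine computation once the two disjointness statements of Proposition~\ref{PROP:accsystem} are invoked.
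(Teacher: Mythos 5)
Your proposal is correct and follows essentially the same route as the paper: unwind Definition~\ref{DEF:cuspacc}, invoke the disjointness $K_\eX(j)\cap K_\eY(j)=\varnothing$ from Proposition~\ref{PROP:accsystem}\eqref{accsystem:disjointremoves}, and verify the set identity and disjointness by an elementary element-wise argument (the paper phrases it as two inclusions rather than a three-case chase, and, like you, never actually needs $M_\eX(j)\cap M_\eY(j)=\varnothing$). The only cosmetic difference is your separate treatment of the degenerate cases $j\notin A_\eZ$, which the general argument already covers since the corresponding elimination sets are empty.
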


\begin{proof}
Let~$j\in A$. For the sake of improved readability we use the abbreviations
\[
\Cs{}\coloneqq\Cs{j,\st}\,,\qquad K_\eZ\coloneqq K_\eZ(j)\,,\qquad 
M_\eZ\coloneqq M_\eZ(j)
\]
for~$\eZ\in\{\eX,\eY\}$. Since $K_\eX\cap K_\eY = \varnothing$ by 
Proposition~\ref{PROP:accsystem}\eqref{accsystem:disjointremoves}, the claimed 
union is disjoint. In order to show the claimed equality, we recall from 
Definition~\ref{DEF:cuspacc} that 
\[
 \Cs{j,\acc} = \Cs{}\setminus \bigl( (K_\eX\cap M_\eX) \cup (K_\eY\cap M_\eY) 
\bigr)\,, 
\]
which clearly contains the set~$\Cs{}\setminus (K_\eX\cup K_\eY)$. Using again 
that $K_\eX\cap K_\eY = \varnothing$, we see that also $K_\eX\setminus M_\eX$ 
and $K_\eY\setminus M_\eY$ are subsets of~$\Cs{j,\acc}$. Thus, 
\begin{equation}\label{eq:Csaccunion}
 \bigl(\Cs{}\setminus ( K_\eX\cup K_\eY)\bigr) \cup \bigl(K_\eX\setminus 
M_\eX\bigr) \cup  \bigl(K_\eY\setminus M_\eY\bigr) \subseteq \Cs{j,\acc}\,.
\end{equation}
It remains to establish the converse inclusion relation. To that end we consider 
any element~$v\in\Cs{j,\acc}$. If $\nu\notin K_\eX\cup K_\eY$, then $\nu$ is 
obviously contained in the union on the left hand side of~\eqref{eq:Csaccunion}. 
If $\nu\in K_\eX$, then $\nu\notin M_\eX$ since otherwise we would have $\nu\in 
K_\eX\cap M_\eX$, which contradicts to $\nu\in\Cs{j,\acc}$. Analogously, for 
$\nu\in K_\eY$. In turn, the inclusion relation in~\eqref{eq:Csaccunion} is 
indeed an equality. 
\end{proof}

For~$\eZ\in\{\eX,\eY\}$ we set
\begin{equation}\label{eq:def_AZ}
A_\eZ^*\coloneqq A^*\cap A_\eZ\,.
\end{equation}
\index[symbols]{Azs@$A_\eZ^*$}%
The following definition is motivated by Lemma~\ref{LEM:accbranches}.
We recall from Definition~\ref{DEF:cuspacc} that~$K_\eZ(j)=M_\eZ(j)=\varnothing$ 
whenever~$j\in A^*\setminus A^*_\eZ$.

\begin{defi}\label{DEF:accbranches}
For~$j\in A^*$ we set
\[
\Csacc{(j,\eR)}\coloneqq\Cs{j,\st}\setminus\bigl(K_\eX(j)\cup K_\eY(j)\bigr)\,,
\]
and, for~$\eZ\in\{\eX,\eY\}$, 
\[
\Csacc{(j,\eZ)}\coloneqq K_\eZ(j)\setminus M_\eZ(j)\,.
\]
\index[symbols]{Cacca@$\Csacc{(j,\eR)}$}%
We further set 
\[
 \Index \coloneqq \defset{ (j,V)\in A^*\times\{\eX,\eR,\eY\} }{ 
\Csacc{(j,V)}\not=\varnothing }\,. 
\]
\index[symbols]{Ab@$\Index$}%
For~$a\in\Index$ we call~$\Csacc{a}$ an \emph{accelerated} (or \emph{induced}) 
\emph{branch}
\index[defs]{induced branch}%
\index[defs]{accelerated branch}%
\index[defs]{branch!induced}%
\index[defs]{branch!accelerated}%
and denote by
\[
\BrS_{\acc}\coloneqq\defset{\Csacc{a}}{a\in\Index}
\]
the \emph{set of all accelerated branches} or \emph{accelerated system}. 
\index[defs]{set of accelerated branches}%
\index[defs]{accelerated system}%
\index[symbols]{Cacc@$\BrS_{\acc}$}%
For any~$j\in A^*$ we define 
\[
I_{(j,V)}\coloneqq\left\{\!
\begin{array}{cl}
\cycnext{V}(j)\act\,\Ired{\psi_V(j)} &\text{if $V\in\{\eX,\eY\}$}\,,\\
\Ired{j}\setminus\left(\cycnext{\eX}(j)\act\,\Ired{\psi_\eX(j)}\cup\cycnext{\eY}(j)\act 
\,\Ired{\psi_\eY(j)}\right) &\text{if $V=\eR$}\,,
\end{array}
\right.
\]
\index[symbols]{Ij@$I_{(j,V)}$}%
as well as
\[
J_{(j,V)}\coloneqq\left\{\!
\begin{array}{cl}
J_{j}\setminus\cycnext{V}(\psi_V^{-1}(j))^{-1}\act 
J_{\psi_V^{-1}(j)}\,,&\text{if $V\in\{\eX,\eY\}$}\,,
\\
J_{j}\,,&\text{if $V=\eR$}\,,
\end{array}
\right.
\]
\index[symbols]{Jj@$J_{(j,V)}$}%
where, for~$j\in A^*\setminus A^*_\eZ$, we set 
~$I_{\psi_\eZ(j)}\coloneqq\varnothing$ 
and~$J_{\psi_\eZ^{-1}(j)}\coloneqq\varnothing$ for $\eZ\in\{\eX,\eY\}$.
\end{defi}

We note that for each~$j\in A^*$, the set~$\Cs{j,\acc}$ decomposes into the 
disjoint union
\[
 \Cs{j,\acc} = \Csacc{(j,\eR)} \cup \Csacc{(j,\eX)} \cup \Csacc{(j,\eY)}
\]
by Lemma~\ref{LEM:accbranches}, an observation that is needed for the proof of 
the following lemma.

\begin{lemma}\label{LEM:accprops}
The accelerated system~$\BrS_{\acc}$ satisfies the following properties.
\begin{enumerate}[label=$\mathrm{(\roman*)}$, ref=$\mathrm{\roman*}$]
\item\label{LEM:accprops:union}
We have
\[
\bigcup_{a\in\Index}\Csacc{a}=\bigcup_{j\in A^*}\Cs{j,\acc}=\BrU_{\acc}\,.
\]

\item\label{LEM:accprops:Isetunion}
For all~$j\in A^*$ we have
$\Ired{j}=I_{(j,\eX)}\cup I_{(j,\eR)}\cup I_{(j,\eY)}$. This union is disjoint.

\item\label{LEM:accprops:Iset}
For all~$a\in\Index$ we have
$\Iset{a}=I(\Csacc{a})$.

\item\label{LEM:accprops:endpoints}
For all~$a\in\Index$, none of the boundary points of~$I_a$ and~$J_a$ is a hyperbolic fixed 
point.
\end{enumerate}
\end{lemma}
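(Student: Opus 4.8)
The plan is to prove the four assertions in the order stated, since \eqref{LEM:accprops:Iset} builds on \eqref{LEM:accprops:Isetunion} and \eqref{LEM:accprops:endpoints} on \eqref{LEM:accprops:Isetunion}--\eqref{LEM:accprops:Iset}; each reduces to bookkeeping once the ingredients of Sections~\ref{SEC:stepredux}--\ref{SEC:cuspacc} are in place. For \eqref{LEM:accprops:union} I would combine Lemma~\ref{LEM:accbranches} with Definition~\ref{DEF:accbranches}: the former says that for every $j\in A^*$ one has $\Cs{j,\acc}=\Csacc{(j,\eR)}\cup\Csacc{(j,\eX)}\cup\Csacc{(j,\eY)}$, so taking the union over $j\in A^*$ and discarding the empty members of the family $\{\Csacc{(j,V)}\}$ (which is exactly what defines the index set $\Index$) yields $\bigcup_{a\in\Index}\Csacc{a}=\bigcup_{j\in A^*}\Cs{j,\acc}$. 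Since $\Cs{j,\acc}=\varnothing$ for $j\in A\setminus A^*$ by \eqref{EQDEF:Astar}, the union over $A^*$ equals the union over $A$, which is $\BrU_{\acc}$ by Definition~\ref{DEF:cuspacc}.

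For \eqref{LEM:accprops:Isetunion} I would use that, by Definition~\ref{DEF:xytuple}, the $\eZ$-tuple of $j$ is $(\psi_{\eZ}(j),\cycnext{\eZ}(j))$ with $\cycnext{\eZ}(j)\in\Trans{}{j}{\psi_{\eZ}(j)}$; property~(\ref{BP:intervaldecompred}\ref{BP:intervaldecompGdecompred}) then gives $I_{(j,\eZ)}=\cycnext{\eZ}(j)\act\Ired{\psi_{\eZ}(j)}\subseteq\Ired{j}$ for $\eZ\in\{\eX,\eY\}$, while the disjointness of the unions in~(\ref{BP:intervaldecompred}\ref{BP:intervaldecompGdecompred}) yields $I_{(j,\eX)}\cap I_{(j,\eY)}=\varnothing$ as soon as the $\eX$-tuple and the $\eY$-tuple of $j$ are distinct. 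The latter is the same computation as in the proof of Proposition~\ref{PROP:accsystem}\eqref{accsystem:disjointremoves}: if the two tuples coincided, then with \eqref{eq:next_general} and \eqref{BP:disjointunion} one would be forced to conclude $\cycnext{\eX}(j)\act\overline{\base{\Cs{\psi_{\eX}(j)}}}=\overline{\base{\Cs{j}}}$, contradicting \eqref{eq:next_general}. Since by definition $I_{(j,\eR)}=\Ired{j}\setminus(I_{(j,\eX)}\cup I_{(j,\eY)})$, the three sets partition $\Ired{j}$.

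For \eqref{LEM:accprops:Iset} I would treat separately the cases $a=(j,\eR)$ and $a=(j,\eZ)$ with $\eZ\in\{\eX,\eY\}$. In both cases the starting facts are $I(\Cs{j,\st})=\Ired{j,\st}$ (which is \eqref{BP:allvectorsredI} together with the description of the reduced branch recorded in the proof of Proposition~\ref{PROP:stepreduxworx}) and the $\Gamma$-invariance of $\wh\R_{\st}$, which lets one rewrite $\cycnext{\eZ}(j)\act\Ired{\psi_{\eZ}(j)}\cap\wh\R_{\st}=\cycnext{\eZ}(j)\act\Ired{\psi_{\eZ}(j),\st}$, a subset of $\Ired{j,\st}$ by \eqref{LEM:accprops:Isetunion}. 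Since membership of $\nu\in\Cs{j,\st}$ in $K_{\eZ}(j)$ depends only on $\gamma_\nu(+\infty)$, and $K_{\eZ}(j)$ collects \emph{all} such $\nu$ with $\gamma_\nu(+\infty)\in\cycnext{\eZ}(j)\act\Ired{\psi_{\eZ}(j),\st}$, one obtains $I(\Csacc{(j,\eR)})=\Ired{j,\st}\setminus\bigl(\cycnext{\eX}(j)\act\Ired{\psi_{\eX}(j),\st}\cup\cycnext{\eY}(j)\act\Ired{\psi_{\eY}(j),\st}\bigr)=\Iset{(j,\eR)}$. For $a=(j,\eZ)$ one has $I(K_{\eZ}(j))=\cycnext{\eZ}(j)\act\Ired{\psi_{\eZ}(j),\st}=\Iset{(j,\eZ)}$, and passing to $\Csacc{(j,\eZ)}=K_{\eZ}(j)\setminus M_{\eZ}(j)$ restricts only $\gamma_\nu(-\infty)$; so it remains to verify that every $x\in\Iset{(j,\eZ)}$ is still the forward endpoint of some $\nu\in\Csacc{(j,\eZ)}$. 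Here one invokes $(j,\eZ)\in\Index$, i.e.\ $\Csacc{(j,\eZ)}\neq\varnothing$: any $\nu_0\in\Csacc{(j,\eZ)}$ supplies a point $y_0=\gamma_{\nu_0}(-\infty)\in\Jset{j}$ lying outside $\cycnext{\eZ}(\psi_{\eZ}^{-1}(j))^{-1}\act J_{\psi_{\eZ}^{-1}(j)}$, and then \eqref{BP:allvectorsredI} produces, for the pair $(x,y_0)\in\Ired{j,\st}\times\Jset{j}$, a vector in $\Csacc{(j,\eZ)}$ with forward endpoint $x$.

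Finally, for \eqref{LEM:accprops:endpoints} I would read off from the formulas in Definition~\ref{DEF:accbranches} that the boundary points of $J_a$ (namely those of $J_j$, or of $J_j$ with the interval $\cycnext{\eZ}(\psi_{\eZ}^{-1}(j))^{-1}\act J_{\psi_{\eZ}^{-1}(j)}$ removed) are $\Gamma$-translates of the endpoints $\eX_k,\eY_k$ of base sets of branches, and that the boundary points of $I_a$ are $\Gamma$-translates of boundary points of reduced intervals $\Ired{k}$; and I would then show that the boundary points of $\Ired{k}$ are themselves $\Gamma$-translates of endpoints $\eX_\ell,\eY_\ell$ of base sets of the original branches, by unwinding \eqref{EQDEF:redInterval} and arguing inductively along the substeps of Algorithm~\ref{stepreduction} (each removed piece $I(V_s^{(r)})$ is, by the definition of the sets $V_s^{(r)}$ and Corollary~\ref{COR:uniquereturn}, contained in an interval $I_\ell$ for a suitable original branch $\ell$, and equals that interval with earlier-removed pieces deleted). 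By \eqref{BP:completegeodesics} all of these endpoints lie in $\wh\R\setminus\wh\R_{\st}$, which is $\Gamma$-invariant because both $\wh\R_{\st}$ and $\wh\R=\partial_{\geo}\H$ are; since $\wh\R_{\st}$ contains every hyperbolic fixed point of $\Gamma$, none of the boundary points in question can be a hyperbolic fixed point. The step I expect to be the main obstacle is precisely this last one: controlling the boundary points of the reduced intervals $\Ired{k}$ forces one to track carefully how the forward-endpoint sets removed by Algorithm~\ref{stepreduction} nest inside the intervals $I_\ell$. A secondary delicate point is the non-emptiness argument in the case $a=(j,\eZ)$ of \eqref{LEM:accprops:Iset}, where it is essential to use $(j,\eZ)\in\Index$ and not merely $j\in A_{\eZ}$.
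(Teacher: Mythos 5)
Your proposal is correct and follows essentially the same route as the paper's proof: the key step, part~\eqref{LEM:accprops:Iset}, is argued exactly as in the paper (use nonemptiness of $\Csacc{(j,\eZ)}$ to obtain a backward endpoint $y$ outside $\cycnext{\eZ}(\psi_{\eZ}^{-1}(j))^{-1}\act J_{\psi_{\eZ}^{-1}(j)}$ and then apply~\eqref{BP:allvectorsredI}, with the $\eR$-case handled by the same set-theoretic identity). The only difference is that you spell out in more detail the disjointness in~\eqref{LEM:accprops:Isetunion} and the tracking of boundary points of the reduced intervals through Algorithm~\ref{stepreduction} for~\eqref{LEM:accprops:endpoints}, which the paper treats as immediate consequences of Definition~\ref{DEF:accbranches}, Lemma~\ref{LEM:accbranches} and~\eqref{BP:completegeodesics}.
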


\begin{proof}
The statement of~\eqref{LEM:accprops:union} follows directly from 
Lemma~\ref{LEM:accbranches}. Further, \eqref{LEM:accprops:Isetunion} is 
obviously true because, for~$V\in\{\eX,\eY\}$, we 
have~$I_{(j,V)}=\cycnext{V}(j)\act\,\Ired{\psi_V(j)}$. In order to 
establish~\eqref{LEM:accprops:Iset} we let $a\coloneqq(j,V)\in\Index$ and 
suppose first that $V\in\{\eX,\eY\}$. The definition of~$\Csacc{a}$ immediately 
shows that
\[
 I(\Csacc{a}) \subseteq I(K_V(j)) = \Iset{(j,V)} = \Iset{a}\,.
\]
For the converse inclusion relation we pick any point
\[
y\in \Jset{j}\setminus\cycnext{V}(\psi_V^{-1}(j))^{-1}\act J_{\psi_V^{-1}(j)}\,.
\]
Its existence follows from $K_V(j)\setminus M_V(j) =\Csacc{a}\ne\varnothing$.
By~\eqref{BP:allvectorsredI}, we find for each $x\in I(K_V(j))$ an 
element~$\nu\in\Cs{j}$ such that
\[
(\gamma_\nu(+\infty),\gamma_\nu(-\infty))=(x,y)\,.
\]
Each such element~$\nu$ is in~$K_V(j)\setminus M_V(j)=\Csacc{a}$. Thus, 
$I(K_V(j))\subseteq I(\Csacc{a})$. We suppose now that~$V=\eR$. Then
\begin{align*}
I(\Csacc{a})&=\Ired{j,\st}\setminus\bigl(I(K_{\eX}(j))\cup I(K_{\eY}(j))\bigr)\\
&=\Ired{j,\st}\setminus\bigl(\cycnext{\eX}(j)\act\,\Ired{\psi_{\eX}(j),\st}\cup\cycnext{
\eY}(j)\act\,\Ired{\psi_{\eY}(j),\st}\bigr)\\
&=\Iset{a}\,,
\end{align*}
where we set~$\Ired{\psi_{\eZ}(j)}\coloneqq\varnothing$ 
whenever~$(j,\eZ)\notin\Index$, for~$\eZ\in\{\eX,\eY\}$.
This shows~\eqref{LEM:accprops:Iset}.

Finally, we observe that, for every~$a\in\Index$, the boundary points of~$I_a$ 
and~$J_a$ in the~$\wh\R$-topology emerge as~$\Gamma$-translates of the endpoints of the geodesic 
segments~$\overline{\base{\Cs{j}}}$ for~$j\in A$.
In other words, they are 
contained in the set
\[
\Gamma\act\defset{\eX_j,\eY_j}{j\in A}\,.
\]
Hence,~\eqref{LEM:accprops:endpoints} is a consequence 
of~\eqref{BP:completegeodesics}.
\end{proof}

Our next goal is to ``update'' the transition sets according to the accelerated branches.
In other words, for~$a,b\in\Index$ we search for a characterization of the set
\begin{equation}\label{EQN:acctransset}
\wh{\mathcal{G}}(a,b)\coloneqq\defset{g\in\Gamma}{\exists\, 
\nu\in\Csacc{a}\colon\gamma'(\ittime{\acc,1}(\nu))\in g\act\Csacc{b}}
\end{equation}
\index[symbols]{Gbb@$\wh{\mathcal{G}}(a,b)$}%
in terms of the transition sets for~$\BrS$, where~$\ittime{\acc,1}(\nu)$
\index[symbols]{tacc@$\ittime{\acc,1}(\nu)$}%
denotes 
the first return time w.r.t.~$\BrU_{\acc}$.
This will enable us to prove that the first return 
time~$\ittime{\acc,1}(\nu)$ does indeed exist for all~$\nu\in\BrU_{\acc}$, and 
thus the sets~$\Trans{\acc}{a}{b}$ belonging to the accelerated system are well-defined (see~\eqref{EQDEF:Transacc1} below).
Further, it is a crucial step in the determination of the transformations that 
constitute the structure tuple for the strict transfer operator approach.

We start with updating the cycles and cycle transformations and sets from 
Section~\ref{SEC:cuspacc} to the situation of the accelerated branches.

\begin{defi}\label{DEF:starcycs}
Let~$\eZ\in\{\eX,\eY\}$ and let~$j\in A$ be such that~$(j,\eZ)\in\Index$.
We call the subsequence~$\cycstar{\eZ}(j)$ of~$\cyc{\eZ}{j}$ of all~$k$ that are 
elements of~$A^*$ the \emph{induced}~$\eZ$-\emph{cycle} of~$j$.
We further define the map~$\psi^*_{\eZ}\colon A^*_{\eZ}\to A^*_{\eZ}$ by
\[
\psi^*_{\eZ}(\cycstar{\eZ}(j)_n)\coloneqq\cycstar{\eZ}(j)_{n+1}
\]
\index[symbols]{pZ@$\psi^*_{\eZ}$}%
\index[defs]{induced Z-cycle}%
\index[defs]{Z-cycle!induced}%
for all $n\in\N$.
We also define the transformations
\[
\cycstarnext{(j,\eZ)}\coloneqq\cycnext{\eZ}(j,\psi^*_{\eZ}(j))
\]
\index[symbols]{gs@$\cycstarnext{(j,\eZ)}$}%
and set as before
\[
\cycstarnext{(j,\eZ),(k,\eZ)}\coloneqq\cycstarnext{(j,\eZ)}\cdot\cycstarnext{
(\psi
^*_{\eZ}(j),\eZ)}\cdots\cycstarnext{((\psi^*_\eZ)^{r_{j,k}-1}(j),\eZ)}\,,
\]
\index[symbols]{gsa@$\cycstarnext{(j,\eZ),(k,\eZ)}$}%
where $r_{j,k}\coloneqq\min\defset{\ell\in\N}{(\psi^*_\eZ)^\ell(j)=k}$,
\index[symbols]{r@$r_{j,k}$}%
as well 
as
\[
\cycstartrans{(j,\eZ)}\coloneqq\cycstarnext{(j,\eZ),(j,\eZ)}\,.
\]
\index[symbols]{u@$\cycstartrans{(j,\eZ)}$}%
Finally, we define the \emph{induced cycle set} of~$(j,\eZ)$ to be
\[
\cycstarset{(j,\eZ)}\coloneqq\{(j,\eZ),(\psi^*_\eZ(j),\eZ),\dots,((\psi^*_\eZ)^{
r_{j,j}-1}(j),\eZ)\}\,.
\]
\index[symbols]{Cycs@$\cycstarset{(j,\eZ)}$}%
\index[defs]{induced cycle set}%
\index[defs]{cycle set!induced}%
\end{defi}

\begin{remark}\label{REM:subtleties}
Note that the updated  transformations~$\cycstarnext{(j,V)}$ already make up for 
the loss of branches during the acceleration procedure.
Thus, the cycle transformation remains unaltered, meaning that
\[
\cycstartrans{(j,V)}=\cyctrans{j}{V}\,,
\]
for every~$(j,V)\in A^*\times\{\eX,\eY\}$.
Note further that the induced $\eZ$-cycles are allowed to contain members~$k$ 
for which~$(k,\eZ)$ is not an element of~$\Index$.
This is due to the following eventuality: Let~$j\in A$ be such that~$\Cs{j,\st}= 
M_\eZ(j)$ for some~$\eZ\in\{\eX,\eY\}$, but~$\Cs{j,\st}\ne K_\eZ(j)$.
Then~$j\in A^*_\eZ$, but~$\Csacc{(j,\eZ)}=\varnothing$, 
thus~$(j,\eZ)\notin\Index$.
But~$\Index$ will include the index~$(j,\eR)$.
This distinction proves necessary in the following construction of induced 
transition sets:~$j$ must be included in the induced cycle in order to allow the 
other induced branches of that cycle to ``see''~$(j,\eR)$. But~$(j,\eZ)$ is 
excluded from~$\Index$ and hence we do not construct transition sets for it.
If the level of reduction is at least~$\kappa_1$, with~$\kappa_1$ as in 
Section~\ref{SUBSEC:branchred}, then this eventuality does not occur and, 
consequentially,~$(k,\eZ)\in\Index$ for every member~$k$ of any 
induced~$\eZ$-cycle.
\end{remark}

With that we are now prepared to determine the \emph{induced transition 
sets}~$\Trans{\acc}{a}{b}$ for~$a,b\in\Index$.
\index[symbols]{Gacc@$\Trans{\acc}{a}{b}$}%
\index[defs]{induced transformation set}%
\index[defs]{transformation set!induced}%
Let~$a=(j,V)\in\Index$ and suppose first that~$V\in\{\eX,\eY\}$.
In Remark~\ref{REM:howtoacc} we have derived the decomposition
\[
I_{(j,V)}=\cycnext{V}(j)\act\,
\Ired{\psi_V(j)}=\bigcup_{n\in\N_0}\bigcup_{k\in\cycset{j}{V}}\mc D_{n,V}^+(j,k)\,.
\]
We can now rewrite
\begin{align*}
D_{n,V}^+(j,k)&=\cyctrans{j}{V}^n\cycnext{V}(j,k)\act 
I_k\setminus\cyctrans{j}{V}^n\cycnext{V}(j,k)\cycnext{V}(k)\act I_{\psi_V(k)}\\
&=\cyctrans{j}{V}^n\cycnext{V}(j,k)\act\bigl(I_k\setminus I_{(k,V)}\bigr)\,.
\end{align*}
Hence,
\[
\mc D_{n,V}^+(j,k)=D_{n,V}^+(j,k)\cap \cyctrans{j}{V}^n\cycnext{V}(j,k)\act\,\Ired{k}=\cyctrans{j}{V}^n\cycnext{V}(j,k)\act\bigl(\,\Ired{k}\setminus I_{(k,V)}\bigr)\,.
\]
Therefore, by passing to~``$\st$"-sets we obtain
\begin{equation}\label{EQ:Iadecomp}
\begin{aligned}
\Iset{(j,V)}&=\bigcup_{n\in\N_0}\bigcup_{k\in\cycset{j}{V}}\cyctrans{j}{V}
^n\cycnext{V}(j,k)\act\bigl(\,\Ired{k,\st}\setminus I(K_V(k))\bigr)\\
&=\bigcup_{n\in\N_0}\bigcup_{k\in\cycset{j}{V}}\cyctrans{j}{V}^n\cycnext{V}(j,
k)\act\bigl(\Iset{(k,\eR)}\cup\Iset{(k,V')}\bigr)\,,
\end{aligned}
\end{equation}
with~$V'$ such that~$\{V,V'\}=\{\eX,\eY\}$ and all unions being disjoint. 
By taking Remark~\ref{REM:subtleties} into consideration, 
Proposition~\ref{PROP:accsystem}\eqref{accsystem:empty} allows us to pass 
to~$(k,V)\in\cycstarset{(j,V)}$ in the second union. The 
transformations~$\cycnext{V}(j,k)$ then need to be substituted 
by~$\cycstarnext{(j,V),(k,V)}$. Hence we obtain the disjoint union
\begin{equation}\label{EQ:Iadec_star}
 \Iset{(j,V)} = 
\bigcup_{n\in\N_0}\bigcup_{(k,V)\in\cycstarset{(j,V)}}\cycstartrans{(j,V)}
^n\cycstarnext{(j,V),(k,V)}\act\bigl(\Iset{(k,\eR)}\cup\Iset{(k,V')}\bigr)\,.
\end{equation}
For any~$b\in\Index$,~$b=(k,W)$ (and $a=(j,V)$, $V\in\{\eX,\eY\}$) we therefore 
define 
\begin{equation}\label{EQDEF:Transacc1}
\Trans{\acc}{a}{b}\coloneqq\left\{\!
\begin{array}{cl}
\bigcup_{n\in\N_0}\{\cycstartrans{a}^n\cycstarnext{a,\wt b}\} &\text{if 
$W\ne V$  and $\wt b\coloneqq(k,V)\in\cycstarset{a}$}\,,\\
\varnothing &\text{otherwise}.
\end{array}
\right.
\end{equation}

We suppose now that~$V=\eR$, thus $a=(j,\eR)$. We consider $(k,g)\in 
A\times\Gamma$ such that $g\in\Trans{}{j}{k}$, pick $\nu\in\Csacc{a}$ such that 
\[
 \gamma_\nu(+\infty) \notin 
\cycnext{\eX}(j)\act\,\Ired{\psi_{\eX}(j),\st}\cup\cycnext{\eY}(j)\act\,\Ired{\psi_{\eY}
(j),\st}\,,
\]
and let $\eta$ denote the intersection vector of~$\gamma_\nu$ with 
$g\act\Cs{k}$. In what follows we argue that $\eta$ is contained in 
$g\act\Csacc{b}$ for some~$b\in\Index$ of the form~$b=(k,W)$. To that end, we 
first note that 
\begin{align*}
 \Iset{(j,\eR)} & = \bigcup_{\substack{ k\in A \\ k\notin\{\psi_{\eX}(j), 
\psi_{\eY}(j)  \} }}\bigcup_{g\in\Trans{}{j}{k}}g\act\,\Ired{k,\st} 
  \quad \cup \bigcup_{\substack{ g\in\Trans{}{j}{\psi_\eX(j)} \\ g\not= 
\cycnext{\eX}(j) }}g\act\,\Ired{\psi_\eX(j),\st} 
  \\
  & 
  \qquad
  \cup \bigcup_{\substack{ g\in\Trans{}{j}{\psi_\eY(j)} \\ g\not= 
\cycnext{\eY}(j) }}g\act\,\Ired{\psi_\eY(j),\st}
\end{align*}
by~(\ref{BP:intervaldecompred}\ref{BP:intervaldecompGdecompred}), 
Definition~\ref{DEF:accbranches} and 
Lemma~\ref{LEM:accprops}\eqref{LEM:accprops:Isetunion}. Therefore the hypotheses 
on~$\nu$ imply that $g\act\,\Ired{k} \subseteq I_{(j,\eR)}$. Then, for 
any~$\eZ\in\{\eX,\eY\}$, we have $J_j\ne g\act J(M_{\eZ}(k))$, as follows 
immediately from the definition of $M_\eZ(k)$. Thus, 
\[
J_j\cap g\act J(M_{\eZ}(k))=\varnothing
\]
by Proposition~\ref{PROP:BPintervaldecompV}\eqref{BP:intervaldecompVdecomp}. It 
follows that $\eta\in \Cs{k,\acc}$ and, by Lemma~\ref{LEM:accbranches}, $\eta\in 
\Csacc{b}$ for some~$b=(k,W)\in\Index$. Therefore, for any~$b=(k,W)\in\Index$ 
(and $a=(j,\eR)$) we define
\begin{equation}\label{EQDEF:Transacc2}
\Trans{\acc}{a}{b}\coloneqq\left\{\!
\begin{array}{cl}
\Trans{}{j}{k}\setminus\{\cycnext{\eX}(j)\} &\text{if $k=\psi_\eX(j)$}\,,
\\
\Trans{}{j}{k}\setminus\{\cycnext{\eY}(j)\} &\text{if $k=\psi_\eY(j)$}\,,
\\
\Trans{}{j}{k} &\text{otherwise}.
\end{array}
\right.
\end{equation}

In the following proposition we collect the adapted ``set of branches''-style 
properties satisfied by~$\BrS_{\acc}$.
Each of the statements follows, in a straightforward way, from its respective 
counterpart in Definition~\ref{DEF:redsetofbranches} and the constructions 
above, for which reason we omit a detailed proof.

\begin{prop}\label{PROP:accsystemnewstuff}
The accelerated system~$\BrS_{\acc}$ satisfies the following properties:
\begin{enumerate}[label=$\mathrm{(B\arabic*_{\acc})}$, 
ref=$\mathrm{B\arabic*_{\acc}}$, itemsep=2ex]
\item\label{BPacc:closedgeodesicsHtoX}
\index[defs]{0BPacc01@(B1$_{\acc}$)}%
\index[symbols]{BPacc01@(B1$_{\acc}$)}%
For each~$a\in\Index$ there exists~$\nu\in\Csacc{a}$ such 
that~$\pi(\gamma_\nu)\in\Geo_{\Per}(\Orbi)$.
\item\label{BPacc:completegeodesics}
\index[defs]{0BPacc02@(B2$_{\acc}$)}%
\index[symbols]{BPacc02@(B2$_{\acc}$)}%
For each~$a=(j,V)\in\Index$ the set~$\base{\Csacc{a}}$ is contained in a 
complete geodesic segment~$\mathrm{b}_a$ in~$\H$ with endpoints 
in~$\wh\R\setminus\wh\R_{\st}$.
This segment is given by
\[
\mathrm{b}_a=\overline{\base{\Cs{j}}}\,.
\]
\item\label{BPacc:pointintohalfspaces}
\index[defs]{0BPacc03@(B3$_{\acc}$)}%
\index[symbols]{BPacc03@(B3$_{\acc}$)}%
For each~$a\in\Index$ all elements of~$\Csacc{a}$ point into the same open 
half-space relative to~$\mathrm{b}_a$.
\item\label{BPacc:coverlimitset}
\index[defs]{0BPacc04@(B4$_{\acc}$)}%
\index[symbols]{BPacc04@(B4$_{\acc}$)}%
The~$\Gamma$-translates of~$\defset{I_a}{a\in\Index}$ cover~$\wh\R_{\st}$.
\item\label{BPacc:allvectors}
\index[defs]{0BPacc05@(B5$_{\acc}$)}%
\index[symbols]{BPacc05@(B5$_{\acc}$)}%
For each~$a\in\Index$ and each pair~$(x,y)\in\Iset{a}\times\Jset{a}$ there 
exists a unique element~$\nu\in\Csacc{a}$ such that
\[
(x,y)=\left(\gamma_\nu(+\infty),\gamma_\nu(-\infty)\right)\,.
\]
\item\label{BPacc:disjointunion}
\index[defs]{0BPacc06@(B6$_{\acc}$)}%
\index[symbols]{BPacc06@(B6$_{\acc}$)}%
If~$\mathrm{b}_a\cap g\act\mathrm{b}_b\ne\varnothing$ for 
some~$a=(j,V),b=(k,W)\in\Index$ and some~$g\in\Gamma$, then either~$j=k$ 
and~$g=\id$, or~$\mathrm{H}_{\pm}(j)=g\act\mathrm{H}_{\mp}(k)$.
\item\label{BPacc:intervaldecomp}
\index[defs]{0BPacc07@(B7$_{\acc}$)}%
\index[symbols]{BPacc07@(B7$_{\acc}$)}%
Let~$\Trans{\acc}{a}{b}$ be defined as in~\eqref{EQDEF:Transacc1} 
and~\eqref{EQDEF:Transacc2} and let~$\wh{\mathcal{G}}(a,b)$ be defined as in 
\eqref{EQN:acctransset} for~$a,b\in\Index$.
\begin{enumerate}[label=$\mathrm{(\roman*)}$, ref=$\mathrm{\roman*}$]
\item\label{BPacc:decomp}
For every~$a\in\Index$ we have
\[
\Iset{a}=\bigcup_{b\in\Index}\bigcup_{g\in\Trans{\acc}{a}{b}}g\act\Iset{b}\,.
\]
This union is disjoint.
\item\label{BPacc:frt}
For every~$a\in\Index$ and every~$\nu\in\Csacc{a}$ there 
exists~$t\in(0,+\infty)$ such that~$\gamma_\nu(t)\in\Gamma\act\BrU_{\acc}$.
\item\label{BPacc:transsetsequal}
For every pair~$(a,b)\in\Index\times\Index$ we have
\[
\Trans{\acc}{a}{b}=\wh{\mathcal{G}}(a,b)\,.
\]
\end{enumerate}
\item\label{BPacc:leavespaceforflip}
\index[defs]{0BPacc08@(B8$_{\acc}$)}%
\index[symbols]{BPacc08@(B8$_{\acc}$)}%
If~$\BrS$ is admissible, then~$\BrS_\acc$ is admissible in the sense that there 
exist a point~$q\in\wh\R$ and an open neighborhood~$\mathcal{U}$ of~$q$ 
in~$\wh\R$ such that
\[
\mathcal{U}\cap\bigcup_{a\in\Index}\Iset{a}=\varnothing\qquad\text{and}\qquad q\notin I_a\,,
\]
for every~$a\in\Index$.
\end{enumerate}
\end{prop}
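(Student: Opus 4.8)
The plan is to verify the eight properties one by one, in the logical order: the ``inherited'' properties first, then the pointwise surjectivity~\eqref{BPacc:allvectors}, then~\eqref{BPacc:closedgeodesicsHtoX}, then~\eqref{BPacc:intervaldecomp}, and finally~\eqref{BPacc:coverlimitset}. Throughout I would use that for $a=(j,V)\in\Index$ one has $\Csacc{a}\subseteq\Cs{j}$ and $\mathrm{b}_a=\overline{\base{\Cs{j}}}$, so that~\eqref{BPacc:completegeodesics}, \eqref{BPacc:pointintohalfspaces} and~\eqref{BPacc:disjointunion} are immediate restatements of~\eqref{BP:completegeodesics}, \eqref{BP:pointintohalfspaces} and~\eqref{BP:disjointunion} for the reduced set of branches $\BrS$. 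For~\eqref{BPacc:leavespaceforflip} one notes that $I_a\subseteq\Ired{j}\subseteq I_j$ and $\Iset{a}\subseteq\Iset{j}$ for every $a=(j,V)\in\Index$ (using $\cycnext{\eZ}(j)\in\Trans{}{j}{\psi_{\eZ}(j)}$ and~(\ref{BP:intervaldecompred}\ref{BP:intervaldecompGdecompred}) to bound the translated intervals $I_{(j,\eX)},I_{(j,\eY)}$), so any point and neighborhood witnessing admissibility of $\BrS$ witnesses it for $\BrS_{\acc}$.

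The core is~\eqref{BPacc:allvectors}, which I would prove by distinguishing the cases $V\in\{\eX,\eR,\eY\}$. Fix $a=(j,V)\in\Index$ and $(x,y)\in\Iset{a}\times\Jset{a}$. By Lemma~\ref{LEM:accprops}\eqref{LEM:accprops:Iset} (together with its dual $\Jset{a}=J(\Csacc{a})$, proved the same way) one gets $\Iset{a}\subseteq\Ired{j,\st}$ and $y\in\Jset{j}$, so~\eqref{BP:allvectorsredI} for $\BrS$ supplies a unique $\nu\in\Cs{j}$ with $(\gamma_\nu(+\infty),\gamma_\nu(-\infty))=(x,y)$, and the task is to place $\nu$ in $\Csacc{a}$. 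If $V=\eR$, then $x\in\Iset{(j,\eR)}$ excludes $x$ from $I(K_{\eX}(j))\cup I(K_{\eY}(j))$, so $\nu\notin K_{\eX}(j)\cup K_{\eY}(j)$, i.e.\ $\nu\in\Csacc{(j,\eR)}$. If $V=\eX$ (and symmetrically $\eY$), then $x\in\Iset{(j,\eX)}=I(K_{\eX}(j))$ forces $\nu\in K_{\eX}(j)$, while $y\in\Jset{(j,\eX)}$ excludes $\gamma_\nu(-\infty)$ from $\cycnext{\eX}(\psi_{\eX}^{-1}(j))^{-1}\act J_{\psi_{\eX}^{-1}(j)}$, hence $\nu\notin M_{\eX}(j)$, so $\nu\in K_{\eX}(j)\setminus M_{\eX}(j)=\Csacc{(j,\eX)}$; uniqueness is inherited from~\eqref{BP:allvectorsredI}. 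With~\eqref{BPacc:allvectors} in hand, \eqref{BPacc:closedgeodesicsHtoX} follows: for $a\in\Index$ the sets $\Iset{a},\Jset{a}$ are nonempty (witnessed by any $\nu\in\Csacc{a}$) and each point of $\Iset{a}$ (resp.\ $\Jset{a}$) is an interior point of $I_a$ (resp.\ $J_a$), the non-interior points of $I_a,J_a$ lying outside $\wh\R_{\st}$ by~\eqref{BP:completegeodesics} and Lemma~\ref{LEM:accprops}\eqref{LEM:accprops:endpoints}; hence Proposition~\ref{PROP:EXliesdense} yields a pair $(x,y)\in E(\Orbi)\cap(I_a\times J_a)\subseteq\Iset{a}\times\Jset{a}$, and~\eqref{BPacc:allvectors} then gives $\nu\in\Csacc{a}$ with $\pi(\gamma_\nu)$ periodic.

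For~\eqref{BPacc:intervaldecomp}, items~\eqref{BPacc:decomp} and~\eqref{BPacc:frt} I would obtain as follows. For~\eqref{BPacc:decomp}: when $a=(j,V)$ with $V\in\{\eX,\eY\}$, the identity~\eqref{EQ:Iadec_star} already displays $\Iset{a}$ as the disjoint union over $n\in\N_0$ and $(k,V)\in\cycstarset{a}$ of $\cycstartrans{a}^n\cycstarnext{a,(k,V)}\act(\Iset{(k,\eR)}\cup\Iset{(k,V')})$, which is exactly the family ranged over in~\eqref{EQDEF:Transacc1} (with $V'$ the element of $\{\eX,\eY\}$ distinct from $V$); when $a=(j,\eR)$, one takes~(\ref{BP:intervaldecompred}\ref{BP:intervaldecompGdecompred}) applied to $\Ired{j,\st}$, removes the two cycle-generating summands $\cycnext{\eX}(j)\act\Ired{\psi_{\eX}(j),\st}$ and $\cycnext{\eY}(j)\act\Ired{\psi_{\eY}(j),\st}$, and refines the surviving summands by Lemma~\ref{LEM:accprops}\eqref{LEM:accprops:Isetunion}, arriving at~\eqref{EQDEF:Transacc2}; disjointness of all unions is inherited from disjointness of the $\eZ$-cycle interval decomposition of Remark~\ref{REM:howtoacc} and of the reduced transition decomposition, together with~\eqref{BP:disjointunion}. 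For~\eqref{BPacc:frt}: given $\nu\in\Csacc{a}\subseteq\Cs{j,\st}$, the return time $\retime{\BrU}(\nu)$ exists by Proposition~\ref{PROP:CofSoBst}\eqref{CofSoB:firstreturn}, so $\gamma_\nu$ meets $\Gamma\act\BrU_{\st}$ at a positive time; Proposition~\ref{PROP:accsystem:geodintersect} then yields a positive time at which $\gamma_\nu$ meets $\Gamma\act\BrU_{\acc}$, and local finiteness (Proposition~\ref{PROP:branches_locfinite}) makes the set of such times have a minimum.

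Item~\eqref{BPacc:transsetsequal}, the equality $\Trans{\acc}{a}{b}=\wh{\mathcal{G}}(a,b)$, is the genuinely delicate point and, I expect, the main obstacle: one must verify that the cuspidal acceleration deletes from $\BrU$ \emph{precisely} the intersection vectors interior to the $\eZ$-cycles, so that for $\nu\in\Csacc{(j,V)}$ with $\gamma_\nu(+\infty)$ in the cycle piece $\mathcal{D}_{n,V}^+(j,k)$ the next intersection of $\gamma_\nu$ with $\Gamma\act\BrU_{\acc}$ is exactly on $\cycstartrans{a}^n\cycstarnext{a,(k,V)}\act\Csacc{b}$ (and the parallel statement for $a=(j,\eR)$, which was essentially established in the text preceding~\eqref{EQDEF:Transacc2}). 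I would carry this out by following the analysis of Remark~\ref{REM:howtoacc}: the nesting of the cycle intervals, combined with Lemma~\ref{LEM:halfspaceFuture} and Lemma~\ref{LEM:halfspacePast}, shows that every intersection vector of $\gamma_\nu$ strictly between $\nu$ and that translate lies in some $K_{\eW}(\cdot)\cap M_{\eW}(\cdot)$ and is therefore absent from $\BrU_{\acc}$; the bookkeeping for cycle members lying in $A^*$ but not in $A^*_{\eZ}$ (so that $(k,\eZ)\notin\Index$ although $(k,\eR)\in\Index$) is handled via Proposition~\ref{PROP:accsystem}\eqref{accsystem:empty} as in Remark~\ref{REM:subtleties}, while the possibility $A^*\varsubsetneq A$ is excluded once a branch reduction of level at least~$\kappa_1$ has been applied, cf.\ Remark~\ref{REM:Itoldyoutoreduce}. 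Finally~\eqref{BPacc:coverlimitset} follows from~\eqref{BPacc:allvectors}, \eqref{BPacc:intervaldecomp} and the argument of Proposition~\ref{PROP:oldB1}: every $\wh\gamma\in\Geo_{\Per}(\Orbi)$ has a lift meeting $\Gamma\act\BrU_{\st}$ since $\BrS$ satisfies the reduced version of~\eqref{BP:closedgeodesicsXtoH}, hence by Proposition~\ref{PROP:accsystem:geodintersect} a lift meeting $\Gamma\act\BrU_{\acc}$; thus $\BrS_{\acc}$ satisfies the accelerated analogue of~\eqref{BP:closedgeodesicsXtoH}, and the equivalence of Proposition~\ref{PROP:oldB1}, applied with $\BrS_{\acc}$ in place of $\BrS$, converts this into~\eqref{BPacc:coverlimitset}.
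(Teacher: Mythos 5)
Most of your outline is sound and is exactly the derivation the paper has in mind (the paper omits the proof, asserting each item follows from its counterpart for the reduced set of branches and the constructions of Sections~\ref{SEC:cuspacc}--\ref{SEC:structacc}): the inheritance of \eqref{BPacc:completegeodesics}, \eqref{BPacc:pointintohalfspaces}, \eqref{BPacc:disjointunion}, \eqref{BPacc:leavespaceforflip}, the case analysis for \eqref{BPacc:allvectors} via $K_V(j)$, $M_V(j)$ and \eqref{BP:allvectorsredI}, and the treatment of \eqref{BPacc:intervaldecomp} via \eqref{EQ:Iadec_star}, \eqref{EQDEF:Transacc2}, Remark~\ref{REM:howtoacc} and Proposition~\ref{PROP:accsystem:geodintersect} all match the intended arguments. (Two small points: the return times for vectors on the \emph{reduced} branches should be justified via the iterated sequences with respect to the reduced branch union, Corollary~\ref{COR:allinitred}, rather than by citing Proposition~\ref{PROP:CofSoBst}, which is stated for a genuine set of branches; and your claim in \eqref{BPacc:closedgeodesicsHtoX} that every point of $\Iset{a}$ is an interior point of $I_a$ is not literally true, since already $\Ired{j}$, see~\eqref{EQDEF:redInterval}, arises by deleting points from the interior of~$I_j$, so $I_a$ need not be open; the density argument survives because $\Iset{a}$ and $\Jset{a}$ each contain a nonempty set of the form $(\text{open set})\cap\wh\R_{\st}$, but the point has to be chosen there.)

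The genuine gap is your final step, the deduction of \eqref{BPacc:coverlimitset}. You establish the accelerated analogue of \eqref{BP:closedgeodesicsXtoH} and then invoke ``the equivalence of Proposition~\ref{PROP:oldB1} with $\BrS_{\acc}$ in place of $\BrS$''. The direction you need runs, in the paper, through Lemma~\ref{LEM:coverlimitset}, whose proof uses the \emph{openness} of the sets~$I_j$; as noted above, the accelerated sets~$I_a$ (and already the reduced sets~$\Ired{j}$) are in general not open, so this transfer is not automatic. Moreover, the accelerated analogue of \eqref{EQN:coverplusinfty} only covers attracting fixed points of hyperbolic elements, which is strictly weaker than covering all of~$\wh\R_{\st}$; without an openness or nested-interval/termination argument there is no passage from the one to the other, so as stated this step does not go through.

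The repair is short and uses only what you have already proved. Let $x\in\wh\R_{\st}$. By \eqref{BP:coverlimitset} for the initial set of branches, $x\in g\act I_{j,\st}$ for some branch index~$j$ and $g\in\Gamma$; choose $y\in g\act\Jset{j}$ and use \eqref{BP:allvectorsredII} (translated by~$g$, after passing if necessary along the identity path to an index of the reduced system, as in the proof of Proposition~\ref{PROP:stepreduxworx}) to obtain a vector~$\nu$ on a $\Gamma$-translate of a reduced branch with $\gamma_\nu(+\infty)=x$ and $\gamma_\nu(-\infty)=y$. Proposition~\ref{PROP:accsystem:geodintersect} then provides $t\geq 0$, $h\in\Gamma$ and $a\in\Index$ with $\gamma_\nu'(t)\in h\act\Csacc{a}$, and Lemma~\ref{LEM:accprops}\eqref{LEM:accprops:Iset} yields
\[
x=\gamma_\nu(+\infty)\in h\act\Iset{a}\subseteq h\act I_a\,,
\]
which is \eqref{BPacc:coverlimitset} without any appeal to Proposition~\ref{PROP:oldB1}.
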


\begin{remark}\label{REM:accgroupelements}
Let~$a,b\in\Index$.
By combining part~\eqref{BPacc:transsetsequal} of~\eqref{BPacc:intervaldecomp} 
with~\eqref{EQN:acctransset}, the definition of~$\Csacc{a}$, and 
Algorithm~\ref{stepreduction} we see that every element~$g\in\Trans{\acc}{a}{b}$ 
emerges as the product of transition set elements.
This means we find~$n\in\N$,~$k_1,\dots,k_{n+1}\in A$, 
and~$h_i\in\Trans{}{k_i}{k_{i+1}}$ 
such that~$g=h_1h_2\cdots h_{n+1}$.
\end{remark}

\begin{cor}\label{COR:accsystemnewit}
Let~$\nu\in\BrU_{\acc}$.
Then there exist uniquely determined sequences
\[
(\ittime{\acc,n}(\nu))_{n\in\Z}\text{ in 
}\R,\quad(\itindex{\acc,n}(\nu))_{n\in\Z}\text{ in 
}\Index,\quad\text{and}\quad(\ittrans{\acc,n}(\nu))_{n\in\Z}\text{ in }\Gamma\,,
\]
\index[symbols]{tacc@$\ittime{\acc,n}(\nu)$}%
\index[symbols]{kacc@$\itindex{\acc,n}(\nu)$}%
\index[symbols]{gacc@$\ittrans{\acc,n}(\nu)$}%
which satisfy the following properties:
\begin{enumerate}[label=$\mathrm{(\roman*)}$, ref=$\mathrm{\roman*}$]
\item\label{accsystemnewit:firstreturn}
The sequence~$(\ittime{\acc,n}(\nu))_{n\in\Z}$ is a subsequence of 
$(\ittime{\BrU,n}(\nu))_{n\in\Z}$. It satisfies
\begin{align*}
\ittime{\acc,0}(\nu)&=0
\intertext{and}
\ittime{\acc,n}(\nu)&=\begin{cases}\min\defset{t>\ittime{\acc,n-1}(\nu)}{
\gamma_\nu^{\prime}(t)\in\Gamma\act\BrU_{\acc}}&\text{for $n\geq 
1$}\,,
\\
\max\defset{t<\ittime{\acc,n+1}(\nu)}{\gamma_\nu^{\prime}
(t)\in\Gamma\act\BrU_{\acc}}&\text{for $n\leq-1$}\,.
\end{cases}
\end{align*}

\item\label{accsystemnewit:transforms}
For all $n\in\Z$ we have
\[
\ittrans{\acc,n}(\nu)\in\Trans{\acc}{\itindex{\acc,n-1}(\nu)}{\itindex{\acc,n}
(\nu)}\,.
\]

\item\label{accsystemnewit:intersect}
Let $a\in\Index$, $t\in\R$, and $g\in\Gamma$ be such that 
$\gamma_\nu^{\prime}(t)\in g\act\Csacc{a}$. Then there exists exactly one 
index~$n\in\Z$ such that
\begin{align*}
a & =\itindex{\acc,n}(\nu)\,,\qquad t=\ittime{\acc,n}(\nu)
\intertext{and}
g & =\ittrans{\acc,\sgn(t)}(\nu)\cdot\ittrans{\acc,2\sgn(t)}(\nu)\cdots\ittrans{
\acc,n}(\nu)\,.
\end{align*}
\end{enumerate}
\end{cor}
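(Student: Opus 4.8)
The plan is to carry over, essentially verbatim, the construction of the system of iterated sequences performed for (reduced) sets of branches in Sections~\ref{SUBSEC:reducedbranches} and~\ref{SEC:stepredux}, now with the accelerated system $\BrS_{\acc}$ playing the role of the set of branches and with the structural results of Section~\ref{SEC:structacc} replacing the defining properties of a set of branches. First I would fix $\nu\in\BrU_{\acc}$, say $\nu\in\Csacc{a_0}$ with $a_0=(j_0,V_0)\in\Index$. Since $\Csacc{a_0}\subseteq\Cs{j_0,\st}$ we have $\nu\in\BrU_{\st}$, hence $\pi(\gamma_\nu)\in\Geo(\Orbi)\setminus\Vanish(\Orbi)$. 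By Proposition~\ref{PROP:accsystemcrosssection}, $\CrSc_{\acc}=\pi(\BrU_{\acc})$ is a strong cross section and $\pi(\gamma_\nu)$ intersects it infinitely often in past and future, while~\eqref{CS:discreteintime} makes these intersections discrete in time; lifting to $\H$ this says that the set $\defset{t\in\R}{\gamma_\nu'(t)\in\Gamma\act\BrU_{\acc}}$ is a discrete, bi-infinite subset of $\R$ containing $0$. Enumerating it as a strictly increasing sequence $(\ittime{\acc,n}(\nu))_{n\in\Z}$ with $\ittime{\acc,0}(\nu)=0$ forces $\sgn(\ittime{\acc,n}(\nu))=\sgn(n)$ and $\ittime{\acc,n}(\nu)\to\pm\infty$, which is exactly the uniqueness and first‑return characterisation of~\eqref{accsystemnewit:firstreturn}; that it is a subsequence of $(\ittime{\BrU,n}(\nu))_{n}$ follows from $\BrU_{\acc}\subseteq\BrU$ together with Corollary~\ref{COR:allinitred} (alternatively from Proposition~\ref{PROP:accsystem:geodintersect}), and property~(\ref{BPacc:intervaldecomp}\ref{BPacc:frt}) of Proposition~\ref{PROP:accsystemnewstuff} is what guarantees a positive return time for \emph{every} $\nu\in\BrU_{\acc}$, so that the enumeration is available throughout.

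Next I would define the index and transformation sequences. The crucial observation is that the $\Gamma$‑translates of the accelerated branches are pairwise distinct: if $h_1\act\nu_1=h_2\act\nu_2$ with $\nu_i\in\Csacc{(k_i,W_i)}$, then, since $\Csacc{(k_i,W_i)}\subseteq\Cs{k_i}$, Proposition~\ref{PROP:CofSoBnonst}\eqref{CofSoB:disjointunion} yields $k_1=k_2$ and $h_1=h_2$, hence $\nu_1=\nu_2$, and then $W_1=W_2$ by the disjointness of $\Csacc{(k,\eX)}$, $\Csacc{(k,\eR)}$, $\Csacc{(k,\eY)}$ from Lemma~\ref{LEM:accbranches}. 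Thus, for each $n\in\Z$ there is a unique pair $(a,h_n)\in\Index\times\Gamma$ with $\gamma_\nu'(\ittime{\acc,n}(\nu))\in h_n\act\Csacc{a}$, and $h_0=\id$, $a=a_0$ when $n=0$. I would set $\itindex{\acc,n}(\nu)\coloneqq a$, $\ittrans{\acc,0}(\nu)\coloneqq\id$, and $\ittrans{\acc,n}(\nu)\coloneqq h_{n-1}^{-1}h_n$ for $n\geq1$, $\ittrans{\acc,n}(\nu)\coloneqq h_{n+1}^{-1}h_n$ for $n\leq-1$; uniqueness of all three sequences is immediate from the uniqueness of $(a,h_n)$. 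Telescoping gives $h_n=\ittrans{\acc,1}(\nu)\cdots\ittrans{\acc,n}(\nu)$ for $n\geq1$ and $h_n=\ittrans{\acc,-1}(\nu)\cdots\ittrans{\acc,n}(\nu)$ for $n\leq-1$, which, together with the enumeration property of $(\ittime{\acc,n}(\nu))_{n}$ and the strict monotonicity, yields~\eqref{accsystemnewit:intersect}.

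It then remains to establish~\eqref{accsystemnewit:transforms}. For $n\geq1$, put $\mu\coloneqq h_{n-1}^{-1}\act\gamma_\nu'(\ittime{\acc,n-1}(\nu))\in\Csacc{\itindex{\acc,n-1}(\nu)}$; then $\gamma_\mu(t)=h_{n-1}^{-1}\act\gamma_\nu(t+\ittime{\acc,n-1}(\nu))$, and because $\Gamma\act\BrU_{\acc}$ is $\Gamma$‑invariant and the times $\ittime{\acc,\cdot}(\nu)$ are consecutive, the first positive intersection time of $\gamma_\mu$ with $\Gamma\act\BrU_{\acc}$ is $\ittime{\acc,n}(\nu)-\ittime{\acc,n-1}(\nu)$, at which $\gamma_\mu$ meets $\ittrans{\acc,n}(\nu)\act\Csacc{\itindex{\acc,n}(\nu)}$. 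By the definition~\eqref{EQN:acctransset} of $\wh{\mathcal G}$ this gives $\ittrans{\acc,n}(\nu)\in\wh{\mathcal G}\bigl(\itindex{\acc,n-1}(\nu),\itindex{\acc,n}(\nu)\bigr)$, and property~(\ref{BPacc:intervaldecomp}\ref{BPacc:transsetsequal}) of Proposition~\ref{PROP:accsystemnewstuff} identifies this with $\Trans{\acc}{\itindex{\acc,n-1}(\nu)}{\itindex{\acc,n}(\nu)}$; the case $n\leq-1$ is symmetric, using the previous intersection. I expect the main obstacle to be purely bookkeeping at this last step — checking carefully that the shift‑and‑translate operation $\gamma_\nu\mapsto\gamma_\mu$ turns the $n$‑th accelerated intersection of $\gamma_\nu$ into the first one of $\gamma_\mu$ and neither creates nor destroys accelerated intersections — rather than any conceptual difficulty, since Propositions~\ref{PROP:accsystem:geodintersect}, \ref{PROP:accsystemcrosssection} and~\ref{PROP:accsystemnewstuff} already provide all the structural input required.
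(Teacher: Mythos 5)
Your proposal is correct and is essentially the paper's own (implicit) argument: the corollary is obtained by rerunning the construction of the iterated sequences from Section~\ref{SUBSEC:reducedbranches} for the accelerated system, with Propositions~\ref{PROP:accsystem:geodintersect}, \ref{PROP:accsystemcrosssection} and~\ref{PROP:accsystemnewstuff} (in particular~(\ref{BPacc:disjointunion}), (\ref{BPacc:intervaldecomp}\ref{BPacc:frt}) and (\ref{BPacc:intervaldecomp}\ref{BPacc:transsetsequal})) supplying exactly the discreteness, bi-infiniteness, uniqueness of the intersected translate and the identification $\wh{\mathcal{G}}(\cdot,\cdot)=\Trans{\acc}{\cdot}{\cdot}$ that you invoke. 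The only point worth noting is that for $n\le-1$ your shift argument in fact delivers $\ittrans{\acc,n}(\nu)^{-1}\in\Trans{\acc}{\itindex{\acc,n}(\nu)}{\itindex{\acc,n+1}(\nu)}$, i.e.\ the same convention used for the unaccelerated sequences after~\eqref{EQDEF:ittrans2}, which is the reading of part~(\ref{accsystemnewit:transforms}) for non-positive indices compatible with the definition of the transformation sequence forced by part~(\ref{accsystemnewit:intersect}); with that reading your treatment of the negative-index case is complete.
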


The following result on the relation between elements of the set of representatives of the accelerated cross section and finite sequences of transition elements should be considered as an ``accelerated variant'' of Lemma~\ref{LEM:paths_gen}.

\begin{cor}\label{COR:seq_tangvec}
Let $m\in\N$ and suppose that $a_0, a_1, \ldots, a_m\in\Index$ and $g_1, \ldots, 
g_m\in\Gamma$ are such that 
\[
 g_j \in \Trans{\acc}{a_{j-1}}{a_j} \qquad\text{for $j\in\{1,\ldots, m\}$}\,.
\]
Then there exists~$\nu\in\Csacc{a_0}$ such that 
\begin{align*}
 a_j & = \itindex{\acc,j}(\nu) \qquad\text{for $j\in\{0,\ldots, m\}$}
 \intertext{and}
 g_j & = \ittrans{\acc,j}(\nu) \qquad\text{for $j\in\{1,\ldots, m\}$}\,.
\end{align*}
Furthermore, the subset of~$\Csacc{a_0}$ of all vectors with that property is 
given by
\begin{equation}\label{EQN:Csacc_initialrestrict}
\Csacc{a_0}\vert_{h_m\act\Csacc{a_m}}\coloneqq\defset{\nu\in\Csacc{a_0}}{\exists 
t^*>0:\gamma_\nu(t^*)\in h_m\act\Csacc{a_m}}\,,
\end{equation}
where~$h_m\coloneqq g_1\cdots g_m$.
\end{cor}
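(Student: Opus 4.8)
The plan is to establish this as the accelerated analogue of Lemma~\ref{LEM:paths_gen}\eqref{path:allbyCS}, replacing the reduced set of branches and its transition sets by the accelerated system $\BrS_{\acc}$ and the induced transition sets $\Trans{\acc}{a}{b}$, and invoking Proposition~\ref{PROP:accsystemnewstuff} and Corollary~\ref{COR:accsystemnewit} in place of the corresponding statements for $\BrS$. First I would record a strict-nesting fact for induced transition elements: for $a,b\in\Index$ and $g\in\Trans{\acc}{a}{b}$ one has $g\act\Iset{b}\varsubsetneq\Iset{a}$. This is the accelerated counterpart of Lemma~\ref{LEM:halfspaceincludeGV} and follows either from (\ref{BPacc:intervaldecomp}\ref{BPacc:decomp}) together with \eqref{BPacc:disjointunion}, or by writing $g$ as a product of reduced transition elements (Remark~\ref{REM:accgroupelements}) and iterating Lemma~\ref{LEM:halfspaceincludeGV}. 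Iterating this along the given data yields $h_m\act\Iset{a_m}\varsubsetneq\dots\varsubsetneq g_1\act\Iset{a_1}\varsubsetneq\Iset{a_0}$ with $h_m=g_1\cdots g_m$. Since the relevant $I$- and $J$-sets are nonempty by \eqref{BPacc:closedgeodesicsHtoX}, I would pick $(x,y)\in h_m\act\Iset{a_m}\times\Jset{a_0}\subseteq\Iset{a_0}\times\Jset{a_0}$ and, using \eqref{BPacc:allvectors}, obtain the unique $\nu\in\Csacc{a_0}$ with $(\gamma_\nu(+\infty),\gamma_\nu(-\infty))=(x,y)$.

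Next I would show that this $\nu$ realizes the prescribed path, by induction on the step, exactly as in the proof of Lemma~\ref{LEM:paths_gen}\eqref{path:allbyCS}. The key tool is the accelerated analogue of Corollary~\ref{COR:uniquereturn}: for $\nu'\in\Csacc{a}$, $g\in\Gamma$, $b\in\Index$, one has $\gamma_{\nu'}'(\ittime{\acc,1}(\nu'))\in g\act\Csacc{b}$ if and only if $g\in\Trans{\acc}{a}{b}$ and $\gamma_{\nu'}(+\infty)\in g\act\Iset{b}$. The forward direction is immediate from $I(\Csacc{b})=\Iset{b}$ (Lemma~\ref{LEM:accprops}\eqref{LEM:accprops:Iset}) and (\ref{BPacc:intervaldecomp}\ref{BPacc:transsetsequal}); the backward direction uses that $\ittime{\acc,1}(\nu')$ exists by (\ref{BPacc:intervaldecomp}\ref{BPacc:frt}), that $\gamma_{\nu'}'(\ittime{\acc,1}(\nu'))$ then lies in some $h\act\Csacc{c}$ with $h\in\Trans{\acc}{a}{c}$ and $\gamma_{\nu'}(+\infty)\in h\act\Iset{c}$, and that the decomposition in (\ref{BPacc:intervaldecomp}\ref{BPacc:decomp}) is disjoint, forcing $(h,c)=(g,b)$. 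Applying this with $\nu'=\nu$ and $\gamma_\nu(+\infty)=x\in g_1\act\Iset{a_1}$ settles step $1$; translating by $g_1^{-1}$ and iterating — noting that the back-translated intersection vector lies in $\Csacc{a_1}$ (this is where $x\in h_m\act\Iset{a_m}$, not merely $x\in\Iset{a_0}$, is used) and that the translated geodesic still heads into $g_2\cdots g_m\act\Iset{a_2}$ — yields the remaining steps. Combined with Corollary~\ref{COR:accsystemnewit} this gives $a_j=\itindex{\acc,j}(\nu)$ and $g_j=\ittrans{\acc,j}(\nu)$ for all $j$.

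For the second statement I would prove both inclusions into $\Csacc{a_0}\vert_{h_m\act\Csacc{a_m}}$. If $\nu\in\Csacc{a_0}$ has the path property, then $\gamma_\nu'(\ittime{\acc,m}(\nu))\in g_1\cdots g_m\act\Csacc{a_m}=h_m\act\Csacc{a_m}$ with $\ittime{\acc,m}(\nu)>0$ (Corollary~\ref{COR:accsystemnewit}\eqref{accsystemnewit:firstreturn}), so $\nu$ lies in that set. Conversely, let $\nu\in\Csacc{a_0}$ with $\gamma_\nu(t^*)\in h_m\act\Csacc{a_m}$ for some $t^*>0$. By Corollary~\ref{COR:accsystemnewit}\eqref{accsystemnewit:intersect} there is a unique $n\in\N$ with $\itindex{\acc,n}(\nu)=a_m$, $\ittime{\acc,n}(\nu)=t^*$ and $\ittrans{\acc,1}(\nu)\cdots\ittrans{\acc,n}(\nu)=h_m$. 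I would then compare the given path from $a_0$ with the one read off from the iterated sequences of $\nu$, matching them prefix by prefix via the strict-nesting fact and the disjointness in (\ref{BPacc:intervaldecomp}\ref{BPacc:decomp}), as in the proof of Proposition~\ref{PROP:returngraph}\eqref{returngraph:disjointpaths}. To force $n=m$ I would invoke the non-collapsing property \eqref{BP:noidentity}: a length mismatch would produce a nonempty product of induced transition elements equal to $\id$, hence by Remark~\ref{REM:accgroupelements} a nonempty product of reduced transition elements along a consecutive path equal to $\id$, contradicting \eqref{BP:noidentity}.

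The hard part will be precisely this last point — transporting the non-collapsing property from $\BrS$ through the acceleration to $\BrS_{\acc}$ so as to pin down $n=m$ in the uniqueness claim; once that is in hand the remainder is a faithful, if bookkeeping-heavy, translation of the arguments behind Lemma~\ref{LEM:paths_gen} and Proposition~\ref{PROP:returngraph} to the accelerated setting, all of whose ingredients are already supplied by Proposition~\ref{PROP:accsystemnewstuff}, Corollary~\ref{COR:accsystemnewit}, and Lemma~\ref{LEM:accprops}.
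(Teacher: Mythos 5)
For the first assertion your argument is essentially the paper's: the paper likewise deduces from the hypotheses the nesting $g_j\act I_{a_j}\subseteq I_{a_{j-1}}$ and $J_{a_{j-1}}\subseteq g_j\act J_{a_j}$, uses \eqref{BPacc:allvectors} and \eqref{BPacc:intervaldecomp} to see that $\Csacc{a_0}\vert_{h_m\act\Csacc{a_m}}\ne\varnothing$, picks an element $\nu_0$ of this set, identifies $(a_1,g_1)=\bigl(\itindex{\acc,1}(\nu_0),\ittrans{\acc,1}(\nu_0)\bigr)$ from~(\ref{BPacc:intervaldecomp}\ref{BPacc:transsetsequal}), \eqref{EQN:acctransset} and the disjointness in~(\ref{BPacc:intervaldecomp}\ref{BPacc:decomp}) --- this is exactly your accelerated analogue of Corollary~\ref{COR:uniquereturn}, which the paper uses without isolating it --- and then iterates with $\nu_j\coloneqq\gamma_{\nu_{j-1}}'(\ittime{\acc,1}(\nu_{j-1}))$. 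Only the inclusion $g\act\Iset{b}\subseteq\Iset{a}$ enters; the strictness you set up at the start is never used.

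Where you part ways is the second assertion, and there you have misplaced the difficulty. The corollary does not claim that the intersection at time~$t^*$ is the $m$-th accelerated intersection, so nothing is gained by determining the index~$n$ furnished by Corollary~\ref{COR:accsystemnewit}\eqref{accsystemnewit:intersect}, let alone by forcing $n=m$. For the converse inclusion it suffices to note that every $\nu\in\Csacc{a_0}\vert_{h_m\act\Csacc{a_m}}$ satisfies $\gamma_\nu(+\infty)\in h_m\act I(\Csacc{a_m})=h_m\act\Iset{a_m}$ by Lemma~\ref{LEM:accprops}\eqref{LEM:accprops:Iset}, so your induction from the first part applies verbatim to this~$\nu$ and already yields $a_j=\itindex{\acc,j}(\nu)$ and $g_j=\ittrans{\acc,j}(\nu)$ for $j\le m$; this is precisely why the paper runs the existence argument with an \emph{arbitrary} element of $\Csacc{a_0}\vert_{h_m\act\Csacc{a_m}}$ and then treats the second assertion as immediate, the other inclusion being, as you note, Corollary~\ref{COR:accsystemnewit}. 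Your detour moreover invokes~\eqref{BP:noidentity}, which is not among the standing hypotheses at this point --- in Section~\ref{SEC:structacc} the set~$\BrS$ is only assumed to be a reduced set of branches; non-collapsing is imposed only in Section~\ref{SEC:strictTOAexist} --- so as written it would establish a weaker statement. One bookkeeping slip: the back-translated intersection vector lies in $\Csacc{a_1}$ automatically, since the first accelerated return is on $g_1\act\Csacc{a_1}$; what the membership $x\in h_m\act\Iset{a_m}$ actually buys is that the translated forward endpoint lies in $g_2\cdots g_m\act\Iset{a_m}\subseteq g_2\act\Iset{a_2}$, which is what keeps the induction running.
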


\begin{proof}
The assumptions imply that
\[
g_j\act I_{a_j}\subseteq I_{a_{j-1}}\quad\text{and}\quad J_{a_{j-1}}\subseteq 
g_j\act J_{a_j}\,,
\]
for all~$j\in\{1,\dots,n\}$.
The combination of~\eqref{BPacc:allvectors} and~\eqref{BPacc:intervaldecomp} 
therefore implies
\[
\varnothing\ne\Csacc{a_0}\vert_{g_1\cdots 
g_m\act\Csacc{a_m}}\subseteq\Csacc{a_0}\vert_{g_1\cdots 
g_{m-1}\act\Csacc{a_{m-1}}}\subseteq\ldots\subseteq\Csacc{a_0}\vert_{
g_1\act\Csacc{a_1}}\,.
\]
Hence, we may choose~$\nu_0\in\Csacc{a_0}\vert_{h_m\act\Csacc{a_m}}$, with~$h_m$ 
as above, and consider the system of 
sequences~$[(\ittime{\acc,n}(\nu_0))_n,(\itindex{\acc,n}(\nu_0))_n,(\ittrans{
\acc,n}(\nu_0))_n]$ associated to it by Corollary~\ref{COR:accsystemnewit}.
Part~\eqref{BPacc:transsetsequal} of Property~\eqref{BPacc:intervaldecomp} 
together with~\eqref{EQN:acctransset} and the uniqueness of the associated 
sequences immediately implies
\[
a_1=\itindex{\acc,1}(\nu_0)\quad\text{and}\quad g_1=\ittrans{\acc,1}(\nu_0)\,.
\]
The first assertion now follows inductively by setting~$\nu_j\coloneqq 
\gamma_{\nu_{j-1}}'(\ittime{\acc,1}(\nu_{j-1}))$ for~$j=1,\dots,m$ and repeating 
this argument.

The second assertion is immediate from Corollary~\ref{COR:accsystemnewit}.
\end{proof}

In the next section we will establish the structure tuple for the strict 
transfer operator approach.
For that goal, several sets of transformations are needed, all of which being required 
to be finite.
The following result, which is a straightforward consequence 
of~(\ref{BPacc:intervaldecomp}\ref{BPacc:transsetsequal}) 
and~\eqref{EQDEF:Transacc2}, plays a crucial part in assuring finiteness for 
many of these sets.

\begin{cor}\label{COR:finramrest}
Suppose that the set of branches~$\BrS$ is finitely ramified.
Then for all~$a,b\in\Index$ with~$a=(j,\eR)$ for some~$j\in A^*$ we have
\[
\#\Trans{\acc}{a}{b}<+\infty\,.
\]
\end{cor}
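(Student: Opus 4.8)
The plan is to reduce the statement directly to the explicit description of the induced transition sets for accelerated branches whose second component is~$\eR$, together with the finite ramification hypothesis on~$\BrS$. The key observation is that such transition sets are obtained, up to removal of at most one distinguished element, from the transition sets of the reduced set of branches~$\BrS$, and hence inherit their finiteness; it is only the cyclic (that is, $\eX$- or $\eY$-type) transition sets that the acceleration may enlarge to infinite sets, via the union over~$n\in\N_0$ in~\eqref{EQDEF:Transacc1}.

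First I would fix $a=(j,\eR)\in\Index$ and an arbitrary $b=(k,W)\in\Index$. By the definition of~$\Index$ we have $j,k\in A^*$, and since $A^*\subseteq A$ (recall that in this section $A$ denotes the index set of the \emph{reduced} set of branches~$\BrS$, all tildes being suppressed), the forward transition set $\Trans{}{j}{k}$ is defined. The hypothesis that $\BrS$ is finitely ramified then gives $\#\Trans{}{j}{k}<+\infty$. If one prefers to impose finite ramification only on the initial set of branches~$\BrS^{(i)}$, one first invokes Proposition~\ref{PROP:reddontram} and Proposition~\ref{PROP:finramandcollaps} to pass to finite ramification of~$\BrS$, and the remaining argument is unchanged.

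Next I would appeal to~(\ref{BPacc:intervaldecomp}\ref{BPacc:transsetsequal}), which identifies $\Trans{\acc}{a}{b}$ with $\wh{\mathcal{G}}(a,b)$ and thus justifies that $\Trans{\acc}{a}{b}$ is given by the explicit formula used in the construction of the induced transition sets. Since the second component of~$a$ is~$\eR$, the relevant formula is~\eqref{EQDEF:Transacc2}: $\Trans{\acc}{a}{b}$ equals $\Trans{}{j}{k}\setminus\{\cycnext{\eX}(j)\}$ if $k=\psi_{\eX}(j)$, equals $\Trans{}{j}{k}\setminus\{\cycnext{\eY}(j)\}$ if $k=\psi_{\eY}(j)$, and equals $\Trans{}{j}{k}$ otherwise. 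In each of these cases $\Trans{\acc}{a}{b}\subseteq\Trans{}{j}{k}$, whence
\[
\#\Trans{\acc}{a}{b}\le\#\Trans{}{j}{k}<+\infty\,,
\]
which is the assertion.

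The argument presents no genuine obstacle; the only points requiring a little care are that $j$ and $k$ indeed lie in the index set over which the forward transition sets of~$\BrS$ are defined (which is built into $a,b\in\Index$), and the conceptual point that the corollary isolates precisely the non-cyclic part of the accelerated system, for which no infinite unions enter the construction of the transition sets.
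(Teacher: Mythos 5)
Your proof is correct and follows essentially the same route as the paper, which presents the corollary as a straightforward consequence of~(\ref{BPacc:intervaldecomp}\ref{BPacc:transsetsequal}) and the explicit formula~\eqref{EQDEF:Transacc2}, giving $\Trans{\acc}{(j,\eR)}{b}\subseteq\Trans{}{j}{k}$ and hence finiteness from the finite ramification of the reduced set of branches. Your additional remarks on propagating finite ramification via Propositions~\ref{PROP:reddontram} and~\ref{PROP:finramandcollaps} are accurate context but not needed, since the hypothesis is stated directly for~$\BrS$.
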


\section{Existence of strict transfer operator 
approaches}\label{SEC:strictTOAexist}

Let $\Gamma$ be a geometrically finite Fuchsian group with at least one hyperbolic element that admits a set of branches. In this section we state and prove the detailed variant of Theorem~\ref{thmA}. See Theorem~\ref{mainthm}. Thus, we show that $\Gamma$ admits a strict transfer operator approach. In addition, we discuss how any given set of branches defines a structure tuple (in the sense of Section~\ref{SUBSEC:stricttrans}). For a precise statement, we need a few preparations. 

By Proposition~\ref{PROP:noncollaps}, every set of branches for~$\Gamma$ (in the sense of Definition~\ref{DEF:setofbranches}) can be turned into one that is admissible. By Proposition~\ref{PROP:allwithfiniteram}, it can further be extended to one that is finitely ramified, preserving the property of admissibility. Moreover, Proposition~\ref{PROP:stepreduxworx} guarantees that it can then be reduced to a reduced set of branches (in the sense of Definition~\ref{DEF:redsetofbranches}) that is non-collapsing. Proposition~\ref{PROP:finramandcollaps} shows that the finite ramification property remains preserved. Therefore, we may and shall suppose that $\BrS=\defset{\Cs{j}}{j\in A}$ is a reduced, admissible, non-collapsing and finitely ramified set of branches for~$\Gamma$.

We resume the notations from Section~\ref{SEC:structacc} and recall, in particular, the index set~$\Index$ as well as the sets~$\cycstarset{a}$,~$I_a$, and~$J_a$, and the transformations~$\cycstarnext{a}$,~$u_a$, and~$\cycstarnext{a,b}$ for~$a,b\in\Index$.
We denote by~$\prA\colon\Index\to A$ the projection onto the first component and 
by~$\prZ\colon\Index\to\{\eX,\eR,\eY\}$ the projection onto the second 
component.
Hence, for every~$a\in\Index$ we have~$a=(\prA(a),\prZ(a))$.
For each $a,b\in\Index$, we define
\[
P_{a,b}\coloneqq\left\{
\begin{array}{cl}
\left\{u_b^{-1}\right\}&\text{if $\prZ(a)\ne\prZ(b)$ and 
$(\prA(a),\prZ(b))\in\cycstarset{b}$}\,,\\
\varnothing&\text{otherwise},
\end{array}
\right.
\]
\index[symbols]{Pab@$P_{a,b}$}%
where we set~$\cycstarset{(\cdot,\eR)}\coloneqq\varnothing$.
For~$p\in P_{a,b}$ we set
\begin{equation}\label{eq:def_gp}
g_p\coloneqq\cycstarnext{b,\wt a}^{-1}
\end{equation}
\index[symbols]{gp@$g_p$}%
with 
\[
 \wt a \coloneqq (\prA(a),\prZ(b))\,.
\]
Furthermore, we define
\[
C_{a,b}\coloneqq\left\{
\begin{array}{cl}
\Trans{\acc}{b}{a}^{-1}&\text{if }\prZ(b)=\eR\,,\\
\left\{\cycstarnext{b,\wt a}^{-1}\right\}&\text{if 
$\prZ(b)\notin\{\prZ(a),\eR\}$ and 
$\wt a \coloneqq (\prA(a),\prZ(b))\in\cycstarset{b}$}\,,\\
\varnothing&\text{otherwise},
\end{array}
\right.
\]
\index[symbols]{Cab@$C_{a,b}$}%
where for~$G\subseteq\Gamma$ we denote~$G^{-1}\coloneqq\defset{g^{-1}}{g\in G}$. 
We emphasize that the requirement ``$(\prA(a),\prZ(b))\in\cycstarset{b}$'' in 
the conditions of these definitions is indeed correct and should not read 
``$(\prA(a),\prZ(a))\in\cycstarset{b}$.'' Further we note that for 
$a,b\in\Index$ such that $\pi_A(a) = \pi_A(b)$ and $P_{a,b}\not=\varnothing$ we 
could define $g_p$ in~\eqref{eq:def_gp} (for the unique $p\in P_{a,b}$) to be 
the identity element in~$\Gamma$ and then also define $C_{a,b}$ to be $\{\id\}$. 
However, the chosen definition has a slight advantage in the proofs of what 
follows.
Finally, since~$\BrS$ is admissible,~$\wh\R\setminus\bigcup_{j\in A}I_{j}$ has inner points.
Let~$\xi$ be such an inner point and let~$q_\xi\in\PSLR$ be such that~$q_\xi\act\xi=\infty$.
Hence, for instance,
\[
q_\xi\coloneqq\bmat{\xi}{-1-\xi^2}{1}{-\xi}\,.
\]
Then~$\infty$ is an inner point of~$q_\xi\act\bigl(\wh\R\setminus\bigcup_{j\in A}I_{j}\bigr)$, meaning that each of the sets~$q_\xi\act I_j$ is an interval in~$\R$.
Since~$I_a\subseteq \Ired{\pi_A(a)}\subseteq I_{\pi_A(a)}$ for every~$a\in\Index$, the convex hull of~$q_\xi\act\overline{\Iset{a}}$ in~$\R$, which we may denote by~$\conv(q_\xi\act\overline{\Iset{a}})$, is an interval in~$\R$ as well.
Note that~$\conv(q_\xi\act\overline{\Iset{a}})=\overline{\conv(q_\xi\act\Iset{a})}$.
We define for every~$a\in\Index$,
\[
\wh{I}_a\coloneqq q_\xi^{-1}\act\conv(q_\xi\act\overline{\Iset{a}})\,.
\]
This definition is independent of the choice of~$\xi$ and~$q_\xi$.

With these preparations we are now ready to formulate our main result.

\begin{thm}\label{mainthm}
Let~$\Gamma$ be a Fuchsian group that admits the construction of a set of 
branches for the geodesic flow on~$\Orbi$.
Then~$\Gamma$ admits a strict transfer operator approach with structure tuple 
given by
\[
\mathcal{S}\coloneqq\left(\Index,(\wh{I}_{a})_{a\in\Index},(P_{a,b})_{a,b\in\Index},
(C_{a,b})_{a,b\in\Index},((g_p)_{p\in P_{a,b}})_{a,b\in\Index}\right).
\]
\end{thm}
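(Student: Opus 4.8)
The plan is to verify that the structure tuple $\mathcal{S}$ defined above satisfies Properties~\ref{staPROP1}--\ref{staPROP5} from Section~\ref{SUBSEC:stricttrans}. All the hard analytic and combinatorial work has been carried out in Sections~\ref{SEC:branchred}--\ref{SEC:structacc}; what remains is to translate the statements about the accelerated system~$\BrS_\acc$ into the language of structure tuples, keeping careful track of how the ``slow'' transition sets~$\Trans{\acc}{a}{b}$ split into a parabolic part (encoded by the~$P_{a,b}$ together with the~$g_p$) and a finite part (encoded by the~$C_{a,b}$). The key reduction performed first would be to recall, from the definitions~\eqref{EQDEF:Transacc1} and~\eqref{EQDEF:Transacc2}, that for $a,b\in\Index$:
if $\prZ(b)=\eR$ then $\Trans{\acc}{a}{b}=\Trans{}{\pi_A(a)}{\pi_A(b)}$ (possibly with a cycle-starting element removed), which is \emph{finite} by the finite ramification hypothesis and Corollary~\ref{COR:finramrest}, and goes into $C_{b,a}$ after inversion; whereas if $\prZ(b)\in\{\eX,\eY\}$ and $\pi_A(a)$ lies in the induced $\prZ(b)$-cycle of $\pi_A(b)$, then $\Trans{\acc}{a}{b}=\bigcup_{n\in\N_0}\{u_a^n\cycstarnext{a,(\pi_A(b),\prZ(b))}\}$, an infinite family parametrized by powers of the parabolic element $u_a=\cycstartrans{a}$, which is precisely what gets encoded by $P_{b,a}=\{u_b^{-1}\}$ and $g_p=\cycstarnext{b,\wt a}^{-1}$ (note the role reversal between $a$ and $b$ because the structure-tuple submaps in Section~\ref{SUBSEC:stricttrans} run ``backwards'' relative to $F$).

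\emph{Property 1} would then follow from Proposition~\ref{PROP:accsystemnewstuff}, specifically from~(\ref{BPacc:intervaldecomp}\ref{BPacc:decomp}): the disjoint decomposition $\Iset{a}=\bigcup_{b\in\Index}\bigcup_{g\in\Trans{\acc}{a}{b}}g\act\Iset{b}$ translates, under the rewriting above and after identifying $\Iset{a}$ with $\wh I_{a,\st}$ (which requires the small lemma that $\wh I_{a,\st}=\Iset{a}$, true because $q_\xi$-conjugation plus convex-hull only adds points outside $\wh\R_\st$, as $\eX_a,\eY_a\notin\wh\R_\st$ by~(\ref{BPacc:completegeodesics})), into exactly the decomposition in~\ref{staPROP1:decompofIsets}, with the inclusion statements~\ref{staPROP1:Pinclude} and~\ref{staPROP1:Cinclude} coming from the nested-interval relations $u_a^{-n}g_p^{-1}\act\wh I_{a,\st}\subseteq\wh I_{b,\st}$ and $g^{-1}\act\wh I_{a,\st}\subseteq\wh I_{b,\st}$ that are built into the definitions of $\Trans{\acc}{\cdot}{\cdot}$; the condition $p^n\notin P_{a,b}$ for $n\ge 2$ holds since $u_b$ is primitive parabolic. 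I would then observe that the discrete dynamical system $(D,F)$ induced by $\mathcal{S}$ coincides with the one induced by the accelerated branches $\BrS_\acc$ via the map $\nu\mapsto(\gamma_\nu(+\infty),a)$ for $\nu\in\Csacc{a}$, using Corollary~\ref{COR:accsystemnewit} and Corollary~\ref{COR:seq_tangvec}. \emph{Properties 2, 3, 4} are then essentially reformulations of Proposition~\ref{PROP:hypconjisperiod} combined with the fact that periodic orbits of $F$ correspond bijectively to closed paths in the accelerated return graph, hence (via~\ref{BPacc:closedgeodesicsHtoX} and~\ref{BPacc:coverlimitset}) to conjugacy classes of hyperbolic elements; here non-collapsing-ness of~$\BrS$ (Property~\eqref{BP:noidentity}, guaranteed by Proposition~\ref{PROP:stepreduxworx}) is exactly what makes $\Per=\bigcup_n\Per_n$ a disjoint union and gives well-definedness of word length. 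The count in Property~4 ($p(g)$ distinct representatives in $\Per_{\omega(g)}$) comes from the cyclic symmetry of closed paths of combinatorial length $\omega(g)$ representing a power $h_0^{m(g)}$.

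\emph{Property 5} is where the admissibility of~$\BrS$ (hence of~$\BrS_\acc$, by~\ref{BPacc:leavespaceforflip}) is used, and I expect this to be the main technical obstacle, though it is mostly an exercise in hyperbolic geometry rather than something genuinely hard. One picks the flip element $\xi\in\PSLR$ sending the common exterior point $q$ to~$\infty$ (available by~\ref{BPacc:leavespaceforflip}); after this conjugation all $\wh I_a$ sit in a bounded real interval, so one can thicken each $\overline{\wh I_{a,\st}}$ to an open, bounded, simply connected $\mathcal{E}_a\subseteq\widehat\C$ (e.g.\ a thin neighbourhood of the convex hull, or a disk), shrinking uniformly so that all the required nesting inclusions $g^{-1}\act\overline{\mathcal{E}_a}\subseteq\mathcal{E}_b$ hold for the finitely many $g\in C_{a,b}$ (a standard compactness/contraction argument using that $g^{-1}$ strictly shrinks the corresponding interval, cf.\ Lemma~\ref{LEM:halfspaceincludeGV}), and such that $g\xi^{-1}\act\infty\notin\overline{\mathcal{E}_a}$; property~\ref{staPROP5:Pinclude} is the assertion that the parabolic orbit $\{u_b^{-n}g_p^{-1}\act\overline{\mathcal{E}_a}\}_{n}$ lies in a fixed compact subset of~$\mathcal{E}_b$ away from the fixed point of $u_b$---this is where one uses that the fixed point of $u_b$ is the \emph{cuspidal} endpoint $\eZ_{\pi_A(b)}$, while $g_p^{-1}\act\overline{\mathcal{E}_a}$ stays a definite hyperbolic distance away from it because $\eZ$ is a boundary point of the relevant reduced interval but not an interior accumulation point of the shrunken domain (invoke the horoball picture from the proof of Lemma~\ref{LEM:periodiccycles} and the fact that cusp representatives are not endpoints of funnel representatives). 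Assembling these choices and invoking Theorem~\ref{mainthm:F_P_NECM} then completes the proof that $\Gamma$ admits a strict transfer operator approach with structure tuple~$\mathcal{S}$.
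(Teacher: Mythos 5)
Your overall strategy — verify Properties~\ref{staPROP1}--\ref{staPROP5} for the given tuple, with Property~\ref{staPROP1} read off from the identity $C_{a,b}^{-1}\cup\defset{p^{-n}g_p^{-1}}{p\in P_{a,b},\,n\in\N}=\Trans{\acc}{b}{a}$ and the disjoint decomposition in~(\ref{BPacc:intervaldecomp}\ref{BPacc:decomp}) — is exactly the paper's, and that part of your sketch is fine (modulo a source/target bookkeeping slip: the finite case of \eqref{EQDEF:Transacc2} and Corollary~\ref{COR:finramrest} apply when the \emph{source} of~$\Trans{\acc}{\cdot}{\cdot}$ carries the letter~$\eR$, not the target, and correspondingly $P_{a,b}$ encodes powers of $u_b$, the cycle transformation of the source~$b$ of~$\Trans{\acc}{b}{a}$). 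The first genuine gap is your treatment of Properties~\ref{staPROP2}--\ref{staPROP4} as ``essentially reformulations'' of Proposition~\ref{PROP:hypconjisperiod} plus a bijection between periodic $F$-orbits and closed paths. That bijection \emph{is} what has to be proven, and two of its ingredients are not automatic. First, nothing in your sketch rules out that an element of~$\Per_{a,n}$ is parabolic: a priori a submap of $F^n$ can satisfy $g^{-1}\act\wh I_{a,\st}\subseteq\wh I_{a,\st}$ with $g$ parabolic, and excluding this is precisely the payoff of the cuspidal acceleration; the paper needs a dedicated argument (Proposition~\ref{PROP:Peranelements}, using the decomposition~\eqref{EQ:Iadec_star}) for it. Second, disjointness of $\Per=\bigcup_n\Per_n$ must cover the case $g\in\Per_{a,n}\cap\Per_{b,m}$ with $a\neq b$, where no uniqueness-of-paths statement applies because the endpoints of the two coding words differ; the paper resolves this geometrically, by counting intersections of the axis of~$g$ with~$\Gamma\act\BrU_{\acc}$ over one period (Lemmas~\ref{LEM:invintercount}, \ref{LEM:intercountprops} and the computation in Section~\ref{SUBSEC:prop2}). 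You also misplace the role of non-collapsing: it is not what gives Property~\ref{staPROP2}; it enters in Property~\ref{staPROP4}, where the exact count of $p(g)$ representatives requires showing that distinct ``cyclic shifts'' $\mathrm{g}_\kappa(a,g^{-1})$, $\kappa\in\{1,\dots,\omega(g_0)\}$, yield distinct conjugates (Lemmas~\ref{LEM:noncollapspayoff} and~\ref{LEM:conjinPer}); your appeal to ``cyclic symmetry'' asserts this without proof.

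The second gap is in Property~\ref{staPROP5}. Your plan is to thicken each $\overline{\wh I_{a,\st}}$ uniformly and invoke a compactness/contraction argument, citing Lemma~\ref{LEM:halfspaceincludeGV} for strict shrinking. But strict inclusion of half-spaces does not give $g^{-1}\act\wh I_a\subseteq\wh I_b^{\circ}$: the intervals $g^{-1}\act\wh I_a$ and $\wh I_b$ may (and do) share a boundary point, in which case no amount of uniform shrinking of the thickening of~$\wh I_a$ keeps its image inside the thickening of~$\wh I_b$ unless the parameter for~$\wh I_a$ is chosen \emph{after}, and in dependence on, the one for~$\wh I_b$. The paper isolates exactly this issue: Lemma~\ref{LEM:prep_prop5} handles the interior cases and the parabolic/compact-set conditions, and the proof of Proposition~\ref{prop:prop5} then organizes the boundary-sharing cases into a dependency graph among the thickening parameters and shows it has no loops, using that interval endpoints are never hyperbolic fixed points (Lemma~\ref{LEM:accprops}\eqref{LEM:accprops:endpoints}) and Property~\ref{staPROP5}\eqref{staPROP5:nofixed} to exclude parabolic closing of a loop. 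Without this loop-freeness argument your choice of the sets~$\mathcal E_a$ is not guaranteed to exist, so the step would fail as written. (A small final point: invoking Theorem~\ref{mainthm:F_P_NECM} at the end is unnecessary for Theorem~\ref{mainthm} itself; it is only needed for the corollary on Selberg zeta functions.)
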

\index[symbols]{S@$\mathcal S$}%

Before we discuss a proof of Theorem~\ref{mainthm}, we provide a brief example 
of a structure tuple and the associated transfer operator family.

\begin{example}\label{EX:G3structup}
Recall the group~$\Gamma_{\lambda}$ from Example~\ref{EX:G3Def} and recall 
further its reduced set of branches~$\{\Cs{2},\Cs{7}\}$ from 
Example~\ref{EX:G3Graphred}.
Its easy to see that~$\{\Cs{2},\Cs{7}\}$ is already non-collapsing.
In Example~\ref{EX:G3acc} we identified the~$\eX$- and~$\eY$-cycle
\[
7\edge{t_\lambda^{-1}}7\qquad\text{and}\qquad 2\edge{t_\lambda}2\,,
\]
hence we obtain the index set
\[
\Index=\{(2,\eR),(2,\eY),(7,\eX),(7,\eR)\}\,,
\]
with
\[
\cycstarset{(2,\eY)}=\{(2,\eY)\}\,,\quad\cycstarset{(7,\eX)}=\{(7,\eX)\}\,,\quad\text{and}\quad\cycstarset{(2,\eR)}=\cycstarset{(7,\eR)}=\varnothing\,.
\]
From Figure~\ref{FIG:G3redreturn} we read off the transition sets
\begin{align*}
\Trans{\acc}{(2,\eR)}{(2,\eR)}&=\Trans{\acc}{(2,\eR)}{(2,\eY)}=\Trans{\acc}{(7,\eR)}{(2,\eR)}\\
&=\Trans{\acc}{(7,\eR)}{(2,\eY)}=\{ht_\lambda,h^{-1}t_\lambda\}\,,\\
\Trans{\acc}{(2,\eR)}{(7,\eX)}&=\Trans{\acc}{(2,\eR)}{(7,\eR)}=\Trans{\acc}{(7,\eR)}{(7,\eX)}\\
&=\Trans{\acc}{(7,\eR)}{(7,\eR)}=\{ht_\lambda^{-1},h^{-1}t_\lambda^{-1}\}\,,\\
\Trans{\acc}{(2,\eY)}{(2,\eR)}&=\defset{t_\lambda^n}{n\in\N}\,,\\
\Trans{\acc}{(7,\eX)}{(7,\eR)}&=\defset{t_\lambda^{-n}}{n\in\N}\,,
\end{align*}
and~$\Trans{\acc}{a}{b}=\varnothing$ for all other choices of~$(a,b)\in\Index\times\Index$.
From that we deduce the structure tuple for~$\Gamma_\lambda$, consisting of the index set~$\Index$ and the following quantities:
\begin{itemize}
\item the family of intervals~$(\wh{I}_a)_{a\in\Index}$ consisting of
\[
\wh{I}_{(2,\eR)}=[0,\lambda]\,,\quad \wh{I}_{(2,\eY)}=[\lambda,\infty]\,,\quad 
\wh{I}_{(7,\eX)}=[-\infty,1-\lambda]\,,\quad \wh{I}_{(7,\eR)}=[1-\lambda,1]\,,
\]

\item the families of transformation sets consisting of
\[
P_{(2,\eR),(2,\eY)}=\{t_{\lambda}^{-1}\}\,,\qquad 
P_{(7,\eR),(7,\eX)}=\{t_{\lambda}\}\,,
\]
and~$P_{a,b}=\varnothing$ for every other choice of~$(a,b)\in\Index\times\Index$, and
\begin{align*}
C_{(2,\eR),(2,\eR)}&=C_{(2,\eR),(7,\eR)}=C_{(2,\eY),(2,\eR)}=C_{(2,\eY),(7,\eR)}=\{t_\lambda^{-1}h^{-1},t_\lambda^{-1}h\}\,,\\
C_{(7,\eX),(2,\eR)}&=C_{(7,\eX),(7,\eR)}=C_{(7,\eR),(2,\eR)}=C_{(7,\eR),(7,\eR)}=\{t_\lambda h^{-1},t_\lambda h\}\,,
\end{align*}
as well as
\[
C_{(2,\eR),(2,\eY)}=\{t_\lambda^{-1}\}\,,\qquad C_{(7,\eR),(7,\eX)}=\{t_\lambda\}\,,
\]
and~$C_{a,b}=\varnothing$ for every other choice of~$(a,b)\in\Index\times\Index$,

\item and the transformations
\[
\begin{array}{ll}
g_p=t_\lambda^{-1}\,,&\text{for }\ p\in P_{(2,\eR),(2,\eY)}\,,\\
g_p=t_\lambda\,,&\text{for }\ p\in P_{(7,\eR),(7,\eX)}\,.
\end{array}
\]
\end{itemize}
The associated fast transfer operator in matrix representation takes the form
\[
\begin{small}
\fTO{s}=\begin{pmatrix}
\alpha_s(t_{\lambda}^{-1}h^{-1})+\alpha_s(t_{\lambda}^{-1}h)&
\sum_n\alpha_s(t_{\lambda}^{-n})&
0&
\alpha_s(t_{\lambda}^{-1}h^{-1})+\alpha_s(t_{\lambda}^{-1}h)
\\
\alpha_s(t_{\lambda}^{-1}h^{-1})+\alpha_s(t_{\lambda}^{-1}h)&
0&
0&
\alpha_s(t_{\lambda}^{-1}h^{-1})+\alpha_s(t_{\lambda}^{-1}h)
\\
\alpha_s(t_{\lambda}h^{-1})+\alpha_s(t_{\lambda}h)&
0&
0&
\alpha_s(t_{\lambda}h^{-1})+\alpha_s(t_{\lambda}h)
\\
\alpha_s(t_{\lambda}h^{-1})+\alpha_s(t_{\lambda}h)&
0&
\sum_n\alpha_s(t_{\lambda}^{n})&
\alpha_s(t_{\lambda}h^{-1})+\alpha_s(t_{\lambda}h)
\end{pmatrix}.
\end{small}
\]
\end{example}

\begin{remark}
Let~$a\in\Index$.
Then
\[
\wh{I}_{a,\st}=\wh{I}_a\cap\wh\R_{\st}=q_\xi^{-1}\act\bigl(\conv(q_\xi\act\Iset{a})\cap\wh\R_{\st}\bigr)=q_\xi^{-1}\act\bigl(q_\xi\act\Iset{a}\bigr)=\Iset{a}\,.
\]
Therefore, we may drop the~$~\wh{}~$~whenever~``$\st$''-sets are concerned.
The intervals~$\wh{I}_a$ were introduced solely to fit the needs of a strict transfer operator approach \emph{verbatim}.
\end{remark}

The proof of Theorem~\ref{mainthm} is split into the following six 
subsections.

\subsection{Step~0}\label{SUBSEC:Step0}

In this step we check the initial requirements for a strict transfer 
operator approach. The finiteness of the set~$\Index$ is obvious from its construction.
Let~$a,b\in\Index$.
From Definition~\ref{DEF:starcycs} and Remark~\ref{REM:accgroupelements} we see immediately that each of the sets~$P_{a,b}$ and~$C_{a,b}$ consists completely of elements 
of~$\Gamma$.
Consequentially,~$g_p\in\Gamma$ for every~$p\in P_{a,b}$.
The finiteness of~$P_{a,b}$ is obvious from its definition.
The same is true for~$C_{a,b}$ whenever~$\prZ(b)\ne\eR$.
In the case~$\prZ(b)=\eR$, since~$\BrS$ is finitely ramified, we obtain from 
Corollary~\ref{COR:finramrest} that
\[
\#C_{a,b}=\#\Trans{\acc}{b}{a}<+\infty\,.
\]
Finally, since~$P_{a,b}=\{u_b^{-1}\}$ whenever it is non-empty,~$u_b=u_{\prA(b),\prZ(b)}$, and~$u_{j,\eZ}$ is parabolic for every choice of~$(j,\eZ)\in A_{\eZ}\times\{\eX,\eY\}$ by the discussion right after~\eqref{eq:def_uZ}, every set~$P_{a,b}$ consists solely of parabolic elements.

\subsection{Property~\ref{staPROP1}}\label{SUBSEC:Proofprop1}

By a straightforward inspection we observe that 
\[
C_{a,b}^{-1} \cup \defset{ p^{-n}g_p^{-1} }{ p\in P_{a,b},\ n\in\N } 
= \Trans{\acc}{b}{a}
\]
for all~$a,b\in\Index$. This union is disjoint. In the second set of the 
union on the left hand side no element gets constructed twice. Therefore, the 
first part of Property~\ref{staPROP1}\eqref{staPROP1:Pinclude} and all of~\eqref{staPROP1:Cinclude} and 
of~\eqref{staPROP1:decompofIsets} are immediate 
by~\eqref{BPacc:intervaldecomp}\eqref{BPacc:decomp}. For the second part 
of~\eqref{staPROP1:Pinclude} we let $a,b\in\Index$ be such that 
$P_{a,b}\ne\varnothing$. Thus, $P_{a,b}=\{\cycstartrans{b}^{-1}\}$. Since the 
element~$\cycstartrans{b}^{-1}$ is parabolic (as we showed in 
Section~\ref{SEC:cuspacc}), we find $q\in\PSLR$ such that 
$\cycstartrans{b}^{-1}=qtq^{-1}$ with $t$ acting either as $z\mapsto z+1$ or as 
$z\mapsto z-1$. Then $\cycstartrans{b}^{-n}=\cycstartrans{b}^{-1}$ for 
some~$n\in\N$ implies $t^{n-1}=\id$ and hence $n=1$. In turn, 
$\cycstartrans{b}^{-n}\ne\cycstartrans{b}^{-1}$ for $n\geq 2$, which establishes 
the second part of~\eqref{staPROP1:Pinclude}.

\subsection{Property~\ref{staPROP2}}\label{SUBSEC:prop2}

For each~$n\in\N$, the set~$\Per_n$ with respect to $\mathcal{S}$ consists of 
all~$g\in\Gamma$ for which there exist an~$a\in\Index$ such that
\[
g^{-1}\act\Iset{a}\times\{a\}\ni(x,a)\mapsto(g\act 
x,a)\in\Iset{a}\times\{a\}
\]
is a submap of~$F^n$.
It is clear right away that~$\id\notin\Per_n$.
Property~\ref{staPROP2} asserts the 
union~$\Per\coloneqq\bigcup_{n=1}^\infty\Per_n$ to be disjoint.
In order to prove this assertion we decompose, for each~$n\in\N$, the set~$\Per_n$
into the sets
\[
\Per_{a,n}\coloneqq\defset{g\in\Gamma}{\left\{\begin{array}{clc}g^{-1}\act\Iset{
a}\times\{a\}&\to&\Iset{a}\times\{a\}\\(x,a)&\mapsto&(g\act 
x,a)\end{array}\right.\text{ is a submap of }F^n}.
\]
\index[symbols]{Peran@$\Per_{a,n}$}%
for~$a\in\Index$.
We emphasize that the union~$\Per_n=\bigcup_{a\in\Index}\Per_{a,n}$ is not necessarily 
disjoint.
Further, for any $n,m\in\N$, we have $\Per_n\cap\Per_m\ne\varnothing$ if and only if there are~$a,b\in\Index$ such that~$\Per_{a,n}\cap\Per_{b,m}\ne\varnothing$.
Therefore, the assertion of Property~\ref{staPROP2} is equivalent 
to~$\Per_{a,n}\cap\Per_{b,m}\ne\varnothing$ implying~$n=m$ for all (not 
necessarily distinct)~$a,b\in\Index$.

The following result equips us with all the necessary information regarding the 
elements of the sets~$\Per_{a,n}$.

\begin{prop}\label{PROP:Peranelements}
Let~$a\in\Index$,~$n\in\N$, and~$g\in\Per_{a,n}$.
Then~$g$ is hyperbolic. Its repelling fixed point~$\fixp{-}{g}$ is an inner 
point of~$\wh{I}_a$. Its attracting fixed point~$\fixp{+}{g}$ is an element 
of~$J_{\prA(a)}$.
\end{prop}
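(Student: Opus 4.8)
The plan is to unwind the definition of $\Per_{a,n}$ and translate it back into the language of the accelerated system $\BrS_\acc$. Suppose $g\in\Per_{a,n}$ for some $a\in\Index$ and $n\in\N$. By definition of $\Per_n$, the submap
\[
g^{-1}\act\Iset{a}\times\{a\}\to\Iset{a}\times\{a\}\,,\quad (x,a)\mapsto(g\act x,a)\,,
\]
is a submap of $F^n$, where $(D,F)$ is the discrete dynamical system induced by the structure tuple $\mathcal S$ (see the explanation right after Property~\ref{staPROP1}). Since the submaps of $F$ are exactly those coming from the families $(C_{c,d})$ and $(P_{c,d})$, which by the computations in Sections~\ref{SUBSEC:Proofprop1} correspond precisely to the accelerated transition sets $\Trans{\acc}{d}{c}$, being a submap of $F^n$ means there exist indices $a=a_0,a_1,\dots,a_n=a$ in $\Index$ and elements $g_i\in\Trans{\acc}{a_{i-1}}{a_i}$ for $i=1,\dots,n$ such that $g=g_1g_2\cdots g_n$. (Here I use that $F$ is read off from $\mathcal S$ exactly as the accelerated dynamical system, by construction of $P_{c,d}$, $C_{c,d}$ and $g_p$; this is the content of the verification in Sections~\ref{SUBSEC:Step0}–\ref{SUBSEC:prop2}.)

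Next I would invoke Corollary~\ref{COR:seq_tangvec}: given such a cyclic sequence of transition elements starting and ending at $a$, there exists a unit tangent vector $\nu\in\Csacc{a}$ with $\itindex{\acc,i}(\nu)=a_i$ and $\ittrans{\acc,i}(\nu)=g_i$ for all relevant $i$; in particular $\gamma_\nu'(\ittime{\acc,n}(\nu))\in g\act\Csacc{a}$, i.e.\ $g\act\Csacc{a}$ is the branch translate hit by $\gamma_\nu$ at its $n$-th accelerated intersection. By Corollary~\ref{COR:seq_tangvec} the set of such $\nu$ is $\Csacc{a}\vert_{g\act\Csacc{a}}$, which is nonempty. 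Now I pick any such $\nu$ and consider $g\act\nu\in g\act\Csacc{a}$; by $\Gamma$-equivariance of the whole picture, the accelerated sequence of $g\act\nu$ is the $g$-translate of that of $\nu$, and one sees that iterating $g$ produces a bi-infinite periodic sequence. Concretely, $g\act\Iset{a}\subseteq\Iset{a}$ (from $g\in\Trans{\acc}{a_0}{a_1}$ composed down the chain, using the nestedness inclusions as in the proof of Corollary~\ref{COR:seq_tangvec}) and $J_{\prA(a)}\subseteq g\act J_{\prA(a)}$, and these are strict: $g\act\Iset{a}\varsubsetneq\Iset{a}$ because otherwise $g$ would fix the complete geodesic segment $\overline{\base{\Csacc{a}}}$, contradicting that the accelerated branches are genuinely shrunk (a path of length $n\geq1$ in $\ReturnGraph{}$ strictly shrinks the $I$-interval, cf.\ Lemma~\ref{LEM:halfspaceincludeGV} and the fact that $\BrS$ is non-collapsing so $g\neq\id$). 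A strict nested contraction of an interval in $\wh\R$ forces $g$ to be hyperbolic with its attracting fixed point inside $\overline{\Iset{a}}\subseteq\wh I_a$ (for the positive-time endpoint of $\gamma_{\nu}$) — wait, I must be careful about which endpoint: $\gamma_\nu(+\infty)$ lies in $\Iset{a}$, and iterating $g$ drags it toward $g$'s fixed point; since $g\act\Iset{a}\varsubsetneq\Iset{a}$, the fixed point that lies in $\bigcap_k g^k\act\Iset{a}$ is the \emph{attracting} one — so I get $\fixp{+}{g}\in\overline{\wh I_a}$, and $\fixp{-}{g}$ is approached from the $J$-side. Comparing with the statement, I must match the convention of Lemma~\ref{LEM:hypfixedconv}: since the submap sends $(x,a)\mapsto(g\act x,a)$ and $x$ ranges over $g^{-1}\act\Iset{a}\subseteq\Iset{a}$, it is $g$ that acts \emph{forward} in the dynamics, hence $g^{-1}$ that contracts toward $\gamma_\nu(+\infty)$ in the accelerated coding; so in fact $\fixp{-}{g}$ (the repeller) is the one sitting in $\wh I_a$ and $\fixp{+}{g}$ (the attractor) sits in $J_{\prA(a)}$, matching the statement.

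The key steps in order: (i) unwind $g\in\Per_{a,n}$ into a closed chain of accelerated transition elements via the identification of $F$'s submaps with $(C_{a,b}),(P_{a,b})$; (ii) apply Corollary~\ref{COR:seq_tangvec} to produce a vector $\nu\in\Csacc{a}$ realizing this chain, so that $\gamma_\nu$ intersects $g\act\Csacc{a}$ after $n$ accelerated steps; (iii) extract from this the strict nested inclusions $g\act\Iset{a}\varsubsetneq\Iset{a}$ and $J_{\prA(a)}\varsubsetneq g\act J_{\prA(a)}$, using $g\neq\id$ (non-collapsing) and $\eqref{BPacc:disjointunion}$/Lemma~\ref{LEM:halfspaceincludeGV}; (iv) conclude via the standard fixed-point dictionary (Lemma~\ref{LEM:hypfixedconv}) that $g$ is hyperbolic with $\fixp{-}{g}\in(\wh I_a)^\circ$ and $\fixp{+}{g}\in J_{\prA(a)}$, being careful that the repeller lands in $\wh I_a$ and not the attractor, and that ``inner point'' follows because $\gamma_\nu(+\infty)$ actually lies in the open set $\Iset{a}$ strictly inside $g^{-1}\act\overline{\Iset{a}}$, or alternatively directly because $\fixp{-}{g}\in\bigcap_k g^{-k}\act\Iset{a}\subseteq\Iset{a}\subseteq(\wh I_a)^\circ$. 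I expect the main obstacle to be step (iii)–(iv), specifically bookkeeping the direction conventions: making sure the submap orientation $(x,a)\mapsto(g\act x,a)$ is correctly matched with which of $g$'s two fixed points is attracting versus repelling, and verifying strictness of the contraction (which is where non-collapsing and the genuine shrinkage of accelerated branches at cusps are essential — a naive argument could fail if $g$ merely fixed $\overline{\base{\Csacc{a}}}$, so one needs that $n\geq1$ together with the strict inclusion from Lemma~\ref{LEM:halfspaceincludeGV} composed along the chain). Everything else is a direct application of results already established (Corollaries~\ref{COR:seq_tangvec} and~\ref{COR:accsystemnewit}, Property~\ref{BPacc:intervaldecomp}, and Lemma~\ref{LEM:hypfixedconv}).
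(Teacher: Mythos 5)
Your steps (i)–(ii) coincide with the paper's opening: one unwinds $g\in\Per_{a,n}$ via the identification of the submaps of~$F$ with the accelerated transition sets and invokes Corollary~\ref{COR:seq_tangvec} to produce $\nu\in\Csacc{a}$ realizing the chain (note, though, that the correct bookkeeping is $g^{-1}=\ittrans{\acc,1}(\nu)\cdots\ittrans{\acc,n}(\nu)$, not $g=g_1\cdots g_n$; your later self-correction about which fixed point lands where papers over this sign slip but does not remove it). The genuine gap is in step (iii)–(iv): the assertion that ``a strict nested contraction of an interval in~$\wh\R$ forces $g$ to be hyperbolic'' is false. A parabolic transformation whose fixed point is an \emph{endpoint} of the interval maps the interval strictly into itself (e.g.\ $x\mapsto x/(x+1)$ on $(0,1)$), so strictness of $g^{-1}\act I_a\varsubsetneq I_a$ — which you obtain from non-collapsing and Lemma~\ref{LEM:halfspaceincludeGV} — only yields that $g$ is hyperbolic \emph{or} parabolic. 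This is not a hypothetical worry here: for $a=(j,\eZ)$ with $\eZ\in\{\eX,\eY\}$ the endpoint $\eZ_j$ of~$I_a$ is precisely a cuspidal, i.e.\ parabolic, fixed point, so the parabolic scenario is exactly the danger the cuspidal acceleration is designed to eliminate. The paper's proof devotes its main effort to excluding it: if $g$ were parabolic, the nested intersection $\bigcap_k g^{-k}\act\overline{I_a}$ (a singleton fixed point, obtained via local finiteness, Proposition~\ref{PROP:branches_locfinite}) would have to be the boundary point $\prZ(a)_{\prA(a)}$, and then the decomposition~\eqref{EQ:Iadec_star} of~$\Iset{(j,V)}$ forces the first accelerated transition to push $g^{-1}\act I_a$ into $I_{(\prA(b),\eR)}\cup I_{(\prA(b),\prZ(a)')}$, away from that endpoint, contradicting that the fixed point lies in $g^{-1}\act\overline{I_a}$. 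Your proposal contains no substitute for this argument, so hyperbolicity is not established.

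Two smaller points. First, the singleton/fixed-point nature of the nested intersection itself needs justification (the paper uses Proposition~\ref{PROP:branches_locfinite}); your write-up takes it for granted. Second, once hyperbolicity is known, ``inner point of~$\wh{I}_a$'' does not follow from membership in $\bigcap_k g^{-k}\act\Iset{a}$ as you suggest (that inclusion into the \emph{open} translates is unproven); the paper instead uses that the boundary points of~$I_a$ lie in $\wh\R\setminus\wh\R_{\st}$ (cf.\ \eqref{BPacc:completegeodesics} and Lemma~\ref{LEM:accprops}\eqref{LEM:accprops:endpoints}), so a hyperbolic fixed point, which lies in~$\wh\R_{\st}$, cannot be a boundary point. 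With these repairs your argument would essentially become the paper's proof; as it stands, the parabolic case is the missing core.
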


\begin{proof}
The definitions of the sets~$P_{a,b}$,~$C_{a,b}$, and~$\Per_{a,n}$ for 
any~$a,b\in\Index$ and~$n\in\N$, together with Corollary~\ref{COR:seq_tangvec} 
imply the existence of~$\nu\in\Csacc{a}$ such that
\begin{equation}\label{EQ:g_by_nu}
g^{-1}=\ittrans{\acc,1}(\nu)\cdot\ittrans{\acc,2}(\nu)\cdots\ittrans{\acc,n}
(\nu)\,.
\end{equation}
Therefore
\begin{equation}\label{EQ:intervalmatryoshka}
g^{-1}\act I_a\subseteq g^{-1}\act I_{\prA(a)}\subseteq I_a\subseteq 
I_{\prA(a)}\,.
\end{equation}
Hence, for all~$k\in\N$, we have
\[
g^{-k-1}\act\overline{\base{\Cs{\prA(a)}}}\subseteq 
g^{-k}\act\Plussp{\prA(a)}\,.
\]
Since $\Gamma$-translates of branches do not accumulate in~$\H$ (see 
Proposition~\ref{PROP:branches_locfinite}), the ``limits'' of the set sequences~$(g^{-k}\act\overline{I_a})_{k\in\N}$ and~$( g^{-k}\act \overline{I_{\prA(a)}} 
)_{k\in\N}$ are equal and a singleton in~$\wh\R$, that is, the intersection 
\begin{equation}\label{EQ:shrink_inter}
\bigcap_{k\in\N} g^{-k}\act\overline{I_a} = \bigcap_{k\in\N} g^{-k}\act 
\overline{I_{\prA(a)}}
\end{equation}
is a singleton in~$\wh\R$, consisting of a fixed point of~$g^{-1}$.
Therefore~$g$ is either hyperbolic or parabolic.

We will now show that $g$ is not parabolic, by means of a proof by 
contradiction. To that end we assume that~$g$ is parabolic. Then the singleton 
from~\eqref{EQ:shrink_inter} consists of the unique fixed point~$\fixp{}{g}$ 
of~$g$. We recall that in \emph{small} neighborhoods of~$\fixp{}{g}$ in~$\wh\R$, 
the action of~$g$, as being a parabolic element, is attracting to~$\fixp{}{g}$ 
on one of the sides of~$\fixp{}{g}$ and repelling on the other. Thus, for any 
interval~$I$ in~$\wh\R$ with $\fixp{}{g}$ in the interior of~$I$ and $I$ not 
being all of~$\wh\R$, we have $g^{-1}\act I\nsubseteq I$. Therefore 
\eqref{EQ:intervalmatryoshka} implies that $\fixp{}{g}$ is a boundary point 
of~$I_a$ and also of~$I_{\prA(a)}$. This implies that~$\prZ(a)\in\{\eX,\eY\}$ 
and hence $\fixp{}{g} = \prZ(a)_{\prA(a)}$ (i.e., if $a=(j,\eZ)$, then 
$\fixp{}{g} = \eZ_j$). Further we see that for the vector~$\nu$ 
from~\eqref{EQ:g_by_nu} we find exactly one 
pair~$(b,r)\in\cycstarset{a}\times\N_0$ with $b$ of the form 
$(k,\prZ(a))\in\Index$ such that
\[
\gamma_\nu(+\infty)\in\cycstartrans{a}^r\cycstarnext{a,b}\act\bigl(\Iset{
(\prA(b),\eR)}\cup\Iset{(\prA(b),\prZ(a)')}\bigr)
\]
(see~\eqref{EQ:Iadec_star}), where $\prZ(a)'$ is such that $\{\prZ(a),\prZ(a)'\} 
= \{\eX,\eY\}$.
Hence,
\[
\ittrans{\acc,1}=\cycstartrans{a}^r\cycstarnext{a,b}
\]
and
\begin{equation}\label{EQ:gI_shrink}
g^{-1}\act I_a\subseteq\ittrans{\acc,1}^{-1}g^{-1}\act I_a\subseteq 
I_{(\prA(b),\eR)}\cup I_{(\prA(b),\prZ(a)')}\,.
\end{equation}
Since~$b\in\cycstarset{a}$, the set~$I_b = I_{(\prA(b),\prZ(a))}$ does not vanish and, moreover, $\fixp{}{g}$ is a boundary point of~$I_b$. 
Comparing with~\eqref{EQ:gI_shrink}, we see that $\fixp{}{g}$ cannot be 
contained in~$g^{-1}\act\overline{I_a}$, which is a contradiction 
to~\eqref{EQ:shrink_inter} being the singleton~$\{\fixp{}{g}\}$. In turn, the 
element~$g$ is not parabolic.

We obtain that the element~$g$ is hyperbolic. The 
singleton~\eqref{EQ:shrink_inter} consists of the fixed 
point~$\fixp{+}{g^{-1}}=\fixp{-}{g}$ (attracting for~$g^{-1}$, repelling 
for~$g$), as follows immediately from the definition of~\eqref{EQ:shrink_inter}. 
Further, we clearly have
\[
 \fixp{-}{g}\in \overline{I_a} \subseteq \overline{I_{\prA(a)}}\,.
\]
Since, by construction, the boundary points of~$I_a$ in~$\wh\R$ are elements 
of~$\wh\R\setminus\wh\R_{\st}$, it follows that~$\fixp{-}{g}$ is an inner point 
of~$I_a$.
Finally, we can apply the same argument with the roles of~$g$ and~$g^{-1}$ 
interchanged to show that~$\fixp{+}{g}=\fixp{-}{g^{-1}}$ is an element 
of~$J_{\prA(a)}$.
\end{proof}

In what follows we extend on the initial argument in the proof of 
Proposition~\ref{PROP:Peranelements} to find an equivalent description of the 
set~$\Per_{a,n}$ in terms of iterated intersections of induced geodesics 
with~$\Gamma\act\BrU_{\acc}$.
Together with an inspection of the relationship between~$\Csacc{a}$ 
and~$\Csacc{b}$ in view of Proposition~\ref{PROP:Peranelements}, this will 
enable us to effectively compare the quantities~$m$ and~$n$.

For~$v,w\in\Index$ and~$h\in\Gamma$ we define~$\Csacc{v}\vert_{h\act\Csacc{w}}$ 
to be the subset of~$\Csacc{v}$ of all vectors~$\nu$ for which~$\gamma_\nu$ 
eventually intersects~$h\act\Csacc{w}$.
To be more precise, we set
\begin{equation}\label{EQ:Csacc_restrict}
\Csacc{v}\vert_{h\act\Csacc{w}}\coloneqq\defset{\nu\in\Csacc{v}}{\exists\,
t^*>0:\gamma_\nu'(t^*)\in h\act\Csacc{v}}
\end{equation}
(see also~\eqref{EQN:Csacc_initialrestrict}).
As for every~$\nu\in\Csacc{v}\vert_{h\act\Csacc{w}}$ the intersection time~$t^*$ 
is uniquely determined, the value 
\begin{equation}\label{EQ:def_intnr}
\varphi(v,w,h,\nu)\coloneqq\#\defset{t\in(0,t^*]}{
\gamma_\nu'(t)\in\Gamma\act\BrU_{\acc}}
\end{equation}
is well-defined. 
\index[symbols]{phi@$\varphi(v,w,h,\nu)$}%

The following result is an immediate consequence of 
Corollary~\ref{COR:seq_tangvec}.
\begin{lemma}\label{LEM:invintercount}
Let~$v,w\in\Index$ and~$h\in\Gamma$ be such 
that~$\Csacc{v}\vert_{h\act\Csacc{w}}\ne\varnothing$.
Then for every choice of~$\nu,\eta\in\Csacc{v}\vert_{h\act\Csacc{w}}$ we have
\[
\varphi(v,w,h,\nu)=\varphi(v,w,h,\eta)\,.
\]
\end{lemma}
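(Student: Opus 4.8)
The statement to prove, Lemma~\ref{LEM:invintercount}, asserts that the intersection count $\varphi(v,w,h,\nu)$ does not depend on the choice of the vector $\nu \in \Csacc{v}\vert_{h\act\Csacc{w}}$. The plan is to reduce this independence to the uniqueness of finite coding sequences established in Corollary~\ref{COR:seq_tangvec}, which is precisely the ``accelerated variant'' of Lemma~\ref{LEM:paths_gen} that was placed in the excerpt for exactly this purpose.

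First I would fix $\nu, \eta \in \Csacc{v}\vert_{h\act\Csacc{w}}$ and let $t^*_\nu, t^*_\eta > 0$ be the respective (unique) times at which $\gamma_\nu$ and $\gamma_\eta$ intersect $h\act\Csacc{w}$. By Corollary~\ref{COR:accsystemnewit}\eqref{accsystemnewit:intersect} applied to $\gamma_\nu$ with $a = w$, $t = t^*_\nu$, $g = h$, there is a unique index $n_\nu \in \N$ with $\sgn(n_\nu) = \sgn(t^*_\nu) = 1$ such that
\[
w = \itindex{\acc,n_\nu}(\nu), \qquad t^*_\nu = \ittime{\acc,n_\nu}(\nu), \qquad h = \ittrans{\acc,1}(\nu)\cdots\ittrans{\acc,n_\nu}(\nu)\,.
\]
By Corollary~\ref{COR:accsystemnewit}\eqref{accsystemnewit:firstreturn}, the sequence $(\ittime{\acc,m}(\nu))_m$ is strictly increasing and is exactly the (two-sided, ordered) sequence of times at which $\gamma_\nu$ meets $\Gamma\act\BrU_\acc$; hence the set $\{t \in (0,t^*_\nu] : \gamma_\nu'(t) \in \Gamma\act\BrU_\acc\}$ is precisely $\{\ittime{\acc,1}(\nu), \ldots, \ittime{\acc,n_\nu}(\nu)\}$, which has exactly $n_\nu$ elements. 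Thus $\varphi(v,w,h,\nu) = n_\nu$, and likewise $\varphi(v,w,h,\eta) = n_\eta$ with $n_\eta$ the analogous index for $\eta$. It remains to show $n_\nu = n_\eta$.

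For this, set $a_0 \coloneqq v$, $a_{n_\nu} \coloneqq w$, $a_j \coloneqq \itindex{\acc,j}(\nu)$ for $j \in \{1,\ldots,n_\nu-1\}$, and $g_j \coloneqq \ittrans{\acc,j}(\nu)$ for $j \in \{1,\ldots,n_\nu\}$; by Corollary~\ref{COR:accsystemnewit}\eqref{accsystemnewit:transforms} we have $g_j \in \Trans{\acc}{a_{j-1}}{a_j}$ for each $j$, and $g_1\cdots g_{n_\nu} = h$. By Corollary~\ref{COR:seq_tangvec}, the set of all vectors $\mu \in \Csacc{v}$ realizing this particular finite sequence of indices and transformations is exactly $\Csacc{v}\vert_{h\act\Csacc{w}}$ (with $h_{n_\nu} = g_1\cdots g_{n_\nu} = h$). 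In particular $\eta$ lies in this set, so $\eta$ realizes the \emph{same} length-$n_\nu$ sequence; that is, $\itindex{\acc,j}(\eta) = a_j$ and $\ittrans{\acc,j}(\eta) = g_j$ for all relevant $j$, and in particular $\itindex{\acc,n_\nu}(\eta) = w$ with $\ittrans{\acc,1}(\eta)\cdots\ittrans{\acc,n_\nu}(\eta) = h$. Then $\gamma_\eta'(\ittime{\acc,n_\nu}(\eta)) \in h\act\Csacc{w}$, so by the uniqueness clause of Corollary~\ref{COR:accsystemnewit}\eqref{accsystemnewit:intersect} applied to $\gamma_\eta$, we must have $n_\eta = n_\nu$. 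Hence $\varphi(v,w,h,\nu) = n_\nu = n_\eta = \varphi(v,w,h,\eta)$, as claimed.

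I do not anticipate a genuine obstacle here: the lemma is essentially bookkeeping built on Corollaries~\ref{COR:accsystemnewit} and~\ref{COR:seq_tangvec}. The one point requiring a little care is making sure that the ``intersections with $\Gamma\act\BrU_\acc$ in $(0,t^*]$'' in the definition of $\varphi$ are counted by the \emph{same} enumeration that underlies the iterated sequences --- i.e., that no intersection is missed and none is double-counted. This is exactly the content of Corollary~\ref{COR:accsystemnewit}\eqref{accsystemnewit:firstreturn}, which identifies $(\ittime{\acc,m}(\nu))_m$ as the full strictly monotone list of intersection times; invoking it cleanly handles that subtlety. A secondary minor point is to confirm that $\Csacc{v}\vert_{h\act\Csacc{w}} \ne \varnothing$ is genuinely needed only to guarantee the existence of at least one such finite sequence, so that Corollary~\ref{COR:seq_tangvec} applies; since the hypothesis supplies this nonemptiness, the argument goes through.
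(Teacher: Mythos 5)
Your proof is correct and follows exactly the paper's route: the paper derives the lemma as an immediate consequence of Corollary~\ref{COR:seq_tangvec} (together with the bookkeeping from Corollary~\ref{COR:accsystemnewit}), which is precisely what you spell out. The extra detail you supply — identifying $\varphi(v,w,h,\nu)$ with the index $n_\nu$ via the monotone enumeration of accelerated intersection times, and then transferring the same finite sequence to $\eta$ via the second assertion of Corollary~\ref{COR:seq_tangvec} — is exactly the argument the paper leaves implicit.
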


Because of Lemma~\ref{LEM:invintercount}, for every two 
pairs~$(v,p),(w,q)\in\Index\times\Gamma$ we can define the \emph{intersection 
count}
\begin{equation}\label{DEF:countit}
\varphi((v,p),(w,q))\coloneqq\left\{
\begin{array}{cl}
\varphi(v,w,p^{-1}q,\nu)&\text{if 
$\Csacc{v}\vert_{p^{-1}q\act\Csacc{w}}\ne\varnothing$}\,,\\
0&\text{otherwise,}
\end{array}
\right.
\end{equation}
\index[symbols]{phia@$\varphi((v,p),(w,q))$}%
\index[defs]{intersection count}%
\index[defs]{count!intersection}%
with an arbitrary choice of~$\nu\in\Csacc{v}\vert_{p^{-1}q\act\Csacc{w}}$ in the 
first case.

\begin{lemma}\label{LEM:intercountprops}
The intersection count has the following properties:
\begin{enumerate}[label=$\mathrm{(\roman*)}$, ref=$\mathrm{\roman*}$]
\item\label{intercountprops:invariant}
For all~$(v,p),(w,q)\in\Index\times\Gamma$ the intersection 
count~$\varphi((v,p),(w,q))$ is invariant under~$\Gamma$ in the sense that
\[
\forall h\in\Gamma:\varphi((v,hp),(w,hq))=\varphi((v,p),(w,q))\,.
\]

\item\label{intercountprops:additive}
Let~$(v,p),(w,q),(u,h)\in\Index\times\Gamma$ be such that
\[
\Csacc{v}\vert_{p^{-1}q\act\Csacc{w}}\ne\varnothing\qquad\text{and}\qquad\Csacc{
w}\vert_{q^{-1}h\act\Csacc{u}}\ne\varnothing\,.
\]
Then
\[
\varphi((v,p),(u,h))=\varphi((v,p),(w,q))+\varphi((w,q),(u,h))\,.
\]

\item\label{intercountprops:Per}
For all~$v\in\Index$,~$n\in\N$, and~$h\in\Per_{v,n}$ we have
\[
\varphi((v,\id),(v,h^{-1}))=n\,.
\]
\end{enumerate}
\end{lemma}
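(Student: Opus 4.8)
The statement has three parts, of which the last one, \eqref{intercountprops:Per}, is the substantive one; the other two are essentially bookkeeping. For \eqref{intercountprops:invariant}, the plan is to observe that for any $h\in\Gamma$ the tangent vector $\nu\in\Csacc{v}\vert_{p^{-1}q\act\Csacc{w}}$ satisfies $\gamma_\nu'(t^*)\in p^{-1}q\act\Csacc{w}$, and the geodesic $\gamma_{h\nu}$ is just the $h$-translate of $\gamma_\nu$, so it has the same intersection times and the same number of intersections with $\Gamma\act\BrU_{\acc}$ in the interval $(0,t^*]$; since $(hp)^{-1}(hq)=p^{-1}q$, both the defining condition of $\Csacc{v}\vert_{(hp)^{-1}(hq)\act\Csacc{w}}$ and the count $\varphi$ are unchanged. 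The key input here is that $\Gamma\act\BrU_{\acc}$ is a $\Gamma$-invariant set and that the action of $\Gamma$ on geodesics commutes with the flow, both of which are immediate from the constructions of Sections~\ref{SEC:cuspacc}--\ref{SEC:structacc}.

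For \eqref{intercountprops:additive}, the plan is to pick $\nu\in\Csacc{v}\vert_{p^{-1}q\act\Csacc{u}}$ (nonemptiness follows from the two hypotheses combined with Corollary~\ref{COR:seq_tangvec} and the nesting of the sets $\Csacc{v}\vert_{\,\cdot\,}$ along transition sequences, exactly as in the proof of Corollary~\ref{COR:seq_tangvec}). Then $\gamma_\nu$ first hits $p^{-1}q\act\Csacc{w}$ at some time $t_1>0$ and subsequently hits $p^{-1}h\act\Csacc{u}$ at some time $t_2>t_1$. Applying $q^{-1}p$ to $\gamma_\nu'(t_1)$ gives a vector in $\Csacc{w}\vert_{q^{-1}h\act\Csacc{u}}$, whose associated intersection time is $t_2-t_1$. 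Since the set of intersection times of $\gamma_\nu$ with $\Gamma\act\BrU_{\acc}$ in $(0,t_2]$ is the disjoint union of those in $(0,t_1]$ and those in $(t_1,t_2]$, and the $\Gamma$-translate by $q^{-1}p$ matches the second block with the intersection times of the translated geodesic, the additivity $\varphi((v,p),(u,h))=\varphi((v,p),(w,q))+\varphi((w,q),(u,h))$ follows, using part~\eqref{intercountprops:invariant} to move the translate.

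For \eqref{intercountprops:Per}, let $v\in\Index$, $n\in\N$, and $h\in\Per_{v,n}$. By the definition of $\Per_{v,n}$ together with the descriptions of the sets $P_{v,b}$ and $C_{v,b}$ and Corollary~\ref{COR:seq_tangvec}, there exists $\nu\in\Csacc{v}$ such that $h^{-1}=\ittrans{\acc,1}(\nu)\cdots\ittrans{\acc,n}(\nu)$ and $\itindex{\acc,n}(\nu)=v$, so that $\gamma_\nu'(\ittime{\acc,n}(\nu))\in h^{-1}\act\Csacc{v}$; in particular, by Proposition~\ref{PROP:Peranelements} the element $h$ is hyperbolic, $\gamma_\nu$ represents the axis of $h$ (or more precisely $\fixp{-}{h}\in I_v$, $\fixp{+}{h}\in J_{\prA(v)}$), and the first $n$ iterated intersection times of $\nu$ with $\Gamma\act\BrU_{\acc}$ occur precisely at $\ittime{\acc,1}(\nu),\dots,\ittime{\acc,n}(\nu)$, with $\ittime{\acc,n}(\nu)$ being the \emph{first} time $\gamma_\nu$ meets $h^{-1}\act\BrU_{\acc}$. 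This identifies $\ittime{\acc,n}(\nu)$ as the $t^*$ in the definition of $\varphi(v,v,h^{-1},\nu)$, and Corollary~\ref{COR:accsystemnewit}\eqref{accsystemnewit:firstreturn} guarantees that $\{t\in(0,\ittime{\acc,n}(\nu)]:\gamma_\nu'(t)\in\Gamma\act\BrU_{\acc}\}=\{\ittime{\acc,1}(\nu),\dots,\ittime{\acc,n}(\nu)\}$, a set of exactly $n$ elements. Hence $\varphi((v,\id),(v,h^{-1}))=\varphi(v,v,h^{-1},\nu)=n$.

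\textbf{Main obstacle.} The delicate point in \eqref{intercountprops:Per} is to be sure that $\ittime{\acc,n}(\nu)$ really is the \emph{first} return time of $\gamma_\nu$ to $h^{-1}\act\BrU_{\acc}$ (so that no intersection is undercounted), and simultaneously that there is no \emph{earlier} intersection with $h^{-1}\act\BrU_{\acc}$ hiding between two consecutive $\ittime{\acc,k}(\nu)$ (so that $n$ is not an overcount of full periods). This requires combining the uniqueness statement of Corollary~\ref{COR:accsystemnewit}\eqref{accsystemnewit:intersect} with the fact that $h^{-1}=\ittrans{\acc,1}(\nu)\cdots\ittrans{\acc,n}(\nu)$ exhibits $h^{-1}$ as the accumulated weight of a length-$n$ path in the accelerated return structure and that, by Remark~\ref{REM:accgroupelements} and non-collapsing, such a path representation is ``tight'' — i.e., $n$ is the minimal length. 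One should argue that if $\gamma_\nu$ met $h^{-1}\act\BrU_{\acc}$ at some time strictly between $0$ and $\ittime{\acc,n}(\nu)$, then by Corollary~\ref{COR:accsystemnewit}\eqref{accsystemnewit:intersect} we would get $h^{-1}=\ittrans{\acc,1}(\nu)\cdots\ittrans{\acc,m}(\nu)$ for some $m<n$, contradicting that $h\in\Per_{v,n}$ and the disjointness of the $\Per_m$'s established in Property~\ref{staPROP2} (whose verification is the content of Section~\ref{SUBSEC:prop2}, of which this lemma is the technical heart). So the argument is somewhat circular in appearance and one must be careful to use only the parts of Property~\ref{staPROP2} already in hand — namely the \emph{definition} of $\Per_{v,n}$ as submaps of $F^n$ and the uniqueness of accelerated iterated sequences — rather than the disjointness conclusion itself.
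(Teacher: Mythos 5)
Your proposal is correct and follows essentially the same route as the paper: parts~\eqref{intercountprops:invariant} and~\eqref{intercountprops:Per} are, as you spell out, direct consequences of the definitions together with Corollaries~\ref{COR:seq_tangvec} and~\ref{COR:accsystemnewit}, and your proof of part~\eqref{intercountprops:additive} — choosing a single vector~$\nu$ witnessing both passages, splitting the count over~$(0,t_2^*]$ at~$t_1^*$, and translating the second block — is exactly the paper's computation. The circularity you worry about in~\eqref{intercountprops:Per} is not an actual obstacle: since $\gamma_\nu$ crosses the single translate~$h^{-1}\act\Csacc{v}$ at most once, the time~$t^*$ in the definition of~$\varphi$ is forced to equal~$\ittime{\acc,n}(\nu)$, and Corollary~\ref{COR:accsystemnewit} already guarantees that every intersection of~$\gamma_\nu$ with~$\Gamma\act\BrU_{\acc}$ in~$(0,t^*]$ is one of the first~$n$ iterated intersection times, so the disjointness assertion of Property~\ref{staPROP2} is never invoked.
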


\begin{proof}
The statements~\eqref{intercountprops:invariant} and~\eqref{intercountprops:Per} 
are immediately clear from the definitions of the intersection count and the 
sets involved. Regarding statement~\eqref{intercountprops:additive}, the 
hypotheses imply that
\[
p\act\Csacc{v}\vert_{q\act\Csacc{w}}\ne\varnothing\qquad\text{and}\qquad 
q\act\Csacc{w}\vert_{h\act\Csacc{u}}\ne\varnothing\,.
\]
Therefore, we have
\[
\Csacc{v}\vert_{p^{-1}h\act\Csacc{u}}\ne\varnothing\,.
\]
Hence all intersection counts involved are nonzero. We pick 
$\nu\in\Csacc{v}\vert_{p^{-1}h\act\Csacc{u}}$. Then $\nu$ is also an element 
of~$\Csacc{v}\vert_{p^{-1}q\act\Csacc{w}}$ and hence there exist uniquely 
determined~$t^*_1,t^*_2\in(0,+\infty)$,~$t^*_1<t^*_2$, such that
\[
\gamma_\nu'(t^*_1)\in 
p^{-1}q\act\Csacc{w}\qquad\text{and}\qquad\gamma_\nu'(t^*_2)\in 
p^{-1}h\act\Csacc{u}\,.
\]
With that we calculate
\begin{align*}
\varphi((v,p),(u,h))&=\varphi(v,u,p^{-1}h,\nu)=\#\defset{t\in(0,t^*_2]}{
\gamma_\nu'(t)\in\Gamma\act\BrU_{\acc}}\\
&=\#\defset{t\in(0,t_1^*]}{\gamma_\nu'(t)\in\Gamma\act\BrU_{\acc}}+\#\defset{
t\in(t_1^*,t_2^*]}{\gamma_\nu'(t)\in\Gamma\act\BrU_{\acc}}\\
&=\varphi(v,w,p^{-1}q,\nu)+\varphi(w,u,q^{-1}pp^{-1}h,\nu)\\
&=\varphi((v,p),(w,q))+\varphi((w,q),(u,h))\,.\qedhere
\end{align*}
\end{proof}

Let~$a,b\in\Index$ and~$m,n\in\N$ be such 
that~$\Per_{a,n}\cap\Per_{b,m}\ne\varnothing$.
Denote~$j\coloneqq\prA(a)$ and~$k\coloneqq\prA(b)$.
Further let~$g\in\Per_{a,n}\cap\Per_{b,m}$.
Because of Proposition~\ref{PROP:Peranelements} the transformation~$g$ is 
hyperbolic with
\[
(\fixp{-}{g},\fixp{+}{g})\in \bigl(I_a\cap I_b\bigr)\times\bigl(J_j\cap 
J_k\bigr)\,.
\]
This implies in particular that
\[
\Iset{a}\cap\Iset{b}\ne\varnothing\qquad\text{and}\qquad\Jset{j}\cap\Jset{k}
\ne\varnothing\,.
\]
The combination of those two shows that either
\[
\Plussp{j}\subseteq 
\Plussp{k}\qquad\text{or}\qquad\Plussp{k}\subseteq\Plussp{j}\,,
\]
where we suppose the former without loss of generality. Furthermore, there 
exists~$N\in\N_0$ such that
\[
g^{-N}\act\Plussp{j}\subseteq\Plussp{k}\subseteq g^{N+2}\act\Plussp{j}
\]
and
\begin{equation}\label{EQ:Csacc_nonempty}
\Csacc{b}\vert_{g^{N+2}\act\Csacc{a}}\ne\varnothing\qquad\text{and}\qquad\Csacc{
a}\vert_{g^{-N}\act\Csacc{b}}\ne\varnothing\,.
\end{equation}
This is due to the fact (see Proposition~\ref{PROP:Peranelements}) that the set 
sequences~$(g^r\act\overline{\base{\Cs{j}}})_{r\in\Z}$ 
and~$(g^r\act\overline{\base{\Cs{k}}})_{r\in\Z}$ converge to the 
singleton~$\{\fixp{\pm}{g}\}$ for~$k\to\pm\infty$ and that the fixed 
points~$\fixp{-}{g}$ and~$\fixp{+}{g}$ are inner points of~$I_j\cap I_k$ 
and~$J_j\cap J_k$, respectively. The~$2$ in the exponent of~$g$ instead of a~$1$ 
accounts for the possibility that $k$ equals~$\psi^*_\eZ(j)$ or vice versa for 
either~$\eZ\in\{\eX,\eY\}$. We note that~\eqref{EQ:Csacc_nonempty} in 
combination with 
\[
 \Csacc{a}\vert_{g^{-1}\act\Csacc{a}}\not=\varnothing
\]
(because $g\in\Per_{a,n}$) yields that 
\[
 \Csacc{a}\vert_{g^{-(N+2)}\act\Csacc{b}} \not=\varnothing\qquad\text{and}\qquad 
\Csacc{b}\vert_{g^N\act\Csacc{a}} \not=\varnothing\,.
\]
With Lemma~\ref{LEM:intercountprops} we now obtain
\begin{align*}
n&=\varphi((a,\id),(a,g^{-1}))
=\varphi((a,g^{N+1}),(a,g^{N}))
\\
& = \frac12\Bigl(\varphi((a,g^{N+2}),(a,g^{N+1})) +  
\varphi((a,g^{N+1}),(a,g^{N}))\Bigr)
\\
& =\frac{1}{2}\varphi((a,g^{N+2}),(a,g^{N}))
\\
&=\frac{1}{2}\Bigl(\varphi((a,g^{N+2}),(b,\id))+\varphi((b,\id),(a,g^{N}
))\Bigr)\\
&=\frac{1}{2}\Bigl(\varphi((a,g^{N+2}),(b,\id))+\varphi((b,g^2),(a,g^{N+2}
))\Bigr)
\\
&=\frac{1}{2}\varphi((b,g^2),(b,\id))
=\varphi((b,\id),(b,g^{-1}))=m\,.
\end{align*}
This completes the proof of Property~\ref{staPROP2}.

\subsection{Property~\ref{staPROP3}}\label{SUBSEC:prop3}

Part \eqref{staPROP3:hyperbolic} of Property~\ref{staPROP3} has already been 
shown to hold in Proposition~\ref{PROP:Peranelements}.
For parts~\eqref{staPROP3:primitive} and~\eqref{staPROP3:representatives} we 
start with some preparations.

For the first step we recall the set~$\Per_{a,n}$ for $a\in\Index$ and $n\in\N$ 
from Section~\ref{SUBSEC:prop2} (proof of Property~\ref{staPROP2} for~$\mc S$) 
and further that each element in~$\Per_{a,n}$ is hyperbolic by 
Proposition~\ref{PROP:Peranelements}. Therefore, for any $g\in\Per_{a,n}$, its 
two fixed points~$\fixp{\pm}{g^{-1}} = \fixp{\mp}{g}$ are in~$\wh\R_{\st}$ and 
each geodesic~$\gamma$ on~$\h$ with $\gamma(\pm\infty) = \fixp{\pm}{g^{-1}}$ is 
a representative of the axis~$\alpha(g^{-1})$ of~$g^{-1}$. The geodesic 
on~$\Orbi$ associated to~$\gamma$ is periodic with period length~$\ell(g^{-1})$. 
See Section~\ref{SUBSEC:geodsurfaces} and, in particular, 
Lemma~\ref{LEM:periodicaxes}.

\begin{lemma}\label{LEM:Perandaxes}
Let $a\in\Index$, $n\in\N$ and $g\in\Per_{a,n}$. Let $\gamma$ be a 
representative of~$\alpha(g^{-1})$. Then $\gamma$ intersects $\Csacc{a}$.
\end{lemma}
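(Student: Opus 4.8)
The plan is to show that $\gamma$ intersects $\Csacc{a}$ by exploiting the defining property of $\Per_{a,n}$ together with the structural results on the accelerated system, in particular Corollary~\ref{COR:seq_tangvec} and Corollary~\ref{COR:accsystemnewit}. First I would recall that, since $g\in\Per_{a,n}$, the definitions of $P_{a,b}$, $C_{a,b}$ and the induced transition sets ensure (via Corollary~\ref{COR:seq_tangvec}, applied to a closed path from $a$ back to $a$ of $\acc$-transition-length $n$ with accumulated weight $g^{-1}$) that there exists $\nu\in\Csacc{a}$ with
\[
 g^{-1}=\ittrans{\acc,1}(\nu)\cdots\ittrans{\acc,n}(\nu)\qquad\text{and}\qquad \itindex{\acc,n}(\nu)=a\,.
\]
In particular $\gamma_\nu'(\ittime{\acc,n}(\nu))\in g^{-1}\act\Csacc{a}$, so the geodesic $\gamma_\nu$ already realizes a periodic-type return. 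The key point is that $\gamma_\nu$ is (up to parametrization and up to the $\Gamma$-action) a representative of $\alpha(g^{-1})$: by Proposition~\ref{PROP:Peranelements} the element $g$ is hyperbolic with $\fixp{-}{g}\in I_a^\circ$ and $\fixp{+}{g}\in J_{\prA(a)}$, and the nested-interval argument there (the intersection $\bigcap_k g^{-k}\act\overline{I_a}=\{\fixp{-}{g}\}$, together with the analogous statement for $g$) forces $\gamma_\nu(+\infty)=\fixp{-}{g}=\fixp{+}{g^{-1}}$ and $\gamma_\nu(-\infty)=\fixp{+}{g}=\fixp{-}{g^{-1}}$. Hence $\gamma_\nu$ has the endpoints of $\alpha(g^{-1})$ and so is a representative of $\alpha(g^{-1})$.

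Next I would transfer the conclusion from $\gamma_\nu$ to the given representative $\gamma$. Since any two representatives of $\alpha(g^{-1})$ differ by a parameter shift (see the discussion of axes in Section~\ref{SUBSEC:geodsurfaces}), and since $\gamma_\nu$ intersects $\Csacc{a}$ at time $0$ (namely $\gamma_\nu'(0)=\nu\in\Csacc{a}$), it follows directly that $\gamma$ also intersects $\Csacc{a}$: a time reparametrization moves the intersection time but does not destroy the intersection. (If one prefers to allow $\gamma$ to be an arbitrary $\Gamma$-translate of a representative of $\alpha(g^{-1})$, then the statement as phrased — ``$\gamma$ a representative of $\alpha(g^{-1})$'' — means exactly an element of the equivalence class, i.e.\ a parameter shift of $\gamma_\nu$, so nothing more is needed; if $\Gamma$-translates were to be included one would invoke Lemma~\ref{LEM:periodicaxes}\eqref{periodicaxes:conjugacy} and note that $\Csacc{a}$ descends to the cross section $\CrSc_{\acc}$, which by Proposition~\ref{PROP:accsystemcrosssection} is intersected by every representative of a periodic geodesic.)

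The main obstacle I anticipate is the first step: carefully matching the combinatorial data defining $\Per_{a,n}$ (through $P_{a,b}$, $C_{a,b}$, $g_p$ and the submap structure of $F^n$) with an honest path in the accelerated transition graph, so that Corollary~\ref{COR:seq_tangvec} is applicable and produces a vector $\nu\in\Csacc{a}$ whose accumulated transition equals $g^{-1}$ and whose $n$-th iterated index is again $a$. One must check that the submap
\[
 g^{-1}\act\Iset{a}\times\{a\}\ni(x,a)\mapsto(g\act x,a)\in\Iset{a}\times\{a\}
\]
being a submap of $F^n$ really corresponds to a sequence $a=a_0,a_1,\dots,a_n=a$ with $g_j\in\Trans{\acc}{a_{j-1}}{a_j}$ and $g_1\cdots g_n=g^{-1}$; this is essentially the content of how $F$ is assembled from the submaps right after Property~\ref{staPROP1}, combined with the identity $C_{a,b}^{-1}\cup\{p^{-n}g_p^{-1}\mid p\in P_{a,b},\,n\in\N\}=\Trans{\acc}{b}{a}$ established in Section~\ref{SUBSEC:Proofprop1}. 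Once that translation is in place, everything else is a direct consequence of Proposition~\ref{PROP:Peranelements} and the basic behaviour of axes, so the proof should be short.
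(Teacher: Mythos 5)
There is a genuine gap at the central step of your argument, namely the claim that the vector~$\nu$ produced by Corollary~\ref{COR:seq_tangvec} satisfies $\gamma_\nu(\pm\infty)=\fixp{\pm}{g^{-1}}$, so that $\gamma_\nu$ represents~$\alpha(g^{-1})$. Corollary~\ref{COR:seq_tangvec} only yields \emph{some} element of the (typically large) set $\Csacc{a}\vert_{g^{-1}\act\Csacc{a}}$: the constraints it imposes are $\gamma_\nu(+\infty)\in g^{-1}\act\Iset{a}$ and $\gamma_\nu(-\infty)\in\Jset{a}$, nothing more. The nested-interval argument you invoke from Proposition~\ref{PROP:Peranelements} concerns the intersection $\bigcap_{k\in\N}g^{-k}\act\overline{I_a}$, i.e.\ the \emph{infinite} iteration of~$g^{-1}$; a vector whose coding realizes only one period of the path has no reason to have its forward endpoint at the fixed point, and its backward endpoint is not constrained at all beyond lying in~$\Jset{a}$. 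So "$\gamma_\nu$ is a representative of~$\alpha(g^{-1})$" is unjustified (and false for a generic choice of~$\nu$), and with it the transfer in your last step collapses, since the given~$\gamma$ and~$\gamma_\nu$ are then genuinely different geodesics, not reparametrizations of one another. Repairing this along your lines would require producing a vector realizing the \emph{bi-infinite periodic} coding, which is exactly the axis statement you are trying to prove.

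The statement is obtained more directly by working with~$\gamma$ itself rather than manufacturing an auxiliary geodesic: Proposition~\ref{PROP:Peranelements} places the endpoints of~$\gamma$, namely $\fixp{+}{g^{-1}}$ and $\fixp{-}{g^{-1}}$, in $\Iset{a}\times\Jset{\prA(a)}$; property~\eqref{BP:allvectors} for the reduced set of branches together with Remark~\ref{REM:inst} then gives an intersection of~$\gamma$ with $\Cs{\prA(a),\st}$; and the structure results for the accelerated branches --- the disjoint decomposition of $\Cs{j,\acc}$ from Lemma~\ref{LEM:accbranches} together with Lemma~\ref{LEM:accprops}\eqref{LEM:accprops:Isetunion} and \eqref{LEM:accprops:Iset}, which identify $\Iset{a}$ with $I(\Csacc{a})$ inside the disjoint partition of $\Ired{\prA(a)}$ --- pin the intersection vector down to lie in~$\Csacc{a}$ because its forward endpoint lies in~$\Iset{a}$. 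Your first step (matching the submap structure of~$F^n$ with a path of accelerated transitions of accumulated weight~$g^{-1}$) and your final reparametrization remark are fine; it is only the identification of~$\gamma_\nu$ with the axis that breaks.
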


\begin{proof}
By Proposition~\ref{PROP:Peranelements}, $\fixp{+}{g^{-1}}\in \Iset{a}$ and 
$\fixp{-}{g^{-1}}\in \Jset{\prA(a)}$. From~\eqref{BP:allvectors} and 
Remark~\ref{REM:inst} we obtain that $\gamma$ intersects~$\Cs{\prA(a),\st}$, and 
hence $\gamma$ intersects~$\Cs{\prA(a),\acc}$. Combining 
Lemmas~\ref{LEM:accbranches}, \ref{LEM:accprops}\eqref{LEM:accprops:Isetunion} 
and~\ref{LEM:accprops}\eqref{LEM:accprops:Iset}, we see that $\gamma$ 
intersects~$\Csacc{a}$.
\end{proof}

\begin{defi}\label{DEF:uniquevec}
Let $a\in\Index$, $g\in\Gamma$ hyperbolic and suppose that $\gamma$ is a 
geodesic on~$\h$ satisfying $\gamma(\pm\infty) = \fixp{\pm}{g}$ and 
$\gamma'(0)\in\Csacc{a}$. 
We set 
\[
\nu(a,g)\coloneqq\gamma^{\prime}(0)\,.
\]
\end{defi}
The vector $\nu(a,g)$ is well-defined 
whenever~$(\fixp{+}{g},\fixp{-}{g})\in\Iset{a}\times\Jset{\prA(a)}$, hence, in 
particular in the case that~$g^{-1}\in\Per_{a,n}$ for any~$n\in\N$.
The geodesic~$\gamma$ in Definition~\ref{DEF:uniquevec} is then a representative 
of~$\alpha(g)$ and equals~$\gamma_{\nu(a,g)}$.

\begin{lemma}\label{LEM:uniquevec}
Let $a\in\Index$, $g\in\Gamma$ be hyperbolic such 
that
\[
(\fixp{+}{g^{-1}},\fixp{-}{g^{-1}})\in\Iset{a}\times\Jset{\prA(a)}\,,
\]
and set $\nu\coloneqq \nu(a,g^{-1})$. 
\begin{enumerate}[label=$\mathrm{(\roman*)}$, ref=$\mathrm{\roman*}$]
\item\label{uniquevec:conj}
For each~$m\in\Z$ set 
\[
 h_m \coloneqq \ittrans{\acc,\sgn(m)}(\nu)\cdots\ittrans{\acc,m}(\nu)\,.
\]
Then we have, for each~$m\in\Z$, 
\[
\nu(\itindex{\acc,m}(\nu), h_m g 
h_m^{-1})=h_m^{-1}\act\gamma_{\nu}^{\prime}(\ittime{\acc,m}(\nu))\,.
\]

\item\label{uniquevec:powers}
For all $m\in\N$ we have $\nu(a,g)=\nu(a,g^m)$.

\item\label{uniquevec:itseq}
The 
sequences~$\bigl((\itindex{\acc,n}(\nu),\ittrans{\acc,n}(\nu))\bigr)_{n\in\N}$ 
and~$\bigl((\itindex{\acc,-n}(\nu),\ittrans{\acc,-n}(\nu))\bigr)_{n\in\N}$ 
in~$\Index\times\Gamma$ are periodic with period length
\[
\ell\coloneqq\countit{(a,\id),(a,g^{-1})}\,.
\]
For all $m\in\N_0$ we have
\begin{align*}
\ittrans{\acc,m\ell+1}(\nu)\cdots\ittrans{\acc,(m+1)\ell}(\nu)&=g^{-1}
\intertext{and}
\ittrans{\acc,-m\ell-1}(\nu)\cdots\ittrans{\acc,-(m+1)\ell}(\nu)&=g\,.
\end{align*}
In particular we obtain
\[
 \ittrans{\acc,-n}(\nu) = \ittrans{\acc,n\ell - n +1}(\nu)^{-1}
\]
for all~$n\in\N$.
\end{enumerate}
\end{lemma}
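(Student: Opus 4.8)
The plan is to prove Lemma~\ref{LEM:uniquevec} by exploiting that $\nu = \nu(a,g^{-1})$ lies on a geodesic $\gamma_\nu$ which is a representative of the axis $\alpha(g^{-1})$, hence $\gamma_\nu$ represents a periodic geodesic on $\Orbi$ with period $\ell(g^{-1})$; its system of accelerated iterated sequences from Corollary~\ref{COR:accsystemnewit} is therefore periodic, and all three assertions follow from careful bookkeeping of that periodicity together with the defining property $\cyctrans{j}{\eZ}\act\point{j}{\eZ}=\point{j}{\eZ}$ of the cycle structure (which is already embedded in the accelerated transition sets). First I would establish~\eqref{uniquevec:conj}: by Corollary~\ref{COR:accsystemnewit}\eqref{accsystemnewit:intersect} the vector $\gamma_\nu'(\ittime{\acc,m}(\nu))$ lies in $h_m\act\Csacc{\itindex{\acc,m}(\nu)}$, so $h_m^{-1}\act\gamma_\nu'(\ittime{\acc,m}(\nu))\in\Csacc{\itindex{\acc,m}(\nu)}$. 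Its associated geodesic is $t\mapsto h_m^{-1}\act\gamma_\nu(t+\ittime{\acc,m}(\nu))$, whose forward and backward endpoints are $h_m^{-1}\act\gamma_\nu(\pm\infty)=h_m^{-1}\act\fixp{\pm}{g^{-1}}=\fixp{\pm}{h_mg^{-1}h_m^{-1}}=\fixp{\mp}{h_mgh_m^{-1}}$. By~\eqref{BPacc:allvectors} this vector is the unique element of $\Csacc{\itindex{\acc,m}(\nu)}$ with these endpoints, hence equals $\nu(\itindex{\acc,m}(\nu),h_mgh_m^{-1})$ by Definition~\ref{DEF:uniquevec}.

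Next I would prove~\eqref{uniquevec:powers}. Since $\alpha(g)=\alpha(g^m)$ and $\ell(g^m)=m\ell(g)$ by Lemma~\ref{LEM:periodicaxes}\eqref{periodicaxes:powers}, and $\fixp{\pm}{g^m}=\fixp{\pm}{g}$, the defining data $(\fixp{+}{g^{-m}},\fixp{-}{g^{-m}})=(\fixp{+}{g^{-1}},\fixp{-}{g^{-1}})\in\Iset{a}\times\Jset{\prA(a)}$ is the same for $g$ and $g^m$. Any geodesic $\gamma$ with $\gamma(\pm\infty)=\fixp{\pm}{g}$ and $\gamma'(0)\in\Csacc{a}$ serves simultaneously as the defining geodesic for $\nu(a,g)$ and $\nu(a,g^m)$, since Definition~\ref{DEF:uniquevec} only refers to the endpoints; hence $\nu(a,g)=\nu(a,g^m)$.

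For~\eqref{uniquevec:itseq}, the core point is that $\gamma_\nu$ represents a periodic geodesic on $\Orbi$. By Proposition~\ref{PROP:Peranelements} (applied with $g^{-1}$ in place of the ``$g$'' there, using the hypothesis) and Corollary~\ref{COR:seq_tangvec}, one gets $g\in\Per_{a,\ell}$ for $\ell=\countit{(a,\id),(a,g^{-1})}$: more precisely, $\ell$ is by definition the intersection count of $\nu$ from $\Csacc{a}$ until it returns to $g^{-1}\act\Csacc{a}$, which means $\ittrans{\acc,1}(\nu)\cdots\ittrans{\acc,\ell}(\nu)=g^{-1}$ and $\itindex{\acc,\ell}(\nu)=a=\itindex{\acc,0}(\nu)$. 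Since $\gamma_{g^{-1}\act\nu}(t)=g^{-1}\act\gamma_\nu(t+\ittime{\acc,\ell}(\nu))$ has the same endpoints as $\gamma_\nu$ (because $g^{-1}$ fixes $\fixp{\pm}{g^{-1}}$) and intersects $\Csacc{a}$ at time $0$, it equals $\gamma_\nu$ up to the time-shift $\ittime{\acc,\ell}(\nu)$; uniqueness of the accelerated system of sequences (Corollary~\ref{COR:accsystemnewit}) then forces $\itindex{\acc,n+\ell}(\nu)=\itindex{\acc,n}(\nu)$ and $\ittrans{\acc,n+\ell}(\nu)=\ittrans{\acc,n}(\nu)$ for all $n\in\Z$, giving the periodicity in positive indices. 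Iterating yields $\ittrans{\acc,m\ell+1}(\nu)\cdots\ittrans{\acc,(m+1)\ell}(\nu)=g^{-1}$ for all $m\in\N_0$. For the backward direction, applying the same argument to $g\act\nu$ (which shifts by $\ittime{\acc,-\ell}(\nu)$ and whose negative-index sequence reproduces that of $\nu$) gives periodicity of $\bigl((\itindex{\acc,-n}(\nu),\ittrans{\acc,-n}(\nu))\bigr)_{n\in\N}$ with the same period $\ell$ and $\ittrans{\acc,-m\ell-1}(\nu)\cdots\ittrans{\acc,-(m+1)\ell}(\nu)=g$. Finally, writing out the chain $\gamma_\nu'(0)\in\Csacc{a}$ with $\ittrans{\acc,1}(\nu)\cdots\ittrans{\acc,\ell}(\nu)=g^{-1}$ and comparing with the backward chain $\ittrans{\acc,-1}(\nu)\cdots\ittrans{\acc,-\ell}(\nu)=g$ shows that the backward transitions are the inverses of the forward ones in reversed order within one period, which is exactly the identity $\ittrans{\acc,-n}(\nu)=\ittrans{\acc,n\ell-n+1}(\nu)^{-1}$ for $n\in\{1,\dots,\ell\}$, and this extends to all $n\in\N$ by periodicity.

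The main obstacle I anticipate is the last identity in~\eqref{uniquevec:itseq}: one has to verify that the backward coding of a periodic geodesic is genuinely the ``reversed inverse'' of its forward coding. The clean way is to observe that $\gamma_\nu$ restricted to one period is a closed loop, so the ordered product of \emph{all} transition elements encountered traversing the loop forward is $g^{-1}$, traversing it backward is $g$; since the backward traversal visits the same intersection points in reverse order, the $n$-th backward transition $\ittrans{\acc,-n}(\nu)$ must be $\ittrans{\acc,\ell-n+1}(\nu)^{-1}$ within the first period (after conjugating appropriately), and I would make this precise using part~\eqref{uniquevec:conj} to transport everything to a common branch, together with~\eqref{eq:iternexttime}-style relations relating $\retime{}$ and $\pretime{}$. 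Care is needed with the indexing offsets ($n\ell-n+1$ versus $\ell-n+1$) which come from the period-$\ell$ shift, and with the fact that $\ittime{\acc,\cdot}$ is only a \emph{subsequence} of $\ittime{\BrU,\cdot}$, so one must work entirely inside the accelerated system and not mix it with the slow one.
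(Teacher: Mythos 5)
Your strategy is essentially the paper's own: parts (i) and (ii) are obtained from Definition~\ref{DEF:uniquevec}, the uniqueness statement~\eqref{BPacc:allvectors} and Lemma~\ref{LEM:periodicaxes}, and part (iii) by locating the return $\gamma_\nu'(t_1)=g^{-1}\act\nu$ at a positive time~$t_1$, identifying the number of crossings of~$\Gamma\act\BrU_{\acc}$ in~$(0,t_1]$ with $\countit{(a,\id),(a,g^{-1})}$ via~\eqref{EQ:def_intnr} and~\eqref{DEF:countit}, propagating periodicity through the uniqueness in Corollary~\ref{COR:accsystemnewit}, and running the mirror argument with~$g$ and a negative time for the backward half. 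Your treatment of (ii), of both periodicity statements, and of the reversed-inverse identity (first $\ittrans{\acc,-n}(\nu)=\ittrans{\acc,\ell-n+1}(\nu)^{-1}$ for $1\le n\le\ell$, then reconciled with the index $n\ell-n+1$ by periodicity) is correct and in fact more detailed than the paper's. Two small repairs in (iii): record why $t_1>0$ (namely $\gamma_\nu(+\infty)=\fixp{+}{g^{-1}}$, so $g^{-1}$ translates forward along~$\gamma_\nu$), and the deduction ``$g\in\Per_{a,\ell}$ by Proposition~\ref{PROP:Peranelements} and Corollary~\ref{COR:seq_tangvec}'' is mis-attributed (Proposition~\ref{PROP:Peranelements} is the converse direction) and also unnecessary; what you actually use, correctly, is Corollary~\ref{COR:accsystemnewit}\eqref{accsystemnewit:intersect} at the crossing time~$t_1$, which yields $\ittrans{\acc,1}(\nu)\cdots\ittrans{\acc,\ell}(\nu)=g^{-1}$ and $\itindex{\acc,\ell}(\nu)=a$.

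The one genuine flaw is the conjugation bookkeeping in (i). The identity you invoke, $h_m^{-1}\act\fixp{\pm}{g^{-1}}=\fixp{\pm}{h_mg^{-1}h_m^{-1}}$, is backwards: for $q\in\PSLR$ one has $q\act\fixp{\pm}{h}=\fixp{\pm}{qhq^{-1}}$, so with $q=h_m^{-1}$ the endpoints of the transported geodesic are $\fixp{\pm}{h_m^{-1}g^{-1}h_m}$. Moreover, Definition~\ref{DEF:uniquevec} requires the \emph{forward} endpoint to be the \emph{attracting} fixed point of the second argument, so even your own computation, which lands on $\fixp{\mp}{h_mgh_m^{-1}}$, would not identify the vector as $\nu(\itindex{\acc,m}(\nu),h_mgh_m^{-1})$. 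The transport-plus-uniqueness argument actually yields
\[
h_m^{-1}\act\gamma_\nu'(\ittime{\acc,m}(\nu))=\nu\bigl(\itindex{\acc,m}(\nu),\,h_m^{-1}g^{-1}h_m\bigr)\,,
\]
i.e.\ the ``inverse'' convention of $\nu=\nu(a,g^{-1})$ must be retained for the conjugate (the printed statement of (i) carries the same imprecision, which the paper glosses over by calling (i) straightforward). This matters precisely where (i) feeds into (iii): with $m=\ell$ one has $h_\ell=g^{-1}$, which commutes with~$g$, so the conjugation is harmless, but the second argument must remain~$g^{-1}$ in order to conclude $h_\ell^{-1}\act\gamma_\nu'(\ittime{\acc,\ell}(\nu))=\nu(a,g^{-1})=\nu$; with~$g$ in its place the identity fails, since $\nu(a,g)\ne\nu(a,g^{-1})$. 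Once (i) is stated and proved in this corrected form, the rest of your argument goes through unchanged.
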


\begin{proof}
Statements~\eqref{uniquevec:conj} and~\eqref{uniquevec:powers} are 
straightforward consequences of 
Lemma~\ref{LEM:periodicaxes}\eqref{periodicaxes:conjugacy} and 
\eqref{periodicaxes:powers}, respectively.
In order to verify \eqref{uniquevec:itseq} consider the 
geodesic~$\gamma\coloneqq\gamma_{\nu(a,g)}$.
Then~$\gamma$ is a representative of~$\alpha(g^{-1})$.
Thus, $\gamma(\R)$ is fixed by~$g$ and~$g^{-1}$.
By Lemma~\ref{LEM:Perandaxes},~$\gamma$ intersects~$g^m\act\Csacc{a}$ for 
every~$m\in\Z$.
Let~$t_1>0$ be such that $\gamma'(t_1)=g^{-1}\act\nu$.
By~\eqref{EQ:def_intnr} and~\eqref{DEF:countit}, the number of intersections 
of~$\gamma$ with~$\Gamma\act\BrU_{\acc}$ at times~$t\in(0,t_1]$ is given 
by~$\countit{(a,\id),(a,g^{-1})}$.
Applying~\eqref{uniquevec:conj} with~$m=1$ now shows the periodicity of the 
first 
sequence~$\bigl((\itindex{\acc,n}(\nu),\ittrans{\acc,n}(\nu))\bigr)_{n\in\N}$ 
with the claimed period length, as well as
\[
\ittrans{\acc,m\ell+1}(\nu)\cdots\ittrans{\acc,(m+1)\ell}(\nu)=g^{-1}
\]
for every~$m\in\N$.
Because of Lemma~\ref{LEM:intercountprops}\eqref{intercountprops:invariant} the 
remaining statements follow analogously, by considering~$g$ instead of~$g^{-1}$ 
and~$t_2<0$ such that~$\gamma'(t_2)=g\act\nu$.
\end{proof}

\begin{prop}\label{PROP:exacthitonj}
Let $a\in\Index$, $n\in\N$ and $g\in\Gamma$. Then $g\in\Per_{a,n}$ if and only 
if there exists $\nu\in\Csacc{a}$ and $t^*>0$ such that 
\[
\#\defset{ t\in (0,t^*] }{\gamma_\nu^{\prime}(t)\in\Gamma\act\BrU_{\acc}}=n
\]
and
\[
\gamma_\nu^{\prime}(t^*)=g^{-1}\act\nu\,.
\]
In this case, $t^*$ is the displacement length~$\ell(g^{-1})$ of~$g^{-1}$.
\end{prop}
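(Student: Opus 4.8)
The statement converts a combinatorial property of the coding map $F$ (being a submap of $F^{n}$ with a prescribed hyperbolic action) into geometric data (a tangent vector whose geodesic closes up after exactly $n$ accelerated returns, the return time being the displacement length of $g^{-1}$). The plan is to first isolate a \emph{dictionary}: for $a\in\Index$, $n\in\N$, $g\in\Gamma$ one has $g\in\Per_{a,n}$ if and only if there is $\nu\in\Csacc{a}$ with $\itindex{\acc,n}(\nu)=a$ and $g^{-1}=\ittrans{\acc,1}(\nu)\cdots\ittrans{\acc,n}(\nu)$. The ``only if'' half is essentially already done inside the proof of Proposition~\ref{PROP:Peranelements}: decompose the submap $g^{-1}\act\Iset{a}\times\{a\}\to\Iset{a}\times\{a\}$ into $n$ consecutive submaps of $F$, read off the chain of target indices $a=c_{0},\dots,c_{n}=a$ and the elements of $\Trans{\acc}{c_{i-1}}{c_{i}}$ furnished by the identity $C_{c_{i},c_{i-1}}^{-1}\cup\defset{p^{-m}g_{p}^{-1}}{p\in P_{c_{i},c_{i-1}},\,m\in\N}=\Trans{\acc}{c_{i-1}}{c_{i}}$ from Section~\ref{SUBSEC:Proofprop1}, and apply Corollary~\ref{COR:seq_tangvec}. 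For the ``if'' half I would reverse this: with $b_{j}\coloneqq\itindex{\acc,j}(\nu)$ and $h_{j}\coloneqq\ittrans{\acc,j}(\nu)\in\Trans{\acc}{b_{j-1}}{b_{j}}$ (Corollary~\ref{COR:accsystemnewit}), each $h_{j}$ is the action element of a submap $F_{j}$ of $F$ from fiber $b_{j-1}$ to fiber $b_{j}$, and an induction on $j$ using the nesting $h_{j}\act\Iset{b_{j}}\subseteq\Iset{b_{j-1}}$ from~(\ref{BPacc:intervaldecomp}\ref{BPacc:decomp}) shows that $F_{n}\circ\cdots\circ F_{1}$ is defined exactly on $h_{1}\cdots h_{n}\act\Iset{b_{n}}\times\{b_{0}\}=g^{-1}\act\Iset{a}\times\{a\}$ and there equals $(x,a)\mapsto(g\act x,a)$, hence is a submap of $F^{n}$ and $g\in\Per_{a,n}$.

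\textbf{Forward implication.} Assume $g\in\Per_{a,n}$. By Proposition~\ref{PROP:Peranelements}, $g$ is hyperbolic, $\fixp{+}{g^{-1}}=\fixp{-}{g}$ is an inner point of $\wh{I}_{a}$ and $\fixp{-}{g^{-1}}=\fixp{+}{g}\in J_{\prA(a)}$; both are hyperbolic fixed points, hence lie in $\wh{\R}_{\st}$, so, using $\wh{I}_{a}\cap\wh{\R}_{\st}=\wh{I}_{a,\st}=\Iset{a}$, we get $(\fixp{+}{g^{-1}},\fixp{-}{g^{-1}})\in\Iset{a}\times\Jset{\prA(a)}$ and $\nu\coloneqq\nu(a,g^{-1})$ is well-defined (Definition~\ref{DEF:uniquevec}, \eqref{BPacc:allvectors}). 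Then $\gamma_{\nu}$ represents $\alpha(g^{-1})$ with $\gamma_{\nu}(+\infty)=\fixp{+}{g^{-1}}$, and (by the normal form~\eqref{EQDEF:standh} together with Lemma~\ref{LEM:periodicaxes}) $g^{-1}$ translates $\gamma_{\nu}$ towards $\gamma_{\nu}(+\infty)$ by its displacement length, i.e.\ $\gamma_{\nu}'(\ell(g^{-1}))=g^{-1}\act\gamma_{\nu}'(0)=g^{-1}\act\nu\in g^{-1}\act\Csacc{a}$. Set $t^{*}\coloneqq\ell(g^{-1})>0$. Corollary~\ref{COR:accsystemnewit} then yields a unique $m\geq1$ with $t^{*}=\ittime{\acc,m}(\nu)$, $\itindex{\acc,m}(\nu)=a$ and $\ittrans{\acc,1}(\nu)\cdots\ittrans{\acc,m}(\nu)=g^{-1}$; by the dictionary $g\in\Per_{a,m}$, so $m=n$ since the sets $\Per_{k}$ are pairwise disjoint (Property~\ref{staPROP2}). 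As $(\ittime{\acc,k}(\nu))_{k}$ is strictly increasing with $\ittime{\acc,0}(\nu)=0$ and enumerates all intersections of $\gamma_{\nu}$ with $\Gamma\act\BrU_{\acc}$ (Corollary~\ref{COR:accsystemnewit}), the intersections in $(0,t^{*}]=(0,\ittime{\acc,n}(\nu)]$ are exactly $\ittime{\acc,1}(\nu),\dots,\ittime{\acc,n}(\nu)$, of which there are $n$; and $t^{*}=\ell(g^{-1})$ holds by construction.

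\textbf{Reverse implication.} Suppose $\nu\in\Csacc{a}$ and $t^{*}>0$ satisfy $\gamma_{\nu}'(t^{*})=g^{-1}\act\nu$ and $\#\defset{t\in(0,t^{*}]}{\gamma_{\nu}'(t)\in\Gamma\act\BrU_{\acc}}=n$. Since $g^{-1}\act\nu\in g^{-1}\act\Csacc{a}$, Corollary~\ref{COR:accsystemnewit} gives a unique $n_{0}\geq1$ with $t^{*}=\ittime{\acc,n_{0}}(\nu)$, $\itindex{\acc,n_{0}}(\nu)=a$ and $\ittrans{\acc,1}(\nu)\cdots\ittrans{\acc,n_{0}}(\nu)=g^{-1}$; the intersections of $\gamma_{\nu}$ with $\Gamma\act\BrU_{\acc}$ in $(0,t^{*}]$ are $\ittime{\acc,1}(\nu),\dots,\ittime{\acc,n_{0}}(\nu)$, so the counting hypothesis forces $n_{0}=n$, and the dictionary gives $g\in\Per_{a,n}$. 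Finally, $\gamma_{\nu}'(t^{*})=g^{-1}\act\gamma_{\nu}'(0)$ and uniqueness of the geodesic determined by a tangent vector yield $(g^{-1})^{k}\act\gamma_{\nu}(0)=\gamma_{\nu}(kt^{*})$ for all $k\in\N$; as $g$ is hyperbolic (Proposition~\ref{PROP:Peranelements}, since $g\in\Per_{a,n}$) and $(g^{-1})^{k}\act z\to\fixp{+}{g^{-1}}$ (Lemma~\ref{LEM:hypfixedconv}), we obtain $\gamma_{\nu}(+\infty)=\fixp{+}{g^{-1}}$, so $g^{-1}$ translates $\gamma_{\nu}$ towards $\gamma_{\nu}(+\infty)$ by $\ell(g^{-1})$, whence $t^{*}=\ell(g^{-1})$.

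\textbf{Main obstacle.} The only non-mechanical step is the ``if'' half of the dictionary: one must verify that $F_{n}\circ\cdots\circ F_{1}$ is a submap of $F^{n}$ in the strict sense, i.e.\ that its natural domain is the \emph{entire} translated interval $g^{-1}\act\Iset{a}\times\{a\}$ and not a proper subinterval — equivalently, that the cycle of intervals along the chain closes up exactly. This is precisely where the disjointness-and-covering property~(\ref{BPacc:intervaldecomp}\ref{BPacc:decomp}) is needed. Everything else is a matter of slotting together Proposition~\ref{PROP:Peranelements}, Corollary~\ref{COR:accsystemnewit}, Property~\ref{staPROP2}, and the standard description of hyperbolic isometries (Lemmas~\ref{LEM:hypfixedconv} and~\ref{LEM:periodicaxes}); no new geometric input is required.
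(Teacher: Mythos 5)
Your proposal is correct, and its reverse direction coincides with the paper's: apply Corollary~\ref{COR:accsystemnewit}\eqref{accsystemnewit:intersect} to identify $g^{-1}$ with the product of the first $n$ accelerated transition elements, pass back to a submap of~$F^n$, and read off $t^*=\ell(g^{-1})$ from the translation along the invariant geodesic. The forward direction, however, is organized differently. The paper does not produce the return time from the axis-translation; instead it invokes the intersection-count machinery built for Property~\ref{staPROP2} (Lemma~\ref{LEM:intercountprops}\eqref{intercountprops:Per} gives $\varphi((a,\id),(a,g^{-1}))=n$ at once), uses this to see that $\Csacc{a}\vert_{g^{-1}\act\Csacc{a}}\neq\varnothing$ and to locate the fixed points, defines $t^*$ as the intersection time with $g^{-1}\act\Csacc{a}$, and only then identifies $\gamma_\nu'(t^*)=g^{-1}\act\nu$ by comparing endpoints of $g\act\gamma_\nu'(t^*)$ and $\nu$ (uniqueness of the vector with prescribed endpoints), with $t^*=\ell(g^{-1})$ falling out afterwards. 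You instead exhibit $t^*=\ell(g^{-1})$ directly from the hyperbolic translation along the axis and recover the count $n$ via your ``dictionary'' together with the disjointness of $\Per=\bigcup_k\Per_k$ (Property~\ref{staPROP2}, legitimately available at this point). Both routes rest on the same underlying correspondence between submaps of $F^n$ and chains of accelerated transitions; the paper declares it immediate from the definition of $F$ (and it is folded into Lemma~\ref{LEM:intercountprops}), whereas you spell it out via the nesting and disjointness in~(\ref{BPacc:intervaldecomp}\ref{BPacc:decomp}) and Corollaries~\ref{COR:accsystemnewit} and~\ref{COR:seq_tangvec}. Making that step explicit is a genuine gain in transparency; the paper's route buys a shorter proof because the $\varphi$-machinery was already in place.

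One small imprecision: to get that $\nu(a,g^{-1})$ is well defined you cite~\eqref{BPacc:allvectors}, which requires the backward endpoint to lie in $\Jset{a}$; Proposition~\ref{PROP:Peranelements} only places $\fixp{+}{g}$ in $\Jset{\prA(a)}$, and for $\prZ(a)\in\{\eX,\eY\}$ the set $\Jset{a}$ may be strictly smaller. The correct reference is Lemma~\ref{LEM:Perandaxes} (or the remark following Definition~\ref{DEF:uniquevec}), which asserts exactly that every representative of $\alpha(g^{-1})$ meets $\Csacc{a}$ when $g\in\Per_{a,n}$; with that citation your argument is complete.
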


\begin{proof}
We suppose first that $g\in\Per_{a,n}$. By 
Lemma~\ref{LEM:intercountprops}\eqref{intercountprops:Per}, 
\begin{equation}\label{eq:countisn}
 \countit{(a,\id),(a,g^{-1})} = n\,.
\end{equation}
By~\eqref{DEF:countit} this implies (since $n\not=0$) that 
$\Csacc{a}\vert_{g^{-1}\act\Csacc{a}}$ is not empty, and further that 
\[
\fixp{-}{g^{-1}} \in \Jset{\prA(a)} \subseteq g^{-1}\act\Jset{\prA(a)}
\]
and
\[
\fixp{+}{g^{-1}} \in \Iset{a}\cap g^{-1}\act \Iset{a}\,.
\]
Let $\nu$ denote the unique element of~$\Csacc{a}$ satisfying 
\[
 \bigl(\gamma_\nu(+\infty), \gamma_\nu(-\infty)\bigr) = 
\bigl(\fixp{+}{g^{-1}},\fixp{-}{g^{-1}}\bigr)
\]
and let $t^*$ denote the intersection time of~$\gamma_\nu$ with $g^{-1}\act 
\Csacc{a}$. Then 
\[
 \#\defset{ t\in (0,t^*] }{\gamma_\nu^{\prime}(t)\in\Gamma\act\BrU_{\acc}}=n
\]
by~\eqref{DEF:countit} and~\eqref{eq:countisn}. Further let $\xi\coloneqq 
\gamma_\nu^\prime(t^*)$ denote the intersection vector of~$\gamma_\nu$ 
with~$g^{-1}\act\Csacc{a}$. Then $g\act\xi\in\Csacc{a}$ and 
\[
 \gamma_{g\act\xi}(\pm\infty) = g\act\gamma_\nu(\pm\infty) = 
\fixp{\pm}{g^{-1}}\,.
\]
Hence, 
\[
 \gamma_\nu^\prime(t^*) = \xi = g^{-1}\act\nu\,.
\]
This proves the one claimed implication of the proposition. For the converse 
implication we suppose that $\nu\in\Csacc{a}$ and $t^*>0$ are chosen such that 
\begin{equation}\label{eq:knowisn}
 \#\defset{ t\in (0,t^*] }{\gamma_\nu^{\prime}(t)\in\Gamma\act\BrU_{\acc}}=n
\end{equation}
and $\gamma_\nu^\prime(t^*) = g^{-1}\act\nu$. From 
Corollary~\ref{COR:accsystemnewit}\eqref{accsystemnewit:intersect} 
and~\eqref{eq:knowisn} we obtain that 
\[
 g^{-1} = \ittrans{\acc,1}(\nu)\cdots\ittrans{\acc,n}(\nu)\,.
\]
The definition of the map~$F$ now immediately implies that $g\in\Per_{a,n}$. 
This completes the proof of the converse implication.
Subsequently, the equality 
$t^*=\ell(g^{-1})$ follows immediately from the definition of the 
displacement length.
\end{proof}

The following result implies Property~\ref{staPROP3}\eqref{staPROP3:primitive}.

\begin{prop}\label{PROP:primitiveofPer}
Let $a\in\Index$, $n\in\N$, and $h\in\Per_{a,n}$. Let $h_0\in\Gamma$ be primitive such 
that $h_0^m=h$. Then $h_0\in\Per_{a,\tfrac{n}{m}}$.
\end{prop}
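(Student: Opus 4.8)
The plan is to use Proposition~\ref{PROP:exacthitonj} together with the periodicity established in Lemma~\ref{LEM:uniquevec}\eqref{uniquevec:itseq}. Since $h \in \Per_{a,n}$, the element $h^{-1}$ is hyperbolic (Proposition~\ref{PROP:Peranelements}) with $(\fixp{+}{h^{-1}}, \fixp{-}{h^{-1}}) \in \Iset{a} \times \Jset{\prA(a)}$, so the vector $\nu \coloneqq \nu(a, h^{-1})$ from Definition~\ref{DEF:uniquevec} is well-defined. By Lemma~\ref{LEM:uniquevec}\eqref{uniquevec:powers}, we have $\nu = \nu(a, h_0^{-1})$ as well, because $h_0^{-1}$ is the primitive of $h^{-1}$ (recall $h = h_0^m$, hence $h^{-1} = (h_0^{-1})^m$, and $h_0^{-1}$ is primitive). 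The axis $\alpha(h^{-1}) = \alpha(h_0^{-1})$ is represented by $\gamma_\nu$.

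First I would apply Proposition~\ref{PROP:exacthitonj} in the ``only if'' direction to $h \in \Per_{a,n}$: this tells us that with $\nu$ as above and $t^* \coloneqq \ell(h^{-1})$ we have
\[
\#\defset{ t \in (0, t^*] }{ \gamma_\nu^\prime(t) \in \Gamma \act \BrU_{\acc} } = n
\]
and $\gamma_\nu^\prime(t^*) = h \act \nu$. Now I would invoke Lemma~\ref{LEM:uniquevec}\eqref{uniquevec:itseq} applied to $h_0^{-1}$ (in the role of $g$ there, with $g^{-1} = h_0$): the sequence $\bigl((\itindex{\acc,k}(\nu),\ittrans{\acc,k}(\nu))\bigr)_{k\in\N}$ is periodic with period length $\ell_0 \coloneqq \countit{(a,\id),(a,h_0)}$, and for each $p \in \N_0$,
\[
\ittrans{\acc,p\ell_0+1}(\nu)\cdots\ittrans{\acc,(p+1)\ell_0}(\nu) = h_0\,.
\]
Since $h = h_0^m$, concatenating $m$ such blocks gives $\ittrans{\acc,1}(\nu)\cdots\ittrans{\acc,m\ell_0}(\nu) = h_0^m = h$, i.e.\ $h^{-1} = \ittrans{\acc,1}(\nu)\cdots\ittrans{\acc,m\ell_0}(\nu)$ is wrong in sign---let me be careful: Lemma~\ref{LEM:uniquevec} with $g = h_0^{-1}$ gives $\ittrans{\acc,p\ell_0+1}(\nu)\cdots\ittrans{\acc,(p+1)\ell_0}(\nu) = g^{-1} = h_0$, so the product over $m$ blocks is $h_0^m = h$; but by Corollary~\ref{COR:accsystemnewit}\eqref{accsystemnewit:intersect} the product $\ittrans{\acc,1}(\nu)\cdots\ittrans{\acc,k}(\nu)$ equals the translate taking $\nu$ to $\gamma_\nu^\prime(\ittime{\acc,k}(\nu))$, and comparing with $\gamma_\nu^\prime(t^*) = h \act \nu$ forces $n = m\ell_0$, hence $\ell_0 = n/m$.

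To finish, I would apply Proposition~\ref{PROP:exacthitonj} in the ``if'' direction to conclude $h_0 \in \Per_{a, \ell_0} = \Per_{a, n/m}$: indeed, set $t^{**} \coloneqq \ittime{\acc,\ell_0}(\nu)$; then $\gamma_\nu^\prime(t^{**}) = h_0 \act \nu$ (since the accumulated transition product over one period is $h_0$, by the normalization $\gamma_\nu^\prime(\ittime{\acc,\ell_0}(\nu)) = (\ittrans{\acc,1}(\nu)\cdots\ittrans{\acc,\ell_0}(\nu)) \act \nu = h_0^{-1} \act \nu$---again I must track the inverse carefully, matching the exact convention $\gamma_\nu^\prime(t^*) = g^{-1}\act\nu$ in Proposition~\ref{PROP:exacthitonj}), and exactly $\ell_0$ intersections with $\Gamma\act\BrU_{\acc}$ occur in $(0, t^{**}]$ by definition of $\countit{(a,\id),(a,h_0)}$ and Lemma~\ref{LEM:invintercount}. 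The main obstacle is bookkeeping the inverse/conjugation conventions consistently: the definition of $\Per_{a,n}$ uses $g^{-1}\act\Iset{a} \to \Iset{a}$ via $x \mapsto g\act x$, Proposition~\ref{PROP:exacthitonj} uses $\gamma_\nu^\prime(t^*) = g^{-1}\act\nu$, and Lemma~\ref{LEM:uniquevec} is stated for the element $g$ with $g^{-1}\in\Per_{a,n}$; one must thread the primitive $h_0$ (respectively $h_0^{-1}$) through all three statements without a sign slip. Once the conventions are aligned, everything reduces to the single arithmetic identity $n = m\ell_0$ coming from the periodicity, which is genuinely the heart of the argument.
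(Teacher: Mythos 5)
Your argument is correct in substance and follows the same skeleton as the paper's proof: apply Proposition~\ref{PROP:exacthitonj} to $h\in\Per_{a,n}$, identify $\nu=\nu(a,h^{-1})=\nu(a,h_0^{-1})$ via Lemma~\ref{LEM:uniquevec}\eqref{uniquevec:powers}, and close with the converse direction of Proposition~\ref{PROP:exacthitonj}. The only place you diverge is the counting step: the paper gets the count $n/m$ over $\bigl(0,\ell(h_0^{-1})\bigr]$ from the displacement-length relation $\ell(h^{-1})=m\,\ell(h_0^{-1})$ (Lemma~\ref{LEM:periodicaxes}) combined with the periodicity of the sequence of intersection-time differences, whereas you read the period length $\ell_0$ off Lemma~\ref{LEM:uniquevec}\eqref{uniquevec:itseq} and force $n=m\ell_0$ by comparing accumulated transition products through Corollary~\ref{COR:accsystemnewit}\eqref{accsystemnewit:intersect}; both mechanisms rest on the same equivariance facts, so this is a cosmetic variation, though your route makes the arithmetic $n=m\ell_0$ more explicit. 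Two points should be cleaned up. First, the conventions you flagged: Lemma~\ref{LEM:uniquevec} must be applied with $g=h_0$ (not $g=h_0^{-1}$), because its distinguished vector is $\nu(a,g^{-1})$ and yours is $\nu(a,h_0^{-1})$; then the period length is $\ell_0=\countit{(a,\id),(a,h_0^{-1})}$ (not $\countit{(a,\id),(a,h_0)}$, which is not the quantity governing the forward itinerary of~$\nu$), the product over one period is $h_0^{-1}$, over $m$ periods it is $h^{-1}$, and this matches $\gamma_\nu'(t^*)=h^{-1}\act\nu$ from Proposition~\ref{PROP:exacthitonj}; your two inverse slips are consistent and cancel, but as written several displayed identities carry the wrong element. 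Second, the upgrade from $\gamma_\nu'(\ittime{\acc,\ell_0}(\nu))\in h_0^{-1}\act\Csacc{a}$ (which is all Corollary~\ref{COR:accsystemnewit} gives) to the equality $\gamma_\nu'(\ittime{\acc,\ell_0}(\nu))=h_0^{-1}\act\nu$ deserves one explicit sentence: $\gamma_\nu$ represents the axis of $h_0^{-1}$, so $h_0^{-1}\act\nu$ is tangent to the same geodesic, and $\gamma_\nu$ can meet the translate $h_0^{-1}\act\Csacc{a}$ at most once since its endpoints lie in $\wh\R_{\st}$ while the base segment of the branch has endpoints in $\wh\R\setminus\wh\R_{\st}$. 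With these adjustments the proof is complete.
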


\begin{proof}
Since $h\in\Per_{a,n}$, we find $\nu\in\Csacc{a}$ such that, with $t^*\coloneqq 
\ell(h^{-1})$ (the displacement length of~$h^{-1}$), we have
\begin{equation}\label{eq:part1}
 \gamma_\nu^\prime(t^*) = h^{-1}\act\nu
\end{equation}
and 
\begin{equation}\label{eq:part2}
 \#\defset{t\in (0,t^*]}{\gamma_\nu^\prime(t)\in\Gamma\act\Cs{\acc}} = n
\end{equation}
by Proposition~\ref{PROP:exacthitonj}. From~\eqref{eq:part1} it follows that 
$\gamma_\nu$ is a representative of the axis of~$h^{-1}$ and hence also 
of~$h_0^{-1}$ (cf.\@ Section~\ref{SUBSEC:geodsurfaces}), and further that 
$\nu=\nu(a,h^{-1})$ and that $\nu(a,h_0^{-1})$ exists and equals~$\nu$, the 
latter by Lemma~\ref{LEM:uniquevec}\eqref{uniquevec:powers}. Therefore, for 
$\wt t\coloneqq \ell(h_0^{-1})$ we obtain 
\[
 \gamma_\nu^\prime(\wt t) = h_0^{-1}\act\nu\,.
\]
The system of accelerated iterated sequences of any element in~$\Cs{\acc}$ depends only on this considered element (and of course the choice of~$\Cs{\acc}$) and 
it is equivariant under the action of elements in~$\Gamma$, as can be observed 
from Corollary~\ref{COR:accsystemnewit}. See also 
Lemmas~\ref{LEM:intercountprops}\eqref{intercountprops:invariant} 
and~\ref{LEM:uniquevec}\eqref{uniquevec:itseq}. Thus, the accelerated sequence 
of intersection times of~$h_0^{-1}\act\nu$ is only shifted against the 
accelerated sequence of intersection times of~$\nu$, and in particular, the 
sequence of differences 
\begin{equation}\label{eq:diffseq}
 \bigl(\ittime{\acc,n+1}(\nu) - \ittime{\acc,n}(\nu)\bigr)_n = 
\bigl(\ittime{\acc,n+1}(h_0^{-1}\act\nu) - 
\ittime{\acc,n}(h_0^{-1}\act\nu)\bigr)_n
\end{equation}
is periodic. The relation $\ell(h^{-1}) = m\ell(h_0^{-1})$ between the 
displacement lengths given by 
Lemma~\ref{LEM:periodicaxes}\eqref{periodicaxes:powers} implies that $m$ (from 
$h_0^m=h$) is a period length of the sequence~\eqref{eq:diffseq}. From this and 
\eqref{eq:part2} it follows that 
\[
 \#\defset{t\in (0,\wt t]}{\gamma_\nu^\prime(t) \in \Gamma\act\Cs{\acc}} = 
\frac1m \#\defset{t\in (0,t^*]}{\gamma_\nu^\prime(t) \in \Gamma\act\Cs{\acc}}
  = \frac{n}{m}\,.
\]
By Proposition~\ref{PROP:exacthitonj}, $h_0\in\Per_{a,\frac{n}{m}}$.
\end{proof}

In order to establish~\eqref{staPROP3:representatives} of 
Property~\ref{staPROP3} we let $[g]\in[\Gamma]_{\mathrm{h}}$ and consider the 
equivalence class of periodic geodesics~$\varrho([g^{-1}])$ on~$\Orbi$, 
where~$\varrho$ is the map from~\eqref{EQDEF:mapaxistoX}. The combination 
of~\eqref{BP:closedgeodesicsXtoH} (see Proposition~\ref{PROP:oldB1}) with 
Proposition~\ref{PROP:accsystemcrosssection} (or 
Proposition~\ref{PROP:accsystem:geodintersect}) imply that we find $a\in\Index$
and a geodesic~$\gamma$ on~$\H$ such that $\gamma$ intersects~$\Csacc{a}$. By 
Lemma~\ref{LEM:periodicaxes}\eqref{periodicaxes:conjugacy} there exists a 
representative $h$ of $[g]$ such that $\gamma$ represents the axis of~$h$, thus, 
$[\gamma]=\alpha(h^{-1})$. Applying Proposition~\ref{PROP:exacthitonj} with 
$t^*=\ell(g^{-1}) = \ell(h^{-1})$ and
$\nu$ being the intersection vector of~$\gamma$ with~$\Csacc{a}$ yields that 
$h\in\Per_{a,n}$ for some $n\in\N$. Property~\ref{staPROP2} now shows uniqueness 
of~$n$, which completes the proof of~\eqref{staPROP3:representatives}.

\subsection{Property~\ref{staPROP4}}\label{SUBSEC:prop4}

Let $g\in\Per$. Recall the word length $\omega(g)$ of $g$ from 
\eqref{EQDEF:wordlengthofh}. Let $a\in\Index$ and $n\in\N$ be such that 
$g\in\Per_{a,n}$.
Let $g_0\in\Gamma$ be the primitive of $g$ and recall the objects $m(g)$ and 
$p(g)$ from \eqref{EQDEF:ratioofh} and before.
By applying Proposition~\ref{PROP:primitiveofPer} we observe
\begin{align}\label{EQN:peqomega0}
\omega(g_0)=\frac{m(g)\omega(g_0)}{m(g)}=\frac{\omega(g)}{m(g)}=p(g)\,.
\end{align}

Recall further the vector~$\nu(b,h)$ from Definition~\ref{DEF:uniquevec}, which 
is uniquely determined for all~$b\in\Index$ and~$h^{-1}\in\Per_{b,m}$,~$m\in\N$.

We will make extensive use of the following abbreviation: 
For~$b\in\Index$,~$h^{-1}\in\bigcup_{k\in\N}\Per_{b,k}$, and~$m\in\Z$ we write
\[
\mathrm{g}_{m}(b,h)\coloneqq\left(\ittrans{\acc,\sgn(m)}(\nu(b,h))\cdots\ittrans
{\acc,m}(\nu(b,h))\right)^{\sgn(m)}\,.
\]
Note that~$\mathrm{g}_{k\cdot\omega(h_0)}(b,h)=h_0^k$ for all~$k\in\N_0$ 
and~$h_0$ the primitive of~$h$ in~$\Gamma$, by Lemma~\ref{LEM:uniquevec} and 
Proposition~\ref{PROP:primitiveofPer}.
It is further in line with this definition to 
set~$\mathrm{g}_0(b,h)\coloneqq\id$ for every possible choice of~$b$ and~$h$.

We recall that the initial set of branches~$\BrS$ has been supposed to be 
non-collapsing.

\begin{lemma}\label{LEM:noncollapspayoff}
Let~$b$ and~$h$ be as before and let~$k,\ell\in\N$,~$k\not=\ell$.
Then
\[
\mathrm{g}_k(b,h)^{-1}\mathrm{g}_\ell(b,h)\ne\id\,.
\]
\end{lemma}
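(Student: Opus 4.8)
The plan is to show that the elements $\mathrm{g}_k(b,h)$ and $\mathrm{g}_\ell(b,h)$ correspond to genuinely distinct finite products of transition set elements, so that their equality would force a nontrivial combination of transition set elements to be the identity, contradicting the non-collapsing property~\eqref{BP:noidentity} of~$\BrS$. First I would unwind the definitions: by Lemma~\ref{LEM:uniquevec}\eqref{uniquevec:itseq}, the sequences $\bigl((\itindex{\acc,n}(\nu(b,h)),\ittrans{\acc,n}(\nu(b,h)))\bigr)_{n}$ are periodic with period length $\ell_0 \coloneqq \countit{(b,\id),(b,h^{-1})} = m(h)\cdot p(h)$, and $\mathrm{g}_{\ell_0}(b,h) = h^{-1}$ (with the convention $\mathrm{g}_m(b,h)^{\sgn m}$ absorbing the sign). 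Without loss of generality I may assume $0 \le k < \ell$; replacing $\ell$ by $\ell - k$ via the cocycle identity $\mathrm{g}_k(b,h)^{-1}\mathrm{g}_\ell(b,h)$ being (up to conjugation by $\mathrm{g}_k$) an analogous product along a shifted segment of the periodic sequence, and using the $\Gamma$-equivariance of the accelerated iterated sequences (Corollary~\ref{COR:accsystemnewit} and Lemma~\ref{LEM:intercountprops}\eqref{intercountprops:invariant}), it suffices to treat the case $k = 0$, i.e.\ to show $\mathrm{g}_\ell(b,h) \ne \id$ for $\ell \in \N$.

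Next I would observe that each factor $\ittrans{\acc,j}(\nu(b,h))$ is, by Remark~\ref{REM:accgroupelements}, itself a finite product of (ordinary, reduced) transition set elements $h_1 h_2 \cdots h_{n_j+1}$ with $h_i \in \Trans{}{k_i}{k_{i+1}}$, where the $k_i$ trace out a path in the return graph of the reduced set of branches. Concatenating these expansions over $j \in \{1,\ldots,\ell\}$ presents $\mathrm{g}_\ell(b,h)$ as a single finite product $\prod h_i$ of reduced transition set elements, corresponding to a path in the return graph that is a proper (non-empty) initial segment of the bi-infinite path determined by the geodesic $\gamma_{\nu(b,h)}$ representing the axis $\alpha(h^{-1})$. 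Since $\ell < \ell_0$, this initial segment is strictly shorter than one full period, hence does not close up to a cycle representing the hyperbolic element $h^{-1}$; in particular the path is non-degenerate in the sense required by~\eqref{BP:noidentity}.

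The main obstacle — and the place requiring genuine care — is arguing that this finite product is not the identity \emph{even though individual accelerated transition factors may themselves contain identity sub-products in disguise}. This is precisely why the non-collapsing hypothesis on~$\BrS$ (inherited through branch reduction, identity elimination, and cuspidal acceleration) is essential: by~\eqref{BP:noidentity}, for every $n \in \N$, every choice of $j_1,\dots,j_{n+1} \in A$ (with $A$ here the index set of the \emph{reduced} set of branches) and every choice $g_i \in \Trans{}{j_i}{j_{i+1}}$, the product $g_1 \cdots g_n \ne \id$. Applying this with the $h_i$ above yields $\mathrm{g}_\ell(b,h) = \prod h_i \ne \id$, provided the underlying path has positive length. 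That positivity follows because $\ell \ge 1$ forces at least one accelerated transition step, and each accelerated transition step — being a transition set element for $\BrS_\acc$ — expands to a non-empty product of reduced transition elements by Remark~\ref{REM:accgroupelements} and the construction of~$\Trans{\acc}{a}{b}$ in~\eqref{EQDEF:Transacc1}--\eqref{EQDEF:Transacc2} (the relevant factors $\cycstarnext{a,b}$ and $\cyctrans{a}{\eZ}$ are nontrivial precisely because they involve genuine $\eZ$-cycle transformations, never the identity, on branches not equal to their own successors). Conjugating back by $\mathrm{g}_k(b,h)$ to return to the general case $k \ne \ell$ preserves non-triviality, completing the argument.
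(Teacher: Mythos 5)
Your argument is essentially the paper's own proof: expand each accelerated factor $\ittrans{\acc,j}(\nu(b,h))$ into reduced transition set elements via Remark~\ref{REM:accgroupelements}, note that consecutive expansions concatenate along a single path in the return graph, and apply~\eqref{BP:noidentity} to the resulting non-empty product $\mathrm{g}_k(b,h)^{-1}\mathrm{g}_\ell(b,h)=\ittrans{\acc,k+1}(\nu(b,h))\cdots\ittrans{\acc,\ell}(\nu(b,h))$ (for $k<\ell$). One caveat: your clause ``since $\ell<\ell_0$'' is neither justified (the lemma allows arbitrary $\ell$, possibly exceeding the period length) nor needed, since~\eqref{BP:noidentity} imposes no non-degeneracy or sub-period condition on the path, and likewise the detour through the case $k=0$ via conjugation is unnecessary, as the displayed identity treats general $k<\ell$ directly.
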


\begin{proof}
Without loss of generality, we may suppose that $k<\ell$. 
By Remark~\ref{REM:accgroupelements}, for every~$j\in\{k+1,\dots,\ell\}$ we 
find~$m_j\in\N$,~$k_{j,0},\dots,k_{j,m_j}\in A$, 
and~$h_{j,i}\in\Trans{}{k_{j,i-1}}{k_{j,i}}$ ($i\in\{1,\ldots, m_j\}$) such that
\[
\ittrans{\acc,j}(\nu(b,h))=h_{j,1}\cdots h_{j,m_j}\,.
\]
By construction,~$\Csacc{\itindex{\acc,j}(\nu(b,h))}\subseteq\Cs{k_{j,m_j}}$, 
for all~$j$.
Thereby,~$k_{j,m_j}=k_{j+1,0}$.
From this we obtain
\begin{align*}
\mathrm{g}_k(b,h)^{-1}\mathrm{g}_\ell(b,h)&=\ittrans{\acc,k+1}(\nu(b,
h))\cdots\ittrans{\acc,\ell}(\nu(b,h))\\
&=h_{k+1,1}\cdots h_{k+1,m_{k+1}}h_{k+2,1}\cdots h_{k+2,m_{k+2}}\cdots\cdots 
h_{\ell,1}\cdots h_{\ell,m_\ell}\\
&\ne\id\,,
\end{align*}
where the final relation is due to~\eqref{BP:noidentity}.
\end{proof}

\begin{lemma}\label{LEM:conjinPer}
Let~$g$,~$a$, and~$n$ be as before.
Let $q\in\Gamma$ be such that $qgq^{-1}\in\Per_n$.
Then there exists exactly one $\kappa\in\N$, $\kappa\leq\omega(g_0)$, such that
\begin{equation}\label{eq:commute}
\mathrm{g}_{\kappa}(a,g^{-1})\cdot q\cdot 
g=g\cdot\mathrm{g}_{\kappa}(a,g^{-1})\cdot q\,.
\end{equation}
Or, equivalently, there exists a unique~$\kappa \in\{1,\ldots, \omega(g_0)\}$ 
such that
\begin{equation}\label{eq:representq}
q\in Q_\kappa\coloneqq\defset{\mathrm{g}_{\kappa}(a,g^{-1})^{-1}\cdot 
g_0^\ell}{\ell\in\Z}\,,
\end{equation}
where~$g_0$ denotes the primitive of~$g$ in~$\Gamma$.
\end{lemma}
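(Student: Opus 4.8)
The plan is to first reduce the two displayed (equivalent) formulations to a single statement about the centralizer of~$g$, then prove existence of~$\kappa$ by following the axis of~$g$ through the accelerated symbolic dynamics, and finally deduce uniqueness from the non-collapsing property~\eqref{BP:noidentity} of the initial set of branches via Lemma~\ref{LEM:noncollapspayoff}. For the reduction, recall that $\Gamma$ is Fuchsian and $g$ is hyperbolic, so the centralizer of~$g$ in~$\Gamma$ coincides with the pointwise stabilizer of the fixed point pair~$(\fixp{+}{g},\fixp{-}{g})$, which is the infinite cyclic group~$\langle g_0\rangle$ generated by the primitive~$g_0$ of~$g$. Hence, for a fixed~$\kappa$, the relation $\mathrm{g}_{\kappa}(a,g^{-1})\,q\,g = g\,\mathrm{g}_{\kappa}(a,g^{-1})\,q$ says exactly that $\mathrm{g}_{\kappa}(a,g^{-1})\,q\in\langle g_0\rangle$, i.e.\ that $q\in\mathrm{g}_{\kappa}(a,g^{-1})^{-1}\langle g_0\rangle=Q_\kappa$. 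This establishes the equivalence of the two formulations, so it remains to show that exactly one $\kappa\in\{1,\dots,\omega(g_0)\}$ has $q\in Q_\kappa$.

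For existence, I would fix $b\in\Index$ with $qgq^{-1}\in\Per_{b,n}$ and apply Proposition~\ref{PROP:exacthitonj} to the hyperbolic element $qgq^{-1}$ to obtain the vector $\mu\coloneqq\nu(b,qg^{-1}q^{-1})\in\Csacc{b}$, whose geodesic represents the axis $\alpha(qg^{-1}q^{-1})=q\act\alpha(g^{-1})$ by Lemma~\ref{LEM:periodicaxes}\eqref{periodicaxes:conjugacy}. Writing $\nu\coloneqq\nu(a,g^{-1})$, whose geodesic represents $\alpha(g^{-1})$, one then has $q\act\nu=\gamma_\mu'(\ittime{\acc,n_0}(\mu))$ for a unique $n_0\in\Z$, with $\itindex{\acc,n_0}(\mu)=a$ and, by Corollary~\ref{COR:accsystemnewit}\eqref{accsystemnewit:intersect} together with the pairwise disjointness of $\Gamma$-translates of accelerated branches (which follows from Proposition~\ref{PROP:CofSoBnonst}\eqref{CofSoB:disjointunion} since $\Csacc{a}\subseteq\Cs{\prA(a)}$), the group element attached to that intersection equals~$q$. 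Translating by~$q^{-1}$ and applying the same corollary to~$\nu$ gives $q^{-1}\act\mu=\gamma_\nu'(\ittime{\acc,-n_0}(\nu))$ and identifies~$q^{-1}$ with the group element $G_{-n_0}$ that Corollary~\ref{COR:accsystemnewit}\eqref{accsystemnewit:intersect} attaches to the accelerated intersection of~$\gamma_\nu$ at index~$-n_0$. Finally, Proposition~\ref{PROP:primitiveofPer} and Proposition~\ref{PROP:exacthitonj} applied to the primitive $g_0\in\Per_{a,\omega(g_0)}$, together with $\nu(a,g_0^{-1})=\nu$ from Lemma~\ref{LEM:uniquevec}\eqref{uniquevec:powers}, show that the translation action of~$g_0$ along its axis shifts the accelerated intersection indices of~$\gamma_\nu$ by exactly~$\omega(g_0)$, so that $G_m=g_0^{-k}\mathrm{g}_{\kappa}(a,g^{-1})$ whenever $m-k\omega(g_0)=\kappa\in\{1,\dots,\omega(g_0)\}$; taking $m=-n_0$ yields $q=G_{-n_0}^{-1}=\mathrm{g}_{\kappa}(a,g^{-1})^{-1}g_0^{k}\in Q_\kappa$.

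For uniqueness, suppose $q\in Q_\kappa\cap Q_{\kappa'}$ with $1\le\kappa\le\kappa'\le\omega(g_0)$, so that $\mathrm{g}_{\kappa'}(a,g^{-1})\,\mathrm{g}_{\kappa}(a,g^{-1})^{-1}\in\langle g_0\rangle$. Since $((\itindex{\acc,m}(\nu),\ittrans{\acc,m}(\nu)))_m$ is periodic with period~$\omega(g_0)$ and $g_0^{-k}=\mathrm{g}_{k\omega(g_0)}(a,g^{-1})$, a power of~$g_0$ can be absorbed into the defining product, which (after interchanging the roles of~$\kappa$ and~$\kappa'$ if needed) reduces the relation to $\mathrm{g}_{\kappa''}(a,g^{-1})^{-1}\mathrm{g}_{\kappa''+s\omega(g_0)}(a,g^{-1})=\id$ for some $\kappa''\in\{\kappa,\kappa'\}$ and some $s\ge0$. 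By Lemma~\ref{LEM:noncollapspayoff}, which rests on~\eqref{BP:noidentity}, this forces the two indices to coincide, hence $s=0$; since both $\kappa$ and~$\kappa'$ lie in $\{1,\dots,\omega(g_0)\}$, we conclude $\kappa=\kappa'$. Combined with the equivalence of the two formulations, this finishes the proof.

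I expect the main obstacle to be the bookkeeping in the existence step: carefully relating the three accelerated symbolic sequences involved (of~$\nu$ along the axis of~$g^{-1}$, of~$\mu$ along its $q$-conjugate, and of the $g_0$-shifted vectors), handling the sign conventions built into $\mathrm{g}_m(a,g^{-1})$ for $m\lessgtr 0$, and pinning the resulting index~$\kappa$ down into $\{1,\dots,\omega(g_0)\}$ while tracking the compensating power of~$g_0$. The uniqueness step, by contrast, is short once the non-collapsing property and the periodicity of the accelerated intersection sequence are available.
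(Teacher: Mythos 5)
Your proposal is correct and follows essentially the same route as the paper's proof: the commutation criterion for hyperbolic elements (centralizer $=\langle g_0\rangle$) gives the equivalence of~\eqref{eq:commute} and~\eqref{eq:representq}, existence comes from the fact that the axis of~$qg^{-1}q^{-1}$ meets a translate of the accelerated cross section, so that $q$ is an attached product of accelerated transitions which is then reduced modulo~$\omega(g_0)$ via the $g_0$-periodicity of Lemma~\ref{LEM:uniquevec}\eqref{uniquevec:itseq}, and uniqueness follows by absorbing powers of~$g_0$ and invoking Lemma~\ref{LEM:noncollapspayoff}, exactly as in the paper. The only blemishes are cosmetic: in your uniqueness display the subscript~$\kappa''$ appears in both factors where the two a priori different indices (one of $\kappa,\kappa'$ and the other shifted by $s\,\omega(g_0)$) are meant, and your formula for~$G_m$ at non-positive~$m$ should be justified by the shift relation $G_{m+\omega(g_0)}=g_0^{-1}G_m$ (or the paper's explicit $r=\lambda\omega(g_0)+\kappa$ computation) --- both are harmless since only the coset $\mathrm{g}_{\kappa}(a,g^{-1})^{-1}\langle g_0\rangle$ matters.
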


\begin{proof}
The equivalence of~\eqref{eq:commute} and~\eqref{eq:representq} is an immediate 
consequence of the fact that non-identity elements of~$\PSLR$ commute if and 
only if they have the same fixed point set 
(see~\cite[Theorem~2.3.2]{Katok_fuchsian}).

Let $b\in\Index$ be such that $h\coloneqq qgq^{-1}\in\Per_{b,n}$.
In the case~$q=\id$ we have 
\begin{equation*}
\mathrm{g}_{\omega(g_0)}(a,g^{-1})\cdot 
g=g\cdot\mathrm{g}_{\omega(g_0)}(a,g^{-1})
\end{equation*}
by Lemma~\ref{LEM:uniquevec} and Proposition~\ref{PROP:primitiveofPer} because 
$\mathrm{g}_{\omega(g_0)}(a,g^{-1})=g_0^{-1}$.
See also the argumentation right before Lemma~\ref{LEM:noncollapspayoff}. Thus, 
the 
identity~\eqref{eq:commute} is valid for~$\kappa=\omega(g_0)$. It remains to 
show that \eqref{eq:commute} is not valid for any smaller value for~$\kappa$ 
in~$\N$. 
To that end we consider~\eqref{eq:commute} as the quest for elements commuting 
with~$g$ (recall that $q=\id$ for the moment).
Applying \cite[Theorem~2.3.2]{Katok_fuchsian} we see that any solution 
$\mathrm{g}_{\kappa}(a,g^{-1})$ 
of~\eqref{eq:commute} must be a non-trivial power of~$g_0$.
Within $\{1,\ldots, \omega(g_0)\}$ only $\kappa=\omega(g_0)$ yields 
such an element in~$\Gamma$.

We consider now the case that~$q\ne\id$.
By Lemma~\ref{LEM:periodicaxes}\eqref{periodicaxes:conjugacy} the axis 
of $h^{-1}=qg^{-1}q^{-1}$ is given by $q\act\alpha(g^{-1})$. Any of the 
representatives of~$\alpha(h^{-1})$ intersects~$\Csacc{b}$ by 
Lemma~\ref{LEM:Perandaxes}.
Hence,~$\gamma_{\nu(a,g^{-1})}$, which is a representative 
of~$\alpha(g^{-1})$, intersects $q^{-1}\act\Csacc{b}$.
This implies the existence of $r\in\Z\setminus\{0\}$ such that
\[
q=\mathrm{g}_r(a,g^{-1})^{-\sgn(r)}\,.
\]
There is a unique way to write
\[
r=\lambda\cdot\omega(g_0)+\kappa\,,
\]
with $\lambda\in\Z$ and $\kappa\in(0,\omega(g_0)]\cap\Z$.
Using Lemma~\ref{LEM:uniquevec}\eqref{uniquevec:itseq} we calculate for~$r>0$
\begin{align*}
qgq^{-1}&=\mathrm{g}_{\lambda\omega(g_0)+\kappa}(a,g^{-1})^{-1}\cdot 
g_0^{m(g)}\cdot 
\mathrm{g}_{\lambda\omega(g_0)+\kappa}(a,g^{-1})\\
&=\mathrm{g}_{\kappa}(a,g^{-1})^{-1}\cdot g_0^{-\lambda}\cdot g_0^{m(g)}\cdot 
g_0^\lambda\cdot \mathrm{g}_{\kappa}(a,g^{-1})\\
&=\mathrm{g}_{\kappa}(a,g^{-1})^{-1}\cdot g\cdot 
\mathrm{g}_{\kappa}(a,g^{-1})\,.
\end{align*}
For~$r<0$, we calculate, again by applying 
Lemma~\ref{LEM:uniquevec}\eqref{uniquevec:itseq},
\begin{align*}
\mathrm{g}_r(a,g^{-1})^{-1}&= \ittrans{\acc,-1}\cdots\ittrans{\acc,r}
\\
& = \ittrans{\acc,\omega(g_0)}^{-1}\ittrans{\acc,2\omega(g_0)-1}^{-1}\cdots 
\ittrans{\acc,-r\omega(g_0) - (r-1)}^{-1}
\\
& = \bigl( 
\ittrans{\acc,\omega(g_0)}^{-1}\ittrans{\acc,2\omega(g_0)-1}^{-1}\cdots 
\ittrans{\acc,1}^{-1}  \bigr)^{-\lambda} 
\ittrans{\acc,2\omega(g_0)}^{-1}\cdots 
\ittrans{\acc,\kappa+1}^{-1}
\\
& \qquad \cdot 
\ittrans{\acc,\kappa}^{-1}\cdots 
\ittrans{\acc,1}^{-1}\ittrans{\acc,1}\cdots 
\ittrans{\acc,\kappa}
\\
& = \bigl( \ittrans{\acc,\omega(g_0)}^{-1}\ittrans{\acc,2\omega(g_0)-1}^{-1} 
\cdots \ittrans{\acc,1}^{-1}  \bigr)^{-\lambda + 1} \mathrm{g}_\kappa(a,g^{-1})
\\
& = g_0^{-\lambda+1}\mathrm{g}_\kappa(a,g^{-1})\,,
\end{align*}
where every transformation~$\ittrans{\acc,j}$,~$j\in\Z$, is to be understood 
with respect to the vector~$\nu(a,g^{-1})$.
Thus, we can proceed as in the case~$r>0$ to obtain
\[
qgq^{-1}=\mathrm{g}_{\kappa}(a,g^{-1})^{-1}\cdot 
g\cdot\mathrm{g}_{\kappa}(a,g^{-1})\,.
\]
It remains to show that $\kappa$ is unique in $\{1,\ldots,\omega(g_0)\}$ with 
\[
 q = \mathrm{g}_{\kappa}(a,g^{-1})^{-1}\cdot g_0^{\ell}
\]
for some~$\ell\in\Z$. To that end, we suppose that there 
exist~$\vartheta\in\{1,\dots,\omega(g_0)\}$ and $m\in\Z$ such that 
\[
 q = \mathrm{g}_{\vartheta}(a,g^{-1})^{-1}\cdot g_0^{m}\,.
\]
Then 
\begin{equation}\label{eq:foundp}
 \mathrm{g}_{\kappa}(a,g^{-1})\mathrm{g}_{\vartheta}(a,g^{-1})^{-1} = g_0^p
\end{equation}
for some~$p\in\Z$. For $p\in\N_0$, this is equivalent to 
\begin{equation}\label{eq:isid1}
 g_0^{-p}\mathrm{g}_{\kappa}(a,g^{-1})\mathrm{g}_{\vartheta}(a,g^{-1})^{-1} = 
\id\,.
\end{equation}
From Lemma~\ref{LEM:uniquevec}\eqref{uniquevec:itseq} we obtain 
\[
 g_0^{-p}\mathrm{g}_{\kappa}(a,g^{-1}) = 
\mathrm{g}_{p\omega(g_0)+\kappa}(a,g^{-1})\,.
\]
Using this identity in~\eqref{eq:isid1} and combining with 
Lemma~\ref{LEM:noncollapspayoff} we find
\[
 p\omega(g_0)+\kappa = \vartheta\,,
\]
thus $p=0$ and 
\[
 \kappa = \vartheta\,.
\]
For the case that in~\eqref{eq:foundp}, $p$ is a nonpositive integer, we 
convert~\eqref{eq:foundp} into 
\[
 \mathrm{g}_{\kappa}(a,g^{-1})\mathrm{g}_{\vartheta}(a,g^{-1})^{-1}g_0^{-p} = 
\id\,.
\]
Again using Lemma~\ref{LEM:uniquevec}\eqref{uniquevec:itseq} we obtain 
\[
 \mathrm{g}_{\vartheta}(a,g^{-1})^{-1}g_0^{-p} = 
\bigl(g_0^p\mathrm{g}_{\vartheta}(a,g^{-1})  \bigr)^{-1}  = 
\mathrm{g}_{-p\omega(g_0) + \vartheta}(a,g^{-1})^{-1}
\]
With Lemma~\ref{LEM:noncollapspayoff} we find
\[
 \kappa = -p\omega(g_0) + \vartheta\,.
\]
Therefore, $p=0$ and 
\[
 \kappa = \vartheta\,.
\]
This completes the proof. 
\end{proof}

We now establish Property~\ref{staPROP4}. 
Consider the sets~$Q_\kappa$ from~\eqref{eq:representq}.
Since~$g_0^{m(g)}=g$, we have
\[
qgq^{-1}=\mathrm{g}_\kappa(a,g^{-1})^{-1}\cdot 
g\cdot\mathrm{g}_\kappa(a,g^{-1})\,,
\]
for every~$q\in Q_\kappa$.
Therefore, because of the uniqueness of~$\kappa$ from Lemma~\ref{LEM:conjinPer}, 
the number~$\#([g]\cap\Per_n)$ is bounded from above by
\[
\#\{1,\dots,\omega(g_0)\}=\omega(g_0)\stackrel{\eqref{EQN:peqomega0}}{=}p(g)\,.
\]
Hence, it remains to show that
\begin{equation}\label{eq:qkinPer}
h_\kappa\coloneqq\mathrm{g}_\kappa(a,g^{-1})^{-1}\cdot 
g\cdot\mathrm{g}_\kappa(a,g^{-1})\in\Per_n
\end{equation}
for all~$\kappa\in\{1,\dots,\omega(g_0)\}$.
This is achieved via the same argument as in the proof of 
Lemma~\ref{LEM:conjinPer}:
By Lemma~\ref{LEM:periodicaxes}\eqref{periodicaxes:conjugacy} the axis 
of~$h_\kappa^{-1}$ is given 
by~$\mathrm{g}_\kappa(a,g^{-1})^{-1}\act\alpha(g^{-1})$ and because of 
Lemma~\ref{LEM:Perandaxes} it intersects $\Csacc{b}$, 
where~$b\coloneqq\itindex{\acc,\kappa}(\nu(a,g^{-1}))$.
This implies
\[
(\fixp{+}{h_\kappa^{-1}},\fixp{-}{h_\kappa^{-1}})\in\Iset{b}\times\Jset{b}
\subseteq I_b^\circ\times J_b\,.
\]
Thus,~$\mathrm{g}_\kappa(a,g^{-1})^{-1}\act\alpha(g^{-1})$ 
intersects~$h_\kappa^{-1}\act\Csacc{b}$ as well, 
implying~$h_\kappa\in\Per_{b,m}$ for some~$m\in\N$.
Since~$g\in\Per_n$ and~$h_\kappa\in[g]$, part~\eqref{staPROP3:representatives} 
of Property~\ref{staPROP3} now yields~$m=n$.
This in turn yields~\eqref{eq:qkinPer} and thereby finishes the proof of 
Property~\ref{staPROP4}.

\subsection{Property~\ref{staPROP5}}\label{SUBSEC:prop5}

With the following proposition we can now establishes 
Property~\ref{staPROP5} and finish the proof of Theorem~\ref{mainthm}. We 
remark that the proof of Proposition~\ref{prop:prop5} is constructive.

\begin{prop}\label{prop:prop5}
There exists a family~$(\mc E_a)_{a\in \Index}$ of open disks in~$\wh\C$ which 
satisfies all properties of Property~\ref{staPROP5}. 
\end{prop}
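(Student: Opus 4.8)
The plan is to exhibit the family $(\mathcal{E}_a)_{a\in\Index}$ explicitly and then check the five conditions of Property~\ref{staPROP5} by elementary estimates, feeding in the combinatorial structure assembled in Sections~\ref{SEC:cuspacc} and~\ref{SEC:structacc} together with Properties~\ref{staPROP1}--\ref{staPROP3}. First I would pass to the convenient coordinate. Since $\BrS$ is admissible, the point $\xi$ used in the definition of the intervals $\wh{I}_a$ may be taken to be an \emph{interior} point of $\wh\R\setminus\bigcup_{j\in A}I_j$; because $\bigcup_j I_j$ is open, this forces $\xi\notin\overline{I_j}$ for every $j\in A$. Conjugating by $q_\xi\in\PSLR$ (with $q_\xi\act\xi=\infty$) turns each $\wh{I}_a$ into a compact interval in $\R$ and $\overline{\wh{I}_{a,\st}}=\overline{I_{a,\st}}$ into a compact subset of $q_\xi\act I_{\prA(a)}\subseteq\R$, which contains the cuspidal endpoints of $\wh{I}_a$ (by the density statement~\eqref{EQN:IsetjdenseinIj}) but no funnel endpoints. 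The element of Property~\ref{staPROP5}\eqref{staPROP5:fliptheworld} will be $q_\xi$, and its first requirement holds automatically once the $\mathcal{E}_a$ are built as bounded sets in the $q_\xi$-picture.

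The candidate: for small $\delta>0$, let $\mathcal{E}_a=\mathcal{E}_a(\delta)$ be the union of the open $\delta$-Euclidean tube (in the $q_\xi$-coordinate) around $\overline{\wh{I}_{a,\st}}$ with, for each cuspidal endpoint $w$ of $\wh{I}_a$, an open Euclidean disk $H_w$ tangent to $\R$ at $w$ on the side of $\wh{I}_a$ (a horoball cap of a controlled, \emph{$\delta$-independent} size), intersected with a fixed large disk so that each $\mathcal{E}_a$ is open, bounded, connected and simply connected. Then~\eqref{staPROP5:cover} is immediate since $\overline{\wh{I}_{a,\st}}\subseteq\mathcal{E}_a$. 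For the second part of~\eqref{staPROP5:fliptheworld}, note that for $g\in C_{a,b}$ one has $g^{-1}\act\overline{\wh{I}_{a,\st}}\subseteq\overline{\wh{I}_{b,\st}}\subseteq\overline{I_{\prA(b)}}$, which does not contain $\xi$; hence $\xi$ has positive distance to the compact set $g^{-1}\act\overline{\wh{I}_{a,\st}}$, so $g q_\xi^{-1}\act\infty=g\act\xi\notin\overline{\mathcal{E}_a}$ once $\delta$ and the caps are small, and there are only finitely many pairs $(a,b)$ and — by finite ramification and Corollary~\ref{COR:finramrest} — only finitely many $g\in C_{a,b}$ to consider.

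For~\eqref{staPROP5:nofixed} and~\eqref{staPROP5:Pinclude}, fix $p\in P_{a,b}$. Then $p=u_b^{-1}$ has fixed point $w=\point{\prA(b)}{\prZ(b)}$, and the cycle relation of Definition~\ref{DEF:starcycs} gives $g_p\act w=\point{\prA(a)}{\prZ(b)}$; since $\wh{I}_a$ is, by Definition~\ref{DEF:accbranches}, a sub-piece of $\Ired{\prA(a)}$ that does \emph{not} touch the endpoint $\point{\prA(a)}{\prZ(b)}$ of $I_{\prA(a)}$, this point lies at positive distance from $\overline{\wh{I}_{a,\st}}$, so for small $\delta$ and small caps $g_p\act w\notin\overline{\mathcal{E}_a}$, i.e.\ $w\notin g_p^{-1}\act\overline{\mathcal{E}_a}$, which is~\eqref{staPROP5:nofixed}. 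Moreover $g_p^{-1}\act\overline{\mathcal{E}_a}$ is then a compact set lying on the $\wh{I}_b$-side of $w$ and not containing $w$, so — $H_w$ being $p^{-1}$-invariant — choosing the cap at $w$ large enough to contain $g_p^{-1}\act\overline{\mathcal{E}_a}$ yields $p^{-n}g_p^{-1}\act\overline{\mathcal{E}_a}\subseteq H_w$ for all $n\in\N$, and $K_{a,b,p}:=\overline{H_w}$, a compact Euclidean disk inside $\mathcal{E}_b$, does the job (here one uses that a parabolic iterated away from its fixed point sweeps a compact set into a fixed horoball). Finally,~\eqref{staPROP5:Cinclude} asks for $g^{-1}\act\overline{\mathcal{E}_a}\subseteq\mathcal{E}_b$ for every $g\in C_{a,b}$; since $g^{-1}\act\overline{\wh{I}_{a,\st}}$ is a compact subset of $\mathcal{E}_b$ by Property~\ref{staPROP1}, the inclusion of the slightly thickened sets holds for small $\delta$, \emph{provided the thickening parameters are chosen so that no finitely many such inclusions conflict}.

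That proviso is the real content and the main obstacle. One must choose the single transversal width $\delta$ and all horoball-cap radii simultaneously and consistently, even though a transition $g\in C_{a,b}$ may form a loop ($a=b$) or a longer cycle in the finite transition graph, in which case $g^{-1}\act\overline{\mathcal{E}_a}\subseteq\mathcal{E}_a$ is a genuine strict self-inclusion that forces $g^{-1}$ to contract $\mathcal{E}_a$; and a cap at a cusp of some $\mathcal{E}_b$ that must absorb an incoming parabolic chain must at the same time be small enough elsewhere so that its $g_p^{-1}$-image lands inside the target cap. That a consistent choice exists is exactly where the structural work pays off: the transition sets are finite (finite ramification, Corollary~\ref{COR:finramrest}); the decomposition in Property~\ref{staPROP1} is exact and disjoint; every cycle element of $\Per$ is hyperbolic rather than parabolic (Property~\ref{staPROP3}), so no $C$-cycle is neutral; and the identity-elimination algorithm has removed all identities, so no $C$-cycle is trivial. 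I would therefore organize the construction in the order: first fix, at each cusp, a horoball cap absorbing the (unique) parabolic chain ending there; then use compactness together with the hyperbolicity of the finitely many $C$-cycle transformations to pick $\delta>0$ small enough that all remaining finitely many inclusions of compacta into open sets hold, shrinking caps once more if needed — the delicate point being to sequence these choices so that shrinking one parameter never destroys an inclusion already secured. This being constructive, it then completes the verification of Property~\ref{staPROP5} and hence the proof of Theorem~\ref{mainthm}.
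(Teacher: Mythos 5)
You have the paper's global strategy — thicken each $\wh{I}_a$ to an explicit neighborhood, settle the finitely many ``compactly contained'' inclusions by compactness, treat the parabolic chains $p^{-n}g_p^{-1}$ separately, and recognize that everything hinges on choosing all thickening parameters simultaneously — but your proposal stops exactly where the paper's proof begins. The problematic configurations are pairs $a,b\in\Index$ and $g\in\Trans{\acc}{b}{a}^{-1}$ with $g^{-1}\act\wh{I}_a\subseteq\wh{I}_b$ sharing a boundary point: there the enlargement of $\wh{I}_a$ at that endpoint must be subordinate to the enlargement of $\wh{I}_b$ at its image, and a single uniform tube width $\delta$ cannot achieve this in general (whenever $g^{-1}$ expands at the shared endpoint $y_a$, one has $g^{-1}\act(y_a+\delta)>y_b+\delta$ for \emph{all} small $\delta$), so one is forced to one parameter per endpoint, and these constraints form a directed dependency graph. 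The actual content of the paper's proof is that this graph has no loops: a loop would yield $q=g_1\cdots g_n$ fixing a common endpoint; since that composition is not the identity (non-collapsing) and endpoints of the $\wh{I}_a$ are never hyperbolic fixed points (Lemma~\ref{LEM:accprops}\eqref{LEM:accprops:endpoints}), $q$ would be parabolic, forcing the endpoint to be the fixed point of the parabolic $p$ with $P_{a_2,a_1}=\{p\}$ and $g_p^{-1}=g_1$, whence $g_p^{-1}\act\wh{I}_{a_2}$ would contain the fixed point of $p$ — contradicting Lemma~\ref{LEM:prep_prop5}\eqref{eq:prepprop5_fixed} (equivalently \eqref{staPROP5:nofixed}). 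Your justification — ``every cycle element of $\Per$ is hyperbolic by Property~\ref{staPROP3}, so no $C$-cycle is neutral'' — does not address this: the dangerous loop elements fix a boundary point, hence are never in $\Per$, and the whole issue is precisely that they would be parabolic; what excludes them is the interplay of the endpoint structure of the $\wh{I}_a$ with the sets $P_{a,b}$ and the elements $g_p$, not Property~\ref{staPROP3}. Without an acyclicity argument of this kind, your ``sequence the choices so that shrinking one parameter never destroys an inclusion already secured'' is exactly the statement left unproved, and that is the gap.

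Two secondary geometric points. A disk genuinely tangent to $\R$ at the cuspidal endpoint $w$ meets $\R$ only at $w$, whereas the sets $p^{-n}g_p^{-1}\act\overline{\mc E_a}$ contain real intervals accumulating at $w$ from the $\wh{I}_b$-side; such a cap can therefore neither absorb them nor serve as $K_{a,b,p}$. What you need is a disk with $w$ on its boundary lying on the $\wh{I}_b$-side (the M\"obius image of a half-plane, forward-invariant under $p^{-1}$), or — as the paper does — simply a real enlargement past $w$ together with the observation that $p^{-n}g_p^{-1}\act\overline{E_a}\subseteq\wh{I}_b$ for all $n$ once $g_p^{-1}\act\overline{E_a}\subseteq\wh{I}_b^\circ$, so that $K_{a,b,p}=\wh{I}_b$ already works. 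Also, a $\delta$-tube around $\overline{\wh{I}_{a,\st}}$ need not be connected (that closure can have gaps at funnel intervals); thicken the full interval $\wh{I}_a$, or follow the paper and take the complex disk spanned by an enlarged real interval, so that all inclusions are inherited from the real picture under the M\"obius actions.
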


Instead of establishing directly the existence of such a family~$(\mc E_a)_{a\in \Index}$
of open disks in~$\C$, we will first provide a family of intervals of~$\R$ which satisfies a set of properties corresponding to those requested in Property~\ref{staPROP5}. Working on the real axis, results in a less involved discussion. A first part is provided by Lemma~\ref{LEM:prep_prop5} below, the remaining steps are done in the proof of Proposition~\ref{prop:prop5}. We then expand the real intervals to complex disks with centers in the real line (``complex disk hull''). Since all considered actions are by linear fractional transformations, all inclusion properties are inherited by the complex disks. 

We start with a few preparations.
Taking advantage of the admissibility of the set of branches~$\BrS$, we may 
suppose that for each $a\in\Index$, the interval~$\wh{I}_a$ is a bounded subset 
of~$\R$.
For that, we possibly need to conjugate the group~$\Gamma$ by some element in~$\PSL_2(\R)$, which, however, does not affect the validity of any results.
Alternatively, we may interpret this step as using a non-standard chart for the relevant part of~$\wh\R$.
See Remark~\ref{REM:SoBrem}\eqref{SoBrem:leavespaceforflip}.
For each $a\in\Index$ let $x_a,y_a\in\R$, $x_a<y_a$, denote the boundary points of~$\wh{I}_a$, thus
\begin{equation}\label{eq:La_bdry}
\wh{I}_a = [x_a, y_a]\,.
\end{equation}

\begin{lemma}\label{LEM:prep_prop5}
There exists a family $\bigl( (\eps_a^x,\eps_a^y) \bigr)_{a\in\Index}$ in~$\R^2$ 
such that for each~$a\in\Index$
\begin{enumerate}[label=$\mathrm{(\roman*)}$, ref=$\mathrm{\roman*}$]
\item\label{eq:prepprop5_large} $\eps_a^x,\eps_a^y>0$\,,
\end{enumerate}
and with 
\[
 E_a \coloneqq (x_a-\eps_a^x, y_a+\eps_a^y)
\]
we have
\begin{enumerate}[resume, label=$\mathrm{(\roman*)}$, ref=$\mathrm{\roman*}$]
\item\label{eq:prepprop5_away} for each~$b\in\Index$ and each~$g\in C_{a,b}$ we 
have
\[
  g\act\infty\notin \overline{E_a}\,,
\]
\item\label{eq:prepprop5_space} for each $b\in\Index$ and each $g\in C_{a,b}$ 
such that $g^{-1}\act \wh{I}_a$ is contained in the interior of~$\wh{I}_b$, we have
\[
 g^{-1}\act \overline E_a \subseteq E_b\,,
\]
\item\label{eq:prepprop5_compact} for all~$b\in\Index$ and all~$p\in P_{a,b}$ 
such that $g_p^{-1}\act \wh{I}_a$ is contained in the interior of~$\wh{I}_b$, we find a 
compact interval~$K_{a,b,p}$ of~$\R$ such that 
\[
 p^{-n}g_p^{-1}\act\overline{E_a} \subseteq K_{a,b,p} \subseteq E_b
\]
for all~$n\in\N$,
\item\label{eq:prepprop5_fixed} for all~$b\in\Index$ and all~$p\in P_{a,b}$, the 
fixed point of~$p$ is not contained in~$g_p^{-1}\act\overline{E_a}$.
\end{enumerate}
Moreover, for any $a\in\Index$, there exist thresholds~$\eta_a^x>0$ and 
$\eta_a^y>0$ such that any family $\bigl( 
(\eps_a^x,\eps_a^y) \bigr)_{a\in\Index}$ that satisfies $\eps_a^x\in 
(0,\eta_a^x)$ and $\eps_a^y\in (0,\eta_a^y)$ for all $a\in\Index$ 
also satisfies~\eqref{eq:prepprop5_large}--\eqref{eq:prepprop5_fixed}.
\end{lemma}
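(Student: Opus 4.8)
The plan is to construct the numbers $\eps_a^x, \eps_a^y$ by a finite intersection argument. For each $a \in \Index$, I would first collect all the finitely many ``forbidden'' points that must be avoided by the enlarged interval $E_a = (x_a - \eps_a^x, y_a + \eps_a^y)$, and then choose $\eps_a^x, \eps_a^y$ small enough that $\overline{E_a}$ stays clear of all of them while simultaneously still fitting inside the prescribed larger intervals. Concretely, I would separate the constraints into four groups, corresponding to \eqref{eq:prepprop5_away}--\eqref{eq:prepprop5_fixed}, and for each group produce an explicit upper threshold; the final $\eps_a^x, \eps_a^y$ are then any positive numbers below the minimum of these thresholds. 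Since $\Index$ is finite and, by Step~0 (Section~\ref{SUBSEC:Step0}), all the sets $C_{a,b}$ and $P_{a,b}$ are finite, only finitely many constraints enter, so the minimum is a positive number and the thresholds $\eta_a^x, \eta_a^y$ exist.

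Here is the order of the steps. First, for \eqref{eq:prepprop5_away}: the set $\bigcup_{b\in\Index}\defset{g\act\infty}{g\in C_{a,b}}$ is a finite subset of $\wh\R$, and none of its elements lies in $\wh{I}_a$. Indeed, this is exactly the content of Property~\ref{staPROP5}\eqref{staPROP5:fliptheworld} combined with the admissibility set-up: after conjugating so that $\infty$ is an inner point of $\wh\R\setminus\bigcup_{j\in A}I_j$, each $g\xi^{-1}\act\infty = g\act\xi$ lies outside $\overline{\Iset{a}}\supseteq\wh{I}_{a,\st}$; I would need to check it also stays off the endpoints $x_a, y_a$, which follows because the endpoints of $I_a$ are non-cuspidal limit-set-free points in $\wh\R\setminus\wh\R_{\st}$ while $g\act\xi$ is a $\Gamma$-translate of a point chosen outside all branch intervals, hence these cannot coincide without contradicting \eqref{BP:disjointunion} (translated to the accelerated system via \eqref{BPacc:disjointunion}). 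So there is a positive distance from $\{x_a, y_a\}$ to this finite forbidden set, and any $\eps_a^x, \eps_a^y$ below half that distance works for \eqref{eq:prepprop5_away}.

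Next, for \eqref{eq:prepprop5_space} and \eqref{eq:prepprop5_compact}: whenever $g\in C_{a,b}$ with $g^{-1}\act\wh{I}_a$ in the interior of $\wh{I}_b$, the map $g^{-1}$ sends the closed interval $\wh{I}_a$ strictly inside the open interval $I_b^\circ\supseteq\wh{I}_b^\circ$, and $g^{-1}$ is continuous on a neighborhood of $\wh{I}_a$ in $\wh\R$ (the only place to be careful is that $g^{-1}\act\infty \notin \wh{I}_a$, which is precisely \eqref{eq:prepprop5_away}); hence a small enough enlargement $\overline{E_a}$ still gets mapped into $E_b$, giving a threshold. For \eqref{eq:prepprop5_compact} the argument is the analogue of Property~\ref{staPROP5}\eqref{staPROP5:Pinclude}: the element $p$ is parabolic (Step~0) with fixed point $\fixp{}{p}$, and by Property~\ref{staPROP1}\eqref{staPROP1:Pinclude} the intervals $p^{-n}g_p^{-1}\act\wh{I}_{a,\st}$ shrink monotonically towards $\fixp{}{p}$ as $n\to\infty$; since $\fixp{}{p}$ is an interior point of $I_b^\circ$ (this is where \eqref{eq:prepprop5_fixed}, established separately, is used to guarantee $\fixp{}{p}\notin g_p^{-1}\act\overline{E_a}$, so all the iterates accumulate at an interior point and never touch $\partial E_b$) and $p$ acts on a one-sided neighborhood of $\fixp{}{p}$ as a contraction, the union $\bigcup_{n\in\N}p^{-n}g_p^{-1}\act\overline{E_a}$ together with its limit point $\fixp{}{p}$ is a compact interval contained in $E_b$, provided $\overline{E_a}$ was chosen small enough that $g_p^{-1}\act\overline{E_a}$ already lies strictly inside $E_b$ on the correct side of $\fixp{}{p}$. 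This yields $K_{a,b,p}$ and a threshold. Finally for \eqref{eq:prepprop5_fixed}: $g_p^{-1}\act\overline{\Iset{a}}\subseteq\wh{I}_{b,\st}$ does not contain $\fixp{}{p}$ (again by the structure of $P_{a,b}$ and \eqref{BPacc:disjointunion} -- if it did, then $p$ would fix an interior point of a branch interval, contradicting the acceleration construction), and $\fixp{}{p}$ is at positive distance from the endpoints of $g_p^{-1}\act\wh{I}_a$, so a small enlargement keeps $\fixp{}{p}$ outside; this gives another threshold.

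To finish, for each $a\in\Index$ I would set $\eta_a^x$ and $\eta_a^y$ to be the minimum over all the finitely many thresholds produced in the four groups above (plus the trivial requirement that $E_a$ be a bounded subinterval of $\R$, which costs nothing since all $\wh{I}_a$ are already bounded). Then any choice of $\eps_a^x\in(0,\eta_a^x)$ and $\eps_a^y\in(0,\eta_a^y)$ yields a family satisfying \eqref{eq:prepprop5_large}--\eqref{eq:prepprop5_fixed}, which is the ``moreover'' clause, and in particular such a family exists. The main obstacle I anticipate is the bookkeeping in \eqref{eq:prepprop5_compact}: one must verify simultaneously that the whole infinite family of iterates $p^{-n}g_p^{-1}\act\overline{E_a}$ stays inside $E_b$ \emph{and} that their closure is compact inside $E_b$ (not merely contained in it), which forces one to check that the accumulation point $\fixp{}{p}$ is a genuinely interior point of $E_b$ and that $p$ acts as a strict contraction there -- both facts coming from the parabolic-fixed-point structure encoded in Property~\ref{staPROP1}\eqref{staPROP1:Pinclude} and Definition~\ref{DEF:cuspacc}. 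Once the contraction picture near each parabolic fixed point is set up cleanly, the rest is a routine finite minimization.
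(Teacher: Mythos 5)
Your global scheme --- collect the finitely many constraints, produce a compactness/thickening threshold for each, and take the minimum --- is exactly the paper's, and your treatment of \eqref{eq:prepprop5_away} and \eqref{eq:prepprop5_space} is essentially the paper's argument. For \eqref{eq:prepprop5_away}, however, you should not invoke Property~\ref{staPROP5}\eqref{staPROP5:fliptheworld}: that property concerns the very family $(\mc E_a)_a$ being constructed, so the appeal is circular. The needed fact is a one-liner: if $g\act\infty\in\wh{I}_a$ then $\infty\in g^{-1}\act\wh{I}_a\subseteq\wh{I}_b$, which is impossible since every $\wh{I}_b$ is a bounded subset of~$\R$; in particular no separate discussion of the endpoints is required, and your side claim that the endpoints of~$I_a$ are non-cuspidal is false (for $a=(j,\eX)$ one endpoint is the cuspidal point~$\eX_j$).

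The genuine problems are in \eqref{eq:prepprop5_compact} and \eqref{eq:prepprop5_fixed}. The fixed point of $p=u_b^{-1}$ is $\prZ(b)_{\prA(b)}$, i.e.\ an \emph{endpoint} of $\wh{I}_b$, not an interior point of $I_b^\circ$ as you assert --- this is precisely the geometry created by the cuspidal acceleration. Your contraction picture can be salvaged only because $E_b$ strictly enlarges $\wh{I}_b$, so the accumulation point is interior to $E_b$; but the containment of \emph{all} iterates $p^{-n}g_p^{-1}\act\overline{E_a}$ in $E_b$ then still needs the structural input that iteration by $p^{-1}$ keeps one inside $\wh{I}_b^\circ$. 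The paper extracts exactly this from the decomposition~\eqref{EQ:Iadec_star}: any subset of $\wh{I}_b^\circ$ is mapped by $p^{-n}$ into $\wh{I}_b^\circ$, so one may simply take $K_{a,b,p}=\wh{I}_b$; targeting $\wh{I}_b^\circ$ rather than $E_b$ is also what keeps the threshold for $\eps_a^x,\eps_a^y$ independent of $\eps_b^x,\eps_b^y$, which the ``moreover'' clause demands. For \eqref{eq:prepprop5_fixed}, your justification --- that otherwise ``$p$ would fix an interior point of a branch interval, contradicting the acceleration construction'' --- is not a contradiction at all: parabolic fixed points of other cusp orbits routinely lie in the interior of intervals such as $I_{(j,\eR)}$. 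The correct reason comes from~\eqref{EQDEF:Transacc1} and~\eqref{EQ:Iadec_star}: $g_p^{-1}\act\Iset{a}\subseteq\Iset{b}\setminus p^{-1}\act\Iset{b}$, hence $g_p^{-1}\act\wh{I}_a\subseteq\wh{I}_b\setminus p^{-2}\act\wh{I}_b$, while the fixed point lies in $\bigcap_{n}p^{-n}\act\wh{I}_b$, so it cannot lie in $g_p^{-1}\act\wh{I}_a$, and a sufficiently small thickening preserves this. With these two points repaired, your argument goes through along the paper's lines.
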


\begin{proof}
In what follows we will show the existence of the thresholds~$\eta_a^x$ 
and~$\eta_a^y$ for all~$a\in\Index$ by showing that they only need to obey 
a finite number of positive upper bounds. These bounds depend on~$\Index$ and a 
finite number of elements in~$\Gamma$, but they do not have any 
interdependencies among each other, i.e., the values for the thresholds are 
independent of each other. 

We start by considering~$a,b\in\Index$ and $g\in C_{a,b}$. Since $g^{-1}\in 
\Trans{\acc}{b}{a}$ we have
\[
 g^{-1}\act\Iset{a} \subseteq \Iset{b}
\]
by~\eqref{BPacc:intervaldecomp}\eqref{BPacc:decomp}. Thus, $g^{-1}\act \wh{I}_a 
\subseteq \wh{I}_b$. By hypothesis, $\infty\notin \wh{I}_b$ and hence
\[
g\act\infty \notin \wh{I}_a\,.
\]
Since $\wh{I}_a$ is compact (see~\eqref{eq:La_bdry}), we can find an open 
neighborhood~$U$ (a ``thickening'') of~$\wh{I}_a$ in~$\wh\R$ that does not 
contain~$g\act\infty$. The open neighborhood~$U$ can be chosen uniformly for 
all~$b\in\Index$ and all~$g\in C_{a,b}$ as $\Index$ and $C_{a,b}$ are finite 
sets. The first condition on the thresholds~$\eta_a^x$ and $\eta_a^y$ is that 
$(x_a-\eta_a^x, y_a + \eta_a^y)$ is contained in~$U$. This implies a positive 
upper bound for each of~$\eta_a^x$ and~$\eta_a^y$, which can obviously be 
optimized (of which we will not take care here).

We now suppose in addition that $g^{-1}\act \wh{I}_a\subseteq \wh{I}_b^\circ$. 
Again using that $\wh{I}_a$, and hence $g^{-1}\act 
\wh{I}_a$, is compact, we find an open neighborhood~$\wt U$ of~$\wh{I}_a$ in~$\wh\R$ 
such that 
\[
g^{-1}\act\wt U \subseteq \wh{I}_b^\circ\,.
\]
Again taking advantage of the finiteness of~$\Index$ and~$C_{a,b}$ for 
each~$b\in\Index$, we can chose~$\wt U$ uniformly for all~$b\in\Index$ and 
all~$g\in C_{a,b}$. Our second condition on the thresholds~$\eta_a^x$ and 
$\eta_a^y$ is that $(x_a-\eta_a^x, y_a + \eta_a^y)$ is contained in~$\wt U$. 
Also this requirement can obviously be satisfied. 

We now consider $a,b=(k,V)\in\Index$ and $p\in P_{a,b}$. The fixed point of~$p$ 
is 
\[
\prZ(b)_{\prA(b)} = V_k\,,
\]
which is the unique point contained in
\[
\bigcap_{n\in\N_0} p^{-n}\act \wh{I}_b\,.
\]
From~\eqref{EQDEF:Transacc1},~\eqref{BPacc:intervaldecomp}\eqref{BPacc:decomp} 
and~\eqref{EQ:Iadec_star} it follows that 
\[
 g_p^{-1}\act \Iset{a} \subseteq \Iset{b}\setminus p^{-1}\act\Iset{b}
\]
and hence
\[
 g_p^{-1}\act \wh{I}_a \subseteq \wh{I}_b\setminus p^{-2}\act \wh{I}_b\,.
\]
Therefore~$V_k$ is not contained in $g_p^{-1}\act \wh{I}_a$. Our third condition on 
the 
thresholds~$\eta_a^x$ and $\eta_a^y$ is that 
\[
 g_p^{-1}\act (x_a-\eta_a^x, y_a+\eta_a^y) 
\]
does not contain~$V_k$ for all~$b\in\Index$ and all~$p\in P_{a,b}$. 
Analogously to above, using the compactness of~$\wh{I}_a$ and the finiteness 
of~$\Index$ and~$P_{a,b}$, we deduce that such choices of~$\eta_a^x, 
\eta_a^y>0$ are possible. 

We now suppose in addition that $g_p^{-1}\act \wh{I}_a\subseteq \wh{I}_b^\circ$. By the 
compactness of~$\wh{I}_a$ we find an open neighborhood~$W$ of~$\wh{I}_a$ such that 
\[
 g_p^{-1}\act W\subseteq \wh{I}_b^\circ\,.
\]
From~\eqref{EQ:Iadec_star} we obtain that for each subset~$M$ of~$\wh{I}_b^\circ$ 
also
\[
 p^{-n}\act M \subseteq \wh{I}_b^\circ 
\]
for all~$n\in\N_0$, and hence in particular
\[
 p^{-n}g_p^{-1}\act W \subseteq \wh{I}_b^\circ\,.
\]
Thus, for the requested compact set~$K_{a,b,p}$ we may pick~$\wh{I}_b$. As before, 
using the finiteness of~$\Index$ and~$P_{a,b}$, we can choose the open 
neighborhood~$W$ uniformly for all~$b\in\Index$ and all~$p\in P_{a,b}$. The 
fourth, and final condition on the thresholds~$\eta_a^x$ and $\eta_a^y$ is 
\[
 (x_a - \eta_a^x, y_a + \eta_a^y) \subseteq W\,,
\]
which can obviously be satisfied. 

We immediately check that the upper bounds for any of the 
thresholds~$\eta_a^x$, $\eta_a^y$ for any~$a\in\Index$ are independent among 
each other. This completes the proof.
\end{proof}

\begin{proof}[Proof of Proposition~\ref{prop:prop5}]
We pick a family~$\bigl( (\eps_a^x,\eps_a^y) \bigr)_{a\in\Index}$ satisfying 
all properties stated in Lemma~\ref{LEM:prep_prop5} and set, as in 
Lemma~\ref{LEM:prep_prop5}, 
\[
 E_a \coloneqq (x_a-\eps_a^x, y_a + \eps_a^y)
\]
for all $a\in\Index$. In what follows we will show that by possibly 
shrinking $\eps_a^x$ and~$\eps_a^y$ for some~$a\in\Index$ and allowing 
interdependencies among the elements of the family~$\bigl( (\eps_a^x,\eps_a^y) 
\bigr)_{a\in\Index}$, we can guarantee that 
Lemma~\ref{LEM:prep_prop5}\eqref{eq:prepprop5_space} is also valid in the case 
that $g^{-1}\act \wh{I}_a\not\subseteq \wh{I}_b^\circ$ and 
Lemma~\ref{LEM:prep_prop5}\eqref{eq:prepprop5_compact} is also valid if 
$g_p^{-1}\act \wh{I}_a\not\subseteq \wh{I}_b^\circ$. Taking advantage of the discussion 
in the proof of Lemma~\ref{LEM:prep_prop5} we see that both cases can be 
subsumed to the situation that there exist $a,b\in\Index$ and 
$g\in\Trans{acc}{b}{a}^{-1}$ such that 
\[
 g^{-1}\act \wh{I}_a \subseteq \wh{I}_b
\]
and the two sets $g^{-1}\act \wh{I}_a$ and~$\wh{I}_b$ have a common boundary point. It 
suffices to show that we can fix $\eps_a^x, \eps_a^y, \eps_b^x, \eps_b^y>0$ 
such that 
\[
 g^{-1}\act (x_a-\eps_a^x, y_a+\eps_a^y) \subseteq (x_b-\eps_b^x, 
y_b+\eps_b^y)\,.
\]
This is clearly possible for any ``local'' consideration, i.e., for fixed 
$a,b\in \Index$ and $g\in\Trans{acc}{b}{a}^{-1}$. We need to show that global 
choices are possible. 

To that end we note that any such pair of sets~$g^{-1}\act \wh{I}_a$ and~$\wh{I}_b$ has 
a single common boundary point, not two common boundary points, 
by~\eqref{BPacc:disjointunion}. For the boundary point of~$\wh{I}_a$, say~$z_a$, for 
which $g^{-1}\act z_a$ is contained in the interior of~$\wh{I}_b$ we may and shall 
suppose that the threshold~$\eta_a^z$ is chosen sufficiently small such that 
$g^{-1}\act (z_a\pm\eta_a^z)$ is also contained in~$\wh{I}_b$. (We may restrict here 
to either $+$ or $-$ as needed. However, we may also require this property for 
both signs.) We now consider the joint boundary point of~$\wh{I}_b$ and~$g^{-1}\act 
\wh{I}_a$. Without loss of generality, we may suppose that it is~$y_b$. 
Since~$\Gamma$-action preserves orientation, the corresponding boundary point 
of~$\wh{I}_a$ is then~$y_a$, hence 
\[
 g^{-1}\act y_a = y_b\,.
\]
Then we need to pick~$\eps_a^y>0$ such that 
\[
 g^{-1}\act (y_a + \eps_a^y) < y_b + \eps_b^y\,.
\]
Thus, the threshold for admissible choices for~$\eps_a^y$ depends on the value 
of~$\eps_b^y$. It might happen that there is $c\in\Index$ and $h\in 
\Trans{acc}{a}{c}^{-1}$ such that 
\[
 h^{-1}\act \wh{I}_c \subseteq \wh{I}_a\qquad\text{and}\qquad h^{-1}\act y_c = y_a\,.
\]
In other words, the dependency situation reappears for~$\wh{I}_a$ and we observe  
a tower of dependency situations: As the threshold for~$\eps_c^y$ depends on 
the choice of~$\eps_a^y$, and the threshold for~$\eps_a^y$ depends on the 
choice of~$\eps_b^y$, the threshold for~$\eps_c^y$ ultimately depends on the 
choice of~$\eps_b^y$. As long as $a$, $b$ and~$c$ are three distinct elements 
of~$\Index$, we can easily garantee admissible choices for~$\eps_b^y$, 
$\eps_a^y$ and~$\eps_c^y$ by picking them in this very order. 

We may organize all occuring dependencies (i.e., considering all elements 
of~$\Index$ and all situations of coinciding boundary points simultaneously) as 
a directed graph with 
\[
\defset{\eps_a^x, \eps_a^y}{a\in\Index}
\]
as set of vertices. As soon as this dependency graph has loops, unsolvable 
situations may occur. We will now show that the graph is loop-free. 

To that end we assume, in order to seek a contradiction, that there exists a 
finite sequence 
\[
 a_1\,,\ \ldots\,,\ a_{n+1} \in \Index
\]
with $a_{n+1} = a_1$, and 
\[
 g_j\in \Trans{\acc}{a_j}{a_{j+1}} \qquad\text{for $j\in\{1,\ldots, n\}$}
\]
such that 
\[
 y_{a_j} = g_j\act y_{a_{j+1}} \qquad\text{for $j\in\{1,\ldots, n\}$}\,.
\]
Then
\[
 y_{a_1} = g_1\cdots g_n\act y_{a_{n+1}} = q\act y_{a_1}
\]
with 
\[
 q \coloneqq g_1 \cdots g_n\,.
\]
As the boundary points of~$\wh{I}_{a_1}$ are not hyperbolic fixed points (see 
Lemma~\ref{LEM:accprops}\eqref{LEM:accprops:endpoints}), 
the element~$q$ is parabolic and $y_{a_1}$ a parabolic fixed point. In turn, 
$P_{a_2,a_1}\not=\varnothing$, say $P_{a_2,a_1} = \{p\}$, and $y_{a_1}$ is the 
fixed point of~$p$ and $g_p^{-1} = g_1$. However, $g_p^{-1}\act \wh{I}_{a_2} = 
g_1\act \wh{I}_{a_2}$ contains~$y_{a_1}$, which contradicts 
Lemma~\ref{LEM:prep_prop5}\eqref{eq:prepprop5_fixed}. Thus, the dependency 
graph has no loops. 

Finally, for each $a\in\Index$, we let $\mc E_a$ be the complex euclidean circle spanned by the real interval~$E_a$, i.e., $\mc E_a$ is the unique euclidean circle in~$\C$ with center in~$\R$ such that $\R\cap \mc E_a = E_a$. The family~$(\mc E_a)_{a\in \Index}$ has all requested properties.
\end{proof}

\section{A detailed example}\label{SEC:longexample}

In this section we carry out the algorithms of branch reduction, identity elimination and cuspidal acceleration for the hyperbolic orbisurface~$\Orbi\coloneqq\quod{\Gamma}{\H}$, where $\Gamma$ is the projective variant of the Hecke congruence subgroup~$\Gamma_0(7)$. Thus, 
\begin{align*}
 \Gamma & = \defset{ g\in\PSL_2(\Z) }{ g \equiv \bmat{1}{*}{0}{1} \mod 7 }
 \\
 & = \defset{  \bmat{a}{b}{c}{d} \in \PSL_2(\Z) }{ \mat{a}{b}{c}{d} \equiv \mat{1}{*}{0}{1} \mod 7 }\,.
\end{align*}
Besides many details we present also a choice of the family~$(\mc E_a)_{a\in \Index}$ of open, bounded, connected and simply connected sets in~$\widehat{\C}$ as requested in Property~\ref{staPROP5} of a structure tuple. 

At first glance, this example might seem to be rather overly complicated. However, it is one of the first examples (``first'' with respect to complexity) in which the full involvement of the cuspidal acceleration algorithm can be observed.

\subsection{Setup}

In this section we collect some useful background information on the hyperbolic orbisurface~$\Orbi = \quod{\Gamma}{\H}$. Further, we present the initial set of branches for the geodesic flow on~$\Orbi$ from which we will proceed and carry out all algorithms. 

A Ford fundamental domain~$\funddom$ for~$\Orbi$ or, equivalently, for the action of~$\Gamma$ on~$\h$ is given by 
\begin{equation}\label{eq:funddom07}
 \funddom \coloneqq \defset{ z\in\h }{ \Rea z\in (0,1),\ \left| z-\tfrac{k}{7}\right| > 1\ \text{for $k\in\{1,2,\ldots, 6\}$} }\,.
\end{equation}
See Figure~\ref{fig:funddomG07}. 
\bigskip
\begin{figure}[h]
\centering
\includegraphics[width=.95\textwidth]{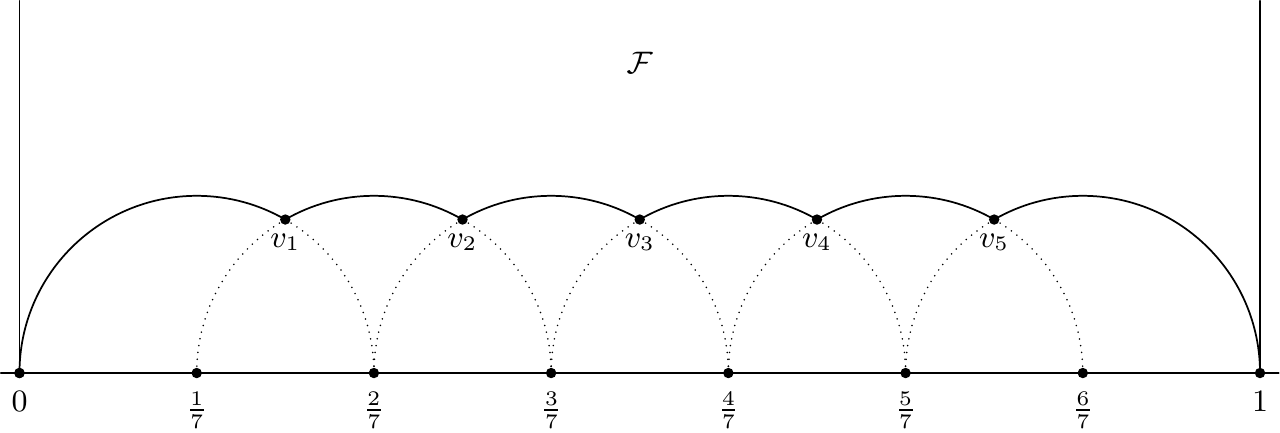}
\caption{The fundamental domain~$\funddom$ for~$\Gamma$.}\label{fig:funddomG07}
\end{figure}
\bigskip
The side-pairing elements are
\begin{align*}
 T & \coloneqq \bmat{1}{1}{0}{1}\,,\qquad & h_1 &\coloneqq \bmat{-6}{1}{-7}{1}\,,
 \\[3mm]
 h_2 & \coloneqq \bmat{-3}{1}{-7}{2}\,, & h_3 & \coloneqq \bmat{-5}{3}{-7}{4}\,.
\end{align*}
The side-pairing is as indicated in Figure~\ref{fig:sidepairingG07}: 
\begin{itemize}
\item The hyperbolic interval~$[0,\infty]_{\H}$ is mapped to~$[1,\infty]_{\H}$ by~$T$ (with $T\act 0 = 1$ and $T\act\infty=\infty$). 
\item The hyperbolic interval~$[0,v_1]_{\H}$ is mapped to~$[1,v_5]_{\H}$ by~$h_1$ (with $h_1\act 0=1$ and $h_1\act v_1 = v_5$).
\item The hyperbolic interval~$[v_1,v_2]_{\H}$ is mapped to~$[v_3,v_2]_{\H}$ by~$h_2$ (with $h_2\act v_1 = v_3$ and $h_2\act v_2 = v_2$).
\item The hyperbolic interval~$[v_3,v_4]_{\H}$ is mapped to~$[v_5,v_4]_{\H}$ by~$h_3$ (with $h_3\act v_3 = v_5$ and $h_3\act v_4 = v_4$).
\end{itemize}
\bigskip
\begin{figure}[h]
\centering
\includegraphics[width=.95\textwidth]{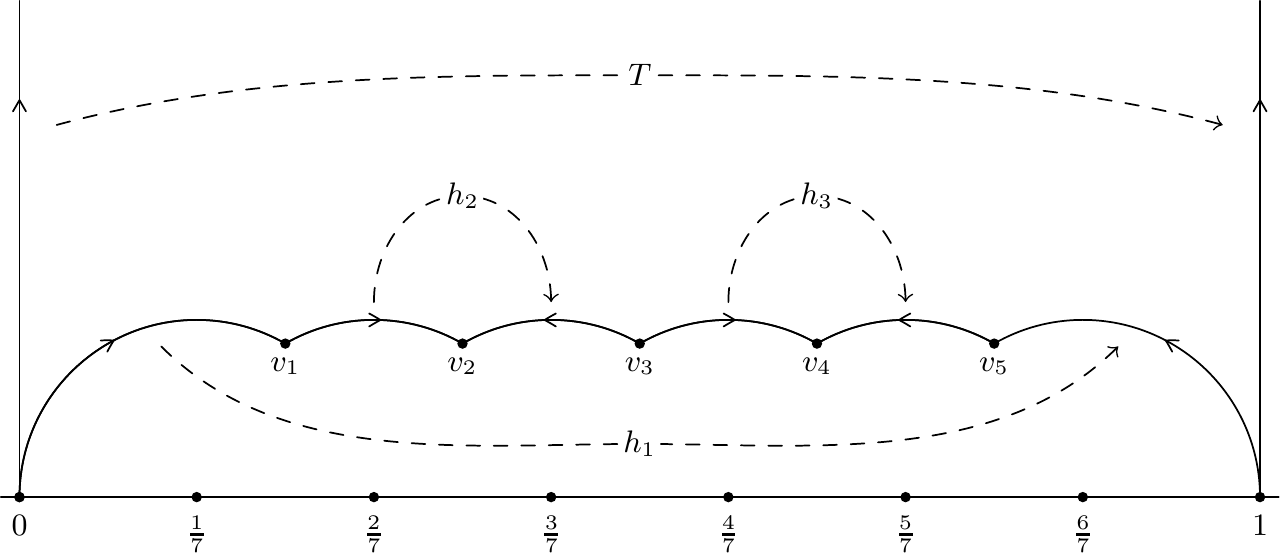}
\caption{The fundamental domain~$\funddom$ for~$\Gamma$.}\label{fig:sidepairingG07}
\end{figure}
\bigskip
From the side-pairings it can immediately be deduced that~$\Orbi$ has two cusps (represented by~$\infty$ and~$0$) and two elliptic points (represented by~$v_2$ and~$v_4$), both of order~$3$. 

The discretization algorithm in~\cite{Pohl_Symdyn2d} (see also~\cite{Pohl_diss}) provides several choices of sets of branches for the geodesic flow on~$\Orbi$. Taking advantage of the fundamental domain~$\funddom$ as given in~\eqref{eq:funddom07}, this algorithm initially constructs the set of branches
\[
\wt\BrS \coloneqq\{ \wt{\mathrm{C}}_1, \ldots, \wt{\mathrm{C}}_8 \}
\]
as indicated in Figure~\ref{fig:cs1_next}. (The seemingly ``chaotic'' enumeration is due to the algorithm. It is also convenient for our purposes.)
This set of branches is weakly non-collapsing, but not admissible. Therefore, for the purpose of our applications, we switch to an admissible, weakly non-collapsing set of branches by shifting the branch~$\wt{\mathrm{C}}_2$ by~$T^{-2}$. Our initial set of branches becomes
\[
\BrS \coloneqq \{\Cs 1,\ldots, \Cs 8\}
\]
as indicated in Figure~\ref{fig:cs3_next}. Thus, 
\[
\Cs 1  \coloneqq \defset{ v\in\UTB\H }{ \Rea\base v = 0\,,\ (\gamma_v(+\infty),\gamma_v(-\infty)) \in I_1 \times J_1  }
\]
with 
\[
I_1 \coloneqq (0,+\infty) \qquad\text{and}\qquad J_1 \coloneqq (-\infty,0)\,, 
\]
and
\[
\Cs 2  \coloneqq \defset{ v\in\UTB\H }{ \Rea\base v = -1\,,\ (\gamma_v(+\infty),\gamma_v(-\infty)) \in I_2 \times J_2  }
\]
with 
\[
I_2 \coloneqq (-\infty, -1) \qquad\text{and}\qquad J_2 \coloneqq (-1,+\infty)\,, 
\]
and 
\[
\Cs 3  \coloneqq \defset{ v\in\UTB\H }{ \Rea\base v = \tfrac67\,,\ (\gamma_v(+\infty),\gamma_v(-\infty)) \in I_3 \times J_3  }
\]
with 
\[
I_3 \coloneqq \left(\tfrac67,+\infty\right) \qquad\text{and}\qquad J_3 \coloneqq \left(-\infty,\tfrac67\right)\,, 
\]
and 
\[
\Cs 4  \coloneqq \defset{ v\in\UTB\H }{ \Rea\base v = \tfrac17\,,\ (\gamma_v(+\infty),\gamma_v(-\infty)) \in I_4 \times J_4  }
\]
with 
\[
I_4 \coloneqq \left(\tfrac17,+\infty\right) \qquad\text{and}\qquad J_4 \coloneqq \left(-\infty,\tfrac17\right)\,, 
\]
and 
\[
\Cs 5  \coloneqq \defset{ v\in\UTB\H }{ \Rea\base v = \tfrac37\,,\ (\gamma_v(+\infty),\gamma_v(-\infty)) \in I_5 \times J_5  }
\]
with 
\[
I_5 \coloneqq \left(\tfrac37,+\infty\right) \qquad\text{and}\qquad J_5 \coloneqq \left(-\infty,\tfrac37\right)\,, 
\]
and 
\[
\Cs 6  \coloneqq \defset{ v\in\UTB\H }{ \Rea\base v = \tfrac27\,,\ (\gamma_v(+\infty),\gamma_v(-\infty)) \in I_6 \times J_6  }
\]
with 
\[
I_6 \coloneqq \left(\tfrac27,+\infty\right) \qquad\text{and}\qquad J_6 \coloneqq \left(-\infty,\tfrac27\right)\,, 
\]
and 
\[
\Cs 7  \coloneqq \defset{ v\in\UTB\H }{ \Rea\base v = \tfrac57\,,\ (\gamma_v(+\infty),\gamma_v(-\infty)) \in I_7 \times J_7  }
\]
with 
\[
I_7 \coloneqq \left(\tfrac57,+\infty\right) \qquad\text{and}\qquad J_7 \coloneqq \left(-\infty,\tfrac57\right)\,,
\]
and 
\[
\Cs 8  \coloneqq \defset{ v\in\UTB\H }{ \Rea\base v = \tfrac47\,,\ (\gamma_v(+\infty),\gamma_v(-\infty)) \in I_8 \times J_8  }
\]
with 
\[
I_8 \coloneqq \left(\tfrac47,+\infty\right) \qquad\text{and}\qquad J_8 \coloneqq \left(-\infty,\tfrac47\right)\,.
\]
We set $A\coloneqq \{1,\ldots, 8\}$. The forward transition sets can be read off from Figure~\ref{fig:cs3_next}. E.g., 
\[
\Trans{}{1}{1}=\{h_1^{-1}T\}\,,\quad \Trans{}{1}{4} = \{\id\}\,,\quad \Trans{}{1}{6} = \varnothing\,.
\]
One easily checks that~$\BrS$ is indeed admissible and weakly non-collapsing. Further, we immediately see that~$\BrS$ is finitely ramified.

\subsection{Application of branch reduction algorithm}\label{SUBSEC:applbr}

The return graph~$\ReturnGraph{0}$ of level~$0$ is indicated in Figure~\ref{fig:RG0}. It can easily be read off from Figure~\ref{fig:cs3_next}.
\begin{figure}[h]
\begin{tikzpicture}[->,shorten >=1pt,auto, semithick]
\def \fak {7.5 em}
\begin{scope}[on grid]
\node[state] (C1) {$1$};
\node[state] (C3) [above left= .78183*\fak and (1-.62349)*\fak of C1] {$3$};
\node[state] (C7) [above left= .97493*\fak and 1.22252*\fak of C1] {$7$};
\node[state] (C8) [above left= .43388*\fak and 1.90097*\fak of C1] {$8$};
\node[state] (C5) [below left= .43388*\fak and 1.90097*\fak of C1] {$5$};
\node[state] (C6) [below left= .97493*\fak and 1.22252*\fak of C1] {$6$};
\node[state] (C4) [below left=.78183*\fak and (1-.62349)*\fak of C1] {$4$};
\node[state] (C2) [left=(-1-.24698)*\fak of C1] {$2$};
\def \los {.7}
\path (C1) edge [bend left,looseness=\los] node {$\id$} (C4)
		  (C1) edge [loop right,out=30,in=330,looseness=10] node[pos=.3,above=3pt] {$h_1^{-1}T$} (C1)
		  (C4) edge [bend left,looseness=\los] node {$\id$} (C6)
		  (C4) edge [bend right,looseness=\los] node[pos=.3,below=5pt] {$h_2^{-1}$} (C8)
		  (C6) edge [bend left,looseness=\los] node {$\id$} (C5)
		  (C6) edge [bend right,looseness=\los] node[pos=.3,above=5pt] {$h_2$} (C5)
		  (C5) edge [bend left,looseness=\los] node {$\id$} (C8)
		  (C5) edge [bend right,looseness=\los] node[pos=.3,above=1pt] {$h_3^{-1}$} (C3)
		  (C8) edge [bend left,looseness=\los] node {$\id$} (C7)
		  (C8) edge [bend right,looseness=\los] node[pos=.7,below=5pt] {$h_3$} (C7)
		  (C7) edge [bend left,looseness=\los] node {$\id$} (C3)
		  (C7) edge [bend left,looseness=\los] node[pos=.3,right=1pt] {$h_1$} (C6)
		  (C3) edge [bend left,looseness=\los] node {$T$} (C1)
		  (C2) edge [loop right,out=30,in=330,looseness=10] node[pos=.3,above=3pt] {$T^{-2}h_1T$} (C2);
\draw[->] (C3.north east) arc [radius=\fak, start angle=120, end angle= 7.5] node[pos=.5,below=7pt] {$h_1T$};
\draw[<-] (C4.south east) arc [radius=\fak, start angle=-120, end angle= -7.5] node[pos=.5,above=7pt] {$T^{-2}h_1$};
\end{scope}
\end{tikzpicture}
\caption{The return graph~$\ReturnGraph{0}$.}\label{fig:RG0}
\end{figure}
Algorithm~\ref{nodereductionI} is void in our situation. Thus, $\kappa_1=0$.
For Algorithm~\ref{nodereductionII} we pick (in this order) $5, 8, 6, 3, 4$. Then $\kappa_2=5$, 
\[
A_5 = \{1,2,7\}
\]
and 
\begin{align*}
\Trans{5}{1}{1} & = \{ h_1^{-1}T,\ h_3^{-1}T,\ h_2h_3^{-1}T \}  
\\
\Trans{5}{1}{2} & = \{ h_3^{-1}h_1T,\ h_2h_3^{-1}h_1T \}
\\
\Trans{5}{1}{7} & = \{\id,\ h_2,\ h_3,\ h_2^{-1},\ h_2h_3,\ h_2^{-1}h_3 \} 
\\
\Trans{5}{2}{1} & = \{ T^{-2}h_1h_3^{-1}T,\ T^{-2}h_1h_2h_3^{-1}T \}
\\
\Trans{5}{2}{2} & = \{ T^{-2}h_1T,\ T^{-2}h_1h_3^{-1}h_1T\, T^{-2}h_1h_2h_3^{-1}h_1T \}
\\
\Trans{5}{2}{7} & = \{ T^{-2}h_1,\ T^{-2}h_1h_2^{-1},\ T^{-2}h_1h_2,\ T^{-2}h_1h_3,\ T^{-2}h_1h_2h_3,\ T^{-2}h_1h_2^{-1}h_3 \}
\\
\Trans{5}{7}{1} & = \{ T,\ h_1h_3^{-1}T,\ h_1h_2h_3^{-1}T \}
\\
\Trans{5}{7}{2} & = \{ h_1T,\ h_1h_3^{-1}h_1T,\ h_1h_2h_3^{-1}h_1T \}
\\
\Trans{5}{7}{7} & = \{ h_1,\ h_1h_2,\ h_1h_3,\ h_1h_2h_3 \}\,.
\end{align*}
The return graph~$\ReturnGraph{5}$ is indicated in Figure~\ref{fig:RG5}. 
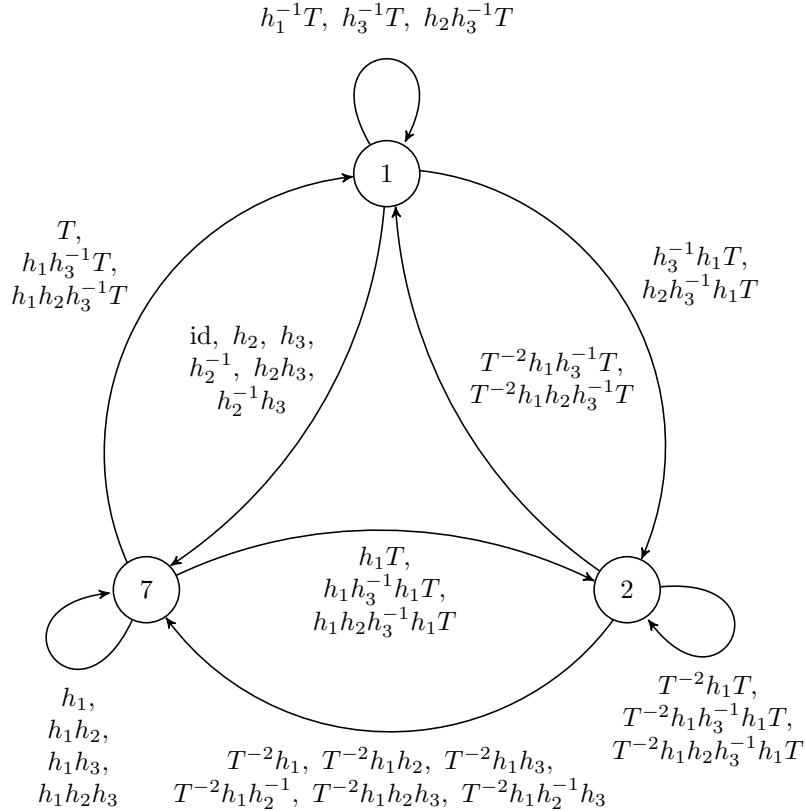
\begin{figure}[h]
\begin{tikzpicture}[->,shorten >=1pt,auto, semithick]
\def \fak {10.5 em}
\def \fakk {\fak*sqrt(3)}
\begin{scope}[on grid]
\node[state] (C1) {$1$};
\node[state] (C7) [below left= 1.5*\fak and .86603*\fak of C1] {$7$};
\node[state] (C2) [below right= 1.5*\fak and .86603*\fak of C1] {$2$};
\draw[<-] (C2.65) arc [radius=\fak, start angle=-24.02084, end angle=84.02084] node[pos=.5,above right=-5pt] {$\begin{array}{c}h_3^{-1}h_1T,\\h_2h_3^{-1}h_1T\end{array}$};
\draw[<-] (C7.305) arc [radius=\fak, start angle=215.97916, end angle=324.02084] node[pos=.5,below=2pt] {$\begin{array}{c}T^{-2}h_1,~T^{-2}h_1h_2,~T^{-2}h_1h_3,\\T^{-2}h_1h_2^{-1},~T^{-2}h_1h_2h_3,~T^{-2}h_1h_2^{-1}h_3\end{array}$};
\draw[<-] (C1.185) arc [radius=\fak, start angle=95.97916, end angle=204.02084] node[pos=.5,above left=-5pt] {$\begin{array}{c}T,\\h_1h_3^{-1}T,\\h_1h_2h_3^{-1}T\end{array}$};
\draw[<-] (C2.165) arc [radius=\fakk, start angle=63.44789, end angle=115.33211] node[pos=.5, below=1pt] {$\begin{array}{c}h_1T,\\h_1h_3^{-1}h_1T,\\h_1h_2h_3^{-1}h_1T\end{array}$};
\draw[<-] (C1.285) arc [radius=\fakk, start angle=183.44789, end angle=235.33211] node[pos=.5, above right=-5pt] {$\begin{array}{c}T^{-2}h_1h_3^{-1}T,\\T^{-2}h_1h_2h_3^{-1}T\end{array}$};
\draw[<-] (C7.45) arc [radius=\fakk, start angle=303.44789, end angle=355.33211] node[pos=.5, above left=-10pt] {$\begin{array}{c}\id,~h_2,~h_3,\\h_2^{-1},~h_2h_3,\\h_2^{-1}h_3\end{array}$};
\path  (C2) edge [loop right,out=5,in=-55,looseness=10] node[pos=.7,below=2pt] {$\begin{array}{c}T^{-2}h_1T,\\T^{-2}h_1h_3^{-1}h_1T,\\T^{-2}h_1h_2h_3^{-1}h_1T\end{array}$} (C2)
(C7) edge [loop right,out=245,in=185,looseness=10] node[pos=.3,below=2pt] {$\begin{array}{c}h_1,\\h_1h_2,\\h_1h_3,\\h_1h_2h_3\end{array}$} (C7)
(C1) edge [loop left,out=120,in=60,looseness=10] node[pos=.5,above=5pt] {$\begin{array}{c}h_1^{-1}T,~h_3^{-1}T,~h_2h_3^{-1}T\end{array}$} (C1);
\end{scope}
\end{tikzpicture}
\caption{The return graph~$\ReturnGraph{5}$ of level~$5$. Multiple edges are indicated by multiple weights.}\label{fig:RG5}
\end{figure}
The associated discrete dynamical system~$(D^{(br)},F^{(br)})$ is defined on the domain (only ``strict'' part; $(br)$ stands for branch reduction) 
\[
 D^{(br)} \coloneqq \Bigl( \bigl(0,\infty\bigr)_{\st}\times\{1\} \Bigr) \cup \Bigl( \bigl(\tfrac57,\infty\bigr)_{\st} \times\{7\} \Bigr) \cup \Bigl( \bigl(-\infty,-1\bigr)_{\st}\times\{2\}\Bigr)
\]
and given by the following submaps:
\begin{align*}
 \bigl(0,\tfrac17\bigr)_{\st}\times\{1\} & \stackrel{\sim}{\longrightarrow} \bigl(0,\infty\bigr)_{\st}\times \{1\}\,, \quad &\bigl(x,1\bigr) & \mapsto \bigl( T^{-1}h_1.x,1\bigr)\,,
 \\[2mm]
 \bigl(\tfrac17,\tfrac{3}{14}\bigr)_{\st}\times\{1\} & \stackrel{\sim}{\longrightarrow} \bigl(\tfrac57,\infty\bigr)_{\st}\times\{7\}\,, & \bigl(x,1\bigr) &\mapsto \bigl(h_3^{-1}h_2.x,7\bigr)\,,
 \\[2mm]
 \bigl(\tfrac{3}{14},\tfrac27\bigr)_{\st}\times\{1\} & \stackrel{\sim}{\longrightarrow} \bigl(\tfrac57,\infty\bigr)_{\st}\times\{7\}\,, & \bigl(x,1\bigr) &\mapsto \bigl(h_2.x,7\bigr)\,,
 \\[2mm]
 \bigl(\tfrac27,\tfrac13\bigr)_{\st}\times\{1\} & \stackrel{\sim}{\longrightarrow} \bigl(-\infty,-1\bigr)_{\st}\times\{2\}\,, & \bigl(x,1\bigr) &\mapsto \bigl(T^{-1}h_1^{-1}h_3h_2^{-1}.x,2\bigr)\,,
 \\[2mm]
 \bigl(\tfrac13,\tfrac{5}{14}\bigr)_{\st}\times\{1\} & \stackrel{\sim}{\longrightarrow} \bigl(0,\infty\bigr)_{\st}\times\{1\}\,, & \bigl(x,1\bigr) &\mapsto \bigl(T^{-1}h_3h_2^{-1}.x,1\bigr)\,,
 \\[2mm]
 \bigl(\tfrac{5}{14},\tfrac{8}{21}\bigr)_{\st}\times\{1\} & \stackrel{\sim}{\longrightarrow} \bigl(\tfrac57,\infty\bigr)_{\st}\times\{7\}\,, & \bigl(x,1\bigr) &\mapsto \bigl(h_3^{-1}h_2^{-1}.x,7\bigr)\,,
 \\[2mm]
 \bigl(\tfrac{8}{21},\tfrac37\bigr)_{\st}\times\{1\} & \stackrel{\sim}{\longrightarrow} \bigl(\tfrac57,\infty\bigr)_{\st}\times\{7\}\,, & \bigl(x,1\bigr) &\mapsto \bigl(h_2^{-1}.x,7\bigr)\,,
 \\[2mm]
 \bigl(\tfrac37,\tfrac12\bigr)_{\st}\times\{1\} & \stackrel{\sim}{\longrightarrow} \bigl(-\infty,-1\bigr)_{\st}\times\{2\}\,, & \bigl(x,1\bigr) &\mapsto \bigl(T^{-1}h_1^{-1}h_3.x,2\bigr)\,,
 \\[2mm]
 \bigl(\tfrac12,\tfrac47\bigr)_{\st}\times\{1\} & \stackrel{\sim}{\longrightarrow} \bigl(0,\infty\bigr)_{\st}\times\{1\}\,, & \bigl(x,1\bigr) &\mapsto \bigl(T^{-1}h_3.x,1\bigr)\,,
 \\[2mm]
 \bigl(\tfrac47,\tfrac57\bigr)_{\st}\times\{1\} & \stackrel{\sim}{\longrightarrow} \bigl(\tfrac57,\infty\bigr)_{\st}\times\{7\}\,, & \bigl(x,1\bigr) &\mapsto \bigl(h_3^{-1}.x,7\bigr)\,,
 \\[2mm]
 \bigl(\tfrac57,\infty\bigr)_{\st}\times\{1\} & \stackrel{\sim}{\longrightarrow} \bigl(\tfrac57,\infty\bigr)_{\st}\times\{7\}\,, & \bigl(x,1\bigr) &\mapsto \bigl(x,7\bigr)\,,
 \\[2mm]
 \bigl(\tfrac57,\tfrac34\bigr)_{\st}\times\{7\} & \stackrel{\sim}{\longrightarrow} \bigl( -\infty, -1\bigr)_{\st}\times\{ 2\}\,, & \bigl(x,7\bigr) &\mapsto \bigl(T^{-1}h_1^{-1}h_3h_2^{-1}h_1^{-1}.x,2\bigr)\,,
 \\[2mm]
 \bigl(\tfrac34,\tfrac{16}{21}\bigr)_{\st}\times\{7\} & \stackrel{\sim}{\longrightarrow} \bigl(0,\infty\bigr)_{\st}\times\{1\}\,, & \bigl(x,7\bigr) &\mapsto \bigl(T^{-1}h_3h_2^{-1}h_1^{-1}.x,1\bigr)\,,
 \\[2mm]
 \bigl(\tfrac{16}{21},\tfrac{27}{35}\bigr)_{\st}\times\{7\} & \stackrel{\sim}{\longrightarrow} \bigl(\tfrac57,\infty\bigr)_{\st}\times\{7\}\,, & \bigl(x,7\bigr) &\mapsto \bigl(h_3^{-1}h_2^{-1}h_1^{-1}.x,7\bigr)\,,
 \\[2mm]
 \bigl(\tfrac{27}{35},\tfrac{11}{14}\bigr)_{\st}\times\{7\} & \stackrel{\sim}{\longrightarrow} \bigl(\tfrac57,\infty\bigr)_{\st}\times\{7\}\,, & \bigl(x,7\bigr) &\mapsto \bigl(h_2^{-1}h_1^{-1}.x,7\bigr)\,,
 \\[2mm]
 \bigl(\tfrac{11}{14},\tfrac45\bigr)_{\st}\times\{7\} & \stackrel{\sim}{\longrightarrow} \bigl(-\infty,-1\bigr)_{\st}\times\{2\}\,, & \bigl(x,7\bigr) &\mapsto \bigl(T^{-1}h_1^{-1}h_3h_1^{-1},2\bigr)\,,
 \\[2mm]
 \bigl(\tfrac45,\tfrac{17}{21}\bigr)_{\st}\times\{7\} & \stackrel{\sim}{\longrightarrow} \bigl(0,\infty\bigr)_{\st}\times\{1\}\,, & \bigl(x,7\bigr) &\mapsto \bigl(T^{-1}h_3h_1^{-1},1\bigr)\,,
 \\[2mm]
 \bigl(\tfrac{17}{21},\tfrac{23}{28}\bigr)_{\st}\times\{7\} & \stackrel{\sim}{\longrightarrow} \bigl(\tfrac57,\infty\bigr)_{\st}\times\{7\}\,, & \bigl(x,7\bigr) &\mapsto \bigl(h_3^{-1}h_1^{-1}.x,7\bigr)\,,
 \\[2mm]
 \bigl(\tfrac{23}{28},\tfrac67\bigr)_{\st}\times\{7\} & \stackrel{\sim}{\longrightarrow} \bigl(\tfrac57,\infty\bigr)_{\st}\times\{7\}\,, & \bigl(x,7\bigr) &\mapsto \bigl(h_1^{-1}.x,7\bigr)\,,
 \\[2mm]
 \bigl(\tfrac{6}{7},1\bigr)_{\st}\times\{7\} & \stackrel{\sim}{\longrightarrow} \bigl(-\infty,-1\bigr)_{\st}\times\{2\}\,, & \bigl(x,7\bigr) &\mapsto \bigl(T^{-1}h_1^{-1}.x,2\bigr)\,,
 \\[2mm]
 \bigl(1,\infty\bigr)_{\st}\times\{7\} & \stackrel{\sim}{\longrightarrow} \bigl(0,\infty\bigr)_{\st}\times\{1\}\,, & \bigl(x,7\bigr) &\mapsto \bigl(T^{-1}.x,1\bigr)\,,
 \\[2mm]
 \bigl(-\infty,-\tfrac{10}{7}\bigr)_{\st}\times\{2\} & \stackrel{\sim}{\longrightarrow} \bigl(\tfrac57,\infty\bigr)_{\st}\times\{7\}\,, & \bigl(x,2\bigr) &\mapsto \bigl(h_3^{-1}h_2h_1^{-1}T^2.x,7\bigr)\,,
 \\[2mm]
 \bigl(-\tfrac{10}{7},-\tfrac{9}{7}\bigr)_{\st}\times\{2\} & \stackrel{\sim}{\longrightarrow} \bigl(\tfrac57,\infty\bigr)_{\st}\times\{7\}\,, & \bigl(x,2\bigr) &\mapsto \bigl(h_2h_1^{-1}T^2.x,7\bigr)\,,
 \\[2mm]
 \bigl(-\tfrac97,-\tfrac54\bigr)_{\st}\times\{2\} & \stackrel{\sim}{\longrightarrow} \bigl(-\infty,-1\bigr)_{\st}\times\{2\}\,, & \bigl(x,2\bigr) &\mapsto \bigl(T^{-1}h_1^{-1}h_3h_2^{-1}h_1^{-1}T^2.x,2\bigr)\,,
 \\[2mm]
 \bigl(-\tfrac54,-\tfrac{26}{21}\bigr)_{\st}\times\{2\} & \stackrel{\sim}{\longrightarrow} \bigl(0,\infty\bigr)_{\st}\times\{1\}\,, & \bigl(x,2\bigr) &\mapsto \bigl(T^{-1}h_3h_2^{-1}h_1^{-1}T^2.x,1\bigr)\,,
 \\[2mm]
 \bigl(-\tfrac{26}{21},-\tfrac{43}{35}\bigr)_{\st}\times\{2\} & \stackrel{\sim}{\longrightarrow} \bigl(\tfrac57,\infty\bigr)_{\st}\times\{7\}\,, & \bigl(x,2\bigr) &\mapsto \bigl(h_3^{-1}h_2^{-1}h_1^{-1}T^2.x,7\bigr)\,,
 \\[2mm]
 \bigl(-\tfrac{43}{35},-\tfrac{17}{14}\bigr)_{\st}\times\{2\} & \stackrel{\sim}{\longrightarrow} \bigl(\tfrac57,\infty\bigr)_{\st}\times\{7\}\,, & \bigl(x,2\bigr) &\mapsto \bigl(h_2^{-1}h_1^{-1}T^2.x,7\bigr)\,,
 \\[2mm]
 \bigl(-\tfrac{17}{14},-\tfrac65\bigr)_{\st}\times\{2\} & \stackrel{\sim}{\longrightarrow} \bigl(-\infty,-1\bigr)_{\st}\times\{2\}\,, & \bigl(x,2\bigr) &\mapsto \bigl(T^{-1}h_1^{-1}h_3h_1^{-1}T^2.x,2\bigr)\,,
 \\[2mm]
 \bigl(-\tfrac65,-\tfrac{25}{21}\bigr)_{\st}\times\{2\} & \stackrel{\sim}{\longrightarrow} \bigl(0,\infty\bigr)_{\st}\times\{1\}\,, & \bigl(x,2\bigr) &\mapsto \bigl(T^{-1}h_3h_1^{-1}T^2.x,1\bigr)\,,
 \\[2mm]
 \bigl(-\tfrac{25}{21},-\tfrac{33}{28}\bigr)_{\st}\times\{2\} & \stackrel{\sim}{\longrightarrow} \bigl(\tfrac57,\infty\bigr)_{\st}\times\{7\}\,, & \bigl(x,2\bigr) &\mapsto \bigl(h_3^{-1}h_1^{-1}T^2.x,7\bigr)\,,
 \\[2mm]
 \bigl(-\tfrac{33}{28},-\tfrac87\bigr)_{\st}\times\{2\} & \stackrel{\sim}{\longrightarrow} \bigl(\tfrac57,\infty\bigr)_{\st}\times\{7\}\,, & \bigl(x,2\bigr) &\mapsto \bigl(h_1^{-1}T^2.x,7\bigr)\,,
 \\[2mm]
 \bigl(-\tfrac87,-1\bigr)_{\st}\times\{2\} & \stackrel{\sim}{\longrightarrow} \bigl(-\infty,-1\bigr)_{\st}\times\{2\}\,, & \bigl(x,2\bigr) &\mapsto \bigl(T^{-1}h_1^{-1}T^2.x,2\bigr)\,.
\end{align*}
The associated transfer operator ${\TO s}^{\!\!\!(br)}$ acts on the function vectors
\[
\begin{pmatrix}
f_a \colon (0,\infty)_{\st} \to \C
\\
f_b \colon \bigl(\tfrac57,\infty\bigr)_{\st}\to\C
\\
f_c \colon (-\infty,-1)_{\st}\to\C
\end{pmatrix}
\]
with $a=1$, $b=7$, $c=2$. It is given by the ``matrix-like'' operator $\bigl( \TO {ij,s}\bigr)_{i,j\in A_5}$ with 
\begin{align*}
\TO {aa,s} & = \tau_s\big(T^{-1}h_1\big) + \tau_s\big(T^{-1}h_3h_2^{-1}\big) + \tau_s\big(T^{-1}h_3\big)\,,
\\
\TO {ab,s} & = \tau_s\big(T^{-1}\big) + \tau_s\big( T^{-1}h_3h_2^{-1}h_1^{-1}\big) + \tau_s\big(T^{-1}h_3h_1^{-1}\big)\,,
\\
\TO {ac,s} & = \tau_s\big(T^{-1}h_3h_1^{-1}T^2\big) + \tau_s\big(T^{-1}h_3h_2^{-1}h_1^{-1}T^2\big)\,,
\\
\TO {ba,s} & = \tau_s\big(\id\big) + \tau_s\big(h_3^{-1}\big) + \tau_s\big(h_2^{-1}\big) + \tau_s\big(h_3^{-1}h_2^{-1}\big) + \tau_s\big(h_2\big) + \tau_s\big(h_3^{-1}h_2\big)\,,
\\
\TO {bb,s} & = \tau_s\big(h_1^{-1}\big) + \tau_s\big( h_3^{-1}h_1^{-1} \big) + \tau_s\big( h_2^{-1}h_1^{-1} \big) + \tau_s\big( h_3^{-1}h_2^{-1}h_1^{-1}\big)\,,
\\
\TO {bc,s} & = \tau_s\big(h_1^{-1}T^2\big) + \tau_s\big( h_3^{-1}h_2^{-1}T^2 \big) + \tau_s\big( h_2^{-1}h_1^{-1}T^2 \big) + \tau_s\big( h_3^{-1}h_2^{-1}h_1^{-1}T^2 \big)
\\
& \quad + \tau_s\big( h_2h_1^{-1}T^2 \big) + \tau_s\big( h_3^{-1}h_2h_1^{-1}T^2 \big)\,,
\\
\TO {ca,s} & = \tau_s\big( T^{-1}h_1^{-1}h_3 \big) + \tau_s\big( T^{-1}h_1^{-1}h_3h_2^{-1} \big)\,,
\\
\TO {cb,s} & = \tau_s\big(T^{-1}h_1^{-1}h_3h_1^{-1} \big) + \tau_s\big( T^{-1}h_1^{-1}h_3h_2^{-1}h_1^{-1}\big) + \tau_s\big(T^{-1}h_1^{-1}\big)\,,
\\
\TO {cc,s} & = \tau_s\big(T^{-1}h_1^{-1}h_3h_1^{-1}T^2 \big) + \tau_s\big(T^{-1}h_1^{-1}h_3h_2^{-1}h_1^{-1}T^2\big) + \tau_s\big(T^{-1}h_1^{-1}T^2\big)\,.
\end{align*}

\subsection{Application of identity elimination}\label{SUBSEC:applie}

As in Section~\ref{SEC:stepredux} we now omit the index~``$5$'' from the notation above and write $\BrS$ instead of~$\BrS_5$, etc. We emphasize that in this section $\BrS$ denotes a different set of branches than the same notation in the Section~\ref{SUBSEC:applbr}. We restrict here to the discussion of the strict cross section and the ``strict'' parts of the intervals to simplify the exposition. 

We have $D_\ini = \{ 1, 2\}$.
The two sub-trees of the branch trees which collect the ``identity chains'' are indicated in Figure~\ref{FIG:subtreesidentity}.
\bigskip
\begin{figure}[h]
\begin{tikzpicture}[level/.style={sibling distance=55mm/#1},edge from parent/.style={draw,-open triangle 45}]
\node (z){$(1,\id)$}
  child {node (a) {$(7,\id)$}
  };
\node [above=1ex of z] {$B'_1:$};   
\node (zz) [right=8em of z] {$(2,\id)$};
\node [above=1ex of zz] {$B'_2:$};
\end{tikzpicture}
\caption[subtreesidentity]{The subtrees~$B_1'$ and $B_2'$ which collect the ``identity chains.''}\label{FIG:subtreesidentity}
\end{figure}
\bigskip 

Thus, 
\[
F_\ini = \{ B_1', B_2' \}\,,\quad \Delta_\ini = \{ (1,7), (2,2) \}\,,\quad
\ell_{(1,7)} = 1\,,\quad \ell_{(2,2)} = 0
\]
and 
\[
a_0^{(1,7)} = 7\,,\quad a_1^{(1,7)} = 7\,,\quad a_0^{(2,2)} = 2\,.
\]
We obtain
\begin{align*}
 \Csr {1,\st} & = \defset{ \nu\in \Cs{1,\st} }{ \gamma_\nu(+\infty) \in \left(0,\tfrac57\right)_{\st} }\,,
 \\
 \Ired{1,\st} & = \Iset{1}\setminus \Iset{7} = \left(0,\tfrac57\right)_{\st}\,,\qquad \Jset{1} = \left(-\infty, 0\right)_{\st}
\end{align*}
and 
\begin{align*}
 \Csr {2,\st} & = \Cs{2,\st}\,,  & \Ired{2,\st} & = \Iset{2} = (-\infty, -1)_{\st},\,, & \Jset{2} & = (-1, +\infty)_{\st}\,,
 \\
 \Csr {7,\st} & = \Cs{7,\st}\,, & \Ired{7,\st} & = \Iset{7} = \left(\tfrac57, +\infty\right)_{\st}\,, & \Jset{7} & = \left(-\infty, \tfrac57\right)_{\st}\,.
\end{align*}
Further, 
\[
 \Transs{}{k}{7} = 
 \begin{cases}
  \Trans{}{1}{1} \cup \Trans{}{1}{7}\setminus\{\id\} & \text{for~$k=1$}
  \\
  \Trans{}{k}{1} \cup \Trans{}{k}{7} & \text{for~$k\not=1$}
 \end{cases}
\]
and, for $i,j\in A$, $j\not=7$, 
\[
 \Transs{}{i}{j} = \Trans{}{i}{j}\,.
\]
Thus, 
\begin{align*}
\Transs{}{1}{1} & = \{ h_1^{-1}T,\ h_3^{-1}T,\ h_2h_3^{-1}T \}  
\\
\Transs{}{1}{2} & = \{ h_3^{-1}h_1T,\ h_2h_3^{-1}h_1T \}
\\
\Transs{}{1}{7} & = \{ h_1^{-1}T,\ h_3^{-1}T,\ h_2h_3^{-1}T,\ h_2,\ h_3,\ h_2^{-1},\ h_2h_3,\ h_2^{-1}h_3 \} 
\\
\Transs{}{2}{1} & = \{ T^{-2}h_1h_3^{-1}T,\ T^{-2}h_1h_2h_3^{-1}T \}
\\
\Transs{}{2}{2} & = \{ T^{-2}h_1T,\ T^{-2}h_1h_3^{-1}h_1T\, T^{-2}h_1h_2h_3^{-1}h_1T \}
\\
\Transs{}{2}{7} & = \{ T^{-2}h_1h_3^{-1}T,\ T^{-2}h_1h_2h_3^{-1}T,\ T^{-2}h_1,\ T^{-2}h_1h_2^{-1},\ T^{-2}h_1h_2,\
\\ & \qquad T^{-2}h_1h_3,\ T^{-2}h_1h_2h_3,\ T^{-2}h_1h_2^{-1}h_3 \}
\\
\Transs{}{7}{1} & = \{ T,\ h_1h_3^{-1}T,\ h_1h_2h_3^{-1}T \}
\\
\Transs{}{7}{2} & = \{ h_1T,\ h_1h_3^{-1}h_1T,\ h_1h_2h_3^{-1}h_1T \}
\\
\Transs{}{7}{7} & = \{ T,\ h_1h_3^{-1}T,\ h_1h_2h_3^{-1}T,\ h_1,\ h_1h_2,\ h_1h_3,\ h_1h_2h_3 \}\,.
\end{align*}

\subsection{Application of cuspidal acceleration}\label{SUBSEC:applca}

As in Section~\ref{SEC:cuspacc} we now omit the tildes from the notation and only consider the underlying strong cross section. 
We have 
\[
 A_X = A = \{1,2,7\}\,,\qquad A_Y = \{2,7\}
\]
and 
\[
\eX_2 = \eY_7 = \infty\,,\quad \eY_2 = -1\,,\quad \eX_1 = 0\,,\quad \eX_7 = \frac57\,.
\]
Further, 
\begin{align*}
\psi_{\eX}(1) &= 1\,, &\psi_{\eX}(2) & = 7\,,  &\psi_{\eX}(7)  &= 2\,,
\\
\cycnext{\eX}(1) & = h_1^{-1}T\,, & \cycnext{\eX}(2) & = T^{-2}h_1h_2^{-1}h_3\,, & \cycnext{\eX}(7) & = h_1h_2h_3^{-1}h_1T\,,
\\
\sigma_{\eX}(1) &= 1\,, &  \sigma_{\eX}(2) & = 2\,, & \sigma_{\eX}(7) &= 2
\\\\
\psi_{\eY}(2) & = 2\,, & \psi_{\eY}(7) &= 7\,,
\\
\cycnext{\eY}(2) & = T^{-2}h_1T\,, & \cycnext{\eY}(7) &= T\,,
\\
\sigma_{\eY}(2) &= 1\,, & \sigma_{\eY}(7) &=1\,.
\end{align*}
We obtain
\begin{align*}
K_{\eX}(1) & = \defset{ \nu\in\Cs{1,\st} }{ \gamma_\nu(+\infty) \in \left(0,\tfrac{5}{42}\right)_{\st} }\,,
\\
K_{\eX}(2) & = \defset{ \nu\in\Cs{2,\st} }{ \gamma_\nu(+\infty) \in \left(\infty,-\tfrac{10}7\right)_{c,\st} }\,,
\\
K_{\eX}(7) & = \defset{ \nu\in\Cs{7,\st} }{ \gamma_\nu(+\infty) \in \left(\tfrac57,\tfrac34\right)_{\st} }\,,
\\
K_{\eY}(1) & = \varnothing\,,
\\
K_{\eY}(2) & = \defset{ \nu\in\Cs{2,\st} }{ \gamma_\nu(+\infty) \in \left(-\tfrac87,-1\right)_{\st} }\,,
\\
K_{\eY}(7) & = \defset{ \nu\in\Cs{7,\st} }{ \gamma_\nu(+\infty) \in \left(\tfrac{12}7,\infty\right)_{\st} }
\end{align*}
and
\begin{align*}
M_{\eX}(1) & = \defset{ \nu\in \Cs{1,\st} }{ \gamma_\nu(-\infty) \in \left(-\tfrac17,0\right)_{\st} }\,,
\\
M_{\eX}(2) & = \defset{ \nu\in \Cs{2,\st} }{ \gamma_\nu(-\infty) \in \left(-\tfrac37,\infty\right)_{\st} }\,,
\\
M_{\eX}(7) & = \defset{ \nu\in \Cs{7,\st} }{ \gamma_\nu(-\infty) \in \left(\tfrac23,\tfrac57\right)_{\st} }\,,
\\
M_{\eY}(1) & = \varnothing\,,
\\
M_{\eY}(2) & = \defset{ \nu\in \Cs{2,\st} }{ \gamma_\nu(-\infty) \in \left(-1,-\tfrac67\right)_{\st} }\,,
\\
M_{\eY}(7) & = \defset{ \nu\in \Cs{7,\st} }{ \gamma_\nu(-\infty) \in \left(\infty,-\tfrac27\right)_{c,\st} }\,.
\end{align*}
Moreover we obtain
\[
 \Index = \{ (1,\eX), (1,\eR), (2,\eX), (2,\eR), (2,\eY), (7,\eX), (7,\eR), (7,\eY) \}\,. 
\]
For convenience we set 
\[
 \UTB\H_{\st} \coloneqq \defset{ \nu\in\UTB\H }{ \gamma_\nu(\pm\infty)\in\wh\R_{\st} }\,.
\]
We have
\begin{align*}
\Csacc{(1,\eX)} & = \defset{ \nu\in\UTB\H_{\st} }{ \Rea\base{\nu}=0\,,\ \gamma_\nu(+\infty)\in \left(0,\tfrac{5}{42}\right)_{\st}\,,\ \gamma_\nu(-\infty)\notin\left(-\tfrac17,0\right)_{\st} }\,,
\\
\Csacc{(1,\eR)} & = \defset{ \nu\in\UTB\H_{\st} }{ \Rea\base{\nu} = 0\,,\ \gamma_\nu(+\infty)\in \left(\tfrac{5}{42},\tfrac57\right)_{\st} }\,,
\\
\Csacc{(2,\eX)} &  = \left\{ \nu\in\UTB\H_{\st} \ \left\vert\ \Rea\base{\nu} = -1\,, \vphantom{\gamma_\nu(+\infty)\in \left(\infty, -\tfrac{10}7\right)_{c,\st}} \right.\right.
\\
& \hphantom{=\{\nu\in\UTB\H_{\st}}\qquad \left. \gamma_\nu(+\infty)\in \left(\infty, -\tfrac{10}7\right)_{c,\st}\,,\ \gamma_\nu(-\infty)\notin \left(-\tfrac37,\infty\right)_{\st} \right\}\,,
\\
\Csacc{(2,\eR)} & = \defset{ \nu\in\UTB\H_{\st} }{ \Rea\base{\nu} = -1\,,\ \gamma_\nu(+\infty)\in \left(-\tfrac{10}7, -\tfrac87\right)_{\st} }\,,
\\
\Csacc{(2,\eY)} & = \left\{ \nu\in\UTB\H_{\st} \ \left\vert\  \Rea\base{\nu} = -1\,, \vphantom{\gamma_\nu(+\infty)\in \left(-\tfrac87, -1\right)_{\st}} \right.\right.
\\
& \hphantom{=\{\nu\in\UTB\H_{\st}}\qquad \left. \gamma_\nu(+\infty)\in \left(-\tfrac87, -1\right)_{\st}\,,\ \gamma_\nu(-\infty)\notin \left(-1,-\tfrac67\right)_{\st} \right\}\,,
\\
\Csacc{(7,\eX)} & = \defset{ \nu\in\UTB\H_{\st} }{ \Rea\base{\nu} = \tfrac57\,,\ \gamma_\nu(+\infty)\in \left(\tfrac57, \tfrac34\right)_{\st}\,,\ \gamma_\nu(-\infty)\notin \left(\tfrac23,\tfrac57\right)_{\st} }\,,
\\
\Csacc{(7,\eR)} & = \defset{ \nu\in\UTB\H_{\st} }{ \Rea\base{\nu} = \tfrac57\,,\ \gamma_\nu(+\infty)\in \left(\tfrac34,\tfrac{12}7\right)_{\st} }\,,
\\
\Csacc{(7,\eY)} & = \left\{ \nu\in\UTB\H_{\st} \ \left\vert\  \Rea\base{\nu} = \tfrac57\,, \vphantom{\gamma_\nu(+\infty)\in \left(\tfrac{12}7,\infty\right)_{\st}}\right.\right.
\\
& \hphantom{=\{\nu\in\UTB\H_{\st}}\qquad \left.  \gamma_\nu(+\infty)\in \left(\tfrac{12}7,\infty\right)_{\st}\,,\ \gamma_\nu(-\infty)\notin \left(\infty,-\tfrac27\right)_{c,\st} \right\}\,.
\end{align*}

In order to provide the accelerated forward transition sets in detail, we first note that $A^* = A$ and hence all induced $\eZ$-cycles, induced cycle sets and transformations are equal to the $\eZ$-cycles, cycle sets and transformations, respectively. Therefore, we will omit the star ($*$) from the notation. We have
\begin{align*}
& \Trans{\acc}{(1,\eR)}{(1,\eX)}  = \Trans{\acc}{(1,\eR)}{(1,\eR)} 
\\
& \qquad = \{ h_3^{-1}T,\ h_2h_3^{-1}T \} 
\\
& \Trans{\acc}{(1,\eR)}{(2,\eX)}  = \Trans{\acc}{(1,\eR)}{(2,\eR)} = \Trans{\acc}{(1,\eR)}{(2,\eY)} 
\\
& \qquad = \{ h_3^{-1}h_1T,\ h_2h_3^{-1}h_1T \}
\\
& \Trans{\acc}{(1,\eR)}{(7,\eX)}  = \Trans{\acc}{(1,\eR)}{(7,\eR)} = \Trans{\acc}{(1,\eR)}{(7,\eY)} 
\\
& \qquad = \{ h_1^{-1}T,\ h_3^{-1}T,\ h_2h_3^{-1}T,\ h_2,\ h_3,\ h_2^{-1},\ h_2h_3,\ h_2^{-1}h_3 \}
\\
& \Trans{\acc}{(2,\eR)}{(1,\eX)}  = \Trans{\acc}{(2,\eR)}{(1,\eR)}
\\
& \qquad = \{ T^{-2}h_1h_3^{-1}T,\ T^{-2}h_1h_2h_3^{-1}T \}
\\
& \Trans{\acc}{(2,\eR)}{(2,\eX)}  = \Trans{\acc}{(2,\eR)}{(2,\eR)} = \Trans{\acc}{(2,\eR)}{(2,\eY)}
\\
& \qquad = \{ T^{-2}h_1h_3^{-1}h_1T,\ T^{-2}h_1h_2h_3^{-1}h_1T \}
\\
& \Trans{\acc}{(2,\eR)}{(7,\eX)}  = \Trans{\acc}{(2,\eR)}{(7,\eR)} = \Trans{\acc}{(2,\eR)}{(7,\eY)}
\\
& \qquad = \{ T^{-2}h_1h_3^{-1}T,\ T^{-2}h_1h_2h_3^{-1}T,\ T^{-2}h_1,\ T^{-2}h_1h_2^{-1},
\\
& \hphantom{ T^{-2}h_1h_3^{-1}T,\ T^{-2}h_1h_2h_3^{-1}T,\ T^{-2}h_1 }  T^{-2}h_1h_2,\ T^{-2}h_1h_3,\ T^{-2}h_1h_2h_3 \}
\\
& \Trans{\acc}{(7,\eR)}{(1,\eX)}  = \Trans{\acc}{(7,\eR)}{(1,\eR)}
\\
& \qquad = \{ T,\ h_1h_3^{-1}T,\ h_1h_2h_3^{-1}T \}
\\
& \Trans{\acc}{(7,\eR)}{(2,\eX)}  = \Trans{\acc}{(7,\eR)}{(2,\eR)} = \Trans{\acc}{(7,\eR)}{(2,\eY)}
\\
& \qquad = \{ h_1T,\ h_1h_3^{-1}h_1T \}
\\
& \Trans{\acc}{(7,\eR)}{(7,\eX)}  = \Trans{\acc}{(7,\eR)}{(7,\eR)} = \Trans{\acc}{(7,\eR)}{(7,\eY)}
\\
& \qquad = \{ h_1h_3^{-1}T,\ h_1h_2h_3^{-1}T,\ h_1,\ h_1h_2,\ h_1h_3,\ h_1h_2h_3 \}
\end{align*}
For the other accelerated transition sets we first collect the induced cycle sets. We have
\begin{align*}
\cycnostarset{(1,\eX)} & = \{ (1,\eX) \}
\\
\cycnostarset{(2,\eX)} & = \{ (2,\eX),\ (7,\eX) \}
\\
\cycnostarset{(2,\eY)} & = \{ (2,\eY) \}
\\
\cycnostarset{(7,\eX)} & = \{ (7,\eX),\ (2,\eX) \}
\\
\cycnostarset{(7,\eY)} & = \{ (7,\eY) \}\,.
\end{align*}
Further
\begin{align*}
\cycstartrans{(1,\eX)} & = \cycnext{\eX}(1) = h_1^{-1}T = \bmat{1}{0}{7}{1}\,,
\\
\cycstartrans{(2,\eX)} & = \cycnext{\eX}(2)\cycnext{\eX}(7) = T^{-2}h_1h_2^{-1}h_3h_1h_2h_3^{-1}h_1T = \bmat{1}{-1}{0}{1}\,,
\\
\cycstartrans{(2,\eY)} & = \cycnext{\eY}(2) = T^{-2}h_1 T = \bmat{8}{7}{-7}{-6}\,,
\\
\cycstartrans{(7,\eX)} & = \cycnext{\eX}(7)\cycnext{\eX}(2) = h_1h_2h_3^{-1}h_1T^{-1}h_1h_2^{-1}h_3 = \bmat{36}{-25}{49}{-34}\,,
\\
\cycstartrans{(7,\eY)} & = \cycnext{\eY}(7) = T = \bmat{1}{1}{0}{1}\,.
\end{align*}
Therefore
\begin{align*}
& \Trans{\acc}{(1,\eX)}{(1,\eR)} = \defset{ \cycstartrans{(1,\eX)}^n }{ n\in\N }\,,
\\
& \Trans{\acc}{(2,\eX)}{(2,\eR)} = \Trans{\acc}{(2,\eX)}{(2,\eY)} = \defset{ \cycstartrans{(2,\eX)}^n }{ n\in\N }\,,
\\
& \Trans{\acc}{(2,\eX)}{(7,\eR)} = \Trans{\acc}{(2,\eX)}{(7,\eY)} = \defset{ \cycstartrans{(2,\eX)}^n\cycnext{\eX}(2) }{ n\in\N_0 }\,,
\\
& \Trans{\acc}{(7,\eX)}{(2,\eR)} = \Trans{\acc}{(7,\eX)}{(2,\eY)} = \defset{ \cycstartrans{(7,\eX)}^n\cycnext{\eX}(7) }{ n\in\N_0 }\,,
\\
& \Trans{\acc}{(7,\eX)}{(7,\eR)} = \Trans{\acc}{(7,\eX)}{(7,\eY)} = \defset{ \cycstartrans{(7,\eX)}^n }{ n\in\N }\,,
\\
& \Trans{\acc}{(1,\eX)}{(1,\eX)} = \Trans{\acc}{(1,\eX)}{(2,\eX)}
\\
& = \Trans{\acc}{(1,\eX)}{(7,\eX)} = \Trans{\acc}{(2,\eX)}{(1,\eX)}
\\
& =\Trans{\acc}{(2,\eX)}{(2,\eX)} = \Trans{\acc}{(2,\eX)}{(7,\eX)} 
\\
& = \Trans{\acc}{(7,\eX)}{(1,\eX)} = \Trans{\acc}{(7,\eX)}{(2,\eX)}
\\
& = \Trans{\acc}{(7,\eX)}{(7,\eX)} = \Trans{\acc}{(2,\eY)}{(1,\eX)} 
\\
& = \Trans{\acc}{(2,\eY)}{(2,\eY)} = \Trans{\acc}{(2,\eY)}{(7,\eY)} 
\\
& = \Trans{\acc}{(7,\eY)}{(1,\eX)} = \Trans{\acc}{(7,\eY)}{(2,\eY)}
\\
& = \Trans{\acc}{(7,\eY)}{(7,\eY)} = \Trans{\acc}{(1,\eX)}{(2,\eR)}
\\
& = \Trans{\acc}{(1,\eX)}{(2,\eY)} = \Trans{\acc}{(1,\eX)}{(7,\eR)}
\\
& = \Trans{\acc}{(1,\eX)}{(7,\eY)} = \Trans{\acc}{(2,\eX)}{(1,\eR)}
\\
& = \Trans{\acc}{(2,\eY)}{(1,\eR)} = \Trans{\acc}{(2,\eY)}{(7,\eR)}
\\
& = \Trans{\acc}{(2,\eY)}{(7,\eX)} = \Trans{\acc}{(7,\eX)}{(1,\eR)}
\\
& = \Trans{\acc}{(7,\eY)}{(1,\eR)} = \Trans{\acc}{(7,\eY)}{(2,\eX)}
\\
& = \Trans{\acc}{(7,\eY)}{(2,\eR)} = \Trans{\acc}{(7,\eY)}{(7,\eX)}
\\
& = \Trans{\acc}{(7,\eY)}{(7,\eR)} = \varnothing\,.
\end{align*}

\subsection{Structure tuple and family of neighborhoods}\label{SUBSEC:structlongex}

With the preparations in the previous sections we can now provide a structure tuple
\[
\mathcal{S} \coloneqq \bigl( \Index, (I_{a})_{a \in \Index}, (P_{a,b})_{a,b \in \Index}, (C_{a,b})_{a,b \in \Index},((g_p)_{p\in P_{a,b}})_{a,b\in \Index} \bigr)
\]
for~$\Gamma$. Following the discussion in Section~\ref{SEC:strictTOAexist}, we obtain
\begin{align*}
I_{(1,\eX)} & = \left(0,\tfrac{5}{42}\right)\,, 
&
I_{(1,\eR)} & = \left(\tfrac{5}{42}, \tfrac57\right)\,,
\\
I_{(2,\eX)} & = \left(-\infty, -\tfrac{10}{7}\right)\,,
&
I_{(2,\eR)} & = \left(-\tfrac{10}{7}, -\tfrac87\right)\,,
&
I_{(2,\eY)} & = \left(-\tfrac87, -1\right)\,,
\\
I_{(7,\eX)} & = \left(\tfrac57,\tfrac34\right)\,,
&
I_{(7,\eR)} & = \left(\tfrac34,\tfrac{12}{7}\right)\,,
&
I_{(7,\eY)} & = \left(\tfrac{12}{7}, +\infty\right)\,.
\end{align*}
Further, 
\begin{align*}
P_{(1,\eR), (1,\eX)} & = \left\{ u_{(1,\eX)}^{-1} \right\} = \left\{ \bmat{1}{0}{-7}{1}\right\}\,,
\\
P_{(2,\eR), (2,\eX)} & = P_{(2,\eY), (2,\eX)} = P_{(7,\eR), (2,\eX)} = P_{(7,\eY), (2,\eX)} = \left\{ u_{(2,\eX)}^{-1} \right\} = \left\{ \bmat{1}{1}{0}{1}\right\}\,,
\\
P_{(2,\eX), (2,\eY)} & = P_{(2,\eR), (2,\eY)} = \left\{ u_{(2,\eY)}^{-1} \right\} = \left\{ \bmat{-6}{-7}{7}{8}\right\}\,,
\\
P_{(2,\eR), (7,\eX)} & = P_{(2,\eY), (7,\eX)} = P_{(7,\eR), (7,\eX)} = P_{(7,\eY), (7,\eX)} = \left\{ u_{(7,\eX)}^{-1} \right\} = \left\{\bmat{-34}{25}{-49}{36}\right\}\,,
\\
P_{(7,\eX), (7,\eY)} & = P_{(7,\eR), (7,\eY)} = \left\{ u_{(7,\eY)}^{-1} \right\} = \left\{ \bmat{1}{-1}{0}{1}\right\}\,,
\end{align*}
and $P_{a,b} = \varnothing$ for all other choices of~$a,b\in\Index$. The fixed points of the parabolic elements in the sets above are as follows:
\begin{center}
\begin{tabular}{l||c|c|c|c|c}
element & $u_{(1,\eX)}^{-1}$ & $u_{(2,\eX)}^{-1}$ & $u_{(2,\eY)}^{-1}$ & $u_{(7,\eX)}^{-1}$ & $u_{(7,\eY)}^{-1}$
\\ \hline
fixed point & $0$ & $\infty$ & $-1$ & $\tfrac57$ & $\infty$
\end{tabular}\,.
\end{center}
For $p=u_{(1,\eX)}^{-1} \in P_{(1,\eR), (1,\eX)}$ we have
\[
g_p = g_{u_{(1,\eX)}^{-1}} = \cycnostarnext{(1,\eX), (1,\eX)}^{-1} = u_{(1,\eX)}^{-1}\,.
\]
For $p=u_{(2,\eX)}^{-1}$ considered as an element of~$P_{(2,\eR), (2,\eX)}$ or of~$P_{(2,\eY), (2,\eX)}$ we have
\[
g_p = g_{u_{(2,\eX)}^{-1}} = \cycnostarnext{(2,\eX), (2,\eX)}^{-1} = u_{(2,\eX)}^{-1}\,.
\]
For $p=u_{(2,\eX)}^{-1}$ considered as an element of~$P_{(7,\eR), (2,\eX)}$ or of~$P_{(7,\eY), (2,\eX)}$ we have
\[
g_p = g_{u_{(2,\eX)}^{-1}} = \cycnostarnext{(2,\eX), (7,\eX)}^{-1} = \cycnext{\eX}(2)^{-1} = \bmat{5}{7}{7}{10}\,.
\]
For $p=u_{(2,\eY)}^{-1} \in P_{(2,\eX), (2,\eY)} = P_{(2,\eR), (2,\eY)}$ we have
\[
g_p = g_{u_{(2,\eY)}^{-1} } = \cycnostarnext{(2,\eY), (2,\eY)}^{-1} = u_{(2,\eY)}^{-1}\,.
\]
For $p=u_{(7,\eX)}^{-1}$ considered as an element of~$P_{(2,\eR), (7,\eX)}$ or of~$P_{(2,\eY), (7,\eX)}$ we have
\[
g_p = g_{u_{(7,\eX)}^{-1}} = \cycnostarnext{(7,\eX), (2,\eX)}^{-1} = \cycnext{\eX}(7)^{-1} = \bmat{-3}{2}{7}{-5}\,.
\]
For $p=u_{(7,\eX)}^{-1}$ considered as an element of~$P_{(7,\eR), (7,\eX)}$ or of~$P_{(7,\eY), (7,\eX)}$ we have
\[
g_p = g_{u_{(7,\eX)}^{-1}} = \cycnostarnext{(7,\eX), (7,\eX)}^{-1} = u_{(7,\eX)}^{-1}\,.
\]
For $p=u_{(7,\eY)}^{-1} \in P_{(7,\eX), (7,\eY)} = P_{(7,\eR), (7,\eY)}$ we have
\[
g_p = g_{u_{(7,\eY)}^{-1}} = \cycnostarnext{(7,\eY), (7,\eY)}^{-1} = u_{(7,\eY)}^{-1}\,.
\]
For all $a\in\Index$ and all $j\in\{1,2,7\}$,
\[
C_{a,(j,\eR)} = \Trans{\acc}{(j,\eR)}{a}^{-1}\,.
\]
Further, 
\begin{align*}
C_{(1,\eR), (1,\eX)} & = \{ \cycnostarnext{(1,\eX), (1,\eX)}^{-1} \} = \{ u_{(1,\eX)}^{-1} \}\,,
\\
C_{(2,\eR), (2,\eX)} & = C_{(2,\eY), (2,\eX)} = \{ \cycnostarnext{(2,\eX), (2,\eX)}^{-1} \} = \{ u_{(2,\eX)}^{-1} \}\,,
\\
C_{(7,\eR), (2,\eX)} & = C_{(7,\eY), (2,\eX)} = \{ \cycnostarnext{(2,\eX), (7,\eX)}^{-1} \} = \{ \cycnext{\eX}(2)^{-1} \}\,,
\\
C_{(2,\eX), (2,\eY)} & = C_{(2,\eR), (2,\eY)} = \{ \cycnostarnext{(2,\eY), (2,\eY)}^{-1} \} = \{ u_{(2,\eY)}^{-1} \}\,,
\\
C_{(2,\eR), (7,\eX)} & = C_{(2,\eY), (7,\eX)} = \{ \cycnostarnext{(7,\eX), (2,\eX)}^{-1} \} = \{ \cycnext{\eX}(7)^{-1} \}\,,
\\
C_{(7,\eR), (7,\eX)} & = C_{(7,\eY), (7,\eX)} = \{ \cycnostarnext{(7,\eX), (7,\eX)}^{-1} \} = \{ u_{(7,\eX)}^{-1} \}\,,
\\
C_{(7,\eX), (7,\eY)} & = C_{(7,\eR), (7,\eY)} = \{ \cycnostarnext{(7,\eY), (7,\eY)}^{-1} \} = \{ u_{(7,\eY)}^{-1} \}\,.
\end{align*}
For all other $a,b\in\Index$, we have $C_{a,b} = \varnothing$.

To end the discussion of this extended example, we now present a family~$(\mc E_a)_{a\in\Index}$ of open, bounded, connected and simply connected sets in~$\wh\C$ for~$\mc S$ as requested in Property~\ref{staPROP5}. As in Section~\ref{SUBSEC:prop5}, for each~$a\in\Index$, we provide an open interval~$E_a$ in~$\wh\R$. The requested set~$\mc E_a$ for~$a\in\Index$ is then the open complex ball in~$\wh\C$ spanned by~$E_a$. More precisely, $\mc E_a$ is the open ball in~$\wh\C$ with center in~$\wh\R$ such that $\mc E_a \cap \wh\R = E_a$. In this context, a subset~$B$ of~$\wh\C$ is called a (complex) ball if there exists $g\in\PSL_2(\R)$ such that $g\act B$ is a euclidean ball in~$\C$. A straightforward, but tedious calculation shows that the following intervals serve our goal:
\begin{align*}
 E_{(1,\eX)} & \coloneqq \left(-\tfrac1{30}, \tfrac{23}{105}\right)\,, & E_{(1,\eR)} & \coloneqq \left( \tfrac{1}{42}, \tfrac67 \right)\,,
 \\
 E_{(2,\eX)} & \coloneqq \left(10^{14}, -\tfrac{19}{14}\right)_c\,, & E_{(2,\eR)} & \coloneqq \left(-\tfrac{17}{7}, -\tfrac{57}{56}\right)\,, & E_{(2,\eY)} & \coloneqq \left(-\tfrac{17}{14}, -\tfrac{99}{100}\right)\,,
 \\
 E_{(7,\eX)} & \coloneqq \left(\frac{38}{63}, \tfrac{17}{20}\right)\,, & E_{(7,\eR)} & \coloneqq \left(\tfrac{37}{50}, \tfrac{27}{14}\right)\,, & E_{(7,\eY)} & \coloneqq \left(\tfrac{11}{7}, -10^{15}\right)_c\,.
\end{align*}


\begin{landscape}

\vspace*{\fill}
\begin{figure}[h]
\begin{center}
\includegraphics{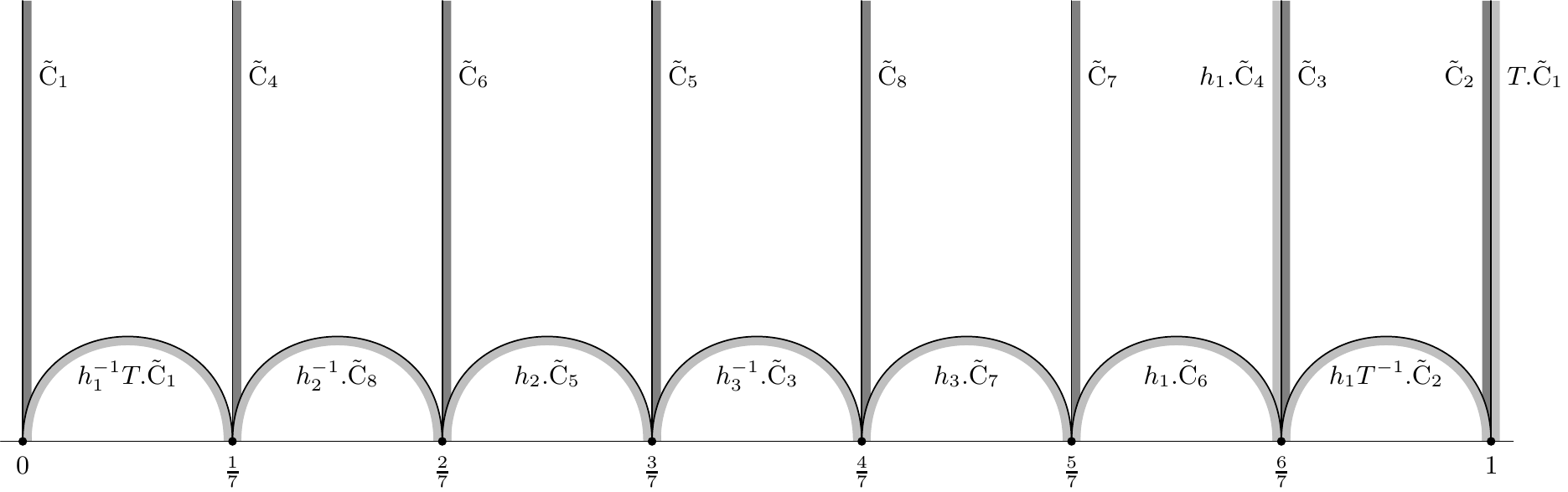}
\end{center}
\caption{A first set of branches and some next intersections.}\label{fig:cs1_next}
\end{figure}
\vspace*{\fill}

\newpage

\vspace*{\fill}
\begin{figure}[h]
\begin{center}
\includegraphics{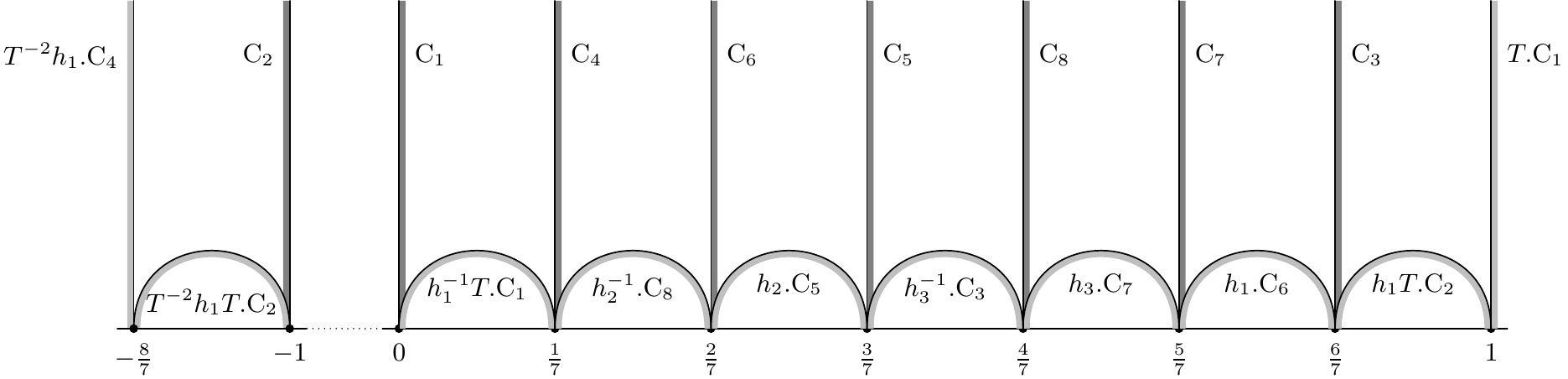}
\end{center}
\caption{The initial set of branches and some next intersections.}\label{fig:cs3_next}
\end{figure}
\vspace*{\fill}

\end{landscape}


\bibliographystyle{amsplain}
\bibliography{pw_TObib}

\printindex[defs]
\printindex[symbols]

\setlength{\parindent}{0pt}

\end{document}